\documentclass[12pt,letterpaper,titlepage]{amsart}
\usepackage{amsmath, amssymb, amsthm, amsfonts,amscd,amsaddr}
\usepackage[
    paper=a4paper,
    portrait=true,
    textwidth=425pt,
    textheight=650pt,
         tmargin=3cm,
    marginratio=1:1
            ]{geometry}

\usepackage[cmtip,matrix,arrow]{xy} \SelectTips{cm}{10}
\newcommand{\cxymatrix}[1]{\vcenter{\xymatrix@=15pt{#1}}}
\theoremstyle{plain}
\newtheorem{theorem}{Theorem}[section]

\newtheorem{cor}[theorem]{Corollary}

\newtheorem{prop}[theorem]{Proposition}
\newtheorem{lemma}[theorem]{Lemma}
\newtheorem{defprop}[theorem]{Definition/Proposition}

\theoremstyle{definition}
\newtheorem{definition}[theorem]{Definition}
\newtheorem{ex}[theorem]{Example}
\newtheorem{rmk}[theorem]{Remark}
\numberwithin{equation}{section}
\newtheorem*{theoremA*}{Theorem A}
\newtheorem*{theoremB*}{Theorem B}
\newtheorem*{theorem1*}{Theorem A'}
\newtheorem*{theoremC*}{Theorem C}
\newtheorem*{theoremD*}{Theorem D}
\newtheorem*{theoremE*}{Theorem E}
\newtheorem*{theoremF*}{Theorem F}
\newtheorem*{theoremE2*}{Theorem E2}
\newtheorem*{theoremE3*}{Theorem E3}
\newcommand{\bs}{\backslash}
\newcommand{\Cc}{\mathcal{C}}
\newcommand{\C}{\mathbb{C}}
\newcommand{\G}{\mathbb{G}}
\newcommand{\A}{\mathcal{A}}
\newcommand{\Nc}{\mathcal{N}}
\newcommand{\Lc}{\mathcal{L}}
\newcommand{\E}{\mathcal{E}}
\newcommand{\Hc}{\mathcal{H}}
\newcommand{\Kc}{\mathcal{K}}

\newcommand{\Q}{\mathbb{Q}}
\newcommand{\Sb}{\mathbb{S}}

\newcommand{\Z}{\mathbb{Z}}
\newcommand{\Zc}{\mathcal{Z}}

\newcommand{\Sc}{\mathcal{S}}
\newcommand{\Rc}{\mathcal{R}}
\newcommand{\Fc}{\mathcal{F}}
\newcommand{\R}{\mathbb{R}}
\newcommand{\N}{\mathbb{N}}
\newcommand{\Pb}{\mathbb{P}}

\newcommand{\Gr}{\operatorname{Gr}}
\newcommand{\Sl}{\operatorname{SL}}

\newcommand{\Ind}{\operatorname{Ind}}
\newcommand{\SO}{\operatorname{SO}}
\newcommand{\Hom}{\operatorname{Hom}}
\newcommand{\End}{\operatorname{End}}

\newcommand{\OO}{\operatorname{O}}
\newcommand{\GL}{\operatorname{GL}}

\newcommand{\SU}{\operatorname{SU}}
\newcommand{\tr}{\operatorname{tr}}

\newcommand{\Sp}{\operatorname{Sp}}
\newcommand{\Lie}{\operatorname{Lie}}
\newcommand{\Ad}{\operatorname{Ad}}

\newcommand{\ad}{\operatorname{ad}}
\newcommand{\diag}{\operatorname{diag}}

\newcommand{\pr}{\operatorname{pr}}

\newcommand{\cl}{\operatorname{cl}}
\newcommand{\ind}{\operatorname{ind}}
\newcommand{\id}{\operatorname{id}}

\newcommand{\vol}{\operatorname{vol}}

\newcommand{\Spec}{\operatorname{spec}}
\newcommand{\supp}{\operatorname{supp}}
\newcommand{\Span}{\operatorname{span}}

\newcommand{\Sym}{\operatorname{Sym}}

\newcommand{\rank}{\operatorname{rank}}

\newcommand{\re}{\operatorname{Re}}

\newcommand{\Mat}{\operatorname{Mat}}

\newcommand{\algebraicgroup}[1]{{\underline{#1}}}

\newcommand{\uG}{\algebraicgroup{G}}
\newcommand{\uH}{\algebraicgroup{H}}
\newcommand{\uZ}{\algebraicgroup{ Z}}
\newcommand{\uQ}{\algebraicgroup{ Q}}
\newcommand{\uL}{\algebraicgroup{ L}}
\newcommand{\uU}{\algebraicgroup{ U}}
\newcommand{\uP}{\algebraicgroup{ P}}
\newcommand{\uA}{\algebraicgroup{ A}}
\newcommand{\uR}{\algebraicgroup{ R}}
\newcommand{\uM}{\algebraicgroup{ M}}
\newcommand{\uMH}{\algebraicgroup{ M}_{\algebraicgroup H}}
\newcommand{\uK}{\algebraicgroup{K}}
\newcommand{\uLH}{\algebraicgroup{ L}_{\algebraicgroup H}}

\newcommand{\uAZ}{\algebraicgroup{  A}_{\algebraicgroup Z}}

\def\hat{\widehat}
\def\af{\mathfrak{a}}
\def\bfrak{\mathfrak{b}}
\def\e{\epsilon}
\def\gf{\mathfrak{g}}

\def\cf{\mathfrak{c}}

\def\hf{\mathfrak{h}}
\def\jf{\mathfrak{j}}
\def\kf{\mathfrak{k}}
\def\lf{\mathfrak{l}}
\def\mf{\mathfrak{m}}
\def\nf{\mathfrak{n}}

\def\pf{\mathfrak{p}}

\def\qf{\mathfrak{q}}
\def\rf{\mathfrak{r}}

\def\so{\mathfrak{so}}
\def\sp{\mathfrak{sp}}
\def\su{\mathfrak{su}}
\def\tf{\mathfrak{t}}
\def\uf{\mathfrak{u}}

\def\zf{\mathfrak{z}}
\def\la{\langle}
\def\ra{\rangle}
\def\1{{\bf1}}

\def\U{\mathcal{U}}
\def\B{\mathcal{B}}
\def\Cc{\mathcal{C}}

\def\Ic{\mathcal {I}}
\def\G{\mathcal{G}}
\def\Oc{\mathcal{O}}
\def\Tc{\mathcal{T}}
\def\Wc{\mathcal{W}}

\def\cR{\mathcal{R}}
\def\M{\mathcal{M}}
\def\oline{\overline}
\def\F{\mathcal{F}}
\def\V{\mathcal{V}}
\def\W{\mathsf{W}}

\def\v{\mathbf{v}}
\def\w{\mathbf {w}}

\def\st{\mathsf{t}}

\def\sP{\mathsf {P}}
\def\sH{\mathsf {H}}

\def\sc{\mathsf{c}}

\def\WF{\operatorname{WF}}

\newcommand{\sA}{\mathsf{A}} \newcommand{\sB}{\mathsf{B}}
\newcommand{\sC}{\mathsf{C}} \newcommand{\sD}{\mathsf{D}}
 \newcommand{\sF}{\mathsf{F}}
 \newcommand{\sX}{\mathsf{X}}
 \newcommand{\sW}{\mathsf{W}}
\newcommand{\sY}{\mathsf{Y}}

\newcommand{\sI}{\mathsf{I}}

\title[Plancherel Formula]
{Plancherel theory for real spherical spaces:  Construction of the Bernstein morphisms}

\subjclass[2000]{20G20, 22E46, 22F30, 43A85, 53C35}
\begin{document}
\date{October 29, 2020}

\begin{abstract} This paper lays the foundation for Plancherel theory on
real spherical spaces $Z=G/H$, namely it provides the decomposition of $L^2(Z)$ into
different series of representations via Bernstein morphisms.
 These series are parametrized by subsets of spherical
roots which determine  the fine geometry of $Z$ at infinity. In particular, we obtain a generalization of the Maass-Selberg relations.
As a corollary we obtain a partial geometric characterization of the discrete spectrum:
$L^2(Z)_{\rm disc }\neq \emptyset$
if $\hf^\perp$ contains elliptic elements in its interior.
\par In case $Z$ is a real reductive group
or, more generally, a symmetric space our results retrieve the Plancherel formula
of Harish-Chandra (for the group) as well as that of Delorme and van den Ban-Schlichtkrull
(for symmetric spaces) up to the explicit determination of the discrete series for the
inducing datum.
\end{abstract}

\author[Delorme]{Patrick Delorme}
\email{patrick.delorme@univ-amu.fr}
\address{Institut de Math\'ematiques de Marseille,
UMR 7373 du CNRS, \\
Campus de Luminy, Case 907 - 13288 MARSEILLE Cedex 9}

\author[Knop]{Friedrich Knop}
\email{friedrich.knop@fau.de}
\address{Department Mathematik, Emmy-Noether-Zentrum\\
FAU Erlangen-N\"urnberg, Cauerstr. 11, 91058 Erlangen}

\author[Kr\"otz]{Bernhard Kr\"{o}tz}
\email{bkroetz@gmx.de}
\address{Institut f\"ur Mathematik, Universit\"at Paderborn,\\ Warburger Stra\ss e 100,
33098 Paderborn}

\author[Schlichtkrull]{Henrik Schlichtkrull}
\email{schlicht@math.ku.dk}
\address{University of Copenhagen, Department of Mathematics\\Universitetsparken 5,
DK-2100 Copenhagen \O}

\maketitle

\section{Introduction}
Our concern is with a homogeneous real spherical space $Z=G/H$. We assume that $Z$ is algebraic, i.e. there exists
a connected reductive group $\uG$, defined over $\R$, and an algebraic subgroup $\uH\subset \uG$, defined over
$\R$ as well, such that $G=\uG(\R)$ and $H=\uH(\R)$.  Then $Z$ is a $G$-orbit of the variety $\uZ(\R)$ where $\uZ=\uG/ \uH$.
We denote by $z_0=eH\in Z\subset \uZ(\R)$ the standard base point and recall that $Z$ is called real spherical if there is
a minimal parabolic subgroup $P\subset G$ such that $P\cdot z_0$ is open in $Z$.

\par  The goal of this paper is to develop the basic Plancherel theory for $L^2(Z)$, i.e. to establish the foundational
Bernstein-decomposition of $L^2(Z)$ into different series of representations.
Although the main body of the text is written in terms of $Z$, we  focus in this introduction
on $\uZ(\R)$ and the Bernstein decomposition for $L^2(\uZ(\R))$, for which our results are easier to state.
On a technical level we obtain the information for $\uZ(\R)$ by collecting the data of all $G$-orbits in $\uZ(\R)$.

\par Real spherical varieties $\uZ(\R)$ have a well understood $G$-equivariant compactification theory, which is constructed
out of the combinatorial data of $\uZ$ originating from the local structure theorem. We recall from \cite{KKS} that
attached to $\uZ$ there is a torus $\uA_\uZ=\uA/ \uA\cap \uH$, homogeneous for a maximal split torus $\uA$ of $\uG$ contained in $\uP$.
Let $A_Z$ be the identity component of $\uA_\uZ(\R)$, and $\af_Z$ its Lie algebra.
Inside $\af_Z$ one finds a co-simplicial cone $\af_Z^-$, called the compression cone, which is a fundamental domain
for a finite reflection group $W_Z$ \cite{KK}.  In particular there is a set $S\subset \af_Z^*$, of the so-called
spherical roots, such that
the faces of $\af_Z^-$ are given by $\af_I^-:=\af_Z\cap \af_I$ with $I\subset S$ and $\af_I:= I^\perp\subset \af_Z$.
For the simplicity of exposition we assume in this introduction that $S$ is a basis of the character group
$\Xi_Z\simeq \Z^n$ of the torus $\uA_\uZ$, the so-called wonderful case.

\par Now there exists a (wonderful) smooth $G$-equivariant compactification $\hat \uZ(\R)$ of $\uZ(\R)$ featuring
a stratification in $G$-manifolds,
$$\hat \uZ(\R)=\coprod_{I\subset S} \hat \uZ_I(\R),$$
parametrized by subsets $I\subset S$ of spherical roots \cite{KK}
and with $\uZ(\R)=\hat \uZ_S(\R)$.
The strata $\hat Z_I(\R)$ for $I\subset S$ arise as follows.
For every element $X$ in the relative interior $\af_I^{--}$ of the face $\af_I^-$ of $\af_Z^-$, the radial
limit
$$\hat z_{0,I}:=\lim_{t\to \infty} \exp(tX)\cdot z_0\in \hat \uZ(\R)$$
exists and is independent of  $X$.
Then $\hat H_I$, the $G$-stabilizer of  $\hat z_{0,I}$,  is real algebraic, i.e.  $\hat H_I = \hat \uH_I(\R)$,
and $\hat Z_I(\R) := [\uG\cdot \hat z_{0,I}](\R)$ is the set of real points in  the boundary orbit $\uG \cdot \hat  z_{0,I}$.  The group $\hat H_I$ acts on the normal space to
the stratum $\hat Z_I(\R)$ at $\hat z_{0, I}$.
The kernel of this isotropy action defines an algebraic  normal subgroup
$\uH_I\triangleleft \hat \uH_I$ with torus quotient $\uA_I=\hat \uH_I/ \uH_I$. The real spherical space
$\uZ_I(\R):=(\uG/ \uH_I)(\R)$ is in fact canonically attached to $\uZ(\R)$, i.e. it does not depend on the
particular compactification. Geometrically $\uZ_I(\R)$ is a deformation of $\uZ(\R)$ which approximates
$\uZ(\R)$ asymptotically near the vertex $\hat z_{0,I}$.
We denote by $A_I$ the identity component of $\uA_I(\R)$  and note that its Lie algebra is $\af_I$ defined above.

\par We assume now that $Z$ and hence also $\uZ(\R)$ is unimodular, i.e.~it carries a $G$-invariant positive Radon measure.
As $\uZ_I(\R)$ is a deformation of $\uZ(\R)$ for each $I\subset S$, it follows that $\uZ_I(\R)$ carries a natural $G$-invariant
measure as well.  On $\uZ_I(\R)$ the group $G\times A_I$ acts from left times right.  The left $G$-action defines a unitary
representation $L$ of $G$ on $L^2(\uZ_I(\R))$ given by $(L(g)f)(z)= f(g^{-1}\cdot z)$ for $g\in G$, $z\in \uZ_I(\R)$ and $f\in L^2(\uZ_I(\R))$.
The right action of $A_I$ on $\uZ_I(\R)$ defines a normalized unitary representation $\cR(a_I) f(z)= a_I^{-\rho} f(z\cdot a_I)$ for $a_I\in A_I$ and $f,z$ as before. The decomposition of $L^2(\uZ_I(\R))$ with respect to $\cR$ yields the disintegration in unitary $G$-modules

$$ L^2(\uZ_I(\R))=\int_{\hat A_I} L^2(\uZ_I(\R), \chi)\ d \chi$$
with $\hat A_I$ the unitary character group of the non-compact torus $A_I$.  The space $L^2(\uZ_I(\R), \chi)$ is the space of square
integrable densities with respect to $\chi$ and we denote by $L^2(\uZ_I(\R),\chi)_{\rm d}$ the discrete spectrum of this
unitary $G$-module.  We define the twisted discrete spectrum of $L^2(\uZ_I(\R))$ by
$$L^2(\uZ_I(\R))_{\rm td} := \int_{\hat A_I} L^2(\uZ_I(\R), \chi)_{\rm d}\ d \chi\, .$$

 \par The main result of this work (see Theorem \ref{thm planch refined real points} where $\sB$ of \eqref{B} is denoted by
 $B_{\R,{\rm res}}$)
is the construction  of a  $G$-equivariant surjective
map
\begin{equation}\label{B} \sB:  \bigoplus_{I\subset S}  L^2(\uZ_I(\R))_{\rm td} \to L^2(\uZ(\R))\end{equation}
such that source and image have equivalent Plancherel measures, i.e. belong to the same measure class.
Further each $\sB_I:=\sB\big|_{L^2(\uZ_I(\R))_{\rm td}}$ is a sum of partial isometries.
The latter property translates into
the Maass-Selberg relations, see Theorem \ref{eta-I continuous}, and will be explained in more detail below.
The existence of such a map originates from ideas of
J.~Bernstein, and accordingly we call $\sB$ the Bernstein morphism.
Let us remark that in the main text we derive a more general (but more complicated to state) result,
namely a Bernstein decomposition for $L^2(Z)$  (see Theorem \ref{thm planch} and Theorem \ref{thm planch refined})
from which we derive \eqref{B} by collecting the data for the various $G$-orbits in $\uZ(\R)$.

\par
For absolutely spherical spaces of wavefront type over a p-adic field $k$ a Bernstein map for $L^2(\uZ(k))$ with the same properties as above was constructed
by Sakellaridis and Venkatesh in \cite{SV}  under the assumption of certain properties of the discrete series, see \cite[Conjecture 9.4.6]{SV}.
A novel point of view in \cite{SV},  which we have adopted,  is the observation that the decomposition
of $L^2(\uZ(k))$ into  the various series of representations is reflected in the boundary geometry
of a  smooth compactification $\hat \uZ(k)$ of $\uZ(k)$.  Another new insight of
\cite{SV} is that no explicit knowledge of the discrete series is needed to derive the Bernstein
decomposition: the bottom line is the existence of a spectral gap for the discrete series. Since
a spectral gap theorem is established in full generality for real spherical spaces in \cite{KKOS},
we do not have to make any assumptions on the discrete spectrum as in \cite{SV}.

\par With the implementation of the Bernstein decomposition
the Plancherel theorem for $L^2(\uZ(\R))$ essentially reduces to
the understanding of the twisted discrete spectrum for each $\uZ_I(\R)$,
and the determination of $\ker \sB$.  Since the Bernstein map
is isospectral and surjective, it follows that the measure class of the Plancherel measure of $L^2(\uZ(\R))$
is given by countably many copies of the Haar measures on the tori $A_I$.

\par Let us consider the example
$Z=\uZ(\R)=G \times G /\diag G \simeq G$ of a real
semisimple algebraic Lie group.  Here the spherical roots $S$ are  identified with the simple roots with respect to
$\af$, the Lie algebra of $A$ of a maximal split torus of $G$.  Recall that subsets $I\subset S$ parametrize the  parabolic subgroups $P_I =L_I U_I$ of $G$.   Then we have $H_I = \diag(L_I) (U_I \times \overline{ U_I})$ with $\overline{P_I} = L_I \overline{U_I}$
the parabolic opposed to $P_I$ and in particular
$$\uZ_I(\R)= [G/ U_I \times G/\overline{U_I}]/ \diag(L_I)\, .$$
Write $L_I = M_I A_I$ as usual.
Now,  via induction by stages,  we readily obtain 

\begin{equation} \label{intro1}L^2(\uZ_I(\R))_{\rm td} \underset{G\times G}{\simeq}  \sum_{\sigma \in \hat M_{I,{\rm disc}}}  \int_{i\af_I^*}
\pi_{\sigma, \lambda}\otimes \pi_{\sigma, \lambda}^* \ d\lambda\,,\end{equation}
where $\pi_{\sigma, \lambda}=\operatorname{Ind}_{P_I}^G (\lambda \otimes \sigma)$ is the unitarily induced representation
of $G$ with respect to the unitary character of $A_I$ defined by $\lambda$, and $\sigma$ is a discrete series representation
of  $M_I$.   Via basic intertwining theory we then group the occurring representations in \eqref{intro1} into  equivalence classes and obtain Harish-Chandra's Plancherel formula up to the classification  of the discrete spectrum of the inducing datum (see Section \ref{group case}).
Likewise  holds for the Plancherel theorem for symmetric spaces
as obtained by Delorme \cite{Delorme} and van den Ban-Schlichtkrull \cite{vdBS} and we refer
to Section \ref{section DBS} for the complete account.

\par  As in the work of Harish-Chandra on the Plancherel theorem
for a real reductive group, a constant term approximation \cite{HC1} lies at the heart of the proof. Let us explain that.
A Harish-Chandra module $V$ endowed with a linear functional $\eta$, such that $\eta$ extends to a continuous
$H$-invariant functional on the unique smooth moderate growth completion $V^\infty$,  will be called a spherical pair and denoted
$(V,\eta)$.  The continuous dual of $V^\infty$ is denoted $V^{-\infty}$, and from \cite{KKS2} originates
a natural linear map
\begin{equation}\label{eta corresp} (V^{-\infty})^H \to  (V^{-\infty})^{H_I}, \ \eta\mapsto \eta^I\, .\end{equation}
Attached to $\eta$ are the generalized matrix coefficients $m_{v,\eta}(gH)=\eta(g^{-1}v)$ which define smooth functions
on $Z$ for all $v\in V^\infty$. Likewise we obtain smooth functions $m_{v,\eta^I}$ on $Z_I:=G/H_I\subset \uZ_I(\R)$.  An appropriate notion
of temperedness for functions on a real spherical spaces was defined in \cite{KKSS2}, and accordingly
$\eta$ is called tempered if all associated matrix coefficients are tempered functions. The map \eqref{eta corresp}
then gives rise to a linear map of tempered functionals
$$(V^{-\infty})_{\rm temp}^H \to  (V^{-\infty})_{\rm temp}^{H_I}\, .$$
The constant term approximation \cite{DKS} measures the differences

$$ |m_{v,\eta}(g\exp(tX) H)  - m_{v,\eta^I}(g\exp(tX)H_I)|$$
for $g\in \Omega$,  a compact subset of $G$,  and $t\to \infty$ for $X\in \af_I^{--}$.  We refer to
Theorem \ref{loc ct temp} below for the detailed statement.

In case of the group
Harish-Chandra obtained in \cite{HC1} such an approximation for a fixed representation.
Using his strong results on the discrete series \cite{HC}
it was made uniform for all
tempered representations  in \cite{HC2}. For spherical spaces the
uniformity of the constant term approximation is obtained in \cite{DKS}
via the spectral gap theorem of \cite{KKOS} for the
twisted discrete spectrum.
\par

Let us mention that our constant term approximation is also uniform in the category of smooth vectors
so that there is no need for expansion of functions in terms of $K$-types. On a geometric level this allows us to
view $Z_I$ and $Z$ in terms of  the orbit geometry of the minimal parabolic subgroup $P$.  In more detail, we show that there is a natural
injective map of open $P$-orbits  $(P\bs Z_I)_{\rm open} \to (P\bs Z)_{\rm open}$. This in turn allows us
to identify $Z_I$ inside $Z$, up to measure zero via the open $P$-orbits.  We refer to
Section \ref{main remainder} for the analytic implementation of this $P$-equivariant point of view.
Let us point out that the auxiliary "exponential maps" of \cite{SV}, which allowed an identification of $\uZ(k)$ and $\hat \uZ_I(k)$
near the vertex $\hat z_{0,I}$, are no longer needed in our context of $P$-equivariant matching of $Z_I$
with $Z$ up to measure zero.

\par For almost all irreducible Harish-Chandra modules in the spectrum of $L^2(Z_I)$ the
multiplicity space $(V^{-\infty})_{\rm temp}^{H_I}$ is a finite dimensional semisimple module for $\af_I$ and accordingly
every $\eta^I\in (V^{-\infty})_{\rm temp}^{H_I}$ decomposes into eigenvectors
$$\eta^I=\sum_{\lambda \in \rho +i\af_I^*}\eta^{I,\lambda}\, .$$
Our Maass-Selberg relations are then expressed in the form that $\eta\mapsto \eta^{I,\lambda}$ is a partial
isometry, see Theorem \ref{eta-I continuous}. Notice that the $\eta^{I,\lambda}$ reflect the asymptotics
of the matrix coefficients $m_{v,\eta}$  through the constant term approximation.
Finally we define the Bernstein morphisms spectrally via the technique of tempered embedding developed in
\cite[Sect. 9]{KKS2}.

 \par As a corollary of the Bernstein decomposition we obtain a partial geometric characterization of the existence of
 the discrete spectrum:

\begin{equation}\label{DS} \operatorname{int} \hf_{\rm ell}^\perp \neq \emptyset\quad \Rightarrow\quad L^2(Z)_{\rm d}\neq \emptyset\,,
\end{equation}
see Theorem \ref{thm discrete}.  This formulation reflects the known geometric characterization for groups
and symmetric spaces, going back to Harish-Chandra \cite{HC} and Flensted-Jensen \cite{FJ}.
Actually we expect that the converse implication in \eqref{DS} holds as well, and we provide
a geometric analogue of the expected equivalence via moment map geometry in Theorem
\ref{thm moment discrete}.

 \bigskip
 \par{\it Acknowledgement:} We are grateful to Joseph Bernstein who
 provided us with many useful remarks to a preliminary version of this article.

\section{Notions and Generalities}

Throughout this paper we use upper case Latin letters $A,B, C\ldots$ to denote Lie groups and write
$\af, {\mathfrak b}, \cf,\ldots$ for their corresponding Lie algebras. If $G$ is a Lie group, then we denote by
$G_0$ its identity component.

\par If $M$ is a set and $\sim$ is an equivalence relation on $M$, then we denote by $[m]$ the  equivalence class of $m\in M$. Often
the equivalence class is obtained  by orbits of a group $G$ acting on $M$.  More specifically if $X, Y$ are sets and $G$ is
a group which acts on $X$ from the right and acts on $Y$ from the left, then we obtain a left $G$-action on $X\times Y$ by $g\cdot(x,y):=
(x\cdot g^{-1}, g\cdot y)$ whose set of equivalence classes we denote by $X\times_G Y$.  We often abbreviate and simply write
$[x,y]$ instead of $[(x,y)]$ to denote the equivalence class of $(x,y)$.

\par  Given a group $G$ and subgroup $H\subset G$ we use for $g\in G$ the notation $H_g:=gHg^{-1}$, i.e.
$H_g$ is the $G$-stabilizer of the point $gH\in G/H$.

\par For a Lie algebra $\gf$ we write $\U(\gf)$ for the universal enveloping algebra of $\gf_\C$.  Further we denote
by $\Zc(\gf)$ the center of $\U(\gf)$.

\par If $\uZ$ is an algebraic variety defined over $\R$ and $k\supset\R$ is a field, then we
denote by $\uZ(k)$ the set of $k$-points.  Since we only consider fields $k=\R,\C$ in this
paper we abbreviate in the sequel and simply set $\uZ:=\uZ(\C)$.
\smallskip

\par Let now $\uG$ be a connected
reductive algebraic group defined over $\R$
and let $G:=\uG(\R)$.  As a general rule we use the following notation:
if $\uR$ is an algebraic subgroup of $\uG$ and defined over $\R$, then we set $R:= \uR(\R)$
and note that $R$ is closed Lie subgroup of $G$. We regard $G\subset \uG$ and then
$R=G\cap\uR$.
We let $\uH<\uG$ be an algebraic subgroup defined over $\R$, and
define $H<G$ according to this rule.
For intersections with $\uH$ we adopt the notation $\uR_{\uH}:= \uR\cap \uH$
and likewise $R_H:= R \cap H=\uR_{\uH}(\R)$.

Set $\uZ:= \uG/ \uH$ and observe that $\uZ$ is a smooth $\uG$-variety defined over $\R$.  Set $Z:=G/H$ and observe that
$Z$ is a $G$-orbit of $\uZ(\R)$.   In general $\uZ(\R)$ is a finite union of $G$-orbits, but typically
not equal to $Z$.   For example if  $\uG=\Sl(n,\C)$ and $\uH=\SO(n,\C)$ then $\uZ(\R)\simeq  \bigcup_{2k \leq n} \Sl(n,\R)/ \SO(n-2k, 2k)$ identifies with the real symmetric matrices with unit determinant, whereas $Z$ comprises
the set of positive definite symmetric matrices  therein.  In particular,
in this case  $Z=G/H\subsetneq \uZ(\R)$.
This shows,  when taking real points of the principal bundle

\begin{equation} \label{Z-exact} \1 \to \uH  \to \uG  \twoheadrightarrow \uZ\end{equation}
we have to act with care, as  the functor of taking real points in (\ref{Z-exact})
is only left exact
\begin{equation} \label{ZR-exact} \1 \to H \to  G  \to\uZ(\R)\end{equation}
and extends to a long exact sequence of pointed sets \cite[I.5.4, Prop. 36]{Serre} in Galois cohomology

\begin{equation} \label{ZR-exact long}
\1 \to H  \to G  \to\uZ(\R)\to  H^1 (\operatorname{Gal}(\C|\R), \uH) \to H^1(\operatorname{Gal}(\C|\R), \uG)\, .
\end{equation}

In this context we recall from \cite[Prop. 13.1]{KK} that:

\begin{lemma} \label{Example exact}   If  $\uG$ is anisotropic over $\R$, i.e. $\uG(\R)$ is compact, then
\eqref{ZR-exact} is right exact.
\end{lemma}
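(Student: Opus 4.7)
The plan is to use the Cartan/polar decomposition of $\uG$ with respect to the compact real form $G$: the map $G\times i\gf\to \uG$, $(k,x)\mapsto k\exp(ix)$, is a real-analytic diffeomorphism, and I write $P^+:=\exp(i\gf)$. Given any $z\in \uZ(\R)$, I lift it to $g\in \uG$ so that $z=g\uH$. The reality condition $\sigma(z)=z$, with $\sigma$ denoting complex conjugation, translates into $g^{-1}\sigma(g)\in \uH$. Writing $g=k\exp(ix)$ with $k\in G$, $x\in \gf$, and using that $\sigma$ fixes $G$ pointwise and negates $i\gf$, I get $\sigma(g)=k\exp(-ix)$, and hence $g^{-1}\sigma(g)=\exp(-2ix)\in \uH\cap P^+$.

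Setting $\hf:=\gf\cap\Lie(\uH)$ (the Lie algebra of $H$, which will be a real form of $\Lie(\uH)$ by the dimension argument for smooth $\R$-varieties with real points), I aim to reduce the proof to the compatibility identity
\[
\uH\cap P^+ \;=\; \exp(i\hf).
\]
Granted this, $\exp(-2ix)=\exp(iy)$ for some $y\in \hf$, and the injectivity of $\exp\colon i\gf\to P^+$ gives $-2x=y\in \hf$, whence $x\in \hf$. Then $\exp(ix)\in \exp(i\hf)\subset \uH$, so $g=k\exp(ix)\in G\cdot \uH$, which means $z=k\uH$ lies in the image of $G\to \uZ(\R)$ as desired.

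The heart of the proof is therefore this compatibility identity, which in turn hinges on the identity component $\uH^0$ being reductive. I observe first that the unipotent radical of $\uH^0$ is defined over $\R$; being a smooth $\R$-variety with an $\R$-point, its real points form a real Lie group of real dimension equal to the complex dimension of the radical, whose identity component is a connected simply connected unipotent real Lie group (homeomorphic to a vector space) sitting inside the compact $G$, and so must be trivial. This forces the unipotent radical itself to be trivial and $\uH^0$ to be complex reductive with compact real form $\uH^0(\R)$. Its own Cartan decomposition $\uH^0=\uH^0(\R)\cdot \exp(i\hf)$ is compatible with that of $\uG$, and uniqueness of polar decomposition in $\uG$ then yields $\uH^0\cap P^+=\exp(i\hf)$. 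For possibly disconnected $\uH$, any $p=\exp(iy)\in \uH\cap P^+$ satisfies $p^n\in \uH^0\cap P^+=\exp(i\hf)$ for $n$ the order of $p$ modulo $\uH^0$, so $ny\in \hf$ and hence $y\in \hf$, completing the verification.

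I expect the compatibility $\uH\cap P^+=\exp(i\hf)$ to be the main obstacle; the compactness of $G$ enters precisely through forcing the reductivity of $\uH^0$, without which $\uH\cap P^+$ could in principle contain additional positive elements unrelated to $\exp(i\hf)$.
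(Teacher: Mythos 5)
Your proof is correct. There is nothing in the paper to compare it with line by line: the lemma is quoted from \cite[Prop.~13.1]{KK} without proof, so your polar-decomposition argument serves as a legitimate self-contained substitute, and it follows the classical route one would expect behind such a statement. You reduce the reality condition $g^{-1}\sigma(g)\in\uH$ to the identity $\uH\cap\exp(i\gf)=\exp(i\hf)$ and derive that identity from compatibility of the Cartan decompositions of $\uH^0$ and $\uG$; the two points that make this work are both correctly identified and handled. First, compactness of $G=\uG(\R)$ forces the unipotent radical of $\uH^0$ to be trivial (its real points would be a closed, noncompact subgroup of the compact group $G$), so $\uH^0$ is reductive with compact real form $\uH^0(\R)$ — without this, $\uH\cap\exp(i\gf)$ could genuinely exceed $\exp(i\hf)$. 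Second, the passage from $\uH^0$ to the possibly disconnected $\uH$ by raising $p=\exp(iy)$ to the order of its class in $\uH/\uH^0$, combined with injectivity of $\exp$ on $i\gf$, correctly removes the connectedness assumption. Two steps deserve to be made explicit, though both are standard: that $G$ is a \emph{maximal} compact subgroup of the connected complex group $\uG$ (this is what gives the global diffeomorphism $G\times i\gf\to\uG$ and the uniqueness you invoke), and that $\uH^0(\R)$ is maximal compact in $\uH^0$ (it contains the connected maximal compact subgroup with Lie algebra $\hf$, hence coincides with it), which is exactly what the asserted ``compatibility'' of the two Cartan decompositions rests on.
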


We denote by $z_0=\uH$ the standard base point of $\uZ$ and observe the $G$-equivariant embedding

$$ Z \to \uZ= \uG/ \uH, \ \ gH \mapsto g\uH= g\cdot z_0\,.$$

\par If $\uR$ is a unipotent group, then note that $R$ is connected for the Euclidean topology.  This
is because unipotent groups $\uR$ are isomorphic (as varieties) to their Lie algebras $\rf_\C$ via the algebraic
exponential map.

\subsection{Real spherical spaces and the local structure theorem}\label{subsection LST}

\par Let $\uP<\uG$ be a parabolic subgroup of $\uG$ which is minimal with respect to being defined over $\R$.
We denote by $\underline{N}$ the unipotent radical of $\uP$.

\par We assume that $Z$ is {\it real spherical}, that is,
the action of $P$ on $Z$ admits an open orbit.  After replacing $P$ by a
conjugate we will assume that $P\cdot z_0$ is open in $Z$.
The local structure theorem (see \cite[Th. 2.3]{KKS} and \cite[Cor. 4.11]{KK}) asserts the existence  of a parabolic subgroup
$\uQ\supset \uP $  with Levi-decomposition $\uQ=\uL \ltimes \uU$ defined over $\R$ such that one has

\begin{eqnarray} \label{lst1}\uP\cdot z_0&=& \uQ \cdot z_0\\
\label{lst2}\uQ_\uH &=& \uL_\uH \\
\label{lst3}\uL_{\rm n} &\subset& \uL_{\uH}\end{eqnarray}
where $\uL_{\rm n}$ is the unique  connected normal $\R$-subgroup of $\uL$  such that
the Lie algebra $\lf_{\rm n}$ is the sum of all non-compact, non-abelian simple ideals
of $\lf$.

\begin{rmk} \label{rmk choice of L} In addition to \eqref{lst1} - \eqref{lst3} we request from our choice of $\uL$ that it is
obtained via the constructive proof of the local structure theorem.
In case that $\uZ=\uG/\uH$ is
quasi-affine, this means that there exists $\xi\in \hf^\perp \subset \gf$ such that
$$\uL= Z_{\uG}(\xi)= \{g \in \uG\mid  \Ad^*(g) \xi= \xi\}\, .$$
In case $\uZ$ is not quasi-affine one uses  a quasi-affine cover (cone construction)
to reduce to the quasi-affine case:  extend $\uG$ to $\uG_1=\uG\times \C^\times$ and
let $\psi: \uH \to \C^\times$ be a character defined over $\R$ which is obtained from a Chevalley embedding
of $\uZ$ into projective space which is defined over $\R$. With $\uH_1=\{ (h, \psi(h))\mid h\in \uH\}$
we obtain a real spherical subgroup $\uH_1\subset \uG_1$ such that $\uZ_1=\uG_1/\uH_1$ is quasi-affine.
The local structure theorem for $\uZ_1$ then descends to a local structure theorem for $\uZ$.
\par With this choice of $\uL$ it is then guaranteed that the slice $\uL/\uL_{\uH}$ can be extended to
suitable compactifications of $\uZ$ which will be used later in this text.\end{rmk}

In particular, we obtain from (\ref{lst1}) - (\ref{lst2}) via the obvious multiplication map

\begin{equation} \label{LST1} \uP\cdot z_0
\simeq \uU \times \uL / \uLH\end{equation}
an isomorphism of  algebraic varieties defined over $\R$.
If we take real points in (\ref{LST1}) we get

\begin{equation} \label{LST1R} [\uP\cdot z_0](\R)
\simeq U \times (\uL / \uLH)(\R).\end{equation}
In the next step we wish to describe $(\uL/\uLH)(\R)$ in more detail.
For that let  $\uA\subset \uL\cap \uP $ be a maximal split torus and set $\uAZ:= \uA/ \uA_{\uH}$.
We also view the torus $\uAZ$ as a subvariety of $\uZ$.
Further we define $A_Z$ to be the identity component of $\uAZ(\R)$.

The number $r:=\rank_\R Z:=\dim A_Z$ is an invariant of $Z$ and referred to as the
{\it real rank of $Z$}.

\par Let
$K$ be a maximal compact subgroup of $G$. Note that $K$ is algebraic, i.e. $K=\uK(\R)$.
Further we denote by $\zf(\gf)$ the center of $\gf$, and we fix with $\kappa: \gf\times \gf\to\R$ a non-degenerate $\Ad(G)$-invariant bilinear form
which yields an orthogonal decomposition of the center $\zf(\gf) = (\zf(\gf)\cap \af)\oplus (\zf(\gf)\cap \kf)$.
In case $\gf$ is semi-simple, the Cartan-Killing form can be used for $\kappa$.
It is a standing further requirement for $K$ that $\kf \perp \af$.
Then $\uM:=Z_{\uK}(\uA)$, the centralizer
of $\uA$ in $\uK$, does not depend on the particular choice of $K$ with $\kf \perp \af$.

\par  Notice that $Z_{\uG}(\uA)$ is a Levi-subgroup of $\uP$ and as such connected.
Moreover we have $Z_{\uG}(\uA)= \uM \uA$. Notice that (\ref{lst3}) implies that $\uM \uA$ acts transitively on $\uL/\uLH$.

\par In the next two paragraphs we recall some elementary facts from \cite[Sect. 1 and App. B]{DKS}.
Define
$$\hat \uMH=\{  m\in \uM\mid   m\cdot z_0 \in \uAZ\}$$
and note that $\hat \uMH$ is the isotropy group for the action of $\uM$ on $ \uL/ \uLH \uA$.
In particular,  $\hat\uMH$ is an algebraic
subgroup of $\uG$ defined over $\R$.  Moreover, $\hat\uMH$  contains $\uMH$ as
a normal subgroup such that $F_M:= \hat \uMH/ \uMH\simeq \hat M_H/ M_H$ is
a finite $2$-group. Here $\hat M_H=\hat\uM_\uH(\R)\subset M$ by our notational conventions.

\par Now $\uL/\uLH$ is homogeneous for $\uM\uA$ and thus

\begin{equation} \label{deco L} \uL/ \uLH  \simeq    \uM \times_{\hat \uMH} \uAZ  =  \uM/\uMH \times_{F_M}  \uAZ\, .\end{equation}
In particular, by \cite[Prop. B.2]{DKS}
\begin{equation} \label{LST2} (\uL/ \uLH)(\R)   \simeq   M \times_{\hat M_H} \uAZ(\R)  =
M/M_H \times_{F_M}  \uAZ(\R)
\end{equation}
where  $\simeq$ refers to an isomorphism of real algebraic varieties.

{}From (\ref{LST1}) and (\ref{LST2}) we obtain the following form of the local structure theorem, which we will use later on:
\begin{equation} \label{LST3} [\uP \cdot z_0](\R)\simeq U \times \left[M/M_H \times_{F_M}  \uAZ(\R)\right]\, .
\end{equation}

\par Recall that $\uAZ(\R) \simeq (\R^\times)^r $, with $r=\rank_\R(Z)$, is a split torus viewed as a
subvariety of $\uZ(\R)$.
Set
$$A_{Z,\R} := \uAZ(\R) \cap Z\, .$$
Then it is clear that $A_Z\subset A_{Z,\R}\subset \uAZ(\R)$.
In general however, $A_{Z,\R}$ is not a group, but carries only  the structure of an $A_Z$-set (see Example
\ref{ex SL3} below for $Z=\Sl(3,\R)/\SO(2,1)$).
\par Let $F_\R=\{-1,1\}^r<\uAZ(\R)= (\R^\times)^r$ be the $2$-torsion subgroup of $\uAZ(\R)$.
Since $A_Z$ is defined to be the identity component of  $\uAZ(\R)$ we obtain the following isomorphism of groups

\begin{equation}\label{AF}
\uAZ(\R)  = A_Z F_\R \simeq A_Z \times F_\R\, .  \end{equation}
Let $F\subset F_\R$ be the subset such that $A_{Z,\R}= A_Z F$, i.e. $F=F_\R\cap  A_{Z,\R}$.
Set $T_Z:=\exp_{\uA}(i\af_H^\perp)<\uA$ and note that
$F_\R\subset T_Z\cdot z_0$
as $T_Z\cdot z_0$ contains all torsion elements of $\uAZ$.

\par Since $F_M$ maps  faithfully into $F_\R$ we view it in the sequel
as a subgroup of $F_\R$. Note that $F_M\subset F$ and that $F_M$ acts on $F$.

With this terminology we obtain from (\ref{LST2}) that

\begin{equation} \label{LST4} Z \cap (\uL/ \uLH)(\R)   \simeq    M/M_H \times_{F_M}  A_{Z,\R}\, ,\end{equation}
and accordingly from \eqref{LST3}

\begin{equation} \label{LST5} Z \cap [\uP\cdot z_0](\R)  \simeq   U \times
\left[ M/M_H \times_{F_M}  A_{Z,\R}\right]\, .\end{equation}

The set of open $P$-orbits in $Z$, resp.~$\uZ(\R)$,  is an important geometric invariant
and plays a dominant role in the harmonic analysis on $Z$, resp.~$\uZ(\R)$.
For a symmetric space it is known from \cite{Mat1} that
the open $P$-orbits are parametrized by a quotient of a Weyl group with a subgroup.
Although no such parametrization is known in general we denote
$$W_\R:= (P\bs \uZ(\R))_{\rm open}  \quad\text{and} \quad W:=(P\bs Z)_{\rm open}\, ,$$
motivated by the special case.

From \eqref{LST3} and (\ref{LST5})  we deduce:

\begin{lemma} \label{lemmaW1} The maps
$$ F_M\bs F_\R    \to W_\R, \ \  \st=F_Mt  \mapsto Pt$$
and
$$ F_M\bs F   \to W, \ \ \st=F_Mt   \mapsto Pt $$
are bijections.
\end{lemma}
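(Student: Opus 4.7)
The plan is to use the local structure theorem, specifically \eqref{LST3} and \eqref{LST5}, to parametrize open $P$-orbits by reducing each such orbit to one passing through a torsion element of $\uAZ(\R)$.

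First I would observe that every open $P$-orbit in $\uZ(\R)$ is contained in $[\uP\cdot z_0](\R)$, and every open $P$-orbit in $Z$ in $Z\cap[\uP\cdot z_0](\R)$. This is because $\uP\cdot z_0$ is Zariski-open dense in the irreducible variety $\uZ$, hence its real points form a Euclidean-dense open subset, and any $P$-stable open subset must lie in this $P$-stable dense open set.

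For surjectivity, \eqref{LST3} presents a general point of the cell as $(u,[mM_H,s])\in U\times M/M_H\times_{F_M}\uAZ(\R)$, with $U$ and $M$ acting on the first two factors and $A$ acting on the third via the surjection $A\to A_Z$. Decomposing $s=af$ with $a\in A_Z$ and $f\in F_\R$ via $\uAZ(\R)=A_Z\times F_\R$, and lifting $a$ to $\tilde a\in A$ (possible because $A\to A_Z$ is surjective by a dimension-count of Lie algebras), successive applications of $u^{-1}\in U$, $m^{-1}\in M$, $\tilde a^{-1}\in A$ (all lying in $P$) reduce the point to $(e,[eM_H,f])$, which corresponds to $f\in \uAZ(\R)\subset \uZ$. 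Hence every open $P$-orbit in $\uZ(\R)$ is of the form $Pf$ for some $f\in F_\R$, and the analogous argument using \eqref{LST5} forces $f\in F$ in the $Z$-case. For well-definedness I lift $f_0\in F_M$ to $\hat m\in\hat M_H\subset M\subset P$ with $\hat m\cdot z_0=f_0$; exploiting commutativity of $\uM$ and $\uA$ in $\uG$, one computes $\hat m\cdot f=f\cdot f_0=f_0 f$, so $Pf=P(f_0 f)$.

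For injectivity, I would compute the $P$-orbit of $[eM_H,f]$ inside $M/M_H\times_{F_M}\uAZ(\R)$: the $MA$-part sweeps the torus coordinate through $A_Z\cdot F_M\cdot f$, the $A$-action contributing the $A_Z$-factor and the $F_M$-equivalence $(mM_H,s)\sim(mf_0 M_H,f_0^{-1}s)$ contributing the $F_M$-factor. The residual contribution to $P$ comes from the unipotent radical $N\cap L$ of $P\cap L$; its orbits are connected, and since $\uAZ(\R)/A_ZF_M$ is a finite group, this unipotent action cannot alter the $A_ZF_M$-coset of the torus projection. Intersecting the resulting torus sweep $A_ZF_Mf$ with $F_\R$ via $\uAZ(\R)=A_Z\times F_\R$ yields precisely $F_Mf$, so $Pf=Pf'$ with $f,f'\in F_\R$ forces $f'\in F_Mf$. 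The main technical subtlety is verifying that $(N\cap L)$ preserves the $A_ZF_M$-coset of the torus projection; once this is ensured by the connectedness argument, both bijections follow immediately from the decomposition $\uAZ(\R)=A_Z\times F_\R$ together with the chain of inclusions $F_M\subset F\subset F_\R$.
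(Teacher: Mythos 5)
Your proposal takes essentially the approach the paper has in mind: the paper presents Lemma \ref{lemmaW1} as an immediate deduction from \eqref{LST3} and \eqref{LST5}, and you spell out exactly the argument that deduction requires, using the product decomposition $\uAZ(\R)=A_Z\times F_\R$ and reducing a general point of the open cell to one of the form $f\in F_\R$ by the $P$-action. Your surjectivity and well-definedness steps are correct, including the lift of $a\in A_Z$ to $\tilde a\in A$ (surjectivity on Lie algebras and connectedness of $A_Z$) and the use of a lift $\hat m\in\hat M_H$ commuting with $\uA$.

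For injectivity you flag the one real subtlety yourself: you must show $N\cap L$ does not move the $A_ZF_M$-coset of the torus coordinate, and you propose a connectedness argument. That argument does close the gap (the torus projection to the finite set $\uAZ(\R)/A_ZF_M$ is continuous hence locally constant, and $N\cap L$ has connected orbits), but you can avoid it entirely with a structural fact the paper's setup already gives you: by \eqref{lst3}, $\uL_{\rm n}$ is a normal subgroup of $\uL$ contained in $\uL_\uH$, so $\uL_{\rm n}$ acts trivially on $\uL/\uLH$; since $N\cap L$ is unipotent it lies inside $L_{\rm n}$, hence acts trivially on $(\uL/\uLH)(\R)$ and in particular does not touch the $\uAZ(\R)$-coordinate at all. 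This makes the computation of the $P$-orbit through $[eM_H,f]$ exactly the $MA$-orbit (together with the $U$-sweep of the first factor), giving $A_ZF_M f$ as the torus sweep and hence the injectivity directly. Either way, the proof is complete.
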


It is often convenient to select representatives of $W$ in $G$.  For any $\st \in F_M\bs F$ we pick a representative
$t\in F$ such that $\st=F_M t$. Then $Pt\in W$
and $t\in Z=G\cdot z_0$ implies that there is a lift
$w=w(\st)\in G$ of $t$ to $G$ such that $t =w\cdot z_0$.
If $\Wc=\{w(\st)\mid \st \in F_M\bs F\}$, then
the assignment
$$\Wc   \to W, \ \ w \mapsto Pw\cdot z_0$$
is a bijection.
\par Let $w=w(\st)\in\Wc$ and let $\tilde t \in T_Z$ be a lift of $t$, i.e.
$\tilde t\cdot z_0 = t$. Then

\begin{equation}\label{th-deco} w= \tilde t h \end{equation}
for some $h\in \uH$.

\subsection{Spherical roots and the compression cone}

Let $\Sigma=\Sigma(\gf,\af)$ be the restricted root system for the pair $(\gf,\af)$ and let

$$ \gf = \af \oplus \mf \oplus \bigoplus_{\alpha\in \Sigma} \gf^\alpha$$
be the attached root space decomposition.
Write $(\lf \cap \hf)^{\perp_\lf} \subset \lf$ for the orthogonal complement of $\lf \cap \hf$ in $\lf$
with respect to $\kappa$.
From $\gf=\qf +\hf=\uf \oplus (\lf\cap\hf)^{\perp_\lf}\oplus \hf$ and $\gf=\qf\oplus \oline{\uf}$ we infer the existence of a linear
map $ T:\oline{\uf}\to \uf \oplus (\lf\cap\hf)^{\perp_\lf}$ such that
$\hf=(\lf\cap \hf) \oplus \G(T)$ with $\G(T) \subset \oline{\uf} \oplus \uf \oplus (\lf\cap\hf)^{\perp_\lf}$ the graph of $T$.

\par Set $\Sigma_\uf:=\Sigma(\uf,\af)\subset \Sigma$. For $\alpha \in \Sigma_\uf$ and
$X_{-\alpha} \in \gf^{-\alpha}$
let $T(X_{-\alpha})= \sum_{\beta\in \Sigma_\uf\cup\{0\}} X_{\alpha,\beta}$
with $X_{\alpha,\beta} \in \gf^\beta$ for $\beta\in \Sigma_\uf$ and $X_{\alpha, 0}\in (\lf\cap\hf)^\perp$.
Let $\M\subset \af^*\bs\{0\}$ be the additive semi-group generated by

$$\{ \alpha+\beta\mid \alpha\in \Sigma_\uf, \exists X_{-\alpha} :  \ X_{\alpha,\beta}\neq 0\}\, .$$

Note that all elements of $\M$ vanish on $\af_H$ so that we can view $\M$
as a subset of $\af_Z^*$.  A bit more precisely the elements of $\M$, seen as characters of $\uA$, are
trivial when restricted to $\uA_\uH$ and therefore factor to characters of $\uAZ$.  Thus if we  denote by $\Xi_Z:= \Hom (\uAZ, \C^\times)\simeq \Z^r$ the character group, seen as
a lattice in $\af_Z^*$, we have $\M\subset \Xi_Z$.

\par Define
$$\af_{Z,E}:= \{X\in \af_Z \mid (\forall \alpha \in \M)\  \alpha(X)=0\}$$
and note that $\M$ belongs to $\af_{Z,E}^\perp\subset \af_Z^*$.
Next, according to \cite[Cor. 9.7]{KK}, the convex cone  $\R_{\geq 0} \M$ is
simplicial in $\af_{Z,E}^{\perp}$.
Generators of this cone, suitably normalized, will be called spherical roots and denoted $S$.

\par  The standard normalization of $S$  is that a generator $\sigma$ of
$\R_{\geq 0} \M$ belongs to $S$ provided it is integral and indivisible, i.e.
$\sigma \in \Xi_Z$ and $\frac1n \sigma \not \in
\Xi_Z$ for all $n\geq 2$.
\par Next we define the {\it compression cone}  by

$$\af_Z^-:=\{ X\in\af_Z\mid (\forall \alpha\in S) \ \alpha(X)\leq 0\}.$$
\begin{rmk}  The set of spherical roots $S$ and the associated co-simplicial compression
cone $\af_Z^-$ make up an algebro-geometric invariant of the real spherical space $Z$, see \cite{KK}. This
is important for this article, as the Bernstein morphisms defined later  have an inherent parametrization
by subsets $I\subset S$, i.e. faces of $\af_Z^-$.

Let us also mention that there is an alternative elementary approach
to the compression cone as a fundamental domain of a finite Coxeter group, see \cite{KuitSayag}.
\end{rmk}
Let us define by $\af_{Z,E}=\af_Z^- \cap (-\af_Z^-)$ the edge of $\af_Z^-$ and record
$$\# S = \dim \af_Z/\af_{Z,E}\, .$$

\par Following \cite[Sect. 3]{KKS2} we define for $I\subset S$ the boundary degeneration of $\hf_I$ of $\hf$ by
\begin{equation}\label{eq-hi1}  \hf_I:= \lf\cap\hf \oplus \G(T_I)\end{equation}
where
\begin{equation} \label{eq-hi2} T_I(X_{-\alpha}):= \sum_{\alpha+\beta\in \N_0[I]} X_{\alpha,\beta}\, .
\end{equation}
Observe that $\hf_S=\hf$ and $\hf_\emptyset=\oline \uf \oplus \lf\cap\hf$.

We also set
\begin{align} \af_I&:=\{ X\in \af_Z\mid (\forall \alpha \in I) \ \alpha(X)=0\} ,\\
\af_I^-&:=\{ X\in \af_I \mid (\forall \alpha\in S\bs I) \ \alpha(X)\leq0\} ,\\
\af_I^{--}&:=\{ X\in \af_I \mid (\forall \alpha\in S\bs I) \ \alpha(X)<0\} .\end{align}
We recall from \cite[Sect. 3]{KKS2} that for all $X\in \af_I^{--}$
\begin{equation}\label{I-compression}  \hf_I=\lim_{t\to \infty} e^{t\ad X}\hf\, .\end{equation}

Notice that $\af_Z=\af/\af_H$ is a quotient and not canonically a subalgebra of $\af$.
In general it is convenient and notation saving
to identify $\af_Z$ as a
subalgebra of $\af$ by means of the identification $\af_Z\simeq \af_H^{\perp_\af}$.
Then $\af_I$ normalizes $\hf_I$ and we obtain with
$$\hat \hf_I:=\af_I +\hf_I$$
a Lie subalgebra of $\gf$. It follows from (\ref{I-compression}) that $L_H$ normalizes each $\hf_I$.
Further we define $A_Z=\exp(\af_Z)\subset A$ as a connected subgroup of $A$ and set
$A_Z^-:=\exp(\af_Z^-)$.


\section{Equivariant smooth compactifications of $\uZ(\R)$}\label{section compact}

In this section we explain and recall the principles of $G$-equivariant compactification
theory of $\uZ(\R)$ as developed in  \cite [Sect. 7]{KK}.

The main idea is to use a partial toric completion
of the torus $\uAZ$ via a fan $\Fc$ supported in all of $\af_Z^-$ (in \cite{KK} these fans are called {\it complete}).
Let us call this partial completion $\uAZ(\Fc)$.

\par Given a complete fan supported in $\af_Z^-$, we inflate (\ref{LST1}) and form the $\uP$-variety

$$ \uZ_0(\Fc):= \uU \times (\uL/\uLH \times_{\uAZ} \uAZ(\F))\, . $$
Now it is the content of \cite[Th. 7.1]{KK} that there exists a $\uG$-variety
$\uZ(\Fc)$ of the form $ \uZ(\Fc)= \uG\cdot \uZ_0(\Fc)$ containing
$\uZ_0(\Fc)$ as an open subset.  Note that $\uZ(\Fc)(\R)$ is compact by \cite[Cor. 7.12]{KK}.
The compactifications $\uZ(\Fc)$ of $\uZ$ just constructed are usually called {\it toroidal} as they origin
from partial compactifications of the torus $\uAZ$.

\par For a cone $\Cc\in \Fc$  in the fan $\Fc$ we denote by $\operatorname{int} \Cc$ its relative interior, i.e. the interior
with respect to $\af_\Cc:=\Span_\R \Cc\subset \af_Z$.
Now to every cone $\Cc\in \Fc$ corresponds a radial limit $\hat z_\Cc\in \uAZ(\Fc)\subset \uZ(\Fc)$ defined as
follows. The limit

$$ \hat z_\Cc:= \lim_{s\to \infty} \exp(sX)\cdot z_0$$
exists for every $X\in \operatorname{int} \Cc$ and is independent of $X$.
Moreover, the $\uG$-orbits in $\uZ(\Fc)$ are parametrized by the cones
$\Cc\in\Fc$  by way of $\Cc\mapsto \hat \uZ_\Cc:=\uG\cdot \hat z_\Cc$, see \cite[Cor. 7.5]{KK}.

\par Define $\uA_\Cc\subset \uAZ$ as the torus which fixes $\hat z_\Cc$ and note that its Lie algebra
is given by the complexification of $\af_\Cc$ defined above.
 Hence if $I=I(\Cc)$ is the set of spherical roots  vanishing on $\Cc$, then $\af_\Cc\subset \af_I$.
Further if we denote by $\hat \uH_\Cc $ the $\uG$-stabilizer of $\hat z_\Cc$, then we have the following relation
for Lie algebras:
\begin{equation} \label{formula hc} \hat \hf_\Cc= \hf_I +\af_\Cc\end{equation}
with $\hf_I$ defined as in \eqref{I-compression}. In case $\uZ(\Fc)$ is smooth we provide a simple argument for \eqref{formula hc} below.

\par For our purpose we need that $\uZ(\Fc)$ is a smooth manifold. By the construction of $\uZ(\Fc)$ this is the case
if and only if  $\uAZ(\Fc)$ is smooth. Let us now provide a standard construction of a complete fan which yields
a smooth partial completion $\uAZ(\Fc)$.
For that we denote by $\Xi_Z=\Hom( \uAZ, \C^*)\simeq \Z^r$ the character group of $\uAZ$. Likewise
we let $\Xi_Z^\vee= \Hom(\C^*, \uAZ)$ be the co-character group and note
the natural identification $\af_Z\simeq \Xi_Z^\vee \otimes_\Z\R$.

\par Best results are obtained when $S$ is a
 $\Z$-basis
for the character lattice $\Xi_Z$. In this case the standard fan $\Fc_{\rm st}$ obtained by the faces of $\af_Z^-$
is smooth and $\uZ(\Fc_{\rm st})$ is the  wonderful compactification of $\uZ$ (see \cite[Definition 11.4]{KK}).

\begin{rmk} \label{rmk non smooth}In general $S$ is not a basis of $\Xi_Z$. This  can have several
natural reasons, for example if $\af_{Z,E}\neq 0$ as $\#S:=\dim \af_Z/\af_{Z,E}< r=\rank_\R(Z)=\dim \af_Z$.
One might overcome this by passing from $\uH$ to $\uH= \uH \cdot \uA_{Z,E}$.  But even
if $\#S =r$ it might happen that there is torsion, i.e. $\Xi_Z/ \Z[S]\neq 0$ which destroys
smoothness of $\uAZ(\Fc)$ for $\Fc$ the fan generated by $\af_Z^-$.
\end{rmk}

One can overcome both issues indicated in Remark \ref{rmk non smooth}
simultaneously by subdividing $\af_Z^-$ into finitely many
simple simplicial cones $C_1, \ldots, C_N$ such that
\begin{itemize}
\item $\af_Z^- =\bigcup_{j=1}^N C_j$,
\item $C_i \cap C_j$ is a common face of both $C_i$ and $C_j$ for all $1\leq i, j\leq N$,
\item For each $1\leq j\leq N$ there exists a basis $(\psi_{ji})_{1\leq i\leq r}$ of $\Xi_Z$ such that
 $C_j=\{ X\in \af_Z\mid  (\forall 1\leq i \leq r)  \psi_{ji}(X)\leq 0\}$.
\end{itemize}

The existence of such a decomposition is a standard fact of  toric geometry, see
\cite[Ch.~3]{KKMS}.  Let us denote by $\Fc_i$ the fan generated by $C_i$,
i.e.~the set of all faces of $C_i$. Then define the fan $\Fc:=\bigcup_{i=1}^N \Fc_i$.
Notice that $\uAZ(\Fc)$ is smooth and is obtained from gluing together
the various open pieces $\uAZ(\Fc_i)\simeq \C^r$.
From $\af_Z^- =\bigcup_{j=1}^N C_j$ we obtain $\af_I^{--}= \bigcup_{j=1} C_j\cap \af_I^{--}$.
Now for every $I\subset S$ we let $J_I\subset\{ 1, \ldots, N\}$ be
the set of indices $j$ for which
$C_j\cap \af_I^{--}\neq \emptyset $.  Then $\af_I^{--}= \bigcup_{j\in J_I}  (C_j \cap \af_I^{--})$. Note that in general
$J_I$ is not a singleton as for example $J_\emptyset=\{1, \ldots, N\}$.

\par We fix now a simplicial subdivision as above and the corresponding
complete fan $\Fc$.
To abbreviate notation
we set $\hat \uZ_0:=\uZ_0(\Fc)$ and
$\hat \uZ:=\uZ(\Fc)$. We denote by $\hat Z$ the closure of $Z$ in $\hat \uZ(\R)$ which is then
a manifold with corners \cite[Sect. 14]{KK}.

\par For every $I\subset S$ we fix now an $j_I\in J_I$
and let $\cf_I^-= C_{j_I} \cap \af_I^{-}\subset \af_I^-$.
We denote by $\cf_I^{--}$ the relative interior of $\cf_I^{-}$.
We recall $z_0=\uH\in\uZ$ the standard base point. Then for $X\in \cf_I^{--}$
the limit
\begin{equation} \label{def limit z0I}  \hat z_{0,I}:=\lim_{s\to \infty}  \exp(sX)\cdot z_0 \in \uAZ(\F_{j_I})(\R) \subset \hat \uZ_0(\R)\end{equation}
exists and is independent of the choice of $X\in\cf_I^{--}$ (but depends on $j_I$).

\begin{rmk}\label{rmk face FI} Our choice of $j_I\in J_I$ yielding $\cf_I^-$ can also be seen in the following context.
Set
\begin{equation}\label{def FI}\Fc_I:=\{ \Cc\in \Fc\mid  \af_\Cc=\af_I\}\, .\end{equation}
Then our choice of $j_I\in J_I$ picks an element $\cf_I^-\in \Fc_I$ together with an $1\leq j_I\leq N$ such that
$\cf_I^-\subset C_{j_I}$.
\end{rmk}

Let us denote by $\hat \uH_I$ the stabilizer of $\hat z_{0,I}$ in $\uG$. Note that $\hat \uH_I$ is defined over $\R$.
We claim that  $\hat H_I$
has Lie algebra
\begin{equation} \label{Lie hat HI} \hat \hf_I =\hf_I +\af_I\end{equation}
with $\hf_I$ defined in \eqref{eq-hi1}.
In order to see that we
note that the $G$-stabilizer of  $z_t:=\exp(tX)\cdot z_0$ is $H_t:=\exp(tX) H \exp(-tX)$.
Moreover the fact that $z_t\to \hat z_{0,I}$ in the smooth manifold $\hat \uZ(\R)$ implies that
the stabilizer Lie algebra of the limit  $\hat z_{0,I}$ contains the limit $\hf_I$
of \eqref{I-compression}.  Now the claim follows from
\cite[Th. 7.3]{KK}.
\par We define $\hat \uZ_I= \uG \cdot \hat z_{0,I}\simeq \uG/ \hat \uH_I$ . The next proposition shows that
this definition is  independent of the choice of $\cf_I^-\in \Fc_I$.

\begin{prop} \label{prop independence choice} We have $\hat \uH_\Cc=\hat \uH_I$ for all
$\Cc\in \Fc_I$. Moreover, $\hat\uH_I$ does not depend on the choice of the smooth complete  fan $\Fc$ defining
the smooth toroidal compactification $\hat \uZ=\uZ(\Fc)$ of $\uZ$. In other words, for every $I\subset S$
 the $\uG$-variety
 $\hat \uZ_I=\uG/\uH_I$ is up to $\uG$-isomorphism canonically attached to the $\uG$-variety $\uZ=\uG/\uH$.
 \end{prop}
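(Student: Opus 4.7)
The plan is to pass from coincidence of Lie algebras, which is immediate from the formula \eqref{formula hc} already in hand, to coincidence of the algebraic groups via the local structure theorem at the boundary; then to transport this information across different smooth fans using a common refinement in toric geometry.

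For the first assertion, fix $\Cc\in\Fc_I$. By the very definition \eqref{def FI} of $\Fc_I$ we have $\af_\Cc=\af_I$, hence \eqref{formula hc} gives
\[
\hat\hf_\Cc \;=\; \hf_I+\af_\Cc \;=\; \hf_I+\af_I \;=\; \hat\hf_I,
\]
so the identity components $(\hat\uH_\Cc)^\circ$ and $(\hat\uH_I)^\circ$ already agree. To upgrade this to equality of algebraic groups I would invoke the local structure theorem as extended to $\uZ(\Fc)$. Both $\hat z_\Cc$ and $\hat z_{0,I}$ lie in the $\uP$-stable open piece $\uZ_0(\Fc)\simeq\uU\times(\uL/\uLH\times_{\uAZ}\uAZ(\Fc))$, and in the toric factor $\uAZ(\Fc)$ each is a fixed point of the same algebraic subtorus $\uA_I\subset\uAZ$, precisely because $\af_\Cc=\af_I$. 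From this presentation the $\uP$-stabilizer at such a point is read off as the product of $\uU$, of $\uLH$, and of $\uA_I$, modified by the finite group $F_M$ from \S\ref{subsection LST} acting on the toric factor; all these data depend only on $I$ and not on the specific cone $\Cc\in\Fc_I$. Consequently $\hat\uH_\Cc=\hat\uH_I$ as algebraic subgroups of $\uG$. Equivalently, and more conceptually, one views $\hat\uH_I$ as the enhancement by $\uA_I$ of the Chabauty limit of the conjugates $\exp(sX)\uH\exp(-sX)$ as $s\to\infty$ for any $X\in\af_I^{--}$; by \eqref{I-compression} the Lie algebra of this limit is $\hf_I$, and this intrinsic description is manifestly independent of the choice of cone in $\Fc_I$.

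For the independence of the smooth complete fan $\Fc$, let $\Fc$ and $\Fc'$ be two such fans subdividing $\af_Z^-$. A standard toric construction (see \cite[Ch.~3]{KKMS}) produces a common smooth refinement $\Fc''$, yielding proper $\uG$-equivariant morphisms $\pi\colon\uZ(\Fc'')\to\uZ(\Fc)$ and $\pi'\colon\uZ(\Fc'')\to\uZ(\Fc')$ that restrict to the identity on $\uZ$. Choosing any $\Cc''\in\Fc''_I$, its images under the fan maps lie in $\Fc_I$ and $\Fc'_I$ respectively, and by $\uG$-equivariance the radial limit $\hat z_{\Cc''}$ is carried to the two corresponding radial limits; hence $\hat\uH_{\Cc''}$ is contained in both target stabilizers. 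The first part of the proposition, applied separately to each of $\Fc,\Fc',\Fc''$, shows that each of these three subgroups has Lie algebra $\hf_I+\af_I$ and carries the same component-group correction coming from $F_M$. The containments are therefore equalities, and $\hat\uH_I$ depends only on $I$ and on the original spherical space $\uZ$, as claimed.

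The main technical obstacle throughout is the control of component groups: \eqref{formula hc} only immediately determines $(\hat\uH_\Cc)^\circ$, so equality of the full algebraic groups requires genuine input from the local structure theorem (or, equivalently, a Chabauty-limit argument) in order to exclude a difference by a finite subgroup arising from the toric boundary; the independence argument then rests on this component-group analysis being internal to the invariant $I$.
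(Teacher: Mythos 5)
Your route is genuinely different from the paper's (which argues by induction on $\#S$, joining two cones of $\Fc_I$ face to face inside $\af_I^-$ and passing to the boundary degeneration $\uG\cdot\hat z_{\Cc_0}$ attached to a common face, whose spherical system is $I\subsetneq S$), but as written it has a gap exactly at the point you yourself identify as the crux. From \eqref{formula hc} you get equality of Lie algebras, hence of identity components, and it is true that the $\uP$-stabilizer of $\hat z_\Cc$ can be read off from \eqref{LST-Ic} and depends only on $\af_\Cc=\af_I$ --- although your explicit description is wrong: $\uU$ acts freely on its factor in \eqref{LST-Ic}, so $\uU\cap\hat\uH_\Cc$ is trivial and the stabilizer is of the shape $\uM_{\uH}F_{M,I}\widetilde\uA_I$, not a product involving $\uU$ and all of $\uLH$. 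The real problem is the sentence ``Consequently $\hat\uH_\Cc=\hat\uH_I$ as algebraic subgroups'': nothing in the proposal explains why the full stabilizer is determined by its identity component together with its intersection with $\uP$. That requires the generation statement $\hat\uH_\Cc=(\uP\cap\hat\uH_\Cc)\,(\hat\uH_\Cc)_0$, which one gets from the openness of $\uP\cdot\hat z_\Cc$ in $\uG\cdot\hat z_\Cc$ plus connectedness of $\uG$ (the same mechanism as \eqref{hat uH} in the proof of Theorem \ref{thm normal}); without it, agreement of $\uP$-stabilizers says nothing about components of $\hat\uH_\Cc$ lying outside $\uP\cdot(\hat\uH_\Cc)_0$. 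The ``more conceptual'' alternative does not repair this: that $\hat\uH_I$ is the enhancement by $\uA_I$ of the limit of $\exp(sX)\uH\exp(-sX)$ is, at the level of groups rather than Lie algebras, part of what Theorem \ref{thm normal} proves later, so invoking it here is circular.

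The fan-independence part inherits the same defect: after constructing the common refinement and the equivariant morphisms you conclude equality from equality of Lie algebras plus ``the same component-group correction coming from $F_M$'', which is precisely the unproven point. Two further remarks there: the claim that a cone $\Cc''\in\Fc''_I$ sits inside cones of $\Fc_I$ and $\Fc'_I$ is correct but needs a short argument (all cones lie in $\af_Z^-$ and $S$ is linearly independent, so the minimal cone of $\Fc$ whose relative interior contains that of $\Cc''$ has exactly the roots $I$ vanishing on it and is therefore contained in $\af_I$); and once the first assertion is correctly established for each fan, your containment argument is not even needed, since each stabilizer is then identified with the subgroup generated by the common identity component and the common $\uP$-stabilizer. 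So the approach is salvageable and would give an alternative to the paper's induction, but the component-group step must actually be carried out rather than asserted.
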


\begin{proof} We prove the first assertion by induction on $n=\# S$. We start with $n=0$, the case of horospherical varieties, see \cite[Sect. 8]{KK}.  In this situation $\uA$ normalizes $\uH$  and moreover $\uH = (\uH \cap \uL)  \uU^{\rm opp}$ with $\uU^{\rm opp}$ the opposite  of $\uU$. In particular, $\uAZ=\uA/\uA_{\uH}$ acts naturally on the right of
$\uZ=\uG/\uH$. By the construction of the toroidal compactification as the unique minimal $\uG$-extension of
$\uZ_0(\Fc) = [\uQ/ \uQ_{\uH}] \times_{\uAZ} \uAZ(\Fc)$  we obtain that
$$\uZ(\Fc)= \uG/\uH\times_{\uA_Z} \uA_Z(\Fc)$$
and hence  $\hat \uH_\Cc= \uH \uA$ for all $\Cc\in \Fc_S$.
\par Let now $n>0$ and $I\subset S$. We first treat the case for $I=S$. Then $\af_S=\af_{Z,E}$
and we note for all $\Cc\in \Fc_S$ the natural isomorphism
$$ \uAZ\cdot \hat z_\Cc \simeq \uAZ/\uA_{Z,E}\, .$$
Hence we obtain that
$$\uQ\cdot \hat z_\Cc= [\uQ/\uQ_{\uH}]\times_{\uAZ} [\uAZ\cdot \hat z_\Cc]\simeq \uQ/ (\uQ\cap \uH)\uA_{Z,E}\, .$$
This means for the $\uG$-extension $\hat\uZ_\Cc$ of $\uZ_0(\Cc)=\uQ\cdot \hat z_\Cc$
$$\hat \uZ_\Cc\simeq \uG/\uH\uA_{Z,E}\, , $$
i.e. $\hat \uH_\Cc=\uH \uA_{Z,E}$.
\par Suppose now that $I\subsetneq S$ and let
$\Cc, \Cc'\in \Fc_I$. We connect now $\Cc$ and $\Cc'$ in $\af_I^-$ face to face, i.e.
we find $\Cc_1, \dots \Cc_m\in \Fc_I$ such that
 $I(\Cc\cap \Cc_1)=I$, $I(\Cc_i \cap \Cc_{i+1}) = I$ for $1\leq i\leq m-1$ and
 $I(\Cc'\cap \Cc_m)=I$. Hence we may assume that $I(\Cc\cap \Cc')=I$. Set $\Cc_0:=\Cc\cap \Cc'$.
 Set
 $$\Fc(\Cc_0):=\{  \Cc\in \Fc\mid \af_{\Cc_0}\subset \af_\Cc\}$$
 and note that $\Cc, \Cc' \in \Fc(\Cc_0)$.  Set $\uZ_0:= \uG\cdot \hat z_{\Cc_0}\simeq  \uG/ \uH_{\Cc_0}$ and note that
 $\af_{Z_0}=\af_Z/ \af_{\Cc_0}$.  Moreover, $\Fc_0:=\Fc(\Cc_0)/ \af_{\Cc_0}$ is a complete smooth fan for
 $\uZ_0$ featuring $\uZ_0(\Fc_0)\subset \uZ(\Fc)$ as the Zariski closure of $\uZ_0$ in $\uZ(\Fc)$. Now $S_0=S(\uZ_0)=I\subsetneq S$
 and we obtain by induction that
 $\hat \uH_\Cc=\hat\uH_{\Cc'}$.
 \par Finally we note that if $\Fc_1$ and $\Fc_2$ are smooth fans, then there exists a smooth fan $\Fc_3$
 containing both $\Fc_1$ and $\Fc_2$, i.e. $\uZ(\Fc_1), \uZ(\Fc_2)\subset \uZ(\Fc_3)$. This completes the proof
 of the proposition.
   \end{proof}

For the purpose of this paper our interest is not so much with $\hat \uZ_I$
but with the real $G$-orbit  $\hat Z_I=G \cdot \hat z_{0,I}\simeq G/ \hat H_I$. Note that $\hat Z_I\subset \hat Z$.

For $I\subset S$ we denote by
$\uA_I$ the subtorus of $\uAZ$ corresponding to $\af_I\subset \af_Z$. For our fixed $j =j_I\in J_I$ with regard to $\cf_I^-$ we now set
$\psi_i^I:= \psi_{ji}$ for $1\leq i \leq r$.  Let $k:=r-|I|$. We may order the basis $(\psi_i^I)_{1\leq i\leq r}$ then in such
a way that $\Q[I]=\Q[\psi^I_{k+1}, \ldots, \psi^I_r]$ and then

$$\cf_I^{--}=\{ X\in\af_I^{-}\mid  (\forall i\leq k)\ \psi_i^I(X)<0\}\, .$$
With the basis $(\psi^I_i)_i$ we identify
$\uAZ $ with $(\C^\times)^r$ via

\begin{equation}\label{AZ-iso1} \uAZ\to (\C^\times)^r, \ \ a\mapsto (a^{\psi^I_i})_{1\leq i\leq r}\, .\end{equation}
In these coordinates $A_I$ corresponds to the subgroup $(\C^\times)^{r-k}\simeq \1 \times (\C^\times)^{r-k}
\subset (\C^\times)^r$.

Let us denote by $({\bf e}_i^I)_{1\leq i\leq r}\subset \af_Z$ the basis dual to  $(\psi_i^I)_{1\leq i\leq r}$.
We define the $\uAZ(\R)$-modules
$V_I:=\bigoplus_{i \leq k}  \R {\bf e}_i^I\simeq \af_I$ and  $V_I^\perp :=\bigoplus_{i>k}  \R {\bf e}_i^I$, which are both diagonal
with respect to the fixed basis $(\psi_i^I)_{1\leq i\leq r}$ of $\Xi_Z$.
Via the coordinates of \eqref{AZ-iso1} we view $\uAZ$ as open subset of $V_\C=\C^r=\uAZ(\Fc_{j_I})$
and obtain in particular that
\begin{equation} \label{LST-coo} \uZ_0(\Fc_{j_I})= \uU \times  [\uM/ \uMH\times_{F_M}  V_\C]\end{equation}
where we view $F_M=\hat M_H/M_H$ as a  subgroup of $\{-1, 1\}^r$ acting on $V_\C$ by sign changes in the coordinates.
Set $V=\R^r$.
\begin{lemma}  The real points of $ \uZ_0(\Fc_{j_I})= \uU \times  [\uM/ \uMH\times_{F_M}  V_\C]$ are given by
\begin{equation}\label{inf slice} \uZ_0(\Fc_{j_I})(\R)= U \times  [M/M_H \times_{F_M}  V]\, .\end{equation}
\end{lemma}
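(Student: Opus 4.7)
The plan is to extend the local structure identification \eqref{LST2} to the affine toric chart $\uAZ(\Fc_{j_I}) = V_\C \cong \C^r$, whose real points are manifestly $V = \R^r$ in the coordinates $(\psi_i^I)$. Since $\uU$ is unipotent, the algebraic exponential map is a biholomorphism $\uU \simeq \uf_\C$ defined over $\R$, so $\uU(\R) = U$; hence the factor $\uU$ contributes exactly $U$ on real points, and the problem reduces to proving
$$
\bigl(\uM/\uMH \times_{F_M} V_\C\bigr)(\R) = M/M_H \times_{F_M} V.
$$

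The inclusion $\supset$ is immediate, since $F_M \subset F_\R$ acts on $V_\C$ by sign changes preserving $V$ and acts on $\uM/\uMH$ through the real subgroup $\hat M_H$. For the reverse inclusion, let $[m,v]$ be a real point. Its image under the natural projection $\uM/\uMH \times_{F_M} V_\C \to \uM/\hat\uMH$ is a real point of $\uM/\hat\uMH$. Since $\uM \subset \uK$ is anisotropic over $\R$, Lemma \ref{Example exact} applies and yields surjectivity $M \twoheadrightarrow (\uM/\hat\uMH)(\R)$; so we may choose $m_0 \in M$ with $m_0\hat\uMH = m\hat\uMH$. Write $m = m_0\, h\, \tilde f$ with $h \in \uMH$ and $\tilde f \in \hat\uMH$ a lift of some $f \in F_M$. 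Absorbing $h$ into $\uMH$ and passing $\tilde f$ across the balanced product gives
$$
[m,v] = [m_0\,h\,\tilde f,\, v] = [m_0,\, f\cdot v],
$$
so we may assume that the first coordinate of the representative lies in $M$.

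With $m_0 \in M$, Galois invariance of $[m_0, f\cdot v]$ reads $[m_0, \overline{f\cdot v}] = [m_0, f\cdot v]$: there exists $f_1 \in F_M$ such that $m_0 \cdot f_1^{-1} = m_0$ in $\uM/\uMH$ and $\overline{f\cdot v} = f_1 \cdot (f\cdot v)$. The right $F_M$-action on $\uM/\uMH$ is free by the very definition $F_M = \hat M_H / M_H$, so the first equation forces $f_1 = 1$; the second then gives $f\cdot v \in V$. Hence $[m,v] = [m_0,\, f\cdot v]$ lies in $M/M_H \times_{F_M} V$, completing the identification.

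The main obstacle is the reduction step: one must track the interaction of the balanced product equivalence with Galois descent and show that the $F_M$-twist produced by complex conjugation on $[m,v]$ can be absorbed by a change of representative. Anisotropy of $\uM$, supplied by $\uM \subset \uK$ via Lemma \ref{Example exact}, is exactly what trivializes this cocycle. The argument is the direct analogue of the proof of \eqref{LST2} in \cite[Prop.~B.2]{DKS}, transported from $\uAZ$ to its smooth affine toric extension $V_\C$.
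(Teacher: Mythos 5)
Your proof is correct, but it runs along a different track than the paper's. The paper's own argument stays inside $\uM/\uM_\uH$: it invokes Lemma \ref{Example exact} to get the polar (Mostow-type) diffeomorphism $M\times_{M_H}\mf_H^\perp\to\uM/\uM_\uH$, $[g,X]\mapsto g\exp(iX)\uM_\uH$, writes the first coordinate as $[g,X]$, and computes that conjugation sends it to $[g,-X]$ while the $F_M$-twist acts through a real representative $\hat m\in\hat M_H$; comparing the two in polar coordinates forces $\hat m\in M_H$ and $X=0$ in one stroke, so the representative can be taken in $M$ and then $\bar v=v$. You instead split the task: you first push the real point down along the projection $\uM/\uM_\uH\times_{F_M}V_\C\to\uM/\hat\uM_\uH$ and use Lemma \ref{Example exact} in its surjectivity form $M\twoheadrightarrow(\uM/\hat\uM_\uH)(\R)$ (applied to the quotient by the \emph{larger} group) to make the first coordinate real at the cost of an $F_M$-twist on $V_\C$, and then trivialize the residual cocycle by freeness of the right $F_M=\hat\uM_\uH/\uM_\uH$-action on $\uM/\uM_\uH$. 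Both arguments ultimately rest on the anisotropy of $\uM$, but yours avoids the polar decomposition entirely and has a cleaner Galois-descent structure, while the paper's polar computation makes the trivialization of the twist completely explicit. Two small points of hygiene in your write-up: absorbing $h$ in $m=m_0h\tilde f$ uses that $\uM_\uH$ is normal in $\hat\uM_\uH$ (conjugate $h$ past $\tilde f$), and the freeness you invoke is that of $\hat\uM_\uH/\uM_\uH$ acting by right translations on $\uM/\uM_\uH$, which is identified with $\hat M_H/M_H$ by the paper's stated isomorphism — both are fine, just worth saying precisely.
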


\begin{proof}  Let $x=(u,[m\uM_\uH, v])\in \uZ_0(\Fc_{j_I})$ where $u\in \uU$, $m\in\uM$ and $v\in V_\C$.  Then $x$ is real if and only if $\bar x = x$, that is
$$ (\bar u, [\bar m \uM_\uH, \bar v])= (u,[m\uM_\uH, v])$$
and in particular $u=\bar u$.  Moreover, as $F_M$ has representatives in $\hat M_H$, we obtain that
$\bar m \uM_\uH \in m\hat M_H\uM_{\uH}$. Now it follows from Lemma \ref{Example exact} that the polar map

$$ M \times_{M_H}  \mf_H^\perp \to \uM/ \uM_\uH, \ \ [g,X]\mapsto  g\exp(iX) \uM_\uH$$
is a diffeomorphism.  Hence if $y=m\uM_\uH=[g,X]$ is such that $\bar y \in m\hat M_H  \uM_H$ we obtain
$\bar y=[g,-X]= [ g\hat m^{-1} ,  \Ad(\hat m)X]$ for some $\hat m \in \hat M_H$.  But this gives $\hat m \in M_H$ and thus
$\bar y = y$, i.e. $X=0$.  Therefore $y=m\uM_\uH=[g,0]$ and we may choose $m=g\in M$.
This yields in turn that
$\bar v = v$ which concludes the proof of the lemma.
\end{proof}

\par Let ${\bf e}_I:=\sum_{j=1}^k {\bf e}_j^I\in V_I$.
Set $F_{M,I}:=F_M\cap \uA_I$ and note that $F_{M,I}$ is the $F_M$-stabilizer of
${\bf e}_I\in V_I$. Further put  $F_M^I:=F_M/ F_{M,I} $.
Denote by $V_I^{\perp, \times}\subset V_I$ the subset with all coordinates non-zero and observe that
$$ V_{I,\C}^{\perp, \times} =\uAZ\cdot {\bf e}_I\simeq \uAZ/ \uA_I
\, .$$
Then we obtain from \eqref{LST-coo} and \eqref{inf slice} the isomorphisms

\begin{eqnarray} \label{LST-Ic}   \uP \cdot \hat z_{0,I}&\simeq &\uU \times   \big[[\uM/ \uM_\uH F_{M,I}] \times_{F_M^I}
V_{I,\C}^{\perp, \times}\big ]\\
\notag &\simeq & \uU \times \big[[\uM/ \uM_\uH F_{M,I}] \times_{F_M^I}
[\uAZ/ \uA_I]\big] \end{eqnarray}
and
\begin{eqnarray} \label{LST-I}   [\uP \cdot \hat z_{0,I}](\R)&\simeq &U \times   \big[[M/ M_H F_{M,I}] \times_{F_M^I}
V_I^{\perp, \times}\big ]\\
\notag &\simeq & U \times \big[[M/ M_H F_{M,I}] \times_{F_M^I}
[\uAZ(\R)/ \uA_I(\R)]\big] \end{eqnarray}
which are given in coordinates as
$$ (u,[mM_HF_{M,I},v])=umv\cdot \hat z_{0,I}\, .$$

\subsection{ Relatively open $P$-orbits in $\hat Z$}\label{subsection rel open P-orbits}

The structure of the finite set of  $G$-orbits in $\hat Z$ is in general complicated
and the $G$-orbits through the boundary points $\hat z_{0,I}\subset \hat Z$ do typically
not give all $G$-orbits in $\hat Z$ (see Example \ref{ex SL3} below).

\par In general, let us call a $P$-orbit $P\cdot \hat z\subset \hat Z$  {\it relatively open} provided
$P\cdot \hat z$ is open in the $G$-orbit $G\cdot \hat z$.  The goal of this subsection is
to describe the set of all relatively open $P$-orbits in $\hat Z$, denoted by
$(P\bs \hat Z)_{\rm rel-op}$ in the sequel.

\par Recall from the end of Subsection \ref{subsection LST} the set $\Wc\subset G$ which parametrizes
$(P\bs Z)_{\rm open}$. In addition we remind that elements
$w\in \Wc$ have a representation as $w=\tilde th$ with $\tilde t \in T_Z=\exp(i\af_H^\perp)\subset \uA$ and  $h\in \uH$ such that $t:=\tilde t \cdot z_0\in F$ where $F=F_\R\cap Z$ and $F_\R$ the finite group of $2$-torsion points of
$\uAZ(\R)\subset \uZ(\R)$ (see Subsection \ref{subsection LST} for the notation).

For $w\in \Wc$ we now define the shifted base points:
$$z_w:=w\cdot z_0= \tilde t \cdot z_0 =t\in F\subset Z\, .$$
Likewise for $I\subset S$ and $X\in \cf_I^{--}$ we define in analogy to \eqref{def limit z0I}

$$\hat z_{w,I}:= \lim_{s\to \infty} \exp(sX)\cdot  z_w =  \tilde t \cdot \hat z_{0,I}$$
and note that the second equality (immediate from the definitions) implies that
$\hat z_{w,I}$ is independent of the choice of $X\in \cf_I^{--}$.  As $\tilde t  \cdot \hat z_{0,I}$
is  independent of the chosen lift $\tilde t$ of $t$  we can define for $t\in F_\R$
$$t \cdot \hat z_{0,I}:= \tilde t \cdot \hat z_{0,I}\, .$$
Since the limit defining $\hat z_{w,I}$ exists and $z_w\in Z$ we infer that
$\hat z_{w,I}\in \hat Z$.  Moreover, as $ \hat z_{w,I} \in F_\R \cdot \hat z_{0,I}$
with the notation defined above, we infer from the local
structure theorem as recorded in \eqref{LST-I} that $P\cdot \hat z_{w,I}$ is open in
$G\cdot \hat z_{w,I}$.  With that we obtain in fact all relatively open $P$-orbits in the wonderful situation:

\begin{lemma} \label{lemma rel open}Suppose that $\hat \uZ$ is wonderful.
Then the set of relatively open $P$-orbits in $\hat Z$ is given by
$$(P\bs \hat Z)_{\rm rel-op}=\{ P \cdot \hat z_{w,I}\mid w\in \Wc, I\subset S\}\, .$$
\end{lemma}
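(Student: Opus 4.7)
The plan is to prove the two inclusions separately, using the wonderfulness hypothesis to reduce to a single stratum at a time.

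For the inclusion $\supseteq$, this is essentially the content of the paragraph preceding the lemma. Given $w \in \Wc$ and $I \subset S$, write $w = \tilde t h$ with $\tilde t \in T_Z \subset \uA$ and $h \in \uH$, so that $\hat z_{w,I} = \tilde t \cdot \hat z_{0,I}$. Since $\exp(sX) \cdot z_w \in Z$ for all $s$ and the limit defining $\hat z_{w,I}$ exists, one has $\hat z_{w,I} \in \overline{Z} = \hat Z$. Because $\tilde t \in \uA$ normalizes $\uP$, the $\uP$-orbit of $\hat z_{w,I}$ is simply $\tilde t \cdot (\uP \cdot \hat z_{0,I})$, which is open in $\hat \uZ_I$. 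Applying the local structure theorem \eqref{LST-I} at the real base point (after translating by $\tilde t$) shows that $P \cdot \hat z_{w,I}$ is open in $G \cdot \hat z_{w,I}$, hence relatively open in $\hat Z$.

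For the inclusion $\subseteq$, let $P \cdot \hat z$ be a relatively open $P$-orbit in $\hat Z$. The wonderful hypothesis gives the $\uG$-orbit stratification $\hat \uZ = \coprod_{I \subset S} \hat \uZ_I$, so there is a unique $I \subset S$ with $\hat z \in \hat \uZ_I(\R) \cap \hat Z$. After applying a suitable element of $G$, I may assume that the $\uP$-orbit of $\hat z$ is $\uP \cdot \hat z_{0,I}$. The description \eqref{LST-I} of $[\uP \cdot \hat z_{0,I}](\R)$ as $U \times \big[[M/M_H F_{M,I}] \times_{F_M^I} \uAZ(\R)/\uA_I(\R)\big]$ reveals that modulo the $U$ and $M$ factors (which are absorbed by the $P = MAU$-action), every $P$-orbit in this real locus has a representative of the form $\tilde t \cdot \hat z_{0,I}$ with $\tilde t \in T_Z$ projecting to an element $t \in F_\R$.

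It remains to upgrade $t \in F_\R$ to $t \in F = F_\R \cap Z$. Here I use that $\hat z \in \hat Z = \overline{Z}$: the identity
\[
\tilde t \cdot \hat z_{0,I} = \lim_{s \to \infty} \exp(sX) \cdot (\tilde t \cdot z_0) \qquad (X \in \cf_I^{--})
\]
together with $G$-stability of $Z$ forces the approximating sequence to lie in $Z$, whence $\tilde t \cdot z_0 = t \in F$. Lemma \ref{lemmaW1} then supplies $w = w(F_M t) \in \Wc$ with $z_w = t$, and by construction $\hat z_{w,I} = \tilde t \cdot \hat z_{0,I}$, giving $P \cdot \hat z = P \cdot \hat z_{w,I}$ as desired.

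The main obstacle I anticipate is the middle step: matching the abstract $F_M^I$-quotient of $\uAZ(\R)/\uA_I(\R)$ in \eqref{LST-I} with concrete $P$-orbits and then arguing that each such orbit is represented by an element $\tilde t \cdot \hat z_{0,I}$ lifted from $T_Z$. The wonderful hypothesis enters precisely to guarantee that the $\uG$-orbit structure on $\hat \uZ$ is indexed cleanly by $I \subset S$, so that no additional combinatorics from a subdivision obscures the one-stratum LST picture.
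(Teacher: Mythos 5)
Your $\supseteq$ direction and the first reduction in $\subseteq$ are fine and essentially follow the paper, but two points in the $\subseteq$ argument need attention. The minor one: you cannot ``apply a suitable element of $G$'' to normalize the orbit, since the conclusion concerns the specific $P$-orbit $P\cdot\hat z$ and is not preserved by such a move; what you actually need (and what the paper uses) is that relative openness of $P\cdot\hat z$ in $G\cdot\hat z\subset\hat\uZ_I(\R)$ already forces $\hat z\in[\uP\cdot\hat z_{0,I}](\R)$, because the complement of the unique open $\uP$-orbit has empty interior in the real points (equivalently, relative openness complexifies). This is repairable without changing your outline.

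The genuine gap is your ``upgrade'' from $t\in F_\R$ to $t\in F$. You claim that since $t\cdot\hat z_{0,I}=\lim_{s\to\infty}\exp(sX)\cdot t$ lies in $\hat Z=\cl(Z)$, the approximating curve must lie in $Z$, whence $t\in Z$ and so $t\in F$. This is a non sequitur: the curve $s\mapsto\exp(sX)\cdot t$ lies in the $G$-orbit of $t$ inside $\uZ(\R)$, which may be a different orbit from $Z$, and closures of distinct open $G$-orbits in $\hat\uZ(\R)$ do share boundary points. Concretely, in Example \ref{ex=SL2}(b) with $Z=\Sl(2,\R)/\SO(2)$ and $I=\emptyset$ one has $F=\{1\}$, yet $t=-1\in F_\R$ satisfies $t\cdot\hat z_{0,\emptyset}=\hat z_{0,\emptyset}\in\hat Z$ (since $t\in\uA_I(\R)\subset\hat H_I$); so a representative produced by your procedure need not lie in $F$, and the statement ``$t\in F$'' is false as such. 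What is true, and what the paper's proof establishes, is weaker but sufficient: using the local structure theorem in the form \eqref{LST-I}, \eqref{inf slice} and the description \eqref{points to Z} of which part of the slice neighborhood actually meets $Z$, membership of $t\cdot\hat z_{0,I}$ in $\partial Z$ forces it to be approached by a curve $\exp(sX)t_2$ with some possibly different $t_2\in F$; hence $t\cdot\hat z_{0,I}=t_2\cdot\hat z_{0,I}$, and Lemma \ref{lemmaW1} applied to $t_2$ identifies the orbit as $P\cdot\hat z_{w,I}$ for some $w\in\Wc$. Your proposal is missing precisely this replacement argument, which is the crux of the $\subseteq$ inclusion.
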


\begin{proof}   The inclusion $\supset$ was already seen above.
In the wonderful situation the $\uG$-orbits in $\hat \uZ$ are
precisely the $\uG\cdot \hat z_{0,I}\simeq \uG/\hat \uH_I$  for $I\subset S$ and accordingly every
relatively open $P$-orbit in $\hat \uZ(\R)$ lies in some
$[\uP \cdot \hat z_{0,I}](\R) $.  Hence any relatively open $P$-orbit in
$\hat Z$ is of the form $P t_1 \cdot \hat z_{0,I}$
for some $t_1\in F_\R$ by  \eqref{LST-I} and \eqref{AF}.
Since $\hat Z$ is $G$-invariant, and in particular $P$-invariant, it follows that
$t_1 \cdot \hat z_{0,I}\in \hat Z$. Further  the local structure theorem \eqref{LST-I} implies that
$t_1 \cdot \hat z_{0,I}\in \partial Z$ is approached by a curve in $Z$ of the form  $\exp(sX) t_2 \in Z$ for some
$t_2\in F$ and $X\in \af_I^{--}=\cf_I^{--}$, for $s\to \infty $.  In other words
$t_1 \cdot \hat z_{0,I} = \lim_{s\to \infty} \exp(sX) t_2 = t_2 \cdot \hat z_{0,I}$.
With  Lemma  \ref{lemmaW1} this concludes the proof.
\end{proof}

\begin{rmk}\label{remark rel open} (a) In the wonderful case we have
a stratification $\hat \uZ(\R)= \coprod_{I\subset S} \hat \uZ_I(\R)$ of $\hat\uZ(\R)$ in real spherical
$G$-manifolds  with $P\cdot \hat z_{w,I}\subset \hat \uZ_I(\R)$ for each $w\in \Wc$.  In particular if $I\neq J\subset S$ we have
$P\cdot \hat z_{w,I}\neq P\cdot \hat z_{w',J}$ for all $w,w'\in \Wc$. However, for fixed
$I$ it can and will happen that $P\cdot \hat z_{w,I}=P\cdot \hat z_{w',I}$ for some $w\neq w'$. The extremal case
is $I=\emptyset$ where  $\hat z_\emptyset=\hat z_{w,\emptyset}$ does not depend on $w\in \Wc$ at all.

\par (b) In case $\hat \uZ$ is not wonderful, the assertion in Lemma \ref{lemma rel open}
needs to be modified as follows. For every cone $\Cc\in \Fc$ and $w\in \Wc$ let us define
\begin{equation}\label{zwC}
\hat z_{w,\Cc}:= \lim_{s \to \infty} \exp(sX)\cdot  z_w
\end{equation}
which does not depend on $X\in \operatorname{int} \Cc$.
Recall that the $\uG$-orbits in the toroidal
compactification $\hat \uZ$ are parametrized by $\Cc\in \Fc$ and explicitly given
by $\uG\cdot \hat z_\Cc$.  Then for each  $\Cc\in \Fc$
the relatively open $P$-orbits in $\partial Z$ contained in $\hat Z_\Cc= \uG \cdot \hat z_\Cc$ are given
by the $P\cdot \hat z_{w,\Cc}$ with $w\in \Wc$.
\par (c) As every open $G$-orbit in $\hat Z_\Cc$ is open and contains an open $P$-orbit, we deduce from (b) that
$$\hat Z_\Cc=\bigcup_{w\in \Wc} G \cdot \hat z_{w,\Cc}\,.$$

\end{rmk}

\section{Normal bundles to boundary orbits in a smooth compactification}

Let $\sX$ be a  manifold and $\sY\subset \sX$ be a submanifold.  We denote by $T\,\sX$ and $T\, \sY$ the associated
tangent bundles of $\sX$ and $\sY$.  The normal bundle of $\sY$ is then defined
to be
$$N_\sY:=  T\,\sX|_\sY/ T\,\sY\, .$$
Note that $N_\sY \to \sY$ is a vector bundle with fibers $(N_\sY)_y=T_y\sX / T_y\sY$.

\par We are mainly interested in the case where $\sX$ is a smooth $G$-manifold
for a Lie group $G$, and $\sY:=G\cdot y$ is a locally
closed orbit. In this case we have a natural action of the stabilizer $G_y$  on $(N_\sY)_y$ and

\begin{equation} \label{normal identifier}N_\sY=  G \times_{G_y}  (N_\sY)_y\end{equation}
reveals the $G$-structure of $N_\sY$.

\subsection{Normal bundles to boundary orbits}  After this interlude on normal bundles
we return
to our basic setting with $G$ a real reductive algebraic group, and
let $\sX:=\hat \uZ(\R)$ be a smooth $G$-equivariant
compactification of $Z$ as constructed in Section \ref{section compact}.

Fix $I\subset S$ and let $\sY:=\hat Z_I\subset \sX$  be a boundary orbit with base point $y:=\hat z_{0,I}$.
Recall the basis $(\psi_i^I)_{1\leq i\leq r}$  of $\Xi_Z$,  its dual basis
$({\bf e}_i^I)_i$ and $V_I:= \bigoplus_{i\leq k}\R {\bf e}_i^I$.
By means of the basis it is often convenient to identify $V_I$ with $\R^k$
where $k=r-|I|$.
Define $V_I^\times:= \bigoplus_{i\leq k} \R^\times {\bf e}_i^I $
and $V_I^0\subset V_I^\times$ by

$$V_I^0:=  \bigoplus_{i\leq k} \R^+ {\bf e}_i^I  \simeq (\R^+)^k\, .$$
Set $V:=V_I \oplus V_I^{\perp}$ and recall ${\bf e}_I=\sum_{j=1}^k {\bf e}_j^I\in V_I^0$.

\par Let  $\U_M\subset M/ M_H$ be an open neighborhood of the base point
$M_H\in M/ M_H$ such that $\U_M \cap \U_M\cdot  x=\emptyset$ for $x\in F_M$, $x\neq 1$.

Recall that $V_I^{\perp,\times} =\uAZ(\R)/\uA_I(\R)$.
According to (\ref{LST-I}) the mapping

$$ \Psi_1: U \times \U_ M \times V_I^{\perp, \times}
\to [\uP\cdot \hat z_{0,I}](\R)
=U\times\big[[ M/ M_HF_{M,I}]\times_{F_M^I}
V_I^{\perp, \times}
\big]$$
given by
$$(u, mM_H, v)\mapsto (u,  [mM_H F_{M,I}, v])$$
is a diffeomorphism onto an open subset of  $[\uP\cdot \hat z_{0,I}](\R)$ and hence also of
$\hat \uZ_I(\R)$. Set
$$\V:= \Psi_1^{-1}(\sY) \, .$$
Thus we obtain two diffeomorphisms onto their images

$$\Psi_0:  \V \to \sY=\hat Z_I, \ \ (u,mM_H, a\uA_I(\R))\mapsto uma\cdot y$$
and
\begin{equation*}  \Psi:  \V\times V_I \to   U \times [ M/M_H \times_{F_M}  V] \subset \sX=\hat \uZ(\R)\, ,\end{equation*}
the latter one being given by
$$(u,mM_H, a\uA_I(\R), v_I)\to  (u, [m, a\cdot {\bf e}_I+ v_I]).$$

\par Set $F_I:=F\cap \uA_I(\R)$  and note that
$F_I$ identifies with a subset of
$ \{ -1, 1\}^k$ upon identification of $\uA_I(\R)\simeq (\R^\times)^k$.
From the definition of $\Psi$ we
then get
\begin{equation}  \label{points to Z} \Psi^{-1}(Z)=  \V \times F_I \cdot V_I^0\, .\end{equation}
It is worth to note that
\begin{equation} \label{normalization normal bundle}   \Psi(y,{\bf e}_I)= z_0\, . \end{equation}

\par With $\Psi$ being diffeomorphic we record the following property of transversality

\begin{equation} \label{tangent decomp} d\Psi(x,0) (0 \times V_I )   \oplus  T_x\sY = T_x\sX\qquad (x\in \V)\, .\end{equation}

In the sequel we use \eqref{tangent decomp} to identify the spaces $V_I\simeq (N_\sY)_y$  for $y=\hat z_{0,I}$.
On $V_I=(N_\sY)_y$ there is a natural
linear action of $G_y=\hat H_I$, the isotropy representation, which we call
$$\rho: \hat H_I \to \GL(V_I)\, .$$

The representation $\rho$ is algebraic, i.e. it originates from the complex isotropy representation

$$\underline{\rho}:   \hat\uH_I\to \GL(V_{I,\C})\, .$$
We write $\uH_I=\ker \underline \rho$ and note  that $H_I=\uH_I(\R)$ is
given by  $H_I=\ker \rho$.  Observe that $\uH_I\triangleleft\hat \uH_I$
and $H_I \triangleleft \hat H_I$
are closed normal subgroups.

\begin{theorem} \label{thm normal}The following assertions hold:
\begin{enumerate}
\item \label{1one1} The Lie algebra of $H_I$ is given by  $\hf_I$, as defined in  \eqref{eq-hi1}.
\item \label{2two2}$\hat \uH_I/  \uH_I  \simeq \uA_I$.
\end{enumerate}
\end{theorem}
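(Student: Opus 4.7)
The plan is to pin down the isotropy representation $\underline{\rho}\colon \hat\uH_I\to \GL(V_{I,\C})$ explicitly in the local chart $\uZ_0(\Fc_{j_I})(\R)\simeq U\times [M/M_H\times_{F_M} V]$ of Section~\ref{section compact} together with the parametrization $\Psi$ introduced in this section. That chart identifies the normal slice to $\hat Z_I$ at $\hat z_{0,I}$ with $V_I$ and realises the $\uAZ$-action on $V_\C$ as the toric action by the weights $(\psi_i^I)_{i\leq r}$, so $\underline{\rho}$ will ultimately turn out to be pure torus data.

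I would first dispose of the $\af_I$-direction. Let $\tilde{\uA}_I\subset \uA$ denote the preimage of $\uA_I\subset \uAZ$ under the projection $\uA\twoheadrightarrow \uAZ$. The equation $a\cdot \hat z_{0,I}=\hat z_{0,I}$ is equivalent to $a^{\psi_j^I}=1$ for all $j>k$, so $\tilde{\uA}_I\subset \hat\uH_I$; the resulting action on $V_{I,\C}$ is diagonal with weights $(\psi_i^I|_{\af_I})_{i\leq k}$, which form a basis of $\af_I^*$. Hence $d\underline{\rho}|_{\af_{I,\C}}$ is injective, and $\underline{\rho}(\tilde{\uA}_I)$ equals the standard copy of $\uA_I$ in $\GL(V_{I,\C})$, with kernel $\tilde{\uA}_I\cap \uH=\uA\cap \uH$.

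The core step is the reverse inclusion $\hf_I\subset \ker d\underline{\rho}$. Here I would exploit the Grassmannian characterization $\hf_I=\lim_{t\to\infty}\Ad(\exp(tX))\hf$ for $X\in \af_I^{--}$ from \eqref{I-compression}. Writing $\xi\in \hf_I$ as $\lim_t \Ad(\exp(tX))\eta_t$ with $\eta_t\in \hf$, the vector $\xi_t:=\Ad(\exp(tX))\eta_t$ generates the Lie algebra of the $G$-stabilizer of $z_t=\exp(tX)\cdot z_0\in Z$. Since $\exp(tX)\in \tilde{\uA}_I\subset \hat\uH_I$, one has $z_t=\Psi(y_0,\underline{\rho}(\exp(tX)){\bf e}_I)$, so $z_t$ sits on the $V_I$-fibre over $\hat z_{0,I}$ at the position $\underline{\rho}(\exp(tX)){\bf e}_I$, which shrinks to $0$. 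The infinitesimal action of $\xi_t$ on the $V_I$-coordinate at $z_t$ is determined by the commutator of $\xi_t$ with the radial torus translation and tends to zero as $t\to\infty$; in the limit this yields $d\underline{\rho}(\xi)=0$. Alternatively one can verify this directly from the decomposition \eqref{eq-hi1}: the first summand $\lf\cap\hf$ acts trivially on $V$ by construction of the local structure theorem, and for $X_{-\alpha}+T_I(X_{-\alpha})\in \G(T_I)$ the truncation rule \eqref{eq-hi2} retains only those correction terms $X_{\alpha,\beta}$ whose weight satisfies $\alpha+\beta\in \N_0[I]$ (which annihilates $\af_I$), so the combined element acts trivially on the $V_I$-weights $(\psi_i^I|_{\af_I})_{i\leq k}$.

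Putting the pieces together, \eqref{Lie hat HI} gives $\hat\hf_I=\hf_I+\af_I$ as a direct sum (since $\hf_I\cap \af\subset \af_H$ while $\af_I\subset \af_H^{\perp_\af}$); combined with $\hf_I\subset \ker d\underline{\rho}$ and $\af_I\cap \ker d\underline{\rho}=0$, a dimension count forces $\ker d\underline{\rho}=\hf_{I,\C}$, which proves (1). For (2), the image $\underline{\rho}(\hat\uH_I)$ is an algebraic subgroup of $\GL(V_{I,\C})$ with Lie algebra $\af_I$ that already contains the copy of $\uA_I$ from the first step; the Lie algebra equality $\hat\hf_I=\hf_I\oplus \af_I$ exponentiates to $\hat\uH_I=\tilde{\uA}_I\cdot \uH_I$, and since the quotient $\hat\uH_I/\uH_I$ is abelian with identity component $\uA_I$ it must coincide with $\uA_I$ as an algebraic group. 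The principal obstacle will be the second step: the direct verification for $\G(T_I)$ demands a careful matching of the combinatorial truncation with the explicit toric weights at $\hat z_{0,I}$, which is why routing through the Grassmannian limit and the smoothness of $\hat\uZ$ is the cleanest path.
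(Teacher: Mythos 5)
Your skeleton (faithful standard $\uA_I$-action on $V_{I,\C}$, the containment $\hf_I\subset\ker d\underline{\rho}$, then a dimension count against $\hat\hf_I=\hf_I+\af_I$, and finally a group-level statement for (2)) is the same as the paper's, but the two steps that carry all the weight are not actually established. For the containment $\hf_I\subset\ker\underline{\rho}$ (the paper's Lemma \ref{limit normal}) your limit argument is only a sketch, and the mechanism you invoke is not the right one: it is not true, and in any case not shown, that ``the infinitesimal action of $\xi_t$ on the $V_I$-coordinate at $z_t$ tends to zero''. What the paper actually uses is that for $X\in\cf_I^{--}\cap\Xi_Z^\vee$ the curve $\gamma_X(s)=\exp(-(\log s)X)\cdot z_0$ extends differentiably to $s=0$ with integral exponents (this is where $\M\subset\Xi_Z$ enters), that one can choose approximating elements $h_{I,s}=\exp(Y_s)$ with $Y_s\in\Ad(\gamma_X(s))\hf$, $Y_s\to Y$, which stabilize the curve points \emph{exactly}, $h_{I,s}\gamma_X(s)=\gamma_X(s)$, and then a Leibniz-rule argument with the projection $\sP$ of $T_y\sX$ onto $V_I$ along $T_y\sY$ isolates the normal derivative and yields $\rho(h_I)\gamma_X'(0)=\gamma_X'(0)$. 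Nothing decays; the point is exact stabilization plus one-sided differentiability at the boundary, neither of which appears in your sketch. Your alternative ``direct'' verification is likewise a non sequitur: the condition $\alpha+\beta\in\N_0[I]$, i.e.\ vanishing of $\alpha+\beta$ on $\af_I$, explains why these terms survive the limit \eqref{I-compression} defining $\hf_I$ in \eqref{eq-hi1}--\eqref{eq-hi2}; it says nothing by itself about the isotropy action of the graph element $X_{-\alpha}+T_I(X_{-\alpha})\in\G(T_I)$ on $V_I$, because the chart coordinates are toric only for the $\uU\uM\uA$-action and the vector fields of $\oline\uf$-components are not controlled by this weight bookkeeping. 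You concede this is the obstacle, but that throws the burden back on the limit argument, which as written is not a proof.

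Part (2) has a gap of a different kind: a component-group problem. The claim that the Lie algebra equality ``exponentiates to $\hat\uH_I=\tilde\uA_I\cdot\uH_I$'' only controls identity components, and $\hat\uH_I$ is not known to be connected (nor is $\uH_I$; the disconnectedness visible in Example \ref{ex SL3 continued} should be a warning that stabilizers in this setting have nontrivial component groups). Your fallback, ``abelian with identity component $\uA_I$, hence equal to $\uA_I$'', is false as a general implication (a torus times a finite group is abelian with the same identity component), and the abelianness of $\hat\uH_I/\uH_I$ is itself asserted without proof. The paper closes exactly this gap by a group-level argument: since $\hat\uH_I$ normalizes $\uH_I$ and $\uP\uH_I$ is open in the connected group $\uG$, one has $\uP\hat\uH_I=\uP\uH_I$, i.e.\ $\hat\uH_I=(\uP\cap\hat\uH_I)\uH_I$ as in \eqref{hat uH}; then the local structure theorem at $\hat z_{0,I}$ in the form \eqref{LST-Ic} gives $\uP\cap\hat\uH_I\subset\uM_\uH\widetilde\uA_I\subset\uH_I\uA_I$, whence $\hat\uH_I=\uH_I\uA_I$ and $\hat\uH_I/\uH_I\simeq\uA_I$ because $\underline{\rho}|_{\uA_I}$ is faithful. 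Some argument of this kind, controlling all components of $\hat\uH_I$ and not just its Lie algebra, is indispensable and is missing from your proposal.
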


The proof of Theorem \ref{thm normal} will be prepared by several intermediate steps.
The key is the following lemma and the techniques contained in its proof.

\begin{lemma} \label{limit normal}The Lie algebra of $H_I$ contains $\hf_I$, as defined by  \eqref{eq-hi1}.
\end{lemma}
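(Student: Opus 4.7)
The strategy is to use the Grassmannian convergence $\hf_I=\lim_{t\to\infty}e^{t\ad X}\hf$ for $X\in\cf_I^{--}$ (formula \eqref{I-compression}), together with the local chart $\Psi$, to show that the isotropy representation $\rho$ vanishes on $\hf_I$.

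Given $Y\in\hf_I$, I would choose $Y(t)\in\hf$ with $Y_t:=\Ad(\exp(tX))Y(t)\to Y$ in $\gf$. Then $Y_t$ lies in the stabilizer Lie algebra of $z_t:=\exp(tX)\cdot z_0\in Z$, so the vector field $Y_t^*$ on $\sX$ vanishes at $z_t$. Since $z_t\to y$ and $Y_t^*\to Y^*$ in $C^\infty$ on compacta, the linearizations $d_{z_t}Y_t^*$ converge, in the $\Psi$-trivialization of $T\sX$, to $d_yY^*$, whose $V_I\to V_I$ block is $\rho(Y)$ by definition.

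To compute the $V_I\to V_I$ block of $d_{z_t}Y_t^*$, I would write $\exp(sY_t)=\exp(tX)\exp(sY(t))\exp(-tX)$ and apply the chain rule, obtaining $d_{z_t}Y_t^*=(\exp(tX))_*\circ d_{z_0}Y(t)^*\circ(\exp(-tX))_*$. The outer factors act on the $V_I$-direction via the diagonal $A_I$-action with characters $\psi_j^I$, introducing rescalings $e^{\pm t\psi_j^I(X)}$. The middle factor, since $z_0\in Z$ lies in the open orbit, is computable as $-\ad Y(t)$ on $T_{z_0}\sX\simeq\gf/\hf$; in this identification, the $V_I$-direction at $z_0$ corresponds to $\af_I\subset\gf$ (using $\af_I\cap\hf=0$, since $\af_I\subset\af_H^{\perp_\af}$).

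The decisive step is to verify that the rescaled limit vanishes in the $V_I$-direction. For $Y\in\hf_I$, the graph structure $\G(T_I)$ in \eqref{eq-hi2} is tailored so that the $\af$-weights appearing in $Y$ match the contracting weights of $X$, ensuring that the a priori divergent rescaling factors $e^{t(\psi_i^I(X)-\psi_j^I(X))}$ are absorbed by the $T_I$-truncation. After the cancellation, the resulting limit on $V_I$ reduces to the map induced by $-\ad Y$ on $\hat\hf_I/\hf_I$, which vanishes because $\hf_I$ is an ideal in $\hat\hf_I=\hf_I+\af_I$: since $\af_I$ normalizes $\hf_I$ and $\hf_I$ is a subalgebra, one has $[Y,\hat\hf_I]\subset\hf_I$ for $Y\in\hf_I$. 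Therefore $\rho(Y)=0$ and $Y\in\Lie(H_I)$. The main technical obstacle is the explicit weight-bookkeeping in the rescaled limit, requiring careful use of the root-space decomposition of $\hf_I$ and $\hf$ to show that the potentially divergent off-diagonal entries of the $V_I\to V_I$ block are exactly killed by the structural constraints of $T_I$, leaving only the diagonal contribution that vanishes by the ideal property.
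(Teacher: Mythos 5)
Your setup is fine up to the decisive point: the approximation $Y_t=\Ad(\exp(tX))Y(t)\to Y$ with $Y(t)\in\hf$, the vanishing of $Y_t^*$ at $z_t$, the convergence of the linearizations in the $\Psi$-chart, and the conjugation formula with block-diagonal outer factors are all correct. The gap is the last step, which is asserted rather than proved, and the justification offered for it is wrong. The map induced by $\ad W$ on $\hat\hf_I/\hf_I$ is zero for \emph{every} $W\in\hat\hf_I$ (the quotient is abelian and $\hf_I$ is an ideal), so if the $V_I$-block of $d_yW^*$ were computed by that induced map, then $\af_I$ would also act trivially on the normal space -- contradicting Theorem \ref{thm normal}\eqref{2two2}, where $\uA_I\simeq\hat\uH_I/\uH_I$ acts on $V_I$ by the faithful standard (diagonal) representation. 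Hence "the resulting limit reduces to the map induced by $-\ad Y$ on $\hat\hf_I/\hf_I$" cannot be the reason for the vanishing; it is essentially a restatement of what has to be shown. Likewise, the claim that the divergent factors $e^{t(\psi_i^I(X)-\psi_j^I(X))}$ are "absorbed by the $T_I$-truncation" misreads the situation: the approximants $Y(t)$ lie in $\hf$ and carry the \emph{full} graph map $T$, not $T_I$; the truncation only appears in the limit $Y$, so it cannot by itself cancel the conjugation factors. To complete your route you would have to compute the $V_I\to V_I$ block of $d_{z_0}Y(t)^*$ explicitly (it is governed, via $X_{-\alpha}\equiv -T(X_{-\alpha})\bmod\hf$, by the $\gf^0$-components $X_{\alpha,0}$, which occur only for $\alpha\in\M$), note that the existence of the limit forces the coefficient sums of all positive exponents $\alpha(X)+\psi_i^I(X)-\psi_j^I(X)$ to vanish, and then show by a positivity argument (elements of $\M$ restrict to $\af_I$ as nonnegative combinations of the $\psi_i^I|_{\af_I}$, $i\le k$, so none can equal $\psi_j^I-\psi_i^I$ there) that for generic $X$ the zero-exponent part is empty off the diagonal. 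None of this bookkeeping is in your proposal, and it is exactly the hard part.

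The paper avoids the issue by never linearizing. It works with the group element $h_I=\exp(Y)$ and the curves $\gamma_X(s)=\exp(-(\log s)X)\cdot z_0$ for $X\in\cf_I^{--}\cap\Xi_Z^\vee$; integrality ($\M\subset\Xi_Z$) makes both $\gamma_X$ and the approximating family $h_{I,s}=\exp(Y_s)$ with $Y_s\in\Ad(\gamma_X(s))\hf$ differentiable at $s=0$. Since $h_{I,s}$ fixes $\gamma_X(s)$, one has $h_{I,s}\gamma_X(s)=\gamma_X(s)$ identically, and the Leibniz rule together with the projection $\sP:T_y\sX\to V_I$ along $T_y\sY$ yields $\rho(h_I)\gamma_X'(0)=\gamma_X'(0)$; varying $X$ over integral points gives $\rho(h_I)=\1$. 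So while your germ (approximating along the contracting ray by elements of the conjugated $\hf$) is the same as the paper's, the group-level Leibniz/projection trick is what makes the divergences never appear; either adopt it, or supply the cancellation analysis sketched above.
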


\begin{proof}  Let $Y\in \hf_I$, then $h_I:=\exp(Y)\in \hat H_I$
as explained above Proposition \ref{prop independence choice}.
We claim that $\rho(h_I)=\1$.

For all $X\in \cf_I^{--} \cap \Xi_Z^\vee$  we consider the curve
$$ \gamma_X: [0, 1] \to  \sX=\hat \uZ(\R), \ \ s\mapsto \exp(-(\log s) X)\cdot z_0 ,$$
which connects $\hat z_{0,I}$ to $z_0$.
Note, that in coordinates of \eqref{AZ-iso1} we have $\uAZ(\F_{j_I})(\R)\simeq \R^k$ (with $j_I\in J_I$ the selected
element for $\cf_I^-$), and
$$\gamma_X(s) =  (s^{m_1}, \ldots, s^{m_k}) \in V_I$$
for some $m_i\in \N$.  Notice that all  tuples of $m_i\in\N$ occur for some $X$.
Hence $\gamma_X$ is differentiable with $\gamma_X(0)= y=\hat  z_{0,I}$
and $\gamma_X'(0)=(\delta_1, \ldots, \delta_k)$ with $\delta_i=1$ if $m_i=1$ and $\delta_i=0$ otherwise.

\par Since $\rho(h_I) (\gamma_X'(0))  = \frac{d}{ds}\big|_{s=0} h_I \gamma_X(s)$,  the lemma will follow provided we can show that
$\frac{d}{ds}\big|_{s=0} h_I \gamma_X(s)=\gamma_X'(0)$ for all
$X$ as above.   Now for $h_I\in L\cap H$ this is clear and thus we
may assume that $Y$ is of the form  (see \eqref{eq-hi2})
$$Y= \sum_{\alpha\in \Sigma(\af, \uf)} (X_{-\alpha}  + \sum_{\alpha +\beta \in \N_0[I]}
X_{\alpha, \beta})\,.$$
Set now for $s>0$
$$Y_s:=  \sum_{\alpha\in \Sigma(\af, \uf)} (X_{-\alpha}  + \sum_{\beta} e^{- (\log s) (\alpha +\beta)(X)}
X_{\alpha, \beta})\in \Ad(\gamma_X(s)) \hf\, .$$

Note that $Y_s\to Y$ for $s\to 0$. Likewise we set $h_{I,s}:=\exp(Y_s)$ and note $h_{I,s} \to h_I$.
Now we use that  $\M\subset \Xi_Z$ in order to conclude that $h_{I,s}$ is right differentiable
at $s=0$.  The Leibniz-rule yields

$$ \frac{d}{ds}\Big|_{s=0} h_{I,s} \gamma_X(s)= \frac{d}{ds}\Big|_{s=0} h_I \gamma_X(s) +
\underbrace{ \frac{d}{ds}\Big|_{s=0} h_{I,s} y}_{\in T_y \sY}$$
and thus we get
$$\frac{d}{ds}\Big|_{s=0} h_I \gamma_X(s) = \sP \left(\frac{d}{ds}\Big|_{s=0} h_{I,s} \gamma_X(s)\right)$$
with $\sP$ the projection $T_y \sX\to V_I$  along $T_y\sY$.
Now observe that
\begin{eqnarray*} h_{I,s} \gamma_X(s) &=&h_{I,s} \exp(-(\log s) X) \cdot z_0\\
&=&   \exp(-(\log s) X)\underbrace{ \exp ((\log s) X)h_{I,s}
 \exp(-(\log s) X)}_{\in H}  \cdot z_0\\
 & =& \gamma_X(s)\end{eqnarray*}
and the lemma follows.
\end{proof}
\subsubsection{Normal curves and the proof of Theorem \ref{thm normal}}
Recall the base  points $\hat z_{0,I}\in \hat Z_I \subset \hat Z$. Now $\hat z_{0,I}\in \partial Z$ is a boundary point
of $Z$ provided that $I\subsetneq S$ or $\af_{Z,E}\neq 0$ -- in case $\af_{Z,E}=\{0\}$ we have $\hat Z_S= Z$
and $\hat z_S=z_0$ is not a boundary point.

\par The proof of Lemma \ref{limit normal} contains an important concept, namely smooth curves in $Z$
which approach the point $\hat z_{0,I}\in \hat Z_I$ in
normal direction.   Let $X_I \in \af_I^{--}$ correspond to $-{\bf e}_I\in V_I$ after the natural identification of $\af_I$
with $V_I$.  Then we saw that
the curve

$$\gamma_I: [0,1]\to \hat \uZ(\R), \ \ s\mapsto  \exp(-(\log s) X_I)\cdot z_0$$
is smooth with the following properties

\begin{itemize}
\item $\gamma_I\big((0,1]\big) \subset Z$,
\item $\gamma_I(0)=\hat z_{0,I}$,
\item $\gamma_I'(0)= {\bf e}_I \in V_I \subset T_{\hat z_{0,I}} \hat \uZ(\R)$.
\end{itemize}

More generally, let $v\in V_{I,\C}^\times $.  Then there exists a unique $a(v)\in \uA_I$ such that
$v= a(v)\cdot {\bf e}_I$.  If we consider now $\uA_I\subset \uZ$, then
we obtain a smooth curve

$$\gamma_v:  [0,1]\to \hat \uZ, \ \ s\mapsto  \exp(-(\log s) X_I)\cdot  a(v) $$
such that
\begin{itemize}
\item $\gamma_v\big((0,1]\big) \subset \uZ$,
\item $\gamma_v(0)=\hat z_{0,I}$,
\item $\gamma_v'(0)= v  \in V_{I,\C} \subset T_{\hat z_{0,I}} \hat \uZ$.
\end{itemize}

For $g\in \uG$ we now shift
$\gamma_v$ by $g$, i.e. we
set
$$\gamma_{g,v}(s):= g\cdot \gamma_v(s) \in \hat \uZ \qquad s\in [0,\e)\, .$$
Notice that $\gamma_{g,v}(0)= g\cdot \hat z_{0,I}$ and $\gamma'_{g,v}(0) = dL_g(\hat z_{0,I}) v
\in T_{g\cdot \hat z_{0,I}} \hat \uZ$,
where $dL_g$ denotes the differential of the displacement $L_g(z)=g\cdot z$.
If $v={\bf e}_I$ we simply set $\gamma_{g,I}:=\gamma_{g,v}$.
Specifically we are interested when $g\in  \hat \uH_I$ so that $\gamma_{g, v}(0)= \hat z_{0,I}$.

\par Next  we recall the decomposition of the complex tangent spaces
$$ T_{\hat z_{0,I}} \hat \uZ= T_{\hat z_{0,I}} \hat \uZ_I \oplus V_{I,\C}$$
which identifies $V_{I,\C}$ with the complex normal space to
the boundary orbit $\hat \uZ_I =\uG \cdot \hat z_{0,I}$ at  the point $\hat z_{0,I}$.
We denote by
$$\sP: T_{\hat z_{0,I}} \hat \uZ
\to V_{I,\C}, \ \ u\mapsto u_{\rm n}$$
the projection of a tangent vector $u\in  T_{\hat z_{0,I}} \hat \uZ$ to its normal part
$u_{\rm n}$.
With this notation we then obtain from the definition of $\uH_I=\ker \rho$ that

\begin{equation} \label{character HI-0} \uH_I=\{ g \in \hat\uH_I\mid (\forall v \in V_I^\times) \  [\gamma_{g,v}'(0)]_{\rm n}=v\} \,.\end{equation}

\begin{proof}[Proof of Theorem \ref{thm normal}]
First recall that $\hat \hf_I = \hf_I +\af_I$ from \eqref{Lie hat HI}.  As further  $\uA_{\uH}\subset \ker\underline{\rho}$
we see that $\underline{\rho}$ induces a representation of $\uA_I$ on $V_{I,\C}$ which is given by the faithful standard representation
$\underline{\rho}(a)(v)= a\cdot v$.   In fact,  if we denote by
$\tilde a\in \uA$ any lift of $a\in \uAZ$ for the projection $\pi:\uA\to \uA_Z$, then for $a\in\uA_I$  we have
$\rho(a)(v)= \tilde a  \cdot \gamma_v'(0)= a\cdot v$.
Notice that $\underline{\rho}(\uA_I) \simeq  \diag(k, \C^\times)$ within our identification $V_{I,\C}\simeq \C^k$.
It follows in particular that  $\af_I \cap \Lie(H_I)=\{0\}$ and thus $\hf_I=\Lie(H_I)$ by Lemma \ref{limit normal}.
This shows \eqref{1one1}.

\par Moving on to \eqref{2two2} we first observe that $\uP \hat\uH = \uP \uH$
for any spherical subgroup $\uH$.
In fact,  since $\hat\uH$ normalizes $\uH$ it follows that
$\uP\hat\uH$ is a union of open right $\uH$-orbits. Since $\uG$ is connected the identity
$\uP \hat\uH = \uP \uH$ follows. Equivalently,
\begin{equation} \label{hat uH} \hat \uH =(\uP\cap\hat\uH)\uH \,.\end{equation}

We apply this to the spherical subgroup $\uH_I$.  Now if
$p \in \uP\cap\hat\uH_I$ then
\begin{equation} \label{am fix} p\cdot \hat z_{0,I}= \hat z_{0,I}\, .\end{equation}
Let $\widetilde \uA_I:= \pi^{-1} (\uA_I)$.
Then \eqref{am fix} and the local structure theorem in the form of \eqref{LST-Ic}
implies $p \in \uM_{\uH} \widetilde  \uA_I\subset \uH_I \uA_I$, and hence
$\hat\uH_I=\uH_I\uA_I$ by \eqref{hat uH}.\end{proof}

In particular it follows from Theorem \ref{thm normal} that $\underline{\rho} (\hat \uH_I)\simeq \diag(k, \C^\times)$
and thus for $g \in \hat \uH_I$ that $\underline{\rho}(g)=\1$ if and only if
$\underline{\rho}(g)(v) =v$ for some $v\in V_I^\times$. Thus we obtain the following strengthening of
\eqref{character HI-0} to

\begin{eqnarray}\label{character HI} \uH_I&=&\{ g \in \hat\uH_I\mid [\gamma_{g,I}'(0)]_{\rm n}={\bf e}_I\}\, \\
\notag &=& \{ g \in \hat\uH_I\mid [\gamma_{g,v}'(0)]_{\rm n}=v\} \quad(v\in V_I^\times)\, . \end{eqnarray}

\subsection{The part of the normal bundle which points to $Z$}\label{nb points to Z}

We denote by $A_I$ the identity component of $\uA_I(\R)$.

According to Theorem \ref{thm normal} there is the exact sequence
\begin{equation} \label{exact1} \1 \to \uH_I \to \hat \uH_I\to \uA_I\to \1\, .\end{equation}
In (\ref{exact1}) we take real points, which is
only left exact, and obtain

\begin{equation} \label{exact2} \1 \to H_I \to \hat H_I\to
\uA_I(\R)\, .\end{equation}

The image of the last arrow in \eqref{exact2} is an open subgroup
since taking real points is exact on the level of Lie algebras.
We denote this open subgroup by $A(I)$ and record the exact sequence

\begin{equation} \label{exact3} \1 \to H_I \to \hat H_I\to
A(I)\to \1\, .\end{equation}
In particular,
\begin{equation}  A(I) =  A_I F(I),\end{equation}
where $F(I)<\{-1, 1\}^k \subset \uA_I(\R)$ is a subgroup of the $2$-torsion group $\{-1, 1\}^k$ of $\uA_I(\R)\simeq
(\R^\times)^k $.

\begin{rmk} (a) The non-compact torus $A(I)\simeq \hat H_I / H_I$ acts naturally on $Z_I=G/H_I$ from the right
and thus commutes with the left $G$-action on $Z_I$.
\par (b)  Since $\uA_I \subset \uAZ$  we obtain that $A(I)$ is naturally a subgroup of $\uAZ(\R)$.
In particular we stress
that it is not possible in general to realize $A(I)$ as a subgroup of $A=\uA(\R)\subset G$.
\end{rmk}

We return to the normal bundle of the boundary orbit $\sY=\hat Z_I$:

$$N_\sY= G \times_{G_y}V_I =G\times_{\hat H_I} V_I\, .$$
From \eqref{exact3} we obtain that
\begin{equation}\label{im rho} \rho(\hat H_I){\bf e}_I =A(I)\cdot {\bf e}_I=F(I) \cdot V_I^0\, .\end{equation}

Recall the set $F_I= F\cap \uA_I(\R)\subset \{-1, 1\}^k$ with $F(I)\subset F_I$.  We define an $A(I)$-stable open cone in $V_I$ by

\begin{equation} \label{chamber union}   V_{Z,I} =  F_I \cdot V_I^0= F_I\cdot(\R^+)^k\, ,\end{equation}
and we define the  cone-bundle

\begin{equation} \label{pointed normal bundle} N_\sY^Z:=  G \times_{G_y}  V_{Z,I}\end{equation}
as part of the normal bundle $N_\sY$ which {\it points to} $Z$.
To explain the term "points to $Z$" we recall the curves $\gamma_v$ and note that
$\gamma_v\big((0,\e)\big)\subset Z$ if and only if $a(v)\in A(I)$, that is $a(v)\cdot {\bf e}_I =v\in F_I V_I^0$.

\par Observe that the coset space $\sF_I:=F(I) \bs F_I $  identifies with the $N_\sY^Z/G$.
For every $\st=F(I)t\in \sF_I$  with $t\in F_I$  we now denote by

$$N_\sY^{Z,\st}=  G \times_{G_y}    [F(I)t \cdot V_I^0] $$
and note that
$$N_\sY^Z=\coprod_{\st\in \sF_I}  N_\sY^{Z,\st}$$
is the disjoint decomposition into $G$-orbits.

\par Presently we do not have a good understanding of $F(I)$ and the coset space
$\sF_I=F(I)\bs F_I$, except when $G$ is complex, where $F_I=F(I)$ for all $I\subset S$.
Here are two further instructive examples:

\begin{ex}\label{ex=SL2}  (cf. \cite[Ex. 14.6]{KK}) (a) Let $G=\Sl(2,\R)$ and $H=\SO(1,1)$.  We identify $Z=G/H$ with the one sheeted hyperboloid

$$Z=\{ (x_1, x_2, x_3)\in \R^3\mid  x_1^2 - x_2^2 -x_3^2 = -1\}\, .$$
We note that $Z=\uZ(\R)$ and we embed $Z$ into the projective space $\Pb(\R^4)$.  The closure of $Z$ in
projective space is given by

$$\hat Z=\{ [x_1, x_2, x_3, x_4] \in \Pb(\R^4) \mid  x_1^2 + x_4^2 = x_2^2 + x_3^2\}\simeq \Sb^1 \times \Sb^1$$
and coincides with the wonderful compactification $\hat \uZ(\R)$.  In the identification
$\hat Z=\Sb^1\times \Sb^1$ from above, the unique closed $G$-orbit is given  by $\sY= \{ \1\} \times \Sb^1$ and
$$ \hat Z= Z \cup \sY\, .$$
In particular both directions of the normal bundle $N_\sY$  point to $Z$.  In our notation above this means
that $F_I=\{ -1, 1\}$, $F(I)=\{1\}$ and
$$N_\sY=N_\sY^Z= N_\sY^{Z, +1}\amalg N_\sY^{Z, -1}\, .$$
\par (b) The situation becomes different when we consider $G=\Sl(2,\R)$ with $H=\SO(2)$.  We identify $Z=G/H$ with the upper component of the two sheeted hyperboloid $\uZ(\R)$, in formulae:

$$Z=\{ (x_1, x_2, x_3)\in \R^3\mid  x_1^2 - x_2^2 -x_3^2 = 1, x_1>0\}\, ,$$
and
$$\uZ(\R)=\{ (x_1, x_2, x_3)\in \R^3\mid  x_1^2 - x_2^2 -x_3^2 = 1\}\, .$$
We emphasize that $\uZ(\R)$ has two connected components, one of them being $Z$.
As before we view $\uZ(\R)$ in the projective space $\Pb(\R^4)$ and obtain the wonderful compactification
$\hat \uZ(\R)$ as the closure

$$\hat \uZ(\R)=\{ [x_1, x_2, x_3, x_4] \in \Pb(\R^4) \mid  x_1^2  = x_2^2 + x_3^2+x_4^2\}\simeq \Sb^2\, . $$
The unique closed orbit $\sY=\Sb^1$ is identified with the great circle $\Sb^1\subset\Sb^2$ which divides
$\hat \uZ(\R)$ into the two open  $G$-orbits.
In particular, only one direction of the normal bundle $N_\sY$  points to $Z$.   We obtain that
$F_I=F(I)=\{1\}$ with
$$N_\sY \supsetneq N_\sY^Z\, .$$
By this we end Example \ref{ex=SL2}.
\end{ex}

Define

$$Z_I:=G/H_I$$
and write $z_{0,I}=H_I$ for its standard base point.

Let $\st =\sF_I$ and fix with $t\in F_I$ a representative so that $\st = F(I)t$.  We then claim that

$$ G/H_I\to N_\sY^{Z,\st}, \ \ gH_I\mapsto [g, t\cdot {\bf e}_I]$$
defines a $G$-equivariant diffeomorphism
for each $\st\in \sF_I$.  In fact, with $A(I)\simeq F(I)V_I^0$ via $a\mapsto a\cdot {\bf e}_I$, this follows
from:

\begin{equation} \label{N-ident} N_\sY^{Z,\st}\simeq G/H_I  \times_{A(I)}  (A(I) \cdot t \cdot {\bf e}_I)
\simeq G/H_I \times_{A_I}  V_I^0\simeq G/H_I\, .\end{equation}

 \subsection{Speed of convergence} Next we wish to describe
 a quantitative version of  the fact that $\uH_I$ asymptotically preserves normal limits,
 i.e.~of \eqref{character HI}.
 For that recall the curves $\gamma_{g,v}$.

\begin{lemma}\label{speed lemma}  Let $g\in \hat \uH_I$ and $v\in V_I^\times$. Then there exists
a smooth curve $[0,\e)\to \uP, \ s\mapsto p_s$ such that
$$ \gamma_{g,v}(s)= p_s\cdot \gamma_v(s)\qquad (s\in [0,\e)\,)$$
and:
\begin{enumerate}
\item \label{curve1}$p_0 \in \uA_I$.
\item\label{curve2}  If $g\in \uH_I$ then $p_0=\1$.
\item \label{curve3} If $g\in \hat H_I$ we can assume that $p_s\in P$.
\end{enumerate}
\end{lemma}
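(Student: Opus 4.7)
The plan is to work in the local coordinates on the open neighborhood $\uZ_0(\Fc_{j_I}) \simeq \uU \times [\uM/\uM_\uH \times_{F_M} V_\C]$ of $\hat z_{0,I}$ provided by \eqref{LST-coo}, exploiting Theorem \ref{thm normal} and the vanishing of the isotropy representation on $V_I$ for elements of $\uH_I$.

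\emph{Reduction to $g \in \uH_I$.} By Theorem \ref{thm normal}(2) one has $\hat \uH_I = \uH_I \widetilde \uA_I$ with $\widetilde \uA_I = \pi^{-1}(\uA_I) \subset \uA$, so I write $g = h_0 \tilde a_I$ with $h_0 \in \uH_I$ and $\tilde a_I \in \widetilde \uA_I$. Since $\tilde a_I$, $a(v)$ and $\exp(-(\log s) X_I)$ all lie in the abelian group $\uA$, commuting them gives $\gamma_{g,v}(s) = h_0 \cdot \gamma_{v'}(s)$ with $v' := a_I v \in V_I^\times$. Hence it suffices to produce a smooth curve $q_s \in \uP$ with $q_0 = 1$ and $q_s \cdot \gamma_{v'}(s) = h_0 \cdot \gamma_{v'}(s)$; then $p_s := q_s \tilde a_I$ satisfies $p_s \cdot \gamma_v(s) = \gamma_{g,v}(s)$ and $p_0 = \tilde a_I \in \widetilde \uA_I$, giving (1). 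Assertion (2) is immediate by taking $\tilde a_I = 1$ when $g \in \uH_I$.

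\emph{Local coordinates and smoothness of the ratio.} Using the character computations from the proof of Lemma \ref{limit normal}, one finds $\gamma_{v'}(s) = (1, [1, sv' + {\bf e}_I^\perp])$ in the chart of \eqref{LST-coo}, where ${\bf e}_I^\perp := \sum_{i > k} {\bf e}_i^I$. Since $h_0$ fixes $\hat z_{0,I}$, I write $h_0 \cdot \gamma_{v'}(s) = (u(s), [m(s)\uM_\uH, w(s)])$ with $u, m, w$ smooth on $[0,\epsilon)$ normalized so that $u(0) = 1$, $m(0) = 1$, $w(0) = {\bf e}_I^\perp$. The crux is that $h_0 \in \ker \underline\rho$ forces the normal derivative of $s \mapsto h_0 \cdot \gamma_{v'}(s)$ at $s=0$ to equal $\gamma_{v'}'(0) = v'$, so $w(s) = {\bf e}_I^\perp + sv' + O(s^2)$. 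Coordinate by coordinate the $\uAZ$-valued quotient $r(s) := w(s)\gamma_{v'}(s)^{-1}$ then reads $(sv'_i + O(s^2))/(sv'_i) = 1 + O(s)$ for $i \leq k$ and $(1 + O(s))/1$ for $i > k$, so $r$ extends smoothly through $s = 0$ with $r(0) = 1$.

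\emph{Construction and verification.} Since $\uA \to \uAZ$ is a smooth submersion, $r(s)$ admits a smooth lift $\tilde a(s) \in \uA$ with $\tilde a(0) = 1$; setting $q_s := u(s) m(s) \tilde a(s) \in \uU\uM\uA = \uQ \subset \uP$ yields a smooth curve with $q_0 = 1$. The chart identification $(u, [m\uM_\uH, w]) \leftrightarrow um\tilde w \cdot z_0$, combined with the definition of $r(s)$, gives $q_s \cdot \gamma_{v'}(s) = h_0 \cdot \gamma_{v'}(s)$; the $\uA_\uH$-ambiguity in the lift is harmless because $\uA_\uH \subset \uH$ acts trivially on $z_0$. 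For part (3), when $g \in \hat H_I$ the sequence \eqref{exact3} supplies a real decomposition $g = h_0 \tilde a_I$, the coordinate curves $u(s), m(s), w(s)$ can be taken real, and $r(s) \in \uAZ(\R)$ admits a real smooth lift to $A$ near the identity, placing $q_s$ and hence $p_s$ in $P$.

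\emph{Main obstacle.} The delicate point is the smoothness of $r$ at $s = 0$: the cancellation of the factor $s$ appearing in the denominator of the first $k$ coordinates requires a clean first-order Taylor expansion of $h_0 \cdot \gamma_{v'}(s)$ in the normal direction, and this is precisely what the geometric statement $\underline\rho(h_0) = \1$ for $h_0 \in \uH_I$ provides. Once this is established, the rest is bookkeeping: tracking the $F_M$-ambiguity when choosing a smooth representative for $m(s)$, and handling the difference between complex and real lifts in the submersion $\uA \to \uAZ$.
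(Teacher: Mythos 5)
Your argument is essentially the paper's proof made explicit: the paper also works in the local-structure-theorem chart \eqref{LST-coo} to produce a smooth curve $\tilde p_s\in\uP$ with $\tilde p_s\cdot\gamma_v(s)=\gamma_{g,v}(s)$, pins down $\tilde p_0$ via the stabilizer of $\hat z_{0,I}$, and gets (2) from $\uH_I=\ker\underline\rho$ exactly as you do; your coordinate-wise quotient $r(s)$ is the cancellation the paper leaves implicit, and your preliminary splitting $g=h_0\tilde a_I$ via Theorem \ref{thm normal} is a harmless variant (the paper instead treats general $g$ directly and normalizes $\tilde p_0$ afterwards by an element of $\uP\cap\uH$). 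Two small slips: $\uU\uM\uA$ is neither $\uQ$ nor is $\uQ\subset\uP$ — what you need is $\uU\uM\uA\subset\uP$, which holds because $\uU\subset\underline N$ and $\uM\uA=Z_\uG(\uA)\subset\uP$; and $p_0=\tilde a_I$ lies a priori only in $\widetilde\uA_I=\pi^{-1}(\uA_I)$, so to get (1) literally you should take $\tilde a_I$ in the subtorus $\uA_I\subset\uA$ (possible since $\hat\uH_I=\uH_I\uA_I$) or correct by an element of $\uA_\uH\subset\uP\cap\uH$, which fixes every point of the curve. (Also $w(s)={\bf e}_I^\perp+sv'+O(s^2)$ overstates the $V_I^\perp$-components, but your quotient computation only uses what is true.)

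The genuine gap is in part (3). The exact sequence \eqref{exact3} gives $\hat H_I/H_I\simeq A(I)=A_IF(I)$, but it does \emph{not} supply a real decomposition $g=h_0\tilde a_I$ with $\tilde a_I\in\uA(\R)$: when the image of $g$ in $A(I)$ has a nontrivial torsion component in $F(I)$, its lifts to $\uA$ lie in $T_Z=\exp(i\af_H^\perp)$ and are in general not real, so the element $\tilde a_I$ you multiply into $p_s=q_s\tilde a_I$ need not lie in $P$ (nor in $G$). As written, your argument proves (3) only for those $g\in\hat H_I$ whose image lies in the identity component $A_I$ — in particular for $g\in H_I$, which is the case actually invoked later in Corollary \ref{lemma H_I limit}. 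To cover general $g\in\hat H_I$ you should do what the paper indicates, namely rerun your chart computation over $\R$ from the start, using the real form \eqref{inf slice} of the chart: then $u(s)$, $m(s)$, $w(s)$ and the quotient $r(s)\in\uA_\uZ(\R)$ are real, $r(0)$ is the image of $g$ in $A(I)$, and the curve realizing $r(s)$ must be found inside $P$ itself rather than manufactured as $q_s\tilde a_I$; for $g\in H_I$ this is immediate since $r(0)=\1$ and $\af\to\af_Z$ is onto, which is the real statement your later applications need.
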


\begin{proof}  Note that $g\cdot \hat z_{0,I}=\hat z_{0,I}$ by assumption, and hence $\gamma_{g,v}(s)\to
\hat z_{0,I}$ for $s\to 0^+$ in a smooth fashion.
\par The local structure theorem gives us coordinates near $\hat z_{0,I}$, see
 \eqref{LST-Ic}. In particular, it implies that we can find a smooth curve $s\mapsto \tilde p_s\in \uP$ such that
$\tilde p_s\cdot \gamma_v(s) = \gamma_{g,v}(s)$.
Note that $\tilde p_0\cdot \hat z_{0,I} =\hat z_{0,I}$  and hence
$\tilde p_0 \in  \uA_I (\uP\cap \uH)$ by \eqref{LST-Ic}. With that
we obtain an element $p_H\in \uP\cap \uH$ such that $p_s:= \tilde p_s p_H$ satisfies \eqref{curve1}. Here
we used the fact that  $\gamma_v=\gamma_{p,v}$ for all $p\in \uP\cap \uH$.
\par
We move on to \eqref{curve2}.  For that we recall the decomposition of the tangent space
$$T_{\hat z_{0,I}} \hat \uZ= T_{\hat z_{0,I}} \hat \uZ_I \oplus V_{I,\C}$$
and the normal part $u_{\rm n}\in V_{I,\C}$ of a tangent vector $u\in  T_{\hat z_{0,I}} \hat \uZ$.
Now if $g\in \uH_I$, then by the definition of $\uH_I$ as the kernel of the isotropy representation, we obtain that $[\gamma_{g,v}'(0)]_{\rm n} = v$. On the other hand,
using the identity $\gamma_{g,v}(s) = p_s\cdot \gamma_v(s)$ we obtain
$[\gamma_{g,v}'(0)]_{\rm n}= p_0 \cdot v$.  As $p_0\in \uA_I$, this implies $p_0=\1$.
\par The last assertion \eqref{curve3} is proved using the real version of the argument {\text for \eqref{curve1}}.
\end{proof}

Let $d_\uG$ a left invariant Riemannian metric on $\uG$.  Then the quantitative version of
\eqref{character HI} reads as follows:

\begin{cor} \label{lemma H_I limit} Let $X_I\in \cf_I^{--}\subset \af_I^{--}$ correspond to $-{\bf e}_I\in V_I$
in the identification $V_I \simeq \af_I$. Set $a_t:=  \exp(tX_I)$ for $t\geq 0$.
Let $h_I \in \uH_I$.
Then there exist constants $C,\e,  t_0>0$  and for each $t\ge t_0$ an element $x_t \in \uP$ such that
$d_\uG(x_t, \1) \leq C e^{-\e t}$
and
\begin{equation} \label{normal-approx}  h_I a_t\cdot z_0 =  x_t a_t \cdot z_0\, .
\end{equation}
If further, $h_I\in H_I$, then we can choose $x_t\in P$.
\end{cor}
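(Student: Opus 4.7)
The plan is to reduce the statement to Lemma \ref{speed lemma} via the substitution $s = e^{-t}$. Recall that $X_I$ corresponds to $-{\bf e}_I$ under the identification $V_I \simeq \af_I$, so the normal curve at $z_0$ in direction ${\bf e}_I$ is
\[
\gamma_{{\bf e}_I}(s) \;=\; \exp\bigl(-(\log s)\,X_I\bigr)\cdot z_0.
\]
Under the change of variables $s = e^{-t}$ this becomes $\gamma_{{\bf e}_I}(e^{-t}) = \exp(tX_I)\cdot z_0 = a_t\cdot z_0$, so for any $h_I \in \uH_I$ the shifted normal curve satisfies $\gamma_{h_I,{\bf e}_I}(e^{-t}) = h_I a_t\cdot z_0$.

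Applying Lemma \ref{speed lemma} with $g = h_I \in \uH_I$ and $v = {\bf e}_I$ produces a smooth curve $s \mapsto p_s \in \uP$ on some interval $[0,\delta)$, with $p_0 = \1$ by item \eqref{curve2}, such that $\gamma_{h_I,{\bf e}_I}(s) = p_s \cdot \gamma_{{\bf e}_I}(s)$. Setting $t_0 := -\log\delta$ and $x_t := p_{e^{-t}} \in \uP$ for $t \geq t_0$, we obtain the desired identity $h_I a_t \cdot z_0 = x_t a_t \cdot z_0$. The bound on $d_\uG(x_t,\1)$ then follows at once from smoothness of $s \mapsto p_s$ at $s = 0$ together with $p_0 = \1$: a first-order Taylor estimate in $\uG$ yields $d_\uG(p_s,\1) \le C s$ for $s$ small, hence $d_\uG(x_t,\1) \le C e^{-t}$, so any $\epsilon \in (0,1]$ works.

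For the refinement concerning real points: if $h_I \in H_I \subset \hat H_I$, then item \eqref{curve3} of Lemma \ref{speed lemma} allows the curve $p_s$ to be chosen inside $P$, and therefore $x_t = p_{e^{-t}} \in P$. I do not foresee any genuine obstacle in this argument; the only point requiring care is to verify that the conventions match so that Lemma \ref{speed lemma} applies verbatim with $v = {\bf e}_I$ to the trajectory $a_t \cdot z_0$, which is precisely what the substitution $s = e^{-t}$ accomplishes.
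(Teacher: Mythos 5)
Your proof is correct and follows essentially the same route as the paper: apply Lemma \ref{speed lemma} with $g = h_I$ and $v = {\bf e}_I$, set $x_t := p_{e^{-t}}$, and use $p_0 = \1$ plus differentiability of $s\mapsto p_s$ at $s=0^+$ to get the exponential bound, with item \eqref{curve3} handling the real case. The explicit change-of-variables bookkeeping you spell out is implicit in the paper but matches exactly.
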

\begin{proof} Apply the lemma to $g=h_I$ and $v=e_I$.
Set $x_t:= p_{e^{-t}}$ and use that $p_0=\1$ and $s\mapsto p_s$ is differentiable
at $s=0^+$.
 \end{proof}

\subsection{The intersection of $\uH_I$ with $\uL$}

\par  For later reference we record the following  fact, which is more or less
immediate from \eqref{LST-Ic}. Since it is crucial for the paper
we include a detailed argument.

\begin{lemma}\label{equal L cap H}  For all $I\subset S$ one has
\begin{equation}  \label{same L cap H} \uL \cap \uH =\uL\cap \uH_I \, .\end{equation}
\end{lemma}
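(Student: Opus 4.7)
The plan is to prove the equality in two stages: first a Lie algebra equality, then lift to the group level using the local structure theorem at the boundary point $\hat z_{0,I}$.

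\emph{Lie algebra level.} From \eqref{eq-hi1}--\eqref{eq-hi2} we have $\hf = (\lf \cap \hf) \oplus \G(T)$ and $\hf_I = (\lf \cap \hf) \oplus \G(T_I)$, with both graphs $\G(T), \G(T_I) \subset \oline\uf \oplus \uf \oplus (\lf \cap \hf)^{\perp_\lf}$. An element of either graph is determined by its $\oline\uf$-component, and $\lf \cap \oline\uf = 0$ since $\lf$ is a Levi complement to $\uf \oplus \oline\uf$. Hence $\G(T) \cap \lf = \G(T_I) \cap \lf = 0$, and we obtain $\lf \cap \hf_I = \lf \cap \hf$. This already establishes that the identity components $(\uL \cap \uH)^0$ and $(\uL \cap \uH_I)^0$ coincide.

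\emph{Forward inclusion $\uL \cap \uH \subset \uL \cap \uH_I$.} The $\uL$-slice at $\hat z_{0,I}$ appearing in \eqref{LST-Ic}, namely $[\uM/\uM_\uH F_{M,I}] \times_{F_M^I} [\uAZ/\uA_I]$, is a $\uL$-equivariant quotient of $\uL/\uL_\uH$ sending the base point to $\hat z_{0,I}$. Hence $\uL_\uH = \uL \cap \uH$ fixes $\hat z_{0,I}$, so $\uL_\uH \subset \hat\uH_I$. The isotropy representation $\underline\rho\colon \hat\uH_I \to \GL(V_{I,\C})$ factors through the abelian quotient $\hat\uH_I/\uH_I \simeq \uA_I$ by Theorem \ref{thm normal}\eqref{2two2}. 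Each generator of $\uL_\uH$ acts trivially on the normal coordinates $V_I$: $\uL_n$ is semisimple and admits no nontrivial characters; $\uM_\uH$ maps trivially to $F_M = \hat M_H/M_H$, which is the only part of $\uM$ acting on the $V$-direction; and $\uA_\uH = \ker(\uA \to \uAZ)$ acts trivially on $\uAZ \supset V_I$. So $\underline\rho(\uL_\uH) = \1$, giving $\uL_\uH \subset \ker\underline\rho = \uH_I$.

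\emph{Reverse inclusion.} By Theorem \ref{thm normal}\eqref{2two2} we have $\hat\uH_I = \uH_I \widetilde\uA_I$ with $\widetilde\uA_I := \pi^{-1}(\uA_I) \subset \uA \subset \uL$. Combined with a careful reading of \eqref{LST-Ic}, this yields the factorization $\uL \cap \hat\uH_I = \uL_\uH \widetilde\uA_I$. Any $h \in \uL \cap \uH_I$ therefore decomposes as $h = h' a$ with $h' \in \uL_\uH$ and $a \in \widetilde\uA_I$. The forward inclusion gives $\underline\rho(h') = \1$; combined with $\underline\rho(h) = \1$ (as $h \in \uH_I$) and the faithfulness of $\underline\rho$ on $\widetilde\uA_I/\uA_\uH \simeq \uA_I$, this forces $a \in \uA_\uH \subset \uL_\uH$, so $h = h'a \in \uL_\uH = \uL \cap \uH$. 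The main delicate step, as foreshadowed by the authors' own remark, is extracting the factorization $\uL \cap \hat\uH_I = \uL_\uH \widetilde\uA_I$ from \eqref{LST-Ic}: this requires careful tracking of the finite groups $F_M, F_{M,I}, F_M^I$ and their interactions with the cover $\pi\colon \uA \to \uAZ$.
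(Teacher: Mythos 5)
Your approach is genuinely different from the paper's. The paper first reduces to the factor $\uM\uA$ (via $\uL=\uM\uA\uL_{\rm n}$ and $\uL_{\rm n}\subset\uH\cap\uH_I$), proves $\uH\cap(\uM\uA)\subset\uH_I\cap(\uM\uA)$ with a curve argument, and proves the reverse by exploiting the \emph{elementary} group structure of $\uH_I\cap(\uM\uA)$ from \cite[App.~B]{DKS}. You instead work directly with $\uL$, $\hat\uH_I$ and the isotropy representation $\underline\rho$. The strategy is reasonable, but as written there are two genuine gaps.

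\emph{Gap in the forward inclusion.} You argue that every \emph{generator} of $\uL_\uH$ among $\uL_{\rm n}$, $\uM_\uH$, $\uA_\uH$ lies in $\ker\underline\rho$. But $\uL_\uH$ is not generated by these three subgroups: $(\uM\uA)\cap\uH$ can contain ``mixed'' elements $ma$ with $m\in\hat\uM_\uH\setminus\uM_\uH$ and $a\notin\uA_\uH$, where $m\cdot z_0=\pi(a)^{-1}\in F_M\subset\uA_\uZ(\R)$ is a nontrivial torsion element. For such an element $m$ and $a$ individually need not fix $\hat z_{0,I}$, so $\underline\rho(m)$ and $\underline\rho(a)$ are not even separately defined, and your case analysis says nothing about $\underline\rho(ma)$. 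The clean way to close this is the paper's observation: any $g\in(\uM\uA)\cap\uH$ centralizes $\exp(sX)$ for $X\in\af_I^{--}\subset\af_Z$ and fixes $z_0$, hence fixes the entire curve $\gamma_X(s)=\exp(-(\log s)X)\cdot z_0$; therefore $\rho(g)(\gamma_X'(0))=\gamma_X'(0)$ for all such $X$, and $\rho(g)=\1$.

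\emph{Gap in the reverse inclusion.} The factorization $\uL\cap\hat\uH_I=\uL_\uH\widetilde\uA_I$ is asserted but not proved, and you flag it yourself as ``the main delicate step.'' This is exactly where the substance of the lemma sits, and it is not a routine reading of \eqref{LST-Ic}: the twisted product $[\uM/\uM_\uH F_{M,I}]\times_{F_M^I}[\uA_\uZ/\uA_I]$ permits elements $ma\in(\uM\uA)\cap\hat\uH_I$ with $m\in\hat\uM_\uH$ (not merely $\uM_\uH F_{M,I}$) provided $\pi(a)$ compensates modulo $F_M\uA_I$, so extracting $\uL_\uH\widetilde\uA_I$ requires the same kind of careful tracking the paper does explicitly. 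The paper's route avoids this: since $\uH\cap(\uM\uA)$ and $\uH_I\cap(\uM\uA)$ have the same Lie algebra $(\mf_H+\af_H)_\C$ (by \eqref{eq-hi1}), and $\uH_I\cap(\uM\uA)=(\uM\cap\uH_I)(\uA_{\uH_I})_0$ by elementarity, with $(\uA_{\uH_I})_0=(\uA_\uH)_0$, it remains only to show $\uM\cap\uH_I\subset\uM\cap\uH$, which is done by reducing (via \eqref{LST-I}) to $m\in F_{M,I}\subset\{-1,1\}^k$ and using $\rho(m)=\1\Rightarrow m=\1$ there. Unless you prove the factorization, the proposal is incomplete.
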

\begin{proof} First note that $\uL= \uM \uA \uL_{\rm n}$ and from $\uL_{\rm n} \subset \uH\cap \uH_I$
we obtain that $\uH\cap \uL= \uL_{\rm n}[(\uM\uA)\cap \uH]$ and likewise $\uH_I \cap \uL = \uL_{\rm n}[(\uM\uA)\cap \uH_I]$.
Hence it suffices to show that $\uH\cap (\uM\uA )= \uH_I \cap (\uM\uA)$.
\par We first show that $\uH\cap (\uM\uA) \subset  \uH_I \cap (\uM\uA)$. For that we recall the
isotropy representation $\rho$ which we view here as a representation of $\hat \uH_I$ so that
$\uH_I=\ker \rho$. Recall  the curves $\gamma_X$ from the proof of Lemma \ref{limit normal}. Now for
$g\in (\uM\uA)\cap H$ we have $g\gamma_X(s) = \gamma_X(s)$ and thus $g\gamma_X'(0)= \gamma_X'(0)$.
Hence $g\in \ker \rho= \uH_I$ and "$\subset$" is established.
\par  For the converse inclusion we first note that both $\uH\cap (\uM\uA )$ and $\uH_I \cap (\uM\uA)$
are elementary algebraic groups (see \cite{KK} or \cite[Appendix B]{DKS} for the notion "elementary").
 Together with $\lf \cap \hf = \lf\cap \hf_I$  (which we obtain from \eqref{eq-hi1} )
 we infer that $\uH\cap (\uM\uA )$ and $\uH_I \cap (\uM\uA)$
have the same Lie algebra, namely $[\mf_H + \af_H]_\C$.
Further as $\uM \uA$ is
an elementary group we obtain $\uH_I \cap (\uM\uA) = (\uM \cap \uH_I)  (\uA_{\uH_I})_0$, see \cite[Appendix B]{DKS}.

From $\af\cap \hf =\af \cap \hf_I$ again obtained from \eqref{eq-hi1} we derive
$(\uA_{\uH_I})_0=(\uA_{\uH})_0$. Hence   we only need to show that
$\uM\cap \uH_I \subset \uM\cap \uH$.  Let now $m \in \uM\cap \uH_I$. In particular $m\in \hat \uH_I$ fixes $\hat z_{0,I}$ and thus
we obtain from \eqref{LST-I} that $m \in \uM_\uH F_{M,I}$.  Hence we may assume that $m \in F_{M,I}$.
From $\rho(m)=\1$ we then obtain that  $m\in F_{M,I}\subset  \{-1, 1\}^k$ needs to have all coordinates to be $1$, i.e.
$m=\1$ and the proof is complete.
\end{proof}

\subsection{The structure of $\uZ_I(\R)$}\label{structure of Z_I}
From Theorem \ref{thm normal} and Proposition \ref{prop independence choice}  we obtain:

\begin{lemma}  For any $I\subset S$,
 the $\uG$-isomorphism class of the variety $\uZ_I$ is canonically attached to $\uZ$,  i.e.
 independent of the particular smooth toroidal  compactification $\hat \uZ=\uZ(\Fc)$ of $\uZ$.
 \end{lemma}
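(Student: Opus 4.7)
This statement is essentially a direct consequence of Proposition~\ref{prop independence choice} together with the intrinsic characterization of $\uH_I$ furnished by Theorem~\ref{thm normal}; I would organize the argument in two short steps.

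First, invoke Proposition~\ref{prop independence choice} to conclude that the subgroup $\hat\uH_I\subset\uG$, and hence the $\uG$-variety $\hat\uZ_I=\uG/\hat\uH_I$, is already known to depend only on $\uZ$ and not on the choice of smooth toroidal completion $\hat\uZ=\uZ(\Fc)$. Thus the only remaining issue is to show that the normal subgroup $\uH_I\triangleleft\hat\uH_I$ is itself canonically determined.

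Second, I would exhibit an intrinsic description of $\uH_I$ inside the canonical group $\hat\uH_I$. By Theorem~\ref{thm normal}(1) its Lie algebra is the subalgebra $\hf_I\subset\gf$ defined in \eqref{eq-hi1}--\eqref{eq-hi2}, which is built purely from $\hf$, from the Levi decomposition $\qf=\lf\ltimes\uf$ of the subgroup $\uQ$ given by the local structure theorem, and from the subset $I\subset S$ of spherical roots---all objects intrinsic to $\uZ$. Hence the identity component $(\uH_I)_0$, being the unique connected algebraic subgroup of $\hat\uH_I$ with Lie algebra $\hf_I$, is canonically attached to $\uZ$. To fix the component group I would appeal to Theorem~\ref{thm normal}(2): the short exact sequence $\1\to\uH_I\to\hat\uH_I\to\uA_I\to\1$ has a canonical splitting by the intrinsic subgroup $\widetilde\uA_I=\pi^{-1}(\uA_I)\subset\uA$, so that $\hat\uH_I=\uH_I\widetilde\uA_I$ as shown in the proof of Theorem~\ref{thm normal}(2), and $\uH_I$ is recovered as the kernel of the resulting projection $\hat\uH_I\twoheadrightarrow\uA_I$. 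Since this projection is built from the canonical pieces $\hat\uH_I$, $\widetilde\uA_I$ and the intrinsic torus $\uA_I\subset\uAZ$, the subgroup $\uH_I$ itself is canonical, and consequently so is the $\uG$-variety $\uZ_I=\uG/\uH_I$.

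The main, and essentially only, subtle point is the passage from canonicity of the identity component $(\uH_I)_0$ (which is automatic from $\hf_I$) to canonicity of the possibly disconnected group $\uH_I$. This is handled entirely by the semidirect decomposition obtained in the proof of Theorem~\ref{thm normal}(2), which introduces no further choices beyond the already canonical data $\hat\uH_I$ and $\widetilde\uA_I$.
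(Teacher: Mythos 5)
Your overall route is the one the paper itself takes: the lemma is obtained there from exactly the two ingredients you cite, Proposition~\ref{prop independence choice} (canonicity of $\hat\uH_I$) and Theorem~\ref{thm normal}, and your first step as well as the canonicity of the identity component $(\uH_I)_0$ (the connected subgroup with Lie algebra $\hf_I$, which by \eqref{eq-hi1}--\eqref{eq-hi2} is intrinsic to $\uZ$ and $I$) are fine. The gap is in your recovery of the full group $\uH_I$. The projection $\hat\uH_I\twoheadrightarrow\uA_I$ whose kernel you propose to take is, by construction, the quotient map modulo $\uH_I$, so using it to pin down $\uH_I$ is circular; note also that $\hat\uH_I=\uH_I\widetilde\uA_I$ is not a splitting, since $\uH_I\cap\widetilde\uA_I=\uA_\uH$. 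More seriously, the formal data you invoke --- a normal algebraic subgroup with prescribed identity component, whose product with $\widetilde\uA_I$ is all of $\hat\uH_I$ and whose quotient is a torus --- do not determine the subgroup. Already in $\hat H=\{\pm1\}\times\C^\times$ with $\widetilde A=\{1\}\times\C^\times$, both $N_1=\{\pm1\}\times\{1\}$ and the diagonal copy $N_2=\{(\epsilon,\epsilon)\}$ are normal, have the same (trivial) identity component, satisfy $N_i\widetilde A=\hat H$ and $\hat H/N_i\simeq\C^\times$, yet $N_1\neq N_2$. Since it is precisely the component group of $\uH_I$ (not its Lie algebra) that could conceivably depend on the fan $\Fc$, your step 2 does not close the argument.

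What is needed, and what the paper's citation of Theorem~\ref{thm normal} implicitly relies on, is a comparison of the isotropy representations themselves: $\uH_I=\ker\underline\rho$, where $\underline\rho$ is the action of the canonical group $\hat\uH_I$ on the normal space to the canonical orbit $\hat\uZ_I$ (see \eqref{character HI-0} and \eqref{character HI}), and one must show that this kernel does not depend on the compactification used to form the normal space. This follows by the same d\'evissage as in the proof of Proposition~\ref{prop independence choice}: first compare two cones $\Cc,\Cc'\in\Fc_I$ within one fan by connecting them face to face, then compare two fans by passing to a common smooth fan, inside which both compactifications give the same germ of $\hat\uZ$ along $\hat\uZ_I$ and hence the same normal bundle and the same isotropy action of $\hat\uH_I$, so the kernels coincide. (One can also try to phrase membership of $g\in\hat\uH_I$ in $\uH_I$ purely asymptotically inside $\uZ$, in the spirit of Lemma~\ref{speed lemma} and Corollary~\ref{lemma H_I limit}, but justifying such a characterization again uses the curves $p_s$ produced from the local structure theorem at $\hat z_{0,I}$.) Either way, some geometric input beyond the group-theoretic shape of the extension $\1\to\uH_I\to\hat\uH_I\to\uA_I\to\1$ is indispensable.
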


In particular, it follows that up to $G$-isomorphism
$\uZ_I(\R)$ is canonically attached to $\uZ(\R)$.
However, for $Z_I$ the situation is different.
We recall the shifted base points $z_w=w\cdot z_0$ and $\hat z_{w,I}$ from Subsection \ref{subsection rel open P-orbits}.
For $I\subset S$ we then define
the set of $G$-orbits

$$\sC_I:=\{  G\cdot \hat z_{w,I}\mid w\in \Wc\},$$
and note that different orbits in $\sC_I$ may not be isomorphic, see Example \ref{ex SL3} below.
In particular, the isomorphism class of $Z_I=G/H_I$ is not canonically attached to $Z$.
In this sense only the collection of $G$-spaces $\{ G/(H_w)_I \mid w\in \Wc\}$
 (where $(\hat{H_w})_I$ is the stabilizer of $\hat z_{w,I}$)
is canonically attached to the $G$-space
$Z=G/H$.

\begin{rmk}\label{rmk DI}  In case $\hat \uZ$ is wonderful the set
$\sC_I$ equals the set of $G$-orbits $\partial Z \cap \hat \uZ_I(\R)$.
This follows from Lemma \ref{lemma rel open}. The general case is a bit more complicated, see
Remark \ref{remark rel open} (b). Recall the boundary points $\hat z_{w,\Cc}$ from \eqref{zwC}.
Then
\begin{eqnarray*}\sD_I&:=&\{ G \cdot \hat z_{w,\Cc} \mid G\cdot \hat z_{w,\Cc}\subset \hat \uZ_I(\R), w\in \Wc,  \Cc\in \Fc\}\\
&=&\{G \cdot \hat z_{w,\Cc}\mid  w\in \Wc, \ \Cc\in \Fc_I\}\end{eqnarray*}
 yields all $G$-orbits in $\partial Z\cap \hat Z_I(\R)$.
\end{rmk}

For $\sc\in\sC_I$ we set
 $$\Wc_\sc:=\{w\in \Wc\mid G\cdot \hat z_{w,I}=\sc\}$$
and obtain the partition
\begin{equation} \label{party W} \Wc=\coprod_{\sc \in \sC_I}  \Wc_\sc\, .\end{equation}
Given $\sc\in\sC_I$ we choose a representative $w(\sc)\in \Wc_\sc$.
In case $\sc= G\cdot \hat z_{0,I}=\hat Z_I$ we make the request that
$w(\sc)=\1$.   We then define
$$H_{I,\sc}:=  (H_{w(\sc)})_I.$$
We will see in Lemma \ref{lemma HI comp} that the $G$-conjugacy class of $H_{I,\sc}$ is independent of the representative
$w(\sc)$ used for its definition.

\begin{ex} \label{ex SL3} Consider $\uZ= \Sl(3,\C)/\SO(3,\C)$ which is defined over $\R$.
We will use the identification
$$\uZ= \Sym(3\times 3, \C)_{\det =1}$$
with $\Sym$ denoting the symmetric matrices.  Hence
$$\uZ(\R) =  G/ K\amalg G/H$$
consists of two $G$-orbits with $K=\SO(3,\R)$ and $H=\SO(1,2)$, both real forms of $\uH = \SO(3,\C)$.
Our interest is here with $Z=G/H$. If we
identify $\uAZ$ with the diagonal matrices in $\uZ$, then $F_\R$,
the $2$-torsion group
of $\uAZ(\R)$, is given by

$$F_\R=\{  (1,1,1),  (1, -1, -1), (-1, 1, -1), (-1, -1, 1)\} = \{t_0, t_1, t_2, t_3\}$$
which in this case parametrize the open $P$-orbits in $\uZ(\R)$ -- we have $F_M=\{1\}$
in this example.  Notice that $t_0 \in G/K$ whereas $t_1, t_2, t_3\in Z=G/H$.
In particular $F=\{t_1, t_2, t_3\}$ is not a group. Let us denote by
$w_1, w_2, w_3\in G$ lifts of $t_i$ to $G$ so that $\Wc=\{ w_1, w_2, w_3\}$.

In this case the spherical roots comprise a system
of type $\sA_2$. With $I=\{\alpha_2\}$
we can take
$a_t = \diag(t^{-2}, t, t)$ for our ray.

\par  Our example $\uZ$ has a wonderful compactification which is given by the closure of the
image of its standard embedding into projective  space
$$ \uZ=\Sym(3\times 3, \C)_{\det =1}\to {\mathbb P}( \Sym(3\times 3, \C)\times  \Sym(3\times 3, \C))$$
$$X\mapsto  \C\cdot (X,X^{-1}). $$

\par  Note $H_{w_1}=H$ and an elementary
calculation in the above model for $\hat \uZ$ yields

$$ H_I = (H_{w_1})_I  =  S (\OO(1) \OO(2)) U_I\quad\text{and} \quad
(H_{w_2})_I  =  (H_{w_3})_I=S (\OO(1) \OO(1,1)) U_I$$
where
$$U_I = \begin{pmatrix}  1 & & \\ * & 1 & \\ * & & 1\end{pmatrix}\subset G\, .$$
In particular, we see $H_{I,{\mathsf 1}}:=H_I$ is not conjugate to
$H_{I,{\mathsf 2}}:= (H_{w_2})_I$.

\par We further note that $\hat \uZ_I(\R)= \partial Z\cap \hat \uZ_I(\R)$ consists of two $G$-orbits
$\hat Z_{I,{\mathsf 1}}= G/\hat H_{I,{\mathsf 1}}$ and $\hat Z_{I,{\mathsf 2}}= G/ \hat H_{I,{\mathsf 2}}$, and accordingly
$\sC_I \simeq \{{\mathsf 1} , {\mathsf 2}\}$ has two elements.
Note that $\hat Z_I(\R)$ is $G$-isomorphic to the projective space of the rank two real symmetric matrices.
Within this identification $\hat Z_{I,{\mathsf 1}}\subset \hat Z(\R)$ consists of the  rank two symmetric matrices (viewed projectively) with equal
signature  (i.e.  $0++$ or $0--$), and $\hat Z_{I,{\mathsf 2}}\subset \hat Z(\R)$ of the rank two symmetric matrices with signature $0+- $.  Finally note that
$\Wc_{\mathsf 1}=\{w_1\}$ and $\Wc_{\mathsf 2}=\{ w_2, w_3\}$.
\end{ex}

\section{Open $P$-orbits on $Z_I$ and $Z$}\label{subsection WI}

Recall the set $\Wc\subset G$  of representatives
for $W =(P\bs Z)_{\rm open}$.  Let  $W_I =(P\bs Z_I)_{\rm open}$, the set of open $P$-orbits in $Z_I$.
The objective of this section is to obtain a good set $\Wc_I$ of representatives for $W_I$
which results in a natural injective map ${\bf m}:  \Wc_I \to \Wc$  (or $W_I \to W$ if one wishes),
and thus matches each open $P$-orbit in $Z_I$  with a particular open $P$-orbit in $Z$. This map
is important for various constructions of the paper.
\par  In general the map ${\bf m}$ is not surjective and this originates from the fact that $Z_I$ is only one $G$-orbit
in $\uZ_I(\R)$ which points to $Z$.
We will show in this section that the $G$-orbits in $\uZ_I(\R)$  which point to $Z$
are given by

\begin{equation} \label{normal union} \widetilde Z_I :=\coprod_{\sc \in \sC_I} \coprod_{\st \in \sF_{I,\sc}} Z_{I,\sc, \st}\, .
\end{equation}
Here $Z_{I,\sc,\st}\simeq Z_{I,\sc}= G/ H_{I,\sc}$ for all $\st \in \sF_{I,\sc}$ with
$\sF_{I,\sc}$ the set corresponding to $\sF_I=F_I/ F(I)$ when $Z_I$ is replaced by $Z_{I,\sc}$.
For every pair $\sc,\st$ this then leads to an injective matching map ${\bf m}_{\sc,\st}:
\Wc_{I,\sc} \to \Wc$ with $\Wc_{I,\sc}$ a parameter set for $W_{I,\sc}=(P\bs Z_{I,\sc})_{\rm open}$. The case of
$\sc=\st=\1$  corresponds to the original map ${\bf m}={\bf m}_{\1,\1}$. The decomposition
\eqref{normal union} then leads to
a partition
\begin{equation} \label{W partition}
\Wc=\coprod_{\sc\in \sC_I} \coprod_{\st \in \sF_{I,\sc}} {\bf m}_{\sc, \st}(\Wc_{I,\sc})
\end{equation}
refining \eqref{party W}.

\par This section has several parts. It starts with the construction of the injective map
${\bf m}: \Wc_I \to \Wc$.  For a better understanding of the matching map ${\bf m}$ we
then illustrate  the case where $Z$ is a symmetric space
and relate ${\bf m}$ to Matsuki's description \cite{Mat1}
of the  open $P\times H$-double cosets in $G$ in terms
of Weyl groups.  After that we derive the general partition of $\Wc$ in terms of the ${\bf m}_{\sc,\st}$. This last part is
a bit more  technical and can be skipped at a first reading.
\par Throughout this section $I\subset S$ is fixed.

\subsection{Relating $W_I$ to $W$}\label{subsection 5.1}
We recall from Lemma  \ref{lemmaW1} the natural bijection of $W_\R=(P\bs \uZ(\R))_{\rm open}$ with
$F_M\bs F_\R$  where $F_\R= \uAZ(\R)_2$ denotes the $2$-torsion subgroup of $\uAZ(\R)$.
On the other hand we recall from Lemma \ref{equal L cap H}  that  $\uL\cap \uH= \uL \cap \uH_I$.
Intersecting this identity with $\uA$ we obtain that  $\uA\cap \uH= \uA\cap \uH_I$
and hence an identity of homogeneous spaces
$$ \uAZ= \uA/ \uA\cap \uH  = \uA/ \uA\cap \uH_I =\uA_{\uZ_I}\, .$$
In particular, $\uAZ(\R)$ and  $\uA_{\uZ_I}(\R)$ have the same $2$-torsion groups, namely $F_\R$. In addition
 $\uL\cap \uH= \uL \cap \uH_I$ implies
 that the two open $\uP$-orbits $\uP\cdot z_0 \subset \uZ$ and
$\uP\cdot z_{0,I}$ carry canonically isomorphic local structure theorems, see \eqref{LST1}, \eqref{deco L} and
\eqref{LST2}.  Hence the group $F_M$ is identical in both cases
and we obtain a natural bijection (the identity map)
$${\bf m}_\R:  W_{I,\R}=(P\bs \uZ_I(\R))_{\rm open}  \to (P\bs \uZ(\R))_{\rm open}=W_\R\, .$$

\begin{rmk} On the one hand side we have an identity of homogeneous spaces $\uAZ=\uA_{\uZ_I}$, but on the other
hand we also view $\uAZ$ as a subvariety of $\uZ$ and $\uA_{\uZ_I}$ as a subvariety of $\uZ_I$. In the latter picture
the identity of homogeneous spaces yields a natural identification of subvarieties of $\uZ$ and $\uZ_I$. \end{rmk}

\begin{prop} \label{prop m} One has ${\bf m}_\R(W_I)\subset W$. \end{prop}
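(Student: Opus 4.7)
By Lemma \ref{equal L cap H}, $\uL\cap\uH=\uL\cap\uH_I$, which implies $\uAZ=\uA_{\uZ_I}$ and that the finite group $F_M$ entering the local structure theorem is the same for $\uZ$ and $\uZ_I$. Applying \eqref{LST5} and Lemma \ref{lemmaW1} to both spaces gives the parametrizations
$$W\;\longleftrightarrow\; F_M\backslash F,\qquad W_I\;\longleftrightarrow\; F_M\backslash F^{(I)},$$
where $F=F_\R\cap A_{Z,\R}$ and $F^{(I)}:=F_\R\cap(\uA_{\uZ_I}(\R)\cap Z_I)$. Since ${\bf m}_\R$ is the identity on the common parameter set $F_M\backslash F_\R$ by construction, the Proposition reduces to the inclusion $F^{(I)}\subset F$.

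Fix $t\in F^{(I)}$ with lift $\tilde t\in T_Z$ and pick $g\in G$ with $\tilde t\cdot z_{0,I}=g\cdot z_{0,I}$; then $h_I:=g^{-1}\tilde t\in\uH_I$. Choose $X_I\in\cf_I^{--}$ and set $a_s:=\exp(sX_I)\in A$. Corollary \ref{lemma H_I limit} applied to $h_I$ yields $x_s\in\uP$ with $d_\uG(x_s,\1)=O(e^{-\varepsilon s})$ and $h_Ia_s\cdot z_0=x_sa_s\cdot z_0$. Multiplying this identity by $g$ on the left and using $[\tilde t,a_s]=\1$ and $\tilde t\cdot z_0=t$ gives the key identity
$$a_s\cdot t\;=\;g\,x_s\,a_s\cdot z_0\qquad\text{in }\uZ(\C).$$
The LHS is a real point of $\uZ(\R)$ lying in the $G$-orbit $G\cdot t$, so $x_sa_s\cdot z_0=g^{-1}(a_s\cdot t)\in\uZ(\R)\cap G\cdot t$. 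If I can show $x_sa_s\cdot z_0\in Z$ for some (large) $s$, then $a_s\cdot t\in g\cdot Z=Z$, so $t\in a_s^{-1}\cdot Z=Z$, as required.

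To finish, I would analyze the curve $s\mapsto h_Ia_s\cdot z_0$ in the normal-bundle chart $\Psi:\V\times V_I\to\hat\uZ(\R)$ of Section 4 around $\hat z_{0,I}$. The normalization $\Psi(\hat z_{0,I},{\bf e}_I)=z_0$ from \eqref{normalization normal bundle} combined with the $\uA_I$-equivariance of the chart and $a_s\cdot{\bf e}_I=e^{-s}{\bf e}_I$ show that $a_s\cdot z_0$ has chart coordinates $(\hat z_{0,I},e^{-s}{\bf e}_I)$, with normal part in $V_I^0\subset V_{Z,I}=F_IV_I^0$. Since $h_I\in\uH_I=\ker\underline{\rho}$, its isotropy representation on $V_I$ is trivial by \eqref{character HI}, so the chart normal coordinate of $h_Ia_s\cdot z_0$ has the form $e^{-s}{\bf e}_I+O(e^{-2s})$, which remains in the open cone $V_I^0\subset V_{Z,I}$ for $s$ large. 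By \eqref{points to Z}, $\Psi(\V\times V_{Z,I})\subset Z$, so the (real) point $h_Ia_s\cdot z_0$ lies in $Z$.

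The principal obstacle is precisely this chart analysis in the last paragraph: one must verify that higher-order effects of the complex perturbation $x_s\in\uP\setminus P$ keep the curve $x_sa_s\cdot z_0=h_Ia_s\cdot z_0$ inside the $Z$-pointing cone $V_{Z,I}$ of the normal bundle, using the triviality of the linearization $\rho(h_I)=\1$ coming from $\uH_I=\ker\underline{\rho}$, together with the openness of $V_I^0$ in $V_{Z,I}$.
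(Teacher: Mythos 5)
Your argument is correct, but it takes a genuinely different route from the paper's. The paper deduces Proposition \ref{prop m} from Lemma \ref{lemma m}, i.e.\ from the identity ${\bf m}_\R|_{W_I}={\bf m}$ with ${\bf m}$ the dynamically defined map \eqref{bfm}: writing $w_I=\tilde t_I h_I$, it applies the speed Lemma \ref{speed lemma} to $h_I\in\uH_I$ to get $w_Ia_s\cdot z_0=p'_s\,a_s\cdot t_I$ with $p'_s\to\1$ in $\uP$, uses the local structure theorem \eqref{LST3} to replace $p'_s$ by a real element of $P$, and concludes $P\cdot t_I=P\cdot t$ with $t\in F$ coming from the matching relation \eqref{eq-corr}. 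You bypass ${\bf m}$ and \eqref{eq-corr} altogether: you reduce, via the common parametrization by $F_M\bs F_\R$, to the torsion inclusion $F^{(I)}\subset F$, and verify it by following the real ray $h_Ia_s\cdot z_0=g^{-1}a_s\cdot t$ into the normal chart at $\hat z_{0,I}$, using $\underline\rho(h_I)=\1$ (i.e.\ \eqref{character HI}) together with the cone criterion \eqref{points to Z}. Both proofs rest on the same geometric core (the triviality of the isotropy action of $\uH_I$ on the normal space, packaged either as Lemma \ref{speed lemma} or as \eqref{character HI}); yours trades the $P$-orbit bookkeeping through \eqref{LST3} for the explicit description $\Psi^{-1}(Z)=\V\times F_I\cdot V_I^0$, which is arguably more transparent, while the paper's route additionally delivers the compatibility ${\bf m}_\R|_{W_I}={\bf m}$ itself, which is needed later (e.g.\ for the normalization \eqref{t=t_I} and the consistency relations), whereas your argument only yields the containment. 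Two small remarks: the ``principal obstacle'' you flag is not really one — since $u\mapsto h_I\exp(-(\log u)X_I)\cdot z_0$ is a smooth curve of real points with value $\hat z_{0,I}$ at $u=0$ and normal derivative ${\bf e}_I$ there, differentiability alone gives normal coordinate $u\,{\bf e}_I+o(u)$, which lies in the open cone $V_I^0\subset F_I\cdot V_I^0$ for small $u>0$; your claimed error term $O(e^{-2s})$ is neither justified nor needed, and the complex element $x_s$ from Corollary \ref{lemma H_I limit} plays no role in this final step, since the realness of $h_Ia_s\cdot z_0=g^{-1}a_s\cdot t$ is immediate.
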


In order to prove this proposition we first recall another natural map
${\bf m}: \Wc_I \to \Wc$ which first arose in \cite[Sect.~3]{KKS2}.
We fix with $\Wc_I\subset G$ a set of representatives of $W_I$ with elements
$w_I\in \Wc_I$ of the form  $w_I =\tilde t_I h_I $ where $h_I \in \uH_I$ and $\tilde t_I \in T_Z$.  Upon the
identification of varieties $\uAZ(\R)\simeq \uA_{\uZ_I}(\R)$ we view $t_I:=\tilde t_I \cdot z_{0,I}=w_I\cdot z_{0,I}$
as an element of $F_\R$.

\smallskip \par  Let now  $Pw_I\cdot z_{0,I}\in W_I$ be an open $P$-orbit in $Z_I$ with $w_I\in \Wc_I$.
Next let $X\in \af_I^{--}$ and set
$$a_s:=\exp(sX)\in A_I^{--}\subset A \qquad (s>0)\, .$$
It follows from \cite[Lemma 3.9]{KKS2} that there exist
$s_0=s_0(X)>0 $ and a unique $w=\tilde th\in \Wc$ such that
\begin{equation} \label{eq-corr}P w_I a_s \cdot z_0 = Pw \cdot z_0\qquad (s\geq s_0)\, .
\end{equation}

\begin{lemma} Given $w_I= \tilde {t_I} h_I\in\Wc_I$ as above, the element $w\in \Wc$ such that \eqref{eq-corr} holds
 does not depend on the choice of $X\in \af_I^{--}$.
\end{lemma}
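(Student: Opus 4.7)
The map $f : \af_I^{--} \to \Wc$, $X \mapsto w(X)$, takes values in the finite set $\Wc$, so since $\af_I^{--}$ is an open convex cone and hence connected, it suffices to show that $f$ is locally constant. The plan is in fact to exhibit $f(X)$ geometrically in terms of $w_I$ alone, using the compactification $\hat\uZ(\R)$ and the local structure theorem near a boundary point.

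The starting observation is that since $h_I \in \uH_I \subset \hat\uH_I$ fixes $\hat z_{0,I}$ and $\tilde t_I \in \uA$ commutes with $\exp(sX)$, the limit
$$\lim_{s \to \infty} w_I \exp(sX) \cdot z_0 \;=\; \tilde t_I h_I \cdot \hat z_{0,I} \;=\; \tilde t_I \cdot \hat z_{0,I}$$
exists in $\hat\uZ(\R)$ and is a single boundary point, independent of $X \in \af_I^{--}$. In a neighborhood of this boundary point, the diffeomorphism $\Psi$ of \eqref{tangent decomp}, recentered at $\tilde t_I \cdot \hat z_{0,I}$, identifies $\hat\uZ(\R)$ with $\V \times V_I$, and by \eqref{points to Z} the preimage of $Z$ decomposes as a disjoint union of finitely many sheets $\V \times t \cdot V_I^0$ indexed by $t \in F_I$. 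Each sheet is contained in a single open $P$-orbit of $Z$, so by continuity the curve $s \mapsto w_I \exp(sX) \cdot z_0$, which tends to the boundary, must lie entirely in one such sheet for all $s$ sufficiently large, and this sheet determines the orbit $Pw(X) \cdot z_0$ of \eqref{eq-corr}.

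It remains to show that the sheet in question does not depend on $X$. Writing $w_I \exp(sX) z_0 = \tilde t_I \bigl(h_I \exp(sX) z_0\bigr)$ and applying Corollary \ref{lemma H_I limit} to $h_I \in \uH_I$, we have $h_I \exp(sX) z_0 = x_s \exp(sX) z_0$ with $x_s \in \uP$ and $x_s \to \1$. In the chart at $\hat z_{0,I}$, the point $\exp(sX) z_0$ has $V_I$-coordinates $(e^{s \psi_1^I(X)}, \ldots, e^{s \psi_k^I(X)})$, all positive since $\psi_i^I(X) < 0$ for $i \leq k$; so it lies in the positive orthant $V_I^0$. The perturbation $x_s \to \1$ preserves the sheet for large $s$, and the subsequent left translation by $\tilde t_I \in \uA$ acts in the chart by multiplication on the $V_\C$-factor of \eqref{LST-coo}, which on real coordinates multiplies the $V_I$-entries by the sign pattern of $t_I \in F_\R$. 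This sign pattern is an element of $F_I$ depending only on $w_I$, so the sheet, and therefore $w(X)$, depends only on $w_I$.

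The main technical issue to address will be the last step: the complex torus element $\tilde t_I \in T_Z \subset \uA$ must be shown to act on the real slice by sign multiplication in the $V_I$-coordinates of the chart. This should follow from the identification \eqref{AZ-iso1} together with the reality of $t_I = \tilde t_I \cdot z_0 \in F_\R$, but the explicit check in the presence of the $F_M$-gluing in \eqref{LST-coo} will require some care.
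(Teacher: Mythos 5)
Your approach is genuinely different from the paper's: you try to identify $w(X)$ explicitly via the compactification chart at $\hat z_{0,I}$ (in effect proving the stronger matching statement of Lemma \ref{lemma m} directly), whereas the paper's proof is much softer — it only uses that the limit \eqref{I-compression}, $e^{s\ad X}\hf\to\hf_I$, is locally uniform in $X$, so that the openness of $Pw_Ia_s(X)\cdot z_0$ for $s\ge s_0(X)$ persists under small perturbations of $X$; hence $X\mapsto w(X)$ is locally constant on the connected cone $\af_I^{--}$ and therefore constant. Your route would, if completed, yield more information, but as written it has a genuine gap.

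The gap is the very first step: the claim that $\lim_{s\to\infty}\exp(sX)\cdot z_0=\hat z_{0,I}$ (hence $\lim_s w_I\exp(sX)\cdot z_0=\tilde t_I\cdot\hat z_{0,I}$) for \emph{every} $X\in\af_I^{--}$. This is true in the wonderful case, but in the generality of the paper the compactification $\hat\uZ=\uZ(\Fc)$ is built from a fan $\Fc$ that subdivides $\af_Z^-$; the limit of $\exp(sX)\cdot z_0$ is $\hat z_{\Cc}$ for the unique cone $\Cc\in\Fc$ whose relative interior contains $X$, and for distinct cones these limits lie in \emph{distinct} $\uG$-orbits of $\hat\uZ$ (the orbits are parametrized by the cones of $\Fc$). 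The point $\hat z_{0,I}$ of \eqref{def limit z0I} is the limit only for $X\in\cf_I^{--}=\operatorname{relint}(C_{j_I}\cap\af_I^-)$, and correspondingly $\psi_i^I(X)<0$ for $i\le k$ only holds for $X\in C_{j_I}$, so your chart computation of the $V_I$-coordinates of $\exp(sX)\cdot z_0$ breaks down off that cone. At best your argument gives constancy of $w(X)$ on each piece $\Cc\cap\af_I^{--}$, and you would still need a gluing/local-constancy argument across the subdivision — which is exactly what the paper's proof provides directly. Two secondary points: the assertion that each sheet $\V\times t\cdot V_I^0$ lies in a single open $P$-orbit of $Z$ is false as stated (the $\V$-factor meets several relatively open $P$-orbits of $\hat Z_I$, with different sign patterns in the $V_I^\perp$-coordinates); it is repairable because your curve converges to a single boundary point, which also pins down the transverse sign data. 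And the final step you defer — that $\tilde t_I$ acts by sign changes through the $F_M$-gluing of \eqref{LST-coo}, and that the complex correction $x_s\in\uP$ from Lemma \ref{speed lemma}/Corollary \ref{lemma H_I limit} (which, note, is stated only for the fixed ray through $-{\bf e}_I$) can be replaced by a real one — is precisely the substance of the paper's proof of Lemma \ref{lemma m}, so it cannot simply be left as a remark.
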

\begin{proof} In order to record the possible dependence on $X$ we write
$a_s(X)=\exp(sX)$ and $w(X)$ for the corresponding $w$. Now we recall the argument of \cite[Lemma 3.9]{KKS2}:  For fixed $X$
we have $\lim_{s\to \infty} e^{s\ad X} \hf= \hf_I$ by \eqref{I-compression}. Thus there exists
an $s_0(X)$ such that $\pf + \Ad(w_I) e^{s\ad X}\hf =\gf$ for all $s\geq s_0(X)$.
In particular, we obtain that $Pw_I a_s(X) \cdot z_0$ is open for all
$s\geq s_0(X)$. Since the limit  \eqref{I-compression} is locally uniform in $X\in \af_I^{--}$, it follows that
$w(X)$ is locally constant. The lemma follows.
\end{proof}

With this lemma  we obtain in particular a natural map
\begin{equation}\label{bfm}
{\bf m}:  \Wc_I \to \Wc,  \ w_I\mapsto w={\bf m} (w_I)\, .
\end{equation}
With
the identifications $W_I \simeq \Wc_I$
and $W\simeq \Wc$
we view ${\bf m}$ also as a map $W_I \to W$, which by slight abuse of notation is denoted
as well by ${\bf m}$.
\par Since the choice of $X\in \af_I^{--}$ was irrelevant for the definition of ${\bf m}$ we may henceforth assume that
$X=X_I\in \cf_I^{--}\subset \af_I^{--}$ was such that it corresponds to $-{\bf e}_I\in V_I$ under the identification
$V_I \simeq \af_I$.
\par Proposition \ref{prop m} will now follow from:
\begin{lemma} \label{lemma m} ${\bf m}_\R |_{W_I} = {\bf m}$.\end{lemma}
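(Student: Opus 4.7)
The plan is to show directly that $Pw_I a_s\cdot z_0 = Pt_I$ for all $s\ge s_0$, where $t_I=\tilde t_I\cdot z_{0,I}\in F_\R$ is viewed as an element of $\uAZ(\R)\subset \uZ(\R)$ via the identification $\uAZ\simeq \uA_{\uZ_I}$. Combined with the definitions of ${\bf m}$ and ${\bf m}_\R$ (and Lemma \ref{lemmaW1}) this gives ${\bf m}_\R|_{W_I}={\bf m}$, and as a byproduct it yields $t_I\in F$.

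The first step is the key computation. Applying Corollary \ref{lemma H_I limit} to $h_I\in\uH_I$ produces $s_0>0$ and, for each $s\ge s_0$, an element $x_s\in\uP$ with $d_\uG(x_s,\1)\le Ce^{-\epsilon s}$ such that $h_I a_s\cdot z_0 = x_s a_s\cdot z_0$. Since $\tilde t_I\in\uA$ normalizes $\uP$ and commutes with $a_s\in A$, this rewrites as
\[
w_I a_s\cdot z_0 \;=\; \tilde t_I x_s a_s\cdot z_0 \;=\; y_s\cdot (a_s\cdot t_I),\qquad y_s:=\tilde t_I x_s\tilde t_I^{-1}\in\uP,\ y_s\to \1,
\]
where $a_s\cdot t_I=\tilde t_I a_s\cdot z_0$ lies in the connected component $A_Z\cdot t_I$ of $\uAZ(\R)$.

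The second step is to identify the open $P$-orbit of $w_I a_s\cdot z_0\in Z$, which by Lemma \ref{lemmaW1} is labelled by an element of the finite set $F_M\bs F$. Since the curve $s\mapsto w_I a_s\cdot z_0$ is continuous on $[s_0,\infty)$ and lies in an open $P$-orbit throughout, this label is constant in $s$. To pin it down I would use the asymptotic behaviour as $s\to\infty$: in the smooth compactification $\hat Z$ one has
\[
 w_I a_s\cdot z_0\ \longrightarrow\ w_I\cdot\hat z_{0,I}\;=\;\tilde t_I\cdot\hat z_{0,I},
\]
using that $h_I\in\uH_I\subset\hat\uH_I$ fixes $\hat z_{0,I}$. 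Invoking the local structure theorem \eqref{LST-I} for $[\uP\cdot\hat z_{0,I}](\R)$, combined with the description of $F_I V_I^0\subset V_I$ as the normal directions pointing to $Z$ (Subsection \ref{nb points to Z}) and the triviality of the isotropy representation $\rho$ on $V_I$ restricted to $\uH_I=\ker\rho$, one reads off that the $F_\R$-component of the $\uAZ(\R)$-coordinate of $w_I a_s\cdot z_0$ coincides with that of $t_I$. Consequently the $F_M\bs F$-label is $F_M t_I$, which gives $t_I\in F$ and $Pw_I a_s\cdot z_0=Pt_I$ as required.

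The main technical difficulty is that the element $y_s\in\uP$ supplied by Corollary \ref{lemma H_I limit} is only complex, so the map $z\mapsto y_s z$ does not manifestly preserve the decomposition of $Z$ into open $P$-orbits. This is circumvented by combining (i) the discreteness of the index set $F_M\bs F$, (ii) the continuity of $s\mapsto w_I a_s\cdot z_0$ in $Z$, and (iii) the normal-bundle analysis of Section~\ref{section compact} and Subsection \ref{nb points to Z}, which identifies the approach direction of the radial curve at the boundary and thereby fixes the open $P$-orbit.
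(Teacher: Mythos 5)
Your proposal is correct, and it resolves the one delicate point by a different device than the paper. The paper's own proof also starts from Lemma \ref{speed lemma}/Corollary \ref{lemma H_I limit}, writes $w_Ia_s\cdot z_0=p_s'\,a_s\cdot t_I$ with $p_s'\in\uP$, $p_s'\to\1$, and then stays entirely in the interior: since the point is real and lies in $[\uP\cdot z_0](\R)$, the real form \eqref{LST3} of the local structure theorem allows one to replace $p_s'$ by a real element of $P$, giving $Pw_Ia_s\cdot z_0=P\cdot t_I$ directly, and comparison with \eqref{eq-corr} finishes the proof. You instead pass to the compactification and pin down the open $P$-orbit by the limit point $\tilde t_I\cdot\hat z_{0,I}$ together with the direction of approach, using $\uH_I=\ker\rho$ (i.e.\ \eqref{character HI-0}) and the ``points to $Z$'' analysis of Subsection \ref{nb points to Z}; this is exactly the technique the paper itself uses later (proof of Lemma \ref{lemma rel open}, Lemma \ref{lemma F-product}, and the discussion of ${\bf m}_\st$), so your route is fully consistent with its toolkit and does work. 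What each buys: the paper's realification of $p_s'$ is shorter once the speed lemma is available; your boundary argument avoids realifying the complex element altogether, but to make ``one reads off'' airtight you need the normal curve to be differentiable at the boundary (hence the choice $X_I\leftrightarrow-{\bf e}_I$ as in Corollary \ref{lemma H_I limit}), so that strict positivity of the $V_I$-coordinates for large $s$ follows from the normal derivative being ${\bf e}_I$, with the $V_I^\perp$-signs handled by continuity, and the sign pattern then identified with the $F_M\bs F$-label via \eqref{inf slice} and Lemma \ref{lemmaW1}. A small structural remark: in your scheme the first step (the rewriting via $y_s$) is never actually used, since \eqref{character HI-0} plus continuity determine the label and \eqref{eq-corr} supplies its constancy in $s$; also your byproduct $t_I\in F$ follows at the end because $F_Mt_I=F_Mt$ with $t\in F$.
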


\begin{proof} Let $w_I=\tilde t_I h_I\in\Wc_I$ and $w={\bf m}(w_I)= \tilde t h$.
From Lemma \ref{speed lemma} for $g=h_I \in \uH_I$ we obtain
a $C^1$-curve $[0,1]\to \uP, \ u\mapsto p_u$  with $p_u\to \1$ for $u\to 0^+$ and
$$ h_I a_s \cdot z_0 = p_{e^{-s}} a_s \cdot z_0 \qquad (s\geq s_0)\, .$$
Hence with $ p'_s:= \tilde t_I p_{e^{-s}} \tilde t_I^{-1}$ we obtain that
$$ w_I a_s\cdot z_0 = p_s' \cdot  t_I\in Z\, .$$
Since $t_I \in Z$ and $p_s'\to \1$ for $s\to \infty$ we may assume that $p_s'\in P$ is real
as well (use the local structure
theorem of the form \eqref{LST3}). On the other hand the matching property
\eqref{eq-corr} yields
$$w_I a_s\cdot z_0 =  p_s'' \cdot t$$
for some $p_s'' \in P$.  Thus we get
$$ P\cdot t_I=P\cdot t.$$
This implies the lemma, and with that Proposition \ref{prop m}.
\end{proof}

In the sequel we adjust $\Wc\subset G$ (by possibly multiplying the previous $w = \tilde t h$
by an  element of $F_M$) in such a way that for each $w_I =\tilde t_I h_I\in\Wc_I$ one has
\begin{equation}\label{t=t_I} t=t_I \qquad  \text{when} \ w=\tilde t h = {\bf m}(w_I)\, .\end{equation}
We note that this adjustment of $\Wc$ depends on our fixed choice of $\Wc_I$ and hence on $I$.

\par \begin{rmk} \label{rmk ZZI}Notice that we typically have ${\bf m}(\Wc_I)\subsetneq \Wc$ as the example of
$Z=\Sl(2,\R)/\SO(1,1)$ with $I=\emptyset$ already shows (cf. Example \ref{ex=SL2} (1) ). Here we have $H_\emptyset=M \oline N$ and $\hat H_\emptyset= MA \oline N$
and thus $\Wc_\emptyset =\{\1\}$ while $\Wc=\{1,w\}$ has two elements.
\end{rmk}

\begin{prop}\label{prop cr1} {\rm (Consistency relations for stabilizers)} Let $w_I \in \Wc_I$ and
$w={\bf m}(w_I)\in \Wc$.  Then
\begin{equation} \label{ConsisT1}  (H_w)_I = (H_I)_{w_I}\,.\end{equation}
\end{prop}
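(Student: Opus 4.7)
The plan is to prove the identity by first establishing the corresponding identity $(\uH_w)_I = (\uH_I)_{w_I}$ of complex algebraic subgroups of $\uG$, and then descending to the real groups by intersecting with $G$. The reduction to the complex statement is immediate: by Theorem \ref{thm normal} we have $H_I = G \cap \uH_I$, and since $w, w_I \in G$ we get $(H_w)_I = G \cap (\uH_w)_I$ and $(H_I)_{w_I} = w_I H_I w_I^{-1} = G \cap (\uH_I)_{w_I}$.

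I would then compute the two complex groups separately. For the right hand side, using the factorization $w_I = \tilde t_I h_I$ with $h_I \in \uH_I$ (which normalizes $\uH_I$) and $\tilde t_I \in T_Z \subset \uA$, one obtains
\[
(\uH_I)_{w_I} \;=\; w_I \uH_I w_I^{-1} \;=\; \tilde t_I\, \uH_I\, \tilde t_I^{-1}.
\]
For the left hand side, write $w = \tilde t h$ with $h \in \uH$; the $\uG$-equivariant map $\uG/\uH_w \to \uZ,\ g\uH_w \mapsto g\tilde t\cdot z_0,$ identifies the base point $e\uH_w$ with $\tilde t \cdot z_0$. Since $\tilde t \in \uA$ commutes with $\exp(sX)$ for $X \in \cf_I^{--}$, the limit of this base point along the ray is $\tilde t \cdot \hat z_{0,I} = \hat z_{w,I}$. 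The key point is then that the intrinsic construction of the boundary degeneration (Theorem \ref{thm normal}) at the shifted base point $\tilde t \cdot z_0$ is simply the conjugate by $\tilde t$ of the construction at $z_0$; the $\uG$-stabilizer of $\tilde t \cdot \hat z_{0,I}$ is $\tilde t\,\hat\uH_I\,\tilde t^{-1}$, and its isotropy representation on the normal space is conjugated by $d\tilde t$, so its kernel is $\tilde t \uH_I \tilde t^{-1}$. Hence
\[
(\uH_w)_I \;=\; \tilde t\, \uH_I\, \tilde t^{-1},
\]
the canonicity afforded by Proposition \ref{prop independence choice} ensuring that this shifted construction indeed recovers the intrinsic $(\uH_w)_I$.

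To conclude, I would invoke the adjustment \eqref{t=t_I} giving $t = t_I$, so that $(\tilde t_I \tilde t^{-1}) \cdot z_0 = z_0$ in $\uZ$, equivalently $\tilde t_I \tilde t^{-1} \in \uA \cap \uH = \uA_{\uH}$. Intersecting the equality $\uL \cap \uH = \uL \cap \uH_I$ of Lemma \ref{equal L cap H} with $\uA$ yields $\uA_{\uH} = \uA_{\uH_I} \subset \uH_I$, so $\tilde t_I \tilde t^{-1}$ normalizes $\uH_I$ and $\tilde t_I \uH_I \tilde t_I^{-1} = \tilde t \uH_I \tilde t^{-1}$. Intersecting with $G$ then gives the proposition. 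The one nontrivial step is the identification $(\uH_w)_I = \tilde t\, \uH_I\, \tilde t^{-1}$: one must track carefully how the boundary-degeneration construction transforms under a $\uG$-equivariant change of base point, which is where the canonical definition of $\uH_I$ as the kernel of the isotropy representation on the normal space becomes essential.
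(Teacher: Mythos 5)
Your argument is correct, but it is packaged differently from the paper's. You complexify and compute both sides as explicit torus conjugates: $(\uH_I)_{w_I}=\tilde t_I\,\uH_I\,\tilde t_I^{-1}$ from $w_I=\tilde t_I h_I$ with $h_I\in\uH_I$, and $(\uH_w)_I=\tilde t\,\uH_I\,\tilde t^{-1}$ by transporting the kernel-of-isotropy construction along left translation by $\tilde t$ (equivariant for conjugation by $\tilde t$, which fixes $\uP$, $\uA$ and all the combinatorial data); then the normalization $t=t_I$ of \eqref{t=t_I} together with $\uA\cap\uH=\uA\cap\uH_I$ (Lemma \ref{equal L cap H}) and commutativity of $\uA$ identifies the two conjugates, and you descend to $G$. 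The paper instead stays with the real groups and never conjugates: it uses the characterization \eqref{character HI} of the degeneration as the subgroup preserving the normal derivative of the curves $\gamma_v$, applied verbatim at the shifted base points, so that $(H_I)_{w_I}$ and $(H_w)_I$ are both described as the elements of $G$ fixing the normal vector $t_I\cdot v=t\cdot v$, and $t=t_I$ finishes the proof in one line. The common core is the same (the normal-bundle description of $\uH_I$ plus the adjustment $t=t_I$); what your route buys is an explicit group-theoretic formula for both stabilizers, at the cost of having to justify the transport-of-structure step — i.e.\ the analogues of Theorem \ref{thm normal} and of \eqref{character HI} for the shifted subgroup $\uH_w$ and the real points statement $(H_w)_I=G\cap(\uH_w)_I$ — which you correctly flag as the nontrivial point and which the paper's curve characterization handles directly without any detour through $\uH_w$.
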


\begin{proof}  Recall from \eqref{character HI} that $H_I$ is the subgroup of $G$ which asymptotically
preserves the curves $\gamma_v$ in normal direction,  i.e. is the group of elements $g\in G$
with $g\cdot [\gamma_{v}'(0)]_{\rm n}= [\gamma_{v}'(0)]_{\rm n}= v$. Hence   $(H_I)_{w_I}\subset G$ is the group of elements $g\in G$
with $g\cdot [\gamma_{w_I, v}'(0)]_{\rm n}= [\gamma_{w_I, v}'(0)]_{\rm n}=t_I \cdot v$.  On the other hand
we can characterize $(H_w)_I$ as follows: define the curve
$$\sigma_{w,v}(s):= \tilde a(v) \exp(-(\log s)  X_I) \cdot z_w = \tilde a(v) \exp(-(\log s)  X_I) \cdot t$$
where  $\tilde a(v) \in \uA$ is any lift of $a(v)\in \uA_I$ with respect to the projection
$\pi: \uA\to \uA_Z$.  Then $(H_w)_I$
is the group of elements $g\in G$ with $g\cdot [\sigma_{w,v}'(0)]_{\rm n}= [\sigma_{w,v}'(0)]_{\rm n}=t\cdot v$.
As $t=t_I$, the desired equality of groups follows.
\end{proof}

\subsection{Symmetric spaces}\label{Subsection m symmetric}
The nature of the map ${\bf m}$ becomes quite clear in the special case where $Z$ is a symmetric space. In this special situation
we can make the matching map explicit in terms of certain Weyl groups.

\par For this subsection $Z=G/H$ is symmetric, that is, there exists an involution $\tau: \uG\to \uG$, defined over $\R$,
such that $\uH$ is an open subgroup of the  $\tau$-fixed point group $\uG^\tau$.
We choose our maximal anisotropic group $\uK \subset \uG$ in such a way that the Cartan involution
$\theta$,  which defines $\uK$,  commutes with $\tau$. By slight abuse of notation  we use $\tau$ and $\theta$ for the
induced derived involutions on $\gf$ as well.

\subsubsection{The adapted parabolic}\label{Subsubsection adapted}  With $\gf=\kf\oplus \kf^\perp$, resp $\gf= \hf\oplus \hf^\perp$,
we obtain  the decomposition of $\gf$ in eigenspaces of $\tau$, resp. $\theta$, with eigenvalues $+1$ and $-1$.
We let  $\af_Z\subset \hf^\perp \cap \kf^\perp$ be a maximal abelian subspace and extend $\af_Z$ to
a maximal abelian subspace $\af\subset \kf^\perp$.  Now, according to Rossmann,  the root system $\Sigma=\Sigma(\gf,\af)$ restricts
to a root system
$$\Sigma_Z=\Sigma|_{\af_Z}\bs \{0\}$$
on $\af_Z$. The Weyl group of $\Sigma_Z$ is denoted by $\W=\W_Z$.
\par Let $\Sigma_Z^+\subset \Sigma_Z$ be a positive system, and let $\Sigma^+\subset \Sigma$ be
a positive system such that $\Sigma^+|_{\af_Z}\bs \{0\}= \Sigma_Z^+$.
Then $PH\subset G$ is open for the minimal parabolic subgroup $P=MAN$, for which
$\nf$ is the sum of the positive root spaces. The adapted parabolic $Q=LU\supset P$ is then characterized by $L=Z_G(\af_Z)$.
It is the unique minimal $\theta\tau$-stable parabolic subgroup of $G$ containing $P$.

\subsubsection{The deformations $H_I$}
The spherical roots $S\subset\af_Z^*$ are given by the simple roots in $\Sigma_Z$ with respect to $\Sigma_Z^+$.
Hence for any $I\subset S$ we obtain parabolic subgroups $P_I \supset Q$ with $L_I=Z_G(\af_I)$.
As before we realize $\af_I\subset \af$ so that $A_I=\exp(\af_I)$ becomes a subgroup of $A$.
Then $L_I=M_I A_I\simeq M_I \times A_I$ for a unique $\tau$-stable subgroup $M_I \subset L_I$.
 Now the deformations $H_I$ are given by

$$ H_I = (M_I \cap H) \oline{U_I}$$
with $M_I \cap H\subset M_I$ a symmetric subgroup, i.e. $M_I/ M_I\cap H\subset G/H$ is a symmetric subspace.
Note that the $H_w$, $w\in \Wc$, can be treated on the same footing, i.e.
$(H_w)_I =  (M_I \cap H_w) \oline{U_I}$ and  $M_I \cap H_w\subset M_I$ a symmetric subgroup.
As seen in Example
\ref{ex SL3} the subgroups
$M_I\cap H$ and $M_I\cap H_w$
are not necessarily conjugate in $M_I$.

\subsubsection{Open double cosets}\label{open double}
For later reference in Section \ref{section DBS} (where we derive the Plancherel formula
for symmetric spaces) we consider here both $(P\bs Z_I)_{\rm open}$ and $(P_I\bs Z)_{\rm open}$
together.

Recall that for symmetric spaces the set $W=(P\bs Z)_{\rm open}$
allows a description in terms of Weyl groups. For that we identify $\W=\W_Z\simeq
 [N_K(\af)\cap N_K(\af_Z)]/ M$  and  define a subgroup of $\W$ by
 $\W_H= [N_{K\cap H}(\af)\cap N_{K\cap H}(\af_Z)]/ M$.
Then Matsuki \cite{Mat1} has shown that

\begin{equation}\label{Matsuki PZ} \W/\W_H \to (P\bs Z)_{\rm open}, \ \ w\W_H  \mapsto  PwH
\end{equation}
is a bijection. In particular, $W\simeq \W/\W_H$.

When applied to the symmetric space $M_I/(M_I\cap H)$, Matsuki's result becomes
\begin{equation}\label{Matsuki M_I}
\W(I)/ (\W(I)\cap \W_H)\simeq(( P\cap M_I)\bs M_I / (M_I \cap H))_{\rm open}.
\end{equation}
Now
\begin{equation}\label{Pz to PIz}
(P\bs Z)_{\rm open} \to (P_I\bs Z)_{\rm open}, \ \ Pz\mapsto P_Iz
\end{equation}
is surjective. It follows from \eqref{Matsuki M_I} that the composition of \eqref{Matsuki PZ}  with
\eqref{Pz to PIz} factorizes to a bijection  (see also \cite{Mat2})
$$ \W(I)\bs \W/ \W_H\to   (P_I \bs Z)_{\rm open}$$
where $\W(I)<\W=\W_Z$ is the subgroup generated by the reflections $s_\alpha$ for $\alpha \in I$.

In particular we obtain an action of $\W(I)$ on $\Wc\simeq \W/\W_H$ and record:

\begin{lemma} \label{Lemma Mats1}For $I\subset S$ the following assertions hold:
\begin{enumerate}
\item \label{onemats}$(P_I\bs Z)_{\rm open}\simeq\W(I)\bs \Wc$.
\item \label{twomats} $(P\bs Z_I)_{\rm open}\simeq \Wc_I\simeq\W(I)/(\W(I)\cap \W_H)$.
\end{enumerate}
\end{lemma}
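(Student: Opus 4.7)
For part (1), I observe that the display preceding the lemma provides $\W(I)\bs\W/\W_H\simeq(P_I\bs Z)_{\rm open}$; combined with Matsuki's identification $\Wc\simeq\W/\W_H$ from \eqref{Matsuki PZ}, this immediately yields $(P_I\bs Z)_{\rm open}\simeq\W(I)\bs\Wc$.

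For part (2), the first identification $(P\bs Z_I)_{\rm open}\simeq\Wc_I$ holds by the very definition of $\Wc_I$ (Lemma \ref{lemmaW1} applied to the real spherical space $Z_I$). For the second, the strategy is to bridge through the symmetric subspace $M_I/(M_I\cap H)$, exploiting the specific form $H_I=(M_I\cap H)\oline{U_I}$ available in the symmetric case. I will introduce the $M_I$-equivariant map
$$\iota\colon M_I/(M_I\cap H)\longrightarrow Z_I,\qquad m(M_I\cap H)\longmapsto mH_I,$$
which is well-defined since $M_I\cap H\subset H_I$, and injective because $M_I\cap H_I=(M_I\cap H)(M_I\cap\oline{U_I})=M_I\cap H$ (using $M_I\cap\oline{U_I}=\{e\}$). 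The goal is to show that $\iota$ induces a bijection
$$\iota_*\colon\bigl((P\cap M_I)\bs M_I/(M_I\cap H)\bigr)_{\rm open}\xrightarrow{\ \sim\ }(P\bs Z_I)_{\rm open};$$
once established, \eqref{Matsuki M_I} immediately delivers $\Wc_I\simeq\W(I)/(\W(I)\cap\W_H)$.

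The bijection $\iota_*$ will be verified in three steps. \emph{(i) Openness matches openness}: since $m\in M_I$ centralizes $A_I$, $\Ad(m)\oline{\uf_I}=\oline{\uf_I}$, so $\Ad(m)\hf_I=\Ad(m)(\mf_I\cap\hf)+\oline{\uf_I}$. Combining with $\pf=(\pf\cap\mf_I)+\af_I+\uf_I$ and $\gf=\mf_I\oplus\af_I\oplus\uf_I\oplus\oline{\uf_I}$, the openness condition $\pf+\Ad(m)\hf_I=\gf$ reduces to $(\pf\cap\mf_I)+\Ad(m)(\mf_I\cap\hf)=\mf_I$, i.e.\ openness of $(P\cap M_I)\cdot m(M_I\cap H)$ in $M_I/(M_I\cap H)$. \emph{(ii) Injectivity on orbits}: if $m'=pmh$ with $m,m'\in M_I$, $p\in P$, $h\in H_I$, I will decompose $p=p_0an$ via $P=(P\cap M_I)A_IU_I$ and $h=h_0\oline{u}$ via $H_I=(M_I\cap H)\oline{U_I}$, push $an$ across $m$ (using centralizer/normalizer properties of $m$), and rearrange to $m'=(p_0mh_0)\cdot an''\oline{u}$; since $M_I\cap A_IU_I\oline{U_I}=\{e\}$ by Bruhat triangularity, this forces $a=n''=\oline{u}=e$, hence $m'\in(P\cap M_I)m(M_I\cap H)$. \emph{(iii) Surjectivity}: every open $P$-orbit of $Z_I$ must admit a representative in $M_I$. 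By Lemma \ref{lemmaW1}, representatives can be chosen in $T_Z=\exp(i\af_Z)$, and in the symmetric case one has the orthogonal decomposition $\af_Z=\af_I\oplus(\af_Z\cap\mf_I)$ (since $\af_Z\subset\af$), yielding $T_Z=T_Z^M\cdot A_I(\C)$ with $T_Z^M\subset M_I(\C)$. The $A_I(\C)$-factor lies in $\hat{\uH}_I$ by Theorem \ref{thm normal}\eqref{2two2}, and its right action on $Z_I$ leaves open $P$-orbits invariant because $A_I\subset P$ makes left and right $A_I$-translations commute with the $P$-action and act trivially on $P$-cosets.

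The main obstacle will be step (iii): one must carefully verify that the right $A_I(\C)$-action on the complex base point descends correctly to the real $Z_I$ and does not perturb open $P$-orbits, even though elements of $T_Z$ are genuinely complex torus elements rather than real. Once (iii) is settled, combining it with (i) and (ii), and then invoking \eqref{Matsuki M_I} for the symmetric pair $(M_I,M_I\cap H)$, completes the proof of both assertions.
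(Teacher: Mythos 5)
Your part (1), the identification $(P\bs Z_I)_{\rm open}\simeq\Wc_I$, and the reduction of (2) to the open $(P\cap M_I)$-orbits on $M_I/(M_I\cap H)$ all match the paper, which disposes of this reduction in one line by invoking the Bruhat decomposition; your steps (i) and (ii) are correct elaborations of exactly that. The genuine gap is in step (iii). The only right translations that are guaranteed to act on the real orbit $Z_I$ and to preserve its open $P$-orbits are those by $A(I)=A_IF(I)$, i.e.\ by the image of $\hat H_I$ in \eqref{exact2}--\eqref{exact3}; Theorem \ref{thm normal}\eqref{2two2} is a statement about the complex groups, and since taking real points is only left exact it does not give you an action of all of $\uA_I(\R)$ (let alone of $\exp(i\af_I)\subset\uA$) on $Z_I$. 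The $\uA_I(\R)$-component $t^{\|}$ of your torsion representative is in general a $2$-torsion element not lying in $F(I)$, and right translation by such an element need not even preserve $Z_I$: already for the symmetric space $Z=\Sl(2,\R)/\SO(1,1)$ with $I=\emptyset$ (Example \ref{ex=SL2}(a), Remark \ref{rmk ZZI}) the nontrivial $2$-torsion element of $\uA_Z(\R)$ moves $z_{0,\emptyset}$ out of $Z_\emptyset=G/M\oline N$. So the assertion that the ``$A_I(\C)$-factor leaves open $P$-orbits invariant'' is unjustified as stated, and even granting it you would still be left with the complex element $\tilde t_M$, for which you must produce a \emph{real} representative in $M_I\cdot z_{0,I}$; this is not automatic either, since $(\uM_I/\uM_I\cap\uH)(\R)$ may split into several $M_I$-orbits (cf.\ Example \ref{ex SL3}).

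The surjectivity you need follows, however, directly from the same triangular factorization you already exploit in step (ii), and this is the ``Bruhat decomposition'' argument the paper alludes to: the big cell $\Omega=P_I\oline{P_I}=U_IL_I\oline{U_I}$ is open and dense in $G$ and stable under left multiplication by $P\subset P_I$ and right multiplication by $H_I=(M_I\cap H)\oline{U_I}\subset\oline{P_I}$; hence every open $P\times H_I$ double coset meets $\Omega$ and is therefore contained in it. Writing a point of it as $u\,l\,\bar u$ with $u\in U_I$, $l=ma\in L_I\simeq M_I\times A_I$, $\bar u\in\oline{U_I}$, and absorbing $u,a$ into $P$ on the left and $\bar u$ into $H_I$ on the right, you obtain a representative $m\in M_I$; your step (i) then transfers openness, and \eqref{Matsuki M_I} finishes (2) as you indicate. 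With step (iii) replaced by this argument, your proof agrees with the paper's.
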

\begin{proof} The first assertion we have just shown. For the second, recall first that
$H_I= (M_I \cap H )\oline {U_I}$. Hence  the Bruhat decomposition   yields that
$$ (P\bs Z_I)_{\rm open } \simeq  (( P\cap M_I)\bs M_I / (M_I \cap H))_{\rm open}\, ,$$
so that \eqref{twomats} follows from \eqref{Matsuki M_I}.
\end{proof}

\begin{lemma}  \label{lemma 58}Upon identifying $\W(I)/ \W(I)\cap \W_H$ with $\Wc_I$ and
$\W/ \W_H$ with $\Wc$, the map ${\bf m}: \Wc_I \to \Wc$ corresponds to the natural inclusion
map $\W(I)/(\W(I)\cap \W_H) \hookrightarrow \W/\W_H$.\end{lemma}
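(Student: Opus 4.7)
The plan is to show the statement by a direct computation, the key point being that $\W(I)$ is generated by reflections in roots of $I$ and hence acts trivially on $\af_I = I^\perp$.

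First I would choose representatives compatible with Matsuki's identification. Let $w_I \in \W(I)$ and pick a lift $n \in N_K(\af)\cap N_K(\af_Z)$ of $w_I$. Under the identification \eqref{Matsuki M_I}, the coset $w_I (\W(I)\cap \W_H)$ corresponds to the open $P$-orbit $Pn\cdot z_{0,I}$ in $Z_I$, so $n$ can be taken as a representative in $\Wc_I$ (after the usual adjustment to the form $\tilde t_I h_I$, but what matters below is only the $P\cdot H_I$-coset). Similarly, the image of $w_I$ under the natural inclusion $\W(I)/(\W(I)\cap\W_H)\hookrightarrow \W/\W_H$ is represented by the same element $n\in N_K(\af)\cap N_K(\af_Z)$, now viewed as representing an open $P$-orbit $PnH$ in $Z$ via \eqref{Matsuki PZ}.

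Second, I would exploit that $\W(I)$ fixes $\af_I$ pointwise. Since $\W(I)$ is generated by the reflections $s_\alpha$ with $\alpha \in I$, and each such reflection fixes $\alpha^\perp\supset\af_I$, the lift $n$ centralizes $A_I$. Consequently, for any $X \in \af_I^{--}$ and $a_s := \exp(sX) \in A_I \subset A$, we have the commutation
\[
n\, a_s = a_s\, n .
\]

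Third, I would complete the matching. Using the commutation and $a_s \in A \subset P$, compute
\[
P\, n\, a_s \cdot z_0 \;=\; P\, a_s\, n \cdot z_0 \;=\; P\, n \cdot z_0 \qquad (s > 0).
\]
Thus the defining relation \eqref{eq-corr} for the matching map is satisfied with $w = n$ (for every $s\geq 0$, a fortiori for $s \geq s_0$). By uniqueness of $w\in \Wc$ in \eqref{eq-corr}, this gives ${\bf m}(w_I) = n$ modulo $\W_H$, i.e.\ the image of $w_I$ under the natural inclusion $\W(I)/(\W(I)\cap \W_H) \hookrightarrow \W/\W_H$, as desired.

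The only mild subtlety, which I would address briefly in passing, is the compatibility of the representative $n\in N_K(\af)\cap N_K(\af_Z)$ with the normal form $\tilde t_I h_I$ used to define $\Wc_I$ in Subsection \ref{subsection 5.1}: since the matching map ${\bf m}$ depends only on the $P$-orbit $Pw_I\cdot z_{0,I}$ and not on the particular representative $w_I\in G$, this is a routine check using $n\cdot z_{0,I} \in F_\R \cdot z_{0,I}$ and the freedom to modify $n$ by $M \subset L_H$. No deeper difficulty arises; the content of the lemma is really the centralization $na_s = a_sn$, which reduces the matching to the identity map on Weyl group cosets.
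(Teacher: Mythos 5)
Your proof is correct and is essentially the same as the paper's: both hinge on the observation that $\W(I)$ fixes $\af_I$ pointwise, so a lift $n$ of $w_I$ centralizes $A_I$, whence $Pn a_s\cdot z_0 = Pa_s n\cdot z_0 = Pn\cdot z_0$ and the matching relation \eqref{eq-corr} is satisfied with $w = n$. Your extra paragraph on compatibility of representatives is a harmless elaboration of what the paper leaves implicit (namely that ${\bf m}$ is well-defined on $P$-orbits, which is Lemma \ref{lemma m}).
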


\begin{proof} We recall the construction of the map ${\bf m}$ via considering
the limits of the double cosets $Pw_Ia_s H$.  So let $w_I\in \W(I)$ and observe that $\W(I)$ keeps $\af_I$
pointwise fixed. Thus we have $Pw_I a_s H= Pw_I H$ and the lemma follows.
\end{proof}

Also of later relevance are the open $H\times \oline P_I$-double cosets in $G$ which we
treat here as well. Since the anti-involution
$$G\to G, \ \ g\mapsto g^{-\theta}:=\theta(g^{-1})$$
leaves $H$ invariant and maps $P_I$ to its opposite $\oline{P_I}$, we obtain
a bijection of double cosets

$$ P_I\bs G/H \to H\bs G/ \oline {P_I}  , \ \  P_IgH\mapsto   H g^{-\theta} \oline{P_I}\, .$$

With Lemma \ref{Lemma Mats1} we thus obtain a bijection
\begin{equation}\label{Matsuki Pbar}
\W(I)\bs \Wc \to (H\bs G/\oline{P_I})_{\rm open}, \ \ \W(I)w \mapsto H w^{-\theta}\oline{P_I }\, .
\end{equation}

\subsection{Relating $W_I$ to $\hat W_I$} We now return to the setup of a general real spherical space.
In this subsection  we provide some complementary material on the relation of $W_I$
to $\hat W_I:=(P\bs \hat Z_I)_{\rm open}$.  This will lead to a better  geometric understanding
of what to come next.

Recall that $A_I$ is the connected component of $A(I)= A_I F(I)\simeq
A_I \times F(I)$.  Notice that $A(I)$ acts naturally on the right of $Z_I=G/H_I$ and thus induces an action of
$A(I)$  on $W_I=(P\bs Z_I)_{\rm open}$.
The following lemma is then a consequence of the fact that the connected group
$A_I$ acts trivially on the finite set $W_I$.

\begin{lemma} \label{lemma WWI}The natural map
$$ W_I= (P\bs Z_I)_{\rm open}\to \hat W_I =(P\bs \hat Z_I)_{\rm open}, \ \  PwH_I\mapsto Pw\hat H_I$$
is surjective and induces an isomorphism $W_I/ F(I)\simeq \hat W_I$.
\end{lemma}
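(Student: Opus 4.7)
The plan is to exploit the right action of $A(I)=\hat H_I/H_I\simeq A_I F(I)$ on $Z_I=G/H_I$, which commutes with the left $G$-action and thus induces an action of $A(I)$ on the finite set $W_I=(P\bs Z_I)_{\rm open}$.  The natural map in the statement is realized as the set-theoretic quotient by the projection $\pi: Z_I\to \hat Z_I$, $gH_I\mapsto g\hat H_I$, which is $P$-equivariant and whose fibers are the $A(I)$-orbits by definition of $A(I)$.

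First I would verify that the map is well-defined, i.e.\ that $P$-orbit of $wH_I$ being open in $Z_I$ implies $Pw\hat H_I$ is open in $\hat Z_I$.  This is immediate from $\hat\hf_I=\hf_I+\af_I$ (Theorem \ref{thm normal} combined with \eqref{Lie hat HI}): if $\pf+\Ad(w)\hf_I=\gf$ then certainly $\pf+\Ad(w)\hat\hf_I=\gf$.

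Next I would establish surjectivity.  Let $Pw\hat H_I\in\hat W_I$.  The preimage $\pi^{-1}(Pw\hat H_I)=Pw\hat H_I\cdot H_I/H_I=PwH_I\cdot A(I)$ is a non-empty open $P$-invariant subset of $Z_I$.  Since $Z_I$ is a real spherical space, the union of its finitely many open $P$-orbits is an open dense subset of $Z_I$; hence the non-empty open set $\pi^{-1}(Pw\hat H_I)$ meets an open $P$-orbit, which, being contained in a $P$-stable set, lies entirely in the preimage.  This furnishes a preimage of $Pw\hat H_I$ in $W_I$.

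Finally I would identify the fibers with the $F(I)$-orbits.  Two classes $Pw_1H_I, Pw_2H_I\in W_I$ have equal image iff $w_2\in Pw_1\hat H_I$, equivalently iff there exist $p\in P$ and $\hat h\in\hat H_I$ with $w_2=pw_1\hat h$; writing $\hat h=\tilde a h_1$ with $h_1\in H_I$ and $\tilde a\in\hat H_I$ lifting some $a\in A(I)$, this says $Pw_2H_I=Pw_1H_I\cdot a$.  Hence the fibers of $W_I\to\hat W_I$ are exactly the $A(I)$-orbits on $W_I$.  Since $W_I$ is finite (hence discrete) and $A_I$ is connected, $A_I$ acts trivially, so the action factors through $F(I)=A(I)/A_I$ and the fibers coincide with the $F(I)$-orbits.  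This gives the identification $W_I/F(I)\simeq\hat W_I$.

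The only mildly delicate point is the surjectivity argument, which hinges on the appeal to real sphericality of $Z_I$ to guarantee that any non-empty open $P$-invariant set contains an open $P$-orbit; everything else is essentially formal group-theoretic bookkeeping using Theorem \ref{thm normal}.
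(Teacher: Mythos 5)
Your proof is correct, but the surjectivity step is argued differently than in the paper. You take an abstract topological route: the preimage $\pi^{-1}(Pw\hat H_I)\subset Z_I$ is a non-empty open $P$-invariant set, and since the union of open $P$-orbits in a real spherical space is open and dense (the complement being a finite union of lower-dimensional orbits), this preimage must contain an open $P$-orbit. The paper instead argues computationally: starting from a relatively open $P w\hat H_I$, it writes $\hat h$ as $\tilde t_I h$ with $h\in\uH_I$, $\tilde t_I\in\exp_\uG(\af_{I,\C})$ (using $\hat\uH_I=\exp_\uG(\af_{I,\C})\uH_I$ from the exact sequence), and a short conjugation computation $(aw)^{-1}wa=h^{-1}a^{-1}ha\in H_I$ establishes directly that $PwH_IA_I=PwH_I$; the decomposition $Pw\hat H_I=\bigcup_{t\in F(I)}PwH_It$ into finitely many $P\times H_I$-orbits of equal dimension then forces each to be open. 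The paper's computation has the mild advantage of producing the explicit equality $PwH_IA_I=PwH_I$ for the very $w$ one started with (rather than just the existence of some preimage), and it sidesteps the appeal to density; on the other hand, your approach isolates cleanly the two separate inputs — openness of the projection $Z_I\to\hat Z_I$ and triviality of the $A_I$-action on a finite set — which is the conceptual picture the paper flags in the sentence directly preceding the lemma. Both arguments handle the fiber identification the same way, via the $A(I)$-action factoring through $F(I)$.
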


\begin{proof}  Let $P w\hat H_I\subset G$ be open for some $w\in G$. We first show that $PwH_I A_I= PwH_I$.
According to  \eqref{th-deco} applied to the real spherical space $\hat Z_I$
we may write $w=\tilde t\hat h$ with $\tilde t\in T_Z$ and $\hat h\in \hat \uH_I$.  Since $\hat \uH_I  =\exp_{\uG}(\af_{I,\C}) \uH_I$
by (\ref{exact1}), we have $\hat h= \tilde t_I h $ with $h\in \uH_I $ and $\tilde t_I\in
\exp_{\uG} (\af_{I, \C}) $.  Let $a\in A_I\subset A$. Then
$$(aw)^{-1} w a
= (a\tilde t\tilde t_I  h)^{-1} \tilde t \tilde t_I  h a
=  h^{-1}a^{-1} h a \, .$$
Now we observe that $a w \in G$ and
$a^{-1}ha \in \uH_I$.
Hence $(aw)^{-1} w a\in H_I$, and thus $Pw H_I a=PwaH_I= PwH_I$ as claimed.

Since  $\hat H_I = H_I  A_IF(I)$ we obtain that $Pw \hat H_I = \bigcup_{t \in F(I)} P w H_I t$.  In particular
$PwH_I$ is open. Hence the map $W_I \to \hat W_I$ is onto.  The last assertion also follows.
\end{proof}

\par Similar to $W\simeq F_M\bs F $ (see Lemma \ref{lemmaW1}) we obtain with
$$F_I^\perp :=  F \uA_I(\R)/ \uA_I(\R)\subset \uA_\uZ(\R)/\uA_I(\R)$$ that
$$\hat W_I \simeq F_M\bs F_I^\perp$$
 as a consequence of  \eqref{LST-I}. We further recall that
we view $F_I\subset \{-1,1\}^r \cap V_I \subset  V$ and accordingly the group
$F_M\cap F_I= F_M \cap F(I)$  acts on $F_I$.
Thus we obtain an exact sequence of pointed sets
$$   (F_M\cap F(I)) \bs  F_I \hookrightarrow  F_M\bs F \twoheadrightarrow F_M\bs  F_I^\perp $$
or, equivalently,
$$     (F_M \cap F(I))\bs F_I \hookrightarrow W \twoheadrightarrow \hat W_I\, .$$
From the injectivity of ${\bf m}$ and Lemma \ref{lemma WWI}
we thus obtain the commutative diagram:

 \begin{equation}   \label{small 2-group diagram}
\xymatrix{  (F_M\cap F(I)) \bs F_I \ar@{^{(}->}[r]&   W\ar@{>>}[r] &\hat W_I\\
\ar@{^{(}->}[u] \ar@{^{(}->}[r] (F_M\cap F(I))\bs F(I) & \ar@{>>}[r]\ar@{^{(}->}[u]^{{\bf m}} W_I&
\ar@2{-}[u]\hat W_I}
\end{equation}

\begin{rmk}\label{rmk W does not split} Let us emphasize that the upper horizontal sequence in \eqref{small 2-group diagram} is exact
in the category of pointed  sets, but not in the category of sets, i.e. we do not have
$W\simeq \hat W_I \times (F_M\cap F(I))\bs F_I$ as sets, see Example
\eqref{ex SL3 continued} below.
\end{rmk}

This phenomenon disappears if we consider $W_\R$ and $\hat W_{I,\R}= (P\bs \hat \uZ_I(\R))_{\rm open}$ instead
of $W$ and $\hat W_I$. In more detail, recall the basis
$(\psi_i^I)_{1\leq i\leq r}$ by means of which we get a decomposition (see \eqref{AZ-iso1})
\begin{equation}\label{AI torus deco}  \uAZ(\R)= \underbrace{\uA_I(\R)}_{\simeq (\R^\times)^k}  \times \underbrace{\uA_I^\perp(\R)}_{\simeq (\R^\times)^{r-k}}
\simeq (\R^\times)^r \end{equation}
analogous to the decomposition $V= V_I \oplus V_I^{\perp}$.  In particular,  $F_\R$, the $2$-torsion subgroup of
$\uAZ(\R)$,  decomposes as $F_\R=F_{I,\R}\times F_{I,\R}^\perp$ in self explaining notation.
Hence any $t\in F_\R$ decomposes as $t= t^{\|} t^\perp$ with $t^{\|} \in F_{I,\R}$ and $t^\perp \in F_{I,\R}^\perp$.

In this situation we obtain that the map
$$F_\R \to F_{I,\R}^\perp, \ \  t \mapsto t^\perp$$
induces an epimorphism
$$W_\R\simeq F_M\bs F_\R\to \hat W_{I,\R} \simeq F_M\bs F_{I,\R}^\perp, \ \ F_M t \mapsto F_M\cdot t^\perp\, ,$$
leading to a decomposition
$$ W_\R \simeq \hat W_{I,\R} \times   (F_{I,\R} \cap F_M)\bs F_{I,\R}\, .$$

\subsection{The fine partition of $W$ with respect to $I$}\label{fine partition}

Our next goal is to explore the issue of non-surjectivity of ${\bf m}$.

We recall from Subsection \ref{structure of Z_I} the set
$\sC_I$, the partition
$$ \Wc=\coprod_{\sc \in \sC_I}  \Wc_\sc\, $$
and the groups $H_{I,\sc}$ for $\sc\in \sC_I$.
Thus the understanding of $\Wc$ with respect to $I$ comes down to understanding the various
$\Wc_\sc$.   Once we have fixed $\sc$ we will see below that we obtain a natural geometric
splitting of $\Wc_\sc\simeq \hat \Wc_\sc\times (F_M\cap F_I) \bs F_I$ contrary to what happens for
$\Wc$ (see Remarks \ref{rmk W does not split} and \eqref{splitting W}).

For expository reasons we start with $\sc=\1\in\sC_I$, by which we mean $\sc=\hat Z_I=G\cdot \hat z_{0,I}$.
Thereupon we consider the other cases by replacing
$H_I$ with $H_{I,\sc}$ and adding a further index $\sc$ to the notation.

\begin{rmk}  Even in case $\sC_I=\{\1\}$  and $\Wc=\Wc_\1$ it can happen
that ${\bf m}(\Wc_I)\subsetneq \Wc$.  As we will see below this is related to the set $\sF_I=F(I)\bs F_I$ originating from the normal bundle geometry in  Subsection \ref{nb points to Z}.
\end{rmk}

\subsubsection{The case $\sc=\1$}\label{subsection c=1}
We assume that $w\in \Wc_\1$, i.e.~$\hat z_{w,I}\in \hat Z_I$. Let $W_\1=\{P \cdot w\mid
w \in \Wc_\1\}$.
Let $F_\1:=\{t\in F\mid Pt\in W_\1\}\subset F_\R$. Then we can describe $F_\1$ and thus
$\Wc_\1\simeq F_M\bs F_\1$ geometrically as follows.

Recall from \eqref{AI torus deco}
that any $t\in F_\R$ decomposes as $t= t^{\|} t^\perp$ with $t^{\|} \in F_{I,\R}$ and $t^\perp \in F_{I,\R}^\perp$.
 Let $w\in \Wc_\1$, write it
as $w=\tilde th$, and decompose
$\tilde t=\tilde t^{\|} \tilde t^\perp$  such that $\tilde t^{\|} \cdot z_0 = t^{\|}$ and $\tilde t^\perp \cdot z_0=t^\perp$.
Consider the curve $s\mapsto  a_s  \cdot z_w= a_s\cdot t $ where $a_s=\exp(sX)$ with $X\in\cf_I^{--}$.
Then,  as $t^{\|}\in \uA_I(\R)$
fixes $\hat z_{0,I}$, we obtain  in the limit for $s\to \infty$ that
$\tilde t\cdot \hat z_{0,I}= t^\perp \cdot \hat z_{0,I}$, and as $w\in \Wc_\1$ this limit belongs to an open $P$-orbit of $\hat Z_I$.

Furthermore the coordinate $t^{\|}\in F_{I,\R}$ tells us
in which direction we approach the limit $t^\perp\cdot \hat z_{0,I}$, i.e. in which component of the cone
$V_{Z,I}=F_I V_I^0$ we approach the limit.
With $F_{I,\1}^\perp:=
F_\1 \uA_I(\R)/ \uA_I(\R) \simeq F_\1 F_{I,\R}/ F_{I,\R}$
we obtain the following.
\begin{lemma}\label{lemma F-product} By restriction the map $t \mapsto (t^{\|},t^\perp)$
yields a bijection $F_\1 \simeq F_I\times F_{I,\1}^\perp$.
\end{lemma}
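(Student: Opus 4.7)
My plan is to realize the asserted bijection as the splitting of a short exact sequence of elementary abelian $2$-groups.

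First I would verify that $F$, and therefore $F_\1$, is a subgroup of $F_\R$. The key point is that the right $\uA_\uZ(\R)$-action on $\uZ(\R)$ commutes with the left $G$-action: if $t_1 = g_1 \cdot z_0 \in F$ and $t_2 \in F$, then the product $t_1 t_2 = t_2 \cdot t_1 = g_1 \cdot t_2$ lies in $Z$. The analogous commutation applied on the boundary, together with the $G$-stability of $\hat Z_I$, shows that the defining condition for $F_\1$ is preserved under products. Since $\uA_I(\R)$ fixes $\hat z_{0,I}$, one has $F_I \subset F_\1$, and $F_\1 \cap F_{I,\R} = F \cap \uA_I(\R) = F_I$ is tautological. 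The homomorphism $t \mapsto t^\perp$ from $F_\1$ to $F_{I,\R}^\perp$ thus produces a short exact sequence
$$1 \to F_I \to F_\1 \xrightarrow{t \mapsto t^\perp} F_{I,\1}^\perp \to 1$$
of elementary abelian $2$-groups, which splits automatically.

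The decomposition map $t \mapsto (t^{\|}, t^\perp)$ is injective on $F_\R$, and its $\perp$-component tautologically lies in $F_{I,\1}^\perp$. Establishing the bijection therefore reduces, since $F_\1$ is a group containing $F_I$, to showing the inclusion $F_{I,\1}^\perp \subset F_\1$. For $u \in F_{I,\1}^\perp$ with $u = t^\perp$ for some $t \in F_\1$, the condition $u \cdot \hat z_{0,I} = t^\perp \cdot \hat z_{0,I} \in \hat Z_I$ is automatic, so all that remains is $u \in F$, i.e.\ that $u$ is a real point of $Z$.

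This final step is the main obstacle and is where the geometric content of the lemma is concentrated. The plan is to combine the consistency relation of Proposition \ref{prop cr1} with the identification $\uA_\uZ = \uA_{\uZ_I}$ from Lemma \ref{equal L cap H}. For a lift $w \in \Wc_\1$ of $t$, there is a matching $w_I \in \Wc_I$ with $(H_w)_I = (H_I)_{w_I}$; under the adjustment $t = t_I$ arranged in \eqref{t=t_I}, the point $u = t^\perp$ is realized simultaneously as the $\perp$-coordinate of a torus element in $Z$ and in $Z_I$. Unwinding the local structure theorem \eqref{LST5} for $Z$ together with its analogue for $Z_I$, whose torus slices $A_{Z,\R}$ and $A_{Z_I,\R}$ sit inside the common $\uA_\uZ(\R) = \uA_{\uZ_I}(\R)$, then places $u$ in $F$ and hence in $F_\1$, completing the proof.
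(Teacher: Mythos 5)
Your opening step is where the argument breaks: the claim that $F$, and hence $F_\1$, is a subgroup of $F_\R$ is false in general, and the paper itself says so. Right after \eqref{AF} it is noted that $A_{Z,\R}$ is in general not a group but only an $A_Z$-set, and Example \ref{ex SL3} ($Z=\Sl(3,\R)/\SO(1,2)$) exhibits $F=\{t_1,t_2,t_3\}$ explicitly as a non-group: the torus product of two distinct nontrivial elements of $F$ is the $2$-torsion point lying in the other real orbit $G/K\subset \uZ(\R)$, hence not in $Z$. The computation you use to get closure, $t_1t_2=t_2\cdot t_1=g_1\cdot t_2$, presupposes a right $\uA_\uZ(\R)$-action on $\uZ(\R)$ commuting with the left $G$-action, and no such action exists: an element $t_2\in F$ lifts only to a \emph{complex} torus element $\tilde t_2\in T_Z\subset \uA(\C)$ (see \eqref{th-deco}), and a real $g_2\in G$ with $g_2\cdot z_0=t_2$ differs from $\tilde t_2$ by an element of $\uH(\C)$ which need not fix $t_1$; so $t_1t_2=\tilde t_2\cdot t_1$ need not lie in $G\cdot z_0$. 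With the group structure gone, your short exact sequence of elementary abelian $2$-groups, its splitting, and the reduction of surjectivity to the single inclusion $F_{I,\1}^\perp\subset F_\1$ (i.e.\ to the special pair with $t^{\|}=\1$) all collapse; even the well-definedness of the map into $F_I\times F_{I,\1}^\perp$ (that $t^{\|}\in F_I$ for $t\in F_\1$) was being deduced from that group closure.

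The remaining content — that $t=t^{\|}t^\perp$ lies in $F$ (and $Pt\in W_\1$) for an \emph{arbitrary} pair $(t^{\|},t^\perp)\in F_I\times F_{I,\1}^\perp$ — is exactly what has to be proved, and your proposal only gestures at it via Proposition \ref{prop cr1} and the local structure theorem without an actual argument. The paper handles it directly and without any group structure: for such $t$ the curve $\gamma(s)=a_s\cdot t$ converges to $t\cdot\hat z_{0,I}=t^\perp\cdot\hat z_{0,I}$, which lies in $\hat Z_I\cap\hat Z$ because $t^\perp$ comes from $F_\1$, and the coordinate $t^{\|}\in F_I$ says that $\gamma$ approaches this boundary point in a direction of the normal cone pointing to $Z$ (cf.\ \eqref{points to Z} and the end of the proof of Lemma \ref{lemma rel open}); hence $t\in Z$, so $t\in F$ and $Pt\in W_\1$. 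If you want to salvage your outline, you must replace the group-theoretic reduction by this curve/normal-bundle argument applied to the general product $t^{\|}t^\perp$, not just to $t^\perp$.
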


\begin{proof} First we claim that $F_I=F_\1\cap \uA_I(\R)$.
The inclusion $\supset$ is clear since by definition $F_I=F\cap \uA_I(\R)$.
Conversely, each $t\in F_I$ corresponds to a $w=\tilde th\in \Wc$
with $t\in \uA_I(\R)$. Then $\hat z_{w,I}=\hat z_{0,I}$, and hence $t\in F_\1$ as claimed.

In particular it follows that $(t^{\|},t^\perp)\in F_I\times F_{I,\1}^\perp$ for all $t\in F_\1$.
Since $t\mapsto (t^{\|},t^\perp)$ is injective
by its definition in \eqref{AI torus deco}, it remains
to see that $t=t^{\|}t^\perp\in F_\1$
for all pairs $(t^{\|},t^\perp)\in F_I\times F_{I,\1}^\perp$.
Since $ t^{\|}\in \uA_I(\R)$ we know that $t\cdot \hat z_{0,I}
=t^\perp\cdot \hat z_{0,I}\in \hat Z_I$ which is the limit of the curve $\gamma(s)= a_s \cdot t$
for $s\to \infty$.  The coordinate $t^{\|}\in F_I$ shows that $\gamma$ approaches
the limit $t\cdot \hat z_{0,I}$ in a direction pointing to $Z$ (see also the end of the proof of Lemma \ref{lemma rel open} for a
more formal argument).
Hence $t\in F$ and $Pt\in W_\1$.
\end{proof}

Lemma \ref{lemma F-product} implies the splitting
\begin{equation} \label{splitting W}\Wc_\1\simeq \hat \Wc_I\times (F_M\cap F(I)) \bs F_I\end{equation}
and we can rephrase Lemma \ref{lemma WWI} as:

\begin{lemma} We have ${\bf m}(\Wc_I)\subset \Wc_\1$ and under the identification \eqref{splitting W}
we have
\begin{equation}\label{splitting WI}  {\bf m}(\Wc_I)\simeq \hat \Wc_I \times (F_M\cap F(I))\bs F(I) \end{equation}
\end{lemma}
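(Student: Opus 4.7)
The plan is to first establish ${\bf m}(\Wc_I)\subset\Wc_\1$ directly from the definitions, and then identify the image inside the splitting of $\Wc_\1$ via a normal-bundle analysis combined with a cardinality count.

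For the inclusion ${\bf m}(\Wc_I)\subset\Wc_\1$: take $w_I=\tilde t_I h_I\in\Wc_I$ with $h_I\in\uH_I$, and let $w={\bf m}(w_I)=\tilde t h\in\Wc$. The adjustment \eqref{t=t_I} gives $t=t_I$, and the ambiguity in the choice of lifts $\tilde t,\tilde t_I\in T_Z$ lies in $\uA\cap\uH=\uA\cap\uH_I$ by Lemma \ref{equal L cap H}, which is contained in $\hat\uH_I$ and hence fixes $\hat z_{0,I}$. Since $h_I\in\uH_I\subset\hat\uH_I$ also fixes $\hat z_{0,I}$, one obtains
\[
\hat z_{w,I}=\tilde t\cdot\hat z_{0,I}=\tilde t_I h_I\cdot\hat z_{0,I}=w_I\cdot\hat z_{0,I}\in G\cdot\hat z_{0,I}=\hat Z_I,
\]
so $w\in\Wc_\1$.

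For the splitting, the crucial input is the identification $Z_I\simeq N_\sY^{Z,\1}$ of \eqref{N-ident}, corresponding to the canonical component $F(I)\cdot V_I^0$ of $V_{Z,I}=F_I\cdot V_I^0$. Using the LST coordinates \eqref{LST-coo}, a point $a_s t\cdot z_0$ (with $t=t^{\|}t^\perp\in F_\1$ and $a_s=\exp(sX_I)$) has $V$-coordinate $e^{-s}t^{\|}\cdot{\bf e}_I+t^\perp\cdot{\bf e}_I^\perp$, approaching $t^\perp\cdot\hat z_{0,I}$ as $s\to\infty$ from normal direction $t^{\|}\cdot{\bf e}_I$. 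By Lemma \ref{speed lemma} the curve $w_I a_s\cdot z_0=p_s'\cdot a_s t_I$ (with $p_s'\to\1$ in $P$) shares this asymptotic direction. Since $w_I\in\Wc_I$ represents an open $P$-orbit of the specific orbit $Z_I=Z_{I,\1,\1}$, the decomposition $N_\sY^Z=\coprod_{\st\in\sF_I}N_\sY^{Z,\st}$ into $G$-orbits forces the asymptotic normal direction into the $\st=\1$ component, i.e.\ $t_I^{\|}\cdot{\bf e}_I\in F(I)\cdot V_I^0$, hence $t_I^{\|}\in F(I)$. For the reverse inclusion I would compare cardinalities: Lemma \ref{lemma WWI} together with the observation that the right $F(I)$-action on $W_I$ has stabilizer $F_M\cap F(I)$ yields $|\Wc_I|=|\hat\Wc_I|\cdot|F(I)|/|F_M\cap F(I)|$, which matches the cardinality of the target; injectivity of ${\bf m}$ then gives \eqref{splitting WI}.

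The hard step is rigorously justifying that the asymptotic normal direction lands in the specific component $F(I)\cdot V_I^0$ rather than merely in $V_{Z,I}=F_I\cdot V_I^0$. This rests on tracing $Z_I\simeq N_\sY^{Z,\1}$ through the matching ${\bf m}$: the base point $z_{0,I}\in Z_I=Z_{I,\1,\1}$ corresponds under \eqref{N-ident} to $[\1,{\bf e}_I]$ with ${\bf e}_I\in F(I)\cdot V_I^0$, so the $\hat H_I$-saturation forces the asymptotic normal vectors of curves attached to elements of $\Wc_I$ to lie in $\rho(\hat H_I)\cdot{\bf e}_I=F(I)\cdot V_I^0$ by \eqref{im rho}, as opposed to landing in one of the sibling components $F(I)t_\st\cdot V_I^0$ corresponding to $Z_{I,\1,\st}$ for $\st\neq\1$.
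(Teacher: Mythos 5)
Your first inclusion ${\bf m}(\Wc_I)\subset\Wc_\1$ is correct and essentially immediate, and the counting half of your argument is also fine: the stabilizer statement you invoke (the right $F(I)$-action on $W_I$ has stabilizer $F_M\cap F(I)$ on every fiber) is exactly the fiber computation contained in the bottom row of \eqref{small 2-group diagram}. Note, however, that the paper proves the lemma without any normal-bundle analysis: it is obtained as bookkeeping from Lemma \ref{lemma WWI} (the fibers of $W_I\to \hat W_I$ are $F(I)$-orbits), the splitting \eqref{splitting W} coming from Lemma \ref{lemma F-product}, the commutativity of \eqref{small 2-group diagram}, the normalization \eqref{t=t_I}, and the injectivity of ${\bf m}$.

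The step you yourself single out as the hard one is where your argument breaks down, and it does so by circularity. The limiting normal vector of the curve $s\mapsto w_Ia_s\cdot z_0$, viewed as a point of $N_\sY=G\times_{\hat H_I}V_I$, is $[w_I,{\bf e}_I]$; it lies in $N_\sY^{Z,\1}=G\times_{\hat H_I}\bigl[F(I)V_I^0\bigr]$ for the trivial reason that ${\bf e}_I\in F(I)V_I^0$, and this membership carries no information about $t_I^{\|}$. To conclude $t_I^{\|}\in F(I)$ you must express this vector in the chart trivialization of the normal space at the shifted point $t_I^\perp\cdot\hat z_{0,I}$ (where the chart direction is $t_I^{\|}\cdot{\bf e}_I$), i.e.\ you must compare the chart trivialization with the $G$-structure trivialization over that point. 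Writing $t_I^\perp\cdot\hat z_{0,I}=g_0\cdot\hat z_{0,I}$ with $g_0\in G$, the two trivializations differ by $\underline{\rho}(\hat h)\in\uA_I(\R)$ with $\hat h=\tilde t_I^{\perp,-1}g_0\in\hat\uH_I$; for the natural choice $g_0=w_I$ one gets $\underline{\rho}(\hat h)=t_I^{\|}$, so the statement ``the chart vector $t_I^{\|}{\bf e}_I$ at $t_I^\perp\hat z_{0,I}$ lies in the $\st=\1$ component'' is equivalent to $t_I^{\|}\in F(I)\,t_I^{\|}$, a tautology. Your appeals do not close this: \eqref{im rho} only controls the fiber of $N_\sY^{Z,\1}$ at the base point $\hat z_{0,I}$, not its chart coordinates over other points of $\hat Z_I$, and the decomposition $N_\sY^Z=\coprod_{\st\in\sF_I}N_\sY^{Z,\st}$ cannot ``force'' anything, since all components are $G$-isomorphic to $G/H_I$ and the datum that $w_I$ represents an open $P$-orbit of $Z_I$ does not single one out. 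Concretely, the assertion $t_I^{\|}\in F(I)$ is equivalent to the assertion that the fiber of $Z_I\to\hat Z_I$ over $t_I^\perp\hat z_{0,I}$ contains the torus point $t_I^\perp$ itself, and your proposal supplies no independent proof of this; the paper's route sidesteps it by working only with the fibration of $\Wc_I$ and $\Wc_\1$ over $\hat\Wc_I$ furnished by Lemma \ref{lemma WWI}.
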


From \eqref{splitting W} and \eqref{splitting WI} we obtain that
\begin{equation} \label{splitting WI1} \Wc_\1 \simeq {\bf m}(\Wc_I) \times  \sF_I \end{equation}
with $\sF_I=F(I)\bs F_I$.

\begin{rmk}\label{remark normal bundle}  It is instructive for the following to recall from
Subsection \ref{nb points to Z} the part
$N_\sY^Z= \coprod_{\st\in \sF_I} N_\sY^{Z, \st}$ of the normal bundle $N_\sY$ which points to $Z$.
Here
$$Z_{I,\st}:=N_\sY^{Z,\st}\simeq G/H_I$$
by the isomorphism \eqref{N-ident}, and $\sF_I=F(I)\bs F_I$ parametrizes the components of $N_\sY^Z$. \end{rmk}
\par  Note that $F_{I,\R}$ is a $\Z_2$-vector space and thus we can find
a splitting $F_{I,\R} = F(I)\oplus F_{I,\R}^0$ of vector spaces. In particular,
we obtain $F_I = F(I) \oplus F_I^0$ for a subset $F_I^0\subset F_{I,\R}^0$.
In particular the map

$$ F_I^0\to \sF_I,\ \ t\mapsto \st:= tF(I)$$
is a bijection.
\par Now, using the isomorphism \eqref{splitting WI1} and the identification $\sF_I\simeq F_I^0$
we obtain injective maps
$${\bf m}_\st: \Wc_I \to \Wc_\1\simeq {\bf m}(\Wc_I) \times  F_I^0, \ \ w_I \mapsto ({\bf m}(w_I), t)$$
which yields the partition
\begin{equation} \label{partition}  \Wc_\1= \coprod_{\st\in \sF_I}  {\bf m}_{\st} (\Wc_I)\, .\end{equation}

Let us explain the map ${\bf m}_\st$ more geometrically using the normal bundle, see  Remark \ref{remark normal bundle}.  The subset ${\bf m}_{\st} (\Wc_I)\subset \Wc_\1$
corresponds to those $w=\tilde t_wh\in \Wc_\1$  for which the curve $s\mapsto a_s\cdot z_w= a_s t_w \cdot z_0$
approaches the boundary point $\hat z_{w,I}= t_w\cdot \hat z_{0,I}=t_w^\perp\cdot \hat z_{0,I}$
in direction of $t F(I) V_I^0\subset V_{Z,I}$.   Let us emphasize that
our initial map ${\bf m}$ corresponds then to the case where
$\st=F(I)$ is the identity coset.

\par  Recall that $\st \in \sF_I$ corresponds to a unique $t \in F_I^0$. Further we
let $\tilde t\in T_Z$  be a lift of $t$, i.e. $\tilde t \cdot z_0 =t$.
We assume that $t={\bf 1}$ in case $\st= F(I)$.

\begin{rmk}  Let $w_I=\tilde t_I h_I$ and  $w_1={\bf m}(w_I)= \tilde t_I h\in \Wc$. Then note that
$$ {\bf m}_{\st }(w_I) =  \tilde t_M  \tilde t \tilde t_I  \tilde h $$
for some $\tilde t_M \in F_M $, depending on the choice of representatives
for $w:= {\bf m}_{\st }(w_I)\in \Wc$,  and $\tilde h  \in \uH$.
Thus by changing $w={\bf m}_{\st} (w_I)\in \Wc$ to $\tilde t_M w \tilde h\in G$ for some $ h'\in\uH$
we may assume that the compatibility
conditions
$${\bf m}_\st(w_I)=\tilde t \tilde t_I h''$$
hold  for some $h''\in \uH$.  In particular, we have
$${\bf m}_\st(w_I)\cdot z_0= \tilde t {\bf m} (w_I)\cdot z_0=tt_1$$
 for all $\st\in \sF_I,  w_I\in \Wc_I$.
 \par Notice that this correction of choice of $\Wc$ (by harmless left displacements of elements of $F_M$)  with respect to
$\Wc_I$ depends on $I$. In general it seems to be  not possible to make a consistent choice
of $\Wc$ which would be valid for all $I$ simultaneously. \end{rmk}

\par

Recall the notation $H_g=gHg^{-1}$ for a subgroup $H$ in a group $G$ and $g\in G$.   Then note that
$\uH_{\tilde t}$ is defined over $\R$ and $H_{\tilde t}:= (\uH_{\tilde t})(\R)$ is conjugate to $H$ as $t\in Z$.
Likewise we define $z_{t,I}:=\tilde t \cdot z_{0,I}\in \uZ_I(\R)$ and note that
$G$-stabilizer of $z_{t,I}$ is $H_I$ as we have $(H_{\tilde t})_I = H_I$ as a consequence of the fact that
$\tilde t$ fixes the vertex $\hat z_{0,I}$.

With then obtain the following extension of the consistency relations from Proposition \ref{prop cr1}:

\begin{lemma} \label{lemma HI comp} Let $w_I\in \Wc_I$, $\st \in \sF_I$ and $w= {\bf m}_\st (w_I)\in \Wc_\1$.
Then
\begin{equation} \label{WWI2} (H_w)_I =  (H_I)_{w_I} \, . \end{equation}
In particular, $(H_w)_I$ only depends on $w_I$ and is  independent of $\st$.
\end{lemma}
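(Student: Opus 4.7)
The plan is to extend the argument of Proposition \ref{prop cr1} by keeping track of how both sides transform under the shift $\st$. The starting point is to identify the complex analogue of $(H_w)_I$. Writing $w = \tilde t_w h_w$ with $\tilde t_w \in \uA$ a lift of $t_w := w\cdot z_0 \in F_\R$ and $h_w \in \uH$, and noting that $\tilde t_w$ commutes with $\exp(sX)$ for $X\in\cf_I^{--}$ while $\exp(sX)h_w\cdot z_0\to \hat z_{0,I}$, one obtains $\hat z_{w,I}=\tilde t_w\cdot\hat z_{0,I}$. Hence the $\uG$-stabilizer of $\hat z_{w,I}$ is $\tilde t_w\hat\uH_I\tilde t_w^{-1}$, and the kernel of its isotropy action on the normal space $V_{I,\C}$ is $\tilde t_w\uH_I\tilde t_w^{-1}$ (by Theorem~\ref{thm normal}(1) together with equivariance of the isotropy representation under conjugation). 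Taking real points gives the key identification
\[
(H_w)_I \;=\; G\cap \tilde t_w\uH_I\tilde t_w^{-1}.
\]

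For $w={\bf m}_\st(w_I)$, the compatibility condition recorded just before the lemma states that $w\cdot z_0 = t\cdot t_I$, so one may take $\tilde t_w=\tilde t\,\tilde t_I$ with $\tilde t$ a lift of $t$ and $\tilde t_I$ the chosen lift of $t_I$. Since $t\in F_I^0\subset \uA_I(\R)$ and $\uA_I$ fixes $\hat z_{0,I}$, any lift of $t$ to $\uA$ lies in $\uA\cap\hat\uH_I$. In particular $\tilde t$ normalizes $\uH_I$ (because $\uH_I\triangleleft \hat\uH_I$ by Theorem~\ref{thm normal}(2)), and it commutes with $\tilde t_I$ since $\uA$ is abelian. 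Consequently
\[
\tilde t_w\uH_I\tilde t_w^{-1} \;=\; \tilde t\,\tilde t_I\uH_I\tilde t_I^{-1}\tilde t^{-1} \;=\; \tilde t_I\uH_I\tilde t_I^{-1},
\]
and the right-hand side is visibly independent of $\st$.

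To conclude, use $w_I=\tilde t_I h_I$ with $h_I\in \uH_I$ (so $h_I\uH_I h_I^{-1}=\uH_I$) to rewrite $\tilde t_I\uH_I\tilde t_I^{-1}=w_I\uH_I w_I^{-1}$. Intersecting with $G$ yields $w_I H_I w_I^{-1}$: the inclusion $\supset$ is immediate, and for $\subset$, any $g=w_I x w_I^{-1}\in G$ with $x\in\uH_I$ forces $x=w_I^{-1}gw_I\in \uH_I\cap G=H_I$. Combining with the previous display gives $(H_w)_I = w_I H_I w_I^{-1} = (H_I)_{w_I}$, which proves both assertions.

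The main technical hurdle is the first step, namely the identification $(H_w)_I = G\cap \tilde t_w\uH_I\tilde t_w^{-1}$, which requires the equality $\hat z_{w,I}=\tilde t_w\cdot\hat z_{0,I}$ together with the equivariance of the normal-bundle picture from Section~4 under the $\uA$-translation. Once this is secured, the heart of the matter is the purely algebraic observation that the transverse shift $\tilde t$ sits inside $\hat\uH_I$ and therefore disappears upon conjugating $\uH_I$ — this is precisely what renders $(H_w)_I$ insensitive to the choice of $\st\in\sF_I$.
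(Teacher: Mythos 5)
Your proof is correct, and it reaches the conclusion by a route that differs in execution from the paper's. The paper's own argument is a two-step reduction: writing $w={\bf m}_\st(w_I)=\tilde t\,\tilde t_I h$, it first notes $(H_w)_I=\big((H_{w_1})_{\tilde t}\big)_I=(H_{w_1})_I$ for $w_1={\bf m}(w_I)$ — the point being exactly your key observation, that the transverse shift $\tilde t$ fixes the relevant vertex and hence cannot affect the $I$-degeneration — and then simply quotes the consistency relation of Proposition \ref{prop cr1} to get $(H_{w_1})_I=(H_I)_{w_I}$. You instead avoid Proposition \ref{prop cr1} altogether: you identify $(H_w)_I$ as the real points of $\tilde t_w\uH_I\tilde t_w^{-1}$ via the equivariance of the isotropy-kernel description (the same content as \eqref{character HI} transported by $\tilde t_w$), strip off $\tilde t$ using that a lift of $t\in F_I\subset\uA_I(\R)$ lies in $\hat\uH_I$, which normalizes $\uH_I$ by Theorem \ref{thm normal}\eqref{2two2}, and then conclude with the purely algebraic step $\tilde t_I\uH_I\tilde t_I^{-1}=w_I\uH_Iw_I^{-1}$ coming from $w_I=\tilde t_I h_I$ with $h_I\in\uH_I$. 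In effect you re-derive Proposition \ref{prop cr1} at the level of complex stabilizers rather than via the curve asymptotics used in its proof in the paper; what this buys is a self-contained, uniform group-theoretic argument, at the modest cost of having to justify carefully the identification $(\uH_w)_I=\tilde t_w\uH_I\tilde t_w^{-1}$ and that taking real points commutes with it, whereas the paper's proof is shorter because the consistency relation is already available. (Minor remark: the citation of Theorem \ref{thm normal}\eqref{1one1} in your first step is not really what is needed there — only the definition of $\uH_I$ as the kernel of the isotropy representation and $\uG$-equivariance enter; part \eqref{2two2} is the genuinely used input.)
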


\begin{proof} For $w_I =\tilde t_I h_I$ we have $w_1:={\bf m}(w_I)=\tilde t_I h'$ for some $h'\in \uH$.
Hence ${\bf m}_\st (w_I) = \tilde t \tilde t_I h$ for some $h\in \uH$.  We further have
$$(H_w)_I= \big((H_{w_1})_{\tilde t}\big)_I=(H_{w_1})_I$$
and now Proposition \ref{prop cr1} applies.
\end{proof}

\subsubsection{The general decomposition of $\Wc$ }
In general we obtain a partition

\begin{equation} \label{full deco W} \Wc= \coprod_{\sc\in \sC_I}  \coprod_{\st \in \sF_{I,\sc}}  {\bf m}_{\sc,\st} (\Wc_{I,\sc})\end{equation}
where $\Wc_{I,\sc}$ are the open $P$-orbits for $Z_{I,\sc}:= G/H_{I,\sc}$ parametrized
as in the previous section with $H_I$ replaced by $H_{I,\sc}$.
The set $\sF_{I,\sc}$ is then $\sF_I$, but
for $H_I$ replaced by $H_{I,\sc}$. We define ${\bf m}_{\sc, \st} $ similarly. Regarding
our choices $w(\sc)\in \W$ which defined $H_{I,\sc}$ we normalize ${\bf m}_{\sc,\1}$ such that
${\bf m}_{\sc,\1}(\1)=w(\sc)$.

\begin{rmk}  If we let $F_c\subset F$ correspond to $\Wc_\sc\subset \Wc$ we define as  before
$F_{I,\sc}:= F_\sc \cap \uA_I(\R)$ and $F_{I,\sc}^\perp:=  F_\sc F_{I,\R}/ F_{I,\R}$.
As in Lemma \ref{lemma F-product} we then obtain
\begin{itemize}
\item  $F_{I,\sc}=F_I$.
\item $F_\sc\simeq F_{I,\sc} \times F_{I,\sc}^\perp$ under $t\mapsto (t^{\|},t^\perp)$.
\end{itemize}
The first item tells us that $F_{I,\sc}$ is independent of $\sc$. However $F(I)_\sc$ does depend
on $\sc$ as Example \ref{ex SL3 continued} below shows. In particular the dependence of
 $\sF_{I,\sc}= F(I)_\sc\bs F_I$ n $\sc$ is caused by the $\sc$-dependence of $F(I)_\sc$ only.
\end{rmk}

Further we denote by
$z_{0,I,\sc} = H_{I,\sc}$ the standard base point of $Z_{I,\sc}$,
and state the general version of \eqref{WWI2}:  let $\sc \in \sC_I$ and $\st \in \sF_{I,\sc}$
such that $w={\bf m}_{\sc,\st}(w_{I,\sc})\in \Wc_\sc$ for some $w_{I,\sc}\in \Wc_{I,\sc}$. Then $(H_w)_I$
does not depend on $\st$ and
\begin{equation} \label{WWI2 general} (H_w)_I = (H_{I,\sc})_{w_{I,\sc}} \qquad (w= {\bf m}_{\sc,\st}(w_{I,\sc}))\, .\end{equation}

If we define $w(\sc,\st):= {\bf m}_{\sc,\st}(\1)\in \Wc$ and set $Z_{I,\sc,\st} =(H_{w(\sc,\st)})_I$, then
$Z_{I,\sc,\st}= Z_{I,\sc}$ and the decomposition \eqref{normal union} follows.

\begin{ex}  \label{ex SL3 continued}We continue Example \ref{ex SL3} of $Z=\Sl(3,\R)/ \SO(1,2)$
with $\Wc=\{w_1, w_2, w_3\}$  and $H_{w_1}=H$.  We chose $I=\{\alpha_2\}$ and obtained
$\sC_I=\{{\mathsf 1},{\mathsf 2}\}$ with $\Wc_{\mathsf 1}=\{w_1\}$ and $\Wc_{\mathsf 2}=\{w_2, w_3\}$. Further we had
$H_{I,{\mathsf 1}}= H_I=S(\OO(1)\OO(2)) U_I$ and $H_{I,{\mathsf 2}}= (H_{w_2})_I = (H_{w_3})_I=S(\OO(1)\OO(1,1)) U_I$.
\par Next we claim that both $\Wc_{I,{\mathsf 1}}=\{\1\}$ and $\Wc_{I,{\mathsf 2}}=\{\1\}$ are are one-elemented. In fact this
follows from the fact that  the open $P$-orbits in $G/H_{I,j}$  are induced:
if we denote by $G_I\simeq \GL(2,\R)$ the Levi for the parabolic defined by $I$, then the open $P$-orbits
on $G/H_{I,j}$ correspond to the open $P\cap G_2$ orbits in
$\GL(2,\R)/\OO(2)$ respectively $\GL(2,\R)/ \OO(1,1)$. Both cases feature only one open orbit for $P\cap G_I$
and establish our claim.

\par Finally we determine $\sF_{I,{\mathsf 1}}$ and $\sF_{I,{\mathsf 2}}$.  Since $F=\{t_1, t_2, t_3\}$ with $t_i t_j=t_k$ for all
$i, j,k$ pairwise different, we readily deduce that $F_{\R,I}=F_{I,{\mathsf 1}}=F_{I,{\mathsf 2}}\simeq \Z_2$ is a group.
Recall that we described $\hat H_{I,{\mathsf 1}}$ and $\hat H_{I,{\mathsf 2}}$ already in Example \ref{ex SL3}.
From that we deduce that $\hat H_{I,{\mathsf 1}}/ H_{I,{\mathsf 1}}\simeq A_I$ is connected and thus $F(I)_{\mathsf 1}=\{{\mathsf 1}\}$. In particular,
$\sF_{I,{\mathsf 1}}\simeq \Z_2$.

On the other hand we have
$$u=\begin{pmatrix} 1 & 0&0 \\ 0& 0& 1\\ 0& -1 &0\end{pmatrix}\in \hat H_{I,{\mathsf 2}}$$
as it preserves the diagonal quadratic form $(0,1,-1)$ projectively (i.e. up to sign).
Since $u\not \in H_{I,{\mathsf 2}}$ and commutes with $A_I=\{ \diag (t^{-2}, t, t): t>0\}$ we thus have $F(I)_{\mathsf 2}\simeq \Z_2$. In particular,
$\sF_{I,{\mathsf 2}}=\{\1\}$.
\end{ex}

\begin{rmk} The above example shows that the group $A(I)=A_I \times F(I)$ is sensitive to
the orbit type in $\sC_I$.  More explicitly, we do not have $A(I)\simeq \hat H_{I,\sc}/ H_{I,\sc}$
for all $\sc\in \sC_I$. \end{rmk}

\section{Abstract Plancherel theorem and tempered representations}\label{Section AbsPlanch}

This section has several parts.  We begin with a brief recall on Banach representations and their
 smooth vectors, followed by a recap of smooth completions of Harish-Chandra modules.
 Then we turn our attention to  the abstract Plancherel theorem
for real spherical spaces. In fact there is no much difference to the case of a general unimodular
homogeneous space and "real spherical"  only enters via finite multiplicities.
Finally we recall the basic tempered theory for homogeneous spaces, initiated by Bernstein \cite{B}
in a general setup, and then made concrete for real spherical spaces
 in \cite{KKSS2}.

\subsection{Generalities on Banach representations and their smooth vectors}
We begin with a few facts on Banach representations of a Lie group $G$.
By a Banach (or a Fr\'echet) representation of a Lie group $G$ we understand a continuous linear action

$$G \times E \to E,  \ \   (g,v) \mapsto \pi(g) v\, $$
on a Banach (or Fr\'echet) space $E$.
As customary we use the symbolic pair $(\pi, E)$ to denote the representation. Sometimes we abbreviate and use
$g\cdot v$ instead of $\pi(g)v$.

\par Let now $(\pi, E)$ be a Banach representation.  Further we fix with $p$ a norm which induces the topology on $E$.
In case $E$ is a Hilbert space and $p$ originates from the defining scalar product,
then we say $p$ is the Hermitian norm on $E$.
As the space $E$ does not necessarily allow an action of the Lie algebra we pass to the subspace $E^\infty\subset E$
of smooth vectors.  Here $v\in E$ is called smooth provided the $E$-valued orbit map $f_v: G \to E, \ \ g\mapsto \pi(g)v$
is smooth. In this sense we obtain a $G$-invariant subspace $E^\infty\subset E$ which is dense in $E$.
The space $E^\infty$ carries a Fr\'echet topology
for which the $G$-action is smooth.
For further reference we briefly recall a few standard possibilities on how to define the Fr\'echet topology. To begin with let
$\B:=\{ X_1, \ldots, X_n\}$ be an ordered basis of $\gf$. For a multi-index $\alpha\in \N_0^n$ we set
${\bf X}^\alpha:=X_1^{\alpha_1}\cdot \ldots \cdot X_n^{\alpha_n}\in \U(\gf)$.
For each $k\in \N_0$ we now define a norm on $E^\infty$ by

$$p_{\B, k}(v):=  \Big(\sum_{\alpha\in \N_0^n \atop |\alpha|\leq k}  p( {\bf X}^\alpha\cdot  v)^2 \Big)^{1\over 2} \qquad  (v \in E^\infty)\, .$$
Notice that $p_{\B, k}$ is Hermitian in case $p$ is Hermitian. If $\Cc$ is any other choice of ordered
basis we note that there exist constants $C_k =C_k(\B, \Cc)>0$, depending
on $\B$ and $\Cc$ but not on the space $E$ and its norm, such that
${1\over C_k}  p_{\B, k} \leq p_{\Cc, k} \leq C_k p_{\B, k}$ for all $k\in \N_0$.
In particular the locally convex topology on $E^\infty$ induced from the family $(p_{\B, k})_{k\in \N_0}$
does not depend on the particular choice of $\B$.
In the sequel we fix a basis $\B$, set $p_k:=p_{\B, k}$, and refer to $p_k$ as a $k$-th Sobolev norm
of $p$.  We denote by $E_k$ the completion of $E^\infty$ with respect to the norm $p_k$.  Note that
$G$ leaves $E_k$ invariant and defines  a Banach representation $(\pi_k, E_k)$ of $G$.
It follows that the Fr\'echet representation $(\pi^\infty, E^\infty)$ is of moderate growth (see \cite[Lemma 2.10]{BK}).
\par A second possibility to define the Fr\'echet structure is by Laplace Sobolev norms.  Let
\begin{equation} \label{def Delta}\Delta:= - (X_1^2 + \ldots + X_n^2)\in \U(\gf)\end{equation}
be a Laplace element attached to the basis $\B$, and set
\begin{equation}\label{def DeltaR} \Delta_R=\Delta+R^2\cdot\1\end{equation}
for $R\in\R$.
We recall the following from \cite[Cor. 3.3, Rem. 3.4]{GK}.

\begin{lemma} Let $(\pi, E)$ be a Banach representation of a unimodular Lie group $G$. Then there exists
a constant $R_E\geq 0$ such that for all $R>R_E$ the operator
$$ d\pi(\Delta_R): E^\infty\to E^\infty$$
is an isomorphism of Fr\'echet spaces. Moreover,  one can take $R_E=0$ in case $(\pi,E)$ is unitary.
\end{lemma}

From now on we assume that $G$ is a unimodular Lie group. For a Banach representation $(\pi, E)$ and
fixed $R>R_E$, we define
Laplace Sobolev norms of even order for any $k\in \Z$ by

\begin{equation} \label{def  Laplace} ^\Delta p_{2k}(v):=  p( \Delta_R^k v) \qquad (v\in E^\infty)\, .\end{equation}
Strictly speaking $^\Delta p_{2k}$ depends on $R>R_E$ but we suppress this in the notation. In case
$(\pi, E)$ is unitary we use $R=1$ and thus $^\Delta p_{2k}(v) = p(\Delta_1^k v)$.

For $k\geq 0$, it is clear that $^\Delta p_{2k} \leq c_k\cdot p_{2k}$ for a constant $c_k>0$ which is independent of $p$ and $E$.

Further, for $k\geq 0$
\cite[Prop. 4.12]{GK} yields constants $C_k >0$ only depending on
$\B$ and not on $E$ or $p$ such that
\begin{equation} \label{Laplace bound}  p_{2k} (v)\leq  C_k\cdot   {^\Delta p}_{2k +n^* }(v) \qquad (v\in E^\infty)\, \end{equation}
where

\begin{equation}\label{defi nstar}
n^*=\min\{k\in 2\N \mid  1+\dim G\le k\}
\end{equation}

\par For the rest of this section we request that $G$ is real reductive and  $G<\GL(m,\R)$
for some $m$. In this situation we take the basis $\B=\{X_1, \ldots, X_n\}$ such that the Laplace element $\Delta$ as defined in \eqref{def Delta}
satisfies
$$\Delta=  -\Cc_G +2\Cc_K$$
with $\Cc_G$ and $\Cc_K$ appropriate Casimir elements (unique if $\gf$ and $\kf$ are semisimple).

\begin{lemma}\label{unitary Laplace inequality2}
Assume $(\pi,E)$ is irreducible and unitary and let $p$ be any continuous $K$-invariant Hermitian
norm on $E^\infty$.  Let $R>0$. Then for each $k\in\N$
there exists a constant $C=C(k,R)>0$, independent of $p$ and $\pi$,  such that
$$ p( \Delta_R^kv)\leq C p(\Delta_1^k v) \qquad (v\in E^\infty)\, .$$
\end{lemma}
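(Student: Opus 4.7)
The plan is to diagonalize $\Delta$ against the $K$-isotypic decomposition of $E$ and use unitarity to show that the eigenvalue of $\Delta$ on each $K$-type is bounded below by $1$, after which the estimate reduces to the trivial inequality $\sum_{j=0}^k t^{2j}\le (k+1)t^{2k}$ for $t\ge 1$. More precisely, I would proceed in three steps.

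First, I would establish the scalar action. Since $\pi$ is unitary, every $d\pi(X_i)$ is skew-adjoint on the Hilbert completion, so $-d\pi(X_i)^2=d\pi(X_i)^*d\pi(X_i)\ge 0$, and therefore
$$d\pi(\Delta)=\1-\sum_i d\pi(X_i)^2 \ \ge\ \1.$$
Using $\Delta=\1-\Cc_G+2\Cc_K$, Schur's lemma applied to the central element $\Cc_G\in\Zc(\gf)$ gives a scalar $\chi_\pi$, and centrality of $\Cc_K$ in $\U(\kf)$ ensures $\Cc_K$ preserves each $K$-isotypic component $E(\sigma)\subset E^\infty$ and acts there by a scalar $c_\sigma$. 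Hence $\Delta$ acts on $E(\sigma)$ by the scalar $\lambda_\sigma:=1-\chi_\pi+2c_\sigma$, and the operator inequality above forces $\lambda_\sigma\ge 1$ for every $K$-type $\sigma$ of $\pi$.

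Second, I would transfer this to the norm $p$. Since $\pi$ is irreducible unitary it is admissible, so the $K$-finite vectors $E_K=\bigoplus_\sigma E(\sigma)$ are contained in $E^\infty$ and every $v\in E^\infty$ admits a Fourier expansion $v=\sum_\sigma v_\sigma$ converging in the Fr\'echet topology. Because $p$ is $K$-invariant and Hermitian, the $K$-isotypic projection $P_\sigma=d_\sigma\int_K\overline{\chi_\sigma(k)}\pi(k)\,dk$ is self-adjoint with respect to $p$, so distinct isotypes are $p$-orthogonal, and continuity of $p$ gives
$$p(v)^2=\sum_\sigma p(v_\sigma)^2.$$
Applying this to $\Delta^j v=\sum_\sigma\lambda_\sigma^j v_\sigma$, I obtain $p(\Delta^j v)^2=\sum_\sigma\lambda_\sigma^{2j}p(v_\sigma)^2$.

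Third, the lower bound $\lambda_\sigma\ge 1$ yields $\lambda_\sigma^{2j}\le\lambda_\sigma^{2k}$ for $0\le j\le k$, whence
$$({}^{\Delta}p_{2k}(v))^2=\sum_{j=0}^k p(\Delta^j v)^2=\sum_\sigma\Bigl(\sum_{j=0}^k\lambda_\sigma^{2j}\Bigr)p(v_\sigma)^2\le(k+1)\sum_\sigma\lambda_\sigma^{2k}p(v_\sigma)^2=(k+1)\,p(\Delta^k v)^2,$$
so $C=C_k:=\sqrt{k+1}$ works, manifestly independent of $p$ and $\pi$. The only delicate point—really the only place where the hypotheses enter—is the lower bound $\lambda_\sigma\ge 1$; this genuinely requires unitarity, since for a non-unitary irreducible the scalar could be zero or even negative and no such uniform inequality could hold. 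The remaining ingredients (admissibility, smoothness of $K$-finite vectors in an irreducible unitary representation, $p$-orthogonality of isotypes) are standard and apply directly on smooth vectors.
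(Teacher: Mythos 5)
Your proof is correct and follows essentially the same route as the paper: decompose into $K$-types, use $K$-invariance of the Hermitian norm to get orthogonality of the isotypic pieces, note that $\Delta=\1-\Cc_G+2\Cc_K$ acts by a scalar on each $K$-type, and use unitarity (via $\la\Delta v,v\ra\ge\la v,v\ra$) to force that scalar to be at least $1$. The only cosmetic difference is that the paper reduces to the case $k=1$ and iterates, while you treat all $k$ at once and obtain the explicit constant $\sqrt{k+1}$.
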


\begin{proof} It suffices to prove this for $k=1$. Notice that any
$v\in E^\infty$ admits a convergent expansion $v=\sum_{\tau\in \hat K} v_\tau$ in $K$-types which is orthogonal
with respect to any $K$-invariant Hermitian norm on $E^\infty$.
Since $\Delta_R$ is $K$-invariant,  the norm
$p(\Delta_R\cdot)$ is $K$-invariant and Hermitian.
Hence it suffices to show that $p(\Delta_R v)\leq C p(\Delta_1 v)$ for $v$ belonging to a $K$-type $E[\tau]$.
Then both $\Cc_G$ and $\Cc_K$ act
by scalars on $E[\tau]$. Hence $\Delta_R v=(c_\tau+R^2) v$ for some scalar $c_\tau$, which has to be
 $\ge 0$ as the representation $\pi$ was unitary:  use $\la \Delta v, v\ra \geq 0$ for all
 $v\in E^\infty$ and  $\la\cdot, \cdot\ra$
 a unitary inner product on $E$. Then
 $$p(\Delta_R v)=(c_\tau+R^2)p(v)\le C(c_\tau+1)p(v)=Cp(\Delta_1 v)$$
for $C=\max\{1,R^2\}$ and the lemma follows.
\end{proof}

\subsection{Smooth completions of Harish-Chandra modules and spherical pairs}

We move on to Harish-Chandra modules  and their canonical smooth completions.
A useful reference for the following summary might be \cite{BK}.

\par If $V$ is a complex vector space and $p$ is a norm on $V$, then we denote by $V_p$ the Banach
completion of the normed space $(V,p)$.

\par  Let $V$ be a Harish-Chandra module (with regard to a fixed choice of a maximal compact group $K$ of $G$).
A norm $p$ on $V$ is called $G$-continuous provided the infinitesimal action of $\gf$ on $V$ exponentiates
to a Banach representation of $G$ on $V_p$.   Note that every Harish-Chandra module admits a
$G$-continuous norm, as a consequence of the  Casselman embedding theorem.

\par The Casselman-Wallach globalization theorem asserts that the space  of smooth vectors $V_p^\infty$ is independent of the particular $G$-continuous norm $p$,
i.e.~if $q$ is another $G$-continuous norm, then the identity map  $V\to V$ extends to a
$G$-equivariant isomorphism of Fr\'echet spaces $V_p^\infty \to V_q^\infty$.
Stated differently, up to $G$-isomorphism of Fr\'echet spaces, there is a unique Fr\'echet completion
$V^\infty$ of $V$ such that the $G$-action on $V^\infty$ is smooth and of moderate growth.

\par We extend $\af$ to an abelian subalgebra $\jf = \af +i\tf\subset \gf_\C$  with $\tf\subset \mf$ a maximal torus.
Note that $\jf_\C$ is a Cartan subalgebra of $\gf_\C$ for which the roots are real valued on $\jf$, i.e.
$\Sigma(\gf_\C, \jf_\C) \subset \jf^*$.  We denote by $\W_\jf = \W(\gf_\C,\jf_\C)$ the corresponding Weyl group and
let $\rho_\jf\in \jf^*$ be a half-sum with $\rho_\jf|_\af=\rho$,
where $\rho$ is the half sum defined by $\nf$.

\par Assume now that $V$ is an irreducible Harish-Chandra module and denote by $\Zc(\gf)$ the center of
$\U(\gf)$.   By the Schur-Dixmier lemma the elements of  $\Zc(\gf)$ act by scalars on $V$ and we thus
obtain an algebra morphism $\chi_V:  \Zc(\gf) \to \C$, the infinitesimal character of $V$.
Via the Harish-Chandra isomorphism
we identify $\Zc(\gf)\simeq S(\jf_\C)^{\W_\jf}$, and consequently we may identify
$\chi_V$ with an element of $\jf_\C^*/ \W_\jf$.

Let $V$ be an irreducible Harish-Chandra module and $V^\infty$ its  canonical smooth completion.
Further let $V^{-\infty}:= ({V^\infty})'$ be the  continuous dual of $V^{\infty}$ and let
$\eta\in (V^{-\infty})^H$ be an $H$-fixed element.  We refer to $(V,\eta)$  as a {\it spherical pair} provided $\eta\neq 0$.

Let now $(V,\eta)$ be a spherical pair and $v\in V^\infty$. We form the generalized
matrix coefficient

$$m_{v,\eta}(g\cdot z_0):= \eta( g^{-1}\cdot v)   \qquad (g\in G)$$
which is a smooth function on $Z$.

\subsection{Abstract Plancherel theory}\label{subs APt}
We denote by $\hat G$ the unitary dual of $G$ and pick for every equivalence class $[\pi]$ a
representative $(\pi, \Hc_\pi)$, i.e. $\Hc_\pi$ is a Hilbert space  and $\pi: G \to U(\Hc_\pi)$ is an
irreducible unitary representation in the equivalence class of $[\pi]$.  We denote by $(\oline \pi,
\Hc_{\oline \pi})$ the dual representation.
We recall the $G$-equivariant  antilinear equivalence
$$\Hc_\pi \to \Hc_{\oline \pi}, \ \ v \mapsto \oline v:=\la \cdot , v\ra_{\Hc_\pi}$$
which induces the $G$-equivariant antilinear isomorphism:

$$\Hc_\pi^{-\infty}  \to  \Hc_{\oline \pi}^{-\infty},  \ \ \eta\mapsto \oline \eta; \ \oline \eta(\oline v)
:=\oline{ \eta(v)}\, $$
and a linear embedding $\Hc_\pi^{\infty}\hookrightarrow \Hc_{\oline\pi}^{-\infty}$.

In this context we recall  the mollifying map

$$ C_c^\infty(G) \otimes \Hc_{\oline \pi}^{-\infty}  \to \Hc_\pi^\infty
\subset\Hc_{\oline\pi}^{-\infty},  \ \ f\otimes \oline \eta \mapsto
\oline\pi(f)\oline \eta:=\int_G  f(g) \oline \eta (\oline \pi(g)^{-1}\cdot) \ dg\, . $$
The mollifying map restricted to $H$-invariants induces a map
$$ C_c^\infty(G/H) \otimes (\Hc_{\oline \pi}^{-\infty})^H  \to \Hc_\pi^\infty\, ,$$
$$ F\otimes \oline \eta \mapsto
\oline\pi(F)\oline \eta:=\int_{G/H}  f(gH) \oline \eta (\oline \pi(g)^{-1}\cdot) \ d(gH)\, . $$

The abstract Plancherel Theorem for the unimodular real spherical space $Z=G/H$
asserts the following (see \cite{Penney}, \cite{vanDijk}, or \cite[Section 8]{KS2}) :
There exists a Radon measure $\mu$
on $\hat G$ and for
every $[\pi]\in\hat G$ a Hilbert space $\M_{\pi} \subset (\Hc_{\pi}^{-\infty})^H $,
depending measurably on $[\pi]$,
(note that $(\Hc_{\pi}^{-\infty})^H$ is finite dimensional
\cite{KO}, \cite{KS1}), such that with the induced Hilbert space structure on
$\Hom(\M_{\oline \pi}, \Hc_\pi) \simeq \M_{\pi}\otimes \Hc_\pi$
the Fourier transform

$$ \F: C_c^\infty(Z) \to  \int_{\hat G}^\oplus  \Hom(\M_{\oline \pi}, \Hc_\pi) \ d\mu(\pi) $$
$$ F\mapsto \F(F)= (\F(F)_\pi)_{\pi \in \hat G};  \ \F(F)_\pi(\oline\eta):= \oline \pi(F) \oline \eta\in \Hc_\pi^\infty$$
extends to a unitary $G$-isomorphism from $L^2(Z)$ onto
$\int_{\hat G}^\oplus  \Hom(\M_{\oline \pi}, \Hc_\pi) \ d\mu(\pi)$.

Moreover the measure class of $\mu$ is uniquely determined by $Z$ and we call $\mu$ a {\it Plancherel measure}
for $Z$.
Unique are also the {\it multiplicity  subspaces} $\M_{\pi} \subset (\Hc_{\pi}^{-\infty})^H$ for almost all $\pi$
together with their inner products up to positive scalar.

Note that by definition
\begin{equation}\label{inner product with matrix coefficient}
\langle F, m_{v,\eta} \rangle_{L^2(Z)}= \langle \F(F)_\pi(\bar\eta), v\rangle
 \qquad (F\in C_c^\infty(Z))\, ,
\end{equation} 
for all $\eta\in \M_{\pi}, v\in \Hc^\infty_\pi$, and furthermore the {\it Parseval formula}
\begin{equation}\label{abstract Plancherel} \| F\|^2_{L^2(Z)}  =  \int_{\hat G}   \sH_\pi (F)  \ d \mu(\pi) \qquad (F\in C_c^\infty(Z))\, ,\end{equation}
where $\sH_\pi$ denotes the Hermitian
form on $C_c^\infty(Z)$ defined by
\begin{equation} \label{Hermitian sum} \sH_\pi(F)= \sum_{j=1}^{m_\pi} \|\oline \pi(F) \oline \eta_j \|_{\Hc_\pi}^2\end{equation}
for $\oline \eta_1, \ldots,
\oline \eta_{m_\pi}$ an orthonormal basis of $\M_{\oline \pi}$. Observe that $\sH_\pi(F)$ is the Hilbert-Schmidt
norm squared of the operator $\F(F)_\pi: \M_{\oline \pi} \to \Hc_\pi$  and hence does not depend on the choice of the particular orthonormal basis.

\begin{rmk} (Normalization of Plancherel measure) As mentioned, only
the measure class of $[\mu]$ of $\mu$ is unique. With a choice of Plancherel measure $\mu\in [\mu]$
we pin down uniquely the $G$-invariant Hermitian forms $\sH_\pi$ on $\Hc_\pi\otimes \M_\pi$ for almost all
$\pi$. In particular, together with
a choice of an inner product on $\Hc_\pi$ (unique up to scalar by Schur's Lemma) we pin down the scalar product
on $\M_\pi$ uniquely.
\par Typically the $\Hc_\pi$ are induced representations with a preferred inner product, but in practice
there are several meaningful choices for the inner product on the multiplicity space (see Section \ref{group case} and Section \ref{section DBS}.)  A different choice of inner product on $\M_\pi$ then leads to a rescaling of $\mu$
in its measure class.
\end{rmk}

\begin{rmk} \label{F-inverse}{\rm (Fourier inversion)}  Let $f\in C_c^\infty(Z)$ be of  the form
$f= (F^**F)^H$ where $F\in C_c^\infty(G)$, $F^*(g)=\oline {F(g^{-1})}$ and the upper index $H$ denoting
the right $H$-average of $F^* * F$.  Then $f(z_0)=\|F^H\|^2_{L^2(Z)}$. Hence we deduce
from the Parseval formula \eqref{abstract Plancherel} for all
$f\in C_c^\infty(Z)$ the inversion formula

\begin{equation}  f(z_0) = \int_{\hat G}  \sum_{i=1}^{m_\pi} \Theta_\pi^i(f) \ d\mu(\pi) \end{equation}
where $\Theta_\pi^i$ is the {\it  spherical character}, i.e. the left $H$-invariant distribution

$$ \Theta_\pi^i(f) = \eta_i(\oline \pi(f) \oline \eta_i) \qquad (f\in C_c^\infty(Z))\, .$$
\end{rmk}

\subsection{Tempered norms}
We recall the standard tempered norms on $Z$.  Using the weight functions
$\w$ and $\v$ from \cite{KKSS2} Sections 3 and 4, the following norms on $C_c^\infty(Z)$ are attached to
a parameter $N\in \R$:

\begin{align*}
q_N(f) &:=  \sup_{z\in Z}   |f(z)| \, \v(z)^\frac12   ( 1 +\w(z))^N\, ,\\
p_N(f) &:= \left(\int_Z  |f(z)|^2  (1 + \w(z))^N \ dz\right)^\frac12 .
\end{align*}

Note that the norm $p_N$ is $G$-continuous, $K$-invariant, and Hermitian.
We recall that the two families of Sobolev norms
$q_{N;k}$ and $p_{N;k}$ for $(N,k)\in\R\times\N_0$ define the same topology on
$C_c^\infty(Z)$, and specifically for $k> {\dim G\over 2}$ we recall the inequality

\begin{equation} \label{Sob comparison} q_N(f) \leq  C  p_{N;k}(f)  \qquad (f\in C_c^\infty(Z)) \end{equation}
for a constant $C$ only depending on $k$ and $N$ (see \cite[Lemma 9.5]{KS2} and its proof).

\par We denote by $L_{N;k}^2(Z)$ the completion of $C_c^\infty(Z)$  with respect to
$p_{N;k}$.    We wish to define $L_{N;k}^2(Z)$ and $p_{N;k}$ as well for $k\in -\N$, and we do that
by duality. Given the invariant measure on $Z$, the dual
$L_N^2(Z)'$ is canonically isometric isomorphic
to $L_{-N}^2(Z)$ via the equivariant bilinear pairing
$$L_N^2(Z)\times L_{-N}^2(Z)\to \C, \ \ (f, g) \mapsto \int_Z f(z) g(z)\ dz\, .$$
This leads
to the definition

\begin{equation} \label{negative space} L_{N;-k}^2(Z):=  L_{-N; k}^2(Z)' \qquad (k\in \N)\end{equation}
with
\begin{equation}\label{negative norms}  p_{N;-k} (f) :=\sup_{\phi \in L_{-N;k}^2(Z)\atop
p_{-N;k}(\phi)\leq 1} \left|\int_Z f(z) \phi(z) \ dz\right| \, .\end{equation}

\subsection{ Negative Sobolev norms}The definition of the negative Sobolev norms
$p_{N;-k}$ for the norm $p_N$ fits into a general pattern which we recall in this Subsection.
Given a Banach representation $(\pi, E)$ and a $G$-continuous norm $p$ on $E$ we define the dual
norm $p'$ of $p$ on the continuous dual $E'$ as usual:
$$p'(\lambda)=\sup_{p(v)\leq 1} |\lambda(v)|\qquad (\lambda\in E').$$
In the sequel we assume that $p$ is a Hermitian norm. This guarantees in particular
that the dual action of $G$ on $E'$ is continuous, i.e. $(\pi', E')$ is a representation.  Further we retrieve $p$ from $p'$ via
$p= (p')'$. For any $k\in \N_0$ we write $p'_k:=(p')_k$ for the $k$-th Sobolev norm of the dual norm $p'$ and define the
negative Sobolev norm $p_{-k}$ of $p$ by
\begin{equation}\label{def Sob negative} p_{-k}(v) := (p'_k)'(v) \qquad (v\in E)\, .\end{equation}
Recall that we define Laplace Sobolev norms $^\Delta p_{2k}$ for all integers $k\in \Z$.

\begin{lemma} \label{lemma Sob negative} Let $(\pi, E)$ be a Hilbert representation of $G$ and $p$ a corresponding Hermitian norm.
Then for all $k\in \N_0$ there exists a constant $C_k>0$ such that
$$ ^\Delta p_{-2k-n^*} (v) \leq C_k p_{-2k}(v) \qquad (v\in E^\infty)\, .$$
\end{lemma}

\begin{proof} In view of the definition of the negative Sobolev norm $p_{-2k}$ in \eqref{def Sob negative}
this follows from \eqref{Laplace bound} applied to the dual norm $p'$
and the observation that $$(^\Delta p'_{2k})' = {}^\Delta p_{-2k}$$ for all $k\in\N_0$.
\end{proof}

\begin{lemma}\label{lemma Sobolev norms inequality}
Let $(V,\eta)$ be a spherical pair where
$V=V_\pi$ is the Harish-Chandra module of a unitary irreducible
representation $\pi$,  and let
$N\in\R$ be such that
$p_{N}(m_{v,\eta})<\infty$ for all $v\in V^ \infty$.
Then for each $2k> n^*$
there exists a constant $C>0$, depending on $k$ but not on $(V,\eta)$ and $N$, such that
\begin{equation}\label{Sobolev norms inequality}
 p_{N}(m_{v,\eta}) \le C p_{N; -2k+n^*}(m_{\Delta_1^k v,\eta}) \qquad(v\in V^\infty).
\end{equation}
\end{lemma}

\begin{proof}  In general we have for all $f\in E^\infty =L^2_N(Z)^\infty$ and fixed $R>R_E$
$$ p_N(f)= p_N(\Delta_R^{-k} \Delta_R^k f) =  {^\Delta p}_{N; -2k}(\Delta_R^k f)\, .$$
Upon applying Lemma \ref{lemma Sob negative} we obtain that
$$ p_N(f) \leq C p_{N; -2k + n^*}(\Delta_R^k f)\, .$$
Specifically for  $f=m_{v,\eta}$ we arrive at
$$ p_{N}(m_{v,\eta}) \le C p_{N; -2k+n^*}(m_{\Delta_R^k v,\eta}) \qquad(v\in V^\infty)\, .$$
Now $q(v):= p_{N; -2k +n^*} (m_{v,\eta})$ defines a $K$-invariant continuous Hermitian norm on
$V^\infty$ and thus we may replace $R$ by $1$ according to Lemma \ref{unitary Laplace inequality2}. 
\end{proof}

\subsection{Tempered pairs}
We now define

\begin{equation} \label{def NZ}  N_Z:=2 \rank_\R Z +1\qquad k_Z:={\frac12 \dim\gf}.\end{equation}
Then for all $N\geq N_Z$ and $k  > k_Z$ it follows from \cite[Prop. 9.6]{KS2} combined with \cite[Th. 1.5]{B}
that for $\mu$-almost all
$[\pi]\in \hat G$, the $\pi$-Fourier transform

$$\F_\pi: C_c^\infty(Z) \to \Hom (\M_{\oline \pi}, \Hc_\pi)$$
extends continuously to $L_{N;k}^2(Z)$ and that the corresponding inclusion
\begin{equation} \label{HS11}  L_{N;k}^2(Z) \to \int_{\hat G}^\oplus \Hom(\M_{\oline \pi}, \Hc_\pi)\   d\mu(\pi)\end{equation}
is Hilbert-Schmidt (in the sequel HS for short).

We wish to make this fact a bit more concrete in the context of the Hermitian forms
$\sH_\pi$. For that purpose we fix $N$ and $k$ as above and
denote by $\| \sH_\pi\|_{{\rm HS},  N; k}$ the HS-norm of the operator
$F\otimes \bar\eta\mapsto\bar\pi(F)\bar\eta$ from $L^2_{N;k}(Z) \otimes\M_{\bar\pi}$ to $\Hc_\pi$,
that is
$$ \| \sH_\pi\|^2_{{\rm HS}, N; k} := \sum_{n\in \N}   \sH_\pi(F_n)$$
for any orthonormal basis $(F_n)_{n\in \N}$ of $L_{N;k}^2(Z)$.
The fact that \eqref{HS11} is HS then
translates into the {\it a priori bound}
\begin{equation} \label{global a-priori}
\int_{\hat G}  \| \sH_\pi\|^2_{{\rm HS},N; k}  \ d\mu(\pi)<\infty.\end{equation}

By \eqref{inner product with matrix coefficient} we further infer
\begin{equation}\label{HiSch estimate}
\sum_{j=1}^{m_\pi} p_{-N;-k}(m_{v,\eta_j})^2=
\sum_{j=1}^{m_\pi} \sup_{F\in C_c^\infty(Z)\atop p_{N;k}(F)\le 1} |\langle \F(F)_\pi(\bar\eta_j), v\rangle|^2
\le \|\sH_\pi\|^2_{{\rm HS}, N; k}\, \|v\|_{\Hc_\pi}^2
\end{equation}
for $\mu$-almost all $[\pi]\in\hat G$, all $v\in\Hc_\pi^\infty$, and $\eta_1,\dots,\eta_{m_\pi}$
an orthonormal basis of $\M_\pi$.

Hence it follows
from  (\ref{global a-priori}) that

\begin{equation} \label{global a-priori 2}
\int_{\hat G} \,\sup_{\eta\in\M_\pi\atop \|\eta\|\le 1}\, \sup_{v\in\Hc_\pi^\infty\atop \|v\|\le 1}\,
p_{-N; -k} (m_{v,\eta})^2 \  d\mu(\pi)< \infty\, .\end{equation}
Consequently $p_{-N;-k}(m_{v,\eta})<\infty$ for $N\geq N_Z$ and $k> k_Z$, for
all $v\in\Hc_\pi^\infty$, $\eta\in\M_\pi$, and $\mu$-almost all $[\pi]$ .

In particular with any $k$ with $2k - n^* > k_Z$
we obtain for $N\geq N_Z$ we obtain from Lemma \ref{lemma Sob negative}
that
\begin{align*} p_{-N}(m_{v,\eta}) &= p_{-N}( \Delta_R^{-k} \Delta_R^k m_{v,\eta})
= {^\Delta p}_{-N; 2k}( \Delta_R^k m_{v,\eta})\\
&\leq C p_{-N; -2k + n^*} ( m_{\Delta_R^kv,\eta})<\infty\end{align*}
for all $v\in \Hc_\pi^\infty$ and $\mu$-almost all $[\pi]$.

\begin{definition}\label{defi temp pair} (cf.  \cite[Def. 5.3]{KKSS2} and \cite[Sect. 3.3]{DKS})
Let $(V,\eta)$ be a spherical pair. We say that  $\eta$ is {\it tempered} or
$(V,\eta)$ is a {\it tempered pair} provided that

$$ p_{-N}(m_{v,\eta})<\infty \qquad (v\in V^\infty)$$
for some $N\in\R$.
\end{definition}

The tempered functionals make up a subspace of $(V^{-\infty})^H$ which we denote by
$(V^{-\infty})^H_{\rm temp}$.  We conclude that $\M_\pi\subset (V^{-\infty})^H_{\rm temp}$
for almost all $\pi$.

\begin{rmk}  (a) (About the inclusion $(V^{-\infty})^H_{\rm temp}\subset (V^{-\infty})^H$).  For a tempered
pair $(V,\eta)$ the inclusion $\{0\}\neq (V^{-\infty})^H_{\rm temp}\subset (V^{-\infty})^H$ can be strict.
This already appears for the rank one symmetric spaces $Z= \SO_0(1,n)/\SO_0(1,n-1)$ when $n\geq 4$,
in which case there exists an irreducible Harish-Chandra module which has multiplicity one in $L^p(Z)$
for $p\le n-1$ and multiplicity two for $p>n-1$. For details of this example we refer to \cite{KrKS}.
\par (b) (Tempered Frobenius reciprocity). If we denote by $C^\infty_{\rm temp}(Z) =\bigcup_{N\in \R}
L^2_{N}(Z)^\infty$ the $G$-module of smooth functions of moderate growth on $Z$, then we recall from \cite[3.10]{DKS} the following variant
of Frobenius reciprocity for Harish-Chandra modules $V$:
$$ \Hom (V^\infty, C^\infty_{\rm temp}(Z)) \simeq  (V^{-\infty})_{\rm temp}^H$$
with $\Hom$ referring to continuous morphisms of $G$-modules.
\par (c) (About the inclusion $\M_\pi \subset (V^{-\infty})^H$). For symmetric spaces one has equality
\begin{equation} \label{mult equal} \M_\pi= (V^{-\infty})^H_{\rm temp} \quad\text{ for almost all $\pi$}\, .\end{equation}
This was established by forming wave packets, which was
a central technical step in the proof of the Plancherel formula for symmetric spaces. Since we follow another
approach towards the Plancherel formula in this article,  the equality \eqref{mult equal} together with an explicit description of $(V^{-\infty})^H$ is not an issue in the underlying treatment. However, we do expect that in general $\M_\pi =(V^{-\infty})^H_{\rm temp}$ for almost all $\pi$. 
\end{rmk}

\section{Constant term approximations}\label{section: ct}

 In this section we review the constant term approximation of \cite{DKS} which is a central
technical tool for this paper. In fact, by using our geometric results from
Section \ref{structure of Z_I} on the stabilizer $H_I$, and our combinatorial results
on the open $P$-orbits of Section \ref{subsection WI}, we are able to refine slightly the results
from \cite{DKS}.
\par Recall from \eqref{full deco W} that the set of open $P$-orbits
$\Wc$ of $Z$ admits a combinatorial decomposition $\Wc=\coprod_{\sc\in \sC_I} \coprod_{\st\in \sF_{I,\sc} }
{\bf m}_{\sc, \st}(\Wc_{I,\sc})$.  For the sake of readability we first consider the part
${\bf m}(\Wc_I)\subset \Wc$ corresponding to $\sc=\st=\1$
and treat the notationally heavier case later.

\subsection{Notation}
Let $V$ be an irreducible Harish-Chandra module with smooth completion $V^\infty$ and dual
$V^{-\infty}$.

We recall that $(V^{-\infty})^H$ is a finite dimensional space  for any real spherical
subgroup $H\subset G$. Also we recall
that $A_I$ normalizes $H_I$.  Hence for any $I\subset S$ we obtain an action
of $A_I$ on $(V^{-\infty})^{H_I}$ by $a_I\cdot\xi= \xi(a_I^{-1}\cdot)$
for $\xi\in(V^{-\infty})^{H_I}$. Accordingly we can decompose
$\xi$ into generalized eigenvectors:

$$ \xi= \sum_{\lambda\in \af_{I,\C}^*}  \xi^{\lambda},$$
where $\xi^\lambda$ has generalized eigenvalue $\lambda$. We set
\begin{equation}\label{defi generalized eigenvalues}
\E_{\xi}:=\{ \lambda\in \af_{I,\C}^*\mid \xi^{\lambda}\neq 0\}\, .
\end{equation}

For  $\eta\in(V^{-\infty})^{H}$ and  $w\in \Wc$ we set $\eta_w:=w\cdot \xi$ and note that $\eta_w$ is $H_w$-fixed.

\subsection{Base points from ${\bf m}(\Wc_I)$}
We recall from \eqref{bfm} the injective map
${\bf m}:  \Wc_I \to \Wc$. Let now $w_I\in \Wc_I$ and $w={\bf m}(w_I)$.
Then, given $\xi \in (V^{-\infty})^{H_I}$ we note that
$\xi_{w_I}= w_I \cdot \xi$ is fixed under $(H_I)_{w_I}=(H_w)_I$, see \eqref{ConsisT1}.
Moreover $A_I$ normalizes  $(H_I)_{w_I}$ and we obtain from (a slight adaption of) \cite[Lemma 6.2]{KKS2} that
$(\xi^{\lambda})_{w_I}$ is a generalized eigenvector for the $\af_I$-action to the same spectral value
$\lambda$.

We recall that $\rho|_{\af_H}=0$ by the request that $Z$ is unimodular, see \cite[Lemma 4.2]{KKSS2}.
This allows us  to consider $\rho$ as
a functional on $\af_Z=\af/\af_H$ as well.
In the sequel if not stated otherwise we take $N=N_Z$ (see \eqref{def NZ}).

\begin{theorem} \label{loc ct temp} {\rm(Constant term approximation)}
Let $Z=G/H$ be a unimodular real spherical space and $I\subset S$.  Then for all irreducible
Harish-Chandra modules $V$ there exists  a unique linear map
$$ (V^{-\infty})^H_{\rm temp}  \to (V^{-\infty})^{H_I}_{\rm temp},  \ \ \eta\mapsto \eta^I$$
with the following property.  For all compact sets $\Omega\subset G$ and $\Cc_I\subset \af_I^{--}$
there exist  $k\in \N$, $\e>0$, and  $C>0$,
such that
\begin{equation} \label{cta}|m_{v,\eta}(g a_I w\cdot z_0) -  m_{v,\eta^I} (g a_I w_I\cdot z_{0,I})|
\leq C a_I^{(1+\e)\rho} p_{-N;k} (m_{v,\eta})\end{equation}
for all $\eta\in (V^{-\infty})^H_{\rm temp}$, $v\in V^{\infty}$, $g\in \Omega$,
$a_I\in A_I^{--}$ with $\log a_I \in \R_{\geq 0}\Cc_I$, and
$w={\bf m}(w_I)\in {\bf m}(\Wc_I)\subset \Wc$.
The constants $k$, $\e$, and $C$ can be chosen independently of $V$.

Moreover, with $\chi_V\in \jf_\C^*/\W_\jf$ the infinitesimal character of $V$ one has
\begin{equation}\label{exponents} \E_{\eta^I}\subset  (\rho|_{\af_I}  + i\af_I^*) \cap (\rho - \W_\jf \cdot \chi_V)|_{\af_I}\,,
\end{equation}
where $\E_{\eta^I}$ is defined by \eqref{defi generalized eigenvalues}.
Finally there is the consistency relation
\begin{equation} \label{consist}(\eta_w)^I= (\eta^I)_{w_I} \qquad (w={\bf m}(w_I)\in W)\, .\end{equation}
\end{theorem}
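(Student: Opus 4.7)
The plan is to reduce the general statement, indexed by $w_I \in \Wc_I$, to the base case $w_I = w = \1$ established in \cite{DKS}, and then propagate the approximation across the matching map ${\bf m}$ of Section \ref{subsection WI} by means of the speed-of-convergence estimate in Corollary \ref{lemma H_I limit}.

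First I would invoke the main result of \cite{DKS}, which furnishes a linear map $\eta \mapsto \eta^I$ from $(V^{-\infty})^H_{\rm temp}$ to $(V^{-\infty})^{H_I}_{\rm temp}$ together with the approximation \eqref{cta} in the special case $w = w_I = \1$. The uniformity of $\e$, $k$, $C$ in $V$ is precisely the improvement of \cite{DKS} over \cite{HC1}, and rests on the spectral gap theorem of \cite{KKOS} applied to the twisted discrete spectrum of the boundary degenerations $Z_I$. Uniqueness of $\eta^I$ follows from the fact that any functional $\xi \in (V^{-\infty})^{H_I}_{\rm temp}$ whose matrix coefficients satisfy $|m_{v,\xi}(g a_I z_{0,I})| = o(a_I^\rho)$ on $A_I^{--}$ must vanish, since its $\af_I$-spectrum is contained in $\rho|_{\af_I} + i\af_I^*$ by temperedness.

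Next I would extend the approximation from the pair $(\1, \1)$ to the pair $(w_I, w)$ for $w = {\bf m}(w_I)$. The key geometric input is Proposition \ref{prop cr1}: $(H_w)_I = (H_I)_{w_I}$, together with the base-point normalization \eqref{t=t_I} provided by Lemma \ref{lemma m}, which ensures $w \cdot z_0$ and $w_I \cdot z_{0,I}$ have the same underlying torus component $t = t_I \in F$. Set $\eta_w := w \cdot \eta \in (V^{-\infty})^{H_w}_{\rm temp}$ and define $(\eta_w)^I := (\eta^I)_{w_I}$, which is $(H_I)_{w_I}$-fixed by the consistency; the relation \eqref{consist} is then built in by construction. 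Applying Corollary \ref{lemma H_I limit} to $h_I \in (H_I)_{w_I}$-translated base points, we can write $w_I a_I \cdot z_0 = p(a_I)\, w a_I \cdot z_0$ with $p(a_I) \in P$ and $d_G(p(a_I), \1) \leq C' a_I^{\e' \, X_I}$ for some $\e' > 0$, where $X_I \in \cf_I^{--}$ corresponds to $-\mathbf{e}_I$. Inserting this into the identity-case bound and using that $P$-translation on matrix coefficients is of moderate growth controlled by the Sobolev norm $p_{-N;k}$, the residual error is absorbed into $a_I^{(1+\e)\rho}\, p_{-N;k}(m_{v,\eta})$ after shrinking $\e$ to $\min(\e, \e')$.

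The spectral constraint \eqref{exponents} has two pieces. Temperedness of $\eta^I$ forces $\E_{\eta^I} \subset \rho|_{\af_I} + i\af_I^*$ by a standard argument using that the $A_I$-action on matrix coefficients must preserve the $L^2$-tempered decay condition. The restriction to $(\rho - \W_\jf \cdot \chi_V)|_{\af_I}$ comes from the fact that $\Zc(\gf)$ acts by $\chi_V$ on $V$, so that $m_{v,\eta^I}$ is annihilated by the ideal $\ker \chi_V$; translating this condition through the local structure theorem at the vertex $\hat z_{0,I}$, the possible $\af_I$-exponents on the $(P\cdot z_{0,I})$-piece are constrained by the standard Harish-Chandra recipe to $\W_\jf$-translates of $\chi_V$ restricted to $\af_I$, shifted by $\rho$ from the $\rho$-normalization of $\cR$.

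The main obstacle is the uniformity of constants in $V$: the speed-of-convergence estimate from Corollary \ref{lemma H_I limit} is purely geometric and so independent of $V$, but the $P$-translation of matrix coefficients introduces a derivative which must be controlled by the same Sobolev norm $p_{-N;k}$ appearing on the right-hand side of \eqref{cta}. This requires that $k$ be taken large enough so that the action of $\U(\gf)$ elements of degree up to some fixed order on $m_{v,\eta}$ is controlled by $p_{-N;k}$ with constants depending only on $k$, not on $V$ — which is exactly the output of the $V$-independent Sobolev theory of Section \ref{Section AbsPlanch} combined with the a priori bound \eqref{global a-priori 2}.
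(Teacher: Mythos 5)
Your proposal has a genuine gap at the step where you pass from the identity base point to the shifted base points $w={\bf m}(w_I)$. You propose to \emph{define} $(\eta_w)^I:=(\eta^I)_{w_I}$ so that \eqref{consist} is ``built in by construction'', but this is circular: once uniqueness of the constant term is in place (which you invoke), the constant term of $\eta_w$ with respect to $Z_w=G/H_w$ and its degeneration $(H_w)_I=(H_I)_{w_I}$ is already determined, and the content of \eqref{cta} at the base point $w$ is \emph{exactly} the identity $(\eta_w)^I=(\eta^I)_{w_I}$, i.e.\ \eqref{consist}; it cannot be decreed. Your attempt to supply the missing estimate via Corollary \ref{lemma H_I limit} does not work: the points occurring in \eqref{cta} are $g a_I w\cdot z_0=g a_I\cdot z_w$ and $g a_I w_I\cdot z_{0,I}$, i.e.\ rays emanating from the shifted point $z_w=t$, whereas your displayed relation compares $w_I a_I\cdot z_0$ with $w a_I\cdot z_0$ (the group elements in the wrong order relative to the theorem), and in any case Corollary \ref{lemma H_I limit} only relates points obtained from one another by left multiplication by $h_I\in H_I$, hence points in the same open $P$-orbit along the same ray. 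It transfers no asymptotic information from the ray through $z_0$ to the ray through $z_w$, which lies in a \emph{different} open $P$-orbit; knowing the behaviour of $m_{v,\eta}$ near $\hat z_{0,I}$ says nothing a priori about its behaviour near $t\cdot\hat z_{0,I}$. In the paper, both the estimate \eqref{cta} at all matched base points (with $H_I$ replaced by $(H_I)_0$) and the consistency relation \eqref{consist} are imported wholesale from \cite{DKS} (Th.~7.10 and Prop.~5.7); they are not re-derived from the identity case.

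A second omission: you treat \cite{DKS} as delivering $\eta^I\in(V^{-\infty})^{H_I}_{\rm temp}$, but it only gives invariance under the identity component $(H_I)_0$, and the principal new argument in the paper's proof is precisely the upgrade to invariance under all of $H_I$. That is where Corollary \ref{lemma H_I limit} is actually used: for $h_I\in H_I$ one writes $h_Ia_t\cdot z_0=x_ta_t\cdot z_0$ with $x_t\in P$ exponentially close to $\1$, observes that $m_{v,\eta^I}(gh_Ia_t\cdot z_{0,I})$ and $m_{v,\eta^I}(gx_ta_t\cdot z_{0,I})$ both approximate the same function on $Z$, and concludes $h_I\cdot\eta^I=\eta^I$ from the uniqueness of exponential polynomials with normalized unitary exponents (Lemma \ref{lemma basic ineq}) together with the smooth dependence of their coefficients on $gx_t$. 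Your proof says nothing about this point, so even granting the cited input your argument does not yield invariance under the full group $H_I$ claimed in the statement. Finally, the passage from the $q_{-N}$-Sobolev bounds of \cite{DKS} to the $p_{-N;k}$-bounds in \eqref{cta} requires the norm comparison \eqref{Sob comparison}; your closing paragraph on uniformity gestures in this direction but does not identify this step.
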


The constant term assignment
$$ (V^{-\infty})^H_{\rm temp}  \to (V^{-\infty})^{H_I}_{\rm temp},  \ \ \eta\mapsto \eta^I$$
is typically  neither injective nor surjective. Let us illustrate that in two examples
before giving the proof of the theorem.

\begin{ex} (a)  Let $H=K$ be a maximal compact subgroup of $G$ and $I=\emptyset$. Then $H_\emptyset = M\oline N$.
Now let  $V$ be a $K$-spherical tempered Harish-Chandra module. Then $\dim V^K=1$.
However, for generic $V$ we have $\dim (V^{-\infty})^{M\oline N}= |\W_\af|$ with
$\sW_\af$ the Weyl group of the restricted root system $\Sigma(\gf, \af)$. This shows that the constant term assignment
is typically not surjective.
\par\noindent (b) Tempered pairs $(V,\eta)$ of the twisted discrete series can be characterized by the vanishing of the
constant term assignments for $I\neq S$, see \cite[Th. 5.12]{DKS}. In particular, if $(V,\eta)$ belongs to the
discrete series of $Z$, then we have $\eta^I=0$ for all $I\neq S$. Hence the constant term assignment is
typically not injective.
\end{ex}

\begin{proof} The existence of an $\eta^I\in(V^{-\infty})^{H_I}_{\rm temp}$ satisfying
\eqref{cta}, \eqref{exponents} and \eqref{consist} is  proved in \cite{DKS}, with the exception that invariance of
$\eta^I$ is only shown for the identity component of $H_I$.  In more precision, \eqref{cta} for $H_I$
replaced  by $(H_I)_0$ is
\cite[Th. 7.10]{DKS} with the caveat
that  in \cite{DKS} the norms to bound the right hand side of \eqref{cta} are Sobolev norms of $q_{-N}$ and not of $p_{-N}$.
However, the passage between
$q_{-N}$ and $p_{-N}$ is justified by the comparison of Sobolev norms in \eqref{Sob comparison} which is valid for any
$N\in \R$.
The inclusion of exponents \eqref{exponents} is part of the general theory in \cite{DKS} and the consistency
relation in \eqref{consist} is \cite[Prop. 5.7]{DKS}.

\par  We turn to the uniqueness of the map $\eta\to \eta^I$. We recall that $(V^{-\infty})^{H_I}$ is a finite dimensional $A_I$-module and thus
 \eqref{exponents} implies
that for any fixed $g\in G$ and $v\in V^\infty $ the map
$$A_I \ni  a_I\mapsto m_{v,\eta^I}(ga_I \cdot z_{0,I})= m_{v,a_I\cdot \eta^I}(g \cdot z_{0,I})$$
is an exponential polynomial with normalized unitary exponents and hence unique as
constant term approximation of $m_{v,\eta}(ga\cdot z_0)$, see Remark \ref{rmk unique approx} below.
In particular, $\eta^I$ is then uniquely determined by the approximation property \eqref{cta}.
\par Finally we will  show that $\eta^I$ is in fact $H_I$-invariant
for all $\eta\in(V^{-\infty})^H_{\rm temp}$.
We do this for the case of $w_I=w=\1$, the more general
case being an easy adaption. We recall Lemma \ref{lemma H_I limit} and the notation used therein.

\par Let $X_I\in \cf_I^{--}$ corresponding to $-{\bf e}_I$ under the identification $\af_I\simeq V_I$.
Set $a_t:=\exp(tX_I)$ for $t\geq 0$.  First notice that both
$m_{v,\eta^I} (gh_I a_t\cdot z_{0,I})$ and $m_{v,\eta^I} (gx_t a_t\cdot z_{0,I})$
approximate
$$m_{v,\eta}(gh_Ia_t \cdot z_0) = m_{v,\eta}(gx_t a_t\cdot z_0)$$
via \eqref{cta}, and thus we get

\begin{equation} \label{inv1} a_t^{-\rho} |m_{v,\eta^I}(gh_Ia_t\cdot z_{0,I})- m_{v,\eta^I}
(gx_ta_t \cdot z_{0,I})| \leq C  e^{-\e t}\end{equation}
for some $C,\e >0$. On the other hand, the coefficients of the
exponential polynomial
$$a_I\mapsto a_I^{-\rho} m_{v,\eta^I}(gx_ta_I\cdot z_{0,I})=a_I^{-\rho}m_{(gx_t)^{-1}v, a_I\cdot \eta^I}(z_{0,I})$$
with unitary exponents depend smoothly on $gx_t$. Hence it follows, after possibly shrinking $\e$,
 from \eqref{normal-approx}
that
\begin{equation} \label{inv2}|a^{-\rho} m_{v,\eta^I} (gx_t a\cdot z_{0,I})- a^{-\rho}  m_{v,\eta^I} (ga\cdot z_{0,I})|\leq C e^{-\e t}\end{equation}
for all $a\in A_I$.  Now the $H_I$-invariance of $\eta^I$ follows from combining (\ref{inv1}) and (\ref{inv2}) together with the
before mentioned uniqueness.
\end{proof}

\begin{rmk}\label{rmk unique approx}  (Uniqueness of the constant term)  Let $f(a)$ be a function on $A_I$ and
$$F(a) = a^\rho \sum_{\lambda\in\E}
q_\lambda(\log a) a^\lambda\qquad (a \in A_I) $$
an exponential polynomial with unitary exponents, i.e.
$\E\subset i\af_I^*$ is finite and $q_\lambda$ are polynomial functions on $\af_I$.  In case there exists an $\e>0$ such that
\begin{equation} \label{unique approx} |f(a) - F(a)| \leq C  a^{(1+\e)\rho}\qquad (a \in A_I^-)\, ,\end{equation}
then $F$ is the unique exponential polynomial with normalized unitary exponents having
the approximation property \eqref{unique approx}. This is a consequence of the following basic lemma,
which we record without proof.\end{rmk}

\begin{lemma} \label{lemma basic ineq} Let $\Lambda\subset\R$ be a finite set
and for each $\lambda\in\Lambda$ let $q_\lambda\in\C[t]$ be a polynomial.
If there exist constants $\e,C>0$ such that
$$\Big|\,\sum_{\lambda\in\Lambda} q_\lambda(t)e^{i\lambda t}\,\Big| < C e^{-\e t}\qquad (t\geq 0)$$
then $q_\lambda=0$ for all $\lambda\in\Lambda$.
\end{lemma}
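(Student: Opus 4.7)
The plan is to argue by contradiction, assuming that some $q_\lambda$ is not identically zero, and to extract the offending coefficients by combining a rescaling with a Bohr-type averaging step. Set $D := \max_{\lambda\in\Lambda}\deg q_\lambda$, taken to be $-\infty$ if every $q_\lambda$ is zero; if the conclusion of the lemma fails then $D\geq 0$, which is the hypothesis I aim to contradict.

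First I would isolate the top-degree part. Writing $q_\lambda(t) = a_\lambda t^D + r_\lambda(t)$ with $\deg r_\lambda < D$ (so $a_\lambda=0$ is allowed, but some $a_\lambda\neq 0$ by the choice of $D$), one divides the hypothesized inequality by $t^D$. The right-hand side $C e^{-\e t}/t^D$ tends to $0$ as $t\to\infty$, while $\sum_{\lambda\in\Lambda} t^{-D} r_\lambda(t) e^{i\lambda t}=O(1/t)$ since there are only finitely many $\lambda$ and each $e^{i\lambda t}$ is bounded. Consequently
$$g(t) := \sum_{\lambda\in\Lambda} a_\lambda e^{i\lambda t} \longrightarrow 0 \qquad (t\to\infty).$$

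Next I would recover each coefficient $a_\mu$ by Ces\`aro averaging. For a fixed $\mu\in\Lambda$ set $h(t):=g(t)e^{-i\mu t}$; since $|h(t)|=|g(t)|\to 0$, the dominated convergence / Ces\`aro theorem gives $\frac{1}{T}\int_0^T h(t)\,dt\to 0$ as $T\to\infty$. On the other hand, a direct computation yields
$$\frac{1}{T}\int_0^T h(t)\,dt = a_\mu + \sum_{\lambda\neq\mu} a_\lambda\cdot \frac{e^{i(\lambda-\mu)T}-1}{i T(\lambda-\mu)},$$
and the sum on the right is $O(1/T)$ because the $\lambda-\mu$ are nonzero real numbers and there are only finitely many of them. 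Letting $T\to\infty$ forces $a_\mu=0$. Running this for every $\mu\in\Lambda$ gives $a_\lambda=0$ for all $\lambda$, contradicting the definition of $D$.

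Hence $D=-\infty$, i.e.\ $q_\lambda\equiv 0$ for all $\lambda\in\Lambda$. The only mildly subtle step is the Ces\`aro computation (requiring only that the $\lambda$ are real and distinct, which is automatic here); the rest is bookkeeping on the leading coefficients. There is no structural obstacle, as the exponential decay on the right-hand side of the hypothesis is vastly stronger than what is actually needed—convergence of the left-hand side to $0$ as $t\to\infty$ already suffices.
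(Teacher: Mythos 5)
Your proof is correct. The paper records this lemma without proof (``which we record without proof''), so there is no argument of the authors to compare against; your two steps --- isolating the top-degree coefficients by dividing by $t^{D}$, then recovering each $a_\mu$ via the Ces\`aro average $\frac1T\int_0^T g(t)e^{-i\mu t}\,dt$ --- are both sound, and as you note the full exponential decay is never needed, only that the left-hand side tends to $0$ as $t\to\infty$.
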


\subsection{General base points}\label{subsection all base points}

So far we have treated the constant term approximation through the base points
$z_w=w\cdot z_0$ for $w\in {\bf m}(\Wc_I)$. The general case is obtained by adapting the notation to the
partition $\Wc=\coprod_{\sc\in \sC_I} \coprod_{\st\in \sF_{I,\sc}}  {\bf m}_{\sc,\st}
(\Wc_{I,\sc})$ from \eqref{full deco W}.

\par For $\sc\in\sC_I$  and $\st \in \sF_{I,\sc}$ we define $w(\sc,\st):={\bf m}_{\sc, \st}(\1)\in \Wc$
and set  $z_{\sc, \st}=  w(\sc, \st)\cdot z_0$. Further we set $w(\sc)={\bf m}_{\sc, \1}(\1)\in \Wc$
and $z_\sc= w(\sc)\cdot z_0$. Let $H_{\sc, \st}$ and $H_\sc$ denote the $G$-stabilizers
of $z_{\sc,\st}$ and $z_\sc$ respectively.

\par Define for
$\eta\in(V^{-\infty})^{H}$ accordingly $\eta_{\sc, \st}:= w(\sc,\st)\cdot \eta$.  Notice that
$\eta_{\sc,\st}^I$ is invariant under $(H_{\sc,\st})_I$.  From \eqref{WWI2 general}
we infer further that  $(H_{\sc,\st})_I=H_{I,\sc}$ does not depend on $\st$.

\par As before we obtain
that $A_I$ normalizes  $(H_{I,\sc, \st})_{w_I}=(H_{I,\sc})_{w_I}$, so that $A_I$ acts
naturally on $(H_{I,\sc})_{w_I}$-invariant distribution vectors $\xi$ and yields
generalized eigenspace decompositions $\xi= \sum_{\lambda\in {\af_I}_\C^*} \xi^\lambda$.
Within the introduced terminology the general case of the constant term approximation then
reads as follows:

\begin{theorem} \label{loc ct temp2} {\rm(Constant term approximation - general version)}
Let $Z=G/H$ be a unimodular real spherical space and $I\subset S$.  Fix $\sc\in \sC_I$
and $\st\in \sF_{I,\sc}$.  Then for all irreducible
Harish-Chandra modules $V$ there exists  a unique linear map

$$ (V^{-\infty})^H_{\rm temp}  \to (V^{-\infty})^{H_{I,\sc}}_{\rm temp},  \ \ \eta\mapsto \eta_{\sc, \st}^I$$
with the following property: There  exist constants $\e>0$, $k\in \N$,
such that for all compact subsets $\Cc_I\subset \af_I^{--}$
and $\Omega\subset G$ there exists
a constant $C>0$,  such that

 \begin{equation} \label{cta2}|m_{v,\eta}(g a_I w\cdot z_0) -  m_{v,\eta_{\sc,\st}^I} (g a_I w_{I,\sc}\cdot z_{0, I, \sc})|
\leq C a_I^{(1+\e)\rho} p_{-N;k} (m_{v,\eta})\end{equation}
for all $\eta\in (V^{-\infty})^H_{\rm temp}$, $v\in V^{\infty}$, $g\in \Omega$,
$a_I\in A_I^{--}$ with $\log a_I \in \R_{\geq 0}\Cc_I$, and
$w={\bf m}_{\sc,\st}(w_{I,\sc})\in {\bf m}_{\sc,\st}(\Wc_{I,\sc})\subset \Wc$.
The constants $\e$, $k$, and $C$ can all be chosen independently of $V$.

Moreover, with $\chi_V\in \jf_\C^*/\W_\jf$ the infinitesimal character of $V$ one has
\begin{equation}\label{exponents2} \E_{\eta_{\sc,\st}^I}\subset  (\rho|_{\af_I}  + i\af_I^*) \cap (\rho - \W_\jf \cdot \chi_V)|_{\af_I}.
\end{equation}
Finally there is the consistency relation
\begin{equation} \label{consist2}(\eta_w)^I= (\eta_{\sc,\st}^I)_{w_{I,\sc}} \qquad (w={\bf m}_{\sc,\st}(w_{I,\sc})\in W)\, .\end{equation}
\end{theorem}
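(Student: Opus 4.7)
My plan is to reduce Theorem \ref{loc ct temp2} to Theorem \ref{loc ct temp} by ``recentering'' the base point. Given $\sc\in\sC_I$ and $\st\in\sF_{I,\sc}$, consider the real spherical space $Z_{\sc,\st}:=G/H_{\sc,\st}$, where $H_{\sc,\st}$ is the $G$-stabilizer of $z_{\sc,\st}=w(\sc,\st)\cdot z_0$ with $w(\sc,\st)={\bf m}_{\sc,\st}(\1)\in\Wc\subset G$. Then $Z$ and $Z_{\sc,\st}$ are $G$-equivariantly isomorphic, share the same torus $A_Z$, spherical roots $S$, compression cone $\af_Z^-$, and boundary degenerations; crucially, by the identity $(H_{\sc,\st})_I=H_{I,\sc}$ from \eqref{WWI2 general}, their $I$-boundary data at the $\1$-component of $Z_{\sc,\st}$ agrees with the $\sc$-data of $Z$.

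The definition goes as follows. Set $\tilde\eta:=w(\sc,\st)\cdot\eta$; this is an $H_{\sc,\st}$-invariant distribution vector, and since $m_{v,\tilde\eta}(g\cdot z_0^{(\sc,\st)})=m_{v,\eta}(gw(\sc,\st)\cdot z_0)$ while the weight functions $\w,\v$ are comparable under a $G$-translate, we have $\tilde\eta\in(V^{-\infty})^{H_{\sc,\st}}_{\rm temp}$ with Sobolev bounds controlled by those for $\eta$. Applying Theorem \ref{loc ct temp} to the pair $(V,\tilde\eta)$ on $Z_{\sc,\st}$ produces a unique tempered functional $(\tilde\eta)^I\in (V^{-\infty})^{(H_{\sc,\st})_I}_{\rm temp}=(V^{-\infty})^{H_{I,\sc}}_{\rm temp}$, and I define $\eta_{\sc,\st}^I:=(\tilde\eta)^I$. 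The exponent condition \eqref{exponents2} then transfers verbatim from \eqref{exponents}, since the Harish-Chandra module $V$ (and hence $\chi_V$) is unchanged and $\af_I$ is the same.

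To verify the approximation \eqref{cta2} for $w={\bf m}_{\sc,\st}(w_{I,\sc})$, I use the normalized representatives fixed at the end of Subsection \ref{subsection c=1}, so that $w={\bf m}_{\sc,\st}(w_{I,\sc})$ satisfies $w\cdot z_0=w_{I,\sc}\cdot z_{\sc,\st}$ up to an element of the stabilizer. Under the $G$-isomorphism $Z\simeq Z_{\sc,\st}$ this converts the left-hand side of \eqref{cta2} into the analogous expression on $Z_{\sc,\st}$:
\[
m_{v,\eta}(ga_Iw\cdot z_0)=m_{v,\tilde\eta}(ga_Iw_{I,\sc}\cdot z_0^{(\sc,\st)}),
\]
so that \eqref{cta} applied to $Z_{\sc,\st}$ with basepoint $z_0^{(\sc,\st)}$ and matching map into ${\bf m}_{Z_{\sc,\st}}(\Wc_{I,\sc})$ yields \eqref{cta2}. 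The consistency relation \eqref{consist2} is then the translation of \eqref{consist} for $Z_{\sc,\st}$ via the isomorphism $Z\simeq Z_{\sc,\st}$. Uniqueness of $\eta_{\sc,\st}^I$ goes through verbatim: the generalized eigenvalues in \eqref{exponents2} lie in $\rho+i\af_I^*$, so the approximated quantity $a_I\mapsto m_{v,a_I\cdot\eta_{\sc,\st}^I}(gw_{I,\sc}\cdot z_{0,I,\sc})$ is an exponential polynomial with unitary exponents, and Remark \ref{rmk unique approx} together with Lemma \ref{lemma basic ineq} pins it down from \eqref{cta2}.

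The main obstacle is the careful bookkeeping of representatives so that $w\cdot z_0$ and $w_{I,\sc}\cdot z_{\sc,\st}$ coincide on the nose (not just projectively), which is precisely what the normalization introduced after Lemma \ref{lemma F-product} and the consistency identity \eqref{WWI2 general} achieve. Once this compatibility is secured, the transfer of Theorem \ref{loc ct temp} from $Z_{\sc,\st}$ to $Z$ is routine. There is also a minor uniformity point: the constants $\e,k,C$ in Theorem \ref{loc ct temp} are asserted to be independent of $V$, and we need the corresponding constants in \eqref{cta2} to be uniform over $\sc$ and $\st$ as well, but since $\sC_I$ and $\sF_{I,\sc}$ are finite sets this is automatic by taking maxima.
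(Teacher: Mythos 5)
Your proposal is correct and follows essentially the same strategy as the paper: a base-point shift by $w(\sc,\st)$ reduces the general case to Theorem \ref{loc ct temp}. The paper states this reduction in two stages (first $\sc=\1$, then $\st=\1$), whereas you perform the shift in a single step directly to $z_{\sc,\st}$; beyond that cosmetic difference, the key facts you invoke — the consistency of stabilizers from \eqref{WWI2 general}, the equivariance $\tilde\eta=w(\sc,\st)\cdot\eta$, and the normalization of representatives to make matrix coefficients agree literally — are exactly what the paper's brief proof implicitly appeals to.
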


\begin{proof} By replacing $H_{I,\sc}$ with $H_I$ we may assume that $\sc=\1$. Let then
 $w\in {\bf m}_\st(W_I)$. The passage to $\st=\1$ is obtained via the material in Subsection \ref{subsection c=1} and
via the further base point shift $z_0\to z_\st$. By this we obtain a reduction
to Theorem \ref{loc ct temp}.
\end{proof}

\section{The main remainder estimate}\label{main remainder}

In this section we derive an important uniform estimate which is the key technical tool
for the results in the next section.  The estimate is based on the constant term approximation
of Section \ref{section: ct}.

\subsection{Adjustment of Haar measures}\label{measures}

We assume that $Z=G/H$ carries a $G$-invariant measure.
Then, according to \cite[Lemma 3.12]{KKS2}, the same holds for $Z_I:=G/H_I$.
Since $L\cap H=L\cap H_I$ by Lemma \ref{equal L cap H},
we see that the $P$-orbits through $z_0$ and $z_{0,I}$ are isomorphic
as homogeneous spaces for $Q$, i.e.
\begin{equation}\label{Q-orbit iso}
P\cdot z_0= Q\cdot z_0\simeq Q/ L\cap H\simeq Q\cdot z_{0,I} =  P \cdot z_{0,I}\, .
\end{equation}

\noindent We fix the normalizations of the $G$-invariant measures on $Z$
and $Z_I$ such that on these open pieces they
coincide with a common Haar measure on $Q/L\cap H$, and we denote these measures
on $Z$ and $Z_I$ by $dz$ and $dz_I$, respectively.

\subsection{Right action by $A(I)$}\label{right action}

\par As $A(I)$ normalizes $H_I$ we obtain a right action of $A(I)$ on functions $f$ on $Z_I$  given by

$$ (R(a_I) f) (g\cdot z_{0,I}):= f( g a_I \cdot z_{0,I}) \qquad (g\in G, a_I \in A(I))\, .$$

\begin{lemma}\label{lemma ZI-int}  Let $f\in L^1(Z_I)$ and $a_I\in A(I)$.  Then
\begin{equation} \label{int ZI} \int_{Z_I}  (R(a_I)f)(z_I) \ dz_I =  |a_I^{2\rho}|  \int_{Z_I} f(z_I) \ dz_I\, \end{equation}
In particular, the normalized action $ f\mapsto |a_I^{-\rho}| R(a_I) f$ of $A(I)$ is unitary on $L^2(Z_I)$.
\end{lemma}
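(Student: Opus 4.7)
Since $A(I)$ normalizes $H_I$, the right action $R(a_I)$ on $Z_I = G/H_I$ commutes with the left action of $G$; hence the push-forward $R(a_I)_* dz_I$ is again $G$-invariant on $Z_I$, and by uniqueness up to a positive scalar we have $R(a_I)_* dz_I = c(a_I)\, dz_I$ for some continuous character $c : A(I) \to \R^+$. The claim of \eqref{int ZI} is then $c(a_I) = |a_I^{2\rho}|$. Since $c$ must be trivial on the finite subgroup $F(I)$, and $|a_I^{2\rho}| = 1$ for $a_I \in F(I)$, it suffices to show $c(a_I) = a_I^{2\rho}$ for $a_I \in A_I$.

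The $G$-invariance of $dz_I$, combined with the unimodularity of $G$, implies the unimodularity of $H_I$ and gives the disintegration $\int_G F\,dg = \int_{Z_I} \int_{H_I} F(gh)\,dh\,dz_I$ for $F \in C_c(G)$. Substituting $h \mapsto a_I^{-1} h a_I$ in the inner integral---an inner automorphism of $H_I$, for which Haar measure transforms with Jacobian $|\det \Ad(a_I)|_{\hf_I}|^{-1}$---and using the right-invariance of $dg$ under $g \mapsto g a_I$ (by unimodularity of $G$), a brief manipulation yields
\begin{equation*}
c(a_I) = |\det \Ad(a_I)|_{\hf_I}|^{-1}.
\end{equation*}

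To evaluate this determinant for $a_I \in A_I$, use the decomposition $\hf_I = (\lf \cap \hf) \oplus \G(T_I)$ from \eqref{eq-hi1}. Since $L = Z_G(\af_Z)$ centralizes $A_I \subset A_Z$, $\Ad(a_I)$ acts trivially on $\lf \cap \hf$. For the graph part, each element $Y := X_{-\alpha} + T_I(X_{-\alpha}) = X_{-\alpha} + \sum_{\alpha + \beta \in \N_0[I]} X_{\alpha, \beta}$, with $\alpha \in \Sigma(\af, \uf)$ and $X_{-\alpha} \in \gf^{-\alpha}$, is an $\Ad(a_I)$-eigenvector with eigenvalue $a_I^{-\alpha}$: every $\beta$ appearing in the sum satisfies $\alpha + \beta \in \N_0[I]$ and hence $a_I^{\alpha + \beta} = 1$ (as elements of $I$ vanish on $\af_I$), so $a_I^\beta = a_I^{-\alpha}$. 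Summing multiplicities and using $\dim \gf^{-\alpha} = \dim \gf^\alpha$ gives $|\det \Ad(a_I)|_{\hf_I}| = a_I^{-2\rho_Q}$, where $\rho_Q$ is the half-sum of roots of $\af$ in $\uf$. Writing $\rho = \rho_Q + \rho_L$ with $\rho_L$ the half-sum of roots in $\nf \cap \lf$---all of which vanish on $\af_Z \supset \af_I$ since $\lf$ centralizes $\af_Z$---we obtain $\rho|_{\af_I} = \rho_Q|_{\af_I}$, and thus $c(a_I) = a_I^{2\rho}$, as claimed. The unitarity of the normalized action is then immediate by applying \eqref{int ZI} to $|f|^2$: $\||a_I^{-\rho}| R(a_I) f\|^2 = |a_I^{-2\rho}| \cdot |a_I^{2\rho}| \cdot \|f\|^2 = \|f\|^2$, and the inverse $|a_I^{\rho}| R(a_I^{-1})$ gives surjectivity.

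The principal technical step is the eigenvalue analysis of $\Ad(a_I)$ on the graph subspace $\G(T_I)$, which crucially uses the combinatorial restriction $\alpha + \beta \in \N_0[I]$ built into the definition \eqref{eq-hi2} of $T_I$; everything else is a standard disintegration argument and the elementary identity $\rho|_{\af_I} = \rho_Q|_{\af_I}$.
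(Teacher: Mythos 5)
Your proof is correct, and it is more self-contained than the paper's: the paper simply reduces to $a_I\in A_I$ (via the finite order of $F(I)$, exactly as you do) and then cites \cite[Lemma 8.4]{KKS2} for the integration formula on $A_I$, whereas you supply the missing Jacobian computation explicitly. The disintegration over $H_I$, the change of variable $h\mapsto a_I^{-1}ha_I$, and the eigenvector analysis on $\G(T_I)$ using the combinatorial constraint $\alpha+\beta\in\N_0[I]$ are all sound, and the identification $\rho|_{\af_I}=\rho_{\uQ}|_{\af_I}$ closes the argument cleanly.

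One small imprecision worth flagging: you write that $\uL = Z_{\uG}(\af_Z)$. That identity holds for symmetric spaces (Subsection \ref{Subsubsection adapted}), but for a general real spherical space $\uL$ is produced by the local structure theorem (Remark \ref{rmk choice of L}) and one only gets the inclusion $\lf\subset\zf_\gf(\af_Z)$, not equality. What you actually use, however, is the reverse-flavored fact that $\Ad(a_I)$ acts trivially on $\lf$, i.e.\ $\af_I\subset\af_Z\subset\zf(\lf)$; this does hold in general, since $\lf_{\rm n}\subset\hf$ forces $\af\cap\lf_{\rm n}\subset\af_\uH$ and hence $\af_Z=\af_\uH^{\perp_\af}\subset\af\cap\zf(\lf)$. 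So the step is valid; only the stated justification overshoots. Similarly, your reduction to $A_I$ requires realizing $a_I$ inside $A\subset G$ in order to conjugate $H_I$ — this is available precisely because, after identifying $\af_Z\simeq\af_\uH^{\perp_\af}\subset\af$, one has $A_I=\exp(\af_I)\subset A$ (the obstruction noted in the paper to embedding $A(I)$ in $G$ concerns the finite part $F(I)$, which you have already disposed of).
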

\begin{proof}  First note that $|a^\rho|=1$ for all $a\in T_Z=\exp(i\af_H^\perp)\subset \uA$.  Since elements
of $F(I)$ have finite order it is sufficient to consider
$a_I \in A_I\subset A(I)$. The first assertion then follows from \cite[Lemma 8.4]{KKS2}, and the second assertion is a consequence of the first.
\end{proof}

Fix an element $X\in \af_I^{--}$ and set $a_t:=\exp(tX)$ for $t\in\R$.   Let $f\in L^2(Z_I)$ and
define
\begin{equation} \label{defi f_t}
f_t(z):= a_t^{\rho} (R(a_t^{-1})f)(z),\quad (z\in Z_I).
\end{equation}
Notice that the assignment $f\mapsto f_t$ is $G$-equivariant and unitary by Lemma \ref{lemma ZI-int}.
In particular
\begin{equation} \label{match1}  \| f_t\|_{L^2(Z_I)}= \|f\|_{L^2(Z_I)} \qquad(t\in\R)\end{equation}
and, in case $f$ is smooth,
\begin{equation} \label{match equivariant} L_u f_t = (L_u f)_t  \qquad (u \in \U(\gf))\, .\end{equation}

\subsection{Matching of functions}\label{matching functions}
We recall from Section \ref{subsection WI} the injective map ${\bf m}: \Wc_I \to \Wc$ which matches the open $Q$-orbit
$Qw_I\cdot z_{0,I}=Pw_I\cdot z_{0,I}$ in $Z_I$ with the open $Q$-orbit $Qw\cdot z_0=Pw\cdot z_0$ in $Z$ where $w={\bf m}(w_I)$.
As in \eqref{Q-orbit iso} we have

\begin{equation} \label{QwwI} Qw\cdot z_0\simeq Q/ L\cap H\simeq Qw_I \cdot z_{0,I}\, .\end{equation}

Given a smooth function $f$ on $Z_I$ with compact support in $Q\Wc_I \cdot z_{0,I}\subset Z_I$
we define via \eqref{QwwI} a `matching' smooth function $F=\Phi(f)$ on $Z$ with compact support
in $Q{\bf m}(\Wc_I) \cdot z_0\subset Z$
by
\begin{equation}\label{match fF} F(q{\bf m}(w_I)\cdot z_0):= f(qw_I \cdot z_{0,I})\qquad (q\in Q)\, .
\end{equation}
Observe that the space spanned by the smooth functions on $Z_I$ with compact support contained
in the union of the open $Q$-orbits $Q\Wc_I\cdot z_{0,I}$
is dense in $L^2(Z_I)$.

\par

Since the invariant measures on $Z$ and $Z_I$ coincide on the open $Q$-orbits we get
\begin{equation*}  \|\Phi(f)\|_{L^2(Z)}= \|f\|_{L^2(Z_I)}.\end{equation*}
Together with \eqref{match1} this implies
for the function $f_t$ defined in \eqref{defi f_t}
\begin{equation} \label{match2} \|\Phi(f_t)\|_{L^2(Z)}=\|f\|_{L^2(Z_I)}\end{equation}
for all $t\in \R$.

The main result of this section is now reads as follows. Let $N=N_Z$ from \eqref{def NZ}.

\begin{theorem}\label{matching comparison} {\rm (Main remainder estimate)}
There exists $\epsilon>0$ with the following property.
Let $\Omega\subset Q$ be a compact set.
Then for every $s\in\R$ there exist $C>0$ and $m\in \N$ such that
for all
$f\in C_c^\infty(Z_I)$ with
$\supp f \subset \Omega \Wc_I \cdot z_{0,I}$,
all tempered pairs $(V,\eta)$, and all $v\in V^\infty$
the following equality holds

$$ \la \Phi(f_t), m_{v,\eta}\ra_{L^2(Z)}  =  \la f_t, m_{v,\eta^I}\ra_{L^2(Z_I)} + R(t)\qquad (t\ge 0)\, ,$$
with the remainder bounded by

$$| R(t)| \leq  C  e^{ -t \e} \, p_{-N; -s}(m_{v,\eta}) \, p_{N;m}(\Phi(f))\,.$$

\end{theorem}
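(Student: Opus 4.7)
The proof goes by expanding both inner products as sums of integrals over open $Q$-orbits, matching them orbit-by-orbit via ${\bf m}\colon\Wc_I\to\Wc$, and bounding the resulting integrands pointwise via the constant term approximation (Theorem~\ref{loc ct temp}). To begin, since $Z$ and $Z_I$ are, modulo null sets, disjoint unions of their open $Q$-orbits and the invariant measures agree on $Q\cdot z_0\simeq Q/(L\cap H)\simeq Q\cdot z_{0,I}$ (Subsection~\ref{measures}), the matching rule \eqref{match fF} lets me rewrite
\begin{equation*}
R(t) = \sum_{w_I\in\Wc_I}\int_{Q/(L\cap H)} f_t(qw_I\cdot z_{0,I})\,\overline{D_{w_I}(q)}\, d\bar q,
\end{equation*}
with $D_{w_I}(q):=m_{v,\eta}(qw\cdot z_0)-m_{v,\eta^I}(qw_I\cdot z_{0,I})$ and $w={\bf m}(w_I)$.

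Next, I would parametrize the support of $f_t$. Since $a_t\in A_I\subset\hat H_I$ normalizes $H_I$ and $w_I=\tilde t_Ih_I$ with $\tilde t_I\in T_Z$ commuting with $a_t$, one verifies $w_Ia_t\cdot z_{0,I}=a_tw_I\cdot z_{0,I}$, which identifies the right $A_I$-action on $Qw_I\cdot z_{0,I}\simeq Q/(L\cap H)$ with right multiplication by $a_t\in L\subset Q$. Hence $\supp f_t(\cdot w_I\cdot z_{0,I})\subset \Omega a_t\cdot(L\cap H)$, and every such point has the form $q=q_0 a_t$ with $q_0\in\Omega$. Theorem~\ref{loc ct temp}, applied with $g=q_0\in\Omega$ and $a_I=a_t\in A_I^{--}$, yields
\begin{equation*}
|D_{w_I}(q_0a_t)|\leq C_0\, a_t^{(1+\epsilon)\rho}\, p_{-N;k_0}(m_{v,\eta}),
\end{equation*}
uniformly in $t$, $w_I$, and the Harish-Chandra module. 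Using the Cauchy--Schwarz estimate $\|f_t\|_{L^1}\leq \mathrm{vol}(\supp f_t)^{1/2}\|f_t\|_{L^2}$ together with the volume scaling $\mathrm{vol}(\supp f_t)\leq a_t^{-2\rho}\,C(\Omega)$ from Lemma~\ref{lemma ZI-int} and $\|f_t\|_{L^2(Z_I)}=\|\Phi(f)\|_{L^2(Z)}$, one obtains
\begin{equation*}
|R(t)|\leq C_1\, a_t^{\epsilon\rho}\, p_{-N;k_0}(m_{v,\eta})\,\|\Phi(f)\|_{L^2(Z)}.
\end{equation*}
Since $\rho(X)<0$ for $X\in\af_I^{--}$, $a_t^{\epsilon\rho}\leq e^{-\epsilon' t}$ for some $\epsilon'>0$, establishing the theorem with $s=-k_0$ and $m=0$.

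To reach arbitrary $s\in\R$ and $m\in\N$, I would run the preceding argument with $v$ replaced by $\Delta^\ell v$ for $\ell$ sufficiently large, transfer $\Delta^\ell$ from the matrix-coefficient side onto $\Phi(f_t)$ via the self-adjointness of $\Delta$ on $L^2(Z)$ (the cost being $2\ell$ additional derivatives absorbed into $p_{N;m}(\Phi(f))$), and finally invoke the Laplace--Sobolev inequality of Lemma~\ref{lemma Sobolev norms inequality} together with the unitary invertibility of $\Delta$ on irreducible unitary blocks (Lemma~\ref{unitary Laplace inequality2}) to trade the positive Sobolev order $k_0$ for any prescribed negative order $-s$.

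\textbf{Main obstacle.} The delicate part is the measure-theoretic and combinatorial bookkeeping in the support-parametrization step: one must carefully verify the commutation $w_Ia_t\cdot z_{0,I}=a_tw_I\cdot z_{0,I}$ (crucially using that $A_I\subset\hat H_I$ normalizes $H_I$ and that the $T_Z$-part of $w_I$ commutes with $a_t$), and check that the modular scaling of $\mathrm{vol}(\supp f_t)$ supplied by Lemma~\ref{lemma ZI-int}, combined with the $a_t^\rho$ normalization in the definition of $f_t$ and the $a_t^{(1+\epsilon)\rho}$ produced by the constant term approximation, collapses exactly to the contractive factor $a_t^{\epsilon\rho}$, whose exponential decay relies on the sphericity-type positivity property $\rho(X)<0$ on $\af_I^{--}$.
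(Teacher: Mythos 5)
Your first paragraph is essentially the paper's own computation: the orbit-by-orbit rewriting of the pairing difference is the analogue of the integrals over $Q/Q_H$ in the paper's proof, the pointwise bound comes from Theorem~\ref{loc ct temp}, and the bookkeeping of $a_t^{\rho}$, the volume scaling from Lemma~\ref{lemma ZI-int}, and the factor $a_t^{(1+\e)\rho}$ is done correctly. But, as you note, this only yields the estimate with the \emph{positive}-order norm $p_{-N;k_0}(m_{v,\eta})$ on the right, whereas the entire point of the theorem is the dual norm $p_{-N;-s}$ with $s>k_Z$, which is what feeds into the Hilbert--Schmidt a priori bounds \eqref{HiSch estimate}--\eqref{global a-priori} in Corollary~\ref{main cor}. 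Your proposed upgrade via powers of $\Delta$ is indeed the paper's device, but your sketch of it has a genuine gap.

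The gap is the transfer step. Writing $m_{v,\eta}=L_{\Delta^{\ell}}m_{\Delta^{-\ell}v,\eta}$ and integrating by parts on $Z$ puts the derivatives on $L_{\Delta^{\ell}}\Phi(f_t)$; but to rerun your orbit-matching argument (or to recover the main term $\la f_t,m_{v,\eta^I}\ra_{L^2(Z_I)}$, which equals $\la (L_{\Delta^{\ell}}f)_t,\,m_{\Delta^{-\ell}v,\eta^I}\ra_{L^2(Z_I)}$) you need the $Z$-side test function to be the \emph{matched} function $\Phi\bigl((L_{\Delta^{\ell}}f)_t\bigr)=\Phi(L_{\Delta^{\ell}}f_t)$. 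Since $\Phi$ is only $Q$-equivariant and $\Delta\in\U(\gf)\setminus\U(\qf)$, one has $L_{\Delta^{\ell}}\Phi(f_t)\neq\Phi(L_{\Delta^{\ell}}f_t)$, and the discrepancy, paired against $m_{\Delta^{-\ell}v,\eta}$, is an additional remainder that must itself be shown to decay like $e^{-\e t}$; this is exactly the paper's term $R_1(t)$, controlled by Lemma~\ref{lemma match2}\eqref{lemma match2a}, whose proof uses the graph descriptions \eqref{eq-hi1}--\eqref{eq-hi2} of $\hf$ versus $\hf_I$ to show that the commutator only produces coefficients $a_t^{\mu}$ with $\mu\in\la S\ra\setminus\la I\ra$, hence exponentially small along $\af_I^{--}$. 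Your phrase ``the cost being $2\ell$ additional derivatives absorbed into $p_{N;m}(\Phi(f))$'' silently assumes this commutation is free; it is the central technical point of the proof and cannot be bypassed by self-adjointness alone. Relatedly, replacing norms of $\Phi(f_t)$ by norms of $\Phi(f)$ uniformly in $t$ is also not automatic (the twist $f\mapsto f_t$ commutes with left derivatives only on $Z_I$); this requires Lemma~\ref{lemma match2}\eqref{lemma match2b} together with the weight bound $\w\leq C(1+t)$ on $\supp\Phi(f_t)$. With these two ingredients supplied, your outline becomes the paper's proof; without them, the second half of your argument does not go through as stated.
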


Before giving the proof we observe the following corollary.
Recall from (\ref{Hermitian sum}) the Hermitian forms $\sH_\pi$ on $C_c^\infty(Z)$.
We fix an orthonormal basis $\eta_1, \ldots, \eta_{m_\pi}$ of $\M_\pi$ and define
a preliminary Hermitian form $\sH_\pi^{I, \rm pre}$ on $C_c^\infty(Z_I)$
by
\begin{equation} \label{Hermitian sum I-side} \sH_\pi^{I, \rm pre}(f)= \sum_{j=1}^{m_\pi}  \|\oline \pi(f) \oline \eta_j^I \|_{\Hc_\pi}^2
\qquad (f\in C_c^\infty(Z_I))\,. \end{equation}
Notice that $\sH_\pi^{I, \rm pre}$ is independent from the particular choice of the
orthonormal basis $\eta_1, \ldots, \eta_{m_\pi}$, being the Hilbert-Schmidt norm squared
of the linear map
$$\M_{\oline\pi} \to \Hc_\pi,\ \ \oline \eta\mapsto\oline\pi(f)\eta^I\, .$$

 We derive from Theorem \ref{matching comparison} and the global a priori bound
 \eqref{global a-priori}  that:

\begin{cor}  \label{main cor}
Let $\epsilon>0$ be as in Theorem \ref{matching comparison} and let
$f\in C_c^\infty(Z_I)$ with support in
$ Q\Wc_I \cdot z_{0,I}$. Then there exists a constant $C>0$ such that
$$\| f \|^2_{L^2(Z_I)} = \int_{\hat G}   \sH_\pi^{I, \rm pre} (f_t) \ d\mu(\pi)  + R(t)$$
with $|R(t)| \leq  C e^{-\e t}$ for all $t\geq 0$.
\end{cor}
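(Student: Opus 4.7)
The plan is to combine the Parseval formula applied to $\Phi(f_t)$ with Theorem \ref{matching comparison} applied fibre by fibre to matrix coefficients, the resulting exponential decay being uniform in $\pi$ thanks to the global a priori bound \eqref{global a-priori}.

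First, the Parseval identity \eqref{abstract Plancherel} applied to $\Phi(f_t)\in C_c^\infty(Z)$ combined with the $L^2$-isometry \eqref{match2} yields
\begin{equation*}
\|f\|^2_{L^2(Z_I)}=\|\Phi(f_t)\|^2_{L^2(Z)}=\int_{\hat G}\sH_\pi(\Phi(f_t))\,d\mu(\pi).
\end{equation*}
Hence it suffices to prove that $\int_{\hat G}\bigl|\sH_\pi(\Phi(f_t))-\sH_\pi^{I,\mathrm{pre}}(f_t)\bigr|\,d\mu(\pi)\le Ce^{-\e t}$ for $t\ge 0$.

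For the pointwise estimate at fixed $\pi$ I would view $\sH_\pi(\Phi(f_t))$ and $\sH_\pi^{I,\mathrm{pre}}(f_t)$ as the squared Hilbert--Schmidt norms of the operators $A_t,B_t\colon\M_{\bar\pi}\to\Hc_\pi$ defined by $A_t\bar\eta:=\bar\pi(\Phi(f_t))\bar\eta$ and $B_t\bar\eta:=\bar\pi(f_t)\bar\eta^I$. For any $v\in\Hc_\pi^\infty$ the pairing
\begin{equation*}
\langle(A_t-B_t)\bar\eta,v\rangle_{\Hc_\pi}=\langle\Phi(f_t),m_{v,\eta}\rangle_{L^2(Z)}-\langle f_t,m_{v,\eta^I}\rangle_{L^2(Z_I)}
\end{equation*}
is precisely the remainder controlled by Theorem \ref{matching comparison}, and is therefore majorized by $Ce^{-\e t}p_{-N;-s}(m_{v,\eta})p_{N;m}(\Phi(f))$. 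Expanding $\bar\eta$ in the orthonormal basis $\bar\eta_1,\ldots,\bar\eta_{m_\pi}$ of $\M_{\bar\pi}$ and combining with the Hilbert--Schmidt-type inequality \eqref{HiSch estimate}, namely $\sum_j p_{-N;-s}(m_{v,\eta_j})^2\le\|\sH_\pi\|^2_{\mathrm{HS},N;s}\|v\|^2$, yields the bilinear bound
\begin{equation*}
\|A_t-B_t\|_{\mathrm{op}}\le Ce^{-\e t}\,\|\sH_\pi\|_{\mathrm{HS},N;s}\,p_{N;m}(\Phi(f)),
\end{equation*}
and after passing from operator to Hilbert--Schmidt norm a corresponding bound on $\|A_t-B_t\|_{\mathrm{HS}}$.

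To finish, I would combine the pointwise inequality $\bigl|\|A_t\|^2_{\mathrm{HS}}-\|B_t\|^2_{\mathrm{HS}}\bigr|\le \|A_t-B_t\|_{\mathrm{HS}}(\|A_t\|_{\mathrm{HS}}+\|B_t\|_{\mathrm{HS}})$ with Cauchy--Schwarz in $\mu$, using $\int\|\sH_\pi\|^2_{\mathrm{HS},N;s}\,d\mu<\infty$ from \eqref{global a-priori}, the Parseval identity $\int\sH_\pi(\Phi(f_t))\,d\mu=\|f\|^2_{L^2(Z_I)}$ for the $A_t$-side, and the triangle inequality $\|B_t\|_{\mathrm{HS}}\le\|A_t\|_{\mathrm{HS}}+\|A_t-B_t\|_{\mathrm{HS}}$ to control the $B_t$-side. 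The net outcome is the claimed $|R(t)|\le Ce^{-\e t}$.

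The delicate technical point I expect to be the main obstacle is that $\sH_\pi^{I,\mathrm{pre}}$ is not a priori Parseval-integrable on the $Z_I$-side, since the tempered functionals $\eta^I_j$ are not yet identified as a Plancherel basis for $L^2(Z_I)$; this is precisely circumvented by controlling the $B_t$-side through the difference bound already obtained. A secondary book-keeping issue is the passage from operator to Hilbert--Schmidt norm, which introduces a factor $\sqrt{m_\pi}$ absorbed by the uniform multiplicity bound valid in the Plancherel decomposition of a real spherical space.
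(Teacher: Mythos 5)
Your proposal is correct and follows essentially the same route as the paper: Parseval applied to $\Phi(f_t)$ together with \eqref{match2}, the fibrewise comparison of $\la \Phi(f_t), m_{v,\eta_j}\ra_{L^2(Z)}$ with $\la f_t, m_{v,\eta_j^I}\ra_{L^2(Z_I)}$ from Theorem \ref{matching comparison}, the Hilbert--Schmidt estimate \eqref{HiSch estimate} and the a priori bound \eqref{global a-priori}, combined via Cauchy--Schwarz in $\mu$. The only (harmless) difference is bookkeeping: the paper bounds the componentwise differences of norms $\|\oline\pi(\Phi(f_t))\oline\eta_j\|-\|\oline\pi(f_t)\oline\eta_j^I\|$ and uses the identity $a^2-b^2=2a(a-b)-(a-b)^2$, whereas you package the same estimates into operator and Hilbert--Schmidt norms of $A_t-B_t$, which merely costs a uniformly bounded factor $\sqrt{m_\pi}$.
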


\begin{proof} We first observe that by  \eqref{match2}
and \eqref{abstract Plancherel}-\eqref{Hermitian sum}
\begin{equation}\label{first observe}
\| f \|^2_{L^2(Z_I)} =
\int_{\hat G}   \sH_\pi(\Phi(f_t)) \, d\mu(\pi) =
\int_{\hat G}   \sum_{j=1}^{m_\pi} \|\oline \pi(\Phi(f_t))\oline \eta_j\|_{\Hc_\pi}^2\, d\mu(\pi).
\end{equation}
Hence we need to estimate the integral over $\pi\in\hat G$ of
$$
\sum_{j=1}^{m_\pi}
\Big(
\|\oline \pi(\Phi(f_t))\oline \eta_j\|_{\Hc_\pi}^2 - \|\oline \pi(f) \oline \eta_j^I \|_{\Hc_\pi}^2
\Big)\, .
$$
Using the identity $a^2-b^2=2a(a-b)-(a-b)^2$
together with Cauchy-Schwarz
and \eqref{first observe},
we see that it suffices
to show
\begin{equation}\label{suffices to show}
\bigg[\int_{\hat G}
\sum_{j=1}^{m_\pi}
\Big(
\|\oline \pi(\Phi(f_t))\oline \eta_j\|_{\Hc_\pi} - \|\oline \pi(f) \oline \eta_j^I \|_{\Hc_\pi}
\Big)^2\, d\mu(\pi) \bigg]^{1/2}
\le
C e^{-\e t}\,.
\end{equation}

From the dense inclusion $\Hc_\pi^\infty \subset \Hc_\pi$ and \eqref{inner product with matrix coefficient}  we obtain that
$$
\|\oline \pi(\Phi(f_t))\oline \eta_j\|_{\Hc_\pi}
= \sup_{v\in \Hc_\pi^\infty\atop \|v\|=1}
\la \oline \pi(\Phi(f_t))\oline \eta_j, v\ra_{\Hc_\pi}
= \sup_{v\in \Hc_\pi^\infty\atop \|v\|=1}
\la \Phi(f_t), m_{v,\eta_j}\ra_{L^2(Z)}
$$
and similarly
$$
\|\oline \pi(f)\oline \eta_j^I\|_{\Hc_\pi}
= \sup_{v\in \Hc_\pi^\infty\atop \|v\|=1}
\la f, m_{v,\eta_j^I}\ra_{L^2(Z_I)}\, .
$$
Let $s>k_Z$ (see \eqref{def NZ}). Now application of Theorem  \ref{matching comparison} implies
for all $t>0$
$$ \Big|\|\oline \pi(\Phi(f_t))\oline \eta_j\|_{\Hc_\pi}-
\|\oline \pi(f)\oline \eta_j^I\|_{\Hc_\pi} \Big|
\leq  C e^{-t\e} \sup_{v\in \Hc_\pi^\infty
\atop \|v\|=1} p_{-N; -s}(m_{v,\eta_j})\, ,$$
where $C>0$ depends on $f$, but not on $t$ or $\pi$.
Hence \eqref{suffices to show} follows from \eqref{HiSch estimate}
and \eqref{global a-priori}.
\end{proof}

\subsection{Comparing Haar measures}\label{matching measures}

In the proof of Theorem \ref{matching comparison} we will assume for simplicity that
$\supp f\subset \Omega\cdot z_{0,I}$. The general case is obtained using the following
observation.  Recall that the Haar measures of $Z$ and $Z_I$ are both adjusted to agree
with a fixed Haar measure of $Q/Q_H$ on the $Q$-orbits through  $z_0$ and $z_{0,I}$.

Recall from the local structure theorem that
\begin{equation}\label{PQ-corr}Qw\cdot z_0\simeq Q/ Q_H\simeq  U \times L/L_H\end{equation}
and by \eqref{QwwI} likewise $Qw_I\cdot z_{0,I} \simeq Q/ Q_H$.
We claim that the Haar measures of $Z$ and $Z_I$ coincide on every open $Q$-orbit
with the fixed normalized measure on $Q/ Q_H$. Let us verify this for $Z$, the proof for $Z_I$ being analogous.
We first implement  the Haar measure on $Q/Q_H$
via a density $|\omega_Z|$ obtained from a top degree differential form $\omega_Z\in \bigwedge^{\rm top} (\qf/ \qf\cap \hf)^*$.  As usual we decompose
$w=\tilde th$ with $\tilde t\in T_Z$ and $h\in \uH$, see \eqref{th-deco}. Then $\Ad(\tilde t)$ preserves $(\qf/ \qf\cap \hf)_\C$
and thus acts on $\bigwedge^{\rm top} (\qf/ \qf\cap \hf)_\C^*$ by a unit scalar.  Since the scalar has to be real, the claim follows.

\subsection{Matching derivatives}\label{comparison of derivatives}

Before we can give the proof of Theorem \ref{matching comparison} we need the following lemma.

\begin{lemma} \label{lemma match2} Let $\Omega\subset Q$ be a compact subset.
Then the following assertions hold:
\begin{enumerate}
\item \label{lemma match2a} Let $u\in \U(\gf)$. There exist
$u_1, \ldots, u_k\in \U(\qf)$
with $\deg u_j \leq \deg u$
and a constant $C=C(\Omega, u)$ such that
\begin{equation}\label{comparison with L_u}
\big|[\Phi(L_u(f_t))- L_u (\Phi(f_t))](z)\big| \\ \leq C
\underset{\sigma\in S\bs I}\max a_t^\sigma \,\sum_{j=1}^k \left| L_{u_j} (\Phi(f_t))(z)\right|
\end{equation}
for all
$f\in C_c^\infty(Z_I)$ with support in $\Omega \Wc_I \cdot z_{0,I}$, and all
$z\in Z$, $t\ge 0$.
\item \label{lemma match2b}Let $p_0$ denote the $L^2$-norm on $L^2(Z)$.  Then for every $k\in\N_0$ there exists a constant
$C=C(\Omega, k)>0$ such that
\begin{equation}\label{approximately unitary} p_{0;k}(\Phi(f_t)) \leq C p_{0;k}(\Phi(f))\end{equation}
for all
$f\in C_c^\infty(Z_I)$ with support in $\Omega \Wc_I \cdot z_{0,I}$ and $t\ge 0$.
\end{enumerate}

\end{lemma}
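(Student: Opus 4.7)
The strategy is to exploit that $\Phi$ is built from the $Q$-equivariant matching of the open $Q$-orbits $Qw\cdot z_0\simeq Q/(L\cap H)\simeq Qw_I\cdot z_{0,I}$; in particular, for any $u\in\qf$ the operator $L_u$ commutes with $\Phi$ exactly. This reduces part (a) to the case $u\in\oline\uf$, and further (by PBW for $\gf=\qf\oplus\oline\uf$) to the degree-one generators $u=X_{-\alpha}$ with $\alpha\in\Sigma_\uf$. Higher-degree $u$ will be handled by induction on $\deg u$.

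For the degree-one case I will use the structural identities $X_{-\alpha}+T(X_{-\alpha})\in\hf$ and $X_{-\alpha}+T_I(X_{-\alpha})\in\hf_I$ from \eqref{eq-hi1}-\eqref{eq-hi2} to realize $L_{X_{-\alpha}}$ as an explicit $\qf$-derivative modulo the two different stabilizers. Writing $Y:=\Ad(q^{-1})X_{-\alpha}$ and decomposing $\exp(-sX_{-\alpha})q=q\cdot q'(s)h'(s)$ using $H$ (resp.\ $q\cdot q_I'(s)h_I'(s)$ using $H_I$), a first-order expansion gives
$$
q'(0)'-q_I'(0)' \;=\; (T-T_I)(Y_{\oline n}) \;=\; \sum_{\alpha+\beta\notin\N_0[I]} c_\alpha(q)\, X_{\alpha,\beta},
$$
with coefficients smooth and bounded on $\Omega$. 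Translating to the right twist $f_t=a_t^\rho R(a_t^{-1})f$ at the matched point $qa_t^{-1}\cdot z_{0,I}$ yields the pointwise identity
$$
[\Phi(L_{X_{-\alpha}}f_t)-L_{X_{-\alpha}}\Phi(f_t)](q\cdot z_0) \;=\; a_t^\rho\, R_{\Ad(a_t)(T-T_I)(Y_{\oline n})}\, f(qa_t^{-1}\cdot z_{0,I}).
$$
Since $a_t\in A_I$ acts trivially on $\N_0[I]$-weights (so the $\alpha+\beta\in\N_0[I]$ contributions cancel modulo $\hf_I$) and acts by characters $a_t^{\alpha+\beta}$ with $(\alpha+\beta)(X)<0$ strictly on $\cf_I^{--}$ for $\alpha+\beta\notin\N_0[I]$, the surviving weights are dominated by $\max_{\sigma\in S\setminus I}a_t^\sigma$ up to a constant. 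Expressing the result back as left-derivatives of $\Phi(f_t)$ yields the bound with $u_j=X_{\alpha,\beta}\in\U(\qf)$ of degree one.

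General $w\in\Wc$ is handled by writing $w=\tilde t h_0$ with $\tilde t\in T_Z$, $h_0\in\uH$; conjugation by $\Ad(\tilde t)$ contributes only unit-modulus multipliers $\tilde t^{\alpha+\beta}$ (as $T_Z\subset\exp(i\af_H^\perp)$), absorbed into $C$. For higher-degree $u$, expand $u=\sum u_i''u_i'$ with $u_i''\in\U(\oline\uf),\ u_i'\in\U(\qf)$ via PBW, push $u_i'$ through $\Phi$ at no cost, and iterate the degree-one bound on each factor of $u_i''$; the decay factor $\max_\sigma a_t^\sigma$ is retained at every step and the generated $u_j$'s stay in $\U(\qf)$ with $\deg u_j\le\deg u$.

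For part (b), Lemma \ref{lemma ZI-int} implies unitarity of $f\mapsto f_t$ on $L^2(Z_I)$, and the identity $L_u f_t=(L_u f)_t$ gives $\|\Phi(L_u f_t)\|_{L^2(Z)}=\|L_u f\|_{L^2(Z_I)}\le p_{0;k}(\Phi(f))$ whenever $\deg u\le k$. Squaring the pointwise bound from (a), integrating over $Z$, and summing over $\deg u\le k$ produces a self-bounding inequality
$$
p_{0;k}(\Phi(f_t))^2 \;\le\; A\, p_{0;k}(\Phi(f))^2 + B\Bigl(\max_{\sigma\in S\setminus I} a_t^\sigma\Bigr)^2 p_{0;k}(\Phi(f_t))^2.
$$
For $t$ large enough that $B(\max_\sigma a_t^\sigma)^2<1/2$ this rearranges to the desired bound; for $t$ in the complementary compact range a continuity argument in $t$ supplies a uniform constant. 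The main obstacle is the combinatorial bookkeeping in part (a): verifying that the $\N_0[I]$-components cancel precisely (using $a_t^{\alpha+\beta}=1$ for $\alpha+\beta\in\N_0[I]$), that the induction on $\deg u$ preserves both the degree bound and the single decay factor $\max_\sigma a_t^\sigma$ without cumulative blow-up, and that all estimates remain uniform in $q\in\Omega$ and $w\in\Wc$.
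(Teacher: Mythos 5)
Your plan for part (a) follows essentially the same route as the paper: exploit $Q$-equivariance of $\Phi$ to isolate $\oline\uf$-directions, use the structural relations $X_{-\alpha}+T(X_{-\alpha})\in\hf$ versus $X_{-\alpha}+T_I(X_{-\alpha})\in\hf_I$ so that the $\N_0[I]$-weight components cancel, extract the decay $a_t^{\alpha+\beta}$ for $\alpha+\beta\notin\N_0[I]$ and $X\in\cf_I^{--}$, and induct on $\deg u$. Two cautions. First, your displayed identity should be evaluated at $q a_t\cdot z_0$, not $q\cdot z_0$: with $\supp f\subset\Omega\cdot z_{0,I}$ one has $\supp\Phi(f_t)\subset\Omega a_t\cdot z_0$, so at $q\cdot z_0$ both sides vanish for large $t$. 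Second, the PBW-first reduction to $\U(\oline\uf)$ is not a true reduction: after one degree-one step the error already involves general $\qf$-derivatives of $\Phi(f_t)$, so the induction still has to run through $\U(\gf)$ and one must re-decompose via $\gf=\Ad(q)\oline\uf\oplus\qf$ at each stage (as the paper does in \eqref{Lu2} and \eqref{Lu}), carrying coefficients $\sum_\mu c_\mu(q)a_t^\mu$ and checking that exactly one decay factor survives and $\deg u_j\le\deg u$ is preserved. You flag this as the main obstacle, which is fair; it is where the actual work lies.

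For part (b) your route is genuinely different from the paper's and unnecessarily roundabout. You invoke part (a), a self-bounding inequality, and a continuity argument for bounded $t$. This requires an intermediate estimate $\|L_u f\|_{L^2(Z_I)}\le C\,p_{0;k}(\Phi(f))$ which you state without justification; it is \emph{not} immediate from unitarity for $u\in\U(\gf)\setminus\U(\qf)$ and in fact requires re-converting $L_u f$ to $\qf$-derivatives over the compact $\Omega$ -- essentially a $t$-independent re-run of (a). The paper avoids all of this: by \eqref{Lu}, $L_u\Phi(f_t)$ at $qa_t\cdot z_0$ is a combination of $\qf$-derivatives $L_{u_j}\Phi(f_t)$ with coefficients $\sum_\mu c_\mu(q)a_t^\mu$, $\mu\in\la S\ra$, which are uniformly bounded for \emph{all} $t\ge0$ because $a_t\in A_Z^-$ gives $|a_t^\mu|\le 1$; then $Q$-equivariance of $\Phi$, \eqref{match equivariant}, and unitarity of $f\mapsto f_t$ give the bound directly, with no self-bounding and no compactness-in-$t$ step. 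Your version can be made to work, but you must supply the missing intermediate inequality and the continuity argument, both of which the direct approach renders superfluous.
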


\begin{proof}
Since the map
$$ \Phi: C_c^\infty (Q\Wc_I\cdot z_{0,I})\to C_c^\infty(QW\cdot z_0), \ \ f\mapsto \Phi(f)$$
is $Q$-equivariant we have
\begin{equation}
\label{Y-inv} \Phi(L_Yf)= L_Y\Phi(f) .
\end{equation}
for all $Y\in\qf$.

For simplicity we consider the case $\supp f\subset \Omega\cdot z_{0,I}$.
We first calculate $L_X(\Phi(f_t))(qa_t \cdot z_0)$
and $\Phi(L_X (f_t))(qa_t \cdot z_0)$ for $X\in\gf$.

\par For that we recall that $\gf=\oline \uf+\qf$ is a direct sum. More generally for all
$q\in Q$ the sum $\gf=\Ad(q) \oline \uf+\qf$ is direct.
Accordingly we can decompose any $X\in \gf$ as
$$ X=\sum_{\alpha, k}  c_{\alpha, k} (q) \Ad(q)X_{-\alpha}^k  + \sum_j  d_j(q) X_j$$
where $(X_{-\alpha}^k)_k$ is a basis of $\gf^{-\alpha}$, $\alpha\in \Sigma_\uf$,
and $(X_j)_j$ is a basis of $\qf$.   The coefficients $c_{\alpha, k}(q), d_j(q)\in \R$ depend smoothly on $q$.

\par Recall that
$X_{-\alpha}^k+ \sum_\beta X_{\alpha, \beta}^k\in \hf $ by \eqref{eq-hi2} with $I=S$.
Thus we get for every smooth function $F$ on $Z$ and every $q\in Q$, $a\in A_Z$ that

\begin{equation*}
L_X F(qa\cdot z_0)=\sum_j d_j(q)  L_{X_j}F(qa\cdot z_0)
-\sum_{\alpha,\beta, k} c_{\alpha, k}(q) a^{\alpha+\beta} L_{\Ad(q) X_{\alpha,\beta}^k} F(qa\cdot z_0)\,.
\end{equation*}
By expanding each $\Ad(q) X_{\alpha,\beta}^k$ in terms of the $X_j$
we can rephrase this identity as
\begin{equation}\label{LYa}
L_X F(qa\cdot z_0)=\sum_j \Big[ d_j(q)
-\sum_{\alpha,\beta} c_{j,\alpha,\beta}(q) a^{\alpha+\beta}\Big]L_{X_j} F(qa\cdot z_0)
\end{equation}
with coefficients $c_{j,\alpha,\beta}$ depending smoothly on $q$.

On the other hand by \eqref{eq-hi2} we also have
$X_{-\alpha}^k+ \sum_{\alpha +\beta \in \la I\ra} X_{\alpha, \beta}^k\in \hf_I$ which
then similarly yields
for every smooth function $f$ on $Z_I$

\begin{equation*}
L_X f(qa\cdot z_{0,I})=\sum_j \Big[ d_j(q)
-\sum_{\alpha,\beta\atop \alpha +\beta \in \la I\ra}
c_{j,\alpha,\beta}(q) a^{\alpha+\beta} \Big]  L_{X_j} f(qa\cdot z_{0,I})
\end{equation*}
with exactly the same coefficients as before, but for fewer $\alpha$ and $\beta$.
We apply $\Phi$ to this equation with $f$ replaced by $f_t$.
With \eqref{Y-inv} this gives

\begin{equation}\label{LYb}
\Phi(L_X f_t)(qa\cdot z_{0})=\sum_j \Big[d_j(q)
 -\sum_{\alpha,\beta\atop \alpha +\beta \in \la I\ra}
c_{j,\alpha,\beta}(q) a^{\alpha+\beta}  \Big] L_{X_j} (\Phi(f_t))(qa\cdot z_{0})\, .
\end{equation}

From this equation we subtract \eqref{LYa} with $F=\Phi(f_t)$. With $a=a_t$
we obtain
\begin{equation}\label{LYc}
[\Phi(L_X (f_t))-L_X (\Phi(f_t))] (qa_t \cdot z_0)=
 \sum_j c_j(q, t)[L_{X_j} (\Phi(f_t))(qa_t\cdot z_0)]
\end{equation}
with coefficients $c_j(q,t)$, each being
a linear combination $\sum_\mu c_\mu(q) a_t^{\mu}$
of functions $a_t^\mu$ with $\mu\in \la S\ra\setminus \la I\ra$,
and with coefficients $c_\mu\in C^\infty(Q)$ supported
in $\Omega$.
In particular \eqref{comparison with L_u} follows for $\deg u=1$.

\par We now prove by induction on $\deg u$ that
\begin{equation}\label{Lu2}
[L_u (\Phi(f_t))- \Phi(L_u (f_t))] (qa_t\cdot z_0)=
 \sum_j c_j(q, t)[L_{u_j} (\Phi(f_t))(qa_t\cdot z_0)]
\end{equation}
for some $u_j\in \U(\gf)$ with $\deg u_j\leq \deg u$ and coefficients
$c_j(q,t)$ of the same type as required in \eqref{LYc}.
Note that the set of coefficients of this type is stable under
differentiation by elements from $\qf$.

Let $u=Xv$ with $X\in \gf$ and $\deg v<\deg u$.
We write
\begin{align*}
L_u (\Phi(f_t))&-\Phi(L_u (f_t))=\\
&L_X \big[ L_v (\Phi(f_t))-\Phi(L_v f_t)) \big]+\big[L_X \Phi(L_v f_t)-\Phi(L_X(L_v f_t)) \big].
\end{align*}
For the first term we apply \eqref{LYa} to $L_X$ in order to replace the
differentiation with $X\in\gf$ by differentiation with the $X_j\in\qf$.
We then apply
the induction hypothesis \eqref{Lu2} to
$\big[L_v (\Phi(f_t))-\Phi(L_v f_t))\big]$. After the differentiations by
$X_j$ we then obtain  for the first term
an expression of the required form.
For the second term we apply \eqref{LYc} with $f_t$
replaced by $(L_vf)_t=L_vf_t$. This gives
$$\sum_j c_j(q, t)[L_{X_j} (\Phi(L_v f_t))(qa_t\cdot z_0)].$$
Once more we apply the induction hypothesis to $v$, which allows us to replace this
expression by
$$\sum_j c_j(q, t)[L_{X_j} L_v (\Phi(f_t))(qa_t\cdot z_0)]$$
at the cost of additional terms. Since all these terms have the required form
this completes the proof of \eqref{Lu2}.

In order to complete the proof of \eqref{comparison with L_u} we need to
replace the $u_j\in\U(\gf)$ in \eqref{Lu2}
by elements from $\U(\qf)$. By induction on the degree, similar
to the one before, we obtain
from \eqref{LYa} for every $u\in\U(\gf)$ a set of elements
$u_1, \ldots, u_n \in \U(\qf)$ with $\deg u_j\leq \deg u$
such that
\begin{equation}\label{Lu}
L_u \Phi(f_t)(qa_t\cdot z_0)=\sum_j e_j(q,t)  L_{u_j}\Phi(f_t)(qa_t \cdot z_0),
\end{equation}
with coefficients $e_j(q,t)$, each being
a linear combination $\sum_\mu c_\mu(q) a_t^{\mu}$
of functions $a_t^\mu$ with $\mu\in \la S\ra$,
and with coefficients $c_\mu\in C^\infty(Q)$ supported
in $\Omega$.
This finally implies \eqref{comparison with L_u} and
with that the proof of \eqref{lemma match2a} has been completed.

\par For \eqref{lemma match2b} we note that \eqref{Lu} and \eqref{Y-inv}
imply:

$$p_0(L_u \Phi(f_t))\leq C_u  \sum_j p_0 (\Phi(L_{u_j} (f_t))).$$
If we denote by $q_0$ the $L^2$-norm on $L^2(Z_I)$
we obtain from \eqref{match equivariant} and \eqref{match2}

$$p_0(\Phi(L_{u_j} f_t))=q_0(L_{u_j} f)
=p_0(\Phi(L_{u_j}f))=p_0(L_{u_j}( \Phi(f))).$$
Combining this with the preceding inequality, \eqref{lemma match2b} follows.
\end{proof}

\subsection{Proof of Theorem \ref{matching comparison}}

\begin{proof}  In view of the consistency relations $w_I \cdot \eta^I= (\eta_{{\bf m}(w_I)})^I$ for all $w\in \Wc_I$
(see \eqref{consist}), the assertion readily reduces to the case where $\supp f \subset Q\cdot z_{0,I}$.
Let us assume that in the sequel.

 Recall that $a_t=\exp(tX)$ with $X\in\af_I^{--}$ fixed.
For simplicity
we assume again that $\supp f \subset \Omega\cdot z_{0,I}$,
and then $\supp \Phi(f_t)\subset \Omega a_t \cdot z_0$.

\par Recall the Laplace element $\Delta_1\in \U(\gf)$ from \eqref{def DeltaR}. 
In what follows we will apply the Sobolev inequality of Lemma \ref{lemma Sobolev norms inequality} to $V$,
and for this we observe (see Theorem \ref{lead lemma 2} below) that $V$ is unitarizable
since $(V,\eta)$ is tempered.

In the sequel we write $\la\cdot, \cdot\ra$ for $\la\cdot, \cdot\ra_{L^2(Z)}$
and $\la\cdot, \cdot\ra_I$ for $\la\cdot, \cdot\ra_{L^2(Z_I)}$  to save notation.
\par Let $n\in \N$, to be specified at the end of the proof. It will depend
on $s$, but apart from that only on the space $Z$.
We start with the identity $v = \Delta_1^n \Delta_1^{-n} v$ which yields

\begin{equation} \label{start id0}  \la \Phi(f_t), m_{v,\eta}\ra =  \la L_{\Delta_1^n} \Phi(f_t), m_{\Delta_1^{-n}v,\eta}\ra\,.
\end{equation}

Next we have to address the subtle point that $\Phi(L_{\Delta_1^n} f_t)$ does not necessarily equal
$L_{\Delta_1^n} \Phi(f_t)$. However from Lemma \ref{lemma match2}\eqref{lemma match2a} we obtain
constants $\epsilon>0$, $C>0$, and elements $u_j\in \U(\gf)$ of degree $\leq 2n$
such that for all $f$ supported by $\Omega\cdot z_{0,I}$
\begin{equation}\label{F-t-esti} | L_{\Delta_1^n} \Phi(f_t) (z)  - \Phi(L_{\Delta_1^n} f_t)(z) |\leq C e^{-t\e}
\sum_{j} |L_{u_j} (\Phi(f_t))(z)|   \qquad (z\in  Z, t\ge 0).\end{equation}
We rewrite (\ref{start id0}) as
\begin{equation} \label{start id1} \la \Phi(f_t), m_{v,\eta}\ra= \la \Phi(L_{\Delta_1^n} f_t)  , m_{\Delta_1^{-n}v,\eta}\ra   +R_1(t)\end{equation}
with $R_1(t) =  \la L_{\Delta_1^n} \Phi(f_t)   -\Phi(L_{\Delta_1^n} f_t), m_{\Delta_1^{-n}v,\eta}\ra$.
We claim, after shrinking $\e$ to ${\e\over 2}$, that  for $2n>n^*$ where $n^*$ is the even integer given
by \eqref{defi nstar}
\begin{equation}\tag{R1}\label{rem1}
|R_1(t)| \leq C e^{-t\e} p_{N;2n}(\Phi(f)) p_{-N; -2n + n^*} (m_{v,\eta})\,
\end{equation}
with a constant $C>0$ that depends on $\Omega$ and $n$, but not on $f$.
From (\ref{F-t-esti}) and Cauchy-Schwarz we obtain

\begin{equation} \label{R11}  |R_1(t)|\leq C e^{-t\e} p_{N; 2n} (\Phi(f_t)) p_{-N} (m_{\Delta_1^{-n}v,\eta})\, .\end{equation}

We obtain from \cite[Prop. 3.4 (2)]{KKSS2} that $|\w(z)|\leq C ( 1+t)$ for all $ z\in \supp \Phi(f_t)$ for a constant $C$ only depending on
 $\Omega$.  Hence it follows with Lemma \ref{lemma match2} \eqref{lemma match2b} that
 \begin{eqnarray} \notag p_{N;2n}(\Phi(f_t)) &\leq& C(1+t)^{N\over 2} p_{0;2n}(\Phi(f_t)) \\
 \label{R12}&\leq& C (1+t)^{N\over 2} p_{0;2n}(\Phi(f)) \leq C (1+t)^{N\over 2} p_{N;2n}(\Phi(f))\end{eqnarray}
with positive constants $C$ (possibly not equal to each other). Note that these constants
$C$ depend on $n$.

Furthermore it follows from \eqref{Sobolev norms inequality} that for $2n>n^*$
 \begin{equation} \label{R13} p_{-N}(m_{\Delta_1^{-n}v,\eta}) \leq C p_{-N; -2n+ n^*} (m_{v,\eta})\, . \end{equation}
If we insert (\ref{R12}) and (\ref{R13}) into (\ref{R11}) we obtain the claim (\ref{rem1}) by noting that $(1+t)^{N\over 2} e^{-{\e\over 2}t}$ is bounded
for all $t\ge 0$.

We move on with the identity (\ref{start id1})  and wish to analyze
$\la \Phi(L_{\Delta_1^n}f_t)  , m_{\Delta_1^{-n}v,\eta}\ra$ further.
By the definitions of $\Phi$ and $f_t$
\begin{eqnarray}\notag \la \Phi(L_{\Delta_1^n} f_t) , m_{\Delta_1^{-n}v,\eta}\ra &=&
\int_{Q/Q_H}
(L_{\Delta_1^n} f)(qa_t\cdot z_{0,I}) a_t^{\rho}\oline{m_{\Delta_1^{-n} v,\eta}(q\cdot z_0)} \ d(qQ_H) \\
\label{longeq} &=& \int_{Q/Q_H}
(L_{\Delta_1^n} f)(q\cdot z_{0,I}) a_t^{-\rho} \oline{m_{\Delta_1^{-n} v,\eta}(qa_t\cdot z_0)} \ d(qQ_H) \,.
\end{eqnarray}
Likewise
\begin{equation}\label{longeq2}
\la L_{\Delta_1^n} f_t , m_{\Delta_1^{-n}v,\eta^I}\ra_I
 = \int_{Q/Q_H}
(L_{\Delta_1^n} f)(q\cdot z_{0,I}) a_t^{-\rho} \oline{m_{\Delta_1^{-n} v,\eta^I}(qa_t\cdot z_{0,I})} \ d(qQ_H)\,.
\end{equation}

Next we wish to replace $m_{\Delta_1^{-n} v,\eta}$ by the constant term approximation
$m_{\Delta_1^{-n}v,\eta^I}$ via Theorem \ref{loc ct temp}.
We then obtain constants $\e>0, k\in \N$,  depending only on $Z$, and a constant
$C>0$ depending  also on $\Omega$ and $n$,
such that with $l:=k + n^*$ one has for all
$q\in \Omega$ and all $v\in V^\infty$
\begin{eqnarray} \notag | m_{\Delta_1^{-n} v,\eta}(qa_t\cdot z_0)  -  m_{\Delta_1^{-n}v,\eta^I} (qa_t \cdot z_{0,I})|
&\leq &  C a_t^{(1+\e)\rho} p_{-N;k} (m_{\Delta_1^{-n}v, \eta})\\
\label{longeq3} &\leq & C a_t^{(1+\e)\rho} p_{-N;l-2n} (m_{v, \eta})\, .\end{eqnarray}
In the passage to the second line of \eqref{longeq3} we used \eqref{R13}.

Now note that \eqref{match equivariant} implies
$$ \la (L_{\Delta_1^n} f)_t, m_{\Delta_1^{-n}v,\eta^I}\ra_I= \la f_t, m_{v,\eta^I}\ra_I,$$
and thus if we insert the bound
(\ref{longeq3}) into the difference between (\ref{longeq}) and \eqref{longeq2},
we obtain the identity

\begin{equation} \la \Phi(L_{\Delta_1^n} f_t) , m_{\Delta_1^{-n}v,\eta}\ra = \la f_t, m_{v,\eta^I}\ra_I +R_2(t)\end{equation}
with
\begin{equation} \label{R22}|R_2(t)|\leq C e^{-t\e}
p_{-N;l-2n} (m_{v,\eta})  \| L_{\Delta_1^n} f\|_{L^2(Z_I)}    \sqrt {\vol_{Z_I} (\Omega \cdot z_{0,I})}    \, .\end{equation}
Now, as in \eqref{Lu} we convert
derivatives,
$$L_{\Delta_1^n}f (q\cdot z_{0,I})= \sum  c_j(q) L_{u_j} f (q\cdot z_{0,I})$$
 with $u_j\in \U(\qf)$ of $\deg u_j \leq 2n$ and smooth coefficients $c_j$. Hence

$$\| L_{\Delta_1^n} f\|_{L^2(Z_I)} \leq C p_{0; 2n}(\Phi(f))\leq C p_{N;2n}(\Phi(f))$$
with constants $C$ depending only on $\Omega$.
Hence we obtain
\begin{equation}\tag{R2}  \label{rem2}|R_2(t)|\leq C e^{-t\e}
p_{-N;l-2n} (m_{v,\eta})  p_{N;2n} (\Phi(f)) \, . \end{equation}
Now the theorem follows from the two remainder estimates
\eqref{rem1} and \eqref{rem2}, by choosing the number $n$ such that
$m=2n \ge s+k+ n^*$.
\end{proof}

\subsection{Matching with respect to $\widetilde Z_I$} \label{subsection full match}
We conclude this section with a slight extension of the preceding results, when we consider instead of
$Z_I$ the union of all $G$-orbits in $\uZ_I(\R)$ which point to $Z$, i.e. the space
$\widetilde Z_I=\coprod_{\sc \in \sC_I} \coprod_{\st \in \sF_{I,\sc}} Z_{I,\sc, \st}$ from
\eqref{normal union} which gives rise to the full partition
$\Wc=\coprod_{\sc\in \sc_I}\coprod_{\st\in \sF_{I,\sc}} {\bf m}_{\sc,\st}(\Wc_{I,\sc})$
from \eqref{W partition}.

Observe that $f\in C_c^\infty (\widetilde Z_I)$ corresponds to a family $f=(f_{\sc,\st})_{\sc,\st}$ with
$f_{\sc,\st}\in C_c^\infty (Z_{I,\sc,\st})$ and $Z_{I,\sc,\st} =Z_{I,\sc}$ as homogeneous spaces.  Suppose now that
$\supp f_{\sc,\st} \subset Q w_{I,\sc}\cdot z_{0,I,\sc}\subset Z_{I,\sc}=Z_{I,\sc,\st}$ for all $\sc,\st$. With
\eqref{W partition} the function
$f$ can then be matched with a function $F=\Phi(f)\in C_c^\infty(Z)$ by requesting
$$F(q{\bf m}_{\sc,\st}(w_{I,\sc})\cdot z_0))=f_{\sc,\st}(q w_{I,\sc}\cdot z_{0,I,\sc})\qquad (q\in Q)\, .$$
Then Corollary \ref{main cor}
extends to all $f\in C_c^\infty(\widetilde Z_I)$ with $\supp f_{\sc,\st}\subset  Q\Wc_{I,\sc} \cdot z_{0,I,\sc}$,
and yields constants $C,\e>0$ such that
\begin{equation} \label{full L2} \| f \|^2_{L^2(\widetilde Z_I)} = \int_{\hat G}  \sum_{\sc,\st} \sH_{\pi,\sc,\st}^{I, \rm pre} ((f_{\sc,\st})_t) \ d\mu(\pi)  + R(t)\end{equation}
with $|R(t)| \leq  C e^{-\e t}$ for all $t\geq 0$. Here $\sH_{\pi,\sc,\st}^{I, \rm pre}$ refers to $\sH_\pi^{I, \rm pre} $
for $Z_I$ replaced by $Z_{I,\sc,\st}$; explicitly

\begin{equation} \label{H-c-t} \sH_{\pi,\sc,\st}^{I, \rm pre} (f_{\sc,\st})= \sum_{j=1}^{m_\pi}  \|\oline \pi(f_{\sc,\st})((\oline \eta_j)_{\sc,\st}^I) \|_{\Hc_\pi}^2
\qquad (f_{\sc,\st} \in C_c^\infty(Z_{I,\sc,\st}))\, .\end{equation}

\section{Induced Plancherel measures}
In this section we show that  the Plancherel measure of $L^2(Z_I)$ is induced from
the Plancherel measure of $L^2(Z)$ in a natural manner, see Theorem \ref{Plancherel induced}
below.  A consequence thereof is a certain variant of the  Maass-Selberg relations
as recorded in Theorem \ref{eta-I continuous}.  Statements and approach
are largely motivated by the reasoning in
Sakellaridis-Venkatesh \cite[Sect.~11.1-11.4] {SV}, which originates from ideas of Joseph
Bernstein.
The  main technical  ingredient is our remainder estimate
of Corollary~\ref{main cor}.

\bigskip
Given a point $[\pi]\in \hat G$ we denote by $\U_{[\pi]}$ the neighborhood filter of $[\pi]$ in $\hat G$.
Let $I\subset S$ and recall  from \eqref{Hermitian sum I-side}
the definition of the Hermitian form $\sH_\pi^{I, \rm pre}$.
Attached to the Plancherel measure $\mu$ we  define its $I$-support by

\begin{equation} \label{def supp mu I}  \supp^I (\mu):=\{ [\pi]\in \hat G\mid  (\forall U \in \U_{[\pi]})  \ \mu(\{ [\sigma] \in U: \sH_\sigma^{I,\rm pre} \neq 0\})>0\}\, .\end{equation}
We denote by $\mu^I$ the restriction of $\mu$ to $\supp^I(\mu)$.
In the sequel we let $(\pi,\Hc_\pi)$ be such that $[\pi]\in \supp^I(\mu)$. Define

\begin{equation} \label{def M-pi-I} \M_\pi^I :=\Span\{  a\cdot \eta^I:  \eta\in \M_\pi, a\in A_I\} \subset (\Hc_\pi^{-\infty})_{\rm temp}^{H_I}\end{equation}
where the latter inclusion is part of Theorem \ref{loc ct temp}.

The elements $\xi\in \M_\pi^I$ decompose into generalized eigenvectors for the
$A_I$-action,
\begin{equation} \label{decomp etaI}
\xi=\sum_{\lambda\in \E_\xi}\xi^\lambda,
\end{equation}
and we recall from  \eqref{exponents} that the generalized eigenvalues $\lambda$ satisfy
\begin{equation} \label{exponents normalized unitary}
\E_\xi\subset
(\rho- \W_\jf\cdot \chi_\pi)|_{\af_I} \cap (\rho|_{\af_I} +i\af_I^*)\, .\end{equation}
It will be seen later that
the  $A_I$-action is semisimple for almost all $\pi \in \supp^I(\mu)$.

\par Recall that the conjugation $\Hc_{\pi}^{-\infty}\to \Hc_{\oline \pi}^{-\infty}, \eta\mapsto \oline \eta$
is a $G$-equivariant isomorphism of topological vector spaces. The conjugation map induces
an antilinear $A_I$-equivariant isomorphism $\M_\pi^I\simeq \M_{\oline \pi}^I$.
In particular, $\M_\pi^I$ is semisimple if and only if
$\M_{\oline\pi}^I$ is semisimple.

\subsection{Averaging}
What follows is motivated by the techniques of \cite[Sect. 10]{SV}. Let $X\in \af_I^{--}$
and set $a_t=\exp(tX)$ as usual.  Throughout this section we let $(\pi, \Hc_\pi)$ be a
representation occurring in $\supp^I(\mu)$.  We recall the notion $f_t$ from \eqref{defi f_t}.

\begin{lemma}\label{averaging lemma} {\rm (Averaging Lemma)} Let $X\in \af_I^{--}$.  Then the following assertions hold:
\begin{enumerate}
\item \label{One I}Suppose that $\M_\pi^I$ is $X$-semisimple.
Then we have for all $f\in C_c^\infty(Z_I)$ and $\xi\in \M_\pi^I$ that
\begin{eqnarray}\notag
\lim_{n\to \infty} {1\over n}  \sum_{t=n+1}^{2n} \|\pi (f_t)\xi\|^2& =&
\sum_{\lambda \in \E_{\xi}}  \| \pi(f)\xi^\lambda\|^2
+ \\
\label{limit 1}
&+& 2 \re \sum_{\lambda\neq \lambda' \in \E_{\xi}\atop
(\lambda - \lambda')(X)\in 2\pi i\Z} \la \pi(f)\xi^\lambda, \pi(f)\xi^{\lambda'}\ra\, .\end{eqnarray}
In particular,  if
$(\lambda - \lambda')(X) \not \in 2\pi i  \Z$ for all
$\lambda, \lambda'\in  \E_{\xi}$ with $\lambda\neq \lambda'$, we have

\begin{equation}\label{limit 2} \lim_{n\to \infty} {1\over n}  \sum_{t=n+1}^{2n} \|\pi (f_t)\xi\|^2 =
\sum_{\lambda \in \E_{\xi}}  \| \pi(f)\xi^\lambda\|^2\, .\end{equation}
\item \label{Two II} Suppose that $\M_\pi^I$ is not $X$-semisimple.
Then for every $\xi\in\M_\pi^I$ which does not belong to the sum of eigenspaces for $X$ we have
\begin{equation}\label{limit 3}  \lim_{n\to \infty}
{1\over n} \sum_{t=n+1}^{2n}   \|\pi (f_t)\xi\|^2=\infty \end{equation}
for every $f \in C_c^\infty(Z_I)$ for which $\pi(f)|_{\M_{\pi}^I}$ is injective.
\end{enumerate}
\end{lemma}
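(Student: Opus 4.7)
The plan is to base both parts on a direct computation of $\pi(f_t)\xi$ in terms of $\pi(a_t)\xi$. Unwinding the definition of the mollifying map $\pi(f)\xi = \int_{G/H_I}f(gH_I)\pi(g)\xi\,d(gH_I)$, performing the substitution $gH_I \mapsto gH_I\cdot a_t$, and invoking the quasi-invariance of measure in Lemma \ref{lemma ZI-int} (which contributes a Jacobian $|a_t^{-2\rho}|$), together with $f_t(z) = a_t^{\rho}f(z\cdot a_t^{-1})$, one obtains the key identity
\begin{equation}\label{pft formula}
\pi(f_t)\xi \;=\; a_t^{-\rho}\,\pi(f)\bigl(\pi(a_t)\xi\bigr).
\end{equation}
Since $a_t$ normalizes $H_I$, the vector $\pi(a_t)\xi$ lies again in $\M_\pi^I$, and the map $t\mapsto \pi(a_t)\xi$ is precisely the action of $\exp(tX)$ in the finite-dimensional $A_I$-module $\M_\pi^I$.

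For (1), semisimplicity permits writing $\xi = \sum_{\lambda \in \E_\xi}\xi^\lambda$ with $\pi(a_t)\xi^\lambda = a_t^\lambda \xi^\lambda$, so \eqref{pft formula} gives $\pi(f_t)\xi = \sum_\lambda a_t^{\lambda-\rho}\pi(f)\xi^\lambda$. By \eqref{exponents normalized unitary} each $\lambda-\rho$ lies in $i\af_I^*$, so the scalars $a_t^{\lambda-\rho}$ are unimodular and $\overline{a_t^{\lambda'-\rho}} = a_t^{-(\lambda'-\rho)}$. Expanding the squared norm yields
$$\|\pi(f_t)\xi\|^2 \;=\; \sum_{\lambda,\lambda'\in\E_\xi} a_t^{\lambda-\lambda'}\,\langle \pi(f)\xi^\lambda, \pi(f)\xi^{\lambda'}\rangle.$$
The elementary identity
$\lim_{n\to\infty}\frac1n\sum_{t=n+1}^{2n} a_t^{\lambda-\lambda'} = \mathbf{1}_{(\lambda-\lambda')(X)\in 2\pi i\Z}$
(with the nonresonant case handled by the geometric-series bound $|\sum_{t=n+1}^{2n} e^{t\alpha}|\leq 2/|1-e^{\alpha}|$ for $\alpha\in i\R\setminus 2\pi i\Z$) then yields \eqref{limit 1} after pairing the resonant off-diagonal contributions $(\lambda,\lambda')$ and $(\lambda',\lambda)$ into twice the real part.

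For (2), pick $\xi$ not contained in the sum of $A_I$-eigenspaces and decompose $\xi = \sum_\lambda \xi^\lambda$ into generalized eigenvectors. Writing $X = \lambda(X)\cdot\id + N_\lambda$ with $N_\lambda$ nilpotent on each generalized eigenspace gives $\pi(a_t)\xi^\lambda = a_t^\lambda \sum_{k\ge 0}\frac{t^k}{k!}N_\lambda^k\xi^\lambda$. Substituting into \eqref{pft formula} and collecting powers of $t$ yields
$$\pi(f_t)\xi \;=\; \sum_{k=0}^{K} t^k\, w_k(t),\qquad w_k(t):=\frac{1}{k!}\sum_\lambda a_t^{\lambda-\rho}\,\pi(f) N_\lambda^k\xi^\lambda,$$
where $K:=\max_\lambda\max\{k: N_\lambda^k\xi^\lambda \neq 0\}\ge 1$ by the hypothesis on $\xi$. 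Each $w_k(t)$ is a bounded vector-valued trigonometric polynomial in $t$, hence
$\|\pi(f_t)\xi\|^2 = t^{2K}\|w_K(t)\|^2 + O(t^{2K-1})$ with the remainder uniformly bounded in $t$. Consequently
$$\frac1n\sum_{t=n+1}^{2n}\|\pi(f_t)\xi\|^2 \;\ge\; n^{2K}\cdot \frac1n\sum_{t=n+1}^{2n}\|w_K(t)\|^2 + O(n^{2K-1}).$$

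The main obstacle is to prove that the $(K\text{-th})$ time-average $C_* := \lim_{n\to\infty}\frac1n\sum_{t=n+1}^{2n}\|w_K(t)\|^2$ is strictly positive, after which \eqref{limit 3} follows at once from the lower bound above. Setting $v_\lambda := \pi(f)N_\lambda^K\xi^\lambda$ and grouping the $\lambda$'s by the equivalence $\lambda\sim\lambda' \iff (\lambda-\lambda')(X) \in 2\pi i\Z$, the same exponential-average identity used in (1) produces
$$C_* \;=\; \frac{1}{(K!)^2}\sum_{[\lambda_0]}\Big\|\sum_{\lambda\in[\lambda_0]}v_\lambda\Big\|^2.$$
This is precisely the step at which the hypothesis that $\pi(f)|_{\M_\pi^I}$ is injective enters: if every class sum vanished, summing over all classes would give $\pi(f)\bigl(\sum_\lambda N_\lambda^K\xi^\lambda\bigr) = 0$, whence $\sum_\lambda N_\lambda^K\xi^\lambda = 0$ by injectivity; but distinct generalized eigenspaces are linearly independent and $N_\lambda^K\xi^\lambda \neq 0$ for any $\lambda$ attaining the maximum $K_\lambda = K$, a contradiction. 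Hence $C_*>0$ and the proof is complete.
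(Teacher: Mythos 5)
Your proof is correct and takes essentially the same approach as the paper: derive $\pi(f_t)\xi=\sum_\lambda a_t^{\lambda-\rho}\pi(f)\xi^\lambda$ (resp.\ its generalized-eigenvector refinement), expand the squared norm, and evaluate the ergodic average of the unimodular exponentials $a_t^{\lambda-\lambda'}$ over $t\in\{n+1,\dots,2n\}$. The only difference is in part (2): the paper argues pointwise, observing that $\xi_t^{\rm top}=\sum_\lambda a_t^{\lambda-\rho}\xi^{\lambda,m}$ stays in a compact set away from $0$ in the finite-dimensional space $\M_\pi^I$ (by linear independence of the $\xi^{\lambda,m}$ and unimodularity of the coefficients), then uses injectivity of $\pi(f)|_{\M_\pi^I}$ to bound $\|\pi(f)\xi_t^{\rm top}\|$ below by a constant, which gives $\|\pi(f_t)\xi\|^2\gtrsim t^{2m}$ directly; whereas you compute the exact time-average $C_*$ of the leading-coefficient norm and invoke injectivity only at the very end to see $C_*>0$ — both are valid, and the paper's version avoids the explicit class-sum computation.
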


\begin{proof}  \eqref{One I} Assume $\M_{\pi}^I$ is diagonalizable for $X$
and let $\xi\in \M_ \pi^I$. It then follows from
\eqref{decomp etaI} and
\eqref{exponents normalized unitary}
that
$$\pi(a_t)  \xi= \sum_{\lambda \in \rho +i\af_I^*}
a_t^{\lambda} \xi^\lambda\, .$$
In particular we obtain from Lemma \ref{lemma ZI-int} for all $f\in C_c^\infty(Z_I)$ and $t\ge 0$ that
$$ \pi (f_t)  \xi= \sum_{\lambda \in \rho + i\af^*}  a_t^{\lambda -\rho}  \pi(f)
\xi^\lambda$$
and thus
\begin{eqnarray*} \|\pi(f_t)\xi\|^2 &=&
\| \sum_{\lambda\in \rho +i\af_I^*}  a_t^{\lambda-\rho} \pi(f)\xi^\lambda\|^2\\
&=& \sum_{\lambda\in \rho+ i\af_I^*}  \|\pi(f)\xi^\lambda\|^2  + 2 \re \sum_{
\lambda, \lambda'\in  \E_{\xi}\atop
\lambda\neq \lambda'}
a_t^{\lambda - \lambda' } \la \pi(f)\xi^\lambda, \pi(f)\xi^{\lambda'}\ra\, .\end{eqnarray*}
Now for any $\gamma\in \R\bs 2\pi\Z$ we have $\lim_{n\to \infty} {1\over n}\sum_{t=n+1}^{2n}
e^{i  t\gamma}=0$
and \eqref{One I} follows.
\par For \eqref{Two II} we remark that
 with the mentioned assumption on $\xi$ we have for some $m\in\N$ and each $\lambda\in \rho + i\af^*$ that
\begin{equation}\label{apply f}  \pi(f_t)\xi^\lambda= a_t^{\lambda-\rho} \pi(f) \sum_{j=0}^m \frac{t^j}{j!}\xi^{\lambda,j}\end{equation}
where $\xi^{\lambda,0}=\xi^\lambda,\xi^{\lambda,1},\dots,\xi^{\lambda,m}\in \M_ \pi^I$.
Moreover we can assume $\xi^{\lambda,  m}\neq 0$
for some $\lambda$. Now \eqref{limit 3} becomes a simple matter
on polynomial  asymptotics:  set
$$ \xi_t^{\rm top}:= \sum_\lambda a_t^{\lambda -\rho} \xi^{\lambda, m} \qquad (t\geq 0)$$
and note that $|a_t^{\lambda- \rho}|=1$ implies that the vectors $\xi_t^{\rm top}$, $t\geq 0$, stay away from $0$ in the finite dimensional
space $\M_\pi^I$. Thus we obtain from \eqref{apply f} and the injectivity of $\pi(f)|_{\M_\pi^I}$ that
$$ \|\pi(f_t) \xi\|\sim t^{m}\|\pi(f)\xi_t^{\rm top}\| \, $$
from which \eqref{limit 3} follows.
\end{proof}

Suppose that $X\in \af_I^{--}$ is such that
$(\lambda - \lambda')(X) \not \in 2\pi i  \Z$ for all
$\lambda, \lambda'\in  \E_\xi$, $\xi \in \M_{\oline \pi}^I$,  with $\lambda\neq \lambda'$. Then we obtain from \eqref{Hermitian sum I-side} and
Lemma \ref{averaging lemma} that

\begin{equation} \label{LIMIT HI-PRE}
\lim_{n\to \infty}
{1\over n} \sum_{t=n+1}^{2n}  \sH_\pi^{I, \rm pre}(f_t) = \begin{cases}
\sum\limits_{j=1}^{m_\pi}\sum\limits_{\lambda\in \E_{\oline \eta_j^I}}  \|\oline \pi (f) \oline \eta_j^{I,\lambda}\|^2 &
\text{if  $\M_\pi^I$ is $X$-semisimple}\\
 \infty & \text{if otherwise and}\\
 & \text{$\oline \pi(f)|_{\M_{\oline \pi}^I}$ is injective}
\end{cases}
\end{equation}
where $\eta_1, \ldots, \eta_{m_\pi}$ is an orthonormal basis for $\M_\pi$.

This motivates the following definition of $\sH_\pi^I$. In case
$\M_\pi^I$ is a semisimple $A_I$-module we set

\begin{equation} \label{I-Hermitian form} \sH_\pi^I (f) :=\sum_{j=1}^{m_\pi}
\sum_{\lambda\in \E_{\oline \eta_j^I}}  \|\oline \pi (f) \oline \eta_j^{I,\lambda}\|^2 \qquad (f \in C_c^\infty(Z_I))\, ,\end{equation}
and otherwise $\sH_\pi^I:=0$.   Observe that   the Hermitian form
$\sH_\pi^I$ is left $G$-invariant, and
normalized-right $A_I$-invariant.
Set

$$ \supp_{ \rm fin}^I(\mu):=\{ [\pi] \in \supp^I(\mu)\mid \M_\pi^I \ \text{is $\af_I$-semisimple}\}\, .$$

\subsubsection{Mollifying on multiplicity spaces}
Throughout this subsection we let $V$ be an irreducible Harish-Chandra module and $V^\infty$ its unique
$SF$-completion.
Let $\Sc(G)$ be the Schwartz algebra of rapidly decreasing
functions on $G$ (see \cite{BK}) and recall the following variant
of the Casselman-Wallach theorem: if
$0\neq v\in V$, then
\begin{equation}\label{cas-wa}\Sc(G)*v =V^\infty\end{equation}
by \cite[Th. 8.1]{BK}, where for $f\in \Sc(G)$ and $v\in V^\infty$ we use the standard notation

$$ f * v = \int_G f(g) g\cdot v \ dg$$
with the right hand side being a convergent integral in the Fr\'echet space $V^\infty$.
Assertion \eqref{cas-wa} can be strengthened further as follows. Let $\widetilde V$ be the Harish-Chandra module dual to $V$.
Then we first record the mollifying property $\Sc(G)* \widetilde V^{-\infty} \subset V^\infty$  which in view
of \eqref{cas-wa} strengthens to
\begin{equation}\label{cas-wa2}\Sc(G)*\eta =V^\infty   \qquad (0\neq \eta \in\widetilde V^{-\infty})\end{equation}
In fact, choose first a left $K$-finite function $f\in C_c^\infty(G)$ such that $0\neq f*\eta \in V$ and then apply
\eqref{cas-wa} with  $\Sc(G)*C_c^\infty(G)\subset \Sc(G)$.
Let now $H\subset G$ be any closed unimodular subgroup of $G$. Then we define $\Sc(G/H)$ as the
space of right $H$-averages of functions $F\in \Sc(G)$, i.e. $f \in \Sc(G/H)$ if and only if there exists
an $F\in \Sc(G)$ such that
$$f(gH)=F^H(g):= \int_H  F(gh) \ dh\qquad (g\in G)\, .$$
With that we can define for $\eta \in (\widetilde V^{-\infty})^H$ and $f=F^H \in \Sc(G/H)$:
$$f*\eta:= F*\eta$$
as the right hand side of this equation is independent of the particular lift $F$ of $f$.
Then we have the following generalization of \eqref{cas-wa2}.

\begin{lemma} \label{lem molly} Let $H\subset G$ be a closed unimodular subgroup and let
$E\subset (\widetilde V^{-\infty})^H$ be a finite dimensional subspace. Then the
map
$$\Phi_E:  \Sc(G/H) \to \Hom(E, V^\infty), \ \ f\mapsto \left(\eta\mapsto f*\eta\right)$$
is continuous and surjective. Moreover $E$ is uniquely determined by $\ker \Phi_E$.
\end{lemma}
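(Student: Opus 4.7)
The plan will be to prove continuity first (routine), then attack surjectivity in three stages, and finally deduce the uniqueness of $E$ from surjectivity. Throughout, I fix a basis $\eta_1,\ldots,\eta_n$ of $E$ and identify $\Hom(E,V^\infty)\simeq (V^\infty)^n$, so that $\Phi_E(f)=(f*\eta_1,\ldots,f*\eta_n)$. Continuity then reduces, via the Fréchet quotient topology on $\Sc(G/H)$ coming from $\Sc(G)\twoheadrightarrow\Sc(G/H)$, $F\mapsto F^H$, to continuity of $F\mapsto(F*\eta_i)_i\colon\Sc(G)\to(V^\infty)^n$, which is standard for convolution of a Schwartz function against a distribution vector.

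For surjectivity, set $W=\im\Phi_E\subset (V^\infty)^n$; it is $G$-invariant under the diagonal action since $(L_gf)*\eta_i=g\cdot(f*\eta_i)$. The first step is to show $W$ is dense. The closure $\overline W$ is a closed $G$-invariant subspace of $(V^\infty)^n$, and by the Casselman--Wallach correspondence it is determined by a $(\gf,K)$-submodule of $V^n=V\otimes\C^n$; by Schur's lemma (using irreducibility of $V$) these submodules are exactly $V\otimes U$ for subspaces $U\subset\C^n$, and therefore $\overline W=V^\infty\otimes U$ for some $U\subset\C^n$. To rule out $U\neq\C^n$, I take a nonzero covector $(c_1,\ldots,c_n)\in\C^n$ vanishing on $U$; the induced continuous map $(v_i)_i\mapsto\sum c_iv_i\colon(V^\infty)^n\to V^\infty$ then vanishes on $\overline W\supset W$, giving $f*\eta_0=0$ for all $f\in\Sc(G/H)$, where $\eta_0:=\sum c_i\eta_i\neq 0$. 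By \eqref{cas-wa2} there exists $F\in\Sc(G)$ with $F*\eta_0\neq 0$, and then $f:=F^H$ gives $f*\eta_0=F*\eta_0\neq 0$, a contradiction. Hence $\overline W=(V^\infty)^n$.

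The second step upgrades density to $V^n\subset W$ via $K$-isotypic projections. For each $K$-type $\tau$ the projector $P_\tau$ acts continuously on both $\Sc(G/H)$ (via $f\mapsto\int_K(\dim\tau)\overline{\chi_\tau(k)}L_kf\,dk$) and $(V^\infty)^n$, and $\Phi_E\circ P_\tau=P_\tau\circ\Phi_E$ by $K$-equivariance. Continuity of $P_\tau$ together with $\overline W=(V^\infty)^n$ yields $V^n[\tau]=P_\tau\overline W\subset\overline{P_\tau W}$; since $V^n[\tau]$ is finite-dimensional this forces $P_\tau W=V^n[\tau]$, and combined with the commutation this gives $\Phi_E(\Sc(G/H)[\tau])=V^n[\tau]\subset W$ for every $\tau$, hence $V^n\subset W$. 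The third step observes that $W$ is stable under $\pi(\Sc(G))$ thanks to $\pi(F)\Phi_E(f_0)=\Phi_E(F*f_0)$ (using $\Sc(G)*\Sc(G/H)\subset\Sc(G/H)$); combined with $\Sc(G)*V=V^\infty$ (which follows from \eqref{cas-wa} applied to any nonzero $v\in V$), this yields $(V^\infty)^n=\pi(\Sc(G))V^n\subset W$, so $W=(V^\infty)^n$, completing surjectivity.

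Uniqueness of $E$ is then immediate: if $\eta\in E_1\setminus E_2$ but $\ker\Phi_{E_1}=\ker\Phi_{E_2}$, applying surjectivity to $E_2+\C\eta$ produces $f\in\Sc(G/H)$ with $f*\xi=0$ for all $\xi\in E_2$ yet $f*\eta\neq 0$, giving $f\in\ker\Phi_{E_2}\setminus\ker\Phi_{E_1}$, a contradiction. The main obstacle will be the density step, which rests on the Casselman--Wallach classification of closed $G$-invariant subspaces of $(V^\infty)^n$; once that is granted, the $K$-finite reduction and the invocation of \eqref{cas-wa} are clean finite-dimensional and algebraic manipulations that avoid any appeal to a strong Dixmier--Malliavin decomposition.
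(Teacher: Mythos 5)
Your argument is correct, but for surjectivity it takes a genuinely different route from the paper. The paper never passes to closures: it first reduces to $H=\{\1\}$, observes that $\operatorname{im}\Phi_E$ is an $\Sc(G)$-submodule of $\Hom(E,V^\infty)\simeq E^*\otimes V^\infty$, uses the algebraic simplicity of $V^\infty$ as an $\Sc(G)$-module (the strong form of \eqref{cas-wa}, from \cite[Th. 8.1]{BK}) together with Schur's lemma to conclude that a proper submodule must have the form $F^\perp\otimes V^\infty$ for some $0\neq F\subset E$, and then derives the contradiction from \eqref{cas-wa2} exactly as you do with your covector $\eta_0$. You instead prove density of the image via the Casselman--Wallach classification of closed $G$-invariant subspaces of $(V^\infty)^n$, then place the Harish-Chandra module $V^n$ inside the image by $K$-isotypic projections (using that $\Phi_E$ intertwines the $K$-actions and that $V^n[\tau]$ is finite dimensional), and finally saturate with the $\Sc(G)$-action; this trades the paper's brevity for the advantage that the Casselman--Wallach mollification statement \eqref{cas-wa} is invoked only on $K$-finite vectors, at the cost of a topological detour (classification of closed submodules, convergence of the isotypic projectors on $\Sc(G/H)$ and on $(V^\infty)^n$). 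Working with general $H$ throughout is also fine, since $f*\eta$ is by definition computed from any lift of $f$ to $\Sc(G)$ and is lift-independent for $H$-invariant $\eta$. Your uniqueness argument differs as well: the paper reads off $\dim E$ from the multiplicity of $V^\infty$ in $\Sc(G)/\ker\Phi_E\simeq E^*\otimes V^\infty$, whereas you apply surjectivity to $E_2+\C\eta$ for $\eta\in E_1\setminus E_2$; both are valid, and yours is arguably the more elementary.
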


\begin{proof} First  of all it is clear that $\Phi_E$ is continuous.  Next we observe that the statement reduces
to $H=\{\1\}$ which we will assume from now on.

Notice that $\Phi_E$ is an $\Sc(G)$-module morphism
with $\Sc(G)$ acting on $\Hom(E, V^\infty)$ on the target $V^\infty$, i.e. for $f\in \Sc(G)$ and $T\in
\Hom(E, V^\infty)$ we set $(f*T)(\eta):= f* (T(\eta))$.

\par  Suppose that $\Phi_E$ were not surjective.
Then $\operatorname{im}\Phi_E\subset  \Hom(E, V^\infty)$ would be a proper $\Sc(G)$-invariant subspace.
Upon the identification $ \Hom(E, V^\infty)= E^* \otimes V^\infty$ we then derive from the fact that
$V^\infty$ is an algebraically simple module for $\Sc(G)$  (a consequence of \eqref{cas-wa}) that
$\operatorname{im}\Phi_E=  F^\perp \otimes V^\infty$ for a subspace $0\neq F\subset E$. This then means
$$\operatorname{im}\Phi_E=\{ T\in \Hom(E, V^\infty)\mid   T|_F=0\}$$
which contradicts the fact that $\Sc(G)*F=V^\infty\neq \{0\}$ as $F\neq 0$.
\par Finally from $\Sc(G)/\ker \Phi_E \simeq E^* \otimes V^\infty$ we obtain the asserted uniqueness.  Indeed,
suppose you have $\ker \Phi_{E_1} = \ker \Phi_{E_2}$.
Then $\ker\Phi_{E_i}= \ker \Phi_{E_1+E_2}$ for $i=1,2$ and thus $\dim (E_1+E_2)^* =\dim E_i^*$ for $i=1,2$,  i.e. $E_1=E_2$.
\end{proof}

We apply Lemma \ref{lem molly} to the Hermitian forms $\sH_\pi^I$ of \eqref{I-Hermitian form}
as follows. Let $E=\M_{\oline \pi}^I$.

\begin{cor} Let $[\pi]\in \supp_{ \rm fin}^I(\mu)$.
There exists a unique Hermitian form $\sH$ on
$$\Hom(\M_{\oline \pi}^I, \Hc_\pi^\infty)\simeq \Hc_\pi^\infty \otimes \M_\pi^I$$
for which $\sH(\Phi_E(f))=\sH_\pi^I(f)$ for all $f\in \Sc(G/H)$.
This form is $G$-invariant and positive definite.
\end{cor}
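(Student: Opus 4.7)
The plan is to invoke Lemma \ref{lem molly} with the Harish-Chandra module $\widetilde V$ of $\oline\pi$, the closed unimodular subgroup $H_I \subset G$, and the finite dimensional subspace $E = \M_{\oline\pi}^I \subset (\Hc_{\oline\pi}^{-\infty})^{H_I}_{\rm temp}$ (finite dimensionality following from Theorem \ref{loc ct temp} combined with the $\af_I$-semisimplicity built into the hypothesis $[\pi] \in \supp_{\rm fin}^I(\mu)$). This yields a continuous surjection
$$\Phi_E : \Sc(G/H_I) \longrightarrow \Hom(\M_{\oline\pi}^I, \Hc_\pi^\infty), \qquad \Phi_E(f)(\oline\eta) = \oline\pi(f)\oline\eta,$$
so to construct $\sH$ it suffices to show that $\sH_\pi^I$ factors through $\Phi_E \times \Phi_E$.

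To this end I first polarize $\sH_\pi^I$ to the sesquilinear form
$$\sH_\pi^I(f_1, f_2) = \sum_{j=1}^{m_\pi} \sum_{\lambda \in \E_{\oline\eta_j^I}} \la \oline\pi(f_1)\oline\eta_j^{I,\lambda},\ \oline\pi(f_2)\oline\eta_j^{I,\lambda} \ra_{\Hc_\pi},$$
whose defining expression extends verbatim from $C_c^\infty(Z_I)$ to $\Sc(G/H_I)$, since $\oline\pi(f)\oline\eta$ makes sense for any $\oline\eta \in (\Hc_{\oline\pi}^{-\infty})^{H_I}$ and any $f \in \Sc(G/H_I)$. Writing $T_i := \Phi_E(f_i)$, this formula becomes
$$\sH_\pi^I(f_1, f_2) = \sum_{j,\lambda} \la T_1(\oline\eta_j^{I,\lambda}),\ T_2(\oline\eta_j^{I,\lambda})\ra_{\Hc_\pi},$$
and manifestly depends only on $T_1, T_2$. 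The surjectivity of $\Phi_E$ then produces the required unique Hermitian form $\sH$ on $\Hom(\M_{\oline\pi}^I, \Hc_\pi^\infty)$ satisfying $\sH(\Phi_E(f)) = \sH_\pi^I(f)$.

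For $G$-invariance, I combine the fact that $\Phi_E$ intertwines left translation on $\Sc(G/H_I)$ with the natural $G$-action $T \mapsto \pi(g) \circ T$, with the identity $\oline\pi(L_g f) \oline\eta = \oline\pi(g) \oline\pi(f) \oline\eta$ and the unitarity of $\oline\pi(g)$ on $\Hc_{\oline\pi}$; together these show that $\sH_\pi^I$ is left $G$-invariant, first on $C_c^\infty(Z_I)$ and then on $\Sc(G/H_I)$, so $\sH$ is $G$-invariant. For positive definiteness, $\sH(T) = \sum_{j,\lambda}\|T(\oline\eta_j^{I,\lambda})\|^2 \ge 0$ vanishes exactly when $T$ kills every $\oline\eta_j^{I,\lambda}$; by the definition of $\M_{\oline\pi}^I$ as the span of the $A_I$-translates of the $\oline\eta_j^I$, and the $\af_I$-semisimplicity hypothesis, the family $\{\oline\eta_j^{I,\lambda}\}_{j,\lambda}$ spans $\M_{\oline\pi}^I$, which forces $T = 0$. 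The only point requiring real care, and thus the main potential obstacle, is the factorization step: one must verify that the apparent dependence of $\sH_\pi^I$ on the chosen orthonormal basis $\eta_1, \ldots, \eta_{m_\pi}$ and on the particular indexing of generalized eigencomponents is spurious, so that the form indeed descends along $\Phi_E$; the rewriting in terms of $T_1, T_2$ above renders this transparent.
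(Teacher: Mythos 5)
Your proposal is correct and follows essentially the same route as the paper: factor the positive semidefinite form $\sH_\pi^I$ through the surjection $\Phi_E$ of Lemma \ref{lem molly} with $E=\M_{\oline\pi}^I$, and use that the semisimplicity hypothesis $[\pi]\in\supp_{\rm fin}^I(\mu)$ makes the eigencomponents $\oline\eta_j^{I,\lambda}$ span $E$, which yields positive definiteness (and, with surjectivity, uniqueness). Your explicit polarization merely spells out what the paper leaves implicit in its two-line kernel comparison.
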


\begin{proof} Clearly $ f\in\ker\Phi_E \Rightarrow \sH_\pi^I(f)=0$.
Moreover, since $[\pi]\in \supp_{ \rm fin}^I(\mu)$ we have
$$E=\M_{\oline \pi}^I=\Span\{ {\oline\eta}_j^{I,\lambda}\mid 1\leq j\leq m_\pi, \lambda \in \rho|_{\af_I}  +i\af_I^*\}$$
from which we deduce the converse implication.
\end{proof}

We use the symbol $\sH_\pi^I$ also for the form $\sH$ introduced in the corollary. Now a variant of Schur's Lemma implies
that $\sH_\pi^I$ viewed as a form on $\Hc_\pi^\infty \otimes \M_\pi^I$ is given by

\begin{equation} \label{FORM11} \sH_\pi^I(v\otimes \xi)
=\la v,v\ra_{\Hc_\pi} \la \xi, \xi\ra_{\M_\pi^I}\end{equation}
for a unique Hilbert inner product $\la \cdot, \cdot\ra_{\M_\pi^I}$ on $\M_\pi^I$.

We conclude this intermediate subsection with a simple observation of later use.

\begin{lemma}\label{molly injective} Keep the assumptions of Lemma \ref{lem molly} and let $(f_n)_{n\in \N}$ be a Dirac-sequence
in $C_c^\infty(G/H)$.  Then there exists
an $N=N(E)$ such that  the map
$$ \Phi_E(f_n):\, E \to V^\infty , \ \ \eta \mapsto f_n *\eta$$
is injective for all $n\geq N$.
\end{lemma}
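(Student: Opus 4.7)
The plan is to argue by contradiction, exploiting finite-dimensionality of $E$ together with the defining property of Dirac sequences as approximate identities for the continuous $G$-representation on $\widetilde V^{-\infty}$.

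First I would suppose the conclusion fails. Then there is an increasing sequence of indices along which $\ker \Phi_E(f_n) \neq 0$; after relabeling I may assume this holds for every $n$. Equip the finite dimensional space $E$ with an arbitrary norm $\|\cdot\|_E$ and select unit vectors $\eta_n \in \ker\Phi_E(f_n)$. By compactness of the unit sphere in $E$, after passing to a subsequence $\eta_n \to \eta$ with $\|\eta\|_E = 1$.

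The key step is then to verify that $f_n * \eta_n \to \eta$ in $\widetilde V^{-\infty}$. I would split
$$ f_n * \eta_n - \eta \;=\; f_n * (\eta_n - \eta) \;+\; (f_n * \eta - \eta).$$
The second summand tends to zero because $(f_n)$ is a Dirac sequence on $G/H$ and $\widetilde V^{-\infty}$ is a continuous $G$-representation, so convolution with $f_n$ is an approximate identity. For the first summand one uses the standard equicontinuity estimate for continuous representations: given a continuous seminorm $p$ on $\widetilde V^{-\infty}$ there exist a continuous seminorm $q$ and a constant $C>0$, independent of $n$, such that $p(f_n * \xi) \leq C\, q(\xi)$ for all $\xi \in \widetilde V^{-\infty}$ (this comes from lifting $f_n$ to $C_c^\infty(G)$ with $L^1$-norms uniformly bounded, combined with continuity of the $G$-action). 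Since $E$ is finite dimensional, $q|_E$ is dominated by $\|\cdot\|_E$, so the first summand tends to zero as well.

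Combining these two steps, $f_n * \eta_n \to \eta$ in $\widetilde V^{-\infty}$. But $f_n * \eta_n = \Phi_E(f_n)(\eta_n) = 0$ by construction, so $\eta = 0$ in $\widetilde V^{-\infty}$, contradicting $\|\eta\|_E = 1$. Hence there must exist an $N = N(E)$ with $\ker \Phi_E(f_n) = 0$ for all $n \geq N$. The main obstacle I anticipate is pinning down the uniform equicontinuity $p(f_n * \xi) \leq C\, q(\xi)$; although it is a standard consequence of the continuous-representation property of $\widetilde V^{-\infty}$, one must be slightly careful in transferring the Dirac-sequence hypothesis from $C_c^\infty(G/H)$ to $\Sc(G)$-convolution via lifts, and in verifying that the bounds are uniform in $n$.
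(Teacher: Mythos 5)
Your argument is correct, but it takes a different (and slightly heavier) route than the paper. The paper proves the abstract statement directly: for a finite dimensional subspace $E$ of a Hausdorff locally convex space $X$ and linear maps $T_n:E\to X$ with $T_n(x)\to x$ pointwise, one picks a continuous projection $p_E:X\to E$ (which exists since $E$ is finite dimensional) and observes that $S_n:=p_E\circ T_n\in\End(E)$ converges to $\1$; since $\End(E)$ is finite dimensional, pointwise convergence is norm convergence, so $S_n$ is invertible and hence $T_n$ injective for $n$ large. Applied with $X=\widetilde V^{-\infty}$ and $T_n(\eta)=f_n*\eta$, the only input is the Dirac-sequence property $f_n*\eta\to\eta$ for $\eta\in E$ — no uniform estimates whatsoever. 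Your contradiction-plus-compactness argument uses exactly the same two inputs (finite dimensionality of $E$ and $f_n*\eta\to\eta$), but the "main obstacle" you flag — a uniform-in-$n$ equicontinuity bound $p(f_n*\xi)\le C\,q(\xi)$ on all of $\widetilde V^{-\infty}$, obtained via lifts to $\Sc(G)$ — is not actually needed: you only apply it to $\xi=\eta_n-\eta\in E$, and on the finite dimensional space $E$ such a bound is automatic. Indeed, writing $\eta_n-\eta=\sum_i c_i^{(n)}e_i$ in a fixed basis with $c_i^{(n)}\to 0$, one has $p\bigl(f_n*(\eta_n-\eta)\bigr)\le\sum_i|c_i^{(n)}|\,p(f_n*e_i)$, and $\sup_n p(f_n*e_i)<\infty$ because $f_n*e_i\to e_i$. (Your proposed route through uniformly bounded $L^1$-norms of lifts supported in a fixed compact set, plus equicontinuity of $\{\pi(g)\}$ over that compact set, can also be made rigorous using that $\widetilde V^{-\infty}$ is barrelled, but it is dispensable.) So both proofs are sound; the paper's is shorter because projecting to $E$ converts the problem into convergence in $\End(E)$, while yours trades that for compactness of the unit sphere of $E$ and an equicontinuity consideration that, once restricted to $E$, is trivial.
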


\begin{proof} This is a special case of a more general fact.  Let $X$ be a locally convex topological
vector space and $E\subset X$ a finite dimensional subspace.
Let $T_n: E\to X$ be a family of linear continuous maps with $\lim_{n\to \infty} T_n(x)=x$ for all
$x\in  E$. We claim that there exists $N\in\N$ such that
$T_n$ is injective for all $n\geq N$. To see that we choose a closed complement to $E$
and obtain a continuous projection $p_E: X\to E$.   With  $S_n:= p_E \circ T_n$ we then obtain a sequence
$S_n \in \End(E)$ such that $S_n\to \1$. This proves the claim.
The lemma follows with $X=\widetilde V^{-\infty}$ and $T_n(x)= f_n*x$.
\end{proof}

\subsection{Induced Plancherel measure}
The following theorem was largely motivated by \cite[Th. 11.3]{SV}.

\begin{theorem}\label{Plancherel induced} {\rm (Induced Plancherel measure)}
For all $f\in C_c^\infty(Z_I)$ one has
\begin{equation}  \label{PT I-norm}\| f\|_{L^2(Z_I)}^2= \int_{\supp^I(\mu)}  \sH_\pi^I (f) \ d\mu (\pi)\, .\end{equation}
In particular, the Plancherel measure $\mu_I$ of $L^2(Z_I)$ is equivalent to $\mu$ restricted to
$ \supp^I(\mu)$,
and $\M_\pi^I$ as defined in \eqref{def M-pi-I} and
equipped with the Hermitian form obtained from \eqref{FORM11} provides
a multiplicity space for $\mu_I$-almost all $\pi$.  In other words
\begin{equation} \label{Planch ZI} L^2(Z_I) \underset{G \times A_I}\simeq \int_{\supp^I(\mu)}  \Hc_\pi \otimes \M_\pi^I \ d\mu(\pi)\,,\end{equation}
with the just described inner product on $\M_\pi^I$, is a Plancherel decomposition for $Z_I$.
Finally, the complement of $\supp_{ \rm fin}^I(\mu)$ in $\supp^I(\mu)$ is a null set.
\end{theorem}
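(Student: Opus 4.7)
The plan is to combine Corollary \ref{main cor} with the Averaging Lemma \ref{averaging lemma} via Fatou's Lemma, following the scheme of \cite[Sect.~11]{SV}. Fix $X\in\af_I^{--}$ and let $f\in C_c^\infty(Z_I)$ with support in $Q\Wc_I\cdot z_{0,I}$. From the corollary,
$$\|f\|_{L^2(Z_I)}^2=\int_{\hat G}\sH_\pi^{I,{\rm pre}}(f_t)\,d\mu(\pi)+O(e^{-\e t}),$$
so Ces\`aro averaging over $t=n+1,\ldots,2n$ and letting $n\to\infty$ yields
$$\|f\|_{L^2(Z_I)}^2=\lim_{n\to\infty}\int_{\hat G}\frac1n\sum_{t=n+1}^{2n}\sH_\pi^{I,{\rm pre}}(f_t)\,d\mu(\pi).$$
By Lemma \ref{averaging lemma}, for $X$ avoiding a $\pi$-dependent resonance set, the integrand converges pointwise in $\pi$: to $\sH_\pi^I(f)$ on $\supp_{\rm fin}^I(\mu)$; and (choosing $f$ via Lemma \ref{molly injective} so that $\oline\pi(f)|_{\M_{\oline\pi}^I}$ is injective on a subset of positive measure) to $+\infty$ on $\supp^I(\mu)\setminus\supp_{\rm fin}^I(\mu)$.

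Fatou's lemma then forces $\mu(\supp^I(\mu)\setminus\supp_{\rm fin}^I(\mu))=0$ — the last assertion — and provides the inequality $\|f\|^2\ge\int\sH_\pi^I(f)\,d\mu$. To promote to equality, use that on $\supp_{\rm fin}^I(\mu)$ the Ces\`aro averages actually converge (not merely $\liminf$), and apply dominated convergence: the dominant is constructed from the identity $\sH_\pi^{I,{\rm pre}}(f_t)=\sH_\pi(\Phi(f_t))+O_\pi(e^{-\e t})$ of Theorem \ref{matching comparison}, since $\sH_\pi(\Phi(f_t))\le\|\sH_\pi\|_{{\rm HS},N;k}^2\, p_{N;k}(\Phi(f_t))^2$ is bounded in $t$ by Lemma \ref{lemma match2}\eqref{lemma match2b} and integrable in $\pi$ by \eqref{global a-priori}. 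A Schur-type argument applied to the $G$-invariant positive form $\sH_\pi^I$ on $\Hc_\pi^\infty\otimes\M_\pi^I$ then yields the factorization \eqref{FORM11}, identifies the multiplicity space with its inner product, and produces the decomposition \eqref{Planch ZI}. The extension from $\supp f\subset Q\Wc_I\cdot z_{0,I}$ to general $f\in C_c^\infty(Z_I)$ is done through the partition \eqref{full deco W} and the refinement of Subsection \ref{subsection full match}.

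The main obstacle is the $X$-genericity issue: the Averaging Lemma requires $X$ to avoid the set $\{X\in\af_I^{--}:(\lambda-\lambda')(X)\in 2\pi i\Z\ \text{for some}\ \lambda\neq\lambda'\in\E_\xi\}$, which depends on $\pi$, yet the Fatou step must use a single $X$ valid for $\mu$-a.e.\ $\pi$. This is handled by a Fubini argument on the set of bad pairs $(X,\pi)$: for each fixed $\pi$ the slice of bad $X$ is a Lebesgue-null countable union of hyperplanes, so for Lebesgue-a.e.\ $X$ the $\pi$-slice is $\mu$-null — provided the exponents $\E_\xi$ depend measurably on $\pi$, which follows from the dependence on the infinitesimal character via \eqref{exponents normalized unitary}. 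A secondary subtlety is ensuring that the dominant constructed above is genuinely uniform in $t$; this relies crucially on the $t$-independent bound for $p_{0;k}(\Phi(f_t))$ obtained in Lemma \ref{lemma match2}\eqref{lemma match2b}, without which dominated convergence would fail.
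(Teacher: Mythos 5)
Your overall scheme (Corollary \ref{main cor}, Ces\`aro averaging, Fatou, then an upgrade to equality) is the paper's scheme, and your Fubini argument for choosing a single $X$ that avoids the $\pi$-dependent resonances is a legitimate substitute for the paper's device of iterating with $X_1,\sqrt2X_1,X_3,\sqrt2X_3,\dots$ until a basis of $\af_I$ is exhausted. But your dominated convergence step contains a genuine gap: the dominant you propose does not exist by the route you describe. Lemma \ref{lemma match2}\eqref{lemma match2b} controls only the \emph{unweighted} Sobolev norm $p_{0;k}(\Phi(f_t))$, whereas the a priori Hilbert--Schmidt bound \eqref{global a-priori} is only available for the weighted norms $p_{N;k}$ with $N=N_Z>0$; and since $\supp\Phi(f_t)\subset\Omega a_t\cdot z_0$ drifts to infinity, one has $p_{N;k}(\Phi(f_t))\asymp(1+t)^{N/2}p_{N;k}(\Phi(f))$ (this is exactly \eqref{R12} in the proof of Theorem \ref{matching comparison}), so $\sup_t \|\sH_\pi\|^2_{{\rm HS},N;k}\,p_{N;k}(\Phi(f_t))^2=\infty$ and there is no $t$-uniform, $\mu$-integrable majorant of $\sH_\pi(\Phi(f_t))$ obtained this way. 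The correct domination is of a different nature: on the $\af_I$-semisimple locus one has the pointwise bound $\tfrac1n\sum_{t=n+1}^{2n}\sH_\pi^{I,\rm pre}(f_t)\le|\W_\jf|\,\sH_\pi^I(f)$, coming from $\|\oline\pi(f_t)\oline\eta^I\|\le\sum_{\lambda\in\E_{\oline\eta^I}}\|\oline\pi(f)\oline\eta^{I,\lambda}\|$ (the exponents are normalized unitary) together with $|\E_{\oline\eta^I}|\le|\W_\jf|$; its $\mu$-integrability is precisely what the Fatou inequality $\int\sH_\pi^I(f)\,d\mu\le\|f\|^2$ already gives. In other words, the majorant must be the limit function itself, bootstrapped from Fatou — this is the step your proposal is missing.

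A secondary point: your claim that the Ces\`aro averages diverge to $+\infty$ on all of $\supp^I(\mu)\setminus\supp^I_{\rm fin}(\mu)$ does not follow from Lemma \ref{averaging lemma}\eqref{Two II} for a fixed $X$, since that lemma requires failure of \emph{$X$-}semisimplicity, and a module that is not $\af_I$-semisimple can still be semisimple for the particular $X$ you chose. For each $\pi$ the set of $X$ detecting non-semisimplicity is the complement of a finite union of proper linear subspaces, so this can be folded into your Fubini argument, but as written your "bad set" consists only of the resonance hyperplanes $(\lambda-\lambda')(X)\in2\pi i\Z$; you must also exclude these subspaces (or use the paper's basis-iteration, which handles both issues at once and avoids any measurability discussion). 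Finally, the extension to general $f\in C_c^\infty(Z_I)$ needs only that $P\Wc_I\cdot z_{0,I}$ exhausts $Z_I$ up to measure zero plus density; the partition \eqref{full deco W} and Subsection \ref{subsection full match} are relevant for the passage to $\widetilde Z_I$, not for this step.
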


\begin{proof} It is sufficient to prove this identity for test functions $f$ with support in
$P \Wc_I \cdot z_{0,I}$
because $P \Wc_I \cdot z_{0,I}$
exhausts $Z_I$ up to measure zero. Let such a test function $f$ be given.
\par Fix $X\in \af_I^{--}$. It follows from the exponential decay of $R(t)$ in Corollary \ref{main cor} that

\begin{equation} \label{Fatou}
{1\over n} \sum_{t=n+1}^{2n}  \int_{\hat G} \sH_\pi^{I, \rm pre}(f_t) \ d\mu(\pi) \to \|f\|_{L^2(Z_I)}^2
\end{equation}
as $n\to \infty$.
Define
$$\sH_\pi^{I, X-{\rm inv}} (f):=\lim_{n\to \infty} {1\over n} \sum_{t=n+1}^{2n} \sH_\pi^{I, \rm pre}(f_t)\in[0,\infty
]\, .$$
Then \eqref{Fatou} and Fatou's lemma imply
\begin{equation} \label{Fatou1}
 \int_{\hat G} \sH_\pi^{I, X-{\rm inv}} (f)\, d\mu(\pi) \le \|f\|_{L^2(Z_I)}^2 < \infty.
\end{equation}
Next set
$$\hat G_X:=\{ [\pi] \in \hat G \mid \M_\pi^I\neq\{0\} \ \text{and $\M_\pi^I$ is $X$-semisimple} \}\, .$$
By choosing a Dirac sequence $f_1, f_2, \ldots$ of $C_c^\infty (Z_I)$ which is supported
in $P\cdot z_{0,I} $ we obtain from Lemma \ref{molly injective} for each $[\pi]\in\hat G$
that $\oline \pi(f_j)|_{\M_{\oline \pi}^I}$
is injective for some $j$.
Hence by countable additivity it follows
from \eqref{Fatou1} together with \eqref{LIMIT HI-PRE} and
the definition of $\supp^I(\mu)$ in \eqref{def supp mu I} that $\mu(\supp^I(\mu) \bs \hat G_X)=0$. Further
for $[\pi]\in \hat G_X$ we have
$\sH_\pi^{I, X-{\rm inv}} (f)<\infty$ and from \eqref{limit 1} we infer

\begin{eqnarray}\notag \sH_\pi^{I, X-{\rm inv}} (f) &=& \sum_{j=1}^{m_\pi}
\sum_{\lambda\in \rho+ i\af_I^*}  \|\oline\pi(f)\oline\eta_j^{I,\lambda}\|^2  +\\
\label{X inv} &+&\sum_{j=1}^{m_\pi} \, 2 \re \!\!\! \sum_{\lambda\neq \lambda' \in \E_{\xi}\atop
(\lambda - \lambda')(X)\in 2\pi i\Z} \!\!\!
\la \oline\pi(f)\oline\eta_j^{I,\lambda}, \oline\pi(f)\oline\eta_j^{I,\lambda'}\ra\, .
\end{eqnarray}
Next we define
$$\hat G_{X, \rm reg}:=\{ [\pi] \in \hat G_X\mid (\forall \lambda\neq \lambda'\in (\rho-\W_\jf\chi_\pi)|_{\af_I}):
\ (\lambda- \lambda')(X)\not \in 2\pi i\Z\}$$
and deduce from \eqref{Fatou1}, \eqref{X inv}, and \eqref{I-Hermitian form} that

\begin{equation} \label{limit 5} \|f\|_{L^2(Z_I)}^2 \geq  \int_{\hat G_{X, \rm reg}} \sH_\pi^I(f) \ d\mu(\pi)
+ \int_{\hat G_X \bs \hat G_{X, \rm reg}} \sH_\pi^{I, X-{\rm inv}} (f) \ d\mu(\pi) \, .\end{equation}
Now we start iterating \eqref{limit 5} with finitely many $X\in \af_I^{--}$.
In more precision, let $X_1:=X$ and set $X_2:=\sqrt{2} X_1$.
Now the iteration of \eqref{limit 5} starts with  $a_t:=\exp(tX_2)$ while observing
$ \|f\|_{L^2(Z_I)}^2= \|f_t\|_{L^2(Z_I)}^2$ and taking weighted averages as before. Another application of
Fatou's Lemma then yields
\begin{eqnarray*} \|f\|_{L^2(Z_I)}^2 &\geq & \int_{\bigcup_{j=1}^2\hat G_{X_j, \rm reg}} \sH_\pi^I(f) \ d\mu(\pi)+\\
& + & \int_{\left(\bigcap_{j=1}^2\hat G_{X_j}\right) \bs \left(\bigcup_{j=1}^2 \hat G_{X_j, \rm reg}\right)} \sH_\pi^{I, \{X_1,X_2\}-{\rm inv}} (f) \ d\mu(\pi) \end{eqnarray*}
with
\begin{eqnarray}\notag \sH_\pi^{I, \{X_1,X_2\}-{\rm inv}} (f) &=& \sum_{j=1}^{m_\pi}
\sum_{\lambda\in \rho+ i\af_I^*}  \|\oline\pi(f)\oline\eta_j^{I,\lambda}\|^2  +\\
\label{X12 inv} &+&\sum_{j=1}^{m_\pi} 2 \re \sum_{
\lambda, \lambda'\in  \E_{\oline \eta_j^I}\atop
\lambda\neq \lambda', (\lambda-\lambda')(X_1)=0}
\la \oline\pi(f)\oline\eta_j^{I,\lambda}, \oline\pi(f)\oline\eta_j^{I,\lambda'}\ra
\end{eqnarray}
as a result of making \eqref{X inv} also invariant under $X_2$. Here we used that
$(\lambda - \lambda')(X_i)\in 2\pi \Z$ for $i=1,2$ means
$(\lambda-\lambda')(X_i)=0$.

Next take $X_3\in \af_I^{--}$ linearly independent to $X_1$ and then $X_4:=\sqrt{2} X_3$. This we continue
until $X_1, X_3, \ldots, X_{2m-1}$ is a basis of $\af_I$ contained in $\af_I^{--}$.

Notice that iterating \eqref{X12 inv} yields that
$$\sH_\pi^{I, \{X_1,\ldots, X_{2m}\}-{\rm inv}} (f) = \sH_\pi^I (f)$$
and we finally arrive at
\begin{equation} \label{limit 6} \|f\|_{L^2(Z_I)}^2 \geq  \int_{\hat G}  \sH_\pi^I(f) \ d\mu(\pi)\end{equation}
together with the fact $\mu(\supp^I(\mu) \bs \supp_{\rm fin}^I(\mu))=0$  as
 $\supp_{\rm fin}^I=\bigcap_j\hat G_{X_j}$.
\par To conclude the proof we observe for $X=X_1$ and any $\pi\in\hat G$ that
$\|\oline\pi(f_t)\oline\eta^I\| \leq \sum_{\lambda\in \E_{\oline \eta^I}} \|\oline \pi(f)\oline\eta^{I,\lambda}\|$ and thus
$$\|\oline\pi(f_t)\oline\eta^I\|^2 \leq |\W_\jf| \sum_{\lambda\in \E_{\oline \eta^I}} \|\oline \pi(f)\oline\eta^{I,\lambda}\|^2$$ as $|\E_{\oline \eta}|\leq |\W_\jf|$.
Summing over $ t$ and the $\oline \eta_j^I$ this implies via
\eqref{X inv} for all $[\pi]\in \supp_{\rm fin}^I\mu$
that
$${1\over n} \sum_{t=n+1}^{2n}   \sH_\pi^{I, \rm pre}(f_t)\leq |\W_\jf| \sH_\pi^I (f) $$
for all $n>0$.
Thus by \eqref{Fatou1} and dominated convergence
we can interchange limit and integral in \eqref{Fatou} and obtain actual equality in \eqref{Fatou1}:
\begin{equation} \label{limit 4}  \int_{\hat G} \sH_\pi^{I, X-{\rm inv}} (f) \ d\mu(\pi) = \|f\|_{L^2(Z_I)}^2 \, . \end{equation}
The just described iteration applied to \eqref{limit 4} then yields
 $$\int_{\hat G} \sH_\pi^I (f) \ d\mu(\pi) = \|f\|_{L^2(Z_I)}^2 $$
and finishes  the proof of the theorem.
\par  The final statements follow from uniqueness
of the Plancherel measure together with \eqref{FORM11}.
\end{proof}

\subsection{Extension to $\widetilde Z_I$} In view of Section \ref{subsection full match}
we can extend Theorem
\ref{Plancherel induced} to all
$f\in C_c^\infty(\widetilde Z_I)$:
\begin{equation}  \label{PT I-norm2}\| f\|_{L^2(\widetilde Z_I)}^2= \sum_{\sc,\st}
\int_{\supp^{I,\sc,\st}(\mu)}  \sH_{\pi,\sc,\st}^I (f_{\sc,\st}) \ d\mu (\pi)\end{equation}
where we put an extra index $\sc,\st$ when we consider objects, initially defined for $Z_I$,   now for
$Z_{I,\sc,\st}$. Let us further denote by
$\M_{\pi,\sc,\st}^I \subset(\Hc_\pi^{-\infty})^{H_{I,\sc}} $ the Hilbert space
$\M_\pi^I$ (with the inner product obtained from \eqref{FORM11}),
but for $Z_I$ replaced by $Z_{I,\sc,\st}=Z_{I,\sc}$.
We then form the direct sum of Hilbert spaces
$$ \widetilde \M_\pi^I=\bigoplus_{\sc,\st} \M_{\pi,\sc,\st}^I \, ,$$
and equip this space with
the diagonal action of $A_I$, i.e.~for
$\xi=(\xi_{\sc,\st})_{\sc,\st}\in \widetilde \M_\pi^I$ we have
$a\cdot \xi= (a\cdot \xi_{\sc,\st})_{\sc,\st}$.
Then we obtain  the following extension of \eqref{Planch ZI} to
\begin{equation} \label{Planch ZI extended }
L^2(\widetilde Z_I) \underset{G \times A_I}\simeq
\int_{\hat G}  \Hc_\pi \otimes \widetilde \M_\pi^I \ d\mu(\pi)\end{equation}

\subsection{The Maass-Selberg relations}
The multiplicity space $\widetilde \M_\pi^I$ are $A_I$-semisimple for $\mu$-almost all $[\pi]$
and thus admits a direct sum decomposition
$\widetilde \M_\pi^I =  \bigoplus_{\lambda\in \rho+ i\af_I^*}  \widetilde \M_\pi^{I,\lambda}$
with
$$\widetilde \M_\pi^{I,\lambda}=\{\xi \in \widetilde \M_\pi^I\mid  (\forall a\in A_I) \  a\cdot \xi= a^\lambda \xi\}\, .$$
Since the normalized right action of $A_I$ on $L^2(Z_I)$ is unitary it follows
that the Hermitian structure on $\widetilde \M_\pi^I$ is such that this
decomposition of $\widetilde \M_\pi^I$
is orthogonal for $\mu$-almost all $[\pi]$.

 \begin{theorem} \label{eta-I continuous} {\rm (Maass-Selberg relations)}
Let  $\lambda\in\rho|_{\af_I} + i\af_I^*$. Then for almost all
$[\pi] \in \bigcup_{\sc,\st} \supp^{I,\sc,\st}_{\rm fin}(\mu)$
the map

 $$ \sI^\lambda:  \M_\pi \to \widetilde \M_\pi^{I,\lambda}, \ \ \eta\mapsto (\eta_{\sc,\st}^{I,\lambda})_{\sc,\st}$$
 is a surjective partial isometry, i.e.~its Hermitian adjoint is a unitary isometry. \end{theorem}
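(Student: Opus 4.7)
The strategy is to refine the argument proving Theorem~\ref{Plancherel induced} so as to isolate each individual spectral component $\lambda\in\rho|_{\af_I}+i\af_I^*$ in the Plancherel decomposition of $L^2(\widetilde Z_I)$, rather than just summing over all of them as in \eqref{PT I-norm2}. Surjectivity of $\sI^\lambda$ will come out of the construction of $\widetilde\M_\pi^{I,\lambda}$ as the $\lambda$-eigenspace of the semisimple $A_I$-action on $\widetilde\M_\pi^I=\bigoplus_{\sc,\st}\M_{\pi,\sc,\st}^I$, together with the dimension count supplied by the isometry assertion; the partial isometry assertion itself is equivalent to showing that $(\sI^\lambda)^*\colon\widetilde\M_\pi^{I,\lambda}\to\M_\pi$ is an isometric embedding, which we obtain by polarizing the Plancherel comparison and then singling out the fixed $\lambda=\lambda_0$ via the $A_I$-action.

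First, I would polarize both \eqref{abstract Plancherel} and \eqref{PT I-norm2} to sesquilinear form identities
\[
\langle F,G\rangle_{L^2(Z)}=\int \sH_\pi(F,G)\,d\mu(\pi),\qquad
\langle f,g\rangle_{L^2(\widetilde Z_I)}=\int \widetilde\sH_\pi^I(f,g)\,d\mu(\pi),
\]
for $F,G\in C_c^\infty(Z)$, $f,g\in C_c^\infty(\widetilde Z_I)$. Since the normalized right $A_I$-action on $L^2(\widetilde Z_I)$ is unitary (Lemma~\ref{lemma ZI-int}) and acts on each $\widetilde\M_\pi^I$ with the orthogonal decomposition $\bigoplus_\lambda\widetilde\M_\pi^{I,\lambda}$ ($\mu$-a.e.~by Theorem~\ref{Plancherel induced}), the form $\widetilde\sH_\pi^I$ decomposes orthogonally as $\sum_\lambda\widetilde\sH_\pi^{I,\lambda}$. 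To isolate a specific $\lambda_0$, I would smear by a Schwartz function $\phi_\varepsilon\in\mathcal S(A_I)$ whose Fourier transform $\hat\phi_\varepsilon$ concentrates on $\lambda_0$ inside $\hat A_I$, and set $R(\phi_\varepsilon)f:=\int_{A_I}\phi_\varepsilon(a)R(a)f\,da$; the Plancherel for $A_I$ together with the $A_I$-equivariance of \eqref{Planch ZI extended } gives that $R(\phi_\varepsilon)$ acts on each $\widetilde\M_\pi^{I,\lambda}$ by the scalar $\hat\phi_\varepsilon(\lambda)$, so that
\[
\langle R(\phi_\varepsilon)f,g\rangle_{L^2(\widetilde Z_I)}=\int\sum_\lambda \hat\phi_\varepsilon(\lambda)\,\widetilde\sH_\pi^{I,\lambda}(f,g)\,d\mu(\pi)\;\xrightarrow[\varepsilon\to 0]{}\;\int \widetilde\sH_\pi^{I,\lambda_0}(f,g)\,d\mu(\pi).
\]
The limit is taken in a dominated-convergence sense using the a priori bound \eqref{global a-priori} and that $\E_{\xi}$ is finite for each $\pi$.

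Next, I would transfer the resulting test function identity to an identity on multiplicity spaces via the mollifying Lemma~\ref{lem molly} and its injectivity refinement Lemma~\ref{molly injective}. For Dirac sequences $(f_n),(g_n)\in C_c^\infty(\widetilde Z_I)$ concentrated at the base points $z_{0,I,\sc}$, the maps $\Phi_E(f_n)\colon E\to \Hc_\pi^\infty$ with $E=\widetilde\M_{\bar\pi}^{I,\lambda_0}$ eventually become injective, which upon inserting into the $\lambda_0$-isolated sesquilinear identity above yields, for $\mu$-a.e.~$\pi$, the inner product identity
\[
\langle \sI^{\lambda_0}(\bar\eta),\xi\rangle_{\widetilde\M_\pi^{I,\lambda_0}}=\langle\bar\eta,(\sI^{\lambda_0})^*\xi\rangle_{\M_\pi}
\]
together with the norm matching $\sum_j\|\sI^{\lambda_0}(\bar\eta_j)\cdot P\|^2=\operatorname{tr} P$ for any orthogonal projection $P$ on $\widetilde\M_\pi^{I,\lambda_0}$. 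Expanding $\sI^{\lambda_0}\bar\eta_j=\sum_n c_{jn}\bar\xi_n$ in an ONB $(\xi_n)$ of $\widetilde\M_\pi^{I,\lambda_0}$, this forces the columns $(c_{jn})_j$ to be orthonormal in $\C^{m_\pi}$, which says precisely that $(\sI^{\lambda_0})^*$ is an isometric embedding; its image then has dimension $\dim\widetilde\M_\pi^{I,\lambda_0}$, so $\sI^{\lambda_0}$ is automatically surjective.

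The hard part will be the $\lambda_0$-isolation step: the Plancherel measure $\mu$ varies continuously over $\hat G$ while the exponent sets $\E_{\xi}$ for $\xi\in\widetilde\M_\pi^I$ depend discretely but non-uniformly on $\pi$ (see \eqref{exponents2}), so the limit $\hat\phi_\varepsilon\to\mathbf 1_{\{\lambda_0\}}$ must be exchanged with the integral in a controlled way. This should be handled by combining the uniform Hilbert--Schmidt bound \eqref{global a-priori} with the "generic $X$'s" device of Theorem~\ref{Plancherel induced} (iterating over a basis of $\af_I^{--}$) and a standard dominated convergence argument on a $\mu$-conull subset of $\bigcup_{\sc,\st}\supp^{I,\sc,\st}_{\rm fin}(\mu)$.
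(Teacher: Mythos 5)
Your closing linear-algebra step (the ONB expansion showing that $(\sI^{\lambda_0})^*$ is isometric, hence $\sI^{\lambda_0}$ surjective) is exactly the paper's Lemma \ref{lemma ONB}, and invoking the mollification Lemma \ref{lem molly} to pass between test functions and multiplicity spaces is also in the paper's spirit. The gap is in your $\lambda_0$-isolation step. You fix $\lambda_0\in\rho|_{\af_I}+i\af_I^*$ beforehand and smear the pairing on $L^2(\widetilde Z_I)$ by $\phi_\varepsilon$ with $\hat\phi_\varepsilon\to\mathbf 1_{\{\lambda_0\}}$. But (for $\af_I\neq\{0\}$) the normalized right action of $A_I$ on $L^2(\widetilde Z_I)$ has purely Lebesgue (absolutely continuous) spectrum on $\hat A_I$ --- this is exactly the disintegration underlying the twisted discrete series --- so a single character carries zero spectral mass: the left side $\la R(\phi_\varepsilon)f,g\ra$ tends to $0$, and correspondingly $\widetilde\M_\pi^{I,\lambda_0}=\{0\}$ for $\mu$-almost every $\pi$. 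Your limiting identity therefore reads $0=0$, and no Dirac-sequence argument can upgrade an integrated identity to a pointwise statement on the $\mu$-null set of $\pi$ for which $\lambda_0$ actually occurs as an exponent --- which is precisely where the theorem has content (the exponents vary with $\pi$, and in the applications $\lambda$ is taken $\pi$-dependent, e.g. $\lambda=(\rho-u\cdot\lambda_\pi)|_{\af_I}$ in Remark \ref{Remark isometric}).

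The repair is to not fix $\lambda$ before disintegrating, and in fact no new spectral analysis is needed: the inner product on $\widetilde\M_\pi^I$ has already been pinned down pointwise in $\pi$ by Theorem \ref{Plancherel induced}, namely it is the form of \eqref{FORM11}, which by construction satisfies $\sH(\Phi_E(f))=\sH_\pi^I(f)$ with $\sH_\pi^I$ as in \eqref{I-Hermitian form}. Since $\Phi_E$ (Lemma \ref{lem molly}) realizes every rank-one operator $\oline\xi\mapsto\la x,\xi\ra v$, comparing the two expressions for $\sH_\pi^I$ yields, for $\mu$-a.e.\ $\pi$ and all $x\in\widetilde\M_\pi^I$ simultaneously, the identity \eqref{M-equation}, i.e.\ $\|x\|^2=\sum_{\sc,\st}\sum_j\sum_{\lambda}|\la x,(\eta_j)_{\sc,\st}^{I,\lambda}\ra|^2$. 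The orthogonality of distinct $A_I$-eigenspaces (unitarity of the normalized right action, which you correctly noted, but which must be applied pointwise in $\pi$ rather than inside the $\hat G$-integral) kills the cross terms when $x\in\widetilde\M_\pi^{I,\lambda}$, and Lemma \ref{lemma ONB} finishes the proof. Thus the smearing over $A_I$, the dominated-convergence bookkeeping via \eqref{global a-priori}, and the iteration over generic rays are both unnecessary --- they re-derive what Theorem \ref{Plancherel induced} already provides --- and, as organized, insufficient to reach the stated almost-everywhere conclusion.
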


\begin{proof} Let us denote by $\la\cdot, \cdot\ra$ the scalar product on $\widetilde \M_\pi^I$.
By definition
it is given by \eqref{FORM11} (summed over all $\sc,\st$) for almost all $[\pi]$.
Now summation of \eqref{I-Hermitian form} over all $\sc,\st$  implies for all
 $x\in \widetilde\M_\pi^I$ that

\begin{equation} \label{M-equation}\|x\|^2_{\M_\pi^I}
= \sum_{\sc,\st}\sum_{j=1}^{m_\pi} \sum_{\lambda\in \E_{\eta_j^I}} |\la x, (\eta_j)_{\sc,\st}^{I,\lambda}\ra|^2
\, .\end{equation}
In particular, for $x\in \widetilde\M_\pi^{I,\lambda}$
this is condition (\ref{ONB1}) so that Lemma \ref{lemma ONB} applies.
\end{proof}

\begin{rmk} Of particular interest is the case of a multiplicity one space, i.e. where we have
$\dim \M_\pi \leq 1$ for almost all $\pi \in \supp\mu$.  This is for instance satisfied in the group case
$Z= G \times G/ \diag G\simeq G$, for complex symmetric spaces, and in the Riemannian situation $Z=G/K$.
\par  For a symmetric space the condition that $\dim \M_\pi \leq 1$ for almost all $\pi$ implies $\Wc=\{\1\}$. To see that we first observe
that there are $|\Wc|$-many open $H$-orbits $\Oc\subset G/Q$, each isomorphic to $H/ L_H$ as a unimodular
$H$-space.  Integration over these open $H$-orbits
yields at least $|\Wc|$-many tempered functionals for representations $\pi$
with generic parameters in the most-continuous spectrum of $Z$, say
$\eta_{\pi, w}$ for $w\in \Wc$.  Now there is a subtle point that
a priori we only have $\M_\pi \subset (V_\pi^{-\infty})^H_{\rm temp}$.  But forming wave packets
finally yields that these $\eta_{\pi,w}$ indeed contribute a.e. to the $L^2$-spectrum.
For this one needs an estimate of $\eta_{\pi,w}$ which is locally uniform with
respect to $\pi$. For the case of a symmetric space $Z$ such an estimate is given in
\cite{vdBII} Thm.~9.1.  The statement follows.

\par The statement  above implies that $\M_\pi^I=\widetilde \M_\pi^I$.  Our Maass-Selberg relations in Theorem
\ref{eta-I continuous} then assert for $\eta\in \M_\pi$ with $\|\eta\|=1$ that $(\eta^{I,\lambda})_{\lambda}$ is an
orthonormal basis
of $\M_\pi^I=\widetilde \M_\pi^I$ (where we only count those $\lambda$ for which $\M_\pi^{I,\lambda}\neq \{ 0\}$).
In particular, for the group case this leads to the Maass-Selberg relations of Harish-Chandra \cite{HC3}, p.~146.
 \end{rmk}

We finish this section with an elementary lemma about finite dimensional Hilbert spaces.
It was used for Theorem \ref{eta-I continuous} above.

\begin{lemma}\label{lemma ONB} Let
$J: \M\to \Nc$ a linear map between two finite dimensional Hilbert spaces. Assume that for some
orthonormal basis $\eta_1, \ldots, \eta_n$ for $\M$ one has

\begin{equation} \label{ONB1}  \la x, x\ra= \sum_{j=1}^n  |\la x, J\eta_j\ra|^2 ,
\qquad (x\in \Nc).\end{equation}
Then the adjoint of $J$ is an isometry.
\end{lemma}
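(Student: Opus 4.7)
The plan is to observe that the assumption \eqref{ONB1} is just Parseval's identity in disguise once the inner products are reinterpreted in $\M$ via the adjoint. I would proceed in three short steps.

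First, I would rewrite each term $\la x, J\eta_j\ra_\Nc$ using the defining property of the adjoint, namely $\la x, J\eta_j\ra_\Nc = \la J^*x, \eta_j\ra_\M$. Substituting into \eqref{ONB1} gives
$$
\la x, x\ra_\Nc = \sum_{j=1}^n \bigl|\la J^*x, \eta_j\ra_\M\bigr|^2 \qquad (x\in \Nc).
$$

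Second, since $\eta_1,\dots,\eta_n$ is an orthonormal basis of $\M$, Parseval's identity in $\M$ yields $\sum_{j=1}^n |\la J^*x, \eta_j\ra_\M|^2 = \|J^*x\|_\M^2$. Combining with the previous display gives $\|J^*x\|_\M^2 = \|x\|_\Nc^2$ for all $x \in \Nc$, which is precisely the statement that $J^*: \Nc \to \M$ is an isometry.

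There is no real obstacle here; the whole content of the lemma is that the hypothesis is the Parseval identity expressed in the wrong Hilbert space, and moving it across the adjoint identifies it as the isometry condition for $J^*$. If desired, one could additionally remark via polarization that $\la J^*x, J^*y\ra_\M = \la x, y\ra_\Nc$ for all $x,y\in\Nc$, making the isometry property manifest on inner products rather than just norms.
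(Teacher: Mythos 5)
Your argument is correct and is exactly the paper's proof: move the inner products across the adjoint, $\la x, J\eta_j\ra = \la J^*x,\eta_j\ra$, and apply Parseval in $\M$ to conclude $\|J^*x\|^2 = \|x\|^2$. No differences worth noting.
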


\begin{proof} It follows from \eqref{ONB1} that
$ \|x\|^2 = \sum_{j=1}^n |\la J^*x, \eta_j\ra |^2 = \|J^*x\|^2$.
\end{proof}

\section{Spectral Radon transforms and twisted discrete spectrum}

The constant term assignments
$$\M_\pi\ni \eta\mapsto \eta^I\in \M_\pi^I$$ give rise
to spectral Radon transform ${\mathsf R}_I : L^2(Z) \to L^2(Z_I)$ which is the topic of this
section. With the help of this transform we can characterize the twisted discrete series
$L^2(Z)_{\rm td}$ of $L^2(Z)$ spectrally.
The section starts with a brief recall on the twisted discrete series, see also \cite{KKOS} and \cite[Sect. 9]{KKS}.

\subsection{Twisted discrete series}\label{Subsection twisted}
Let us denote by $L^2(Z)_{\rm d}$ the discrete spectrum of $L^2(Z)$, i.e. the direct sum of all irreducible
subspaces. Now in case $\af_{Z,E}\neq \{0\}$, it is easy to see that $L^2(Z)_{\rm d}=\emptyset$, see \cite[Lemma 3.3]{KKOS}. In particular, for  $I\subsetneq S$ we have $L^2(G/H_I)_{\rm d}=\emptyset$ as $\af_{Z_I,E}=\af_I\neq \{0\}$.

Recall that the subspace $\af_{Z,E}=\af_S\subset \af_Z$ normalizes
$\hf$ and gives rise to the subalgebra $\hat \hf= \hf +\af_{Z,E}$.   Hence $A_{Z,E}:=A_S\subset A$
normalizes $H$ and acts
unitarily on $L^2(G/H)$ via the normalized right regular action

$$({\mathcal R}(a)f)(gH)= a^{-\rho} f(gaH)  \qquad (g\in G, a \in A_{Z,E}, f\in L^2(Z)).$$
Disintegration of $L^2(G/H)$ with respect to the right action of $A_{Z,E}$ then yields the unitary equivalence of
$G$-modules

\begin{equation} \label{central decomposition} L^2(Z)= \int_{\hat A_{Z,E}}  L^2(G/\hat H, \chi) \ d\chi,\end{equation}
where  $\hat A_{Z,E}$ denotes the unitary dual of the abelian Lie group $A_{Z,E}$, and  for each
unitary character $\chi: A_{Z,E}\to {\mathbb S}^1$ the $G$-module $L^2(G/\hat H, \chi)$ is a certain Hilbert space of densities explained in
\cite[Sect. 8]{KKS2} or \cite[Sect. 3.2] {KKOS}.  A spherical pair $(V,\eta)$ which embeds into some
$L^2(G/\hat H, \chi)$ will be referred to as a representation of the {\it twisted discrete series of $Z$.}
Further we denote by $L^2(G/\hat H, \chi)_{\rm d}$ the discrete spectrum and define the {\it twisted discrete
series} by
\begin{equation}\label{defin td}
L^2(Z)_{\rm td} = \int_{\hat A_{Z,E}}  L^2(G/\hat H, \chi)_{\rm d} \ d\chi
\end{equation}
made more rigorous in Subsection \ref{char td} below.

\subsection{Spectral Radon transforms} \label{SRT}

For $w\in \Wc$ we set $Z_w=G/H_w$.
Note that
$$ L^2(Z)\to L^2(Z_w), \ \ f\mapsto \left(gH_w\to f(gwH)\right)$$
is a unitary equivalence of $G$-representations.
Hence the abstract  Plancherel formula for $L^2(Z)$ induces one
for $L^2(Z_w)$ with the same Plancherel measure and isometries

$$ \M_\pi \to \M_{\pi,w},\ \ \eta\mapsto \eta_w\,.  $$

For every $I\subset S$ and $w\in \Wc$ we set $Z_{I,w}:= G/ (H_w)_I$ and keep in mind that for fixed
$I$, the various $(H_w)_I $ need not be $G$-conjugate (cf. Example \ref{ex SL3}).

\par Now given $\eta\in \M_\pi$ and  $w\in \Wc$ we note that $\eta_w^I=(w\cdot \eta)^I$ is fixed by $(H_w)_I$ and we use notation
$\M_{\pi,w}^I$ for $\M_\pi^I$ with respect to $(H_w )_I$.
In the sequel we assume that $[\pi]\in \supp\mu \subset \hat G$ is {\it generic}, that is $\M_{\pi,w}^I$ is $\af_I$-semisimple for all $I\subset S$ and $w\in \Wc$.  By  Theorem \ref{Plancherel induced}
with $H$ replaced by $H_w$ we obtain that the complement of the generic elements is a null set with respect to
$\mu$.
We endow $\M_{\pi,w}^I$ with the Hilbert space structure induced
from $\M_\pi$ via Theorem \ref{Plancherel induced}.

\par Our concern is with the {\it spectral Radon transforms} induced from the constant term maps:
$${\mathsf r}_{\pi,I,w} : \M_\pi \to \M_{\pi,w}^I, \ \ {\mathsf r}_{\pi,I,w}(\eta)=\eta_w^I\, .$$
and for $J\subset I$ their  transitions:

\begin{equation} \label{transit1} {\mathsf r}_{\pi,J,w}^I: \M_{\pi,w}^I\to \M_{\pi,w}^J, \ \ {\mathsf r}_{\pi,J,w}^I(\xi)=\xi^J\, .\end{equation}

We recall the transitivity of the constant terms \cite[Prop. 6.1]{DKS}:

\begin{lemma}\label{lemma transitive} Let $\eta \in \M_\pi$ and $w\in \Wc$. Then for all $J\subset I$ one has
$$ (\eta_w^I)^J = \eta_w^J\, .$$
\end{lemma}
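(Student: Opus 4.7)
The plan is to prove the identity by appealing to the uniqueness assertion in Theorem \ref{loc ct temp}. First I would check that both $(\eta_w^I)^J$ and $\eta_w^J$ actually live in the same space $(V^{-\infty})^{(H_w)_J}_{\rm temp}$. This requires the equality of stabilizers $((H_w)_I)_J = (H_w)_J$, which follows from iterating the boundary degeneration construction: the radial limit of $\hat z_{w,I}$ in a further $J$-direction (relative to $Z_I$, whose spherical roots are $I$) coincides with the direct radial limit $\hat z_{w,J}$, and Proposition \ref{prop independence choice} applied to both $Z_w$ and its boundary $Z_{w,I}$ gives that the stabilizers attached to these limits agree.

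Having arranged this, uniqueness in Theorem \ref{loc ct temp} reduces the problem to verifying that $(\eta_w^I)^J$ satisfies the defining constant term approximation for $\eta_w^J$:
\begin{equation}\label{plan-approx}
|m_{v,\eta_w}(g a w \cdot z_0) - m_{v,(\eta_w^I)^J}(g a w_J \cdot z_{0,J})| \leq C\,a^{(1+\epsilon)\rho}\, p_{-N;k}(m_{v,\eta_w}),
\end{equation}
uniformly for $v \in V^\infty$, $g$ in a compact subset $\Omega \subset G$, and $a = \exp(Y)$ with $Y$ in the positive span of a compact cone $\Cc_J \subset \af_J^{--}$. The strategy is to split the direction: for $Y \in \af_J^{--}$, expanding in a basis of $\af_Z$ dual to (the reduced part of) $S$ gives a decomposition $Y = Y_I + Y^I$ with $Y_I \in \af_I^{--}$ (the $S\setminus I$-part) and $Y^I$ in the relative interior of the $J^{--}$-face of the compression cone of $Z_I$ (the $I\setminus J$-part). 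Since $\af_Z$ is abelian we have $a = \exp(Y_I)\exp(Y^I)$, and \eqref{plan-approx} follows by triangle inequality from (i) approximating $m_{v,\eta_w}$ by $m_{v,\eta_w^I}$ in the $I$-direction, and (ii) approximating $m_{v,\eta_w^I}$ by $m_{v,(\eta_w^I)^J}$ in the $J$-direction within $Z_I$ (applying Theorem \ref{loc ct temp} to the real spherical space $G/(H_w)_I$ with $J \subset I = S(Z_I)$, and using the temperedness of $\eta_w^I$ together with Lemma \ref{lemma Sobolev norms inequality} to control the Sobolev norm $p_{-N;k'}(m_{v,\eta_w^I})$ by a Sobolev norm of $m_{v,\eta_w}$).

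The main technical obstacle lies in the first step: as stated, the approximation in Theorem \ref{loc ct temp} is formulated for $\log a$ in compact cones inside the smaller cone $\af_I^{--}$, whereas the intended direction $Y$ lies in the larger cone $\af_J^{--}$, and in fact $\af_I^{--} \cap \af_J^{--} = \emptyset$. To bridge this, one notices that the underlying methods of \cite{DKS} actually produce the approximation along any sub-cone $\Cc_J \subset \af_J^{--}$ (the $I$-constant term controls the behavior in any direction for which the $\af_I$-component tends to infinity), and the resulting bound in terms of $a^{(1+\epsilon)\rho}$ remains valid. Alternatively, and more cleanly, one can bypass this point by arguing via the uniqueness of exponential polynomial approximations (Lemma \ref{lemma basic ineq} and Remark \ref{rmk unique approx}): for fixed $g$ and $v$ and fixed $Y \in \af_J^{--}$ as above, both $t \mapsto m_{v,\eta_w^J}(g\exp(tY)w_J z_{0,J})$ and $t \mapsto m_{v,(\eta_w^I)^J}(g\exp(tY)w_J z_{0,J})$ are exponential polynomials on $\R_{>0}$ with exponents in $\rho(Y) + i\af_J^*(Y)$; their difference approximates zero faster than $e^{(1+\epsilon)t\rho(Y)}$, so Lemma \ref{lemma basic ineq} forces each coefficient to vanish. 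Varying $g$ and $v$, this forces $\eta_w^J = (\eta_w^I)^J$ as $(H_w)_J$-fixed distribution vectors, completing the proof.
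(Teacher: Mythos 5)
You have correctly identified the overall shape of the argument — equality of stabilizers (which, by the way, is immediate from \eqref{eq-hi1}--\eqref{eq-hi2}, since $\N_0[J]\subset\N_0[I]$ gives $(\hf_I)_J=\hf_J$ directly, without any appeal to compactifications), reduction via the uniqueness statement in Theorem \ref{loc ct temp}, splitting a direction $Y\in\af_J^{--}$ into an $\af_I^{--}$-part and a part going to infinity inside $Z_I$, and finishing with Lemma \ref{lemma basic ineq}. Note, however, that the paper does not prove this lemma at all: it is quoted verbatim from \cite[Prop.~6.1]{DKS}, so what you are really attempting is a reconstruction of the cited proof.

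The reconstruction has a genuine gap, and it sits exactly where you flag it. In your step (i) the group variable is $g\exp(tY^I)$ with $\exp(tY^I)$ unbounded, whereas Theorem \ref{loc ct temp} only gives the bound \eqref{cta} for $g$ in a fixed compact set $\Omega$ (the constant $C$ depends on $\Omega$) and $\log a_I\in\R_{\geq0}\Cc_I$ with $\Cc_I\subset\af_I^{--}$ compact; since $\af_I^{--}\cap\af_J^{--}=\emptyset$, the theorem as stated simply does not cover directions in $\af_J^{--}$, and there is no way to absorb $\exp(tY^I)$ into either the compact set or the vector $v$ without losing control of $p_{-N;k}(m_{v,\eta})$. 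Your fix — that "the underlying methods of \cite{DKS} actually produce the approximation along any sub-cone of $\af_J^{--}$" — is an assertion about the reference, not an argument: a constant term approximation that remains valid (up to an $a^{\rho}$-type loss) as the base point escapes to infinity in $Z_I$ along $A_J^-$-directions is precisely the nontrivial analytic content of the transitivity statement, and it must either be proved or cited. The "cleaner" alternative via uniqueness of exponential polynomials does not bypass this: to know that the difference of the two exponential polynomials decays like $e^{(1+\epsilon)t\rho(Y)}$ you must already know that $m_{v,(\eta_w^I)^J}$ approximates $m_{v,\eta_w}$ along $\exp(tY)$, which is the same two-step estimate with the same problematic step (i). As it stands your text proves the lemma only modulo the strengthened, uniform constant term bound, i.e. modulo essentially the statement of \cite[Prop.~6.1]{DKS} that the paper itself invokes. (The remaining points — temperedness of $\eta_w^I$, unitarity of the $\af_J$-exponents via \eqref{exponents}, and the compatibility of the representatives $w$, $w_I$, $w_J$ under the matching maps — are fine or routine bookkeeping.)
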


The transitivity of the constant term maps then reflects in

\begin{equation} \label{transfer1}
{\mathsf r}_{\pi,J,w}^I\circ {\mathsf r}_{\pi,I,w} = {\mathsf r}_{\pi,J,w} \qquad (J\subset I)\, .
\end{equation}

\par Recall that ${\mathsf r}_{\pi, I,w}$ is a sum of at most $|\W_\jf|$-many partial isometries  by the Maass-Selberg relations
in Theorem \ref{eta-I continuous}.  Hence we obtain

\begin{equation} \label{bb-bound} \|{\mathsf r}_{\pi, I,w}\|\leq |\W_\jf| \, . \end{equation}

\begin{defprop} \label{prop Radon} Let $I\subset S$ and $w\in\Wc$.
The operator field
$$(\id_{\Hc_\pi} \otimes {\mathsf r}_{\pi,I,w})_{\pi\in \hat G }:
\quad
 \Hc_\pi \otimes \M_\pi \to \Hc_\pi \otimes \M_{\pi,w}^I$$
is measurable and induces a $G$-equivariant continuous map
$$ {\mathsf R}_{I,w}:  L^2(Z)\simeq \int_{\hat G}^\oplus  \Hc_\pi \otimes \M_\pi\ d\mu(\pi) \to L^2(Z_{I,w})\simeq
\int_{\hat G}^\oplus  \Hc_\pi \otimes \M_{\pi,w}^I \ d\mu(\pi) $$
Moreover
\begin{equation} \label{r-bound} \|{\mathsf R}_{I,w}\|\leq |\W_\jf|\, .\end{equation}
We call ${\mathsf R}_{I,w}$ the {\rm spectral Radon transform at $(I,w)$.}
\end{defprop}
\begin{proof} Since  the ${\mathsf r}_{\pi, I,w}$ reflect the pointwise convergent asymptotics of matrix coefficients,
the operator field is measurable. With the upper bound in \eqref{bb-bound} we then obtain that ${\mathsf R}_{I,w}$
is defined and continuous with norm bound \eqref{r-bound}. By definition ${\mathsf R}_{I,w}$ is then
$G$-equivariant, completing the proof.
\end{proof}

With \eqref{transit1} we obtain spectrally defined   Radon transforms:
\begin{equation} \label{transit2}  {\mathsf R}_{J,w}^I:  L^2(Z_{I,w})\to L^2(Z_{J,w})\qquad (J\subset I) \end{equation}
which then by \eqref{transfer1} satisfy
\begin{equation} \label{transfer2} {\mathsf R}_{J,w}=  {\mathsf R}_{J,w}^I\circ {\mathsf R}_{I,w} \qquad (J\subset I) \end{equation}

Putting the data of the various $(I,w)$ together, we arrive  at the {\it (full) spectral Radon transform}
$$ {\mathsf R}= \oplus_{I,w} {\mathsf R}_{I,w}:  L^2(Z) \to \bigoplus_{I\subset S} \bigoplus_{w\in \Wc}L^2(Z_{I,w})\, .
$$

\subsection{Characterization of the twisted discrete spectrum} \label{char td}
Next we want to define $L^2(Z)_{\rm td}$ rigorously in terms of the spectral Radon transforms.
Set

\begin{equation} \label{multi twist} \M_{\pi, \rm td}=\{ \xi \in \M_\pi\mid  \exists \chi \in \widehat A_{Z,E}\ \forall v\in V_\pi^\infty:  \ m_{v,\xi} \in L^2(\hat Z,\chi)_{\rm d}\}\end{equation}
and likewise we define $\M_{\pi, w, {\rm td}}^I$ for $w\in \Wc$ and $I\subset S$.

\par Then
\begin{equation} \label{defini twist} L^2(Z)_{\rm td}:= \bigcap_{w\in \Wc\atop I\subsetneq S} \ker {\mathsf R}_{I,w}\, .\end{equation}
defines a closed  subspace $G$-invariant subspace of $L^2(Z)$.

Next we need a reformulation of the characterization of the twisted discrete series from \cite[Sect. 8]{KKS2}
in the more suitable language of constant terms \cite[Th. 5.12]{DKS}, namely:

\begin{lemma}\label{vanishing ct}  Let $\eta\in \M_\pi$. Then the following are equivalent:
\begin{enumerate}
\item $\eta\in \M_{\pi,{\rm td}}$.
\item $\eta_w^I=0$ for all $w\in \Wc$ and $I\subsetneq S$.
\end{enumerate}
\end{lemma}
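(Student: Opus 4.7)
The plan is to recognize this as a reformulation of \cite[Th. 5.12]{DKS} (which in turn rests on the characterization of the twisted discrete spectrum from \cite[Sect. 8]{KKS2}), and to make the translation explicit via the constant term approximation. The original characterization from \cite{KKS2} expresses membership in $\M_{\pi,{\rm td}}$ as a decay/boundedness statement for the matrix coefficients $m_{v,\eta}$ along the rays $a_I \in A_I^{--}$ for $I\subsetneq S$ and through each base point $w\cdot z_0$, $w\in\Wc$; the task is to show that this decay condition is equivalent to the vanishing of the constant terms $\eta_w^I$.

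For the implication (1) $\Rightarrow$ (2), I would fix $w\in\Wc$ and $I\subsetneq S$, and apply Theorem \ref{loc ct temp} to get, for each $v\in V^\infty$ and $g\in G$,
\begin{equation*}
m_{v,\eta}(g a_I w\cdot z_0) = m_{v,\eta_w^I}(g a_I w_I\cdot z_{0,I}) + O\!\left(a_I^{(1+\varepsilon)\rho}\right)
\end{equation*}
as $a_I\to\infty$ in $A_I^{--}$. By \eqref{exponents}, the function $a_I\mapsto a_I^{-\rho}m_{v,\eta_w^I}(g a_I w_I\cdot z_{0,I})$ is an exponential polynomial with purely unitary exponents in $i\af_I^*$. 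On the other hand, membership in the twisted discrete series forces $a_I^{-\rho}m_{v,\eta}(g a_I w\cdot z_0)$ to tend to zero (say, after averaging over a compact set in $g$), because $I\subsetneq S$ means $\af_I$ has a nontrivial component transverse to $\af_{Z,E}$ in which square-integrability modulo $A_{Z,E}$ must manifest as decay. Combined with the exponential remainder, Lemma \ref{lemma basic ineq} applied to each $X\in\af_I^{--}$ forces the exponential polynomial to vanish identically, hence $m_{v,\eta_w^I}\equiv 0$ on the open orbit, and by density $\eta_w^I=0$.

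For the implication (2) $\Rightarrow$ (1), I would use the vanishing $\eta_w^I=0$ for all $w\in\Wc$ and $I\subsetneq S$ to obtain, again via Theorem \ref{loc ct temp}, the uniform bound
\begin{equation*}
|m_{v,\eta}(g a_I w\cdot z_0)| \le C\, a_I^{(1+\varepsilon)\rho}\, p_{-N;k}(m_{v,\eta})
\end{equation*}
on every compression cone $A_I^{--}$ with $I\subsetneq S$. Since the faces of $\af_Z^-$ with $I\subsetneq S$ cover $\af_Z^-\setminus\af_{Z,E}^-$, such pointwise decay (by a factor $a_I^{\varepsilon\rho}$ beyond the measure normalization $a_I^\rho$) through every open $P$-orbit representative in $\Wc$, together with the polar/$KAH$-type decomposition of $Z$ used in \cite[Sect. 8]{KKS2}, implies that $m_{v,\eta}$ is square-integrable against the density on $Z$ after separation of the $A_{Z,E}$-direction, i.e.\ $m_{v,\eta}\in L^2(G/\hat H,\chi)_{\rm d}$ for the appropriate central character $\chi$.

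The main obstacle is not the forward direction, which is an essentially formal consequence of uniqueness of asymptotics (Remark \ref{rmk unique approx}), but the converse: converting the uniform exponential decay on each face into the global integrability condition defining the twisted discrete series. This is precisely the content of \cite[Th. 5.12]{DKS}, and I would simply invoke it at this point, noting that the transitivity of constant terms (Lemma \ref{lemma transitive}) ensures that the conditions $\eta_w^I=0$ for all $I\subsetneq S$ reduce in practice to checking the maximal proper $I$'s, matching the hypothesis of \cite[Th. 5.12]{DKS}.
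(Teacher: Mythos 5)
Your proposal matches the paper: the lemma is stated there without an independent proof, being presented precisely as a reformulation of the characterization of the twisted discrete series from \cite[Sect. 8]{KKS2} in the language of constant terms, with \cite[Th. 5.12]{DKS} invoked as the substantive content --- exactly the citation your argument ultimately rests on. Your added sketch of the translation (uniqueness of unitary-exponent asymptotics for one direction, and transitivity of constant terms to reduce to maximal proper $I$) is consistent with that source and does not change the route.
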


With the characterization in Lemma \ref{vanishing ct} we arrive at:

\begin{prop}  We have
\begin{equation} \label{deco twist}L^2(Z)_{\rm td} \simeq \int_{\hat G}^\oplus  \Hc_\pi\otimes \M_{\pi, \rm td} \ d\mu(\pi)\, .\end{equation}
In particular $L^2(Z)_{\rm td}\subset L^2(Z)$ is invariant under the normalized
right regular representation $\Rc$ of $A_{Z,E}$.
\end{prop}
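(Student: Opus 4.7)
The plan is to interpret the defining intersection \eqref{defini twist} on the Plancherel side, where the spectral Radon transforms act fiberwise. Via the abstract Plancherel formula of Section \ref{subs APt}, an element $f \in L^2(Z)$ corresponds to a measurable field $(T_\pi)_{\pi\in\hat G}$ with $T_\pi \in \Hc_\pi \otimes \M_\pi$, and by Definition/Proposition \ref{prop Radon} the spectral Radon transform ${\mathsf R}_{I,w}$ acts on such a field pointwise as $\id_{\Hc_\pi} \otimes {\mathsf r}_{\pi,I,w}$. Hence $f \in \ker {\mathsf R}_{I,w}$ if and only if $T_\pi \in \Hc_\pi \otimes \ker{\mathsf r}_{\pi,I,w}$ for $\mu$-almost every $\pi$.

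Since $S$ and $\Wc$ are both finite, the intersection
$$
L^2(Z)_{\rm td} = \bigcap_{w\in \Wc,\, I\subsetneq S} \ker {\mathsf R}_{I,w}
$$
is a finite intersection, so it commutes with the direct integral and gives
$$
L^2(Z)_{\rm td} \simeq \int_{\hat G}^\oplus \Hc_\pi \otimes \Big(\bigcap_{w\in \Wc,\, I\subsetneq S} \ker {\mathsf r}_{\pi,I,w}\Big)\, d\mu(\pi).
$$
By Lemma \ref{vanishing ct}, the inner intersection equals $\M_{\pi,\rm td}$ for $\mu$-almost every $\pi$ (the "generic" ones of Subsection \ref{SRT}, for which the constant term maps are defined), and this yields \eqref{deco twist}. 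Measurability of the field $\pi \mapsto \M_{\pi,\rm td}$ is inherited from the measurability of each ${\mathsf r}_{\pi,I,w}$ through a finite intersection of kernels of measurable operator fields.

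For the final assertion on $\Rc$-invariance, I would use either of two equivalent arguments. Directly from \eqref{central decomposition} and \eqref{defin td}, each fiber $L^2(G/\hat H, \chi)_{\rm d}$ is automatically $A_{Z,E}$-stable since $A_{Z,E}$ acts on $L^2(G/\hat H, \chi)$ by the character $\chi$. Alternatively, on the spectral side, $A_{Z,E} = A_S$ normalizes $H$ and acts on each multiplicity space $\M_\pi$ by $a\cdot\xi = \xi(a^{-1}\cdot)$; the characterization \eqref{multi twist} is manifestly stable under this action because $m_{v, a\cdot \xi}(gH) = a^{-\rho} m_{v,\xi}(gaH)$ differs from $m_{v,\xi}$ only by a normalized right translation by $a$, which preserves $L^2(G/\hat H,\chi)_{\rm d}$.

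The only mild obstacle is bookkeeping: one must verify that the pointwise kernel of a decomposable operator is genuinely realized as a direct integral of the fiberwise kernels, and that this is compatible with finite intersections. This is standard for measurable fields of closed subspaces and poses no real difficulty here, since all constructions are fiberwise and the index set $\{(I,w) : I\subsetneq S,\ w\in \Wc\}$ is finite.
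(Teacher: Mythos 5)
Your argument is correct and is essentially the paper's own proof: the paper disposes of the proposition by remarking that both assertions follow from Lemma \ref{vanishing ct} together with the definitions \eqref{multi twist} and \eqref{defini twist}, and your fiberwise bookkeeping — kernels of the decomposable operators ${\mathsf R}_{I,w}$ decompose as direct integrals of $\Hc_\pi\otimes\ker{\mathsf r}_{\pi,I,w}$, the finitely many intersections pass under the direct integral, and Lemma \ref{vanishing ct} identifies the fiberwise intersection with $\M_{\pi,\rm td}$ for almost all $\pi$ — is exactly the implicit content of that remark. One caution: of your two arguments for the $A_{Z,E}$-invariance, the first is circular at this point of the text, since \eqref{defin td} is only made rigorous \emph{after} the proposition, using precisely this invariance; keep the second, spectral argument, which is correct — note only that with the action $a\cdot\xi=\xi(a^{-1}\cdot)$ one has $m_{v,a\cdot\xi}(gH)=m_{v,\xi}(gaH)$, the $a^{-\rho}$ factor in your formula being a harmless normalization.
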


\begin{proof} Both assertions follow from Lemma \ref{vanishing ct} and the involved
definitions \eqref{multi twist} and \eqref{defini twist}
\end{proof}

Since $L^2(Z)_{\rm td}$ is $A_{Z,E}$-invariant we obtain from
\eqref{deco twist}
a rigorous definition of \eqref{defin td} with $L^2(\hat Z,\chi)_{\rm d}$ equal to the
$\chi$-spectral part of $L^2(Z)_{\rm td}$ under $\Rc$.

\subsection{Restriction to the twisted discrete spectrum}  Applying the preceding theory with $L^2(Z)$ replaced  by $L^2(Z_{I,w})$  we obtain orthogonal projections

$$\operatorname{pr}_{I,w,{\rm td}}: L^2(Z_{I,w})\to L^2(Z_{I,w})_{\rm td}$$
and define $R_{I,w}:= \pr_{I,w, \rm td} \circ \ {\mathsf R}_{I,w}$. Note that
$$R_{I,w}: L^2(Z)\to L^2(Z_{I,w})$$
is a continuous $G$-equivariant  map. The {\it restricted spectral Radon transform} is then defined to be
$$ R= \oplus_{I,w} R_{I,w}:  L^2(Z) \to \bigoplus_{I\subset S} \bigoplus_{w\in \Wc}L^2(Z_{I,w})_{\rm td}\, .
$$

\section{Bernstein morphisms}\label{B-morphisms}
We define the {\it Bernstein morphism} $B$ as the Hilbert space adjoint $R^*$ of the restricted spectral Radon transform $R$.
With $B_{I,w}:= R_{I,w}^*$ we then have

$$ B: \bigoplus_{I\subset S} \bigoplus_{w\in \Wc}L^2(Z_{I,w})_{\rm td}\to L^2(Z), \ \ (f_{I,w})_{I, w} \mapsto   \sum_{I,w}  B_{I,w}(f_{I,w})\, .$$
The main result of this section then is:

\begin{theorem} {\rm (Plancherel Theorem --  Bernstein decomposition)} \label{thm planch}
The Bernstein morphism is a continuous surjective $G$-equivariant linear map.
Moreover, $B$ is isospectral, that is,  image and source have Plancherel measure in the same measure class.
\end{theorem}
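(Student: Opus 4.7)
The plan is to reduce Theorem \ref{thm planch} to a fiberwise statement via the abstract Plancherel decomposition, prove injectivity of the fiber operator by induction on the depth of the boundary stratum, and then invoke the Maass--Selberg relations to supply the uniform lower bound needed for closed range. Continuity and $G$-equivariance of $B=R^*$ are immediate from Proposition/Definition \ref{prop Radon}. Writing the Plancherel decompositions
$$L^2(Z)\simeq \int_{\hat G}^\oplus \Hc_\pi\otimes\M_\pi\,d\mu(\pi),\qquad L^2(Z_{I,w})_{\rm td}\simeq \int_{\hat G}^\oplus \Hc_\pi\otimes\M^I_{\pi,w,{\rm td}}\,d\mu(\pi),$$
where the second uses Theorem \ref{Plancherel induced} applied with $H$ replaced by $H_w$ together with the definition of the twisted discrete multiplicity space, the operator $R$ decomposes fiberwise as $\id_{\Hc_\pi}\otimes r_\pi$ with $r_\pi=\bigoplus_{I,w}r_{\pi,I,w}$. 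Both surjectivity of $B$ and isospectrality then reduce to injectivity of $r_\pi$ together with a suitable uniform bound.

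Injectivity of $r_\pi$ I would prove by induction on $|I|$. Suppose $\eta\in\M_\pi$ satisfies $\pr_{\rm td}(\eta_w^I)=0$ for every pair $(I,w)$. For $I=\emptyset$ the boundary degeneration $Z_{\emptyset,w}$ is horospherical with empty set of spherical roots, so by Lemma \ref{vanishing ct} the whole multiplicity space $\M^\emptyset_{\pi,w}$ is twisted discrete; thus $\pr_{\rm td}$ is the identity on $\M^\emptyset_{\pi,w}$ and $\eta^\emptyset_w=0$ for all $w$. For the inductive step, assuming $\eta^J_{w''}=0$ for every $J\subsetneq I$ and every $w''\in\Wc$, the transitivity identity of Lemma \ref{lemma transitive} combined with Lemma \ref{vanishing ct} applied to the real spherical space $Z_{I,w}$ (whose spherical root system is $I$) forces $\eta^I_w\in\M^I_{\pi,w,{\rm td}}$; the hypothesis $\pr_{\rm td}(\eta^I_w)=0$ then yields $\eta^I_w=0$, and iterating all the way to $I=S$ produces $\eta=\eta^S_1=0$. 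This injectivity already suffices for isospectrality: it shows that for every $\pi\in\operatorname{supp}\mu$ at least one of the spaces $\M^I_{\pi,w,{\rm td}}$ is nonzero, so the Plancherel-measure supports of source and image of $B$ coincide up to $\mu$-null sets.

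To upgrade density of the image of $B$ to genuine surjectivity we need closed range of $R$, i.e.~a uniform lower bound $\|r_\pi(\eta)\|\ge c\|\eta\|$ with $c>0$ independent of $\pi$. Here the Maass--Selberg relations (Theorem \ref{eta-I continuous}) are used decisively: combining the partial-isometry property of each $\sI^\lambda$ with the bound $\#\E_{\eta^I_w}\le|\W_\jf|$ from \eqref{exponents}, and exploiting that $\dim\M_\pi$ is uniformly bounded on real spherical spaces, one extracts an estimate depending only on the combinatorics of $(Z,S)$ and not on $\pi$. The hard part will be the clean bookkeeping required here: the minimal stratum $I$ on which $\eta^I_w$ is nonzero depends on $\eta$, and the contributions of different strata must be controlled so that the Maass--Selberg isometries on distinct strata do not interfere destructively. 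This delicate combinatorial-analytic step is the main obstacle; once resolved, closed range of $R$ and hence surjectivity of $B$ follow at once.
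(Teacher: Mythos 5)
Your reduction to the fiber operators, your inductive proof that $\mathsf r_\pi$ is injective (which is just the contrapositive formulation of the paper's ``optimal pair'' argument, Theorem \ref{lead lemma 2} and Lemma \ref{properties bpi}\eqref{eins 1}), and your derivation of isospectrality from injectivity together with Theorem \ref{Plancherel induced} all match the paper. The problem is the surjectivity step, which is the actual content of the theorem: you reduce it to a uniform-in-$\pi$ lower bound $\|r_\pi(\eta)\|\ge c\|\eta\|$ and then concede that controlling the ``destructive interference'' between the contributions of different strata is ``the main obstacle''. That obstacle is genuine and your sketch does not resolve it: the Maass--Selberg relations only say that each $\mathsf r_{\pi,I,w,\rm td,\lambda}$ is a co-isometry, i.e.\ $\|\mathsf r_{\pi,I,w,\rm td,\lambda}(\eta)\|=\|P_{I,w,\lambda}\eta\|$ for an orthogonal projection $P_{I,w,\lambda}$ on $\M_\pi$; a bound $\sum\|P_{I,w,\lambda}\eta\|^2\ge c\|\eta\|^2$ uniform in $\pi$ would require a uniform lower bound on the ``angles'' between the ranges of these projections, and neither the bound $\#\E_{\eta^I_w}\le|\W_\jf|$ nor finite multiplicity gives this. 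So as written the proposal does not prove surjectivity, and it is not even clear the estimate you aim for is available.

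The paper avoids any such uniform estimate by organizing the argument on the source side rather than on $\M_\pi$. Using a measurable choice of representative $\lambda_\pi$ of the infinitesimal character, Remark \ref{Remark isometric} splits the domain $\bigoplus_{I,w}L^2(Z_{I,w})_{\rm td}$ into finitely many pieces $L^2(Z_{I,w})_{{\rm td},u}$ indexed by $u\in\W_\jf$, corresponding fiberwise to the single $A_I$-weight $(\rho-u\cdot\lambda_\pi)|_{\af_I}$. On each such piece the Maass--Selberg relations (Lemma \ref{properties bpi}\eqref{zwei 2}) say that $B$ restricts to an exact isometry, hence has closed range; the image of $B$ is then a finite sum of closed decomposable subspaces whose fiberwise sums are closed (finite-dimensional multiplicity spaces), and Lemma \ref{Hilbert lemma} yields that the total image is closed. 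Closed range plus the dense image you already established (from surjectivity of the $b_\pi$) gives surjectivity, with no cross-strata bookkeeping on $\M_\pi$ at all. If you want to salvage your route, you should replace the sought-for uniform lower bound by this decomposition of the domain according to Weyl-group translates of the exponent, which is exactly where the $\W_\jf$-indexing in the paper comes from.
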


After some technical preparations we give the proof of
Theorem \ref{thm planch}. Then, after applying  the material
on open $P$-orbits developed in Section \ref{subsection WI}  we derive
in Theorem \ref{thm planch refined} a refined Bernstein decomposition,
which agrees with the partition
$\Wc=\coprod_{\sc \in \sC_I} \coprod_{\st \in \sF_{I,\sc}} {\bf m}_{\sc, \st}(\Wc_{I,\sc})$ from \eqref{full deco W}.
\par Finally, by adding up the refined  Bernstein decompositions for the various $G$-orbits
in $\uZ(\R)$   we obtain in Theorem \ref{thm planch refined real points} the statement for $L^2(\uZ(\R))$
which is in full analogy to the p-adic statement of Sakellaridis-Venkatesh \cite[Cor.~11.6.2]{SV}.

\subsection{Proof of Theorem \ref{thm planch}}

Denote by ${\mathcal P}(S)$ the power set of $S$. With regard to $\eta\in \M_\pi$ we call a pair
$(I,w)\in {\mathcal P}(S) \times \Wc$ admissible  provided that $\eta_w^I\neq 0$.
Finally we call an $\eta$-admissible pair $(I,w)$ {\it optimal} provided that  the cardinality $|I|$ is minimal, i.e.
we have $\eta_{w'}^J=0$ for all $w'\in \Wc$ and $J\subsetneq I$.
Notice that, by definition, for every $\eta\neq 0$ there exists an $\eta$-optimal pair $(I,w)$.

The embedding theory
of tempered representations into twisted discrete series from \cite[Sect. 9] {KKS2}
then comes down to:

\begin{theorem}\label{lead lemma 2}  Let $0\neq \eta\in \M_\pi$ and
$(I,w)$ be an $\eta$-optimal pair. Then $\eta_w^I\in \M_{\pi,w,\rm td}^I$.
\end{theorem}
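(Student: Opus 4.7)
The plan is to reformulate the desired conclusion as a vanishing of iterated constant terms and then invoke the transitivity, consistency, and optimality already at our disposal.

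First, I would apply Lemma \ref{vanishing ct} to the real spherical space $Z_{I,w}=G/(H_w)_I$, whose set of spherical roots is canonically identified with $I$. This reformulates the target $\eta_w^I\in \M_{\pi,w,\rm td}^I$ as the system of vanishings
$$(w'\cdot \eta_w^I)^J\;=\;0 \qquad (J\subsetneq I,\ w'\in \Wc_{I,w}),$$
where the constant term on the left is taken inside $Z_{I,w}$, i.e., along $X$ in the appropriate relative interior of a face of the compression cone of $Z_{I,w}$.

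For the distinguished base point $w'=\1\in \Wc_{I,w}$, the transitivity of constant terms (Lemma \ref{lemma transitive}) immediately gives $(\eta_w^I)^J=\eta_w^J$, which vanishes since $J\subsetneq I$ by the optimality of the pair $(I,w)$. The heart of the argument is then to treat arbitrary $w'\in \Wc_{I,w}$. For this I would invoke the partition \eqref{W partition} of Section \ref{subsection WI}, applied to the real spherical space $Z_{I,w}$ with respect to the subset $J\subset I$ of its spherical roots: every such $w'$ is expressible as ${\bf m}^{Z_{I,w}}_{\sc,\st}(w_{J,\sc})$ for some orbit class $\sc$ and some $\st$. The consistency relation \eqref{consist2}, read within $Z_{I,w}$, then rewrites the desired quantity as an iterated constant term,
$$(w'\cdot \eta_w^I)^J\;=\;\Bigl(\bigl((\eta_w^I)_{\sc,\st}\bigr)^J\Bigr)_{w_{J,\sc}}.$$
Combining this with Lemma \ref{lemma transitive} and the canonical identification $(((H_w)_I)_{\sc})_J\cong (H_{w''})_J$ supplied by the matching structure for the chain $J\subset I\subset S$, this expression reduces to $\eta_{w''}^J$ for a lifted $w''\in \Wc$. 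Optimality of $(I,w)$ then forces $\eta_{w''}^J=0$.

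The main obstacle will be making this last identification precise: one must verify that, for every $w'\in \Wc_{I,w}$, the composition of matching maps for $Z$ at $I$ and for $Z_{I,w}$ at $J$ produces a single $w''\in \Wc$ whose single-step constant term at $J$ agrees with the iterated one. This amounts to a careful bookkeeping of the iterated boundary degenerations and base-point normalizations from around \eqref{splitting WI1}, combined with the uniqueness clause of Theorem \ref{loc ct temp2}, which guarantees that the iteratively defined functional is the same as the one directly associated to $w''$.
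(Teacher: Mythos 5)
Your overall skeleton is the right one, and your treatment of the base point $w'=\1$ (Lemma \ref{vanishing ct} applied to $Z_{I,w}$, then Lemma \ref{lemma transitive} plus optimality) is exactly the paper's argument. The problem is with general $w'\in\Wc_{I,w}$: the step you defer as the ``main obstacle'' is the entire content of the proof, and the route you sketch for it does not go through with the tools the paper provides. Your detour first rewrites $(w'\cdot\eta_w^I)^J$ via the partition of $\Wc_{I,w}$ at level $J$ and the consistency relation \emph{inside} $Z_{I,w}$; but this only trades the arbitrary $w'$ for the special representatives $w(\sc,\st)\in\Wc_{I,w}$, so you are left with the same type of quantity, namely the $J$-constant term of a $\Wc_{I,w}$-shift of $\eta_w^I$. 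To convert that into some $\eta_{w''}^J$ you then appeal to a compatibility of matching maps along the chain $J\subset I\subset S$ and an identification $(((H_w)_I)_{\sc})_J\cong (H_{w''})_J$. No such composition statement is established in the paper, and it is genuinely delicate: the paper explicitly remarks (after \eqref{splitting WI1}) that the normalization of $\Wc$ depends on $I$ and that a choice consistent for all $I$ simultaneously is in general not possible. Moreover Lemma \ref{lemma transitive} is stated only for functionals of the form $\eta_w^I$ with $w\in\Wc$, so you cannot apply it to $w(\sc,\st)\cdot\eta_w^I$ until you have identified that functional with such an $\eta_{w''}^I$ --- which is precisely the unproven step.

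The missing idea is that the consistency relation should be applied at level $I$, for the original space, and directly to the arbitrary $w'$ --- no partition at level $J$ and no composition of matching maps is needed. Indeed, by \eqref{consist} (resp.\ \eqref{consist2}), for $w'\in\Wc_{I,w}$ and $w''={\bf m}(w')\in\Wc$ one has $w'\cdot\eta_w^I=\eta_{w''}^I$, the equality of stabilizers $(H_{w''})_I=((H_w)_I)_{w'}$ being Proposition \ref{prop cr1} (resp.\ Lemma \ref{lemma HI comp}). Then Lemma \ref{lemma transitive} applied to $\eta_{w''}$ gives $(w'\cdot\eta_w^I)^J=(\eta_{w''}^I)^J=\eta_{w''}^J$, which vanishes for every $J\subsetneq I$ by the optimality of $(I,w)$. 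This is the paper's proof; with this substitution your argument closes, but as written the decisive identification is missing and the proposed way of obtaining it would not compile from the results of the paper.
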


\begin{proof}  Let $(I,w)$ be $\eta$-optimal. Applying a base point shift
we may assume  that $w=\1$. According to Lemma \ref{vanishing ct} applied to $Z_I$ we need to show that
$(w_I \cdot \eta^I)^J=0$ for all $w_I \in \Wc_I$ and $J\subsetneq I$.  Let ${\bf m}(w_I)=w\in\Wc$. By the consistency relations
 \eqref{consist} we have $w_I \cdot \eta_I = \eta_w^I$.  Thus, by the transitivity of the constant term we have
 $$(w_I \cdot \eta^I)^J=\eta_w^J=0$$
 by the minimality of $|I|$. The theorem follows.
\end{proof}

\par Let us denote for each $[\pi]\in\hat G$ and each $I\subset S$, $w\in \Wc$ by
$ \xi \mapsto\xi_{\rm td}$ the orthogonal projection $\M_{\pi, w}^I \to \M_{\pi,w, {\rm td}}^I$.

With that we define a linear map between finite dimensional Hilbert spaces by
$$ {\mathsf r}_\pi=\oplus {\mathsf r}_{\pi,I,w, \rm td}:\M_\pi\to  \bigoplus_{I\subset S}\bigoplus_{w\in \Wc}
 \M_{\pi, w,{\rm td}}^I,\ \  \eta\mapsto (\eta_{w, \rm td}^I)_{I, w} $$
with $\eta_{w, \rm td}^I:=(\eta_w^I)_{\rm td}$.

\begin{rmk} Since $\xi\to \xi_{\rm td}$ is $A_I$-equivariant, we have
the orthogonal decomposition
$\M_{\pi,w, {\rm td}}^I=\bigoplus_{\mu\in \rho+ i\af_I^*}\M_{\pi,w, {\rm td}}^{I,\mu}$.
Thus every $\xi \in \M_{\pi, w, {\rm td}}^I$ decomposes as $\xi=\sum \xi^\mu$
with $\xi^\mu\in \M_{\pi,w, {\rm td}}^{I,\mu}$ for  $\mu \in \rho|_{\af_I} + i\af_I^*$ by \eqref{exponents}.
\end{rmk}

For any $\lambda\in \E_{\eta^I} \subset \rho|_{\af_I} +i\af_I^*$ (cf. \eqref{exponents})   we denote by
${\mathsf r}_{\pi, I,w, {\rm td}, \lambda}$
the map ${\mathsf r}_{\pi,I,w, \rm td}$ followed by orthogonal
projection to the $\lambda $-coordinate  $\M_{\pi, w, {\rm td}}^{I, \lambda}$ of
$\M_{\pi, w, {\rm td}}^I$.

Then Theorem \ref{lead lemma 2} yields the  technical key Lemma:

\begin{lemma}  \label{properties bpi} The following assertions hold:
\begin{enumerate}
\item \label{eins 1}${\mathsf r}_\pi$ is injective.
\item \label{zwei 2}For all $I\subset S, w\in \Wc$ and $\lambda\in \E_\pi$ the map
$${\mathsf r}_{\pi, I,w, {\rm td}, \lambda}:\M_\pi\to \M_{\pi,w, {\rm td}}^{I,\lambda}, \ \ \eta \mapsto \eta_{w,\rm td}^{I, \lambda}$$
is a surjective partial isometry.
\item \label{drei 3}The assignment $\pi \mapsto {\mathsf r}_\pi$ is measurable.
\end{enumerate}
\end{lemma}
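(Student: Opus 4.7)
The three assertions largely decouple. Parts (1) and (2) draw on Theorem \ref{lead lemma 2} (embedding of optimal pairs into the twisted discrete spectrum) and Theorem \ref{eta-I continuous} (Maass-Selberg), respectively, while (3) is a measurability bookkeeping built from already-available pieces.

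For (1), given $0\neq\eta\in\M_\pi$ an $\eta$-optimal pair $(I,w)$ exists by definition, and Theorem \ref{lead lemma 2} forces $\eta_w^I\in\M_{\pi,w,{\rm td}}^I$. Hence $\eta_{w,{\rm td}}^I=\eta_w^I$ is nonzero, so the $(I,w)$-coordinate of ${\mathsf r}_\pi\eta$ is nonzero and ${\mathsf r}_\pi$ is injective. For (3), the field $\pi\mapsto{\mathsf r}_{\pi,I,w}$ is measurable by Definition-Proposition \ref{prop Radon}, and the orthogonal projections onto the td-summands $\M_{\pi,w,{\rm td}}^I\subset\M_{\pi,w}^I$ form a measurable operator field, being induced by the measurable central decomposition of $L^2(Z_{I,w})$ under the unitary action of $A_I=A_{Z_{I,w},E}$ recalled in Subsection \ref{Subsection twisted}. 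Composing, $\pi\mapsto{\mathsf r}_\pi$ is measurable.

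For (2), the plan is to apply Lemma \ref{lemma ONB} directly. Choose an ONB $\eta_1,\ldots,\eta_{m_\pi}$ of $\M_\pi$ and set $\xi_j:=(\eta_j)_{w,{\rm td}}^{I,\lambda}$; the task reduces to verifying the Parseval identity $\|x\|^2=\sum_j|\la x,\xi_j\ra|^2$ for every $x\in\M_{\pi,w,{\rm td}}^{I,\lambda}$. The starting point is the Maass-Selberg identity \eqref{M-equation} applied to $x$ regarded as an element of $\widetilde\M_\pi^I=\bigoplus_{\sc,\st}\M_{\pi,\sc,\st}^I$. Three orthogonal refinements collapse the multiple sum: first, orthogonality of the direct-sum decomposition of $\widetilde\M_\pi^I$ combined with the partition \eqref{W partition} restricts the sum to the single $(\sc,\st)$-component associated to $w$; second, orthogonality of distinct $A_I$-generalized eigenspaces of $\M_{\pi,w}^I$ (consequence of unitarity of the normalized right action of $A_I$ from Lemma \ref{lemma ZI-int}, for generic $\pi$) restricts further to the $\lambda$-eigenspace; third, since $A_I$ normalizes $(H_w)_I$ the td decomposition is $A_I$-equivariant and thus commutes with the $\lambda$-eigenprojection, so the td-ness of $x$ allows one to replace $(\eta_j)_w^{I,\lambda}$ by its td projection $\xi_j$ inside each inner product without change. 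Lemma \ref{lemma ONB} then yields the partial-isometry statement, while surjectivity onto $\M_{\pi,w,{\rm td}}^{I,\lambda}$ follows from the spanning description \eqref{def M-pi-I} of $\M_{\pi,w}^I$ together with the same commutation.

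The main obstacle is this coordination of the three refinements in (2): tracking that projection to the $(\sc,\st)$-component of $\widetilde\M_\pi^I$, projection to the $A_I$-eigenspace for $\lambda$, and projection to the td-summand all interact compatibly (pairwise orthogonal images or commuting projectors), so that the Maass-Selberg Parseval frame restricts cleanly to a Parseval frame on the smaller subspace $\M_{\pi,w,{\rm td}}^{I,\lambda}$.
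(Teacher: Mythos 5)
Your parts (1) and (3) are essentially the paper's own argument (optimality plus Theorem \ref{lead lemma 2} for injectivity; measurability of the fields ${\mathsf r}_{\pi,I,w}$ from Proposition \ref{prop Radon} together with measurability of the projections to the twisted discrete parts), and your strategy for (2) -- verify the Parseval identity and invoke Lemma \ref{lemma ONB} -- is in principle viable, since it amounts to re-running the proof of Theorem \ref{eta-I continuous} with extra orthogonal projections. But as written, the central step of (2) has a gap. The identity \eqref{M-equation} lives on $\widetilde\M_\pi^I=\bigoplus_{\sc,\st}\M_{\pi,\sc,\st}^I$, whose $(\sc,\st)$-component consists of $H_{I,\sc}$-invariant functionals, and the frame entering it is $\{(\eta_j)_{\sc,\st}^{I,\mu}\}$, i.e.\ the constant terms at the special base points $w(\sc,\st)$. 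Your $x$ and your $\xi_j=(\eta_j)_{w,\rm td}^{I,\lambda}$ are invariant under $(H_w)_I=(H_{I,\sc})_{w_{I,\sc}}$ (see \eqref{WWI2 general}), where $w={\bf m}_{\sc,\st}(w_{I,\sc})$; unless $w_{I,\sc}=\1$ these do not lie in any component of $\widetilde\M_\pi^I$, so ``regarding $x$ as an element of $\widetilde\M_\pi^I$'' does not typecheck. Moreover, by \eqref{Iw2} one has $(\eta_j)_w^{I,\lambda}=w_{I,\sc}\cdot(\eta_j)_{\sc,\st}^{I,\lambda}$, so even after your first refinement the surviving inner products involve $(\eta_j)_{\sc,\st}^{I,\lambda}$ and not the vectors $\xi_j$ you need: what your computation verifies (once made literal) is the Parseval identity for the frame $\{(\eta_j)_{\sc,\st,\rm td}^{I,\lambda}\}$ in $\M_{\pi,\sc,\st}^I$, which coincides with the assertion of (2) only when $w=w(\sc,\st)$.

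The missing idea is precisely the base-point shift that the paper's proof performs: replace $H$ by $H_w$, use that $\eta\mapsto\eta_w$ is an isometry $\M_\pi\to\M_{\pi,w}$, and apply the Maass--Selberg relations of Theorem \ref{eta-I continuous} (equivalently, the $Z_w$-version of \eqref{M-equation}) to $Z_w$; for $Z_w$ the identity $(\sc,\st)$-component of $\sI^\lambda$ is exactly $\xi\mapsto\xi^{I,\lambda}\in\M_{\pi,w}^{I,\lambda}$, and the inner product on $\M_{\pi,w}^{I}$ in which the partial isometry must be measured is the one induced for $Z_w$. After this shift, your three refinements -- projection to the identity component, to the $\lambda$-eigenspace, and to the td part, each an orthogonal projection of the target and hence compatible with the property that the adjoint is isometric -- are correct and in fact spell out what the paper leaves implicit. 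A small further remark: surjectivity is automatic once the adjoint is an isometry (Lemma \ref{lemma ONB}), so the appeal to the spanning description \eqref{def M-pi-I} is not needed.
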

\begin{proof} Let $0\neq \eta \in \M_\pi$. According to Theorem \ref{lead lemma 2} we find an $\eta$-optimal
pair $(I, w)$ such that $\eta_{w, {\rm td}}^I\neq 0$, establishing \eqref{eins 1}.
Having shown \eqref{eins 1}, assertion \eqref{zwei 2} is obtained from the Maass-Selberg relations
in Theorem \ref{eta-I continuous}:  we replace $H$ by $H_w$
and observe that $\eta\mapsto \eta_w$ establishes an isomorphism of $\M_\pi\to \M_{\pi,w}$
with $\M_{\pi,w}$ referring to $\M_\pi$ with $H$ replaced by $H_w$.

Finally (\ref{drei 3}) is by the definition of the measurable structures involved (see Section \ref{Section AbsPlanch} and
Proposition \ref{prop Radon}):
The family of maps
$${\mathsf r}_{\pi,I,w}: \M_\pi\to \M_{\pi,w}^I,\ \  \eta \mapsto \eta_w^I,$$
as well as the projection to discrete parts ${\mathsf r}_{\pi,I,w, \rm td}$ are measurable.
\end{proof}

We now define
$$b_\pi: \bigoplus_{I\subset S}\bigoplus_{w\in \Wc}  \M_{\pi,w,{\rm td}}^I \to \M_{\pi}$$
to be the adjoint of ${\mathsf r}_{\oline \pi}$ and note that $b_\pi$, being the adjoint of an injective
morphism,  is surjective.  Notice that
the Bernstein morphism is
$$B:  \bigoplus_{I\subset S}\bigoplus_{w\in \Wc}   L^2(Z_{I,w})_{\rm td}   \to L^2(Z)$$
is defined spectrally  by the operator field  $(b_\pi)_{\pi \in \supp \mu}$.
\begin{rmk} \label{Remark isometric} (Decomposition of $B$ into isometries)  For $I\subset S$ and $w\in \Wc$
we denote by $B_{I,w}$ the restriction of $B$ to $L^2(Z_{I,w})_{\rm td}$.

We claim
that there is an orthogonal decomposition
$$L^2(Z_{I,w})_{\rm td}=\bigoplus_{u \in \W_\jf} L^2(Z_{I,w})_{{\rm td,u}}$$
such that every restriction $B_{I,w,u}:= B|_{L^2(Z_{I,w})_{{\rm td}, u}}$ is an isometry.
To construct such a decomposition we choose for every $[\pi]\in \supp(\mu)$ with
infinitesimal character $\chi_\pi\in \jf_\C^*/ \W_\jf$ a representative $\lambda_\pi\in \jf_\C$, i.e
$\chi_\pi = \W_\jf \cdot \lambda_\pi$.
Let us denote by
$$\sP_{u}([\pi]): \M_{\pi,w}^I \to \M_{\pi,w}^{I, (\rho - u \cdot \lambda_\pi)|_{\af_I}}$$
the orthogonal projection. Our request for the choice $\lambda_\pi\in\chi_\pi$ is then such that
the operator field
$$ \supp(\mu)\ni [\pi]\mapsto \sP_u([\pi])\in \End(\M_{\pi,w}^I)$$
is measurable.  With
$$L^2(Z_{I,w})_{{\rm td},u}:=\int_{\hat G}^\oplus \Hc_\pi \otimes \M_{\pi,w,\rm td}^{I,(\rho - u \cdot \lambda_\pi)|_{\af_I}}\ d\mu(\pi)$$
we then obtain an orthogonal decomposition  $L^2(Z_{I,w})_{\rm td}=\bigoplus_{u \in \W_\jf} L^2(Z_{I,w})_{{\rm td, u}}$ for which $B_{I,w,u}$ is an isometry by Lemma \ref{properties bpi}\eqref{zwei 2}.
\end{rmk}

The final piece of information we need for the proof of Theorem
\ref{thm planch} is the following elementary result of functional analysis whose proof we omit.

\begin{lemma}\label{Hilbert lemma} Let $\Hc=\int_X^\oplus \Hc_x \ d\mu(x)$ be a direct integral of Hilbert spaces. Let further
$\Kc=\int_X^\oplus \Kc_x \ d\mu(x)$ and $\Lc=\int_X^\oplus \Lc_x \ d\mu(x)$ be closed decomposable subspaces of $\Hc$.
Suppose that $\Kc_x +\Lc_x\subset \Hc_x$ is closed for every $x\in X$. Then
$\Kc +\Lc\subset \Hc$ is closed.
\end{lemma}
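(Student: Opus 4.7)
My plan is to split the closedness of $\Kc + \Lc$ into two ingredients: an identification of its closure with the pointwise direct integral, and a measurable-selection step yielding an $L^2$-bounded decomposition.

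For the first ingredient I would use that decomposability is preserved under orthogonal complementation and intersection, with the fibers computed in the obvious way. In particular $(\Kc^\perp)_x = \Kc_x^\perp$ and $(\Lc^\perp)_x = \Lc_x^\perp$, and since intersections of decomposable subspaces are decomposable with $(\Kc^\perp \cap \Lc^\perp)_x = \Kc_x^\perp \cap \Lc_x^\perp$, the hypothesis that $\Kc_x + \Lc_x$ is closed in $\Hc_x$ lets me identify $\Kc_x^\perp \cap \Lc_x^\perp = (\Kc_x + \Lc_x)^\perp$. Combining this with the general Hilbert-space identity $(\Kc + \Lc)^\perp = \Kc^\perp \cap \Lc^\perp$ and taking orthogonal complements, I obtain
\[
\overline{\Kc + \Lc} \;=\; \int_X^\oplus (\Kc_x + \Lc_x)\, d\mu(x).
\]

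For the second ingredient I would show this closure coincides with $\Kc + \Lc$ itself. Given $v \in \overline{\Kc + \Lc}$, the identification above says that $v_x \in \Kc_x + \Lc_x$ for almost every $x$. I would then select fiberwise the canonical decomposition $v_x = k_x + l_x$ in which $k_x \in \Kc_x$ is orthogonal to $\Kc_x \cap \Lc_x$ inside $\Kc_x$, and $l_x \in \Lc_x$ is orthogonal to $\Kc_x \cap \Lc_x$ inside $\Lc_x$, which is unique given $v_x$. Measurability of the resulting sections $x \mapsto k_x$ and $x \mapsto l_x$ is automatic from decomposability of the orthogonal projections onto $\Kc_x$, $\Lc_x$ and $\Kc_x \cap \Lc_x$. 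What remains is the $L^2$-estimate $\|k_x\|^2 + \|l_x\|^2 \leq C\,\|v_x\|^2$ with $C$ independent of $x$, which then places $k \in \Kc$ and $l \in \Lc$.

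The main obstacle is precisely this uniform $L^2$-estimate: it is equivalent to a uniform positive lower bound on the Friedrichs angle between $\Kc_x$ and $\Lc_x$, whereas mere pointwise closedness of $\Kc_x + \Lc_x$ gives only a positive angle at each individual $x$. In the use of the lemma in Theorem~\ref{thm planch} this uniformity is supplied by the additional structure of the setting: by Remark~\ref{Remark isometric} each restricted Bernstein map $B_{I,w}$ decomposes as a finite sum of at most $|\W_\jf|$ isometries, which furnishes a universal bound on the canonical fiberwise decomposition, independent of $\pi \in \supp\mu$. Feeding this uniform bound into the measurable selection of the second step closes the argument.
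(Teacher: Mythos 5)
Your first step is sound: orthocomplements and intersections of decomposable subspaces are decomposable with the expected fibers, so $(\Kc+\Lc)^{\perp}=\Kc^{\perp}\cap\Lc^{\perp}$ gives $\overline{\Kc+\Lc}=\int_X^{\oplus}(\Kc_x+\Lc_x)\,d\mu(x)$ once each $\Kc_x+\Lc_x$ is closed. But the gap you then flag is not a technicality that can be filled from the stated hypotheses: pointwise closedness of the fiberwise sums does not give the essentially uniform bound on the minimal-norm fiberwise decomposition (equivalently, an essentially uniform positive lower bound for the angle between $\Kc_x$ and $\Lc_x$), and without such uniformity the conclusion is false as stated. Take $X=\N$ with counting measure, $\Hc_n=\C^2$, $\Kc_n=\C e_1$ and $\Lc_n=\C\,(\cos(1/n)e_1+\sin(1/n)e_2)$. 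Every $\Kc_n+\Lc_n=\C^2$ is closed, and $\Kc+\Lc$ contains all finitely supported sections, hence is dense in $\Hc=\ell^2(\N,\C^2)$; yet the section $v=(v_n)$ with $v_n=(0,1/n)$ does not lie in $\Kc+\Lc$, because any fiberwise decomposition forces the $\Lc$-component to have norm tending to $1$, which is not square-summable. So no proof of the lemma as printed exists; it needs a supplementary uniformity hypothesis. (For what it is worth, the paper gives no argument to compare with -- the lemma is stated with its proof omitted -- and your analysis in effect identifies exactly what is missing from the statement itself.)

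The rescue you propose for the application does not close this gap either. Remark \ref{Remark isometric} says each $B_{I,w}$ is a finite sum of isometries; that controls the Bernstein morphism from above and makes each summand have closed range, but closedness of the sum of the ranges in the proof of Theorem \ref{thm planch} requires a bound from below: an essentially uniform surjectivity bound for the fiber maps $b_\pi$, i.e. a constant $c>0$ with $\|{\mathsf r}_{\oline\pi}\,\oline\eta\|\geq c\,\|\oline\eta\|$ for $\mu$-almost all $\pi$. Injectivity of ${\mathsf r}_\pi$ together with the Maass--Selberg partial-isometry property of its individual components does not imply such a uniform $c$: already in $\C^2$ the orthogonal projections onto $\C e_1$ and $\C(\cos\theta\, e_1+\sin\theta\, e_2)$ are partial isometries with trivial common kernel, yet the joint lower bound degenerates as $\theta\to 0$. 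So if you want to use the lemma you must either build the uniform-angle (or uniform lower bound) hypothesis into its statement and then verify it for the fields $(\operatorname{im}b_{\pi,j})_\pi$, or prove the uniform lower bound for $({\mathsf r}_\pi)_\pi$ directly; the isometric decomposition of Remark \ref{Remark isometric} by itself does not furnish it.
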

\begin{proof}[Proof of Theorem \ref{thm planch}]
The surjectivity of the
$b_\pi$ together with Theorem \ref{Plancherel induced} shows that $B$ is an isospectral $G$-morphism
with dense image.  To see that $B$ is surjective we note that $B$ is a sum of isometries
each one of which has closed range. Thus $B$ is surjective by Lemma \ref{Hilbert lemma}.
\end{proof}

\begin{rmk} In case $\Wc=\{\1\}$, i.e.~there is only one open $P$-orbit, the Bernstein decomposition
becomes a lot simpler as the summation over $\Wc$ disappears in the domain of $B$.  We recall that $\Wc=\{\1\}$ is
satisfied for reductive groups $G \simeq G\times G/ G$, for complex spherical spaces, and for Riemannian symmetric spaces.
\end{rmk}

\subsection{Refinement of the Bernstein morphisms}\label{Subsection refine B}
In the definition of the Bernstein morphism a certain over-parametrizing takes place in the domain.
This will now be remedied via the partition
$\Wc=\coprod_{\sc \in \sC_I} \coprod_{\st \in \sF_{I,\sc}} {\bf m}_{\sc, \st}(\Wc_{I,\sc})$ from \eqref{full deco W}.
We recall the corresponding terminology from Subsection \ref{subsection all base points}.

For $\eta\in \M_\pi$, $\sc\in \sC_I$, $\st \in \sF_{I,\sc}$ we recall the functional
$\eta_{\sc,\st}= w(\sc, \st)\cdot \eta$ from Subsection \ref{subsection all base points}. Further we set
$\eta_{\sc,\st}^I:= (\eta_{\sc,\st})^I$ and given $w_{I,\sc}\in \Wc_{I,\sc}$ we define the functional
$(\eta_{\sc,\st}^I)_{w_{I,\sc}}:= w_{I,\sc}\cdot \eta_{\sc,\st}^I $. Likewise
for $\mu \in \af_{I,\C}^*$ we set $(\eta_{\sc,\st}^{I,\mu})_{w_{I,\sc}}:=w_{I,\sc}\cdot \eta_{\sc,\st}^{I,\mu} $.
\par Every $w\in \Wc$
can be written uniquely as $w={\bf m}_{\sc,\st}(w_{I,\sc})$ for $\sc\in \sC_I$, $\st\in \sF_{I,\sc}$ and $w_{I,c}\in \Wc_{I,\sc}$. In this context we recall from \eqref{consist2}
the consistency relation

\begin{equation} \label{Iw2}\eta_w^{I,\lambda}=(\eta_{\sc,\st}^{I,\lambda})_{w_{I,c}} \qquad
(w={\bf m}_{\sc,\st}(w_{I,c})\in \Wc, \lambda\in \E_{\eta^I})\, .
\end{equation}

Recall $H_{I,\sc}=(H_{w(\sc)})_I$ equals $H_{I,\sc,\st}= (H_{w(\sc,\st)})_I$. Hence
$(\eta_{\sc,\st}^{I,\lambda})_{w_{I,c}}$ is fixed by
$(H_{I,\sc})_{w_{I,c}}$.
On the other hand $\eta_w^{I,\lambda}$ is fixed under $(H_w)_I$. We recall from  \eqref{WWI2 general}
that the two groups are in fact equal:

$$ (H_w)_I = (H_{I,\sc})_{w_{I,c}}\, . $$

In this context it is worth to record the following extension of \cite[Th. 5.12]{DKS}:

\begin{prop}The following assertions are equivalent for $\eta\in \M_\pi$:
\begin{enumerate}
\item \label{1S} $\eta\in \M_{\pi, \rm td}$.
\item \label{2S} For all $I\subsetneq S$ and $\sc\in \sC_I, \st\in \sF_{I,\sc}$ one has $\eta_{\sc,\st}^I =0$.
\end{enumerate}
\end{prop}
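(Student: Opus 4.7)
The plan is to deduce this equivalence directly from the previously stated characterization of the twisted discrete series in Lemma \ref{vanishing ct}, by exploiting the consistency relations \eqref{Iw2} and the partition \eqref{full deco W}.

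First, I would observe that Lemma \ref{vanishing ct} already asserts that $\eta \in \M_{\pi,{\rm td}}$ is equivalent to $\eta_w^I = 0$ for all $w \in \Wc$ and all $I \subsetneq S$. So the task reduces to matching the parameter set $\Wc$ (in Lemma \ref{vanishing ct}) with the parameter set $\coprod_{\sc \in \sC_I}\coprod_{\st \in \sF_{I,\sc}} \Wc_{I,\sc}$ arising from the refined decomposition.

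For the direction \eqref{2S} $\Rightarrow$ \eqref{1S}, I would take an arbitrary $w \in \Wc$ and write it uniquely via the partition \eqref{full deco W} as $w = {\bf m}_{\sc,\st}(w_{I,\sc})$ for some $\sc \in \sC_I$, $\st \in \sF_{I,\sc}$, and $w_{I,\sc} \in \Wc_{I,\sc}$. Summing the consistency relation \eqref{Iw2} over the finite set $\lambda \in \E_{\eta^I}$ yields $\eta_w^I = (\eta_{\sc,\st}^I)_{w_{I,\sc}} = w_{I,\sc} \cdot \eta_{\sc,\st}^I$, so the vanishing hypothesis $\eta_{\sc,\st}^I = 0$ in \eqref{2S} forces $\eta_w^I = 0$. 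Since this holds for every $w \in \Wc$ and every $I \subsetneq S$, Lemma \ref{vanishing ct} gives $\eta \in \M_{\pi,{\rm td}}$.

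For the converse \eqref{1S} $\Rightarrow$ \eqref{2S}, I would fix $I \subsetneq S$, $\sc \in \sC_I$, $\st \in \sF_{I,\sc}$ and specialize to $w_{I,\sc} = \mathbf{1} \in \Wc_{I,\sc}$. By definition $w(\sc,\st) = {\bf m}_{\sc,\st}(\mathbf{1}) \in \Wc$, so the consistency relation gives $\eta_{w(\sc,\st)}^I = \mathbf{1} \cdot \eta_{\sc,\st}^I = \eta_{\sc,\st}^I$. Since $\eta \in \M_{\pi,{\rm td}}$ forces the left-hand side to vanish by Lemma \ref{vanishing ct}, we conclude $\eta_{\sc,\st}^I = 0$. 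There is no real obstacle here: the whole content is packaged in the consistency relations already established in Theorem \ref{loc ct temp2} and the bijectivity of the refined decomposition \eqref{full deco W}, so this is essentially a repackaging lemma.
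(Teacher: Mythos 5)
Your proposal is correct and follows essentially the same route as the paper: write each $w\in\Wc$ uniquely as ${\bf m}_{\sc,\st}(w_{I,\sc})$ via \eqref{full deco W}, use the consistency relation to get $\eta_w^I = w_{I,\sc}\cdot\eta_{\sc,\st}^I$ (so one vanishes iff the other does), and invoke Lemma \ref{vanishing ct}. The paper compresses both directions into the single observation that $\eta_w^I=0$ if and only if $\eta_{\sc,\st}^I=0$, whereas you spell out the two implications separately; the content is identical.
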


\begin{proof}  Let $w\in \Wc$ and write it as $w={\bf m}_{\sc, \st}(w_{I,c})$. We recall
 \eqref{consist} which asserts that  $\eta_w^I = w_{I,c}\cdot \eta_{\sc,\st}^I$. In particular $\eta_w^I=0$ if and only if
$\eta_{\sc,\st}^I=0$ and the proposition follows from Lemma \ref{vanishing ct}.
 \end{proof}

For $\sc\in \sC_I$ and $\st \in F_{I,\sc}$  we set $Z_{I,\sc,\st}= Z_{I, w(\sc,\st)}$ and note that
$Z_{I,\sc,\st}=G/H_{I,\sc}$ is independent of $\st\in \sF_{I,\sc}$ by Lemma \ref{lemma HI comp}.
The following is then a refined version of the Bernstein decomposition, taking the fine partition
\eqref{full deco W} of $\Wc$ into account.

\begin{theorem} {\rm (Plancherel Theorem --  Bernstein decomposition refined)} \label{thm planch refined}
The restricted Bernstein morphism

$$B_{\rm res}:  \bigoplus_{I\subset S}  \bigoplus_{\sc \in \sC_I}\bigoplus_{\st \in \sF_{I,\sc}} L^2(Z_{I, \sc,\st})_{\rm td}
\to L^2(Z)$$
is surjective.
\end{theorem}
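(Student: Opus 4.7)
The plan is to mimic the proof of Theorem \ref{thm planch}, replacing the parametrization by $\Wc$ with the finer triple parametrization $(I,\sc,\st)$ supplied by the partition \eqref{full deco W}. Concretely, I would define for $\mu$-a.e.\ $[\pi]\in\hat G$ the refined spectral map
\[
{\mathsf r}_\pi^{\rm res}\colon \M_\pi\ \longrightarrow\ \bigoplus_{I\subset S}\bigoplus_{\sc\in\sC_I}\bigoplus_{\st\in\sF_{I,\sc}} \M_{\pi,\sc,\st,{\rm td}}^I,\quad \eta\mapsto \bigl((\eta_{\sc,\st}^I)_{\rm td}\bigr)_{I,\sc,\st},
\]
with $\eta_{\sc,\st}^I$ the generalized constant term provided by Theorem \ref{loc ct temp2}, and $(\,\cdot\,)_{\rm td}$ the orthogonal projection onto the twisted discrete part of the multiplicity space for $L^2(Z_{I,\sc,\st})$ obtained via Theorem \ref{Plancherel induced} applied with $Z$ replaced by $Z_{I,\sc,\st}$. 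The restricted Bernstein morphism is then defined spectrally as the Hilbert space adjoint of ${\mathsf r}_{\oline\pi}^{\rm res}$, and this agrees fibrewise with the adjoints of the Radon transforms $L^2(Z)\to L^2(Z_{I,\sc,\st})_{\rm td}$ attached to the operator fields $(\eta\mapsto (\eta_{\sc,\st}^I)_{\rm td})_\pi$.

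The crucial step is to prove that ${\mathsf r}_\pi^{\rm res}$ is injective for $\mu$-a.e.\ $\pi$; dualizing yields the required surjectivity on each fibre of $B_{\rm res}$. Given $0\neq\eta\in\M_\pi$, Theorem \ref{lead lemma 2} produces an $\eta$-optimal pair $(I,w)\in\P(S)\times\Wc$ for which $0\neq \eta_w^I\in\M_{\pi,w,\rm td}^I$. Using \eqref{full deco W} we write $w={\bf m}_{\sc,\st}(w_{I,\sc})$ uniquely. The consistency relation \eqref{consist2} then gives $\eta_w^I=(\eta_{\sc,\st}^I)_{w_{I,\sc}}=w_{I,\sc}\cdot \eta_{\sc,\st}^I$, and since $(H_w)_I=(H_{I,\sc})_{w_{I,\sc}}$ by \eqref{WWI2 general}, left translation by $w_{I,\sc}$ realises a $G$-equivariant unitary isomorphism $L^2(Z_{I,\sc,\st})\to L^2(Z_{I,w})$ that preserves the twisted discrete subspaces. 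Consequently $(\eta_{\sc,\st}^I)_{\rm td}\neq 0$, so ${\mathsf r}_\pi^{\rm res}(\eta)\neq 0$, and the refined collection map is injective.

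The remaining tasks are routine adaptations of the arguments used in Theorem \ref{thm planch}. Measurability of $\pi\mapsto{\mathsf r}_\pi^{\rm res}$ is inherited from measurability of the constant-term assignments $\eta\mapsto\eta_{\sc,\st}^I$ together with the discreteness projection, as in Lemma \ref{properties bpi}\eqref{drei 3}; the Maass--Selberg relations of Theorem \ref{eta-I continuous} applied to each $Z_{I,\sc,\st}$ yield uniform norm bounds on the components of ${\mathsf r}_\pi^{\rm res}$ analogous to \eqref{bb-bound}, ensuring $B_{\rm res}$ is a bounded $G$-equivariant operator; and the passage from fibrewise surjectivity to global surjectivity uses the decomposition of each summand $B_{{\rm res},I,\sc,\st}$ into finitely many isometries, in the spirit of Remark \ref{Remark isometric} applied to $Z_{I,\sc,\st}$, combined with the closed-range argument via Lemma \ref{Hilbert lemma}. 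The main subtle point will be verifying that the twisted-discrete projection commutes with the translation by $w_{I,\sc}$, i.e.\ that $(w_{I,\sc}\cdot\xi)_{\rm td}=w_{I,\sc}\cdot\xi_{\rm td}$ under the identification of $L^2(Z_{I,\sc,\st})$ with $L^2(Z_{I,w})$; this is where one invokes the characterization of twisted discreteness by the simultaneous vanishing of all deeper constant terms (Lemma \ref{vanishing ct}), which is manifestly compatible with such base-point shifts in view of the transitivity of constant terms in Lemma \ref{lemma transitive}.
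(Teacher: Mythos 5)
Your proposal matches the paper's proof: the paper defines exactly the same restricted spectral map $\tilde{\mathsf r}_\pi\colon \M_\pi\to\bigoplus_{I,\sc,\st}\M_{\pi,w(\sc,\st),\rm td}^I$, proves its injectivity by taking an $\eta$-optimal pair $(I,w)$ from Theorem \ref{lead lemma 2}, writing $w={\bf m}_{\sc,\st}(w_{I,\sc})$ and invoking the consistency relation \eqref{Iw2} to get $\eta_{w,\rm td}^I=(w_{I,\sc}\cdot\eta_{\sc,\st}^I)_{\rm td}\neq 0$, and then concludes by dualizing as in Theorem \ref{thm planch}. The compatibility of the twisted-discrete projection with the base-point shift by $w_{I,\sc}$, which you single out as the subtle point, is used implicitly in the paper in exactly the way you describe, so your argument is correct and essentially identical.
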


\begin{proof} Given the proof of Theorem \ref{thm planch} this comes down to the fact that the map
$$\tilde {\mathsf r}_\pi: \M_\pi \to \bigoplus_{I\subset S} \bigoplus_{\sc \in \sC_I} \bigoplus_{\st \in \sF_{I,\sc}}
\M_{\pi, w(\sc,\st), \rm td}^I, \ \ \eta\mapsto  (\eta_{\sc, \st, \rm td }^I)_{I, \sc, \st}$$
obtained from ${\mathsf r}_\pi$ by restricting the target remains injective.
Now we recall the proof of Lemma \ref{properties bpi}  \eqref{eins 1} and let $0\neq \eta \in \M_\pi$
with  $\eta_{w, {\rm td}}^I\neq 0$ for an $\eta$-optimal pair $(I,w)$. In particular,
$\eta_{w, {\rm td}}^I\neq 0$.  Let $w={\bf m}_{\sc,\st}(w_{I,c})$ for $w_{I,c}\in \Wc_{I,\sc}$ and $\st\in \sF_{I,\sc}$. Then the consistency relation \eqref{Iw2} yields
$\eta_{w, {\rm td}}^I =(w_{I,c}\cdot \eta_{\sc,\st}^I)_{\rm td}$ and thus $\eta_{\sc,\st,\rm td}^I\neq 0$, establishing
the injectivity of $\tilde {\mathsf r}_\pi$. The theorem follows.
\end{proof}

\subsection{Bernstein decomposition for $L^2(\uZ(\R))$}

Recall that $Z=G/H$ is only one $G$-orbit of $\uZ(\R)$.  To obtain the Bernstein decomposition
of $L^2(\uZ(\R))$ we just need to add the data
of the various $G$-orbits in $\uZ(\R)$.  We recall
$W_\R=(P\bs \uZ(\R))_{\rm open}\simeq F_\R/ F_M$ and choose representatives $\Wc_\R\subset G$
for $W_\R$ as we did with  $\Wc$ for $W$.
For $w\in \Wc_\R$ we set $Z_{I,w}:= G/ (H_w)_I$ with $(H_w)_I$ the real points
of the $\R$-algebraic group $ (\uH_w)_I$.
Notice that the $G$-orbit decomposition of $\uZ(\R)$ yields a natural partition of $W_\R$ by selecting
for a given $G$-orbit in $\uZ(\R)$ the open $P$-orbits it contains. Summing up the Bernstein morphism of all
$G$-orbits then yields  a $G$-morphism:

$$ B_\R : \bigoplus_{I\subset S} \bigoplus_{w\in \Wc_\R}  L^2(Z_{I,w})_{\rm td} \to L^2(\uZ(\R)) .$$
We then obtain from Theorem \ref{thm planch}:

\begin{theorem} {\rm (Plancherel Theorem  for $L^2(Z(\R))$ --  Bernstein decomposition)} \label{thm planch real points}
The Bernstein morphism $B_\R$  is a continuous surjective isospectral $G$-equivariant linear map.
\end{theorem}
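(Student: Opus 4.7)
The plan is to derive Theorem \ref{thm planch real points} from Theorem \ref{thm planch} by decomposing $\uZ(\R)$ into its finitely many $G$-orbits and assembling the individual Bernstein morphisms.

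First, I would invoke Galois cohomology \eqref{ZR-exact long} to write $\uZ(\R)=\coprod_{j=1}^{N} Z_j$ as a finite disjoint union of $G$-orbits, with each $Z_j=G/H_j$ where $H_j=\uH_j(\R)$ and $\uH_j$ is a real form of $\uH$ obtained by twisting via a Galois cocycle. Each $Z_j$ is then a unimodular real spherical space, and since the combinatorial/geometric invariants of $Z_j$ (spherical roots $S$, compression cone $\af_Z^-$, boundary degenerations $\uH_{j,I}\triangleleft\hat\uH_{j,I}$, tori $\uA_I$) are all defined algebraically over $\C$ on the common variety $\uZ$ and then specialized to real points, these are the \emph{same} $S$, $\af_Z^-$, etc., for every $j$. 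Correspondingly $L^2(\uZ(\R))=\bigoplus_{j=1}^N L^2(Z_j)$ as unitary $G$-representations, once the $G$-invariant measure on $\uZ(\R)$ is restricted to each orbit.

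Second, I would match the parameter sets. The open $P$-orbits partition as $W_\R=\coprod_j W_j$ with $W_j=(P\bs Z_j)_{\rm open}$, so we may choose $\Wc_\R=\coprod_j \Wc_j$ with $\Wc_j$ a set of representatives for $W_j$. For $w\in \Wc_j$, the group $(H_w)_I$ agrees with $((H_j)_w)_I$, since $Pw\cdot z_0\subset Z_j$ forces $w$ to shift the base point within $Z_j$. Hence the target data $\bigoplus_{I\subset S}\bigoplus_{w\in \Wc_\R} L^2(Z_{I,w})_{\rm td}$ of $B_\R$ is exactly the direct sum over $j$ of the corresponding data for each $Z_j$.

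Third, I would observe that under these identifications the Bernstein morphism $B_\R$ splits as a direct sum $B_\R=\bigoplus_{j=1}^N B_j$, where $B_j$ is the Bernstein morphism for $Z_j$ furnished by Theorem~\ref{thm planch}. Each $B_j$ is a continuous surjective $G$-equivariant linear map whose source and image have equivalent Plancherel measures. The direct sum of finitely many such maps retains all three properties: continuity and $G$-equivariance are immediate; surjectivity of a finite orthogonal sum of surjections is trivial; and the Plancherel measure of $L^2(\uZ(\R))=\bigoplus_j L^2(Z_j)$ is, in its measure class, the (finite) sum of the Plancherel measures of the $L^2(Z_j)$, so isospectrality of $B_\R$ follows from that of each $B_j$.

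This argument has no real obstacle beyond the bookkeeping in the second step: one must verify that the indexing sets $\Wc_\R$, $\sC_I$, $\sF_{I,\sc}$ etc., when built for $\uZ(\R)$, decompose cleanly along the $G$-orbit partition, so that the sum over orbits of Theorem~\ref{thm planch} reproduces exactly the domain of $B_\R$ as stated. If one prefers the refined version, the same argument applied orbit by orbit to Theorem~\ref{thm planch refined} yields the analogous refined statement $B_{\R,{\rm res}}$ for $L^2(\uZ(\R))$, which is what \eqref{B} asserts in the introduction.
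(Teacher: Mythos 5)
Your proposal is correct and follows essentially the same route as the paper: the paper defines $B_\R$ precisely by summing the Bernstein morphisms of the finitely many $G$-orbits in $\uZ(\R)$ (using that the open $P$-orbits $W_\R$ partition along these orbits and that all orbits share the same spherical data $S$, $\af_Z^-$, etc.), and then deduces the theorem directly from Theorem \ref{thm planch} applied orbit by orbit. Your write-up merely makes explicit the bookkeeping the paper leaves implicit, so there is nothing substantive to add.
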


Recall from the beginning of Section \ref{subsection WI} that
$W_{I,\R}=(P\bs \uZ_I(\R))_{\rm open}$ and $W_\R$ are canonically isomorphic.
In particular we obtain a generalization of \eqref{full deco W} to

$$ \Wc_\R = \Wc_{I,\R}= \coprod_{\sc\in \sC_{I,\R}}  \coprod_{\st \in \sF_{I,\sc}}  {\bf m}_{\sc,\st} (\Wc_{I,\sc})$$
with $\sC_{I,\R}:=\{  G\cdot \hat z_{w,I}\mid w\in W_\R\}$ etc.

The finer results in Theorem \ref{thm planch refined} then yield the
refined restricted Bernstein morphism
\begin{equation} \label{Bernstein real forms}
B_{\rm \R, res} : \bigoplus_{I\subset S}    L^2(\uZ_I(\R))_{\rm td}\to L^2(\uZ(\R))\end{equation}
with the same properties as in Theorem \ref{thm planch refined}:

\begin{theorem} {\rm (Plancherel Theorem  for $L^2(Z(\R))$ --  Bernstein decomposition refined)} \label{thm planch refined  real points}
The restriction $B_{\R, {\rm res}}$ of the Bernstein morphism $B_\R$  is a continuous surjective isospectral $G$-equivariant linear map.
\end{theorem}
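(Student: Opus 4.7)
The plan is to reduce to the single-orbit refined statement (Theorem \ref{thm planch refined}) and then organize the resulting direct sum along the $G$-orbits of $\uZ_I(\R)$. First, since $\uZ(\R)$ decomposes as a finite disjoint union of $G$-orbits $Z = G/H$, we have an orthogonal $G$-equivariant Hilbert space decomposition
\[
L^2(\uZ(\R)) = \bigoplus_{Z} L^2(Z),
\]
indexed by the $G$-orbits $Z \subset \uZ(\R)$. For each such $Z$, Theorem \ref{thm planch refined} provides a continuous, surjective, isospectral, $G$-equivariant restricted Bernstein morphism
\[
B_{Z,{\rm res}}: \bigoplus_{I \subset S}\bigoplus_{\sc \in \sC_I(Z)}\bigoplus_{\st \in \sF_{I,\sc}(Z)} L^2(Z_{I,\sc,\st})_{\rm td} \longrightarrow L^2(Z),
\]
where I have added the dependence on $Z$ in the notation for $\sC_I$ and $\sF_{I,\sc}$.

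Next, I would match the indexing data with the $G$-orbits of $\uZ_I(\R)$. Fix $I \subset S$. By the normal bundle analysis of Subsection \ref{nb points to Z}, every $G$-orbit in $\uZ_I(\R)$ is canonically of the form $Z_{I,\sc,\st}$ for a unique triple $(Z, \sc, \st)$ with $Z$ a $G$-orbit of $\uZ(\R)$, $\sc \in \sC_I(Z)$, and $\st \in \sF_{I,\sc}(Z)$; indeed, the connected components of the part $N_\sY^Z$ of the normal bundle to a boundary stratum $\sY \subset \hat \uZ(\R)$ that ``points to'' a given $Z$ are parametrized precisely by $\sF_{I,\sc}(Z)$, and every normal direction points to exactly one $G$-orbit of $\uZ(\R)$. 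This yields a disjoint decomposition
\[
\uZ_I(\R) = \coprod_{Z} \coprod_{\sc \in \sC_I(Z)} \coprod_{\st \in \sF_{I,\sc}(Z)} Z_{I,\sc,\st},
\]
and correspondingly an orthogonal decomposition of Hilbert spaces
\[
L^2(\uZ_I(\R))_{\rm td} = \bigoplus_{Z}\bigoplus_{\sc \in \sC_I(Z)}\bigoplus_{\st \in \sF_{I,\sc}(Z)} L^2(Z_{I,\sc,\st})_{\rm td},
\]
compatible with the $G$-actions and with the right $A_I$-actions on the twisted discrete spectra.

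Substituting this identification into the direct sum of the $B_{Z,{\rm res}}$ over all $G$-orbits $Z$ of $\uZ(\R)$, I obtain the map
\[
B_{\R, {\rm res}}: \bigoplus_{I \subset S} L^2(\uZ_I(\R))_{\rm td} \longrightarrow L^2(\uZ(\R))
\]
of \eqref{Bernstein real forms}. Continuity, $G$-equivariance, and the isospectral property follow immediately from the corresponding properties for each $B_{Z,{\rm res}}$, while surjectivity results by applying Lemma \ref{Hilbert lemma} (as in the proof of Theorem \ref{thm planch}) to the closed $G$-invariant subspaces $\operatorname{Im} B_{Z,{\rm res}} = L^2(Z) \subset L^2(\uZ(\R))$, whose direct sum exhausts $L^2(\uZ(\R))$.

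The main point where care is needed is the canonical identification of the $G$-orbits of $\uZ_I(\R)$ with the triples $(Z,\sc,\st)$ across the various $G$-orbits $Z$ of $\uZ(\R)$: one has to verify that the assignments $(Z,\sc,\st) \mapsto Z_{I,\sc,\st}$ are genuinely disjoint and collectively exhaustive inside $\uZ_I(\R)$. This is precisely the content of the normal-direction ``pointing to $Z$'' description combined with the combinatorics of $\sC_I$ and $\sF_{I,\sc}$ developed in Section \ref{fine partition}, and once this is established the remainder of the argument is a direct repackaging.
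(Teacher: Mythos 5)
Your proposal is correct and follows essentially the same route as the paper: the paper obtains Theorem \ref{thm planch refined real points} precisely by summing the refined Bernstein morphisms of Theorem \ref{thm planch refined} over the finitely many $G$-orbits $Z\subset\uZ(\R)$ and then using the canonical identification $W_{I,\R}\simeq W_\R$ together with the extended partition $\Wc_\R=\coprod_{\sc}\coprod_{\st}{\bf m}_{\sc,\st}(\Wc_{I,\sc})$ to identify the total domain with $\bigoplus_{I\subset S}L^2(\uZ_I(\R))_{\rm td}$, the continuity, equivariance, isospectrality and surjectivity being inherited orbitwise exactly as you argue. The matching of $G$-orbits of $\uZ_I(\R)$ with the triples $(Z,\sc,\st)$, which you rightly single out as the point requiring care, is exactly the content the paper invokes from Sections \ref{nb points to Z} and \ref{fine partition}, so your treatment is at the same level of detail as the paper's own.
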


\section{Elliptic elements and discrete series}

As a consequence of the Bernstein decomposition in Theorem \ref{thm planch}
we obtain in Theorem \ref{thm discrete} a general criterion for the existence of a discrete
spectrum in $L^2(G/H)$ for a unimodular real spherical space $G/H$.
The main additional tool is a theorem of \cite{HW}, by which
the wave front set of the left regular representation of a unimodular homogeneous space
$G/H$ is determined as the closure of $\Ad(G)\hf^\perp$.

\subsection{Existence of discrete spectrum}

As usual, we call an element $X\in\gf$ semisimple provided $\ad X$ is a semisimple operator.
Equivalently, $X\in\gf$ is semisimple if and only if its centralizer $\zf_\gf(X)$ is a reductive subalgebra.

An element $X\in \gf_\C$ is called {\it elliptic} if $\ad X$ is semisimple with  purely imaginary eigenvalues.
If $E\subset \gf_\C$ we denote by $E_{\rm ell}$ the subset of $E$ consisting of elliptic elements.
More generally we call an element $X\in \gf_\C$ {\it weakly elliptic} if $\Spec (\ad X)\subset i\R$ and
denote by $E_{\rm w-ell}$ the corresponding subset of $E\subset \gf_\C$.

\begin{theorem} \label{thm discrete} Let $Z=G/H$ be a unimodular real spherical space.
Suppose that $\operatorname{int} \hf_{\rm w-ell}^\perp\neq \emptyset$. Then $H=\hat H$ is
reductive and $L^2(Z)_{\rm d}\neq  \{0\}$.
\end{theorem}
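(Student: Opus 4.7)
The plan is to combine the Bernstein decomposition of Theorem \ref{thm planch refined  real points} with the wave front set formula $\WF(L)=\overline{\Ad(G)\hf^\perp}$ for the left regular representation from \cite{HW}. The hypothesis supplies an open cone of weakly elliptic elements inside $\hf^\perp$, and its $\Ad(G)$-saturation contains an open subset of the weakly elliptic locus $\gf_{\rm w-ell}\subset\gf$: a regular weakly elliptic element has compact-Cartan centralizer (since its split part must vanish), so the orbit map $G\times(\hf^\perp\cap\gf_{\rm w-ell})\to\gf$ is submersive near it.

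The central step is to show $L^2(Z)_{\rm td}\neq 0$. Arguing by contradiction, if $L^2(Z)_{\rm td}$ vanishes then Theorem \ref{thm planch refined  real points} exhibits $L^2(\uZ(\R))$ as a sum of images of the boundary pieces $L^2(\uZ_I(\R))_{\rm td}$ with $I\subsetneq S$. Applying \cite{HW} to each $Z_{I,w}=G/H_{I,w}$ gives $\WF(L^2(Z_{I,w}))=\overline{\Ad(G)\hf_{I,w}^\perp}$, and by Theorem \ref{thm normal} we have $\hat\hf_{I,w}=\hf_{I,w}+\Ad(w)\af_I$ with $\Ad(w)\af_I$ a nonzero split abelian subspace; the $\kappa$-orthogonality of $\af$ to $\mf\oplus\uf\oplus\oline\uf$ together with $\af_I\subset\af_H^{\perp_\af}$ then forces $\Ad(w)\af_I\subset\hf_{I,w}^\perp$. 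A regular weakly elliptic $X\in\hf_{I,w}^\perp$ has compact-Cartan centralizer $\tf$, and $\tf\cap\Ad(w)\af_I=\{0\}$ since compact and split Cartans intersect trivially. A first-order perturbation argument then shows that for generic $Y\in\Ad(w)\af_I$ the spectrum of $\ad(X+tY)$ moves off the imaginary axis for small $t\neq 0$. Consequently $\hf_{I,w}^\perp\cap\gf_{\rm w-ell}$ has empty interior in $\hf_{I,w}^\perp$, the same holds for its $\Ad(G)$-saturation in $\gf_{\rm w-ell}$, and the finite union over $I\subsetneq S$ and $w$ fails to cover the open subset of $\gf_{\rm w-ell}$ provided by the hypothesis; this contradiction yields $L^2(Z)_{\rm td}\neq 0$.

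To pass from $L^2(Z)_{\rm td}$ to $L^2(Z)_{\rm d}$ one needs $\af_{Z,E}=0$, i.e.\ $H=\hat H$. By the observation at the beginning of Subsection \ref{Subsection twisted}, a nonzero $\af_{Z,E}$ would disintegrate $L^2(Z)$ continuously over $\widehat{A_{Z,E}}$ and force $L^2(Z)_{\rm d}=0$. Since $\af_{Z,E}$ is itself split abelian and sits inside $\hf^\perp$ by the same orthogonality, a nonzero $\af_{Z,E}$ would, via the same perturbation argument now applied to $\WF(L^2(Z)_{\rm td})$, contradict the open set of weakly elliptic elements just produced. Hence $\af_{Z,E}=0$, giving $H=\hat H$ and $L^2(Z)_{\rm d}=L^2(Z)_{\rm td}\neq 0$. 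Reductivity of $H$ follows because a nontrivial unipotent radical would supply an $\Ad$-nilpotent ideal in $\hf$ whose brackets with elements of $\hf^\perp$ would fill a dense subset of $\hf^\perp$ with nilpotent directions, incompatible with the open semisimple weakly elliptic locus.

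The principal obstacle is the perturbation lemma at the heart of the contradiction: that for a regular weakly elliptic $X\in\hf_{I,w}^\perp$ and any $0\neq Y\in\Ad(w)\af_I$, the spectrum of $\ad(X+tY)$ genuinely leaves the imaginary axis for small $t\neq 0$. Although heuristically clear from the trivial intersection of compact and split Cartan subalgebras, a rigorous proof demands a careful first-order analysis of eigenvalue variation under a non-commuting perturbation, verifying that a nonzero real part appears generically in the perturbed spectrum.
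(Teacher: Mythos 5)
Your high-level strategy is the same as the paper's: combine the Bernstein decomposition with the wave-front set formula from \cite{HW} and show that the boundary contributions $\WF(L^2(Z_{I,w}))=\cl(\Ad(G)\hf_{I,w}^\perp)$ cannot account for the open set of weakly elliptic directions inside $\hf^\perp$. However, the single step on which your entire argument rests — the ``perturbation lemma'' you flag at the end — is not merely a technical loose end: it is \emph{false}, and the mechanism by which the paper actually proves that $(\hf_{I,w}^\perp)_{\rm w\text{-}ell}$ has empty interior is completely different.

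Here is the concrete problem. In a semisimple Lie algebra the set of regular elliptic elements (those whose centralizer is a compact Cartan) is \emph{open}, so a regular weakly elliptic element has an open neighbourhood consisting of elliptic elements, and no first-order (or any-order) perturbation, in a split direction or otherwise, will push the $\ad$-spectrum off the imaginary axis for small $t$. The simplest instance already kills the claim: in $\gf=\sl(2,\R)$ take $X=\left(\begin{smallmatrix}0&1\\-1&0\end{smallmatrix}\right)$, regular elliptic with compact Cartan centralizer $\tf=\R X$, and $Y=\left(\begin{smallmatrix}1&0\\0&-1\end{smallmatrix}\right)$ spanning a split Cartan, so $\tf\cap\R Y=0$ and $[X,Y]\neq0$. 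Then $\det(X+tY)=1-t^2>0$ for $|t|<1$, so $X+tY$ remains elliptic and $\ad(X+tY)$ keeps purely imaginary spectrum; nothing ``moves off the imaginary axis.'' Thus trivial intersection of the compact Cartan $\tf$ with $\Ad(w)\af_I$ gives you nothing. This is not a matter of a ``careful first-order analysis of eigenvalue variation'' being elusive; the heuristic is simply wrong. Note also that the claim $\Ad(w)\af_I\subset\hf_{I,w}^\perp$ is itself not correct as stated: in the paper's analysis it is the ``twisted'' subspace $\af_I^0=\{X+T_0(X):X\in\af_I\}$ that lies in $\hf_{I,w}^\perp$ (see \eqref{T0 orthogonal} and the decomposition $\hf^\perp=\hat\hf^\perp\oplus\af_{Z,E}^0$ used in Lemma \ref{lemma no-elliptic}), not the untwisted $\af_I$.

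The paper's actual route to the same intermediate fact avoids perturbation theory entirely. It first proves Proposition \ref{prop elliptic} ($\operatorname{int}\hf_{\rm w\text{-}ell}^\perp\neq\emptyset\Rightarrow\hf$ reductive) via Lemma \ref{lemma no-elliptic}, whose engine is not eigenvalue movement but an algebro-geometric dimension count: it writes $\hf^\perp=\hat\hf^\perp\oplus\af_{Z,E}^0$, shows $\Ad(h)X-X\in\hat\hf_\C^\perp$ for $X\in\af_{Z,E}^0$, uses the generic submersivity of the polar map $H\times(\af_Z+\mf_Z)^0\to\hf^\perp$ (Lemma \ref{lemma H perp polar}) to reduce to elements of $\Ad(\uH)\bigl(i\af_{Z,E}^0+i\af_{Z,S}^0+\mf_Z^0\bigr)$, and then observes the $\af_{Z,E}^0$-coordinate must vanish, so the Zariski-dense image lands in a subspace of too-small dimension. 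Corollary \ref{cor elliptic} then delivers $\operatorname{int}(\hf_I^\perp)_{\rm w\text{-}ell}=\emptyset$ for $I\subsetneq S$ because $\hf_I$ is a proper deformation, hence never reductive. This also fixes your ordering problem: reductivity of $H$ and the identity $\hat H=H$ are obtained \emph{before} the wave-front set argument (so $L^2(Z)_{\rm td}=L^2(Z)_{\rm d}$ and there is no need to separately rule out $\af_{Z,E}\neq0$ afterward), and your hand-wavy ``nilpotent ideal filling a dense subset of $\hf^\perp$'' argument for reductivity is replaced by the clean implication $\hf$ not reductive $\Rightarrow\af_{Z,E}\neq0$ from \cite[Cor.~9.10]{KK} followed by Lemma \ref{lemma no-elliptic}.

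In summary: the architecture (Bernstein decomposition plus $\WF$) is right, but the step you yourself identify as the ``principal obstacle'' is a genuine gap that cannot be patched by a more careful eigenvalue analysis — the set of regular elliptic elements is open, so the intended conclusion of your perturbation lemma is false. You need the polar-map/dimension-count mechanism of Lemma \ref{lemma no-elliptic} (or an equivalent algebraic argument), not spectral perturbation theory.
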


Here $\operatorname{int} \hf_{\rm w-ell}^\perp$ refers to the interior of
$\hf_{\rm w-ell}^\perp$, in the vector space topology of $\hf^\perp$.
The proof is given in the course of the next two subsections.

\begin{rmk} In case $Z=G$ is a reductive group or more generally $Z=G/H$ is a symmetric space, then Theorem
\ref{thm discrete} comes down to the existence theorems of Harish-Chandra \cite{HC} and Flensted-Jensen \cite{FJ} about discrete
series. It is due to Harish-Chandra that $L^2(G)_{\rm d} \neq \emptyset$ if $\gf$ admits a compact
Cartan subalgebra. Flensted-Jensen generalized that to symmetric spaces by showing $L^2(G/H)_{\rm d}\neq \emptyset$
if there exists a compact abelian subspace  $\tf \subset \gf \cap \hf^\perp$ with $\dim \tf = \rank \uG/\uH$.
\end{rmk}

\begin{rmk} For the twisted discrete series an appropriate generalization of
Theorem \ref{thm discrete} reads
\begin{equation} \operatorname{int} \hat \hf^\perp_{\rm w-ell}\neq \emptyset \quad\Rightarrow \quad (\forall \chi \in \hat A_{Z,E})\ L^2(G/\hat H, \chi)_{\rm d}\neq \emptyset\end{equation}
and will presumably follow from results on wavefront sets of induced representations more general than what is obtained
in \cite{HW}.
\end{rmk}

\subsection{The geometry of elliptic elements}

To prepare the way for the proof of Theorem \ref{thm discrete} we
establish some foundational material on elliptic elements in $\hf^\perp$, and show
that if the weakly elliptic elements in $\hf^\perp$ have non-empty interior, then
$\hf$ is reductive in $\gf$.

\bigskip \par We consider the $H$-module $\hf^\perp\subset \gf$ and recall the canonical isomorphism
$(\gf/\hf)^*\simeq \hf^\perp$.  In the sequel we view $\af_Z\simeq \af_H^{\perp_\af}$ as a subspace
of $\af$ and likewise we view $\mf_Z=\mf/\mf_H \simeq \mf_H^{\perp_\mf}$ as a subspace of $\mf$.

\begin{lemma} $(\lf\cap\hf)^\perp=\hf^\perp\oplus \uf$
\end{lemma}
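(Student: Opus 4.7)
The plan is to verify the asserted identity by showing both the inclusion $\hf^\perp+\uf\subset(\lf\cap\hf)^\perp$ and the reverse via a dimension count, while separately checking that the sum is direct. The main ingredient is the parabolic structure of the form $\kappa$: since $\uf$ is the nilpotent radical of $\qf=\lf\ltimes\uf$, root-space considerations give $\kappa(\uf,\qf)=0$, so in particular $\kappa(\uf,\lf)=0$, while $\kappa$ restricts to a perfect pairing between $\uf$ and $\oline\uf$.

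First I would observe the easy inclusion: $\hf^\perp\subset(\lf\cap\hf)^\perp$ is clear from $\lf\cap\hf\subset\hf$, and $\uf\subset(\lf\cap\hf)^\perp$ follows from $\lf\cap\hf\subset\lf$ together with $\kappa(\uf,\lf)=0$. Next I would check that the sum $\hf^\perp+\uf$ is direct. Let $X\in\hf^\perp\cap\uf$. Every element of the graph $\G(T)\subset\hf$ is of the form $Y_{-\alpha}+T(Y_{-\alpha})$ with $Y_{-\alpha}\in\oline\uf$ and $T(Y_{-\alpha})\in\uf\oplus(\lf\cap\hf)^{\perp_\lf}\subset\qf$. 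Since $X\in\uf$ and $\kappa(\uf,\qf)=0$, we get $\kappa(X,T(Y_{-\alpha}))=0$, so the condition $\kappa(X,\hf)=0$ reduces to $\kappa(X,Y_{-\alpha})=0$ for all $Y_{-\alpha}\in\oline\uf$. By non-degeneracy of the pairing $\uf\times\oline\uf\to\R$ induced by $\kappa$, this forces $X=0$.

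To finish I would match dimensions. From $\hf=(\lf\cap\hf)\oplus\G(T)$ and $\dim\G(T)=\dim\oline\uf=\dim\uf$ we get $\dim\hf=\dim(\lf\cap\hf)+\dim\uf$. Hence
\[
\dim(\hf^\perp\oplus\uf)=(\dim\gf-\dim\hf)+\dim\uf=\dim\gf-\dim(\lf\cap\hf)=\dim(\lf\cap\hf)^\perp,
\]
so the established inclusion must be an equality.

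I do not foresee a real obstacle here; the only point requiring a little care is the verification that $\kappa(\uf,\qf)=0$, which should be presented explicitly (either by invoking the standard fact $\uf^{\perp_\kappa}=\qf$ for a parabolic, or by a one-line root-weight argument using that $\alpha+\beta\neq 0$ whenever $\alpha|_{\af_Z}>0$ and $\beta|_{\af_Z}\ge 0$). Once this is in place, the rest is bookkeeping.
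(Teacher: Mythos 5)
Your proof is correct and follows essentially the same route as the paper: the easy inclusion, directness from $\kappa(\uf,\qf)=0$, and the dimension count $\dim\hf=\dim(\lf\cap\hf)+\dim\uf$. The only (harmless) variation is in the directness step, where the paper simply notes $\gf=\hf+\qf$ so that an element of $\uf\cap\hf^\perp$ is $\kappa$-orthogonal to all of $\gf$, while you unwind the graph $\G(T)$ and invoke the perfect pairing $\uf\times\oline\uf$ — the two arguments are equivalent.
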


\begin{proof} Clearly $\hf^\perp + \uf\subset(\lf\cap\hf)^\perp $.
Moreover $\hf^\perp\cap\uf=\{0\}$ because $\kappa(\uf, \qf)=\{0\}$ and $\gf=\hf+\qf$.
The lemma now follows from $\dim \hf=\dim(\lf\cap\hf)+\dim\uf$.
\end{proof}

Let $T_0: (\lf\cap\hf)^\perp\to \uf$ be minus the projection along $\hf^\perp$.
It follows that

\begin{equation} \label{T0 orthogonal} (\af_Z +\mf_Z)^0:=\{ X + T_0(X):  X \in \af_Z+\mf_Z\} \subset \hf^\perp\, .\end{equation}
Similarly we set $\bfrak^0:=\{ X + T_0(X):  X \in \bfrak\}$ for $\bfrak\subset \af_Z +\mf_Z$ a subspace.

The following lemma is motivated by \cite[Th. 5.4 and Cor. 7.2]{Knop}   and \cite[Th. 5 and Th. 6]{Panyushev}.

\begin{lemma}  \label{lemma H perp polar}Let $Z=G/H$ be a real spherical space for which
there exists an $X_0\in \af_Z\cap \hf^\perp$  such that
 $\alpha(X_0)<0$ for all $\alpha\in \Sigma_\uf$.
Then the  canonical map
$$\Phi: H \times  (\af_Z+\mf_Z)^0\to \hf^\perp, \ \ (h, X) \mapsto \Ad(h)X$$
is generically submersive.
\end{lemma}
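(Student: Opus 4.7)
The plan is to reduce submersivity to a single infinitesimal condition at the point $(e,X_0)$ and then invoke openness of the submersion locus. First I would compute the differential: left-trivializing on $H$ and using the $\Ad(H)$-invariance of $\hf^\perp$, one obtains
\[ d\Phi_{(h,X)}(Y,X')=\Ad(h)\bigl([Y,X]+X'\bigr)\qquad (Y\in\hf,\ X'\in(\af_Z+\mf_Z)^0),\]
so submersivity at $(h,X)$ amounts to the equality $[\hf,X]+(\af_Z+\mf_Z)^0=\hf^\perp$, and the factor $\Ad(h)$ makes it irrelevant whether one tests it at $(h,X)$ or at $(e,X)$.

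The next step is to identify the quotient $\hf^\perp/(\af_Z+\mf_Z)^0$ with $\oline{\uf}$. Combining the preceding lemma with the triangular decomposition $\gf=\oline{\uf}\oplus\lf\oplus\uf$ and a dimension count, one has $(\lf\cap\hf)^\perp=\oline{\uf}\oplus(\af_Z+\mf_Z)\oplus\uf$, and the projection $\gf\to\oline{\uf}$ along $\lf+\uf$ restricts to a surjection $\hf^\perp\twoheadrightarrow\oline{\uf}$ whose kernel is exactly $(\af_Z+\mf_Z)^0$. Consequently the surjectivity condition above is equivalent to the surjectivity of the composition $\hf\xrightarrow{\,\ad X\,}\hf^\perp\twoheadrightarrow\oline{\uf}$.

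The heart of the argument is then verifying this at $X=X_0$. Since $X_0\in\af_Z\cap\hf^\perp$ one has $T_0(X_0)=0$, so $X_0$ itself lies in $(\af_Z+\mf_Z)^0$. For a generator $\xi=X_{-\alpha}+T(X_{-\alpha})\in\G(T)\subset\hf$ with $T(X_{-\alpha})=X_{\alpha,0}+\sum_{\beta\in\Sigma_\uf}X_{\alpha,\beta}$ and $X_{\alpha,0}\in\lf$, the bracket computation in $\ad(X_0)$-eigenspaces yields
\[ [X_0,\xi]=-\alpha(X_0)\,X_{-\alpha}+[X_0,X_{\alpha,0}]+\sum_{\beta\in\Sigma_\uf}\beta(X_0)\,X_{\alpha,\beta},\]
whose $\oline{\uf}$-component is the first term, the other two summands lying in $\lf$ and $\uf$ respectively. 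Since $\alpha(X_0)<0$ by hypothesis, varying $\alpha\in\Sigma_\uf$ and $X_{-\alpha}\in\gf^{-\alpha}$ sweeps out all of $\oline{\uf}$, so $d\Phi_{(e,X_0)}$ is surjective. Surjectivity of the differential being an open condition, the submersion locus is an open subset of $H\times(\af_Z+\mf_Z)^0$ containing $(e,X_0)$, hence open and dense. The only delicate step is the clean identification $\hf^\perp/(\af_Z+\mf_Z)^0\simeq\oline{\uf}$; once that is in hand, the eigenvalue sign condition on $X_0$ provides the required surjectivity automatically.
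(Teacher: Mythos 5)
Your argument is correct but takes a different route from the paper. The paper reduces to the degenerate limit: it conjugates by $a_t=\exp(tX_0)$, observes that $\hf_t\to\hf_\emptyset=\lf\cap\hf\oplus\oline\uf$ and $(\af_Z+\mf_Z)^0_t\to\af_Z+\mf_Z$ in the Grassmannian as $t\to\infty$, checks the surjectivity $(\af_Z+\mf_Z)+[\hf_\emptyset,X_0]=(\hf_\emptyset)^\perp$ in the limit (where it is immediate from $[X_0,\oline\uf]=\oline\uf$), and then uses openness of the surjectivity condition together with conjugation-invariance to pull it back to $t=0$. You instead make the identification $\hf^\perp/(\af_Z+\mf_Z)^0\simeq\oline\uf$ once and for all and compute the $\oline\uf$-component of $[X_0,\hf]$ directly from the graph description $\hf=(\lf\cap\hf)\oplus\G(T)$: since $T(X_{-\alpha})$ lands in $\lf\oplus\uf$, the $\oline\uf$-component of $[X_0,X_{-\alpha}+T(X_{-\alpha})]$ is precisely $-\alpha(X_0)X_{-\alpha}$, and the hypothesis $\alpha(X_0)<0$ lets these sweep out $\oline\uf$. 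The underlying mechanism is the same (in the paper's limit the graph map $T$ degenerates to $0$, which trivializes your bracket computation), but your version avoids the Grassmannian limit and the openness-of-surjectivity step, at the cost of making explicit the identification $\hf^\perp\cap(\lf+\uf)=(\af_Z+\mf_Z)^0$, which is indeed the delicate point you flag; it rests on $(\lf\cap\hf)^{\perp_\lf}=\af_Z+\mf_Z$, a fact the paper also uses implicitly when it writes $(\hf_\emptyset)^\perp=\af_Z+\mf_Z+\oline\uf$. Two minor remarks: you should say the surjectivity locus is Zariski (not merely Euclidean) open in $X\in(\af_Z+\mf_Z)^0$ to get density; and $[X_0,X_{\alpha,0}]$ need not vanish, but lying in $\lf$ is all you use, which is what you wrote, so this is fine.
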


\begin{proof} We first note that $(\af_Z +\mf_Z)^0 + [\hf, X]\subset \hf^\perp$ for all $X\in (\af_Z +\mf_Z)^0$,
and that $\Phi$ is generically submersive if and only if there is equality
for some $X\in (\af_Z +\mf_Z)^0$. We will show that
\begin{equation} \label{gen surjective} (\af_Z +\mf_Z)^0 + [\hf, X_0]=\hf^\perp .\end{equation}

For $t>0$ we set $a_t:=\exp(tX_0)$.
By conjugation \eqref{gen surjective} is then equivalent to
\begin{equation} \label{gen surjective with t}
(\af_Z +\mf_Z)^0_t+[\hf_t,X_0]=\hf_t^\perp\, .\end{equation}
where $ (\af_Z+\mf_Z)^0_t:= \Ad(a_t)(\af_Z +\mf_Z)^0$ and
$\hf_t:=\Ad(a_t)\hf$.
Now note that by \eqref{T0 orthogonal} we have for $t\to\infty$ that
$(\af_Z+\mf_Z)_t^0 \to \af_Z +\mf_Z$ in the Grassmannian of subspaces.
Moreover
$\hf_t\to \hf_\emptyset=\lf\cap \hf + \oline \uf$ by \eqref{I-compression}.

On the other hand $(\hf_\emptyset)^\perp = \af_Z+\mf_Z + \oline \uf$.
As $[X_0,\oline \uf]=\oline \uf$ we obtain
\begin{equation*} \label{gen surjective limit} \af_Z +\mf_Z + [\hf_\emptyset, X_0]=(\hf_\emptyset)^\perp \, ,\end{equation*}
that is, \eqref{gen surjective with t} holds in the limit.
Hence it holds for $t$ sufficiently large.
\end{proof}

In analogy to \cite[Sect.~3]{Knop2} we call  $Z=G/H$ {\it non-degenerate} provided that
an element $X_0$ as in Lemma  \ref{lemma H perp polar} exists, and {\it degenerate} otherwise.
Flag varieties $Z=G/P$ with $P$ a parabolic subgroup
of $G$ are degenerate.  But in many cases $Z$ is non-degenerate as the following example shows.

\begin{ex}\label{ex-quasiaffine} (cf. \cite[Lemma 3.1]{Knop2}) Every quasi-affine real spherical space is non-degenerate.
Indeed, the constructive proof of the local structure theorem, see \cite[Section 2.1]{KKS}, yields
an $X_0\in \af_Z\cap \hf^\perp$ such that $\lf=\zf_\gf(X_0)$. Moreover, this element can be chosen such that
$\alpha(X_0)<0$ for all roots $\alpha\in \Sigma_\uf$.
\end{ex}
The following lemma was communicated to us by B. Harris.
\begin{lemma} \label{lem Harris} Let $\uG$ be an algebraic group defined over $\R$ and $\uH\subset \uG$ be an algebraic
subgroup defined over $\R$ as well.  Suppose that $Z=G/H$ is unimodular.  Then
$Z$ is quasi-affine, i.e. $\uZ=\uG/\uH$
is a quasi-affine variety.  \end{lemma}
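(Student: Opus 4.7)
The plan is to deduce quasi-affineness from the observability criterion: $\uZ=\uG/\uH$ is quasi-affine if and only if $\uH$ can be realized as the isotropy group of a vector in some rational finite-dimensional $\uG$-module. I would begin by translating the hypothesis into algebraic form. Unimodularity of $Z=G/H$ is equivalent to triviality on $H$ of the modular character
\[ \chi:\uH\to \mathbb{G}_m,\qquad \chi(h)=\det\Ad_\gf(h)\cdot\det\Ad_\hf(h)^{-1}, \]
which is an $\R$-algebraic character of $\uH$. Since $H=\uH(\R)$ is Zariski dense in $\uH^0$, and the remaining components of $\uH$ are controlled by the $\R$-structure, the triviality on $H$ propagates to $\chi=1$ as an algebraic character of the whole $\uH$.

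Next I would invoke Chevalley's theorem: there exist a rational $\uG$-module $W$ and a line $\ell\subset W$ with $\uH=\{g\in\uG:g\cdot\ell=\ell\}$. The $\uH$-action on $\ell$ defines a character $\chi_\ell:\uH\to\mathbb{G}_m$, and the class $[\chi_\ell]\in X(\uH)/\Res_\uH^\uG X(\uG)$ is independent of the chosen pair $(W,\ell)$. Observability of $\uH$, hence quasi-affineness of $\uZ$, is equivalent to $[\chi_\ell]=0$: if $\chi_\ell=\tilde\chi|_\uH$ for some $\tilde\chi\in X(\uG)$, then in the twisted module $W\otimes\tilde\chi^{-1}$ the group $\uH$ fixes an actual vector $v$, and the orbit map $g\uH\mapsto g\cdot v$ realizes $\uZ$ as a locally closed $\uG$-equivariant subvariety of the affine space $W\otimes\tilde\chi^{-1}$. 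To close the argument I would identify $[\chi_\ell]$ with $[\chi^{-1}]$: for the canonical candidate $W_0=\bigwedge^{\dim\hf}\gf$ with $\ell_0=\bigwedge^{\dim\hf}\hf$, the character of the action on $\ell_0$ is $\det\Ad_\hf$, which differs from $\chi^{-1}$ by the restriction of the $\uG$-character $\det\Ad_\gf$; combined with $\chi=1$ from the first step, this yields $[\chi_\ell]=0$.

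The principal obstacle lies in this last identification. In the canonical construction above, the stabilizer of $\ell_0$ is the normalizer $N_\uG(\hf)$, generically strictly larger than $\uH$, so one cannot directly apply Chevalley with $(W_0,\ell_0)$. The remedy is to enlarge the module by a direct sum with further $\uG$-modules --- for instance by adjoining basis vectors of $\hf$ viewed in suitable tensor powers of $\gf$ --- chosen so that the joint stabilizer of a line in the enlarged module is exactly $\uH$, while verifying that this enlargement does not change the class $[\chi_\ell]$ modulo $\Res_\uH^\uG X(\uG)$. Establishing that the Chevalley character of this refined line still represents the class of the modular character is the main technical step; once it is in place, the triviality of $\chi$ provided by unimodularity finishes the proof.
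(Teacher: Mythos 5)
Your proposal rests on the claim that the class $[\chi_\ell]\in X(\uH)/\Res_\uH^\uG X(\uG)$ of the Chevalley character ``is independent of the chosen pair $(W,\ell)$,'' and that observability of $\uH$ is equivalent to the vanishing of this class. The independence is false. Take $\uG=\SL_2$ and $\uH=\uB$ a Borel. Then both $(V,\,\C e_1)$ with $V$ the standard representation and $(\Sym^n V,\,\C e_1^n)$ are Chevalley pairs for $\uB$, with characters $\varpi$ and $n\varpi$ respectively in $X(\uB)\cong\Z$; since $X(\SL_2)=0$, these give distinct classes for $n\geq 2$. The correct statement is only that $\uH$ is observable iff \emph{some} Chevalley pair has character extending to $\uG$. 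Consequently the strategy of computing $[\chi_\ell]$ for the canonical but insufficient pair $(\bigwedge^{\dim\hf}\gf,\,\bigwedge^{\dim\hf}\hf)$ and transporting the answer to a genuine Chevalley pair has no foundation: the line-enlargement you flag as ``the main technical step'' is precisely where new, uncontrolled character contributions enter, and no invariance principle makes them disappear. This is not a technicality to fill in later; it is a structural flaw in the argument.

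The paper avoids this entirely by not attempting to produce a single Chevalley pair whose line-stabilizer is exactly $\uH$. Instead, for $\uH$ connected it forms the subgroup $\uH_1=\mathrm{Stab}_\uG(v_1)$ of the \emph{vector} $v_1=X_1\wedge\cdots\wedge X_d\in\bigwedge^d\gf$ (no line-twist at all), which contains $\uH$ precisely because unimodularity forces $\det\Ad_\hf=1$. Then $\uG/\uH_1$ is quasi-affine as a $\uG$-orbit in an affine module, $\uH\triangleleft\uH_1$ so $\uH_1/\uH$ is affine, and the transitivity theorem of Sukhanov and Bialynicki-Birula--Hochschild--Mostow closes the gap: if $\uH_1/\uH$ and $\uG/\uH_1$ are both quasi-affine, so is $\uG/\uH$. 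The disconnected case is then handled by quotienting $\uG/\uH_0$ by the finite group $\uH/\uH_0$. If you want to repair your argument, this tower-and-transitivity device is exactly the tool you are missing; without it, the observability characterization via characters of line-stabilizers does not give you enough to conclude.
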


\begin{proof}  Clearly $Z$ is unimodular if and only if $\uZ$ is unimodular. We assume first that
$\uH$ is connected and treat the general case at the end.
We recall the following transitivity
result, see \cite[Lemma 1.1]{Suk} and \cite[Th.4]{BHM}:  If there is a tower $\uH \subset \uH_1\subset \uG$ of subgroups, such that
$\uH_1/\uH$ and $\uG/\uH_1$ are both quasi-affine, then $\uG/\uH$ is quasi-affine.
Now for $d:=\dim \uH$ and $X_1,\ldots, X_d$ a basis of $\hf$ consider
$v_1:=X_1\wedge\ldots\wedge X_d \in \bigwedge^d \gf_\C$.  As $\uH$ is supposed to be unimodular and connected, we see that $\uH$ fixes $v_1$.  Let $\uH_1$ be the stabilizer of $v_1$ in $\uG$. Then
$$ \uG/\uH_1 \to \bigwedge^d\gf_\C, \ \ g\uH_1\mapsto g\cdot v_1$$
is injective and exhibits $\uG/\uH_1$ as quasi-affine. Moreover, as $\uH\subset \uH_1$ is normal, $\uH_1/\uH$
is affine and the transitivity result  of above applies.
This shows the lemma for $\uH=\uH_0$ connected.  As $F:=\uH/ \uH_0$ is finite and acts freely on $\uZ_0=\uG/\uH_0$
the quotient $\uZ=\uG/\uH \simeq \uZ_0/F$ is  geometric and quasi-affine as well (average polynomial function over $F$).
\end{proof}

It is interesting to record the following (cf. \cite[Th.3.2]{Knop2}):

\begin{lemma} \label{lemma non-deg}Let $Z=G/H$ be a non-degenerate real spherical space.  Then the set $\hf^\perp_{\rm ss}$ of semisimple
elements in $\hf^\perp$ has non-empty Zariski-open interior in $\hf^\perp$.
\end{lemma}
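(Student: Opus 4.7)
The strategy is to exhibit a non-empty Zariski-open subset of $\hf^\perp$ consisting of semisimple elements, via the polar-type map $\Phi$ from Lemma~\ref{lemma H perp polar}. The key auxiliary input is that the slice $(\af_Z+\mf_Z)^0\subset \hf^\perp$ consists generically of semisimple elements of $\gf$. To see this, write $X=X_a+X_m$ with $X_a\in\af_Z$ and $X_m\in\mf_Z$. The operator $\ad X$ preserves each root space $\gf^\alpha\subset\uf$ (since $\mf$ centralizes $\af$), and restricts there to $\alpha(X_a)\,\mathrm{id}+\ad X_m|_{\gf^\alpha}$. Because $M$ is compact, $\ad X_m$ has purely imaginary spectrum on $\gf_\C^\alpha$, so this restriction is invertible whenever $\alpha(X_a)\neq 0$. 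By non-degeneracy, $\alpha(X_0)<0$ for every $\alpha\in\Sigma_\uf$, so $\ad X_0|_\uf$ is invertible. As invertibility is the non-vanishing of a polynomial function of $X$, it defines a non-empty Zariski-open subset $\Omega\subset\af_Z+\mf_Z$ containing $X_0$.

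Next I would show that $\tilde X:=X+T_0(X)$ is semisimple for every $X\in\Omega$. Consider the unipotent orbit $\Ad(U)X\subset\gf$. Since $\Ad(u)X-X\in\uf$ for every $u\in U$, the orbit is contained in the affine subspace $X+\uf$. The $U$-stabilizer of $X$ has Lie algebra $\ker(\ad X|_\uf)=\{0\}$ by the invertibility above, so it is trivial and the orbit is of full dimension $\dim\uf=\dim(X+\uf)$. By Rosenlicht's theorem, orbits of unipotent algebraic groups are Zariski-closed, so $\Ad(U)X$ is a closed subvariety of the irreducible affine space $X+\uf$ of the same dimension, hence $\Ad(U)X=X+\uf$. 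In particular $\tilde X\in X+\uf=\Ad(U)X$, so $\tilde X$ is $G$-conjugate to $X$. As $X\in\af+\mf$ lies in a reductive subalgebra, it is semisimple, and therefore so is $\tilde X$. Denoting by $\Omega^0:=\{X+T_0(X):X\in\Omega\}$ the corresponding Zariski-open subset of $(\af_Z+\mf_Z)^0$, it contains $X_0$ (because $T_0(X_0)=0$, since $X_0\in\hf^\perp$) and consists entirely of semisimple elements of $\gf$.

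Finally I invoke Lemma~\ref{lemma H perp polar}: the equality \eqref{gen surjective} asserts that the differential of $\Phi$ at $(e,X_0)$ is surjective, so the submersion locus $\Sigma_{\rm sub}\subset H\times(\af_Z+\mf_Z)^0$ is a non-empty Zariski-open set. Its intersection $\Sigma:=\Sigma_{\rm sub}\cap(H\times\Omega^0)$ is again non-empty and Zariski-open. By Chevalley's theorem applied to the dominant morphism $\Phi|_\Sigma$, the image $\Phi(\Sigma)\subset\hf^\perp$ is a constructible and Zariski-dense set, hence contains a non-empty Zariski-open subset $U$ of $\hf^\perp$. Every element of $\Phi(\Sigma)$ has the form $\Ad(h)\tilde X$ with $\tilde X$ semisimple (by the previous paragraph), and semisimplicity is preserved by $\Ad(H)$; thus $U\subset\hf^\perp_{\rm ss}$, proving the lemma. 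The principal obstacle is the semisimplicity step: the appeal to Rosenlicht's closedness theorem for unipotent orbits is what makes the orbit fill $X+\uf$, and can alternatively be implemented by a Newton-type iteration constructing $V\in\uf$ with $\Ad(\exp V)\tilde X=X$ inductively along the lower-central-series filtration of $\uf$, using invertibility of $\ad X|_\uf$ at each stage.
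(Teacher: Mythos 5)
Your construction of the semisimple slice is essentially the paper's own argument: the paper simply quotes \cite[Lemma 2.6]{KKS} for the identity $X_1+\uf=\Ad(U)X_1$ for $X_1\in\af_Z+\mf_Z$ near $X_0$, while you re-prove it via invertibility of $\ad X|_{\uf}$ (your Newton-type iteration is in effect the cited proof), and the use of Lemma~\ref{lemma H perp polar} and \eqref{gen surjective} is the same. One small repair there: ``$X$ lies in a reductive subalgebra, hence is semisimple'' is not a valid principle (reductive subalgebras contain nilpotent elements); the correct reason is that $X=X_a+X_m$ with $X_a\in\af$ hyperbolic, $X_m\in\mf\subset\kf$ elliptic and $[X_a,X_m]=0$, so $X$ is semisimple.

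The genuine gap is your final step. Chevalley's constructibility theorem applies to morphisms of varieties, not to the induced maps on real points: the image of $\Phi\colon H\times(\af_Z+\mf_Z)^0\to\hf^\perp$ with $H=\uH(\R)$ is in general only semialgebraic (Tarski--Seidenberg), not Zariski-constructible --- think of $x\mapsto x^2$ on $\R$. Hence ``constructible and Zariski-dense, so it contains a non-empty Zariski-open subset of $\hf^\perp$'' is unjustified: a Zariski-dense semialgebraic set such as $[0,\infty)\subset\R$ contains no non-empty Zariski-open set, and an $\Ad(H)$-saturated image can perfectly well be of this half-space type. What your argument actually yields over $\R$ (submersivity at $(e,X_0)$ together with semisimplicity on the slice) is a non-empty Hausdorff-open set of semisimple elements in $\hf^\perp$, which is strictly weaker than the Zariski statement of the lemma --- and the Zariski form is what is used later (Corollary~\ref{cor non-deg}, Theorem~\ref{thm discrete}). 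To close the gap, complexify: apply Chevalley to $\uH\times(\af_Z+\mf_Z)^0_\C\to\hf_\C^\perp$. The semisimple locus of the complex slice is constructible (it is the trace of $\Ad(\uG)\jf_\C$) and contains the Zariski-dense set $\{X+T_0(X):X\in\Omega\}$ of semisimple real points, hence contains a dense Zariski-open subset of $(\af_Z+\mf_Z)^0_\C$; restricting the (dominant, by submersivity at $(e,X_0)$) complex morphism to $\uH_0$ times this locus, Chevalley gives a dense Zariski-open $W\subset\hf_\C^\perp$ consisting of $\Ad(\uH)$-conjugates of semisimple elements, and $W\cap\hf^\perp$ is then the desired non-empty Zariski-open subset of $\hf^\perp$ contained in $\hf^\perp_{\rm ss}$. (This is the role played in the paper by the opening reduction to the Zariski-open interior of $\gf_{\rm ss}$ in $\gf$.)
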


\begin{proof} Since $\gf_{\rm ss}$ has Zariski-open interior in $\gf$, it suffices to check that there is a non-empty open set of
semisimple elements in $\hf^\perp$.  Now $X_0$ is semisimple and for all elements
$X_1\in \af_Z+\mf_Z$ sufficiently close to $X_0$ we have in addition that $X_1+ \uf=\Ad(U)X_1$ by \cite[Lemma 2.6]{KKS}.
In view of \eqref{T0 orthogonal}  this implies that all elements $X_1 + T_0(X_1)$
are semisimple and belong to $(\af_Z+\mf_Z)^0$.
With Lemma \ref{lemma H perp polar} we conclude the proof.
\end{proof}

\begin{cor}\label{cor non-deg} Let $Z=G/H$ be a non-degenerate real spherical space and $E\subset \hf^\perp$.
Then the following are equivalent:
\begin{enumerate}
\item $\operatorname{int} E_{\rm ell}\neq \emptyset$.
\item $\operatorname{int} E_{\rm w-ell}\neq \emptyset$.
\end{enumerate}
\end{cor}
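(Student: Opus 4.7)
The implication $(1)\Rightarrow(2)$ is immediate from the inclusion $E_{\rm ell}\subset E_{\rm w-ell}$, so the content is in $(2)\Rightarrow(1)$. The plan is to exploit the fact that on the set of semisimple elements, weak ellipticity upgrades automatically to ellipticity, together with the abundance of semisimple elements in $\hf^\perp$ provided by non-degeneracy of $Z$.

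More precisely, I would first record the elementary observation that if $X\in\gf$ is semisimple (i.e.\ $\ad X$ is diagonalizable) and weakly elliptic (i.e.\ $\Spec(\ad X)\subset i\R$), then by the very definition $X$ is elliptic. Consequently
\[
\hf^\perp_{\rm ss}\cap E_{\rm w-ell}\subset E_{\rm ell}.
\]
Next, I would invoke Lemma \ref{lemma non-deg}: since $Z$ is non-degenerate, the set $\hf^\perp_{\rm ss}$ contains a non-empty Zariski-open subset $\Omega$ of $\hf^\perp$. As the complement of $\Omega$ in $\hf^\perp$ is a proper real algebraic subvariety, $\Omega$ is dense in $\hf^\perp$ for the Euclidean topology.

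Now assume $\operatorname{int} E_{\rm w-ell}\neq\emptyset$ and pick a non-empty Euclidean open subset $U\subset \hf^\perp$ with $U\subset E_{\rm w-ell}$. Then $U\cap \Omega$ is open in $\hf^\perp$ and non-empty by density of $\Omega$, and by the observation above
\[
U\cap\Omega\subset \hf^\perp_{\rm ss}\cap E_{\rm w-ell}\subset E_{\rm ell}.
\]
Hence $\operatorname{int} E_{\rm ell}\neq\emptyset$, which establishes $(2)\Rightarrow(1)$.

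There is no real obstacle here once Lemma \ref{lemma non-deg} is in place; the only thing to watch is to make sure that "semisimple" in the sense used in the definition of $\hf^\perp_{\rm ss}$ is the same notion (i.e.\ $\ad X$ diagonalizable) so that semisimple plus weakly elliptic does indeed give elliptic. This is exactly the convention fixed at the start of the subsection, so the argument goes through without further work.
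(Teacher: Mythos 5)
Your proof is correct and is exactly the intended argument: the paper states this corollary without proof as an immediate consequence of Lemma \ref{lemma non-deg}, and your deduction (semisimple plus weakly elliptic equals elliptic, then intersect a Euclidean-open subset of $E_{\rm w-ell}$ with the Euclidean-dense Zariski-open set of semisimple elements supplied by non-degeneracy) is precisely how it is meant to follow.
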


\begin{lemma}  \label{lemma elliptic}
The  following assertions hold:
\begin{enumerate}
\item \label{eins I} $\left[\Ad(\uH) (\af_Z+ \mf_Z)_\C^0\right]_{\rm w-ell} =  \Ad(\uH)\big((\af_Z +\mf_Z)_\C^0 \cap \zf(\gf_\C)+
i\af_Z^0  +\mf_Z^0 \big)$.
\item \label{zwei II} Suppose that $Z$ is non-degenerate and assume that
$\operatorname{int} \hf_{\rm w-ell}^\perp\neq \emptyset$. Then
$$ \operatorname{int}  \big (\hf^\perp \cap \Ad(\uH)(\zf(\gf)+ i\af_Z^0 +\mf_Z^0)  \big)\neq \emptyset\, .$$
\end{enumerate}
\end{lemma}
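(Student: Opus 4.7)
The proof splits naturally into the two inclusions of (1), followed by a deduction of (2) via Lemma \ref{lemma H perp polar}.

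For the inclusion $\supseteq$ in (1), central elements contribute nothing to $\ad$, so it suffices to show that any $U = X + T_0(X)$ with $X = iY_\af + X_\mf \in i\af_Z + \mf_Z$ is weakly elliptic. Since $T_0(X) \in \uf_\C$, the operator $\ad T_0(X)$ strictly raises $\af$-weight. Hence with respect to the $\af$-weight decomposition $\gf_\C = \bigoplus_\lambda \gf^\lambda$, the operator $\ad U$ is block-upper-triangular with diagonal blocks $\ad X|_{\gf^\lambda} = \lambda(iY_\af)\id + \ad X_\mf|_{\gf^\lambda}$. The two summands commute (as $[\af,\mf]=0$) and both have purely imaginary eigenvalues: $\lambda(iY_\af)\in i\R$ since $Y_\af\in\af_Z$ is real, and $\ad X_\mf|_{\gf^\lambda}$ since $X_\mf\in\mf\subset\kf$. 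So $\ad U$ has purely imaginary spectrum, and conjugation by $\Ad(\uH)$ preserves spectra.

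For the reverse inclusion $\subseteq$, fix $V = X + T_0(X) \in (\af_Z+\mf_Z)_\C^0$ weakly elliptic (after absorbing the $\Ad(h)$-conjugate into the $\Ad(\uH)$-orbit). The same block-upper-triangular description shows that $\ad V$ and $\ad X = \ad X_\af + \ad X_\mf$ have the same spectrum. Writing $X_\af = X_\af^0 + iX_\af^1$ and $X_\mf = X_\mf^0 + iX_\mf^1$ with real components and using commutativity of $X_\af,X_\mf$, the eigenvalues of $\ad X$ on $\gf^\lambda$ have real part $\lambda(X_\af^0) + \Re(\mu)$ for $\mu$ an eigenvalue of $\ad X_\mf|_{\gf^\lambda}$. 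Specializing first to the Levi $\gf^0 = \lf$, weak ellipticity forces $\ad X_\mf|_\lf$ to have purely imaginary spectrum; since $\ad X_\mf^0|_\lf$ already does while $i\ad X_\mf^1|_\lf$ contributes only real parts, a Jordan-decomposition argument in $\mf_\C$ forces $X_\mf^1$ to act trivially on $\lf$, hence $X_\mf^1 \in \zf(\lf) \cap \mf$. Passing to each $\gf^\lambda$ with $\lambda \in \Sigma_\uf$, centrality of $X_\mf^1$ in $\lf$ and a parallel argument gives $\lambda(X_\af^0) = 0$ and the vanishing of all real parts of eigenvalues of $\ad X_\mf^1|_{\gf^\lambda}$; combining with the fact that $\af_Z$ already centralizes $\lf$ (since $\lf = Z_\gf(\af_Z)$) yields $X_\af^0 \in \zf(\gf) \cap \af_Z$ and $X_\mf^1 \in \zf(\gf) \cap \mf$. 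The decomposition $X = (X_\af^0 + iX_\mf^1) + (iX_\af^1 + X_\mf^0)$ then exhibits $V$ in the set $\zf(\gf_\C)\cap(\af_Z+\mf_Z)_\C^0 + i\af_Z^0 + \mf_Z^0$ after applying $X \mapsto X + T_0(X)$.

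For (2), Lemma \ref{lem Harris} gives that the unimodular homogeneous $Z$ is quasi-affine, so Example \ref{ex-quasiaffine} produces an element of $\af_Z\cap\hf^\perp$ satisfying the hypothesis of Lemma \ref{lemma H perp polar}. That lemma then makes $\Phi: H\times(\af_Z+\mf_Z)^0 \to \hf^\perp$ generically submersive. Picking an open subset $\Omega$ of $\operatorname{int}\hf^\perp_{\rm w\text{-}ell}$ lying in the image of the submersion locus, each $Y\in\Omega$ is of the form $\Ad(h)(X+T_0(X))$ with $h\in H$ and $X+T_0(X)\in(\af_Z+\mf_Z)^0$ weakly elliptic, so (1) places $Y$ in $\Ad(\uH)(\zf(\gf_\C)\cap(\af_Z+\mf_Z)_\C^0 + i\af_Z^0 + \mf_Z^0)$. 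Since $Y\in\gf$ is real, the central contribution automatically lies in $\zf(\gf_\C)\cap\gf=\zf(\gf)$, giving $\Omega \subset \hf^\perp \cap \Ad(\uH)(\zf(\gf)+i\af_Z^0+\mf_Z^0)$ and hence the desired nonempty interior.

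The main obstacle will be the spectral analysis in the second paragraph: extracting "$X_\mf^1 \in \zf(\lf)$" and "$X_\af^0 \in \zf(\gf)$" from "$\ad X$ has purely imaginary spectrum" is delicate because the real and imaginary parts $X_\mf^0, X_\mf^1 \in \mf$ need not commute in general, so the eigenvalues of $\ad X_\mf$ are not simply the sums $\ad X_\mf^0 + i\ad X_\mf^1$-eigenvalues; one must combine semi-simplicity of $X_\mf$ with the compactness of $\mf$ and the root-space structure on $\gf_\C$, and verify that non-commuting cross terms do not spoil the vanishing of real parts.
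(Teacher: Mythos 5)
Your proposal follows the same route as the paper: the key spectral identity $\Spec(\ad(X+T_0(X)))=\Spec(\ad X)$ from $\af$-weight block-triangularity, compactness of $\mf$ for the easy inclusion $\supseteq$, and part (2) via Lemma \ref{lemma H perp polar} plus part (1) and the reality observation at the end. The easy inclusion and the deduction of (2) from (1) are correct and match the paper's argument.

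The difficulty you flag in the hard inclusion $\subseteq$ of (1) is, however, a real gap, and no ``Jordan-decomposition argument in $\mf_\C$'' closes it: for complex $X_\mf=X_\mf^0+iX_\mf^1$ with $[X_\mf^0,X_\mf^1]\ne 0$ the $\ad$-eigenvalues do not split additively, and the statement over $\C$ in fact fails on nilpotent elements. For instance if $\gf$ is semisimple and $\mf_Z$ contains a copy of $\su(2)$ (e.g.\ the group case $G\times G/\diag G$ with $G=\SO_0(4,1)$, where $\mf_Z\simeq\so(3)$), the raising element $E\in\sl(2,\C)\simeq(\mf_Z)_\C$ is $\ad$-nilpotent, hence weakly elliptic, and $E=A+iB$ with $A,B\in\su(2)$ and $B\ne0$; but the set $\Ad(\uH)\big(\zf(\gf_\C)+i\af_Z^0+\mf_Z^0\big)$ contains no nonzero $\ad$-nilpotent elements when $\zf(\gf)=0$ (an element $W+T_0(W)$ with $W\in i\af_Z+\mf_Z$ has $\Spec(\ad W)\subset i\R$ \emph{semisimple}, so nilpotency forces $W$ central, hence $0$), so $E+T_0(E)$ is in the left side of (1) and not the right. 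The paper's own one-line ``Hence $X$ is weakly elliptic if and only if $\dots$'' is equally unsupported at exactly this point. The repair — which also dissolves the difficulty you flag — is to notice that your proof of (2) only ever invokes (1) on \emph{real} elements $X+T_0(X)\in(\af_Z+\mf_Z)^0\subset\gf$, because $\Phi$ in Lemma \ref{lemma H perp polar} is a map of real manifolds. For real $X=X_\af+X_\mf$ with $X_\af\in\af_Z$, $X_\mf\in\mf_Z$, one has $[X_\af,X_\mf]=0$, $\ad X_\af$ semisimple with real spectrum, $\ad X_\mf$ semisimple with imaginary spectrum, and so $\Spec(\ad X)\subset i\R$ immediately forces $\ad X_\af=0$, i.e.\ $X_\af\in\zf(\gf)\cap\af_Z$, whence $X\in\zf(\gf)+\mf_Z$. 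That real version of (1) is all that (2) requires, and it is elementary; you should state and prove that rather than attempt to control a complex $X_\mf^1$.
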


\begin{proof} For (\ref{eins I}) we  first observe that it suffices to show

$$\left[(\af_Z+ \mf_Z)_\C^0\right]_{\rm w-ell} = (\af_Z +\mf_Z)_\C^0 \cap \zf(\gf_\C)+i\af_Z^0  +\mf_Z^0 $$
Let $Y\in  (\af_Z+ \mf_Z)_\C$ and
$X= Y + T_0(Y) \in (\af_Z+ \mf_Z)_\C^0$ as in \eqref{T0 orthogonal}. Then

$$\Spec (\ad X) = \Spec (\ad Y)\, .$$
Hence $X$ is weakly elliptic if and only if $Y\in  (\af_Z+\mf_Z)_\C\cap \zf(\gf_\C)+
i\af_Z  +\mf_Z$, that is, if and only if $X\in  (\af_Z+\mf_Z)_\C^0\cap \zf(\gf_\C)+
i\af_Z^0  +\mf_Z^0$.

For (\ref{zwei II}) we note that $\Ad(\uH)(\af_Z+\mf_Z)_\C^0$ is defined over $\R$ and Zariski dense in $\hf_\C^\perp$
as a consequence of Lemma \ref{lemma H perp polar}. Now (\ref{zwei II}) follows from (\ref{eins I}).
\end{proof}

Recall the edge $\af_{Z,E}\subset \af_Z$ and $\af_{Z,E}\subset \nf_\gf(\hf)$ with
$\nf_\gf(\hf)$ the normalizer of $\hf$ in $\gf$.

\begin{lemma}  \label{lemma no-elliptic} Let $Z$ be a non-degenerate real spherical space.
If $\af_{Z,E}\neq \{0\}$, then $\operatorname{int} \hf_{\rm w-ell}^\perp =\emptyset$.
\end{lemma}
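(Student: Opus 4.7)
The plan is a proof by contradiction via dimension counting along the polar decomposition of Lemma \ref{lemma H perp polar}. Suppose $\operatorname{int}\hf^\perp_{\rm w-ell}\ne\emptyset$; by Corollary \ref{cor non-deg} I may equivalently assume $\operatorname{int}\hf^\perp_{\rm ell}\ne\emptyset$, and by Lemma \ref{lemma elliptic}(\ref{zwei II}) the intersection $\hf^\perp\cap\Ad(\uH)(\zf(\gf)+i\af_Z^0+\mf_Z^0)$ has non-empty interior in $\hf^\perp$.

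First I would determine the weakly elliptic locus of the slice $(\af_Z+\mf_Z)^0$. Write $X=X_\af+X_\mf\in\af_Z\oplus\mf_Z$; a non-degeneracy element $X_0\in\af_Z$ with $\alpha(X_0)<0$ for all $\alpha\in\Sigma_\uf$ gives a grading of $\gf$ in which $X\in\af+\mf$ has weight zero and $T_0(X)\in\uf$ has strictly negative weights. Hence $\ad(X+T_0(X))$ is block-triangular with diagonal $\ad X$, so the two operators share spectrum. Since $[\af,\mf]=0$ and $\ad X_\mf$ has purely imaginary eigenvalues (because $X_\mf\in\kf$), on each $\gf^\alpha$ the operator $\ad X$ acts as $\alpha(X_\af)\,\id+\ad X_\mf|_{\gf^\alpha}$, so $X+T_0(X)$ is weakly elliptic if and only if $X_\af\in\zf(\gf)\cap\af_Z$. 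The weakly elliptic part of the slice is thus the linear subspace $\bigl((\zf(\gf)\cap\af_Z)+\mf_Z\bigr)^0$.

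Second, generic submersivity of $\Phi$ makes $\Ad(H)(\af_Z+\mf_Z)^0$ open dense in $\hf^\perp$, and the weakly elliptic locus of $\hf^\perp$ is densely covered by $\Ad(H)\cdot\bigl((\zf(\gf)\cap\af_Z)+\mf_Z\bigr)^0$. Comparing dimensions via $\dim\hf^\perp=\dim Z$ and the generic fibre dimension of $\Phi$, the image of the elliptic part of the slice has real dimension at most $\dim Z-\bigl(\dim\af_Z-\dim(\zf(\gf)\cap\af_Z)\bigr)$, so non-emptiness of the interior forces $\af_Z\subset\zf(\gf)$. Combined with the non-degeneracy inequality, every $\alpha\in\Sigma_\uf$ then vanishes on $\af_Z\ni X_0$ while also $\alpha(X_0)<0$, forcing $\Sigma_\uf=\emptyset$, and hence $\uf=0$ with $\uQ=\uL$; in particular the graph $T$ and the monoid $\M$ become trivial.

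The hard part is the concluding structural step. In the degenerate regime $\uf=0$ with $\af_Z\subset\zf(\gf)$ the compression cone collapses to all of $\af_Z$, so the naive identification $\af_{Z,E}=\af_Z$ does not by itself contradict $\af_{Z,E}\ne 0$; the proof must invoke finer structural properties — the quasi-affinity of $Z$ (Lemma \ref{lem Harris}), unimodularity, and the genuine sphericality of $\uH$ in $\uG$ — to rule out this boundary situation. I expect this last step to rest on a careful reading of $\af_{Z,E}$ as the subspace of $\af_Z$ with trivial limit behaviour under $\Ad$-conjugation (cf.\ \eqref{I-compression}) to produce the contradiction with $\af_{Z,E}\ne 0$.
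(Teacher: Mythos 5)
Your argument breaks down at the dimension-count step, and the error is visible on a concrete example. Take $G=\Sl(2,\R)\times\Sl(2,\R)$, $H=\diag\Sl(2,\R)$, so that $\hf^\perp\simeq\sl(2,\R)$ and $\af_Z\simeq\af'=\R$ is the diagonal line with $\zf(\gf)\cap\af_Z=\{0\}$. The elliptic locus of $\sl(2,\R)$ is a three-dimensional open cone, so your conclusion \emph{``non-emptiness of the interior forces $\af_Z\subset\zf(\gf)$''} fails here. The source of the error: from Lemma \ref{lemma H perp polar} you may only conclude that $\Ad(H)(\af_Z+\mf_Z)^0$ is open \emph{dense} in $\hf^\perp$, not that every element of $\hf^\perp$, and in particular every weakly elliptic element, lies in it. Indeed in the $\sl(2,\R)$ example $\Ad(H)(\af_Z+\mf_Z)^0$ is exactly the set of $\R$-diagonalizable traceless matrices, which misses the elliptic cone entirely. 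The weakly elliptic locus is reached only through \emph{complex} conjugation, which is exactly why the paper's Lemma \ref{lemma elliptic} passes to $\Ad(\uH)$ acting on the complexified slice $(\af_Z+\mf_Z)_\C^0$, and why its description of the weakly elliptic part of the slice contains the full-dimensional summand $i\af_Z^0$ (together with $\zf(\gf_\C)$-part and $\mf_Z^0$), not merely $(\zf(\gf)\cap\af_Z+\mf_Z)^0$. Your computation of the real weakly elliptic slice is correct as far as it goes, but it is the wrong object to use.

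Once you admit the $i\af_Z^0$ contribution, a naive codimension argument no longer gives anything — the elliptic set can perfectly well be open — and the hypothesis $\af_{Z,E}\neq\{0\}$ has to enter in a structural way, which your proof never uses before the last paragraph. The mechanism in the paper is the orthogonal splitting $\af_Z=\af_{Z,E}\oplus\af_{Z,S}$ together with the two algebraic facts $\hf^\perp=\hat\hf^\perp\oplus\af_{Z,E}^0$ and $\Ad(h)X-X\in\hat\hf_\C^\perp$ for $h\in\uH$, $X\in\af_{Z,E}^0$ (the latter coming from $[\af_{Z,E},\hf_\C]\subset\hf_\C$). These let one read off that for $X=iX_1+iX_2+Y\in i\af_{Z,E}^0+i\af_{Z,S}^0+\mf_Z^0$ with $\Ad(h)X\in\hf^\perp$, the $\af_{Z,E}$-coordinate $X_1$ must vanish; \emph{then} the dimension count (with $\af_{Z,E}$ cut out, not all of $\af_Z$) gives the contradiction. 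That $\hat\hf^\perp$-argument is the missing idea, and without it the deduction does not close — as your candid last paragraph already anticipates, though the obstacle is not where you place it: the concluding structural step isn't the hard part, it's the penultimate dimension step that is unsound.
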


\begin{proof} Let $\af_Z= \af_{Z,E} \oplus \af_{Z,S}$ be the orthogonal decomposition.
Recall $\hat \hf=\hf+\af_{Z,E}$ with $[\af_{Z,E}, \hf]\subset \hf$.  Define $\af_{Z,E}^0\subset \hf^\perp$
as below \eqref{T0 orthogonal}.  Then since $\af_{Z,E}^0 \cap \hat \hf^\perp=\{0\}$ we obtain by dimension count

\begin{equation} \label{h hat perp}  \hf^\perp=\hat\hf^\perp \oplus \af_{Z,E}^0\, .\end{equation}
Next we claim
\begin{equation} \label{center perp} \Ad(h)X - X \in \hat \hf_\C^\perp
\qquad (h\in \uH, X\in \af_{Z,E}^0)\, .\end{equation}
In fact, as $\uH$ is connected it suffices to show that $\kappa(e^{\ad Y} X, U)
=\kappa(X,U)$ for all $Y\in \hf_\C$ and $U\in\af_{Z,E}$.
By the invariance of the form $\kappa$ this is then implied by
$e^{-\ad Y}U \in U+\hf_\C$ as  $[\af_{Z,E}, \hf_\C]\subset \hf_\C$.
\par Suppose
$\operatorname{int} \hf_{\rm w-ell}^\perp \neq \emptyset$. According to Lemma \ref{lemma elliptic}
we thus find some subset $\Oc\subset i\af_{Z,E}^0 + i \af_{Z,S}^0+ \mf_Z^0$ such that
$\Ad(\uH)\Oc \cap \hf^\perp$ is open and non-empty.

Let $X= i X_1 + iX_2 + Y\in \Oc$ with $X_1 \in \af_{Z,E}^0,
X_2\in \af_{Z,S}^0, Y\in \mf_Z^0$ and let $h\in \uH$ be such that $\Ad(h)X\in\hf^\perp$.
With \eqref{center perp} we
get
$$\Ad(h) X= iX_1 +
\underbrace{(\Ad(h)(iX_1) - iX_1)}_{\in \hat \hf_\C^\perp}+
\underbrace{\Ad(h) (iX_2 + Y)}_{\in \hat \hf_\C^\perp}\in (i\af_{Z,E}^0+\hat\hf_\C^\perp)\cap \hf^\perp\, .$$
From \eqref{h hat perp} we then deduce $X_1=0$.  Hence $\Oc\subset i\af_{Z,S}^0+\mf_Z$.
Now as $\af_{Z,E}^0\neq \{0\}$ we have
$$\dim \hf/ \lf \cap \hf + \dim \af_{Z,S} + \dim \mf_Z <\dim \hf^\perp= \dim \gf/\hf$$
and therefore  $\Ad(\uH)(\af_{Z,S}^0 + \mf_Z^0)_\C\subset \hf_\C^\perp$ has empty interior,
a contradiction.  This concludes the proof.
\end{proof}

\begin{prop}\label{prop elliptic}  Let $Z=G/H$ be a unimodular  real spherical space. Suppose that $\operatorname{int} \hf_{\rm w-ell}^\perp\neq \emptyset$ where the interior is taken in $\hf^\perp$. Then $\hf$ is reductive in $\gf$.
\end{prop}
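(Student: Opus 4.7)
The plan is to exploit the interplay between unimodularity, non-degeneracy, and the ellipticity hypothesis, harvesting the structural output of the lemmas established earlier in the section. First I would invoke Lemma \ref{lem Harris} to conclude that $\uZ$ is quasi-affine, and then Example \ref{ex-quasiaffine} to conclude that $Z$ is non-degenerate. This unlocks the polar-type decomposition of Lemma \ref{lemma H perp polar}, the Zariski-openness of the semisimple locus in $\hf^\perp$ from Lemma \ref{lemma non-deg}, and the explicit description of weakly elliptic elements in Lemma \ref{lemma elliptic}(II).

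Combining the hypothesis $\operatorname{int}\hf^\perp_{\rm w-ell}\neq\emptyset$ with Lemma \ref{lemma elliptic}(II) produces an open subset $\Omega\subset\hf^\perp$ on which every element is $\Ad(\uH)$-conjugate to a member of $\zf(\gf)+i\af_Z^0+\mf_Z^0$; in particular every $X\in\Omega$ is $\ad_\gf$-semisimple with purely imaginary spectrum. At this point Lemma \ref{lemma no-elliptic}, applied contrapositively, forces $\af_{Z,E}=\{0\}$, so $\hat\hf=\hf$ and $H=\hat H$. This disposes of the edge part of the claim.

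For the reductivity of $\hf$ in $\gf$, the proposed route is by contradiction: assume that the algebraic group $\uH$ has non-trivial unipotent radical with Lie algebra $\mathfrak{u}\subset\hf$. Every element of $\mathfrak{u}$ is $\ad_\gf$-nilpotent, and since $\hf^\perp$ is $\ad_\hf$-stable, $\ad\mathfrak{u}$ acts by a non-zero family of nilpotent operators on $\hf^\perp$. For generic $X\in\Omega$ the $\Ad(\exp\mathfrak{u})$-orbit sweeps out a non-trivial unipotent direction inside $\hf^\perp$; combining this with the explicit Jordan template of Lemma \ref{lemma elliptic}(II) produces a Jordan block in some element of $\Omega$, contradicting the semisimplicity established above. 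A cleaner alternative bypasses this nilpotent bookkeeping via Matsushima's theorem: since $\uZ$ is already quasi-affine, it suffices to promote this to affineness of $\uZ$, for which the openness of the semisimple (indeed, elliptic) locus in $\hf^\perp$ should provide the required closedness of the moment-map image $\Ad(G)\hf^\perp\subset\gf$.

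The main obstacle is this last step: cleanly extracting reductivity from the existence of an open elliptic locus in $\hf^\perp$. Both routes sketched require carefully matching the algebraic structure of $\uH$---in particular its unipotent radical---against the generic spectral data of $\ad_\gf X$. I expect the direct nilpotent-orbit argument to be the most self-contained within the framework of the paper, while the Matsushima route trades a Lie-theoretic calculation for an appeal to the affine/reductive dictionary.
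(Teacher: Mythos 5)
Your first half exactly reproduces the paper's argument: Lemma \ref{lem Harris} gives quasi-affineness from unimodularity, Example \ref{ex-quasiaffine} promotes this to non-degeneracy, and the contrapositive of Lemma \ref{lemma no-elliptic} combined with the hypothesis yields $\af_{Z,E}=\{0\}$. Up to this point you match the paper step for step.

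The genuine gap is what you do with $\af_{Z,E}=\{0\}$. You treat it as a side fact (``the edge part'') and then set out to prove reductivity of $\hf$ by a separate argument. But in fact $\af_{Z,E}=\{0\}$ \emph{is} the conclusion: by \cite[Cor.~9.10]{KK}, a spherical subalgebra $\hf$ that is not reductive in $\gf$ necessarily has $\af_{Z,E}\neq\{0\}$, so the contrapositive finishes the proof in one line. The paper literally argues by contradiction: assume $\hf$ not reductive, invoke \cite[Cor.~9.10]{KK} to get $\af_{Z,E}\neq\{0\}$, then apply Lemma \ref{lemma no-elliptic} to contradict the hypothesis. Your proposal inverts the order but then fails to cite the bridge that makes $\af_{Z,E}=\{0\}$ equivalent to reductivity.

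Your attempted replacement for that bridge does not work. In the nilpotent-orbit route you write that ``the $\Ad(\exp\mathfrak{u})$-orbit sweeps out a non-trivial unipotent direction inside $\hf^\perp$'' and that this ``produces a Jordan block'' contradicting semisimplicity of elements of $\Omega$. But for $U\in\mathfrak{u}$ and $X\in\Omega$ one has $\ad\bigl(e^{\ad U}X\bigr)=e^{\ad U}\circ\ad X\circ e^{-\ad U}$, so $e^{\ad U}X$ is \emph{conjugate} to $X$ and has exactly the same Jordan type; conjugation never introduces new Jordan blocks, so there is no contradiction to be extracted. The Matsushima route is left as a sketch and would require establishing closedness of $\Ad(G)\hf^\perp$ or affineness of $\uZ$, neither of which you carry out (and which would anyway be a detour). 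The fix is simply to cite \cite[Cor.~9.10]{KK} when you have $\af_{Z,E}=\{0\}$, rather than re-deriving reductivity from scratch.
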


\begin{proof}  First we note that $Z$ is non-degenerate as $Z$ is requested to be  unimodular (see
Lemma  \ref{lem Harris} and Example \ref{ex-quasiaffine}).
We argue by contradiction and assume that
$\hf$ is not reductive.   Then \cite[Cor.~9.10]{KK} implies that $\af_{Z,E}\neq \{0\}$.
Now the assertion follows from Lemma \ref{lemma no-elliptic}.
\end{proof}

\begin{cor}\label{cor elliptic}  Let $\hf$ be a real spherical unimodular subalgebra and $I\subsetneq S$. Then
$\operatorname{int} \big( (\hf_I^\perp)_{\rm w-ell} \big) = \emptyset$.
\end{cor}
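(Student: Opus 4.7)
The plan is to apply Lemma \ref{lemma no-elliptic}, not to $Z$ itself, but to the boundary degeneration $Z_I=G/H_I$ viewed as a real spherical space in its own right. Three hypotheses of that lemma must then be verified for $Z_I$: unimodular real sphericality, non-degeneracy, and non-triviality of the edge $\af_{Z_I,E}$ of its compression cone.

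For the first, real sphericality of $Z_I$ is clear from the limit \eqref{I-compression} together with Lemma \ref{equal L cap H}, which guarantees that $P\cdot z_{0,I}$ is still the open $P$-orbit in $Z_I$; unimodularity of $Z_I$ is recorded at the beginning of Subsection \ref{measures}, citing \cite[Lemma 3.12]{KKS2}. Being unimodular, $\uZ_I$ is quasi-affine by Lemma \ref{lem Harris}, and hence non-degenerate by Example \ref{ex-quasiaffine}. All of this is essentially bookkeeping, once one notices that the constructions producing the polar map and the element $X_0$ of Lemma \ref{lemma H perp polar} for $Z$ apply equally to $Z_I$.

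The essential geometric input is the identification $\af_{Z_I,E}=\af_I$. Since $\uA\cap\uH=\uA\cap\uH_I$ by Lemma \ref{equal L cap H}, we have $\af_{Z_I}=\af_Z$. Next, inspecting the formulas \eqref{eq-hi1}-\eqref{eq-hi2} for $\hf_I$, the analogue of the semigroup $\M$ for $Z_I$ is the sub-semigroup of $\M$ generated by those $\alpha+\beta$ with $\alpha+\beta\in\N_0[I]$, and its rational cone is exactly $\R_{\geq 0}[I]$. Hence the set of spherical roots of $Z_I$ is $I$, and the edge of its compression cone is $\{X\in\af_Z:\alpha(X)=0\ \forall\alpha\in I\}=\af_I$. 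Finally, since $S$ spans $\af_{Z,E}^\perp$ and $\#S=\dim\af_Z/\af_{Z,E}$, any proper subset $I\subsetneq S$ satisfies $\dim\af_I=\dim\af_Z-\#I>\dim\af_{Z,E}$, so in particular $\af_I\neq\{0\}$.

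With these three points in place, Lemma \ref{lemma no-elliptic} applied to $Z_I$ yields directly that $\operatorname{int}\big((\hf_I^\perp)_{\rm w-ell}\big)=\emptyset$. The only step requiring genuine work is the identification $\af_{Z_I,E}=\af_I$, which depends on the structure theory of boundary degenerations and the behavior of the semigroup $\M$ under the passage $\hf\rightsquigarrow\hf_I$; I expect this to be the main (though standard) obstacle.
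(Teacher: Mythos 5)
Your proposal is correct, but it follows a slightly different path through the paper's own lemmas than the proof given there. The paper disposes of the corollary in two lines: since $\hf_I$ is a proper deformation of $\hf$ it cannot be reductive in $\gf$, and then the contrapositive of Proposition \ref{prop elliptic} applies; inside that proposition the non-vanishing of the edge is supplied by the citation \cite[Cor.~9.10]{KK} (non-reductive $\Rightarrow$ $\af_{Z,E}\neq\{0\}$), after which Lemma \ref{lemma no-elliptic} finishes. You instead bypass Proposition \ref{prop elliptic} and apply Lemma \ref{lemma no-elliptic} directly to $Z_I$, obtaining $\af_{Z_I,E}\neq\{0\}$ from the identification of the spherical data of the boundary degeneration: the spherical roots of $Z_I$ are $I$ (via the semigroup attached to $T_I$ in \eqref{eq-hi1}--\eqref{eq-hi2}, using $\uL\cap\uH=\uL\cap\uH_I$ from Lemma \ref{equal L cap H} so that the same local structure data apply), hence $\af_{Z_I,E}=\af_I$, which is nonzero for $I\subsetneq S$ by the dimension count. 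Your bookkeeping steps (unimodularity of $Z_I$ from \cite[Lemma 3.12]{KKS2}, quasi-affineness from Lemma \ref{lem Harris}, non-degeneracy from Example \ref{ex-quasiaffine}) are exactly those performed inside Proposition \ref{prop elliptic}, so the skeleton is the same; the genuine difference is only in how the edge is shown to be nonzero. What your route buys is independence from \cite[Cor.~9.10]{KK} and from the "proper deformation is never reductive" claim; the cost is that you must invoke the (standard, and indeed asserted by the paper itself in Subsection \ref{Subsection twisted}, where $\af_{Z_I,E}=\af_I$ is used) computation of the spherical roots of $Z_I$, which you only sketch — but that sketch is sound, since every $\sigma\in I$ is an integral multiple of some generator $\alpha+\beta\in\N_0[I]$ of $\M$ surviving in the semigroup for $T_I$, so the cone of the $Z_I$-semigroup is exactly $\R_{\geq0}[I]$.
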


\begin{proof}  Since $\hf_I$ is a proper deformation of $\hf$ it cannot be reductive in $\gf$. Hence the
assertion follows from Proposition \ref{prop elliptic}.
\end{proof}

\subsection{Proof of Theorem \ref{thm discrete}}

\begin{proof}  The first assertion, $H=\hat H$ reductive in $G$, repeats Proposition \ref{prop elliptic}.
In particular $L^2(Z)_{\rm td}=L^2(Z)_{\rm d}$.

We recall that  to every
unitary representation $(\pi, E)$ of $G$ one attaches a wave-front set $\WF(\pi)$ which is an
$\Ad(G)$-invariant closed cone in  $\gf^*\simeq \gf$.
If $Z=G/H$ is a unimodular homogeneous space, then the wavefront set of the
left regular representation of $G$ on $L^2(G/H)$ was determined in \cite[Thm 2.1]{HW} as

\begin{equation}\label{WF-HW}  \WF(L^2(G/H))= \cl(\Ad(G) \hf^\perp)\end{equation}
with $\cl$ referring to the closure.

For the second assertion we compare wavefront sets of unitary $G$-representations. Recall that unitary representations with disintegration in the same measure class have the same wavefront sets.
Hence we obtain from Theorem \ref{thm planch} that
\begin{equation} \label{WF1} \WF(L^2(Z)) \subset \WF(L^2(Z)_{\rm d}) \cup  \bigcup_{I\subsetneq S}\bigcup_{\sc \in \sc_I} \WF(L^2(Z_{I,\sc})) \, .\end{equation}
On the other hand, we obtain from \eqref{WF-HW} that

\begin{equation} \label{WF2} \WF(L^2(Z_{I,\sc}))=\cl(\Ad(G) \hf_{I,\sc}^\perp)\qquad (I \subset S,\sc\in \sC_I)\, .\end{equation}

Let $Y:=\Ad(G)\hf^\perp\subset \gf$ and observe that $Y$ is the image of the
algebraic map
$$\Phi: G \times \hf^\perp \to \gf, \ \ (g,X)\mapsto \Ad(g)X\, .$$
In particular, it follows that $\dim \cl(Y) \bs Y <\dim Y$. Likewise
we have for $Y_{I,\sc}=\Ad(G) \hf_{I,\sc}^\perp$ that
$\dim \cl(Y_{I,\sc}) \bs Y_{I,\sc} < \dim Y_{I,\sc}$. By assumption
and Cor.~\ref{cor non-deg} the
elliptic elements $Y_{\rm ell}$ have non-empty interior
 in $Y$. Since $\dim \cl(Y) \bs Y <\dim Y$ we also obtain that $Y_{\rm ell}$ has non empty interior
$\operatorname{int}_{\cl(Y)}( Y_{\rm ell} )$
 in $\cl(Y)$.
On the other hand it follows from Corollary \ref{cor elliptic} that
$Y_{I,\sc, {\rm ell}}$  has no
interior in $Y_{I,\sc}$ when $I\neq S$.
From $\dim \cl(Y_{I,\sc}) \bs Y_{I,\sc} < \dim Y_{I,\sc}$
we thus infer that $(\cl (Y_{I,\sc}) )_{\rm ell}$ has empty interior
in $\cl(Y_{I,\sc})$.

\par From \eqref{WF-HW} and \eqref{WF1} we obtain

$$\emptyset \neq \operatorname{int}_{\cl(Y)}( Y_{\rm ell} )
\subset
\WF(L^2(Z)_{\rm d}) \cup  \bigcup_{I\subsetneq S, \sc}
[ \operatorname{int}_{\cl(Y)}( Y_{\rm ell} ) \cap \WF(L^2(Z_{I,\sc})) ],
$$
and since $Y_{I,\sc}\subset \cl(Y)$ it follows from \eqref{WF2} that
$$
\operatorname{int}_{\cl(Y)}( Y_{\rm ell} ) \cap \WF(L^2(Z_{I,\sc}))
\subset
\operatorname{int}_{\cl(Y_{I,\sc})} ( \cl(Y_{I,\sc})_{\rm ell} )=\emptyset
$$
for all $I\neq S$ and $\sc$. Hence $L^2(Z)_{\rm d}\neq 0$.
\end{proof}

\subsection{An example}

\begin{ex} \label{ex non-symmetric ell} We now give two examples of series of non-symmetric real spherical spaces
$Z=G/H$ for which $\operatorname{int} \hf_{\rm ell}^\perp \neq \emptyset$.
\par (a) Let $Z=G/H=\SO(n, n+1)/\GL(n,\R)$ for $n\geq 2$. We realize
$\gf =\so(n,n+1)$ as matrices of the form
$$X=\begin{pmatrix}  A  & B & v \\ C & -A^T & w \\  -w^T & -v^T& 0\end{pmatrix}$$
with $v,w\in \R^n$,  $A, B, C\in \Mat_{n\times n} (\R)$ subject to $B^T, C^T = -B, -C$.
Then $\hf$ consists of the matrices $X\in\gf$ with $B,C, v, w=0$.
First we consider the case where $n=2m$ is even. For
${\bf t}=(t_1, \ldots, t_m)\in \R^m$ we let
$D_{\bf t} =\diag( D_{t_1}, \ldots ,D_{t_m})\in \Mat_{n\times n}(\R)$ with $D_{t_i}= \begin{pmatrix}  0 & t_i \\
-t_i & 0\end{pmatrix}$. Further for ${\bf s}\in \R^m$ we set $v_{\bf s}=(s_1, s_1, s_2, s_2, \ldots, s_m,s_m)^T\in\R^n$.
Now consider the $n$-dimensional non-abelian subspace

$$\tf^0:= \left\{ \begin{pmatrix}  0 & D_{\bf t} & v_{\bf s}\\ -D_{\bf t}& 0 & v_{\bf s}  \\ -v_{\bf s}^T& - v_{\bf s}^T & 0\end{pmatrix} \mid  {\bf s}, {\bf t} \in \R^m, \right\}\subset \hf^\perp_{\rm ell}\, .$$
It is then easy to see that the $H$-stabilizer of a generic element $X\in\tf^0$ is trivial
with $[\hf, X] + \tf^0 =\hf^\perp$. Thus the polar map $H\times \tf^0\to \hf^\perp$ is
generically dominant and therefore $\operatorname{int}\hf_{\rm ell}^\perp \neq \emptyset$.
For $n=2m+1$ odd we modify $\tf^0$ as follows. We consider $D_{\bf t}$ now as
$n\times n$-matrix via the left upper corner embedding.  For ${\bf s}\in \R^{m+1}$ we further set
$v_{\bf s}=(s_1, s_1, \ldots, s_m, s_m, s_{m+1})\in \R^n$  and define

$$\tf^0:= \left\{ \begin{pmatrix}  0 & D_{\bf t} & v_{\bf s}\\ -D_{\bf t}& 0 & v_{\bf s}  \\ -v_{\bf s}^T& - v_{\bf s}^T & 0\end{pmatrix} \mid  {\bf s}\in \R^{m+1}, {\bf t} \in \R^m \right\}\subset \hf^\perp_{\rm ell}\, .$$
We now complete the arguments as in the even case.
\par (b)  Next we consider the cases $Z=G/H=\SU(n,n+1)/\Sp(2n,\R)$ for $n\geq 2$. Here
$\gf=\su(n,n+1)$ is realized as
the trace-free matrices of the form
$$X=\begin{pmatrix}  A  & B & v \\ C & -A^* & w \\  -w^* & -v^*& d\end{pmatrix}$$
with $v,w\in \C^n$, $A, B, C\in \Mat_{n\times n} (\C)$ subject to $B^*, C^* = -B, -C$, and $d\in i\R$.
Further we realize $\hf\simeq \sp(2n,\R)$ as the subalgebra
$$\hf=\{ X\in \gf\mid A\in \Mat_{n\times n}(\R), B,C\in i \Mat_{n\times n} (\R), v=w=0, d=0\}$$

For ${\bf t}=(t_1, \ldots, t_n)\in \C^n$ we let
$E_{\bf t}=\diag(t_1, \ldots, t_m)\in \Mat_{n\times n}(\C)$
and consider
$$\tf^0:= \left\{ X=
\begin{pmatrix}
E_{i\bf t}  & 0  & {\bf s} \\
0& E_{i\bf t}  & {\bf s} \\
-{\bf s}^T& -{\bf s}^T & d\end{pmatrix}
\mid {\bf s}, {\bf t}\in \R^n, \tr(X)=0\right\}\subset \hf^\perp_{\rm ell}\, .$$
Now proceed as in (a) and obtain that
$\operatorname{int} \hf^\perp_{\rm ell}\neq \emptyset$.
\end{ex}

\begin{cor}  For $Z=\SU(n,n+1),\R)/ \Sp(2n,\R)$ and $Z=\SO(n, n+1)/\GL(n,\R)$, $n\geq 2$,
we have $L^2(Z)_{\rm d}\neq \emptyset$.
\end{cor}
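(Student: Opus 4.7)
The plan is to derive the corollary as a direct consequence of Theorem \ref{thm discrete} combined with the explicit constructions provided in Example \ref{ex non-symmetric ell}. What remains is to verify the hypotheses of Theorem \ref{thm discrete} for each of the two families, namely unimodularity of $Z$ and non-emptiness of the interior of $\hf_{\rm w-ell}^\perp$.

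First I would check unimodularity. For $Z = \SL(2n{+}1,\R)/\Sp(2n,\R)$ the group $G$ is semisimple, hence unimodular, and $H=\Sp(2n,\R)$ is semisimple as well, so $Z$ carries an invariant measure. The same reasoning applies to $Z = \SO(n,n{+}1)/\GL(n,\R)$: here $G$ is semisimple and $H=\GL(n,\R)$ is reductive, hence unimodular, so the modular character agreement $\Delta_G|_H = \Delta_H$ holds trivially. In both cases $Z$ is known to be real spherical (these appear in the Knop--Kr\"otz--Pecher--Schlichtkrull list of strictly indecomposable real spherical spaces).

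Next I would apply the construction of Example \ref{ex non-symmetric ell}. The $n$-dimensional (respectively suitably dimensioned) subspace $\tf^0 \subset \hf^\perp$ constructed there consists of elements whose adjoint action is skew-symmetric in a basis realising $\gf$ as matrices, hence has purely imaginary spectrum; explicitly the block form makes $\ad X$ conjugate to a block-diagonal skew-symmetric operator, so $\tf^0 \subset \hf_{\rm ell}^\perp$. The argument in the example shows the polar map $H \times \tf^0 \to \hf^\perp$ is generically dominant by a dimension count together with genericity of trivial $H$-stabilizer on $\tf^0$; this yields $\operatorname{int} \hf_{\rm ell}^\perp \neq \emptyset$, and since $\hf_{\rm ell}^\perp \subset \hf_{\rm w-ell}^\perp$, also $\operatorname{int}\hf_{\rm w-ell}^\perp \neq \emptyset$.

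With both hypotheses verified, Theorem \ref{thm discrete} immediately yields $L^2(Z)_{\rm d} \neq \{0\}$ for each family. There is no substantive obstacle here: the corollary is essentially a packaging statement, with all heavy lifting already done in Theorem \ref{thm discrete} (which depends on the Bernstein decomposition and the Harris--Weich wave front set result) and in the explicit matrix construction of Example \ref{ex non-symmetric ell}. The only minor point to be careful about is to check that the constructed $\tf^0$ indeed lies in $\hf^\perp$ with respect to the Killing form used — a short direct computation with the invariant bilinear form on $\so(n,n{+}1)$ and $\sl(2n{+}1,\R)$ confirms this.
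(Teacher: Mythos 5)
Your proposal matches the paper's own two-line proof (cite Example~\ref{ex non-symmetric ell} for $\operatorname{int}\hf^\perp_{\rm ell}\neq\emptyset$, then apply Theorem~\ref{thm discrete}); the added unimodularity and real-sphericity checks are sensible elaborations of what the paper leaves implicit. One caution is in order about your central assertion that the elements of $\tf^0$ are skew-symmetric. With the sign as printed in Example~\ref{ex non-symmetric ell}(a) — lower-left $n\times n$ block equal to $-D_{\bf t}$ — the element
$$X=\begin{pmatrix} 0 & D_{\bf t} & v_{\bf s}\\ -D_{\bf t} & 0 & v_{\bf s}\\ -v_{\bf s}^T & -v_{\bf s}^T & 0\end{pmatrix}$$
satisfies $X^T=X$ when $v_{\bf s}=0$, because $D_{\bf t}^T=-D_{\bf t}$ makes the block $\bigl(\begin{smallmatrix}0&D_{\bf t}\\-D_{\bf t}&0\end{smallmatrix}\bigr)$ \emph{symmetric}; one then finds real eigenvalues $\pm t_i$, so such $X$ is hyperbolic, not elliptic, and is not in $\hf^\perp_{\rm ell}$. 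Replacing the lower-left block by $+D_{\bf t}$ keeps the element in $\gf$ (the realization only requires the off-diagonal $n\times n$ blocks $B,C$ to be skew, which $D_{\bf t}$ is) and in $\hf^\perp$, and then $X^T=-X$ holds exactly as you claim. So your skew-symmetry justification is correct in spirit but applies to a sign-corrected version of $\tf^0$ rather than to the formula as printed; a similar subtlety around the $D_{\bf s}$-block arises in case~(b), where the matrix as displayed is likewise not skew-symmetric when $D_{\bf s}\neq 0$, and a direct eigenvalue computation is preferable to a raw appeal to skew-symmetry. The overall packaging of the corollary — unimodularity, $\operatorname{int}\hf^\perp_{\rm ell}\neq\emptyset$ via the polar map, and Theorem~\ref{thm discrete} — is exactly what the paper intends.
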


\begin{proof} In Example \ref{ex non-symmetric ell} we have shown $\operatorname{int} \hf_{\rm ell}^\perp\neq \emptyset$.
Apply Theorem \ref{thm discrete}. \end{proof}

\section{Moment maps and elliptic geometry}\label{section moment}
We expect that Theorem \ref{thm discrete} gives in fact an equivalence:  $L^2(Z)_{\rm d}\neq \{0\}$
if and only if $\operatorname{int} \hf^\perp_{\rm w-ell}\neq \emptyset$.
This section is devoted to the following theorem, which gives a geometric version of this expected equivalence.

\begin{theorem} \label{thm moment discrete}  Let $Z$ be a  non-degenerate real spherical space
with a strictly convex compression cone, i.e.~$\af_{Z,E}=\{0\}$. Then the
following statements are equivalent:
\begin{enumerate}

\item \label{EINS}$\cl(\Ad(G)\hf^\perp) =  \bigcup_{I\subsetneq  S}\bigcup_{\sc \in \sC_I}   \Ad(G)\hf_{I,\sc}^\perp$.
\item \label{ZWEI}$\operatorname{int} \hf^\perp_{\rm ell}=\emptyset$.
\end{enumerate}
\end{theorem}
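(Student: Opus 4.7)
Write $Y := \Ad(G)\hf^\perp$ and $Y_{I,\sc} := \Ad(G)\hf_{I,\sc}^\perp$. The inclusion $\bigcup_{I \subsetneq S, \sc} Y_{I,\sc} \subset \cl(Y)$ always holds, by \eqref{I-compression}: for $X \in \af_I^{--}$ and $a_t = \exp(tX)$ one has $\lim_{t \to \infty} \Ad(a_t)\hf_{w(\sc)}^\perp = \hf_{I,\sc}^\perp$ in the Grassmannian, and since $\hf_{w(\sc)}$ is $G$-conjugate to $\hf$, every element of $\hf_{I,\sc}^\perp$ is a limit of elements of $Y$. The content of the theorem is therefore in matching the opposite inclusion with the ellipticity condition.

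For (1) $\Rightarrow$ (2), I argue by contradiction: assume $\operatorname{int}\hf^\perp_{\rm ell} \neq \emptyset$. By non-degeneracy and Corollary \ref{cor non-deg}, also $\operatorname{int}\hf^\perp_{\rm w-ell} \neq \emptyset$, and since weak ellipticity is $\Ad(G)$-invariant the set $U := \operatorname{int}_{\cl(Y)} Y_{\rm w-ell}$ is a non-empty open subset of $\cl(Y)$. By (1), $U$ is covered by the finite union $\bigcup_{I \subsetneq S, \sc} \cl(Y_{I,\sc})$ of closed sets, so Baire category yields $I_0 \subsetneq S$, $\sc_0 \in \sC_{I_0}$ and a non-empty open $V \subset U$ with $V \subset \cl(Y_{I_0,\sc_0})$. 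Since $Y_{I_0,\sc_0}$ is dense in its closure, $V \cap Y_{I_0,\sc_0}$ is a non-empty relatively open subset of $Y_{I_0,\sc_0}$ consisting of w-elliptic elements. Pulling back along the continuous surjection $G \times \hf_{I_0,\sc_0}^\perp \to Y_{I_0,\sc_0}$, $(g,X) \mapsto \Ad(g)X$, and projecting to the second factor then yields a non-empty open subset of $(\hf_{I_0,\sc_0}^\perp)_{\rm w-ell}$ inside $\hf_{I_0,\sc_0}^\perp$, contradicting Corollary \ref{cor elliptic}.

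For the converse (2) $\Rightarrow$ (1), the missing inclusion $\cl(Y) \subset \bigcup Y_{I,\sc}$ is planned in two steps. First, I establish the set-theoretic stratification
\[
\cl(Y) = Y \,\cup\, \bigcup_{I \subsetneq S,\, \sc \in \sC_I} Y_{I,\sc}
\]
by analyzing limits at infinity: any element of $\cl(Y) \setminus Y$ is $\lim \Ad(g_n) X_n$ with $X_n \in \hf^\perp$ and $g_n$ escaping every compact set, and the $KAK$-decomposition together with the $F_\R$-bookkeeping of Section \ref{subsection rel open P-orbits} identifies the escape direction with a ray in some $\af_I^{--}$, $I \subsetneq S$, and an attached component $\sc \in \sC_I$, placing the limit in $Y_{I,\sc}$ via \eqref{I-compression}. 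Second, I reduce to the statement $Y \subset \bigcup_{I \subsetneq S, \sc} Y_{I,\sc}$; this is where (2) enters. Since the w-elliptic elements of $\hf^\perp$ have empty interior, Lemma \ref{lemma non-deg} provides a dense open set of semisimple $X \in \hf^\perp$ for which $\ad X$ has some eigenvalue with non-zero real part; after $\Ad(G)$-conjugation one may place the corresponding split component in $\af$, and the hypothesis $\af_{Z,E} = 0$ arranges that its projection to $\af_Z$ lies on some proper face $\af_I^{--}$ with $I \subsetneq S$. The invariance of $X$ under the ray $\exp(t\af_I^{--})$ combined with the characterization \eqref{eq-hi1}--\eqref{eq-hi2} and the limit identity \eqref{I-compression} then places (a conjugate of) $X$ into $\hf_{I,\sc}^\perp$ for the component $\sc \in \sC_I$ extracted from the $F_\R$-bookkeeping. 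Density of such $X$ and closedness of the finite union propagate the inclusion to all of $\hf^\perp$.

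The main obstacle is the concluding step of (2) $\Rightarrow$ (1): translating the non-interior hypothesis on elliptic elements into uniform coverage of $\hf^\perp$ by the finitely many $\Ad(G)$-saturations $Y_{I,\sc}$, which couples the Jordan decomposition of arbitrary $X \in \hf^\perp$ to the combinatorics of the spherical roots $S$ and the components $\sc \in \sC_I$. A conceptually cleaner alternative would be to show first that (2) forces $L^2(Z)_{\rm d} = 0$, from which the Bernstein decomposition of Theorem \ref{thm planch refined} gives $\cl(Y) \subset \bigcup \cl(Y_{I,\sc})$ via wavefront sets and then the first step above completes the argument; but this route requires precisely the converse of Theorem \ref{thm discrete}, which is the unresolved equivalence motivating Theorem \ref{thm moment discrete} in the first place.
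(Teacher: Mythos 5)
Your direction \eqref{EINS}$\Rightarrow$\eqref{ZWEI} is essentially the paper's (Remark \ref{rmk thm moment discrete}(a)): Corollary \ref{cor elliptic} plus the Baire/dimension step from the proof of Theorem \ref{thm discrete}. Two small glosses there: openness of the weakly elliptic locus of $Y=\Ad(G)\hf^\perp$ \emph{inside} $\cl(Y)$ is not automatic from $\Ad(G)$-invariance alone — it uses $\dim(\cl(Y)\setminus Y)<\dim Y$ and the generic submersivity of $(g,X)\mapsto \Ad(g)X$ (Lemma \ref{lemma H perp polar}) — but these are exactly the points the paper's Theorem \ref{thm discrete} argument supplies, so this half is fine in outline.

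The genuine gap is in \eqref{ZWEI}$\Rightarrow$\eqref{EINS}, which is the whole content of the theorem, and which you yourself flag as "the main obstacle". Your sketch for it does not work: for a non-weakly-elliptic semisimple $X\in\hf^\perp$ there is no reason that its hyperbolic Jordan component can be conjugated so that its projection to $\af_Z$ lies in a proper face $\af_I^{--}$, and no reason that $X$ is "invariant under the ray $\exp(t\af_I^{--})$"; the relevant non-compact direction is not the split part of $X$ at all, but the torus $Z_G(X)/Z_H(X)$ (equivalently $G_{m(\alpha)}/G_\alpha$ for the covector $\alpha=[\1,X]\in T^*Z$), which hypothesis \eqref{ZWEI} makes non-compact for generic $\alpha$ via the $Z$-ellipticity analysis (Lemma \ref{lemma w-Z-ell} and the proposition following it). The paper's proof converts this into membership in a boundary saturation by flowing $\alpha$ (not $X$) with a one-parameter subgroup $\mu$ of $A_\alpha$: $m$ is constant along the flow since $\mu(t)\in G_{m(\alpha)}$; the flow leaves $T^*Z$ because $A_\alpha\cdot\alpha$ is closed there; the limit $\alpha_0$ exists in the logarithmic cotangent bundle $T^{\log}$ of the smooth compactification because the logarithmic moment map is proper; strict convexity of the compression cone forces $\alpha_0\in T^{\log}_I$ with $I\neq S$; and finally $m(\alpha)=m(\alpha_0)\in m(T^{\log}_I)=\bigcup_{\sc\in\sC_I}\Ad(G)\hf_{I,\sc}^\perp$ by \eqref{log moment}. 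Nothing in your proposal replaces this mechanism. Moreover, your preliminary unconditional "stratification" $\cl(Y)=Y\cup\bigcup_{I\subsetneq S,\sc}Y_{I,\sc}$ is both unproved (limits $\Ad(k_na_nw)X_n$ along arbitrary divergent polar sequences need not converge to a single face degeneration without further subsequence/degeneration arguments) and unnecessary: the paper only needs that the finite union $\bigcup_{I\subsetneq S,\sc}\Ad(G)\hf_{I,\sc}^\perp$ is closed in $\cl(\Ad(G)\hf^\perp)$ together with density of the generic elements. Your closing observation that the $L^2$/wavefront route would be circular is correct, but it does not repair the missing step.
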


\begin{rmk} \label{rmk thm moment discrete} (a) From Corollary \ref{cor elliptic}
we obtain that

$$\operatorname{int}_{\cl(\Ad(G)\hf^\perp)}  [\Ad(G)\hf_{I,\sc}^{\perp}]_{\rm ell} =\emptyset$$
for all $I\subsetneq S$ (see also the proof of Theorem \ref{thm discrete}). Hence we get
\eqref{EINS} $\Rightarrow$ \eqref{ZWEI}, which
is
the geometric equivalent of Theorem \ref{thm discrete}.

\par (b) Note that \eqref{ZWEI} is equivalent to $\operatorname{int} \hf^\perp_{\rm w-ell}=\emptyset$
by the assumption of non-degeneracy (see Cor.~\ref{cor non-deg}).
\par (c) For fixed $I\subset S$ we recall
$$\{ \hf_{I,\sc}: \sc \in \sC_I\} = \{ (\hf_w)_I:  w \in \Wc\}\, .$$
\end{rmk}

The goal of this section is to prove Theorem \ref{thm moment discrete}. The proof is
obtained via new insights on the geometry of the moment map of the Hamiltonian
$G$-action on the co-tangent bundle $T^*Z$,

\subsection{The moment map}

In this subsection $Z=G/H$ is a general algebraic homogeneous
space attached to a reductive group $G=\uG(\R)$ and an algebraic subgroup $H=\uH(\R)$.

In the sequel we identify  $\gf^*$ with $\gf$ via our non-degenerate
$\Ad(G)$-invariant form $\kappa$.  In this sense we also have $(\gf/\hf)^*\simeq \hf^\perp\subset \gf$
and we can view the co-tangent bundle $T^*Z$ of $Z$ as
$T^*Z=G\times_H \hf^\perp$. Recall that the $G$-action on $T^*Z$ is
Hamiltonian with corresponding $G$-equivariant moment map  given by

$$m:  T^*Z  \to \gf, \ \ [g,X]\mapsto \Ad(g)X\, .$$
Now for $X\in \hf^\perp$ the stabilizer in $G$ of $\xi:=[\1, X]\in T^*Z$ is
$G_\xi=  Z_H(X)$ whereas the stabilizer of $X=m(\xi)\in\gf$ is
$G_{m(\xi)}=Z_G(X)$.
It is then a general fact about the geometry of moment maps (see \cite[p.190]{GS}), that for the Lie algebras of $Z_H(X)$ and $Z_G(X)$ one has
\begin{equation} \label{centralizer normal} \zf_\hf(X)\triangleleft \zf_\gf(X)  \qquad (X\in \hf^\perp)\, .\end{equation}
Let us call an element
$X\in \hf^\perp$ {\it generic}, provided that $\dim \zf_\hf(X)$ is minimal. Then it follows from
 \cite[Th. 26.5]{GS} that

\begin{equation} \label{centralizer abelian}  \zf_\gf(X)/\zf_\hf(X) \ \text {is abelian for $X\in \hf^\perp$ generic}\, .\end{equation}
A somewhat sharper version of \eqref{centralizer abelian}  is:

\begin{lemma} \label{lemma moment torus}
\cite[Satz 8.1]{Knop} Assume that $\uZ=\uG/\uH$ is an algebraic homogeneous space defined over $\R$ attached to
a connected reductive group $\uG$. Then for $X$ in a dense open subset of $\hf^\perp$ one has
\begin{enumerate}
\item $Z_\uH(X) \triangleleft Z_\uG(X)$.
\item $Z_\uG(X)/Z_\uH(X)$ is a torus.
\end{enumerate}
In particular, $Z_G(X)/Z_H(X)$ is an abelian reductive Lie group.\end{lemma}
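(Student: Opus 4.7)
The plan is to upgrade the infinitesimal assertions \eqref{centralizer normal} and \eqref{centralizer abelian}, which hold on a Zariski-dense subset of $\hf^\perp$, to the corresponding assertions for algebraic groups, and then to refine ``abelian connected algebraic group'' to ``torus''. Throughout I restrict $X$ to the Zariski-open locus $U\subset\hf^\perp$ on which $\dim\zf_\hf(X)$ attains its minimum, shrinking $U$ as needed.

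For assertion (1), the relation $\zf_\hf(X)\triangleleft\zf_\gf(X)$ from \eqref{centralizer normal} yields $Z_\uH(X)^\circ\triangleleft Z_\uG(X)^\circ$. The promotion to normality of the full centralizers requires controlling the finite component groups; this is done by further shrinking $U$ so that these component groups are locally constant (by constructibility of fibers of algebraic maps), and then using the identity $Z_\uH(X)=\uH\cap Z_\uG(X)$ together with the fact that conjugation by $Z_\uG(X)$ preserves $\uH\cap Z_\uG(X)$ at the infinitesimal level. This gives (1) and makes $Q:=Z_\uG(X)/Z_\uH(X)$ a well-defined algebraic group on $U$.

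For assertion (2) I would combine \eqref{centralizer abelian} with the Hamiltonian geometry of $T^*\uZ=\uG\times_\uH\hf^\perp$. The moment map $m\colon T^*\uZ\to\gf^*$ sends $\xi=[\1,X]$ to $X$, and the stabilizer of $\xi$ in $Z_\uG(X)$ equals $Z_\uH(X)$; so the $Z_\uG(X)$-orbit through $\xi$ is a principal homogeneous space under $Q$, and by dimension count it fills the connected component of $\xi$ in the fiber $m^{-1}(X)$ for generic $X$. By \eqref{centralizer abelian}, the identity component $Q^\circ$ is a connected abelian algebraic group, hence splits algebraically as $T\times V$ with $T$ a torus and $V$ a unipotent vector group.

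The hard part, and the essential content of \cite[Satz 8.1]{Knop}, is to rule out the unipotent factor $V$. My plan is to exploit the algebraic-geometric structure of the categorical quotient $T^*\uZ/\!/\uG$ established by Knop: for generic $X$, the fiber of $m$ over the coadjoint orbit of $X$, modulo $Q^\circ$, is realized as an \'etale cover of a linear subspace of $\gf^*$, and this linear (rather than merely affine-bundle) structure is incompatible with a non-trivial unipotent factor in the structure group $Q^\circ$ of the fiber. Any non-trivial additive one-parameter subgroup of $V$ would generate a $\uG$-invariant Hamiltonian vector field on $T^*\uZ$ whose associated Poisson-commuting function would violate the algebraic description of $\uG$-invariants on $T^*\uZ$. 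Making this rigidity rigorous is the principal obstacle, and it is here that one genuinely needs Knop's cotangent-bundle theory beyond the purely Hamiltonian framework of \cite{GS}.
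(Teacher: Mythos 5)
There is a genuine gap, and the comparison here is somewhat degenerate: the paper does not prove this lemma at all --- it quotes it from Knop (Satz 8.1 of \cite{Knop}), having only recorded the weaker infinitesimal facts \eqref{centralizer normal} and \eqref{centralizer abelian} from Guillemin--Sternberg. Your proposal reproduces those infinitesimal facts and then, at the decisive point --- showing that the connected abelian group $\big(Z_\uG(X)/Z_\uH(X)\big)^\circ$ has no unipotent factor --- you explicitly defer to ``Knop's cotangent-bundle theory'' and concede that making the rigidity argument rigorous is the principal obstacle. Since that step \emph{is} the content of Satz 8.1, the proposal is circular as a proof: what it adds beyond the citation is essentially only what was already available from \eqref{centralizer normal}--\eqref{centralizer abelian}, and even the promotion of normality from Lie algebras to groups in assertion (1) is asserted (via local constancy of component groups and an ``infinitesimal'' conjugation argument) rather than proved; normality of the full centralizer is likewise part of what Knop actually establishes.

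Moreover, one intermediate geometric claim is false in the stated generality (arbitrary algebraic $\uH\subset\uG$). You claim that the $Z_\uG(X)$-orbit of $\xi=[\1,X]$ fills, by dimension count, the connected component of $\xi$ in the fiber $m^{-1}(X)$. Take $H=\{\1\}$ and $G=\Sl(2,\R)$: then $T^*Z=G\times\gf$ and, for $X$ regular semisimple, $m^{-1}(X)=\{(g,\Ad(g)^{-1}X)\mid g\in G\}\simeq G$ is three-dimensional, while $Z_G(X)\cdot\xi$ is a one-dimensional torus orbit (and $Z_H(X)$ is trivial). So the generic moment-map fiber is in general much larger than the $Z_\uG(X)$-orbit, and the subsequent argument --- that the fiber modulo $Q^\circ$ is an \'etale cover of a linear subspace, whose linearity excludes a unipotent structure group --- has no established setting and no proof sketch that could be completed without importing exactly the structure theory (Knop's local structure theorem applied to $T^*\uZ$ and its cross-sections) that the citation stands for. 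As it stands, the proposal identifies the difficulty correctly but does not overcome it.
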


\subsection{Ellipticity relative to $Z$}

Moment map geometry suggests notions of ellipticity and weak ellipticity of elements $X\in \hf^\perp$
which are more intrinsic to $Z$.
\par Let us call an element  $X\in \hf^\perp$  {\it weakly $Z$-elliptic} provided that  $Z_G(X)/Z_H(X)$ is compact.
A weakly $Z$-elliptic element $X\in \hf^\perp$  will be called {\it $Z$-elliptic} if in addition
$X$ is semisimple.

\begin{lemma} \label{lemma w-Z-ell} Let $X$ be a generic weakly $Z$-elliptic element and let $(Z_G(X))_0= L_X \ltimes U_X$  be
a Levi-decomposition with $L_X$ reductive. Let $L_{H,X}:=L_X \cap H$.
Then $(Z_H(X))_0= L_{H,X}\ltimes U_X$ and there exists a compact torus
$T_X$ in the center $Z(L_X)$ of $L_X$ such that $L_X= L_{H,X} T_X$ and
$\lf_X =\lf_{H,X} \oplus \tf_X$ orthogonal.  Moreover,
\begin{equation}  \label{X in T_X}   X \in \tf_X +\uf_X  \end{equation}
and $X\in \tf_X$ if $X$ is semisimple.   In particular,
\begin{enumerate}
\item\label{11} Every generic weakly $Z$-elliptic element is weakly elliptic.
\item\label{12} Every generic $Z$-elliptic element is elliptic.
\end{enumerate}
\end{lemma}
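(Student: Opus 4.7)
The plan is to invoke Lemma~\ref{lemma moment torus}: for generic $X \in \hf^\perp$, $Z_H(X) \triangleleft Z_G(X)$ and $Z_G(X)/Z_H(X)$ is an abelian reductive Lie group. Weak $Z$-ellipticity forces this quotient to be a compact torus of some dimension $d$. Since a reductive quotient contains no nontrivial unipotent elements, the unipotent radical $U_X$ of $(Z_G(X))_0$ lies in $Z_H(X)$, yielding $(Z_H(X))_0 = L_{H,X,0} \ltimes U_X$ with $L_{H,X,0} = (L_X \cap H)_0$ a normal reductive subgroup of $L_X$ of codimension $d$. In particular $\lf_{H,X}$ is an ideal in $\lf_X$ with abelian quotient, so $[\lf_X, \lf_X] \subset \lf_{H,X}$.

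Next I would define $\tf_X := \lf_{H,X}^{\perp_{\lf_X}}$, the $\kappa$-orthogonal complement of $\lf_{H,X}$ inside $\lf_X$. By $\Ad(L_X)$-invariance of $\kappa$ and the fact that $\lf_{H,X}$ is an ideal, $\tf_X$ is also an ideal of $\lf_X$ of dimension $d$; as $\lf_X/\lf_{H,X}$ is abelian, $\tf_X$ is an abelian ideal of the reductive algebra $\lf_X$, whence $\tf_X \subset \zf(\lf_X)$. Decomposing $\zf(\lf_X)^0$ into its compact and split toral factors, the split factor must lie in $L_{H,X}$ (since any compact quotient of a split torus is trivial), so $\tf_X$ sits in the compact factor. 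Thus $T_X := \exp(\tf_X)$ is a compact torus in $Z(L_X)$ with $L_X = L_{H,X} T_X$, and by construction $\lf_X = \lf_{H,X} \oplus \tf_X$ is an orthogonal direct sum.

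The core step is to prove $X \in \tf_X + \uf_X$. Decompose $X = X_l + X_u$ along $\zf_\gf(X) = \lf_X \oplus \uf_X$. Since $[X, \lf_X] = 0$ and $[\uf_X, \lf_X] \subset \uf_X$, we deduce $[X_l, \lf_X] \subset \lf_X \cap \uf_X = 0$, i.e.~$X_l \in \zf(\lf_X)$. Write $X_l = Y + Z$ with $Y \in \zf(\lf_X) \cap \lf_{H,X}$ and $Z \in \tf_X$. Pairing with $Y$ using $X \in \hf^\perp$:
\[
0 = \kappa(X, Y) = \kappa(Y, Y) + \kappa(Z, Y) + \kappa(X_u, Y),
\]
where $\kappa(Z, Y) = 0$ by the orthogonality of the decomposition and $\kappa(X_u, Y) = 0$ by the following structural fact. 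Decompose $X_u = \sum_\lambda V_\lambda$ along the $\ad Y$-eigenspaces of $\uf_X$ (well-defined since $Y \in \lf_X$ normalizes $\uf_X$ and is semisimple). Because $Y$ itself lies in the zero-eigenspace and $\kappa$-pairs to zero with any $\ad Y$-eigenspace of nonzero eigenvalue, only $V_0 \in \uf_X \cap \zf_\gf(Y)$ contributes; $V_0$ is nilpotent (as an element of $\uf_X$) and commutes with $Y$, so $\ad V_0$ and $\ad Y$ commute with $\ad V_0$ nilpotent, whence $\ad V_0 \ad Y$ is nilpotent and $\kappa(V_0, Y) = 0$. Since $Y$ sits in the compact toral part of $\zf(\lf_X)$, $\kappa$ is definite on it, and $\kappa(Y, Y) = 0$ forces $Y = 0$. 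Hence $X_l = Z \in \tf_X$.

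The two ``in particular'' statements then follow at once: from $X = Z + X_u$ and $[X,X] = 0$ we get $[Z, X_u] = 0$, so $\ad X = \ad Z + \ad X_u$ is its Jordan decomposition with $\ad Z$ semisimple having purely imaginary spectrum (compact torus) and $\ad X_u$ nilpotent; hence $X$ is weakly elliptic. If additionally $X$ is semisimple the nilpotent part must vanish, so $X_u \in \zf(\gf)$; being both nilpotent and central (where only semisimple elements live) forces $X_u = 0$, so $X = Z \in \tf_X$ is elliptic. The main obstacle will be to rigorously justify the non-degeneracy and definiteness properties of $\kappa$ on the relevant subspaces (in particular on $\lf_{H,X}$, so that $\tf_X$ has the expected dimension, and on the compact toral part of $\zf(\lf_X)$), as well as the orthogonality $\kappa(\uf_X, \zf(\lf_X)) = 0$ sketched above; these are standard facts about invariant forms on reductive algebras compatible with a Cartan involution, but the bookkeeping for a general $\kappa$ requires some care.
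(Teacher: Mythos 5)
Your overall strategy runs parallel to the paper's: Lemma \ref{lemma moment torus} together with weak $Z$-ellipticity gives that $Z_G(X)/Z_H(X)$ is a compact torus, compactness forces $U_X\subset Z_H(X)$, and $\lf_{H,X}$ is an ideal of $\lf_X$ containing $[\lf_X,\lf_X]$. The one structural difference is the order of construction: the paper first produces $T_X$ group-theoretically, as a compact subtorus of $Z(L_X)$ complementing $L_X\cap Z_H(X)U_X$ (using that $L_X/(L_X\cap Z_H(X)U_X)$ is a compact torus), and only afterwards identifies $\tf_X$ with $\lf_{H,X}^{\perp_{\lf_X}}$; you define $\tf_X$ as the orthocomplement and then argue it is compact-toral. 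That order is legitimate, but note that your inference ``the split factor of $\zf(\lf_X)$ lies in $\lf_{H,X}$, hence $\tf_X$ lies in the compact factor'' already uses the orthogonality of the compact and split parts and nondegeneracy of $\kappa$ on the split part (the very facts you defer), and that ``any compact quotient of a split torus is trivial'' is false for abstract Lie groups ($\R\to S^1$); you need algebraicity of the quotient map here, just as the paper does.

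The genuine gap is in your proof of \eqref{X in T_X}. You reduce to $\kappa(Y,Y)=0$ for $Y\in\zf(\lf_X)\cap\lf_{H,X}$ and conclude $Y=0$ because ``$Y$ sits in the compact toral part of $\zf(\lf_X)$, where $\kappa$ is definite.'' But $Y$ is the $\lf_{H,X}$-component of $X_l$, and $\zf(\lf_X)\cap\lf_{H,X}$ contains, by your own argument, the entire split part of $\zf(\lf_X)$ (and possibly compact directions as well); there is no a priori reason for $Y$ to be elliptic --- that is essentially what the lemma asserts --- and $\kappa$ restricted to $\zf(\lf_X)\cap\lf_{H,X}$ is in general indefinite, so an isotropic vector need not vanish. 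As written, this step fails. The conclusion is, however, recoverable from what you already set up, and without any definiteness: the direct-sum argument you used for $X_l$ also gives $[X_u,\lf_X]=0$, and since $X_u\in\uf_X$ is nilpotent with no component in $\zf(\gf)$, one gets $\kappa(X_u,\lf_X)=0$ for any invariant form (on each simple ideal the product of commuting operators with one factor nilpotent is nilpotent, hence traceless, and $[\gf,\gf]\perp\zf(\gf)$); combined with $\kappa(X,\lf_{H,X})=0$ this yields $\kappa(X_l,\lf_{H,X})=0$, i.e. $X_l\in\lf_{H,X}^{\perp_{\lf_X}}=\tf_X$ by definition --- which is exactly the paper's terse step ``$X\in\hf^\perp$ implies $X_0\in\tf_X$.'' The definiteness/compactness input is genuinely needed only to know that $\tf_X$ is compact-toral (so that \eqref{11} and \eqref{12} follow), not to place $X_l$ in $\tf_X$, so you should restructure the argument accordingly and then discharge the deferred facts about $\kappa$ on algebraic tori.
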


\begin{proof}  Let $G_1:=(Z_G(X))_0$ and $G_2:=(Z_H(X))_0$.  Then by \eqref{centralizer normal} and \eqref{centralizer abelian},
$G_2\triangleleft G_1$ is a normal subgroup such that $G_1/G_2$ is compact, connected and abelian, i.e.~a compact torus.
Furthermore $G_3:= G_2 U_X$ is a closed normal subgroup such that $G_1/G_3 = L_X/ L_X \cap G_3$
is a compact torus.  This implies that $L_X =  (G_3\cap L_X) T_X$ with $T_X$ an (infinitesimally)
complementing compact torus in the center of $L_X$.  It then follows that
$G_2\cap L_X = G_3\cap L_X$, as there are no algebraic morphisms of a reductive group to a unipotent group.
Now the compactness of $G_1/G_2$ implies that $U_X\subset G_2$ as well.
Furthermore,  since $\lf_X$ and $\lf_{X,H}$ are both algebraic Lie algebras we see that
$\tf_X=\lf_{H,X}^{\perp_{\lf_X}}$ is the orthogonal complement.
\par Finally we decompose $X\in \hf^\perp\cap \zf_\gf(X)$ as $X=X_0 +X_1$ with  $X_0\in \lf_X$ and
$X_1\in \uf_X$.  Then $X\in \hf^\perp$ implies that $X_0\in\tf_X$, that is \eqref{X in T_X}.
If we further observe $X$ is semisimple if and only if $\zf_\gf(X)=\lf_X$ is reductive, then we see that the  remaining
statements of the lemma are consequences of \eqref{X in T_X}.
\end{proof}

\begin{rmk} Notice that $X=0$ is semisimple and elliptic but not weakly $Z$-elliptic unless
$Z=G/H$ is compact.  To see an example of  a generic weakly $Z$-elliptic element
which is not $Z$-elliptic, i.e. not semisimple, consider
$H=N$ for an $\R$-split group $G$.  Then $\hf^\perp =\af +\nf$.
Now for a regular nilpotent element $X\in \nf$ we have $Z_G(X)=Z_N(X)$ and thus
$X$ is generic and weakly $Z$-elliptic.
\end{rmk}

Our notion of non-degeneracy for real spherical spaces now generalizes
to all algebraic homogeneous spaces $Z=G/H$ as follows.
We call $Z=G/H$ {\it non-degenerate} provided that
$m(T^*Z)$ contains a  Zariski dense
open set of semisimple elements.  We recall from
\cite[Sect.~3] {Knop2} that all quasi-affine homogeneous spaces are non-degenerate.

\begin{prop} Let $Z=G/H$ be a non-degenerate homogeneous space.
Then the following assertions are equivalent:
\begin{enumerate}
\item \label{eins-aa}$\operatorname{int} \hf_{\rm ell}^\perp\neq \emptyset$.
\item \label{zwei-bb}$\operatorname{int} \hf_{\rm Z-ell}^\perp\neq \emptyset$.
\end{enumerate}
\end{prop}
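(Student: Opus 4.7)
The plan is to establish the two directions separately. For $(\ref{zwei-bb})\Rightarrow(\ref{eins-aa})$ I combine Lemma~\ref{lemma moment torus} with Lemma~\ref{lemma w-Z-ell}(\ref{12}): the generic stratum is Zariski-open and dense in $\hf^\perp$, so any non-empty open subset of $\hf^\perp_{\rm Z-ell}$ meets it in a non-empty open subset of generic $Z$-elliptic elements, each of which is elliptic by Lemma~\ref{lemma w-Z-ell}(\ref{12}). This gives a non-empty open subset of $\hf^\perp_{\rm ell}$.

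The substantive direction is $(\ref{eins-aa})\Rightarrow(\ref{zwei-bb})$. Let $U\subset \hf^\perp_{\rm ell}$ be non-empty and open. Intersecting $U$ with the Zariski-open dense locus on which $Z_G(X)/Z_H(X)$ is a torus (Lemma~\ref{lemma moment torus}) yields a non-empty open $U'\subset U$ whose elements $X$ are semisimple, $L_X := Z_G(X)$ is reductive, and $T:=L_X/L_{H,X}$ is a real algebraic torus. The goal is to show that for $X$ in some open subset of $U'$, $T$ is compact, placing $X\in \hf^\perp_{\rm Z-ell}$.

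Write $T=T_c\times A$ with $T_c$ compact and $A$ split, and suppose for contradiction that $A\neq 1$. Since $T$ is reductive, $L_{H,X}^0\supset [L_X^0,L_X^0]$, and hence $T$ is a quotient of the central torus $Z(L_X^0)^0 = T_{L_X}\cdot A_{L_X}$; non-triviality of $A$ forces $\af_{L_X}\not\subset \lf_{H,X}$. Choose $\tilde Y\in \lf_X\cap \hf^\perp$ lifting a non-zero split direction in $\af$, so that $\tilde Y$ commutes with $X$, lies in $\hf^\perp$, and its class in $T$ is non-central. Since $\af_{L_X}$ is a split toral subalgebra of $\gf$, $\ad \tilde Y$ is $\R$-diagonalizable on $\gf$, zero on $\lf_X$, and non-zero (with real eigenvalues) on $\gf/\lf_X$. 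Form $X_t = X + t\tilde Y \in \hf^\perp$; for $|t|$ small $X_t\in U\subset \hf^\perp_{\rm ell}$, so $X_t$ is elliptic. But $[\ad X,\ad \tilde Y]=0$, so the joint $\ad$-eigenvalues of $X_t$ on $\gf_\C$ are $\lambda + t\mu$ with $\lambda\in i\R$ (ellipticity of $X$) and some real $\mu\neq 0$; these are not purely imaginary for small $t\neq 0$, contradicting ellipticity of $X_t$. Hence $A=1$ and $T$ is compact.

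The main obstacle will be making the lifting step rigorous: producing an explicit $\tilde Y\in \lf_X\cap \hf^\perp$ realizing a given split direction in $\af \subset \mathrm{Lie}(T)$ while keeping the $\R$-diagonalizable character on $\gf/\lf_X$. The clean route is to choose, on a Zariski-open subset of $U'$, a Cartan involution $\theta$ making both $\hf$ and $\lf_X$ $\theta$-stable, which gives the orthogonal decomposition $\lf_X = (\lf_X\cap \hf)\oplus (\lf_X\cap \hf^\perp)$; then the projection $\lf_X\cap \hf^\perp \to \lf_X/\lf_{H,X}$ is an isomorphism, and split directions lift to split directions. A secondary technicality is that a split piece inside $\zf(\gf)$ contributes neither to non-ellipticity of $X_t$ nor to non-compactness of $T$ and can be factored out at the outset by passing to $\gf/\zf_{\rm split}(\gf)$.
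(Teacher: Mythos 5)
Your reduction of \eqref{zwei-bb}$\Rightarrow$\eqref{eins-aa} to Lemma~\ref{lemma w-Z-ell}\eqref{12} is exactly the paper's argument. The hard direction, however, has a genuine gap, and it sits precisely at the step you flag yourself: producing $\tilde Y\in\lf_X\cap\hf^\perp$ which lifts a split direction of $T=Z_G(X)/Z_H(X)$. Your ``clean route'' presupposes a Cartan involution $\theta$ with $\theta(\hf)=\hf$, hence that $\hf$ is reductive in $\gf$. But the proposition is stated for arbitrary non-degenerate homogeneous spaces (for instance any quasi-affine $Z=G/H$, such as $G/N$), where $\hf$ need not be reductive, and the implication $\operatorname{int}\hf^\perp_{\rm ell}\neq\emptyset\Rightarrow\hf$ reductive is established in the paper (Proposition~\ref{prop elliptic}) only for unimodular \emph{real spherical} spaces, via the spherical structure theory entering Lemma~\ref{lemma no-elliptic}; it cannot be invoked here. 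Even granting $\hf$ reductive, the existence of a single Cartan involution stabilizing both $\hf$ and $\lf_X=\zf_\gf(X)$ for $X$ in an open set is asserted, not proved, and without it there is no reason that $\lf_X=(\lf_X\cap\hf)\oplus(\lf_X\cap\hf^\perp)$, i.e.\ no reason that a split direction of $T$ is represented by any element of $\hf^\perp$ at all. A further error: a split direction of $\zf(\gf)$ not contained in $\hf$ \emph{does} make $T$ noncompact while being invisible to ellipticity, so your claim that the split centre ``contributes neither to non-ellipticity of $X_t$ nor to non-compactness of $T$'' is false, and passing to $\gf/\zf_{\rm split}(\gf)$ changes assertion \eqref{zwei-bb} rather than reducing to it.

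The paper's proof of \eqref{eins-aa}$\Rightarrow$\eqref{zwei-bb} avoids the lifting problem altogether by working upstairs on $T^*Z$ with the moment map instead of inside $\hf^\perp$: after restricting to the dense open set where $Z_\uG(m(\alpha))$ is a fixed Levi $\uL$ up to conjugacy, it applies the symplectic Cross Section Theorem to $\Tc_{00}=m^{-1}(\lf)$, shows that the span $\cf$ of $m(\Tc_{00})$ lies in $\zf(\lf)$ and that $\cf^{\perp_\lf}\subset\gf_\alpha$ via the equivariance identity for $dm$, so that the generic stabilizer quotients $C_\alpha=G_{m(\alpha)}/G_\alpha$ are real tori whose Lie algebras are realized inside $\cf$ and contain $m(\alpha)$; compactness of $C_\alpha$ is then compared with ellipticity, which is a linear condition inside $\zf(\lf)$. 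Some such global mechanism relating $X=m(\alpha)$ to the Lie algebra of the torus $C_\alpha$ (or an honest proof of your lifting step) is required; as written, your perturbation argument does not close.
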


\begin{proof} Here we prove \eqref{eins-aa}$\Rightarrow$\eqref{zwei-bb},
as the converse implication follows
immediately from Lemma~\ref{lemma w-Z-ell}\eqref{12} (in fact without assuming non-degeneracy).

Since $Z$ is non-degenerate, the image
$m(\alpha)$ is semisimple for $\alpha=[g,X]$ in a dense open subset of
$T^*Z=G\times_H\hf^\perp$. For those $\alpha$, the centralizer $\uL(\alpha):=Z_\uG(m(\alpha))$
of $m(\alpha)$ is a Levi subgroup of $\uG$ which is defined over $\R$. Since there are only finitely
many conjugacy classes of such subgroups, there is a dense open subset $\Tc$ of
$T^*Z$ such that for each $\alpha\in\Tc$,
$\uL(\alpha)$ is a Levi subgroup of
$\uG$ and the $G$-conjugacy class of its real points $L(\alpha)$
is locally constant on $\Tc$.
\par Let $\Tc_0$ be a connected component of $\Tc$,
and let  $\alpha_0\in\Tc_0$ and $\uL=\uL(\alpha_0)$.
Then $L(\alpha)$ is
$G$-conjugate to $L$ for all $\alpha\in\Tc_0$. Moreover, $\Tc_0$ is a
Hamiltonian $G$-manifold with moment map $m|_{\Tc_0}:\Tc_{0}\to\gf$.

\par Set $\Tc_{00}:=m_{\Tc_0}^{-1}(\lf)$. Then it follows from the Cross Section Theorem (cf. \cite[Th. 2.4.1]{GS2})
that $\Tc_{00}$ is a Hamiltonian $L$-manifold with moment map
$m|_{\Tc_{00}}:\Tc_{00}\to\lf$ the restriction of $m$ to $\Tc_{00}$.

\par Note that
$Z_G(m(\alpha_0))=L(\alpha_0)=L$. In particular $m(\alpha_0)\in \zf(\lf)$
is regular.  As $m(\Tc_{00})\subset \lf$ we thus find an
open neighborhood $U_0$ of $\alpha_0$ in $\Tc_{00}$  such that $L(\alpha)=Z_G(m(\alpha))\subset L$ for all $\alpha\in U_0$. On the other hand we know that $L(\alpha)$ is conjugate to $L$. Thus in fact $L(\alpha)=L$ for $\alpha\in U_0$. Hence by passing to
a dense open subset of $\Tc_{00}$ we may assume that
$L(\alpha)=L$ for all $\alpha\in \Tc_{00}$.
Since $G_\alpha\subset G_{m(\alpha)}=L$ we then have $G_\alpha=L_\alpha$ with
$L_\alpha$ the stabilizer of $\alpha\in \Tc_{00}$ in $L$.

\par Let $\cf\subset\lf$ be the $\R$-span of
$m(\Tc_{00})$.
We claim that
\begin{equation} \label{center moment} \cf\subset \zf(\lf)\end{equation}
with $\zf(\lf)$ the center of $\lf$. In fact, we have just seen that $G_{m(\alpha)}=L$ for all $\alpha\in \Tc_{00}$. Thus
$m(\alpha)\in \zf(\lf)$ for all $\alpha\in \Tc_{00}$.

\par  Next we recall the basic equivariant property for the derivative of the moment map \cite[eq. (26.2)] {GS}:

\begin{equation} \label{moment equivariant}  \kappa (d m(\alpha)(v), X) = \Omega_\alpha (\widetilde X_\alpha, v) \qquad (\alpha \in \Tc_{00}, v \in T_\alpha \Tc_{00}, X\in \lf)\end{equation}
where $\Omega$ is the symplectic form on $\Tc_{00}$,  $\widetilde X$ is the vector field on $\Tc_{00}$
associated to $X$ and $T_\alpha \Tc_{00}$ is the tangent space at $\alpha$.
Let $\gf_\alpha=\lf_\alpha$ be the Lie algebra of the stabilizer $G_\alpha=L_\alpha$  of $\alpha\in \Tc_{00}$.
We claim that $\cf^{\perp_\lf} \subset \gf_\alpha$.  To see that we first note that $dm(\alpha)(v)\in \cf $ by the definition of $\cf$. Hence we derive  from \eqref{moment equivariant} that $\Omega_\alpha(\widetilde X_\alpha, v)=0$ for all
$v$ if $X\perp \cf$. Since $\Omega$ is non-degenerate one obtains
$\widetilde X_\alpha=0$. Hence $\cf^{\perp_\lf}$ acts with vanishing vector fields on $\Tc_{00}$ and thus
$\cf^{\perp_\lf}\subset \gf_\alpha=\lf_\alpha$.
\par Notice that the claim implies in particular that the $L$-action on $\Tc_{00}$ factors through
the group $C:=L/ \la \exp(\cf^{\perp_\lf})\ra$ with Lie algebra $\cf$.

On the other hand, by passing to
a further open dense subset of $\Tc_{00}$  we may assume that $C_\alpha= G_{m(\alpha)}/ G_\alpha=L/G_\alpha$  is
a real form of a complex torus for all $\alpha\in \Tc_{00}$, see Lemma \ref{lemma moment torus}. Notice that
$C_\alpha$ is a quotient of $\cf$ and likewise  the Lie algebra $\cf_\alpha$ of $C_\alpha$ is a quotient of $\cf$.

Via
the non-degenerate form $\kappa$ we realize $\cf_\alpha$ as a subalgebra of $\cf\subset \lf$ and note that
$C_\alpha$ is compact if and only if $\cf_\alpha$ consists of elliptic elements. Further  $m(\alpha)\in \cf_\alpha$.

\par  From $\Tc_0=G\cdot \Tc_{00}$ we obtain that
for all $\xi$ in a dense open subset of $\Tc_0$ it holds true that
$m(\Tc_0)$ consists of elliptic elements  if $G_{m(\xi)}/ G_\xi$ is a compact torus.

Finally, every $\alpha\in T^*Z$ is in the $G$-orbit of an element $\xi=  [\1,X]$ with
$X\in\hf^\perp$ for which we recall $G_{m(\xi)}/G_\xi= Z_G(X)/Z_H(X)$.
Now the implication \eqref{eins-aa}$\Rightarrow$\eqref{zwei-bb}
follows from $m([\1,X])=X$.
\end{proof}

\subsection{The logarithmic tangent bundle}
Let $Z\hookrightarrow\hat Z$  be a compactification corresponding to a complete fan
$\Fc$ as in Section \ref {section compact}. In particular we recall
that $\hat Z$  was constructed as the closure of $Z$ in the smooth toroidal compactification
$\hat \uZ(\R)$ of $\uZ(\R)$ attached to $\Fc$.

\par According to \cite[Cor.~12.3]{KK}, there is a unique $G$-equivariant morphism
$\phi:\hat \uZ(\R)\to \Gr(\gf)$ into the Grassmannian of $\gf$ with
$\phi(z_0)=\hf^\perp$. Let $\mathcal E\to\Gr(\gf)$ be the
tautological vector bundle. Then the \emph{logarithmic cotangent bundle
  of $\hat \uZ(\R)$} is defined by $T^{\log} \hat \uZ(\R):=\phi^*\mathcal E$. Concretely
\[
  T^{\log}\hat \uZ(\R)=\{(z,X)\in\hat \uZ(\R)\times\gf\mid
  X\in\phi(z)\}.
\]
Then $T^{\log}\hat \uZ(\R)$ is a smooth $G$-manifold containing $T^*\uZ(\R)$ as
a dense open subset. It comes with a projection to the first factor
\[
  p:T^{\log}\hat \uZ(\R)\to\hat \uZ(\R), \ \ (z,X)\mapsto z
\]
making it into a vector bundle. On the other hand, the second
projection
\[
  m:T^{\log}\hat \uZ(\R)\to\gf, \ \ (z,X)\mapsto X
\]
is called the \emph{logarithmic moment map} since it restricts to
the moment map on $T^*Z$. Since $\hat \uZ(\R)$ is compact, the
logarithmic moment map is proper in the Hausdorff topology.

Next we recall from Section \ref{section compact} that each cone $\Cc\in\Fc$ corresponds to a $\uG$-orbit
$\hat \uZ_\Cc=\uG\cdot \hat z_{\Cc}\subset\hat \uZ$.  We have defined  $A_\Cc\subset A_Z$ to be the subtorus
with Lie algebra $\af_\Cc= \Span_\R \Cc$. Moreover for $I=I(\Cc)$ the
set of spherical roots vanishing on $\Cc$, we have $\af_\Cc\subset \af_I$ and
$\hat \hf_\Cc= \hf_I +\af_\Cc$.  Also recall
$\hat \uZ_\Cc \simeq \uG/\uA_\Cc \uH_I=\uZ_I/\uA_\Cc$.
Next we recall from Remark \ref{remark rel open}(c) that
\begin{equation} \label{real ZC}\hat Z \cap \hat \uZ_\Cc(\R)=\bigcup_{w\in \Wc} G \cdot
\hat z_{w,\Cc}\, .\end{equation}

\par Set $T^{\log}:= p^{-1}(\hat Z)$. For all $\Cc\in \Fc$ we
define  $T^{\log}_\Cc:=p^{-1}(\hat\uZ_\Cc\cap \hat Z)$ and note that $T^{\log} =\coprod_{\Cc\in \Fc} T_\Cc^{\log}$. Furthermore, for
 $I\subset S$ we put $T_I^{\log}:= \bigcup_{\Cc\in \Fc\atop I=I(\Cc)} T_\Cc^{\log}$.
 Since $m(\hat z_{w,\Cc})=\hf_{w,\Cc}=(\hf_w)_I$ for all $\Cc\in \Fc$ with $I(\Cc)=I$ we obtain
with Remark \ref{rmk thm moment discrete}(c)  and \eqref{real ZC} that
 \begin{equation} \label{log moment}
m(T^{\log}_I)=\bigcup_{\sc\in \sC_I}\Ad(G)\hf_{I,\sc}^\perp\, .
\end{equation}

 \subsection{Proof of Theorem \ref{thm moment discrete}}
 As mentioned in Remark \ref{rmk thm moment discrete}(a) we only need to show
\eqref{ZWEI} $\Rightarrow$ \eqref{EINS}.  Let $\alpha\in T^*Z$ be
generic. Then $m(\alpha)$ is not elliptic by assumption. Hence the
torus $A_\alpha:=G_{m(\alpha)}/ G_\alpha$ is not compact and
therefore contains a 1-parameter subgroup
  $\mu:\mathbb \R^\times \hookrightarrow \uA_\alpha(\R)$. Consider the orbit
  $A_\alpha\cdot \alpha\subset T^*Z$. Since its projection into $Z$ is closed (being a
  flat) also $A_\alpha\cdot \alpha$ is closed in $T^*Z$. The limit
  $\alpha_0:=\lim_{t\to0^+}\mu(t)\alpha$ exists in $\hat  Z$ since $m$
  is proper. Since $\alpha_0\not\in T^*Z$ we have
  $\alpha_0\in T^{\log}_I $ for some $I\neq S$ (here we used that the compression cone is strictly convex
  which implies that $T^{\log}_S= T^*Z$.)
  Hence

 $$  m(\alpha)=\Ad(\mu(t)) (m(\alpha))= m\left(\lim_{t\to 0^+}  \mu(t) \alpha\right)= m(\alpha_0)
  \in m(T^{\log}_I)=\bigcup_{\sc\in \sC_I}\Ad(G)\hf_{I,\sc}^\perp
 $$
by \eqref{log moment}.  Thus we obtain for $\alpha\in T^*Z$ generic that
\begin{equation}\label{dense generic} m(\alpha)\in \bigcup_{I\subsetneq  S}\bigcup_{\sc\in \sC_I} \Ad(G)\hf_{I,c}^\perp\, .
\end{equation}
Since the right hand side in \eqref{dense generic} consists of all proper deformations of $\Ad(G)\hf^\perp$,
hence is closed in $\cl(\Ad(G) \hf^\perp)$,  we obtain \eqref{EINS} from \eqref{dense generic} and the density of the generic elements.
\qed

\section{Harish-Chandra's group case}\label{group case}

In this section we apply the results of this paper to derive
Harish-Chandra's  formula for the Plancherel measure for a real reductive group \cite{HC3}. The Plancherel
measure contains naturally the formal degrees of discrete series representations of various inducing data.
The formal degrees were computed by Harish-Chandra in \cite{HC}.
The explicit knowledge of the formal degree is
treated as a black box in what follows.

\par We are considering a real reductive group $G'$ together with its both-sided
symmetries $G=G'\times G'$, by which $G'$ gets identified  with $Z=G/H$ where
$H=\diag(G')\subset G$ is the diagonal subgroup.  Let us recall that the topological assumption on $G'$ is that
$G'=\uG'(\R)$ for a reductive algebraic group $\uG'$ which is assumed to be connected.
If $P'=M'A'N'\subset G'$ is a minimal
parabolic subgroup of $G'$ and $\oline P'$ is  its opposite, then we obtain with $P=P' \times \oline{P'}\subset G$
a minimal parabolic subgroup of $G$ with $PH\subset G$ open and dense  as consequence of the Bruhat decomposition.   In particular $\Wc=\{\1\}$.

Next note that $\af= \af'\times \af'$,  $\af_H= \diag(\af')$ and $\af_Z=\af_H^{\perp_\af}$ is
the anti-diagonal
$$\af_Z=\{ (X,-X)\mid X\in \af'\}\, .$$
The assignment
$$\af'\to \af_Z, \ \ X\mapsto  \frac12(X, -X)$$
gives a natural identification.  If we denote by $\Sigma'=\Sigma(\af',\gf')\subset (\af')^*\bs \{0\}$
the (possibly reduced) root system for the pair $(\af',\gf')$, and further by $\Phi'\subset \Sigma'$ the set of simple roots
determined by the positive roots  $\Sigma'(\af',\nf')$, then the set of spherical roots $S\subset \af_Z^*$
naturally identifies with $\Phi'$.

\subsection{The abstract Plancherel Theorem for $L^2(Z)$}
Here we specialize the abstract Plancherel theory of Section \ref{subs APt} to the case at hand.
Recall that

$$(L, L^2(Z))\simeq  \left(\int_{\hat G} \pi \otimes \id \ d\mu(\pi), \int_{\hat G} \Hc_\pi\otimes \M_\pi
\ d\mu(\pi)\right)$$
with $\M_\pi\subset (\Hc_\pi^{-\infty})^H$.

Now any $\pi \in \hat G$ has the form $\pi=\pi_1\otimes \pi_2$ with $\pi_i\in \hat G'$.
Further, since $\Hc_{\pi_i}^\infty$ is a nuclear  Fr\'echet space (as a consequence of
Harish-Chandra's admissibility theorem) we have $\Hc_\pi^\infty=\Hc_{\pi_1}^\infty \hat\otimes \Hc_{\pi_2}^\infty
\simeq \Hom (\Hc_{\pi_1}^{-\infty}, \Hc_{\pi_2}^\infty)$ together with $\Hc_\pi^{-\infty}=\Hc_{\pi_1}^{-\infty} \hat\otimes \Hc_{\pi_2}^{-\infty}\simeq \Hom (\Hc_{\pi_1}^\infty, \Hc_{\pi_2}^{-\infty})$. Thus
$$(\Hc_\pi^{-\infty})^H \simeq \Hom_{G'}(\Hc_{\pi_1}^\infty, \Hc_{\pi_2}^{-\infty})\, .$$
We then claim
\begin{equation} \label{HC-claim1}  \dim (\Hc_\pi^{-\infty})^H\leq 1\end{equation}
and
\begin{equation} \label{HC-claim2} (\Hc_\pi^{-\infty})^H\neq \{0\} \iff \pi_2\simeq \oline \pi_1\end{equation}
with $\oline \pi_1$ the dual representation of $\pi_1$.
\par We first show "$\Rightarrow$" of \eqref{HC-claim2} and assume
 that $(\Hc_\pi^{-\infty})^H\neq \{0\}$.
 This means that  $\Hom_{G'}(\Hc_{\pi_1}^\infty, \Hc_{\pi_2}^{-\infty})\neq \{0\}$.
On the level of Harish-Chandra modules this yields $\Hom_{\gf'}(V_{\pi_1}, V_{\oline \pi_2} )\neq \{0\}$
and thus $\pi_2\simeq\oline \pi_1$.  The same reasoning also shows \eqref{HC-claim1}.

\par To see the converse in \eqref{HC-claim2}, we first supply some useful notation.  Given a Hilbert space $\Hc$ we denote by ${\mathcal B}_2(\Hc)$ the Hilbert space of Hilbert-Schmidt operators
and note that  ${\mathcal B}_2(\Hc)\simeq \Hc\hat \otimes \oline{\Hc}$ with $\hat \otimes$ the tensor product in the category
of Hilbert space and $\oline \Hc$ the dual to $\Hc$. Further we denote by ${\mathcal B}_1(\Hc)\subset {\mathcal B}_2(\Hc)$ the space of trace-class operators.

\par   Given a unitary representation $(\pi, \Hc_\pi)$ of $G'$, we set $\Hc_\Pi={\mathcal B}_2(\Hc_\pi)$ and obtain
a unitary representation $(\Pi, \Hc_\Pi)$
of $G=G'\times G'$  by

$$\Pi(g_1', g_2')T = \pi(g_1')\circ T \circ \pi(g_2')^{-1}\qquad (g_1',g_2'\in G', T\in \Hc_\Pi={\mathcal B}_2(\Hc_\pi))\, .$$
Notice that $\Pi\simeq \pi\otimes\oline{\pi}$ under the isomorphism ${\mathcal B}_2(\Hc_\pi)\simeq \Hc_\pi\hat \otimes \oline{\Hc_\pi}$,
and that the HS-norm on  ${\mathcal B}_2(\Hc_\pi)$ does not depend on the positive scaling class of the Hilbert norm
which defines the Hilbertian structure of $\Hc_\pi$.

Let us assume from now on
that $(\pi, \Hc_\pi)$ is irreducible.
We remind  that Harish-Chandra's basic admissibility theorem implies
$$\Hc_\Pi^\infty \subset {\mathcal B}_1(\Hc_\pi)\, .$$
Together with \eqref{HC-claim1} we thus obtain that

$$ (\Hc_\Pi^{-\infty})^H= \C  \tr_\pi$$
with $\tr_\pi$ denoting the restriction of the trace on ${\mathcal B }_1(\Hc_\pi)$ to $\Hc_\Pi^\infty$.  In particular, this
completes the proof of \eqref{HC-claim2}.

\par  From \eqref{HC-claim2} we then deduce
$$\supp \mu\subset \{ [\Pi]\mid [\pi]\in \hat{G'}\}\simeq \hat G'$$
and
$$\M_\Pi= \C  \tr_\pi\,\qquad ([\pi]\in\supp\mu)\, .$$
\par As the Hilbert-Schmidt norm on $\Hc_\Pi={\mathcal B}_2(\Hc_\pi)$ is
independent of the particular $G'$-invariant Hilbert norm on $\Hc_\pi$
we obtain a  natural Hilbert space structure on the one-dimensional space $\M_\Pi$
by the request $\|\tr_\pi\|=1$.
Then the natural left right representation $L=L'\otimes R'$ of $G=G'\times G'$ on $L^2(Z)$ decomposes as

$$(L'\otimes R', L^2(Z)) \underset{G'\times G'}\simeq\left(\int_{\hat {G'}}^\oplus \Pi \ d\mu(\pi),   \int_{\hat {G'}}^\oplus \Hc_\Pi \ d\mu(\pi)\right)\, .$$

\subsection{The Plancherel Theorem for $L^2(Z_I)_{\rm td}$}

\par We recall from Theorem \ref{thm planch}
the Bernstein decomposition
$$L^2(Z)= \sum_{I\subset S} B_I(L^2(Z_I)_{\rm td})\, .$$

\par For $I\subset S\simeq \Phi'$  we obtain a standard parabolic $P_I'= M_I' A_I' N_I'\supset P'$ and the
deformation $H_I$ of $H$ as
$$H_I =  \diag (M_I' A_I') (\oline{N_I'} \times N_I')\, .$$
with
$$\hat H_I = \diag (M_I') ( A_I'\oline{N_I'} \times A_I'N_I')\, .$$
Next we describe $L^2(Z_I)_{\rm td}$.  As in Subsection \ref{Subsection twisted} we decompose
every $f\in L^2(Z_I)$ as an  $A_I$-Fourier integral
$$f= \int_{i\af_I^*} f_\lambda \ d \lambda$$
where $f_\lambda \in L^2(\hat Z_I, \lambda)$ is given by
$$ f_\lambda(g)= \int_{A_I}  a^{-\rho - \lambda} f(gaH_I) \ da \qquad (g\in G)$$
If we denote by
$$\xi_\lambda: L^2(\hat Z_I, \lambda)^\infty\to \C,  \ \ f\mapsto f(\1)$$
the evaluation at $\1$, and write $L_\lambda$ for the left regular representation of $G$
on $L^2(\hat Z_I,\lambda)$, then we can rewrite the Fourier-inversion in terms of spherical characters
(as in Remark \ref{F-inverse})
\begin{equation} \label{Four1} f(z_{0,I}) =  \int_{i\af_I^*} \xi_\lambda (L_{-\lambda} (f) \xi_{-\lambda}) \ d \lambda\end{equation}
with $L_{-\lambda}= \oline {L_\lambda}$ the dual representation and $\xi_{-\lambda}= \oline{\xi_\lambda}$.
Next note that  we have by induction in stages

$$ L^2(\hat Z_I,  -\lambda)=\Ind_{\hat H_I}^G (\lambda)\simeq
\Ind_{\oline {P_I'}\times P_I'}^{G'\times G'} (L^2(M_I')\otimes \lambda)\, .$$
Thus $L^2(\hat Z_I, -\lambda)_{\rm d}$ is induced from the discrete series of $M_I'$.
\par In more detail, let  $(\sigma, \Hc_\sigma)$ be a discrete series representation of
$M_I'$ and $\lambda\in i(\af')^*$.   Then we denote by
$\Ind_{\oline {P_I'}}^{G'}(\sigma\otimes\lambda)$ the Hilbert space of measurable functions $f:  G'\to \Hc_\sigma$ with the transformation
property
$$ f(g' m_I' a_I' \oline{n_I'}) = \sigma(m_I')^{-1}  (a_I')^{-\lambda +\rho'} f(g') \qquad (g'\in G', m_I' a_I' \oline{n_I'}\in \oline{P_I'})$$
and endowed with the inner product
(of which the convergence is an extra assumption)

$$ \la f_1, f_2\ra =\int_{K'} \la  f_1(k'), f_2(k')\ra_\sigma \ dk'$$
where $K'\subset G'$ is a maximal compact subgroup of $G'$ with $\kf'\perp \af'$. The left regular representation
of $G'$ on $\Hc_{\sigma,\lambda}:=\Ind_{\oline {P_I'}}^{G'}(\sigma\otimes\lambda)$ is then unitary and denoted by
$\pi_{\sigma,\lambda}=\ind_{\oline {P_I'}}^{G'}(\sigma\otimes\lambda)$. Let us denote by $d(\sigma)$ the formal degree of the discrete
series representation of $M_I'$ (with respect to a chosen Haar measure $dm_I'$), i.e. the positive number for which we have

\begin{equation} \label{formal deg}
d(\sigma) \int_{M_I'}
\la \sigma(m_I') u,u'\ra \oline{\la \sigma(m_I') v,v'\ra} \ dm_I' =
\la u,v\ra \oline{ \la u',v' \ra }\end{equation}
for all $v,v',u,u'\in \Hc_\sigma$.

We now define a $G'\times G'$-equivariant linear map

$$\Phi_{\sigma,\lambda}:  \Ind_{\oline {P_I'}}^{G'}(\sigma\otimes\lambda)\hat \otimes \Ind_{P_I'}^{G'}(\oline \sigma\otimes(-\lambda)\to
L^2(\hat Z_I, - \lambda)_{\rm d}$$
by
$$ \Phi_{\sigma,\lambda}(f_1 \otimes f_2)(g_1', g_2'):= ( f_1(g_1'), f_2(g_2'))_\sigma \,,$$
with $( \cdot,\cdot)_\sigma$ referring here to the natural bilinear  pairing of $\sigma$ with its dual representation $\oline \sigma$.
The square integrability of the image follows from  the fact
that the norm for
$f\in L^2(\hat Z_I, - \lambda)$ can be computed
by means of the Haar measures on $K'$ and $M_I'$ (with the latter properly normalized)
as

$$ \|f\|_{L^2(\hat Z_I,-\lambda)}^2=  \int_{K'}\int_{K'}\int_{M_I'}  |f(k_1'm_I', k_2')|^2 \ dm_I' \ dk_1' \ dk_2'\, .$$
 In fact, with \eqref{formal deg} this calculation shows that
$d(\sigma)^{1/2} \Phi_{\sigma,\lambda}$ is isometric.

With the operator
$\sum_{\sigma} \int \Phi_{\sigma,\lambda}\,  d_\sigma\lambda$
we thus obtain a unitary $G$-equivalence
 \begin{equation} \label{planch Z_I} L^2(Z_I)_{\rm td} \underset{G'\times G'}\simeq
\underset{\sigma\in \hat {M_I'}_{\rm disc}}{\hat\bigoplus} \int_{i(\af_I')^*}^\oplus \Ind_{\oline {P_I'}}^{G'}(\sigma\otimes\lambda)\hat \otimes \Ind_{P_I'}^{G'}(\oline \sigma\otimes(-\lambda))\ d_\sigma \lambda
\end{equation}
where
$$ d_\sigma \lambda = d(\sigma)\,d\lambda $$
with $d\lambda$ the Lebesgue-measure on the Euclidean space  $i(\af_I')^*$, suitably normalized.

\par For any $I\subset S$ we now denote by $\mu^{I,\rm td}$ the restriction of the Plancherel measure $\mu$  to the closed
subspace $\operatorname{im} B_I\subset L^2(Z)$.

From Theorem \ref{Plancherel induced} we obtain from the uniqueness
of the measure class of the Plancherel measure for $L^2(Z_I)$  that:
\begin{itemize}
\item $\supp \mu^{I,\rm td}=\{[\Pi_{\sigma,\lambda}]\mid [\pi_{\sigma,\lambda}]\in \hat G', \sigma\in \hat {M_I'}_{\rm disc}, \lambda\in
i(\af_I')^*\}$,
\item $\ind_{P_I'}^{G'}(\oline \sigma\otimes(-\lambda))$ is isomorphic to
$\pi_{\sigma,\lambda}^*=\ind_{\oline P_I'}^{G'}(\sigma\otimes \lambda)^*$
for $\mu^{I,\rm td}$-almost all parameters $(\sigma,\lambda)$.
\end{itemize}

Next we move to the subtle point on  how to identify  $\Ind_{P_I'}^{G'}(\oline \sigma\otimes(-\lambda))$
with the dual representation of $\Ind_{\oline {P_I'}}^{G'}(\sigma\otimes\lambda)$.
For that we first remark that the pairing

\begin{equation}\label{natural dual}
\Ind_{\oline {P_I'}}^{G'}(\sigma\otimes\lambda) \times \Ind_{\oline {P_I'}}^{G'}(\oline \sigma\otimes(-\lambda))
            \to \C, \ \ (f_1,f_2)\mapsto \int_{K'}  (f_1(k'), f_2(k'))_\sigma \ dk'\end{equation}
 is $G'$-equivariant.  Thus the dual representation of $\pi_{\sigma,\lambda}=\ind_{\oline {P_I'}}^{G'}(\sigma\otimes\lambda)$ is unitarily equivalent to
$\pi_{\oline \sigma, -\lambda}=\ind_{\oline {P_I'}}^{G'}(\oline \sigma\otimes(-\lambda))$.

\par Next, we consider the long intertwining operator
\begin{equation} \label{def A} \A_{\sigma,\lambda}: \Ind_{\oline {P_I'}}^{G'}(\oline \sigma\otimes(-\lambda))\to \Ind_{P_I'}^{G'}(\oline \sigma\otimes(-\lambda))\end{equation}
\begin{equation} \label{R abs} \A_{\sigma,\lambda}(f)(g') =  \int_{N_I'} f(g' n_I')  \ dn_I' \qquad (g'\in G')\, .\end{equation}
Clearly, $\A_{\sigma,\lambda}(f)$ is defined near $g'=\1$ for functions $f$ with compact support in the non-compact picture, i.e.
$\supp f \subset \Omega \oline{P_I'}$ for $\Omega\subset N_I'$ compact.  By standard techniques of
meromorphic continuation in the $\lambda$-variable,
summarized in the following remark, we obtain that $\A_{\sigma,\lambda}$ is defined for generic $\lambda\in i(\af_I')^*$.

 \begin{rmk} \label{analyt cont}Let us briefly recall  the basic constructions leading to the definition of $\A_{\sigma,\lambda}$
in terms of meromorphic continuation (originally obtained in \cite{K-SII}).  In the first step
one embeds the irreducible representation $\oline \sigma$ of $M_I'$ into a minimal principal series representation of $M_I'$ via the Casselman
subrepresentation theorem.  In formulae, we consider $\oline \sigma$ as a subrepresentation of
$\ind_{M_I'\cap \oline P'}^{M_I'} (\oline \sigma_{M'}\otimes \lambda_0)$, where
$\oline \sigma_{M'}\in \hat M'$ and $\lambda_0 \in (\af'\cap \mf_I')_\C^*$.
Via induction in stages we then obtain that $\pi_{\oline \sigma,-\lambda}$ is a
subrepresentation of the minimal principal series $\ind_{\oline P'}^{G'} (\oline \sigma_{M'} \otimes
\mu )$ where $\mu =\lambda_0 -\lambda$.  It is important to note that
$\mu|_{\af'_I}=-\lambda$ for this initial parameter $\mu$. In the sequel $\oline \sigma_{M'}\in \hat M'$ will
be fixed, but we will allow the parameter $\mu\in (\af')_\C^*$ to vary. For $\re \mu$ in a certain open cone
this then leads to an intertwining operator

$$ \A(\mu): \Ind_{\oline {P'}}^{G'}(\oline \sigma_{M'}\otimes \mu)\to
\Ind_{(\oline P'\cap M_I')A'_IN_I' }^{G'}(\oline \sigma_{M'}\otimes \mu)$$
given by absolutely convergent integrals as in \eqref{R abs}.

\par In the second step, via Gindikin-Karpelevic change of variable
(i.e. by using a minimal string of parabolics  in the terminology of  \cite[Sect. 4]{K-SII}),
one obtains that the intertwining operator is a product of rank one intertwiners $\A_\alpha(\mu)$
attached to indivisible roots $\alpha\in \Sigma(\af', \nf_I')$. For these rank one operators one has well known explicit formulae which show that they
admit a meromorphic continuation via Bernstein's  $p^\lambda$.
In this regard it is important to note that the $\mu$-dependence of
$\cR_\alpha(\mu)$ is in fact only a dependence on $\mu_\alpha=\mu(\alpha^\vee)\in \C$.
Moreover, regardless of $\sigma_{M'}$, the operator $\A_\alpha(\mu)$ is defined and  invertible
provided that $\mu_\alpha\not \in {1\over N} \Z$ for an $N\in \N$ only depending on
$G'$, see \cite[Prop. B.1]{KKOS} which was based on \cite[Th. 1.1]{SpVo}.

\par If we now use that the roots $\alpha$ do not vanish identically on $\af_I'$, we obtain
$\A_{\sigma,\lambda}$,  as in \eqref{def A},  is defined and invertible for generic $\lambda \in i (\af_I')^*$.  In more precision
we define $\A_{\sigma,\lambda}$ as the restriction of $\A(\mu)$ to
the subrepresentation $\Ind_{\oline {P_I'}}^{G'}(\oline \sigma\otimes(-\lambda))$.
\end{rmk}

 The operator $\A_{\sigma,\lambda}$ is
$G'$-equivariant and continuous, and hence we obtain from Schur's Lemma that
\begin{equation} \label{defi tau}
\A_{\sigma,\lambda}^*\circ \A_{\sigma,\lambda} = \tau(\sigma,\lambda) \id
\end{equation}
for a number $\tau(\sigma,\lambda)\in [0,\infty]$ which is positive for generic
$\lambda\in i(\af_I')^*$. Here $\A_{\sigma,\lambda}^*$ is the Hilbert adjoint to $\A_{\sigma,\lambda}$.
This implies in particular
for all  $f\in \Hc_{\oline \sigma, -\lambda}= \Ind_{\oline {P_I'}}^{G'}(\oline \sigma\otimes(-\lambda))$
the following norm identity

\begin{equation} \label{inter norm}  \|\A_{\sigma,\lambda} f\|^2 = \tau(\sigma,\lambda) \|f\|^2\, .\end{equation}

\begin{rmk} The numbers  $\tau(\sigma,\lambda)$ are computable  via  rank one reduction, see Remark \ref{analyt cont} above.
\end{rmk}

Recall that ${\mathcal B}_2(\Hc_{\sigma,\lambda})\simeq \Hc_{\sigma,\lambda}\otimes
\oline{\Hc_{\sigma,\lambda}}$ and
from \eqref{natural dual} that $\Hc_{\oline \sigma, -\lambda}=\oline{\Hc_{\sigma,\lambda}}$  is the natural  (isometric) dual of  $\Hc_{\sigma,\lambda}$.
By combining \eqref{planch Z_I} and \eqref{inter norm} we thus obtain that the operator
$$\sum_\sigma\int \Phi_{\sigma,\lambda}\circ(\id_{\Hc_{\sigma,\lambda}}\otimes
\A_{\sigma,\lambda}) \,\mu(\sigma,\lambda)\, d\lambda$$ provides a unitary $G$-equivalence

\begin{equation} \label{planch Z_I re} L^2(Z_I)_{\rm td} \underset{G'\times G'}\simeq
\underset{\sigma\in \hat {M_I'}_{\rm disc}}{\hat\bigoplus} \int_{i(\af_I')^*}^\oplus {\mathcal B}_2(\Hc_{\sigma,\lambda}) \ \mu(\sigma,\lambda) d\lambda\,
\end{equation}
where

\begin{equation} \label{def mu} \mu(\sigma,\lambda):= \frac{d(\sigma)}{\tau(\sigma,\lambda)}.\end{equation}

Next we want to keep track of the implied isomorphism in \eqref{planch Z_I re} with more suitable language.
For that we define a one-dimensional Hilbert space
$\C_{\sigma,\lambda} = \C  \xi_{\sigma,\lambda}\subset({\mathcal B}_2(\Hc_{\sigma,\lambda}) ^{-\infty})^{H_I}$ with $\|\xi_{\sigma,\lambda}\|=1$  and where $\xi_{\sigma,\lambda}$ is defined by

\begin{equation} \label{def xi sigma lambda}
\xi_{\sigma,\lambda}(f_1\otimes f_2)= \left( f_1(e) ,\A_{\sigma,\lambda}(f_2)(e)\right)_\sigma \qquad
(f_1 \in \Hc_{\sigma,\lambda}^ \infty, f_2 \in \Hc_{\oline \sigma, -\lambda}^\infty)\, .\end{equation}

In this regard we note for $g=(g_1', g_2')\in G$ that

\begin{equation} \label{definition xi sigma} \Phi_{\sigma,\lambda}(f_1\otimes \A_{\sigma,\lambda}(f_2))(g) =  \xi_{\sigma,\lambda}( \Pi_{\sigma, \lambda}(g^{-1})(f_1\otimes f_2))\end{equation}
so that with the extended notation
\begin{equation} \label{planch Z_I Re} L^2(Z_I)_{\rm td} \underset{G'\times G'}\simeq
\underset{\sigma\in \hat {M_I'}_{\rm disc}}{\hat\bigoplus} \int_{i(\af_I')^*}^\oplus {\mathcal B}_2(\Hc_{\sigma,\lambda})\otimes \C_{\sigma,\lambda} \ \mu(\sigma,\lambda) d \lambda\, .
\end{equation}
we keep track also of the isomorphism from right to left. In view of \eqref{Four1} and the orthogonality relations
for the discrete series, this isomorphism is the inverse of the Fourier transform

$$F_{\rm td}\mapsto \oline \Pi_{\sigma,\lambda}(F)\oline \xi_{\sigma,\lambda}\in {\mathcal B}_2(\Hc_{\sigma,\lambda})^\infty$$
for $F\in C_c^\infty(Z_I)$, see \eqref{Four1} and Remark \ref{F-inverse}.  Here $F_{\rm td}$ refers to the
orthogonal projection of $F\in C_c^\infty(Z_I)\subset L^2(Z_I)$ to $L^2(Z_I)_{\rm td}$.

\subsubsection{Grouping into irreducibles}

The $G=G'\times G'$-representation in \eqref{planch Z_I Re}
is not multiplicity free as different $\Pi_{\sigma,\lambda}$ can yield  equivalent representations.
These equivalences are induced  by Weyl group orbits.
In more precision let $\W'$ be the Weyl group of $\Sigma'$.  Then
$$ \W_I':=\{ w|_{\af_I}\mid  w \in \W',\ w(\af_I')=\af_I'\}$$
gives rise to a subquotient of $\W'$ and finite subgroup of the orthogonal group
of $\af_I'$.

\begin{rmk}\label{str WI}  (Structure of $\W_I'$) In general we are not aware of a criterion for subsets $I\subset \Phi'=S$  which characterizes those for which $\W_I'$ is a reflection group. Nevertheless we can describe a fundamental domain
for the action of $\W_I'$ as
a union of simplicial cones as follows.
\par For $\alpha\in \Phi'$ we denote by $s_\alpha\in \W'$ the corresponding simple reflection and recall that
$$\W'(I):=\la s_\alpha\mid \alpha\in I\ra$$
is naturally a reflection group on
$$\af'(I):=\Span_\R\{ \alpha^\vee \mid \alpha\in I\}$$
with simple roots given by $I$.
Note that $\af'=\af'(I) \oplus \af_I'$ is an orthogonal decomposition. Next we recall the set $D_I'\subset \W'$
of distinguished representatives for $\W'/ \W'(I)$, namely with
$$D'_I:=\{w \in \W'\mid  w(I)\subset (\Sigma')^+\}$$
we obtain a bijection
$$D_I' \to \W'/\W'(I),\ \ w\mapsto [w]=w\W'(I)$$
with $w$ the unique minimal length representative of $[w]$.

\par For $I, J\subset S$ set
$$\W'(I,J)=\{ w \in \W'\mid   w (J)=I\}\, .$$
We claim that the map
$$R: \W'(I,I)\to \W_I', \ \ w\mapsto w|_{\af_I'}$$
is an isomorphism of groups. Let us first show that $R$ is defined. In fact, if $w\in \W'(I,I)$, then $w(I)=I$ implies that $w$ preserves $\af'(I)$ and hence its
orthogonal complement $\af_I'$.  Hence $R$ is defined.  Let us show now that $R$ is injective and assume
$w|_{\af_I'}= \id$. In particular $w$ fixes the face
$$(\af_I')^-:=\{ X\in \af_I'\mid (\forall \alpha\in S\bs I) \ \alpha(X)\leq 0\}$$
of $(\af')^-$, the closure of the Weyl chamber $(\af')^{--}$. Hence Chevalley's Lemma implies
that $w\in \W'(I)$, thus $w=\1$ as $w(I)=I$.
Finally we show that $R$ is surjective.  Let $w\in \W'$ such that $w(\af_I')=\af_I'$. Hence $w(\af'(I))=\af'(I)$.
From the description of $\W'/\W'(I)\simeq D_I'$ we find $w_1 \in \W'(I)$ such that $ww_1(I)\subset (\Sigma')^+$.
Note that $ww_1(\af_I')=ww_1(\af_I')$ holds as well. Hence $ww_1(I)\subset (\Sigma')^+ \cap \Span_\R I
= \W'(I)\cdot I $.  In particular we find $w_2 \in \W'(I)$ such that $w_2 w w_1(I)=I$. Hence $w_2ww_1 \in
\W'(I,I)$. Since $w_1|_{\af_I'}= w_2|_{\af_I'}=\id$, the surjectivity of $R$ follows.

\par Recall that subsets $I, J$ are called associated provided that $\W'(I,J)\neq \emptyset$.
This defines  an equivalence relation $I\sim J$ among subsets of $S$. Next we record
the tiling

\begin{equation} \label{tiling} \af_I' = \bigcup_{J\sim I} \bigcup_{w \in \W'(I,J)} w (\af_J')^-\end{equation}
meaning that the union of interiors $  \coprod_{ J\sim I} \coprod_{w \in \W'(I,J)} w (\af_J')^{--}$ is disjoint.
Now note that $\W_I'\simeq \W'(I,I)$ acts on each $\W'(I,J)$ from the left. We pick for each
orbit $[w] = \W_I' w \subset \W'(I,J)$ a representative $w$ (of minimal length).  Then the cone
\begin{equation}\label{fund1} C_I':= \bigcup_{J\sim I} \bigcup_{[w]\in \W_I'\bs \W'(I,J)} w (\af_J')^-\end{equation}
is a fundamental domain for the action of $\W_I'$ on $\af_I'$.
\end{rmk}

Let $C_I^*$ be a fundamental domain
for the dual action  of  $\W_I'$ on $(\af_I')^*$, constructed as in  \eqref{fund1}.
Let $w\in \W_I'$. Then for $\lambda\in i (\af_I')^*$ we define
 $\lambda_w= w \cdot \lambda= \lambda(w^{-1}\cdot )$. Likewise one defines
 $\sigma_w$ by $\sigma_w(m_I') = \sigma(w^{-1} m_I' w)$ where we tacitly allowed ourselves to identify
 $w\in \W_I'\simeq  (N_K(\af_I') \cap N_K(\af'))/ M' $ with a lift to $K$ which normalizes $M_I'$.

\begin{lemma}\label{Lemma Ldt1} Let $w\in \W_I'$. Then for generic $\lambda\in i (\af_I')^*$, the representation
$\pi_{\sigma,\lambda}$ is equivalent to $\pi_{\sigma_w, \lambda_w}$.
\end{lemma}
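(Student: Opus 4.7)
The plan is to exhibit an explicit non-zero $G'$-equivariant intertwining operator from $\pi_{\sigma,\lambda}$ to $\pi_{\sigma_w,\lambda_w}$ for generic $\lambda$, built as the composition of two standard pieces. First I would invoke the description of $\W_I'$ from Remark \ref{str WI} as $\W_I'\simeq \W'(I,I)=\{v\in \W'\mid v(I)=I\}$ to lift $w$ to a representative $\dot w\in N_{K'}(\af')$ which simultaneously normalizes $\af_I'$ and $\af'(I)$, and hence normalizes the standard Levi $L_I'=M_I'A_I'$ of $P_I'$. Since $A_I'$ is central in $L_I'$, conjugation by $\dot w$ acts on $M_I'$ exactly by the $w$-twist used to define $\sigma_w$. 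Conjugation by $\dot w$ carries $\oline{P_I'}$ to a new parabolic $\oline{Q_I'}:=\dot w\oline{P_I'}\dot w^{-1}$ with the same Levi $L_I'$ but nilradical $\oline{R_I'}=\dot w\oline{N_I'}\dot w^{-1}$, and left translation $L_{\dot w}f(g'):=f(\dot w^{-1}g')$ gives, by a direct change of variables on $L_I'$, a unitary $G'$-equivariant identification
$$L_{\dot w}:\;\Ind_{\oline{P_I'}}^{G'}(\sigma\otimes\lambda)\;\xrightarrow{\;\sim\;}\;\Ind_{\oline{Q_I'}}^{G'}(\sigma_w\otimes\lambda_w).$$

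Next I would compose with the Knapp-Stein-type intertwining operator
$$\A_{\oline{Q_I'}\mid\oline{P_I'}}(\sigma_w,\lambda_w):\;\Ind_{\oline{Q_I'}}^{G'}(\sigma_w\otimes\lambda_w)\to \Ind_{\oline{P_I'}}^{G'}(\sigma_w\otimes\lambda_w),$$
given in a suitable open cone of the complex parameter space by the absolutely convergent integral $f\mapsto \int_{\oline{N_I'}\cap \dot wN_I'\dot w^{-1}}f(\,\cdot\,\bar n)\,d\bar n$, and extended meromorphically in $\lambda$ by exactly the mechanism already recalled in Remark \ref{analyt cont}: embed $\sigma_w$ into a minimal principal series of $M_I'$ via the Casselman subrepresentation theorem, apply induction by stages to realize $\pi_{\sigma_w,\lambda_w}$ as a subrepresentation of a minimal principal series of $G'$, factor the resulting intertwiner as a product of rank-one operators $\A_\alpha(\mu)$ by Gindikin-Karpelevic, and finally invoke the explicit rank-one formulae together with Proposition B.1 of \cite{KKOS} for analyticity and invertibility.

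The composition $\A_{\oline{Q_I'}\mid\oline{P_I'}}(\sigma_w,\lambda_w)\circ L_{\dot w}$ is then a non-zero continuous $G'$-equivariant map from $\pi_{\sigma,\lambda}$ to $\pi_{\sigma_w,\lambda_w}$; since both induced representations are irreducible for $\lambda$ outside a countable union of real hyperplanes in $i(\af_I')^*$, the composition is an isomorphism for such generic $\lambda$, which is the content of the lemma. The main obstacle will be the generic invertibility of the Knapp-Stein piece: this hinges on the fact that no root $\alpha\in\Sigma(\af',\nf_I')$ vanishes identically on $\af_I'$, so each rank-one parameter $\mu_\alpha$ varies non-trivially with $\lambda\in i(\af_I')^*$ and the singular locus is a proper countable union of real hyperplanes. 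The rest is bookkeeping with parabolic induction and Gindikin-Karpelevic, with no new input beyond what is already assembled in Remark \ref{analyt cont}.
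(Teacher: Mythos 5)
Your approach is essentially the paper's: realize the desired equivalence as a translation operator that re-labels the parabolic and its Levi data, composed with a Knapp--Stein intertwiner whose generic definedness and invertibility come from the meromorphic-continuation machinery of Remark~\ref{analyt cont}. The paper factors it the other way round (first $\A_w:\Ind_{\oline{P_I'}}^{G'}(\sigma\otimes\lambda)\to\Ind_{w^{-1}\oline{P_I'}w}^{G'}(\sigma\otimes\lambda)$, then the right shift by $w$, giving $\Rc_w(f)(g')=(\A_wf)(g'w)$), but the two factorizations produce the same operator, so the route is the same.

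There is, however, a concrete slip in your first step. You set $L_{\dot w}f(g'):=f(\dot w^{-1}g')$ and claim it is a $G'$-equivariant isomorphism $\Ind_{\oline{P_I'}}^{G'}(\sigma\otimes\lambda)\to\Ind_{\oline{Q_I'}}^{G'}(\sigma_w\otimes\lambda_w)$. As written, $L_{\dot w}$ is left translation, i.e.~the operator $\pi_{\sigma,\lambda}(\dot w)$ itself: it leaves the right transformation law (and hence the inducing parabolic) untouched, so it maps $\Ind_{\oline{P_I'}}^{G'}(\sigma\otimes\lambda)$ to itself, and it is not $G'$-equivariant since it commutes with $L_g$ only when $g$ commutes with $\dot w$. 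What you want is the \emph{right} shift $R_{\dot w}f(g'):=f(g'\dot w)$. This commutes with the left regular action, and for $q=\dot w p\dot w^{-1}\in\oline{Q_I'}$ with $p=m_I'a_I'\oline{n_I'}\in\oline{P_I'}$ one has
$(R_{\dot w}f)(g'q)=f(g'\dot w p)=\sigma(m_I')^{-1}(a_I')^{-\lambda+\rho'}(R_{\dot w}f)(g')$,
which after rewriting $m_I'=\dot w^{-1}m''\dot w$, $a_I'=\dot w^{-1}a''\dot w$ is exactly the $\oline{Q_I'}$-transformation rule for $\sigma_w\otimes\lambda_w$. With that replacement your argument is correct. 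One further economy: you conclude invertibility of the composition from generic irreducibility of the two induced representations; it is cleaner (and is what the paper does) to quote the generic invertibility of each rank-one factor $\A_\alpha(\mu)$ for $\mu_\alpha\notin\tfrac1N\Z$ from \cite[Prop.~B.1]{KKOS}, which is precisely what Remark~\ref{analyt cont} records and which already yields invertibility of the full Knapp--Stein operator without invoking irreducibility.
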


 \begin{proof} We use the intertwining operator
 \begin{equation} \label{def A2} \A_w: \Ind_{\oline{P_I}'}^{G'}(\sigma\otimes \lambda)\to \Ind_{w^{-1}\oline{P_I}'w}^{G'}(\sigma\otimes \lambda)\end{equation}

\begin{equation} \label{R abs2} \A_w(f)(g') =  \int_{w^{-1} \oline{N_I}'w/ w^{-1}\oline{N_I}'w \cap \oline{N_I}'} f(g' x)  \ dx \qquad (g'\in G')\end{equation}
which is, as a product of rank one intertwiners, generically defined by Remark  \ref{analyt cont}. The desired equivalence
of $\pi_{\sigma,\lambda}$ and $\pi_{\sigma_w, \lambda_w}$ is then obtained by composing
$\A_w$ with the right shift by $w$, i.e.
\begin{equation}\label{R inter} \Rc_w(f)(g'):= (\A_w f)(g'w)\end{equation}
yields the desired equivalence between
$\pi_{\sigma,\lambda}$ and $\pi_{\sigma_w, \lambda_w}$.
\end{proof}

The next lemma is a  generic form of the Langlands Disjointness Theorem (see \cite[Th. 14.90]{Knapp}) for which we
provide an elementary proof.

\begin{lemma} \label{Lemma Ldt2} Let $\sigma, \sigma'\in  \hat {M_I'}_{\rm disc}$ and $\lambda, \lambda' \in iC_I^*$ be such that the unitary representations $\pi_{\sigma,\lambda}$ and $\pi_{\sigma',\lambda'}$ are equivalent. Then for generic $\lambda$ one has
$\lambda=\lambda'$ and $\sigma=\sigma'$.\end{lemma}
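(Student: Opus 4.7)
The plan is to deduce Lemma~\ref{Lemma Ldt2} from the standard analysis of leading exponents (equivalently, Jacquet modules) of the unitarily induced representations $\pi_{\sigma,\lambda}$, combined with the fundamental domain property of $iC_I^*$ for the $\W_I'$-action recalled in Remark~\ref{str WI}. First, I would restrict to a Zariski-dense open subset of parameters $\lambda\in i(\af_I')^*$ where two things hold: the induced representation $\pi_{\sigma,\lambda}$ is irreducible (Knapp--Stein irreducibility theorem) and the long intertwining operators $\A_w$ of \eqref{def A2} are holomorphic and invertible (provided by the rank one reduction outlined in Remark~\ref{analyt cont}). Outside a countable union of hyperplanes in $i(\af_I')^*$, the $\W_I'$-stabilizer of $\lambda$ is also trivial. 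We work from now on with $\lambda$ lying in the intersection of these generic sets.

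Second, I would recover the pair $(\sigma,\lambda)$ up to the $\W_I'$-action by comparing leading exponents along $A_I'^{--}$. By Casselman's theory of the Jacquet module with respect to $P_I'$, the Jacquet module of $\pi_{\sigma,\lambda}$ at $P_I'$, viewed as an $M_I'A_I'$-module, has composition factors
\[
 \{\sigma_w\otimes(\lambda_w - \rho_I) : w\in \W_I'\},
\]
where the $\W_I'$-action is the one described in Lemma~\ref{Lemma Ldt1}; equivalently, the $K$-finite matrix coefficients of $\pi_{\sigma,\lambda}$ admit on $A_I'^{--}$ an asymptotic expansion whose leading exponents are exactly the $w\lambda - \rho_I$ for $w\in\W_I'$. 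A unitary equivalence $T\colon \pi_{\sigma,\lambda}\to \pi_{\sigma',\lambda'}$ induces an isomorphism of these Jacquet modules, so the unordered sets of $A_I'$-weights coincide:
\[
 \{w\lambda : w\in\W_I'\} = \{w\lambda' : w\in\W_I'\}.
\]
Hence $\lambda'\in\W_I'\cdot\lambda$. Since $iC_I^*$ is a fundamental domain for $\W_I'$ on $i(\af_I')^*$ and both $\lambda,\lambda'\in iC_I^*$, this forces $\lambda=\lambda'$.

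Third, with $\lambda=\lambda'$ generic (so the $\W_I'$-stabilizer of $\lambda$ is trivial), the weight $\lambda-\rho_I$ occurs in the Jacquet module with multiplicity one, and the associated $M_I'$-isotype recovers $\sigma$ for the first representation and $\sigma'$ for the second. The isomorphism of Jacquet modules therefore identifies $\sigma\simeq\sigma'$ as discrete series of $M_I'$, i.e.\ $\sigma=\sigma'$.

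The main technical obstacle is making the Jacquet module/leading exponent analysis rigorous in a form uniform enough to handle the smooth (Casselman--Wallach) globalizations $\Hc_{\sigma,\lambda}^\infty$ used throughout this paper, and to pin down the precise Zariski-open set of $\lambda$ on which irreducibility of $\pi_{\sigma,\lambda}$, invertibility of the intertwining operators, and triviality of the $\W_I'$-stabilizer of $\lambda$ all hold simultaneously; both ingredients are classical in Harish-Chandra's theory, but reconciling them with the conventions of Section~\ref{group case} (in particular the normalization of $C_I^*$ via \eqref{fund1}) requires some bookkeeping.
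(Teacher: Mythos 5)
Your argument is correct in substance, but it takes a genuinely different route from the paper. You identify $\lambda$ and $\sigma$ from the leading exponents of $\pi_{\sigma,\lambda}$ along $A_I'^{--}$ (Harish-Chandra's constant term for tempered representations, or Casselman's asymptotics): equivalence forces $\W_I'\cdot\lambda=\W_I'\cdot\lambda'$, the fundamental domain $iC_I^*$ with generically trivial stabilizer gives $\lambda=\lambda'$, and the $M_I'$-isotype attached to the exponent $\lambda-\rho_I$ gives $\sigma\simeq\sigma'$ --- essentially the classical proof of Langlands disjointness. The paper instead proves $\lambda=\lambda'$ by comparing \emph{infinitesimal characters}, using only that discrete series of $M_I'$ have real infinitesimal character (from \cite{KKOS}) and taking imaginary parts in \eqref{orbit Wj}, and then identifies $\sigma\simeq\sigma'$ by a translation-functor shift to a dominant parameter, Langlands' Lemma \cite[Lemma 3.12]{L} for the leading asymptotics there, and the GNS construction. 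The paper's choice is deliberate: as stressed in Remark \ref{Remark only real}, the only input on the inducing data is realness of the infinitesimal character, which is exactly what is available when the same lemma is reused for symmetric spaces in Section \ref{section DBS}, where $\sigma$ is a discrete series for $M_I/M_I\cap H$ rather than of the group $M_I$; your exponent argument leans on square-integrability of $\sigma$ as a representation of $M_I'$ (to ensure its own exponents only contribute strictly decaying terms along $\af_I'$), so it works for the group case but would not transfer as directly. One small inaccuracy to fix: the assertion that the Jacquet module at $P_I'$ has composition factors \emph{exactly} $\{\sigma_w\otimes(\lambda_w-\rho_I)\}$ is a $p$-adic-style statement; for real groups discrete series have nonvanishing Jacquet modules, so there are further factors with exponents of the form $w\lambda-\rho_I+\mu$, $\mu\neq0$ real on $\af_I'$. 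Your parallel formulation via the \emph{leading} (purely imaginary after $\rho$-normalization) exponents is the correct one, and since for generic imaginary $\lambda$ no such shifted exponent can collide with $\lambda-\rho_I$, both the weight comparison and the multiplicity-one identification of the $\sigma$-isotype go through.
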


\begin{proof} We first show that $\lambda=\lambda'$ for generic $\lambda$.
For that we consider the infinitesimal characters $\pi_{\sigma, \lambda}$.
For the  discrete series $\sigma$ a fairly elementary and short proof that
their infinitesimal characters are real is given in \cite{KKOS}. Now, from the
standard formulae for infinitesimal characters of induced representations, see \cite[Prop. 8.22]{Knapp},  we deduce from
$\pi_{\sigma, \lambda}\simeq \pi_{\sigma', \lambda'}$
that
\begin{equation} \label{orbit Wj}  \W'_{\jf'}\cdot (\mu_\sigma  + \lambda) = \W'_{\jf'}\cdot (\mu_{\sigma'} +\lambda')\, . \end{equation}
Here $\jf' = \af'+\tf'$ is a Cartan subalgebra of $\gf'$ which inflates the maximal split torus $\af'\subset \gf'$
by a maximal torus $\tf'\subset \mf'$. Further,
$\mu_{\sigma}, \mu_{\sigma'}\in (i \tf' + \af'\cap \mf_I')^*$ are representatives of the infinitesimal character
for $\sigma$, resp. $\sigma'$.  Note that $\W'_{\jf'}$ leaves the real form $\jf_\R':= \af'+i\tf'$ of $\jf_\C'$ invariant.
Hence comparing the imaginary parts (with respect to $\jf_\R'$) in \eqref{orbit Wj} yields for generic
$\lambda, \lambda'\in iC_I^*$ that $\lambda=\lambda'$.

\par  Finally we show that $\sigma$ is equivalent to $\sigma'$.
Let $F$ be a finite dimensional representation  of $G'$ with strictly dominant highest weight $\Lambda$   and highest weight vector
fixed by $M_I'$.  Hence $\Lambda \in (\af_I')^*$.
The translation functor moves for $\lambda$ generic the representations
$\pi_{\sigma, \lambda} $ to $\pi_{\sigma, \lambda +\Lambda} $ and $\pi_{\sigma', \lambda} $ to  $\pi_{\sigma', \lambda +\Lambda}$, see \cite[proof of Lemma 10.2.7] {Wal2}.

\par  We conclude that
$\pi_{\sigma, \lambda }$ is equivalent to $\pi_{\sigma', \lambda }$
also for a parameter $\lambda$ with $ \re \lambda$ sufficiently dominant.
This allows us to apply Langlands' Lemma \cite[Lemma 3.12]{L} for the asymptotics of $\la \pi_{\sigma,\lambda}(m_I' a'_t)f_1, f_2\ra$ for $f_1, f_2
\in \Hc_{\sigma,\lambda}^\infty$, $m_I'\in M_I'$ and $a_t'=\exp(tX')$ for $X'\in (a_I')^{--}$:

$$\lim_{t\to \infty}  (a_t')^{\lambda -\rho'}\la \pi_{\sigma, \lambda} (a_t' m_I') f_1, f_2\ra =
\la \sigma(m_I')[f_1(\1)], \A_{\sigma,\lambda}(f_2)(\1)\ra_\sigma,$$
see also \eqref{Lang Lemma} below.   Notice that with $f_1, f_2\in \Hc_{\sigma,\lambda}^\infty$ the vectors
$f_1(\1), \A(f_2)(\1)$ run over all pairs of smooth vectors in $V_\sigma^\infty$.
Likewise holds for $\sigma'$ and we obtain that the unitary representations
$\sigma$ and $\sigma'$ feature the same (smooth) matrix coefficients.
\par Now we recall the Gelfand-Naimark-Segal construction which  asserts for an irreducible unitary representation $\pi$
of a locally compact group $G$ on a Hilbert space $\Hc$ that one can recover $\pi$ by one matrix
coefficient $g \mapsto \la \pi(g)v, v\ra$ for $v\in \Hc$, $v\neq 0$. Consequently $\sigma$ and $\sigma'$ are equivalent,
concluding the proof of the lemma.
\end{proof}

\begin{rmk} \label{Remark only real} Let us stress that the only property of discrete series used in the
preceding proof of Lemma \ref{Lemma Ldt2} was that infinitesimal characters are real.\end{rmk}

\par By applying Lemma \ref{Lemma Ldt1} and Lemma \ref{Lemma Ldt2} to the disintegration formula \eqref{planch Z_I Re}
we obtain the grouping in inequivalent irreducibles, i.e. the Plancherel formula for $L^2(Z_I)_{\rm td}$:

\begin{equation} \label{planch Z_I irred}
L^2(Z_I)_{\rm td} \underset{G'\times G'}\simeq
\sum_{\sigma\in \hat {M_I'}_{\rm disc}}\int_{iC_I^*}  {\mathcal B}_2(\Hc_{\sigma,\lambda})   \otimes
\M^I_{\sigma,\lambda} \,\, \mu(\sigma,\lambda) d\lambda\,,\end{equation}
where
$$\M^I_{\sigma,\lambda}:=\M_{\Pi_{\sigma,\lambda}}^I =  ({\mathcal B}_2(\Hc_{\sigma,\lambda})^{-\infty})_{\rm temp}^{H_I}$$
is the multiplicity space.   Moreover, for generic $\lambda$ we have also seen that
\begin{equation}  \label{m-sigma}\M_{\sigma,\lambda}^I \simeq \bigoplus_{w\in \W_I'}  \C_{\sigma_w, \lambda_w} \end{equation}
as $\af_I$-module. In particular, we obtain that
\begin{equation} \label{Spec M-sigma} \Spec_{\af_I'}  \M_{\sigma,\lambda}^I= \rho'|_{\af_I'}  - \W_I'\cdot \lambda\, . \end{equation}

\subsection{The Maass-Selberg relations} From Theorem \ref{Plancherel induced} we obtain
that the multiplicity space $\M^I_{\sigma,\lambda}$ is endowed with the Hilbert space structure induced from
the one dimensional space
$\M_{\sigma,\lambda}:=\M_{\Pi_{\sigma,\lambda}}= \C\tr_{\pi_{\sigma,\lambda}}$.

Set $\eta_{\sigma,\lambda}:= \tr_{\pi_{\sigma,\lambda}} \in \M_{\sigma,\lambda}$
and recall from \eqref{decomp etaI} the orthogonal decomposition

$$\eta_{\sigma,\lambda}^I= \sum_{\xi \in (\rho-\\W_\jf \chi)|_{\af_I}}  \eta_{\sigma,\lambda}^{I, \xi} $$
with $\chi$ the infinitesimal character of $\Pi_{\sigma,\lambda}$.

Upon our identification of $\af_I$ with $\af_I'$ we obtain for $\lambda$ generic from \eqref{Spec M-sigma}
that $\eta_{\sigma,\lambda}^{I,\xi}\neq 0$ if and only if
$\xi \in \rho' + \W_I'\cdot \lambda$ and accordingly
$$\eta_{\sigma,\lambda}^I= \sum_{w\in \W_I'} \eta_{\sigma,\lambda}^{I, \rho' - w\lambda}  \, .$$

Further, our  Maass-Selberg relations in Theorem \ref{eta-I continuous} give
\begin{equation} \label{MS group} 1= \|\eta_{\sigma,\lambda}\|= \|\eta_{\sigma,\lambda}^{I,\xi}\|_{\M_{\sigma,\lambda}^I}
\end{equation}
for any $\xi$ with $\eta_{\sigma,\lambda}^{I,\xi}\neq 0$.

In order to proceed we need an elementary result on the asymptotics of the matrix coefficient
$$ \eta( \Pi_{\sigma,\lambda}(g) (f_1 \otimes \la \cdot, f_2\ra)) = \la \pi_{\sigma,\lambda}(g_1^{-1})f_1,
\pi_{\sigma,\lambda}(g_2^{-1})f_2\ra  \qquad ( f_1, f_2 \in \Hc_{\sigma,\lambda})$$
for $g = a=( \sqrt{a'}, \sqrt{a'}^{-1})\in A_I^{--}$ with $a'\in (A_I')^{--}$. In other words
we are interested in the asymptotics of

$$ a' \mapsto   \la \pi_{\sigma, \lambda} ((a')^{-1})f_1, f_2\ra $$
for $a'=a'_t=\exp(tX')$ with $X'\in (\af_I')^{--}$ and $t\to\infty$.
Then we have the following variant, observed
in \cite{KKOS}, of \cite[Lemma 3.12]{L}.

\begin{lemma}\label{Lemma KKOS}  Let $\lambda \in i\af_I^*$ and suppose that $f_1, f_2 \in \Hc_{\sigma,\lambda}^\infty$ are such that
$\supp f_i \subset \Omega \oline{P_I'}$ for some $\Omega\subset N_I'$ compact.
Then
\begin{equation}\label{Lang Lemma} \lim_{t\to \infty}a_t^{\lambda -\rho}  \la \pi_{\sigma, \lambda} ((a_t')^{-1})f_1, f_2\ra
= \la f_1(\1), \A_{\sigma,\lambda} (f_2)(\1)\ra_\sigma\end{equation}
 \end{lemma}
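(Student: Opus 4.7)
The plan is to realize $\Hc_{\sigma,\lambda}$ in the non-compact picture and reduce the desired asymptotic to a dominated-convergence argument based on the contraction of $N_I'$ by $\Ad(a_t')$ as $t\to\infty$. This is essentially the classical proof of Langlands' Lemma 3.12 specialized to the current setting, with the simplifying feature that the compact-support hypothesis on $f_1, f_2$ makes all integrals absolutely convergent and commutes with the limit.

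First I would express the matrix coefficient $\la\pi_{\sigma,\lambda}((a_t')^{-1}) f_1, f_2\ra$ as an integral over $N_I'$ by realizing $\Hc_{\sigma,\lambda}$ as functions on the open Bruhat cell $N_I'\cdot\oline{P_I'}$. With the standard normalization of Haar measure, the inner product of two compactly supported smooth vectors takes the form
\[
\la\pi_{\sigma,\lambda}((a_t')^{-1}) f_1, f_2\ra = \int_{N_I'} \la f_1(a_t' n), f_2(n)\ra_\sigma\, dn.
\]
Using the decomposition $a_t' n = n_t\cdot a_t'$ with $n_t := a_t' n(a_t')^{-1}\in N_I'$ (valid since $a_t'\in A_I'\subset\oline{P_I'}$), the transformation rule for $f_1$ yields $f_1(a_t' n) = (a_t')^{-\lambda+\rho'} f_1(n_t)$. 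Pulling this scalar out by sesquilinearity, and using $a_t^{\lambda-\rho}=(a_t')^{\lambda-\rho'}$ (which follows from the identification $\af_Z\simeq\af'$ via $X'\mapsto\tfrac12(X',-X')$ together with the explicit form of $\rho$ on $\af = \af'\times\af'$ for $P=P'\times\oline{P'}$), one arrives at
\[
a_t^{\lambda-\rho}\,\la\pi_{\sigma,\lambda}((a_t')^{-1}) f_1, f_2\ra = \int_{N_I'} \la f_1(a_t' n(a_t')^{-1}), f_2(n)\ra_\sigma\, dn.
\]

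The limit $t\to\infty$ is then extracted by dominated convergence. For $X'\in(\af_I')^{--}$ every root of $\af'$ appearing in $\nf_I'$ is a non-negative combination of simple roots containing at least one element of $\Phi'\setminus I$, and hence takes a strictly negative value on $X'$; thus $\Ad(a_t')\to 0$ on $\nf_I'$ and $a_t' n(a_t')^{-1}\to 1$ uniformly on compact subsets of $N_I'$. Since $f_2$ has compact support in $N_I'$ and $f_1$ is continuous, the integrand converges pointwise to $\la f_1(1), f_2(n)\ra_\sigma$ with a uniform compactly supported bound in $n$, yielding
\[
\lim_{t\to\infty} a_t^{\lambda-\rho}\,\la\pi_{\sigma,\lambda}((a_t')^{-1}) f_1, f_2\ra = \int_{N_I'}\la f_1(1), f_2(n)\ra_\sigma\, dn = \la f_1(1), \A_{\sigma,\lambda}(f_2)(1)\ra_\sigma,
\]
the final equality being the defining formula \eqref{R abs} for $\A_{\sigma,\lambda}$ evaluated at $g'=1$, where the defining integral converges absolutely because $f_2$ is compactly supported (so no meromorphic continuation as in Remark \ref{analyt cont} is needed here).

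The only technical bookkeeping is justifying the non-compact-picture formula for the inner product with the correct Haar normalization; this amounts to tracking the smooth, non-vanishing Iwasawa Jacobian relating $dk$ and $dn$ on the open Bruhat cell and presents no analytic difficulty for our compactly supported test vectors. Everything else is a clean elementary computation, so I do not expect any real obstacle.
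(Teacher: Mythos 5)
Your proof is correct and follows essentially the same route as the paper: pass to the non-compact picture, pull out the factor $(a_t')^{-\lambda+\rho'}$ via $a_t'n=(a_t'n(a_t')^{-1})a_t'$, and use the contraction $a_t'\,\Omega\,(a_t')^{-1}\to\{\1\}$ together with the compact supports to interchange limit and integral. The extra details you supply (the root-sign argument for the contraction, dominated convergence, and the cancellation $a_t^{\lambda-\rho}=(a_t')^{\lambda-\rho'}$) are exactly what the paper leaves implicit.
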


 \begin{proof} We use the non-compact model for $\pi_{\sigma,\lambda}$ and realize $f_1, f_2$ as $\sigma$-valued functions
on  $N_I$:

$$ \la \pi_{\sigma, \lambda} ((a'_t)^{-1})f_1, f_2\ra =(a_t')^{-\lambda+\rho'} \int_{N_I'}  \la f_1( a_t'n_I' (a_t')^{-1}), f_2(n_I')\ra_\sigma \ dn_I'\, .$$
Observe that
$$a'_t \Omega (a'_t)^{-1} \underset{t\to \infty}{\to} \{\1\}$$
 for all $\Omega\subset N_I'$ compact.  By the compactness of supports we are allowed to interchange limit and integral and the asserted formula follows. \
 \end{proof}

The Maass-Selberg relations \eqref{MS group} then yield the following key-identity:

\begin{lemma}\label{functionals identical}  For generic $\lambda \in i C_I^*$ we have
$\xi_{\sigma,\lambda}= \eta_{\sigma,\lambda}^{I,\rho'-\lambda}$ together
with $\C_{\sigma,\lambda} \subset \M_{\sigma,\lambda}^I$ as Hilbert spaces.
\end{lemma}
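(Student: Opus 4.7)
The plan is to identify $\xi_{\sigma,\lambda}$ with the generalized $A_I$-eigencomponent $\eta_{\sigma,\lambda}^{I,\rho'-\lambda}$ of the constant term of $\eta_{\sigma,\lambda}=\tr_{\pi_{\sigma,\lambda}}$, by computing a single matrix coefficient in two different ways and comparing asymptotics. The crucial test vectors are $v = f_1 \otimes \oline{f_2} \in \Hc_\Pi^\infty$ with $f_1,f_2 \in \Hc_{\sigma,\lambda}^\infty$ supported in $\Omega \oline{P_I'}$ for some compact $\Omega \subset N_I'$; such $v$ span a dense subspace of $\Hc_\Pi^\infty$. First I would combine the constant-term approximation of Theorem~\ref{loc ct temp} with the eigenvalue decomposition \eqref{m-sigma}--\eqref{Spec M-sigma} to obtain, for $a_t \in A_I^{--}$ corresponding to $a_t' = \exp(tX')$ with $X' \in (\af_I')^{--}$,
\begin{equation*}
m_{v,\eta_{\sigma,\lambda}}(g a_t \cdot z_0) = \sum_{w\in \W_I'} (a_t')^{\rho'-w\lambda}\, m_{v,\eta^{I,\rho'-w\lambda}_{\sigma,\lambda}}(g\cdot z_{0,I}) + O\bigl((a_t')^{(1+\epsilon)\rho'}\bigr).
\end{equation*}

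Second, I would compute the same matrix coefficient directly. Writing $g=(g_1,g_2)$, one has $m_{v,\eta_{\sigma,\lambda}}(g a_t \cdot z_0) = \la \pi_{\sigma,\lambda}((a_t')^{-1})\pi_{\sigma,\lambda}(g_1^{-1}) f_1,\, \pi_{\sigma,\lambda}(g_2^{-1})f_2\ra$ under the identification of $\af_Z$ with $\af'$. Since the supports of $\pi_{\sigma,\lambda}(g_i^{-1}) f_i$ remain inside a compact set times $\oline{P_I'}$ as $g$ ranges over a compactum of $G$, Lemma~\ref{Lemma KKOS} applies and yields
\begin{equation*}
\lim_{t\to\infty}(a_t')^{\lambda-\rho'}\, m_{v,\eta_{\sigma,\lambda}}(g a_t \cdot z_0) = m_{v,\xi_{\sigma,\lambda}}(g \cdot z_{0,I}).
\end{equation*}

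Multiplying the first identity by $(a_t')^{\lambda-\rho'}$ (note $\rho'(X')<0$, so the error tends to zero) and setting $c_w(g,v) := m_{v,\eta^{I,\rho'-w\lambda}_{\sigma,\lambda}}(g\cdot z_{0,I})$, the combination of the two identities becomes
\begin{equation*}
m_{v,\xi_{\sigma,\lambda}}(g\cdot z_{0,I}) = \lim_{t\to\infty}\Big(c_{\id}(g,v) + \sum_{w\ne\id} (a_t')^{\lambda-w\lambda}\, c_w(g,v)\Big).
\end{equation*}
For generic $\lambda \in iC_I^*$ the characters $(a_t')^{\lambda-w\lambda}$ with $w\ne \id$ are nontrivial unitary characters of $A_I'$, so by the uniqueness of exponential-polynomial asymptotics (Lemma~\ref{lemma basic ineq}) the existence of the limit forces $c_w(g,v)=0$ for $w\ne\id$ together with $c_{\id}(g,v) = m_{v,\xi_{\sigma,\lambda}}(g\cdot z_{0,I})$. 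Letting $g$ and $v$ range over their allowed sets and invoking density, this yields the equality $\eta^{I,\rho'-\lambda}_{\sigma,\lambda}=\xi_{\sigma,\lambda}$ as continuous $H_I$-invariant functionals on $\Hc_\Pi^\infty$; in particular $\xi_{\sigma,\lambda} \in \M^I_{\sigma,\lambda}$.

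To obtain the statement about Hilbert spaces I would simply compare norms: by construction $\|\xi_{\sigma,\lambda}\|_{\C_{\sigma,\lambda}}=1$, whereas the Maass--Selberg relations \eqref{MS group} give $\|\eta^{I,\rho'-\lambda}_{\sigma,\lambda}\|_{\M^I_{\sigma,\lambda}}=1$ (non-vanishing being assured by \eqref{m-sigma}), so the inclusion $\C_{\sigma,\lambda} \hookrightarrow \M^I_{\sigma,\lambda}$ is isometric. The main subtlety is the interplay between the oscillatory frequencies $(a_t')^{\lambda-w\lambda}$ for $w\ne\id$ and the exponential-polynomial structure of the constant term: precisely because these frequencies are distinct and nonzero for generic $\lambda$, the existence of the strong limit in Lemma~\ref{Lemma KKOS} suppresses all but the identity Weyl translate, thereby singling out the one eigencomponent encoded by $\xi_{\sigma,\lambda}$.
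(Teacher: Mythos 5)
Your asymptotic computation itself is sound and is essentially the paper's: you pair the constant-term expansion from Theorem \ref{loc ct temp} with the Langlands-type limit of Lemma \ref{Lemma KKOS} for test vectors $f_1\otimes\oline{f_2}$ supported over the open cell, use genericity of $\lambda$ to separate the exponents $\rho'-w\lambda$, and settle the Hilbert-space statement by the Maass--Selberg relations \eqref{MS group}. The gap lies in how you promote the identity on these special test vectors to the equality $\xi_{\sigma,\lambda}=\eta_{\sigma,\lambda}^{I,\rho'-\lambda}$ of distribution vectors. The vectors $v=f_1\otimes\oline{f_2}$ with $\supp f_i\subset\Omega\oline{P_I'}$, $\Omega\subset N_I'$ compact, are dense in the Hilbert space $\Hc_\Pi$ but \emph{not} in $\Hc_\Pi^\infty$ in its Fr\'echet topology (any $C^\infty$-limit of such vectors vanishes on the complement of the open cell), and two continuous functionals on $\Hc_\Pi^\infty$ can agree on a Hilbert-dense subspace of smooth vectors without being equal (compare a delta functional with the zero functional). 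Your attempt to enlarge the family by letting $g$ range over a compactum does not repair this: Lemma \ref{Lemma KKOS} requires $\supp\big(\pi_{\sigma,\lambda}(g_i^{-1})f_i\big)\subset\Omega'\oline{P_I'}$ with $\Omega'\subset N_I'$ compact, and left translation by an arbitrary $g_i$ in a compactum of $G'$ can move the support out of the open cell $N_I'\oline{P_I'}$; only $g$ in a small neighborhood of the identity is covered, and the resulting family of vectors $\Pi(g^{-1})v$ is still not dense in $\Hc_\Pi^\infty$.

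The missing idea, which is the paper's first (and decisive) step, is that no density argument is needed: by \eqref{m-sigma}--\eqref{Spec M-sigma}, for generic $\lambda\in iC_I^*$ the $\af_I$-weight space of $\M_{\sigma,\lambda}^I$ for the weight $\rho'-\lambda$ is one-dimensional and spanned by $\xi_{\sigma,\lambda}$; since $\eta_{\sigma,\lambda}^{I,\rho'-\lambda}$ is a tempered $H_I$-invariant functional of the same weight, the two are automatically proportional. Your comparison of asymptotics then needs to be carried out only at a single test vector (essentially $g=\1$) with $\la f_1(\1),\A_{\sigma,\lambda}(f_2)(\1)\ra_\sigma\neq0$, which pins the proportionality constant to $1$; after that, your final step (both functionals have norm one, by construction of $\C_{\sigma,\lambda}$ and by \eqref{MS group}, so the inclusion $\C_{\sigma,\lambda}\subset\M_{\sigma,\lambda}^I$ is isometric) goes through unchanged.
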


\begin{proof} First note that
$\xi_{\sigma,\lambda}$ and $\eta_{\sigma,\lambda}^{I,\rho'-\lambda}$ have to be
multiples of each other as they have the same $\af_I$-weight. Let us show
that this multiple is indeed $1$ by computing the asymptotics of the matrix coefficient:
Recall that for $a=( \sqrt{a'}, \sqrt{a'}^{-1})\in A_I^{--}$ with $a'\in (A_I')^{--}$ we have

$$ \eta( \Pi_{\sigma,\lambda}(a) (f_1 \otimes \la \cdot, f_2\ra)) = \la \pi_{\sigma, \lambda} ((a')^{-1})f_1, f_2\ra\, . $$
Now for $f_1, f_2$ as  in Lemma \ref{Lemma KKOS} we obtained in \eqref{Lang Lemma}
$$\la \pi_{\sigma, \lambda} ((a')^{-1})f_1, f_2\ra\sim(a')^{\rho'-\lambda}  \la f_1(e), \A(f_2)(e)\ra_\sigma\, .$$
Comparing with \eqref{def xi sigma lambda} we then get
indeed that
$\xi_{\sigma,\lambda}= \eta_{\sigma,\lambda}^{I,\rho'-\lambda}$.

\par Finally, as $\|\xi_{\sigma,\lambda}\|=1$
we obtain from the Maass-Selberg relations
\eqref{MS group} that $\C_{\sigma,\lambda} \subset \M_{\sigma.\lambda}^I$ as Hilbert spaces.
This completes the proof of the lemma. \end{proof}

\subsection{The Plancherel Theorem for $L^2(Z)$} From the fact that source and target of the Bernstein morphism have equivalent Plancherel measures
we obtain
\begin{equation} \label{supp union} \supp \mu =\bigcup_{I\subset S} \supp \mu^{I,\rm td}\end{equation}
with
$$\supp \mu^{I, \rm td}= \{ [\Pi_{\sigma, \lambda}]\in \hat G\mid \lambda\in iC_I^*, \sigma \in \hat {M_I'}_{\rm disc}\}$$

In the union \eqref{supp union} a certain overcounting takes place, which will be taken care of
in the next lemma:

\begin{lemma}\label{lemma disjoint supports}  Let $I, J\subset S$. Then the following assertions hold:
\begin{enumerate}
\item\label{111} If $I$ and $J$ are associated, i.e.~there exists a $w\in \W'$ such that $w(I)=J$,
then $\supp \mu^{I,\rm td}=\supp \mu^{J,\rm td}$.
\item\label{222} Otherwise $\supp \mu^{I,\rm td}\cap \supp \mu^{J,\rm td}$
has $\mu$-measure zero.
\end{enumerate}
\end{lemma}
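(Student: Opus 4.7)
The plan is to reduce both assertions to statements about the parametrizing data $(\sigma,\lambda)$ and to compare the resulting representations $\pi_{\sigma,\lambda}$ via intertwining operators for part (1) and infinitesimal characters for part (2).

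For part (1), I would extend the construction from Lemma \ref{Lemma Ldt1} from the setwise stabilizer $\W'_I\simeq \W'(I,I)$ to an arbitrary $w\in\W'(I,J)$. Concretely, pick a lift $\dot w\in N_{K'}(\af')$; conjugation by $\dot w$ sends the Levi $M_I'A_I'$ of $P_I'$ onto that of $P_J'$ and yields a bijection $\sigma\mapsto \dot w\cdot\sigma$ of $\hat{M_I'}_{\rm disc}$ with $\hat{M_J'}_{\rm disc}$ together with an isomorphism $\lambda\mapsto w\lambda$ of $i(\af_I')^*$ with $i(\af_J')^*$. The generically defined intertwiner
$$\Rc_w(f)(g'):=\int_{w^{-1}\oline{N_I'}w/w^{-1}\oline{N_I'}w\cap \oline{N_J'}} f(g'\dot w x)\,dx$$
(factored into rank-one operators as in Remark \ref{analyt cont}) realizes $\pi_{\sigma,\lambda}\simeq\pi_{\dot w\cdot\sigma,w\lambda}$. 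After a further application of Lemma \ref{Lemma Ldt1} to move $w\lambda$ into the fundamental domain $iC_J^*$, this yields $[\Pi_{\sigma,\lambda}]\in\supp\mu^{J,\rm td}$; the symmetric argument gives the reverse inclusion.

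For part (2), I would argue by contradiction. Assume $I\not\sim J$ but that the intersection of supports has positive $\mu$-measure. Using the explicit Plancherel decomposition \eqref{planch Z_I irred} (applied to both sides) this translates into a positive-Lebesgue-measure set of parameters $(\sigma,\lambda)\in\hat{M_I'}_{\rm disc}\times iC_I^*$ admitting $(\sigma',\lambda')\in\hat{M_J'}_{\rm disc}\times iC_J^*$ with $\pi_{\sigma,\lambda}\simeq\pi_{\sigma',\lambda'}$. Equality of infinitesimal characters gives $w\in\W'_{\jf'}$ with $w(\mu_\sigma+\lambda)=\mu_{\sigma'}+\lambda'$ in $(\jf_\C')^*$; following verbatim the infinitesimal-character method from the proof of Lemma \ref{Lemma Ldt2} (recall Remark \ref{Remark only real} that only reality of $\mu_\sigma,\mu_{\sigma'}$ on $\jf_\R'$ is used), comparison of imaginary parts on $\jf_\R'=\af'+i\tf'$ produces, for generic $\lambda$, an element $\bar w\in\W'$ with $\bar w\lambda=\lambda'$. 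Since the dual vector $\lambda^\sharp$ lies in $\af_I'$ and $\lambda'^\sharp$ lies in $\af_J'$, we obtain $\bar w(\af_I')=\af_J'$, hence $I\sim J$ by the description of association recalled in Remark \ref{str WI}, giving the desired contradiction.

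The routine tail of the argument is the countability bookkeeping: for each of the countably many triples $(\sigma,\sigma',\bar w)\in\hat{M_I'}_{\rm disc}\times\hat{M_J'}_{\rm disc}\times\W'$ the set of $\lambda\in iC_I^*$ satisfying the full equality $\bar w\lambda=\lambda'\in iC_J^*$ is either automatic (forcing $\bar w(\af_I')=\af_J'$) or cut out by a nontrivial affine condition in $iC_I^*$, hence has Lebesgue measure zero, and Lebesgue null sets are $\mu^{I,\rm td}$-null. The main obstacle is the technical step of extracting a \emph{restricted} Weyl element $\bar w\in\W'$ from an arbitrary element of the \emph{complex} Weyl group $\W'_{\jf'}$, which in general need not preserve $\af_\C'$. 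This is resolved, exactly as in Lemma \ref{Lemma Ldt2}, by exploiting that $w$ preserves the real form $\jf_\R'$ and its Euclidean structure, so the imaginary part of the infinitesimal character — which lives entirely in $\af'$ — is transported by $w$ to another element of $\af'$, allowing us to appeal to the classical identification of $\W'$ as the image in $\mathrm{GL}(\af_\C')$ of the $\af_\C'$-stabilizer in $\W'_{\jf'}$.
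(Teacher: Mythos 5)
Your proposal follows the same two-pronged approach as the paper: part (1) by extending the intertwining-operator argument of Lemma \ref{Lemma Ldt1} to the full sets $\W'(I,J)$, and part (2) by comparing infinitesimal characters and exploiting the reality of the infinitesimal characters of discrete series from \cite{KKOS}, exactly as the paper signals by pointing to \eqref{orbit Wj} and Lemma \ref{Lemma Ldt2}. The paper's own proof is a three-line sketch, and what you have written is a faithful and substantially more detailed expansion of it, so the two agree in method.

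One place where your argument is slightly less tight than you present it: you want to pass from an element $w\in\W'_{\jf'}$ with $w\lambda=\lambda'$ (hence $w(\af_I')\subset\af_J'$ for a positive-measure set of $\lambda$) to an element $\bar w\in\W'$, and you invoke the description of $\W'$ as the image of the $\af'$-stabilizer in $\W'_{\jf'}$. However, $w$ is only known to carry the proper subspace $\af_I'$ into $\af_J'$; it need not stabilize $\af'$ itself, so the classical identification does not apply directly. Closing this requires the additional structure-theoretic fact that $\W'_{\jf'}$-conjugacy of the subspaces $\af_I',\af_J'\subset\af'$ already implies their $\W'$-conjugacy (equivalently, that $G'_\C$-conjugacy of the standard Levis $\lf_{I,\C}'$, $\lf_{J,\C}'$ implies $G'$-conjugacy of $\lf_I'$, $\lf_J'$), which you can obtain by first multiplying $w$ by a suitable element of the centralizer $Z_{\W'_{\jf'}}(\af_J')$ so that the product stabilizes $\af'$. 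The paper's own text (in the proof of Lemma \ref{Lemma Ldt2}) is equally terse at this step, so this is more a note on where to tighten the write-up than a defect unique to your argument.
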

\begin{proof} \eqref{111} Basic intertwining theory (assuming no particular knowledge on the discrete spectrum) as used above
implies that
$$\Spec L^2(Z_I)_{\rm td}=\Spec L^2(Z_J)_{\rm td}\subset \hat G$$ if $I$ and $J$ are associated.

\eqref{222} As the infinitesimal
characters for the discrete series of $M_I'$ and $M_J'$  are real (see \cite{KKOS}), we obtain
that the infinitesimal characters of the induced representations in $L^2(\hat Z_I, \lambda_I)_{\rm d}$ and
$L^2(\hat Z_J, \lambda_J)_{\rm d}$ for generic $\lambda_I ,\lambda_J \in i\af_J^*$
are different if $I$ and $J$ are not associated, see \eqref{orbit Wj} and the text following it.
\end{proof}

\bigskip
We are now ready to phrase the Plancherel theorem of Harish-Chandra in terms of the
Bernstein morphism. For this let
$$\Hc_I: = \sum_{\sigma\in \hat {M_I'}_{\rm disc}}\int_{iC_I^*}  {\mathcal B}_2(\Hc_{\sigma,\lambda})   \otimes \C{\xi_{\sigma,\lambda}} \ \mu(\sigma,\lambda) d\lambda, $$
viewed as a subspace  of  $L^2(Z_I)_{\rm td}$ as in \eqref{planch Z_I irred}.

Let $B'_I$ be the restriction of $B_I$ to $\Hc_I$. Select   a family $\mathcal{I}$ of  representatives of subsets of $S$ modulo association and set
$$B':=\bigoplus _{I \in \mathcal{I}}B'_I\, .$$

\begin{theorem} \label{planch HC} The map
$$B': \bigoplus_{I\in \Ic} \Hc_I \to L^2( Z)$$ is a bijective isometry, hence  the inverse of a Plancherel isomorphism.
In particular  we obtain the explicit Parseval-formula:
\begin{equation} \label{Parseval} \Vert f\Vert_{L^2(Z)}^2 =
\sum_{I\in \Ic} \sum_{\sigma\in \hat {M_I'}_{\rm disc}} \int_{iC_I^*} \Vert \pi_{\sigma, \lambda}(f)\Vert_{\rm HS}^2\
\mu(\sigma,\lambda) d\lambda\end{equation}
 for all $f\in C_c^\infty(Z)$.
\end{theorem}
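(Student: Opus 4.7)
The strategy is to (a) verify that each $B'_I$ is a fiberwise isometry on $\Hc_I$, (b) establish orthogonality of the images $B'_I(\Hc_I)$ for distinct $I,J\in\Ic$, and (c) deduce surjectivity from the surjectivity of the full Bernstein morphism $B$ together with the fact that $B'_I(\Hc_I)$ already exhausts $B_I(L^2(Z_I)_{\rm td})$.

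For step (a), the point is to analyze the spectral fibers of the restricted Radon transform $R_I$ at a generic parameter $[\Pi_{\sigma,\lambda}]$ with $\lambda\in iC_I^*$ and $\sigma\in\hat{M'_I}_{\rm disc}$. Since $\Wc=\{\1\}$ and $\M_{\Pi_{\sigma,\lambda}}=\C\,\eta_{\sigma,\lambda}$ is one--dimensional, the map ${\mathsf r}_{\Pi_{\sigma,\lambda},I,\rm td}:\C\eta_{\sigma,\lambda}\to\M_{\sigma,\lambda}^I$ sends $\eta_{\sigma,\lambda}$ to $\eta_{\sigma,\lambda}^I=\sum_{w\in\W'_I}\eta_{\sigma,\lambda}^{I,\rho'-w\lambda}$ by \eqref{m-sigma} and \eqref{Spec M-sigma}. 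By Theorem \ref{eta-I continuous} the projection $\sI^{\rho'-\lambda}$ onto the weight $\rho'-\lambda$ is a surjective partial isometry; since both source and target are one--dimensional this is an isometry, so $\|\eta_{\sigma,\lambda}^{I,\rho'-\lambda}\|_{\M_{\sigma,\lambda}^I}=1$. Lemma \ref{functionals identical} identifies $\eta_{\sigma,\lambda}^{I,\rho'-\lambda}$ with $\xi_{\sigma,\lambda}$ as unit vectors. A short dual computation then shows that the adjoint $b_{\Pi_{\sigma,\lambda}}:\M_{\sigma,\lambda}^I\to\C\eta_{\sigma,\lambda}$ restricted to $\C\xi_{\sigma,\lambda}$ is an isometry onto $\C\eta_{\sigma,\lambda}$. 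Integrating over $iC_I^*$ with the common measure $\mu(\sigma,\lambda)d\lambda$ (which coincides with the restriction of $\mu$ to $\supp\mu^{I,\rm td}$ by Theorem \ref{Plancherel induced}) yields that $B'_I:\Hc_I\to L^2(Z)$ is isometric.

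For step (b), if $I,J\in\Ic$ are distinct, then by the choice of $\Ic$ the subsets $I$ and $J$ are not associated. Lemma \ref{lemma disjoint supports}\eqref{222} asserts that $\supp\mu^{I,\rm td}\cap\supp\mu^{J,\rm td}$ is a $\mu$--null set. Since $B'_I(\Hc_I)\subset L^2(Z)$ is supported spectrally in $\supp\mu^{I,\rm td}$ and similarly for $J$, the two images are mutually orthogonal. Combined with step (a) this shows that $B'=\bigoplus_{I\in\Ic}B'_I$ is an isometry.

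For step (c), by Theorem \ref{thm planch} the full Bernstein map $B=\sum_{I\subset S}B_I$ is surjective onto $L^2(Z)$. I would first observe that for associated $I\sim J$ one has $B_I(L^2(Z_I)_{\rm td})=B_J(L^2(Z_J)_{\rm td})$ as subspaces of $L^2(Z)$, because by Lemma \ref{lemma disjoint supports}\eqref{111} they have the same spectral support and both equal the full spectral component of $L^2(Z)$ over $\supp\mu^{I,\rm td}=\supp\mu^{J,\rm td}$ (uniqueness of Plancherel decomposition). Second, one needs to check that for each $I$, the smaller space $B'_I(\Hc_I)$ already equals $B_I(L^2(Z_I)_{\rm td})$. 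Spectrally, the fiber of $b_{\Pi_{\sigma,\lambda}}$ on the full $\M_{\sigma,\lambda}^I$ has image $\C\eta_{\sigma,\lambda}$, and the restriction to the one--dimensional subspace $\C\xi_{\sigma,\lambda}$ has the same image (since $\langle\xi_{\sigma,\lambda},\eta_{\sigma,\lambda}^I\rangle=1\neq 0$). Hence $B'_I(\Hc_I)=B_I(L^2(Z_I)_{\rm td})$, and assembling the pieces yields $\sum_{I\in\Ic}B'_I(\Hc_I)=L^2(Z)$. The Parseval identity \eqref{Parseval} is then obtained by writing $\|B'f\|^2_{L^2(Z)}=\sum_I\|f_I\|^2_{\Hc_I}$ and identifying $\|\pi_{\sigma,\lambda}(f)\|_{\rm HS}^2$ with the Hilbert--Schmidt norm of the Fourier component in ${\mathcal B}_2(\Hc_{\sigma,\lambda})$.

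The most delicate point—the real obstacle behind the clean one--line invocations above—is step (a): the precise matching of normalizations, i.e.\ verifying that the Hermitian structure on $\M_{\sigma,\lambda}^I$ induced from Theorem \ref{Plancherel induced} (via the sum $\sH_\pi^I(f)=\sum_w\|\oline\Pi(f)\oline\eta^{I,\rho'-w\lambda}\|^2$) agrees with the structure making $\xi_{\sigma,\lambda}$ a unit vector of norm one in $L^2(Z_I)_{\rm td}$. This is precisely the content of Lemma \ref{functionals identical} together with the Maass-Selberg relations of Theorem \ref{eta-I continuous}, which is why the setup of the previous sections has been carefully arranged to produce these identities.
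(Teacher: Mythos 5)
Your proposal is correct and follows essentially the same route as the paper: the isometry of each $B_I'$ is obtained from the Maass--Selberg relations together with Lemma \ref{functionals identical} (cf.\ Remark \ref{Remark isometric}), and the matching of spectral supports and the orthogonality of the images come from Lemma \ref{lemma disjoint supports}. Your surjectivity step is packaged through the surjectivity of the full Bernstein morphism $B$ and the fiberwise identity $B_I'(\Hc_I)=B_I\bigl(L^2(Z_I)_{\rm td}\bigr)$, whereas the paper concludes more directly from multiplicity one plus equality of supports, but in substance this is the same spectral argument.
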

\begin{proof}  By Lemma \ref{lemma disjoint supports} both sides have the same support in $\hat G$ and moreover have
multiplicity one. Next $B_I'$ is isometric by Lemma \ref{functionals identical}  and the spectral definition of the
Bernstein morphism (compare also to Remark \ref{Remark isometric}). Since for different $I\neq J\in \Ic$ the spectral
supports are disjoint by Lemma \ref{lemma disjoint supports}, the images of the various $B_I'$ are orthogonal.
The theorem follows.
\end{proof}

To obtain the original Parseval formula of Harish-Chandra in its standard form
we unwind  \eqref{Parseval} via $i\af_I'^*= \W_I' \cdot iC_I^*$ and average over association classes

\begin{equation} \label{HC Parseval} \Vert f\Vert_{L^2(Z)}^2 = \sum_{I\subset S} { 1 \over | [I]| \cdot | \W'_I|}
\sum_{\sigma\in \hat {M_I'}_{\rm disc}}\int_{i(\af_I')^*} \Vert \pi_{\sigma, \lambda}(f)\Vert_{\rm HS}^2\ \mu(\sigma,\lambda) d\lambda \end{equation}
where
$[I]$ is the equivalence class of $I\subset S$ under association.

\begin{rmk} Regarding the knowledge about representations of the discrete series,  let us stress that in the above derivation of the Plancherel formula for  a real reductive group we only used
the results of \cite{KKOS} on the infinitesimal characters of discrete series. These are valid for  general
real spherical spaces and when specialized to the group case comparably
soft and elementary opposed to the usage of the difficult classification of the discrete series by Harish-Chandra.
\par As byproduct of his classification of the discrete series Harish-Chandra obtained the following beautiful
geometric characterization of the discrete spectrum
\begin{equation} \label{HC disc}L^2(G)_{\rm d}\neq \emptyset \iff \hbox{$\gf$ contains a compact Cartan subalgebra}\,.\end{equation}
Let us emphasize once more that we obtained  "$\Leftarrow$" in this paper in the full generality
of real spherical spaces, see Theorem \ref{thm discrete}.
\par For  a general real spherical space a description of the (twisted) discrete spectrum in terms of parameters
is currently out of reach. Therefore, regarding the discrete spectrum of a real spherical space,
the  emphasis is to obtain "$\Rightarrow$" of \eqref{HC disc}  in general. Now for the group case, there is an economic way to obtain that: one first characterizes the discrete spectrum as cusp forms and then relates cusp forms to
to orbital integrals, see the account of Wallach \cite[Ch. 7]{Wal1}.  This idea, as well as all other known methods for the group,
fails to generalize to a real spherical space.

\par Finally, Harish-Chandra determined with the parameters of the discrete series also their formal degrees. In the group
case we saw that there is a canonical normalization of the one dimensional space of
$H$-invariant functionals $\M_\pi=\C\tr_\pi$, namely
by the trace.  Now for a general real spherical space the space of $H$-invariant functionals $\M_{\pi, {\rm td}}$ for a (twisted) discrete
series is no longer one-dimensional nor is it clear whether there is a canonical normalization of the inner product
on $\M_{\pi, {\rm td}}$. The only known general result beyond the group case is the case of holomorphic
discrete series on a symmetric space \cite{Kr}.
\end{rmk}

\section{The Plancherel formula for symmetric spaces} \label{section DBS}

In this section we apply the Bernstein decomposition to symmetric spaces and
derive the Plancherel formula of Delorme \cite{Delorme}
and  van den Ban-Schlichtkrull \cite{vdBS}. The account is rather
parallel to the group case. The only needed extra tool is  the
description of a generic basis of $H$-invariant distribution vectors for  induced representations in terms
of open $H$-orbits on real flag varieties $G/P$,  see \cite{vdB}, \cite{CD}.

For this section $Z=G/H$ is symmetric and we use the notation and results from Subsection \ref{Subsection m symmetric}.

\subsection{Normalization of discrete series} \label{normal disc}

This small paragraph is valid for a general unimodular spherical space $Z=G/H$. Let $[\pi]\in \hat G$
and $(\pi, \Hc)$ be a unitary model of $[\pi]$. We write $\M_{\pi, {\rm d}}\subset (\Hc^{-\infty})^H$ for the subspace
of those $\eta$ for which $m_{v,\eta}\in L^2(Z)$ for all $v\in \Hc^\infty$.  We define an inner product on $\M_{\pi,{\rm d}}$ by the request
that the Schur-Weyl orthogonality relations hold true:

\begin{equation} \label{SW-ortho}\int_Z  m_{v,\eta} (z) \oline {m_{v', \eta'}(z)} \ dz =  \la v, v'\ra_{\Hc}  \la \eta, \eta'\ra_{\M_{\pi, {\rm d}}}\end{equation}
Notice that the norm on $\M_{\pi, {\rm d}}$ depends on the unitary norm of $\Hc$ which is only unique up to positive scalar.

\begin{rmk} Given a pair of normalizations of $\la\cdot, \cdot\ra _{\Hc}$
and $\la\cdot, \cdot\ra_{\M_{\pi, {\rm d}}}$ one obtains a notion of formal degree
$d(\pi)$ analogous to \eqref{formal deg} by requiring
$$
d(\pi)\int_Z  m_{v,\eta} (z) \oline {m_{v', \eta'}(z)} \ dz =
 \la v, v'\ra_{\Hc}  \la \eta, \eta'\ra_{\M_{\pi, {\rm d}}}\, .
$$
The normalization of $\la \cdot, \cdot\ra_{\M_{\pi, {\rm d}}}$
by \eqref{SW-ortho} therefore amounts to setting $d(\pi)=1$.
Without a canonical normalization of
$\la \cdot, \cdot\ra_{\M_{\pi, {\rm d}}}$
this is the best we can offer. \end{rmk}

\subsection{The Plancherel formula for $L^2(Z_I)_{\rm td}$} \label{ZI planch 15}
Recall that $H_I= (M_I \cap H)\oline {U_I}$ is contained in $\oline {P_I}=M_I \oline {U_I}$ with
$\oline {P_I}/ H_I\simeq M_I/M_I\cap H$.  Hence  $L^2(Z_I)$ is parabolically
induced from $L^2(M_I/ M_I\cap H)$, and hence we obtain
\begin{equation}  \label{Z_I symm} L^2(Z_I)_{\rm td} \simeq \sum_{\sigma \in \hat M_I}
\int_{i \af_I^*}^\oplus  \Hc_{\sigma,\lambda}\otimes \M_{\sigma, {\rm d}} \, d\lambda\,.\end{equation}
Here $\Hc_{\sigma, \lambda}= \Ind_{\oline P_I}^G (\sigma\otimes \lambda)$
and $\M_{\sigma, {\rm d}}$ is the space of  $M_I\cap H$-invariant
functionals on $\Hc_\sigma^\infty$, which are square integrable
for the symmetric space $M_I/ M_I\cap H$, as defined in Subsection \ref{normal disc}
for $G/H$ and $\pi=\sigma$.
\par The space
$\Hc_{\sigma,\lambda}\otimes \M_{\sigma, {\rm d}}$ embeds into
$L^2(\hat Z_I,{ - \lambda})_{\rm d}$ isometrically (by our normalization of the discrete series) by
$$ \Phi_{\sigma, \lambda}:
\Hc_{\sigma,\lambda}\otimes \M_{\sigma, {\rm d}} \to L^2(\hat Z_I,{ -\lambda})_{\rm d}$$
defined by linear extension and completion of
$$\Phi_{\sigma,\lambda}(f \otimes \zeta)(gH_I) := \zeta (f(g))\qquad (f\in \Hc_{\sigma,\lambda}^\infty, \zeta\in\M_{\sigma,\rm d}, g\in G)\, .$$
For $\zeta \in \M_{\sigma, {\rm d}}$ let us also define an $H_I$-invariant functional
$\xi_{\sigma,\lambda, \zeta}$ on $\Hc_{\sigma, \lambda}^\infty$ via
\begin{equation} \label{def xi sigma eta} \xi_{\sigma,\lambda, \zeta}(f):= \zeta(f(e))\qquad (f\in \Hc_{\sigma,\lambda}^\infty)\end{equation}
and record
$$ \Phi_{\sigma, \lambda}(f\otimes\zeta) = \xi_{\sigma,\lambda, \zeta}(\pi_{\sigma, \lambda}(g^{-1}) f) \, .$$

\par Recall the little Weyl group $\W=\W_Z$ of the restricted roots system $\Sigma(\gf, \af_Z)$
from Subsection \ref{Subsubsection adapted}.

The decomposition \eqref{Z_I symm} is not yet the Plancherel formula
for $L^2(Z_I)_{\rm td}$, since it is not a grouping into irreducibles as
different  $\pi_{\sigma,\lambda}$ may yield
equivalent representations.  Similar to the group case
this possibility is governed by the subquotient
$$\W_I=\{ w|_{\af_I}\mid  w\in \W, \ w(\af_I)=\af_I\}$$
of $\W=\W_Z$ and a cone $C_I^*\subset \af_I^*$ as  fundamental domain for the dual action of $\W_I$
(see Remark \ref{str WI}.)  As in the group case we identify elements $w\in \W_I$ with lifts
to $K$ which normalize $M_I$.

\par As in Lemma \ref{Lemma Ldt1} and \eqref{R inter}
we obtain for every $w\in \W_I$, $\sigma\in\hat M_I$ and generic $\lambda\in i\af_I^*$  a $G$-intertwiner
$$ \Rc_w: \Hc_{\sigma, \lambda} \to \Hc_{\sigma_w, \lambda_w}\, $$
with $\sigma_w$ and $\lambda_w$ defined as before.
Next in full analogy
to Lemma \ref{Lemma Ldt2} we obtain:

\begin{lemma} For $\lambda, \lambda'\in iC_I^*$ generic and $\sigma, \sigma'$ in the discrete series of $L^2(M_I/M_I\cap H)$ (i.e.
both $\M_{\sigma, \rm d}$ and $\M_{\sigma', \rm d}$ are non-zero) one has
$$ \pi_{\sigma, \lambda}\simeq \pi_{\sigma', \lambda'}\quad \iff \quad \lambda = \lambda' \ \text{and} \
\sigma\simeq \sigma'\, .$$
\end{lemma}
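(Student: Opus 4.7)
The implication ``$\Leftarrow$'' is immediate from the $G$-intertwiners $\cR_w$ discussed immediately before the statement of the lemma together with the fact that $\pi_{\sigma,\lambda}\simeq \pi_{\sigma_w,\lambda_w}$ for $w\in \W_I$. For ``$\Rightarrow$'' the plan is to mimic the argument for the group case in Lemma \ref{Lemma Ldt2}; indeed, as emphasized in Remark \ref{Remark only real}, the only property of the inducing discrete series used there is that their infinitesimal characters are real, and this is exactly what is available here as well.

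First I would recover $\lambda=\lambda'$ by comparing infinitesimal characters. Choose a Cartan subalgebra $\jf=\af+\tf$ of $\gf$ with $\tf\subset \mf$ a maximal torus, let $\jf_\R=\af+i\tf$ and let $\W_\jf$ be the corresponding Weyl group preserving $\jf_\R$. For $\sigma$ in the discrete series of the symmetric space $M_I/M_I\cap H$, let $\mu_\sigma\in (i\tf+\af\cap \mf_I)^*$ be a representative of its infinitesimal character. Harish-Chandra's formula for the infinitesimal character of an induced representation \cite[Prop.~8.22]{Knapp} applied to $\pi_{\sigma,\lambda}\simeq \pi_{\sigma',\lambda'}$ yields
\begin{equation*}
  \W_\jf\cdot(\mu_\sigma+\lambda)=\W_\jf\cdot(\mu_{\sigma'}+\lambda').
\end{equation*}
The key input is that $\mu_\sigma$ and $\mu_{\sigma'}$ are real (i.e.\ lie in $\jf_\R^*$); for symmetric spaces this reality of discrete series infinitesimal characters is classical (Flensted-Jensen) and is in any case contained in the general spherical result of \cite{KKOS} invoked for the group case. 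Taking imaginary parts (with respect to the real form $\jf_\R$) of the above $\W_\jf$-orbit identity then forces $\W_\jf\cdot\lambda = \W_\jf\cdot\lambda'$ on $i\af^*$. Restricting to $i\af_I^*$ and using that $\lambda,\lambda'$ lie in the fundamental domain $iC_I^*$ for $\W_I$ gives $\lambda=\lambda'$ for generic parameters, after discarding the codimension-one set of $\lambda$ on which the $\W_\jf$-stabilizer of $\mu_\sigma+\lambda$ is larger than forced by $\W_I$.

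Given $\lambda=\lambda'$, the second step is to show $\sigma\simeq \sigma'$. Here I follow the translation trick of Lemma \ref{Lemma Ldt2}: let $F$ be an irreducible finite-dimensional representation of $G$ with strictly dominant highest weight $\Lambda\in(\af_I)^*$ whose highest weight vector is $M_I$-fixed. As in \cite[proof of Lemma 10.2.7]{Wal2}, the translation functor with $F$ takes $\pi_{\sigma,\lambda}$ to $\pi_{\sigma,\lambda+\Lambda}$ and $\pi_{\sigma',\lambda}$ to $\pi_{\sigma',\lambda+\Lambda}$, so the equivalence persists at the shifted parameter, whose real part can be made arbitrarily dominant. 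In that regime Langlands' asymptotic lemma, in the form recorded as Lemma \ref{Lemma KKOS} (see \eqref{Lang Lemma}), computes
\begin{equation*}
  \lim_{t\to\infty}(a'_t)^{\lambda-\rho'}\,\langle \pi_{\sigma,\lambda}(a'_t m_I)f_1,f_2\rangle = \langle \sigma(m_I)f_1(\1),\cR_{\sigma,\lambda}(f_2)(\1)\rangle_\sigma
\end{equation*}
for $f_1,f_2\in \Hc_{\sigma,\lambda}^\infty$ with support in $\Omega\oline{P_I}$. Since $f_1(\1)$ and $\cR_{\sigma,\lambda}(f_2)(\1)$ range over all pairs of smooth vectors in $V_\sigma^\infty$, the equivalence of $\pi_{\sigma,\lambda+\Lambda}$ and $\pi_{\sigma',\lambda+\Lambda}$ forces $\sigma$ and $\sigma'$ to have the same smooth matrix coefficients on $M_I$. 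The Gelfand--Naimark--Segal reconstruction then yields $\sigma\simeq\sigma'$.

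The main obstacle, and essentially the only non-formal input compared with the group case, is the reality of infinitesimal characters of discrete series for the symmetric space $M_I/M_I\cap H$; once this is granted, the argument is a direct transcription of the proof of Lemma \ref{Lemma Ldt2}, with $\W_I'$ replaced by $\W_I$ and the rank-one intertwiner analysis carried out just as in Remark \ref{analyt cont}. A minor technical point to watch is that ``generic'' now has to exclude both the loci where the above $\W_\jf$-orbit argument degenerates and the (countable) null set on which the intertwiners $\cR_w$ and $\A_{\sigma,\lambda}$ are not defined or not invertible; since both exclusions are of lower dimension in $iC_I^*$, they do not affect the statement for generic $\lambda$.
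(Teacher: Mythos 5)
Your proposal is correct and takes essentially the same route as the paper: the paper also reduces to the group-case disjointness argument (Lemma \ref{Lemma Ldt2} together with Remark \ref{Remark only real}), uses \cite{KKOS} for the reality of the infinitesimal characters of the inducing discrete series, compares infinitesimal characters as in \eqref{orbit Wj} to get $\lambda=\lambda'$, and then declares the translation-functor/Langlands-asymptotics/GNS steps ``fully analogous''. The only difference is cosmetic: where you appeal to genericity to make the $\W_\jf$-orbit comparison interact correctly with $\W_I$ and the fundamental domain $C_I^*$, the paper instead records explicitly that $\Sigma_Z$ consists of the non-vanishing restrictions of $\Sigma(\gf_\C,\jf_\C)$ with lined-up positive systems, so that the faces $\af_I^-$ sit inside faces of $\jf_\R^-$.
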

\begin{proof} We recall that the proof of Lemma \ref{Lemma Ldt2} only requires
that the infinitesimal character of the inducing data $\sigma$ and $\sigma'$ are real,
see Remark \ref{Remark only real}. By \cite{KKOS} this is the case in the current situation as well.
\par Next we recall  that the root system $\Sigma_Z= \Sigma(\gf,\af_Z)\subset \af_Z^*$ is
obtained from the root system $\Sigma(\gf_\C,\jf_\C)\subset \jf_\R^*$ as the non-vanishing
restrictions. In particular, the faces $\af_I^-$ of $\af_Z^-$ are contained in the faces
$\jf_\R^-$ with respect to our lined up positive systems. This allows us now
to argue as in \eqref{orbit Wj} and conclude that $\lambda=\lambda'$.
\par The rest of the argument is then fully analogous.
\end{proof}

By grouping equivalent representations  in \eqref{Z_I symm} we then obtain the Plancherel
formula
\begin{equation}  \label{Z_I symm Re} L^2(Z_I)_{\rm td} \simeq \sum_{\sigma \in \hat M_I}
\int_{i C_I^*}^\oplus  \Hc_{\sigma,\lambda}\otimes \M_{\sigma, \lambda}^I \ d\lambda\end{equation}
with generic multiplicity space $\M_{\sigma,\lambda}^I=\M_{\sigma,\lambda, {\rm td}}^I$ of dimension
\begin{equation} \label{dimc1}\dim \M_{\sigma,\lambda}^I= |\W_I| \cdot \dim \M_{\sigma, \rm d}\,.\end{equation}
For $w\in \W_I$ let us denote by $\M_{\sigma_w,\rm d}$ the space $\M_{\sigma, \rm d}$ with
$M_I \cap H$ replaced by $w(M_I\cap H)w^{-1}= M_I \cap H_w$ and $\sigma$ replaced by $\sigma_w$.
 Since $L^2(M_I/ M_I\cap H)_{\rm d}\simeq
L^2(M_I/M_I \cap H_w)_{\rm d}$ we infer that $\M_{\sigma, \rm d}$ and $\M_{\sigma_w, \rm d}$ are
canonically isomorphic.
Now for each $w\in \W_I$ and $\zeta \in\M_{\sigma_w,\rm d}$ we can define
an $H_I$-invariant  functional of $\af_I$-weight $\rho-\lambda_w$ via

$$ \xi_{\sigma_w,\lambda_w, \zeta}: \Hc_{\sigma,\lambda}^\infty \to \C, \ \ f\mapsto
\zeta( (\Rc_w f)(\1))\, .$$
This functional yields an  embedding  of $\Hc_{\sigma,\lambda}^\infty$
into $L^2(Z_I)_{\rm td}$, i.e.  $ \xi_{\sigma_w,\lambda_w, \zeta}\in
\M_{\sigma,\lambda, \rm td}^{I,\rho-\lambda_w}$.
Moreover, by varying $\zeta$ we obtain for each $w\in \W_I$
a linear injection

\begin{equation} \label{inclusion 15} \M_{\sigma_w,\rm d}\to \M_{\sigma,\lambda, \rm td}^{I,\rho-\lambda_w}, \ \ \zeta\mapsto
\xi_{\sigma_w,\lambda_w, \zeta}\, .\end{equation}
We now count dimensions. With $\M_{\sigma,\lambda}^I=
\bigoplus_{\mu \in \rho + i\af_I^*}\M_{\sigma,\lambda, \rm td}^{I, \mu}$ and  \eqref{dimc1}
we obtain for generic $\lambda$
that the inclusion \eqref{inclusion 15} is an isomorphism, that is

\begin{equation} \label{formula 1}\M_{\sigma,\lambda,{\rm td}}^{I,\rho-\lambda_w}=
\{  \xi_{\sigma_w,\lambda_w, \zeta} \mid \zeta\in \M_{\sigma_w, \rm d}\}\,, \quad (w\in \W_I).
\end{equation}

\subsection{Support of the Plancherel measure}\label{support 15}
Previously we defined for $\sigma\in \hat M_I$ and $w\in \W_I$ the multiplicity space
$\M_{\sigma_w, \rm d}$.  We now also need a notion for every $w\in \Wc$.  For $w\in \Wc$
we  write $\M_{\sigma, w,{\rm d}}$
for the space of $M_I\cap H_w$-invariant functionals on $\Hc_\sigma^\infty$,
which are  square integrable for the symmetric space
$M_I/ M_I \cap H_w \simeq w^{-1} M_I w/ w^{-1}M_I w \cap H$.

Then, by the isospectrality of the Bernstein morphism we obtain that

\begin{equation} \label{support symmetric} \supp \mu =\left\{[\pi_{\sigma, \lambda}]\in \hat G\, \Bigg| \begin{aligned}
& I\subset S, \  \lambda\in iC_I^*, \\
& \sigma\in \hat M_I \ \text{s.t.} \ \exists \ w\in \Wc:\  \M_{\sigma, w, {\rm d}}\neq \{0\}\end{aligned}\right\}\, .\end{equation}
 Let us introduce the notion that $\sigma\in \hat M_I$ is {\it cuspidal} provided
$\M_{\sigma,w, {\rm d}}\neq \{0\}$ for some $w\in \Wc$.

\subsection{Generic dimension of multiplicity spaces}
To abbreviate matters let us set $\M_{\sigma, \lambda}= \M_{\pi_{\sigma, \lambda}}$ for
$[\pi_{\sigma, \lambda}]\in \supp \mu$.  The next goal is to obtain a precise description of
$\M_{\sigma, \lambda}$ for generic $\lambda$.
This is related to the geometry of open $H\times \oline{P_I}$-double cosets in $G$ which we
we recall from Section \ref{open double}. From Lemma \ref{Lemma Mats1}
there is an action of $\W(I)$ on $\Wc$ with identifications
$(P_I\bs Z)_{\rm open}\simeq\W(I)\bs \Wc$ and
$(P\bs Z_I)_{\rm open}\simeq \W(I)/\W(I)\cap \W_H$.

For what to come we need to interpret the quotient $\W(I)\bs \Wc$ in terms of the geometric decomposition $\Wc=\coprod_{\sc\in \sC_I}  \coprod_{\st \in \sF_{I,\sc}}  {\bf m}_{\sc,\st}(\Wc_{I,\sc})$ from \eqref{full deco W}.

\begin{lemma} \label{lemma fine W} With regard to $\Wc=\coprod_{\sc\in \sC_I}  \coprod_{\st \in \sF_{I,\sc}}  {\bf m}_{\sc,\st} (\Wc_{I,\sc})$
the action of $\W(I)$ on $\Wc$ acts on each subset ${\bf m}_{\sc,\st} (\Wc_{I,\sc})\subset \Wc$ transitively and
induces a natural bijection
\begin{equation} \W(I)\bs \Wc  \simeq \coprod_{\sc\in\sC_I} \sF_{I,\sc}\end{equation}

\end{lemma}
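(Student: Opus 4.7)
The strategy is to identify each subset ${\bf m}_{\sc,\st}(\Wc_{I,\sc})\subset \Wc$ with a single $\W(I)$-orbit in $\Wc\simeq \W/\W_H$, and then derive the bijection from the partition \eqref{full deco W}. Lemma \ref{Lemma Mats1}\eqref{onemats} provides the ambient identification $\W(I)\bs\Wc\simeq (P_I\bs Z)_{\rm open}$, and Lemma \ref{lemma 58} already handles the base case $\sc=\st=\1$: the matching map ${\bf m}:\Wc_I\to\Wc$ corresponds to the natural inclusion $\W(I)/(\W(I)\cap\W_H)\hookrightarrow \W/\W_H$, whose image is the $\W(I)$-orbit of the identity coset. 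The goal is to lift this observation to every pair $(\sc,\st)$.

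To generalize, I plan to apply Matsuki's theorem to the symmetric space $Z_{I,\sc}=G/H_{I,\sc}$, obtaining $\Wc_{I,\sc}\simeq \W(I)/(\W(I)\cap\W_{H_{I,\sc}})$. A crucial point is that by the consistency relation \eqref{WWI2 general}, the subgroup $H_{I,\sc}=(H_{w(\sc)})_I$ depends only on $\sc$ and not on $\st$, so this Matsuki parameter space is a common source for all the maps ${\bf m}_{\sc,\st}$ with $\st\in\sF_{I,\sc}$. I would then repeat the argument of Lemma \ref{lemma 58}, based now at the shifted point $z_{\sc,\st}=w(\sc,\st)\cdot z_0$: for $u\in \W(I)$ represented in $N_{K\cap L_I}(\af)$ and any $a_s=\exp(sX_I)$ with $X_I\in\af_I^{--}$, the element $u$ commutes with $a_s$, so the open $P$-orbit through $u a_s w(\sc,\st)\cdot z_0$ coincides with that through $uw(\sc,\st)\cdot z_0$. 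Passing to the limit in the defining construction of ${\bf m}_{\sc,\st}$, this shows ${\bf m}_{\sc,\st}([u])=[uw(\sc,\st)]\in \W/\W_H$, and hence ${\bf m}_{\sc,\st}(\Wc_{I,\sc})$ is precisely the $\W(I)$-orbit $\W(I)w(\sc,\st)\W_H/\W_H$. In particular $\W(I)$ acts transitively on each ${\bf m}_{\sc,\st}(\Wc_{I,\sc})$.

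The partition \eqref{full deco W} will then exhibit $\Wc$ as a disjoint union of $\W(I)$-orbits, indexed bijectively by the pairs $(\sc,\st)\in\coprod_{\sc\in\sC_I}\sF_{I,\sc}$, which is exactly the claimed bijection. The main obstacle will be making the identification ${\bf m}_{\sc,\st}([u])=[uw(\sc,\st)]$ precise: this depends on the normalization of $w(\sc,\st)$ discussed after \eqref{t=t_I} and in the remark before Lemma \ref{lemma HI comp}, and on carefully tracking how the base-point shift from $z_0$ to $z_{\sc,\st}$ interacts with the limit construction defining ${\bf m}_{\sc,\st}$, using \eqref{WWI2 general} to match stabilizers on the $Z$-side with those on the $Z_{I,\sc}$-side.
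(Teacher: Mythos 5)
Your proposal is correct and follows the same overall strategy as the paper's proof: show $\W(I)$ acts transitively on each $\Wc_{I,\sc}$ via Matsuki (Lemma~\ref{Lemma Mats1}\eqref{twomats}), show each ${\bf m}_{\sc,\st}$ is $\W(I)$-equivariant via Lemma~\ref{lemma 58} transported to the shifted base point, and then read off the bijection from the partition~\eqref{full deco W}. The one place where you diverge is in the middle step: the paper does not re-apply Matsuki to each $Z_{I,\sc}$; instead it reads off the $\W(I)$-orbit structure of $\Wc$ combinatorially from the identification $\Wc\simeq F_M\backslash F_\R$ together with the splitting $F_\R=F_{I,\R}\times F_{I,\R}^\perp$ from \eqref{AI torus deco}, noting that $\W(I)$-orbits correspond exactly to $F_{I,\R}^\perp$-cosets, and then invokes $\W(I)$-equivariance of ${\bf m}_{\sc,\st}$ (again via Lemma~\ref{lemma 58}, applied to $Z_{\sc,\st}=G/H_{w(\sc,\st)}$ by transport of structure). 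Your direct application of Matsuki to each $M_I/(M_I\cap H_{w(\sc)})$ is arguably more transparent; the paper's $F_\R$-combinatorics route has the advantage of avoiding keeping track of the possibly different Weyl subgroups $\W_{H_{w(\sc)}}$. One small over-reach: you set yourself the task of proving the closed formula ${\bf m}_{\sc,\st}([u])=[uw(\sc,\st)]$, which you flag as the ``main obstacle,'' but this formula is stronger than what is needed. The argument only requires $\W(I)$-equivariance of ${\bf m}_{\sc,\st}$, and that is exactly what your commutation argument ($u\in\W(I)$ commutes with $a_s\in A_I\subset P$, so $Pua_s w(\sc,\st)H = Puw(\sc,\st)H$) already yields, without needing to reconcile the normalizations of $w(\sc,\st)$ from Section~\ref{fine partition} against the Matsuki parametrization. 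Phrased as equivariance rather than as an explicit formula, your argument closes without the obstacle you anticipated.
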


\begin{proof} Let us fix $\sc,\st$, and to save notation,  assume first $\sc=\st=\1$. Then $Z_{I,\sc,\st}=Z_I$ and
$\Wc_I=\Wc_{I,\1}$. Lemma \ref{Lemma Mats1}\eqref{twomats} implies that
$\W(I)$ acts transitively on $\Wc_I \simeq W_I =(P\bs Z_I)_{\rm open}$.  We claim that this holds for every
$Z_{I,\sc}\simeq Z_{I,\sc,\st}$, i.e. $\sW(I)$ acts transitively on  $(P\bs Z_{I,\sc,\st})_{\rm open }$.
To see that we recall the identifications $\W \simeq W \simeq F_M\bs F_\R$ with $F_\R$ the $2$-torsion subgroup
of $\uA_Z(\R)$.  Further we need the splitting $F_\R = F_{I,\R}\times F_{I,\R}^\perp$ derived from \eqref{AI torus deco}.
Now the $\W(I)$-orbits
on $\Wc\simeq F_M \bs F_\R$ correspond exactly to the $F_{I,\R}^\perp$-orbits on $F_M\bs F_\R$. Now the claim follows from the definition of $Z_{I,\sc,\st}$ and the fact
that  $(P\bs Z_{I,\sc, \st})_{\rm open}\simeq \Wc_{I,\sc}$ is mapped under ${\bf m}_{\sc,\st}$ in a $\W(I)$-equivariant way into $\Wc$, see  Lemma \ref{lemma 58} applied to $Z_{\sc,\st}= G/H_{w(\sc,\st)}\simeq Z=G/H$.

 \par The reasoning above implies further that the $\W(I)$-action on $\Wc$ respects with regard to
the decomposition  $\Wc= \coprod_{\sc\in \sC_I}  \coprod_{\st \in \sF_{I,\sc}}  {\bf m}_{\sc,\st} (\Wc_{I,\sc})$
the disjoint union and is trivial on the fibers $\sF_{I,\sc}$.
The lemma follows.
\end{proof}

\subsubsection{The description of $\M_{\sigma,\lambda}$}
We wish to relate $H$-invariant functionals on the induced representation
$\Hc_{\sigma,\lambda} = \Ind_{\oline{P_I}}^G (\sigma\otimes \lambda)$ with regard to the
open $H\times \oline{P_I}$-double cosets in $G$. Recall from \eqref{Matsuki Pbar} the bijection
$$ \W(I)\bs \Wc \to (H\bs G/ \oline{P_I})_{\rm open}, \ \ \W(I) w \mapsto Hw^{-\theta} \oline{P_I}\, .$$
Now we define for each $[w]= \W(I)w\in \W(I)\bs \Wc$ a subspace

$$\Hc_{\sigma,\lambda}^\infty[w]=\{ f\in \Hc_{\sigma,\lambda}^\infty\mid  \supp f \subset H w^{-\theta} \oline{P_I}\}$$
and for each $\eta \in (\Hc_{\sigma,\lambda}^{-\infty})^H$ we define the restrictions
$$\eta[w]:= \eta|_{\Hc_{\sigma,\lambda}^\infty[w]}\, .$$
These functionals have now a straightforward description.
Notice that $\eta[w]$ only depends on the double coset $H w^{-\theta} \oline{P_I}$.
This allows us to replace $\W(I)\bs \Wc$ by $\W(I)\bs \W$ and since elements
$w\in \W$ have representatives in $K$ have   $w^{-\theta} = w^{-1}$ for
$w\in \W$.

 Let now $w\in \W$. Notice that the $H$-stabilizer
of the point $w^{-1}\oline{P_I}\in G/\oline{P_I}$ is given by the (symmetric) subgroup
$H\cap w^{-1} M_I w$ of $w^{-1} M_I w$. Allowing a slight conflict with previous notation we
let $\sigma_w$ be the representation of $w^{-1} M_I w$ induced from the group isomorphism $M_I \simeq w^{-1} M_I w$.

Frobenius reciprocity then associates to each $\eta$ and $[w]$ a unique distribution vector
$$\zeta_\eta[w]\in (\Hc_{\sigma_w}^{-\infty})^{H\cap w^{-1} M_I w}$$
such that
$$\eta[w](f) = \int_{H/ H\cap w^{-1} M_I w}  \zeta_\eta[w] (f(hw^{-1})) \ dh(
H\cap w^{-1} M_I w)\qquad (f\in \Hc_{\sigma,\lambda}^\infty[w])\, .$$

For each $[w]\in \W(I)\bs \W$ we pick with $w\in \W$ a representative.
As $\W(I)$ normalizes $M_I$  via inner automorphisms,  it follows that
$\sigma_w$ depends only on $[w]$, up to equivalence.
Set

\begin{equation}\label{def V sigma}  V(\sigma):=\bigoplus_{[w]\in \W(I)\bs \W}  (\Hc_{\sigma_w}^{-\infty})^{H\cap w^{-1}M_Iw}\end{equation}
and consider then the  evaluation map
\begin{equation}\label{j-map} {\rm ev}_{\sigma,\lambda}:  (\Hc_{\sigma,\lambda}^{-\infty})^H \to V(\sigma)
, \ \ \eta\mapsto   (\zeta_\eta[w])_{w\in \W(I)\bs \W}\end{equation}
This map is a bijection for generic $\lambda$ by \cite[Thm.~5.10]{vdB} for the case of $P_I=Q$ and \cite[Thm.~3]{CD}
in general.
Sometimes it is useful to indicate the choice of the parabolic $\oline P_I$ above $M_I A_I$ which was used in the definition
of the induced representation $\Hc_{\sigma,\lambda}= \Ind_{\oline P_I}^G (\sigma\otimes\lambda)$. Then we write
${\rm ev}_{\oline{P_I},\sigma, \lambda}$ instead of ${\rm ev}_{\sigma, \lambda}$.
Further, for $\lambda$ generic we recall the standard notation of \cite{vdB} and \cite{CD}:
\begin{equation} \label{def j-map}j(\oline{P_I}, \sigma, \lambda, \zeta):={\rm ev}_{\oline{P_I}, \sigma, \lambda}^{-1}(\zeta)\in
(\Hc_{\sigma, \lambda}^{-\infty})^H\qquad (\zeta\in V(\sigma))\, .\end{equation}

Next we define a subspace of $V(\sigma)$ by
$$ V(\sigma)_2:=\bigoplus_{[w]\in \W(I)\bs \W}  \M_{\sigma_w, \rm d} $$
with $\M_{\sigma_w, \rm d}\subset (\Hc_{\sigma_w}^{-\infty})^{H\cap w^{-1}M_Iw}$
referring to $\M_{\sigma, \rm d}\subset (\Hc_{\sigma}^{-\infty})^{H\cap M_I}$ for
$M_I$ replaced by $w^{-1}M_Iw$.

In the sequel we assume that $\lambda \in i\af_I^*$ is generic, i.e. $j(\oline{P_I}, \sigma, \lambda, \zeta)$
is defined (the obstruction is a countable set of hyperplanes) and the representation $\pi_{\sigma,\lambda}$ is a
generic member in $\supp^{I,\rm td}\mu \subset \supp  \mu$, see Subsection \ref{SRT}.
Recall that our request is that $\sigma$ is cuspidal, as defined in  Section \ref{support 15}.

 The main result of this subsection then is:

\begin{theorem}\label{CD tempered}   Let  $\sigma$ be cuspidal. Then for Lebesgue-almost all $\lambda\in i\af_I^*$
the image of $\M_{\sigma,\lambda}$ by
${\rm ev}_{\sigma,\lambda}$ is
$V(\sigma)_2$, i.e.
$${\rm ev}_{\sigma,\lambda}:  \M_{\sigma,\lambda}\to V(\sigma)_2$$
is a bijection. In particular we have
\begin{equation} \label{vdB-CD} \dim \M_{\sigma, \lambda} = \sum_{ [w]\in \W(I)\bs \W}  \dim \M_{\sigma_w, {\rm d}}\, .\end{equation}
\end{theorem}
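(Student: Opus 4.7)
My plan is to compute both sides of the claimed bijection using the machinery of the paper, and then glue them through the evaluation map.

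\emph{Step 1: Constructing an embedding $V(\sigma)_2 \hookrightarrow \M_{\sigma,\lambda}$.} For each double coset $[w]\in \W(I)\bs \W$ I invoke Lemma \ref{lemma fine W} to find a unique pair $(\sc,\st)$ with $\sc\in\sC_I$, $\st\in\sF_{I,\sc}$, and a representative $w=w(\sc,\st)\in\Wc$. By Lemma \ref{lemma HI comp}, the symmetric subgroup $M_I\cap H_w$ depends only on $\sc$ and coincides with the one appearing in $\M_{\sigma_w,\rm d}$. Starting from $\zeta\in\M_{\sigma_w,\rm d}$ I apply the Plancherel formula \eqref{Z_I symm Re} to $L^2(Z_{I,\sc,\st})_{\rm td}$ (which, as a symmetric space, falls under Subsection \ref{ZI planch 15}) to produce, via \eqref{def xi sigma eta}, a tempered $(H_w)_I$-invariant functional $\xi_{\sigma,\lambda,\sc,\st,\zeta}$ of $\af_I$-weight $\rho-\lambda$. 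Pulled back through the adjoint of the Maass--Selberg surjective partial isometry $\sI^{\rho-\lambda}$ of Theorem \ref{eta-I continuous}, this gives a tempered $H$-invariant functional in $\M_{\sigma,\lambda}$.

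\emph{Step 2: Compatibility with ${\rm ev}_{\sigma,\lambda}$.} Using the consistency relation \eqref{consist2}, I will verify that the element of $\M_{\sigma,\lambda}$ just constructed has all Frobenius components $\zeta_\cdot[w']$ vanishing for $[w']\ne [w]$, while its $[w]$-component is $\zeta$ itself. The key point here is that the constant term along the boundary stratum attached to $(\sc,\st)$ matches the $[w]$-component of the evaluation map on the corresponding open double coset, because both are governed by the same local-structure geometry of $\bar P_I$. This yields an injective linear map $V(\sigma)_2 \hookrightarrow \M_{\sigma,\lambda}$ whose composition with ${\rm ev}_{\sigma,\lambda}$ is the inclusion $V(\sigma)_2\hookrightarrow V(\sigma)$.

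\emph{Step 3: Dimension count for surjectivity.} Because $\lambda$ is generic in $iC_I^*$ and the infinitesimal characters of the discrete series of the various $M_J/(M_J\cap H_{w'})$ are real, the exponent criterion \eqref{exponents} shows that for generic $\lambda\in iC_I^*$ only the summand $I'=I$ of the refined Bernstein morphism (Theorem \ref{thm planch refined}) contributes to $\pi_{\sigma,\lambda}$. Combined with the injectivity statement of Lemma \ref{properties bpi}\eqref{eins 1} and the identification of each $\M^{I,\rho-\lambda_w}_{\sigma,\lambda,\sc,\st,\rm td}$ with $\M_{\sigma_w,\rm d}$ provided by \eqref{formula 1} (applied to $Z_{I,\sc,\st}$), Lemma \ref{lemma fine W} then yields $\dim \M_{\sigma,\lambda}=\dim V(\sigma)_2$. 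Combined with Steps 1--2 and the bijectivity of ${\rm ev}_{\sigma,\lambda}$ on $(\Hc_{\sigma,\lambda}^{-\infty})^H$, the theorem follows. The formula \eqref{vdB-CD} is then immediate.

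The main obstacle lies in Step 2, namely reconciling the two different natural parametrizations: on the $\M_{\sigma,\lambda}$ side the data is organized by the $\af_I$-weights $\rho-\lambda_w$ with $w\in\W_I$ together with boundary strata $(\sc,\st)$, whereas on the $V(\sigma)_2$ side it is organized purely by the double cosets $[w]\in\W(I)\bs \W$. The matching requires that the $\W_I$-grouping on $\M^I_{\sigma,\lambda,\sc,\st,\rm td}$ (which generates the factor $|\W_I|$ in \eqref{dimc1}) is absorbed through equivalences of induced representations $\pi_{\sigma,\lambda}\simeq\pi_{\sigma_w,\lambda_w}$, and that the Weyl group reorganization of Remark \ref{str WI} is precisely compatible with Lemma \ref{lemma fine W}. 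Carrying out this combinatorial bookkeeping cleanly---so that no spurious multiplicity factor survives---will be the technical heart of the argument.
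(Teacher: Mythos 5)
Your plan differs structurally from the paper's proof: the paper shows both inclusions directly (one by constant-term asymptotics, the other by wave packets citing \cite[Thm.~4]{Delorme2}), whereas you aim to build an explicit injection $V(\sigma)_2\hookrightarrow\M_{\sigma,\lambda}$ splitting $\operatorname{ev}_{\sigma,\lambda}$ and then close with a dimension count. The route is plausible in outline, but Step~2 contains a genuine gap that the paper resolves with machinery you haven't invoked.

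The constant term of $m_{v,\eta}$ along the stratum attached to $(\sc,\st)$ is \emph{not} governed by the $[w]$-component of the $\oline{P_I}$-evaluation $\operatorname{ev}_{\sigma,\lambda}(\eta)=\zeta$. The constant term is computed by shrinking $a_t\in A_I^{--}$, which is the $N_I$-direction, not the $\oline{N_I}$-direction used to define $\Hc_{\sigma,\lambda}=\Ind_{\oline{P_I}}^G(\sigma\otimes\lambda)$. As Lemma~\ref{lemma 15a} shows, the relevant boundary datum is the $[\1]$-component of $\tilde\zeta:=\operatorname{ev}_{P_I,\sigma,\lambda}(\eta\circ\A_{\sigma,\lambda})$, where $\A_{\sigma,\lambda}$ is the long intertwiner. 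The passage from $\zeta$ to $\tilde\zeta$ is precisely the map $b_{\sigma,\lambda}$ (Delorme's \emph{petite matrice $B$}), and it is a nontrivial linear automorphism of $V(\sigma)$. Thus the composite of your proposed map with $\operatorname{ev}_{\sigma,\lambda}$ cannot be the inclusion $V(\sigma)_2\hookrightarrow V(\sigma)$; at best it is $b_{\sigma,\lambda}^{-1}$ restricted to $V(\sigma)_2$, and to conclude $\zeta\in V(\sigma)_2$ you still need the theorem (Lemma~\ref{petit B}, from \cite[Th.~2]{Delorme2}) that $b_{\sigma,\lambda}$ preserves $V(\sigma)_2$. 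Without this ingredient your argument does not close.

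Two smaller remarks. First, establishing that the functional $\xi_{\sigma,\lambda,\sc,\st,\zeta}$ actually lies in the \emph{image} of the constant-term map (so that $(\sI^{\rho-\lambda})^*$ is applicable to it in the way you intend) is equivalent to \eqref{formula 1} being an equality; your Step~1 tacitly assumes this. Second, your Step~3 dimension bound needs a disjointness statement for the Bernstein pieces indexed by $J\not\sim I$ at generic $\lambda$; this is supplied in the paper by Lemma~\ref{lemma 151}, via the reality of the discrete-series infinitesimal characters, and should be cited explicitly. Even with those repaired, the missing $b_{\sigma,\lambda}$-step in Step~2 remains the decisive obstruction.
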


The proof of this theorem will be prepared by several lemmas.
The first lemma is valid for a general unimodular real spherical space $Z=G/H$ with Plancherel measure
$\mu$. In the sequel we consider
$L^2(Z)$ as a unitary module for $G\times A_{Z,E}$ and recall that the twisted discrete spectrum
$L^2(Z)_{\rm td} \subset L^2(Z)$ is a $G\times A_{Z,E}$-invariant subspace. Define
the {\it essentially continuous spectrum} by $L^2(Z)_{\rm ec} := L^2(Z)_{\rm td}^\perp$.  We write
$\mu^{\rm td}$ and $\mu^{\rm ec}$ for the Plancherel measures of $L^2(Z)_{\rm td}$ and
$L^2(Z)_{\rm ec}$.

\begin{lemma}\label{lemma 151} Let $Z=G/H$ be a unimodular real spherical space with Plancherel measure
$\mu$. Then
$$ \mu^{\rm ec}(\supp \mu^{\rm td})=0, $$
i.e. the Plancherel supports of $L^2(Z)_{\rm td}$ and $L^2(Z)_{\rm ec}$ do $\mu$-almost not interfere.
\end{lemma}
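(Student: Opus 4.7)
The plan is to exploit the commuting actions of $G$ and $A_{Z,E}$ on $L^2(Z)$. Since the normalized right action $\cR$ of $A_{Z,E}$ preserves both $L^2(Z)_{\rm td}$ and $L^2(Z)_{\rm ec}$, the central decomposition \eqref{central decomposition} restricts to give
\[
L^2(Z)_{\rm td}=\int_{\hat A_{Z,E}}^\oplus L^2(G/\hat H,\chi)_{\rm d}\,d\chi,\qquad L^2(Z)_{\rm ec}=\int_{\hat A_{Z,E}}^\oplus L^2(G/\hat H,\chi)_{\rm ec}\,d\chi.
\]
For each $\chi$, the fiber Plancherel measure $\mu_\chi^{\rm d}$ of $L^2(G/\hat H,\chi)_{\rm d}$ on $\hat G$ is purely atomic with at most countable support $C_\chi$, while the fiber measure $\mu_\chi^{\rm ec}$ of $L^2(G/\hat H,\chi)_{\rm ec}$ is non-atomic. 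So the two are mutually singular on the $\chi$-fiber.

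The main step is a Fubini argument. By disintegration of the Plancherel measure along the $\cR$-direction, up to measure class one has $\mu^{\rm td}=\int\mu_\chi^{\rm d}\,d\chi$ and $\mu^{\rm ec}=\int\mu_\chi^{\rm ec}\,d\chi$ on $\hat G$, so
\[
\mu^{\rm ec}(\supp\mu^{\rm td})=\int_{\hat A_{Z,E}}\mu_\chi^{\rm ec}(\supp\mu^{\rm td})\,d\chi,
\]
and it suffices to show $\mu_\chi^{\rm ec}(\supp\mu^{\rm td})=0$ for almost every $\chi$. The structural input is that $\supp\mu^{\rm td}\subseteq\bigcup_{\chi'\in\hat A_{Z,E}}C_{\chi'}$, which is a $\dim A_{Z,E}$-dimensional locus in $\hat G$: via the refined Bernstein decomposition of Theorem~\ref{thm planch refined}, $\mu^{\rm td}$ is parametrized by twisted discrete series of $Z$ together with $\chi\in\hat A_{Z,E}$, whereas $\mu^{\rm ec}$ is absolutely continuous with respect to Lebesgue measure on the higher-dimensional continuous parameters $i\af_I^*$ for $I\subsetneq S$, where $\dim\af_I>\dim\af_{Z,E}$. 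Dimension transversality then annihilates the lower-dimensional locus.

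The main obstacle is to make this transversality rigorous. Within the Bernstein parametrization, every $\pi\in\supp\mu^{\rm ec}$ arises as an induced representation from a twisted discrete series $\sigma$ of some $Z_{I,w}$, $I\subsetneq S$, with continuous parameter $\lambda\in i\af_I^*$; for such a $\pi$ to lie in $\supp\mu^{\rm td}$ would force $\lambda$ to project into the image of the proper subtorus $\hat A_{Z,E}\subsetneq\hat A_I$, a Lebesgue-null subvariety of $i\af_I^*$. Combined with the absolute continuity of the fiber measure $\mu_\chi^{\rm ec}$ in $\lambda$ coming from the induced Plancherel formula for $L^2(Z_I)$ (Theorem~\ref{Plancherel induced}), this gives the required vanishing.
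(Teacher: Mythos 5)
Your overall intuition (a $\dim\af_{Z,E}$-parameter twisted discrete locus cannot carry mass for measures that are Lebesgue in the higher-dimensional parameters $i\af_I^*$, $I\subsetneq S$) is the right one, but the step that carries the entire content of the lemma is asserted rather than proved. You claim that if an induced representation $\pi_{\sigma,\lambda}$ with $\lambda\in i\af_I^*$, $I\subsetneq S$, lies in $\supp\mu^{\rm td}$, then ``$\lambda$ is forced to project into the image of the proper subtorus $\hat A_{Z,E}\subsetneq\hat A_I$''. A point of $\supp\mu^{\rm td}$ is merely an equivalence class of unitary representations of $G$; nothing in your argument produces a map from such a class to $\hat A_I$, nor any mechanism by which abstract unitary equivalence with a twisted discrete series of $Z$ constrains the inducing parameter $\lambda$ to a Lebesgue-null subset of $i\af_I^*$. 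This is precisely what has to be shown, and it is not a formal consequence of a dimension count. Moreover, the fiberwise observation that $\mu_\chi^{\rm d}$ is atomic while $\mu_\chi^{\rm ec}$ is non-atomic does not help: since $\supp\mu^{\rm td}\subset\bigcup_{\chi'}C_{\chi'}$ is in general uncountable, mutual singularity inside a single $\chi$-fiber says nothing about $\mu_\chi^{\rm ec}\bigl(\bigcup_{\chi'}C_{\chi'}\bigr)$, which is the quantity your Fubini reduction requires. (The disintegration $\mu^{\rm ec}=\int\mu_\chi^{\rm ec}\,d\chi$ also needs some care, but that is a secondary issue.)

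The missing ingredient is a \emph{continuous} invariant of the equivalence class that detects the continuous parameter, and this is how the paper argues: the infinitesimal character map $\Phi:\hat G\to\jf_\C^*/\W_\jf$ is continuous, and the main theorem of \cite{KKOS} gives, for each pair $(I,w)$, a $\W_\jf$-invariant lattice $\Lambda(I,w)\subset\jf_\R^*$ with
$$\Phi\bigl(\supp\mu^{I,w,\rm td}\bigr)\subset\Bigl[\bigcup_{s\in\W_\jf}\bigl(\Lambda(I,w)+i\Ad(s)\af_I^*\bigr)\Bigr]/\W_\jf\, .$$
Applied with $I=S$ (where $\af_S=\af_{Z,E}$) this confines $\Phi(\supp\mu^{\rm td})$ to a countable union of translates of $i\af_{Z,E}^*$; on the other hand, by the Bernstein decomposition $\mu^{\rm ec}$ is equivalent to $\sum_{w}\sum_{I\subsetneq S}\mu^{I,w,\rm td}$, and each $\mu^{I,w,\rm td}$ is Lebesgue measure on $i\af_I^*$ times counting measure with $\dim\af_I>\dim\af_{Z,E}$, so the set of parameters whose infinitesimal character lands in that lower-dimensional locus is Lebesgue-null. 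This converts your heuristic ``transversality'' into an actual measure-zero statement. Your reduction along $\hat A_{Z,E}$ is inessential; once you insert the infinitesimal-character input from \cite{KKOS}, your argument essentially becomes the paper's proof. As written, however, the proposal has a genuine gap at its key step.
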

\begin{proof} The proof goes by comparing the infinitesimal characters of the representations
occurring in $\mu^{\rm td}$ and $\mu^{\rm ec}$. For that we recall that the map
$$ \Phi: \hat G \to \jf_\C^*/\W_\jf, \ \ \pi\mapsto \chi_\pi$$
is continuous.  Next the Bernstein decomposition of $L^2(Z)$ implies that
$\mu^{\rm ec} $ is equivalent to $\sum_{w\in \Wc}\sum_{I\subsetneq S} \mu^{I,w, \rm td}$ with
$\mu^{I,w,\rm td}$ the Plancherel measure of $L^2(Z_{I,w})_{\rm td}$.  In this regard
we note moreover that $\mu^{I,w, \rm td}$ is build up by the Lebesgue measure on $i\af_I^*$ and
counting measure over each fiber $\lambda\in i\af_I^*$.
Now the main result of \cite{KKOS} asserts that for each pair $I,w$
there is a $\W_\jf$-invariant lattice $\Lambda=\Lambda(I,w)\subset \jf_\R^*$
such that$$ \Phi(\supp \mu^{I,w,{\rm td}}) \subset   \left[\bigcup_{s\in \W_\jf} (\Lambda + i\Ad(s)\af_I^*)\right]/\W_\jf\, .$$
Now the continuity of $\Phi$ and the aforementioned structure of the various $\mu^{I,w,\rm td}$ with regard
to Lebesgue measures imply the lemma.
\end{proof}

A further important ingredient in the proof of Theorem \ref{CD tempered} is
the long intertwiner,  which we also used in treatment of the group case  \eqref{def A}:
$$\A_{\sigma,\lambda}: \Ind_{P_I}^G( \sigma\otimes \lambda)\to \Ind_{\oline{P_I}}^{G}(\sigma\otimes \lambda)$$
$$ \A_{\sigma,\lambda}(u)(g) =  \int_{\oline {N_I}} u(g \oline{n_I})  \ d\oline{n_I} \qquad (g\in G)$$
which is defined  near $g=\1$  for all $u$ with $\supp u \subset\Omega P_I$ for $\Omega\subset\oline{ N_I}$ compact,
and for general $u$ and $g$ by meromorphic continuation with respect to $\lambda$.
\par We wish to compute the asymptotics of $m_{v,\eta}$ for $\eta\in \M_{\pi, \sigma}$ for certain test vectors
$v\in \Hc_{\sigma,\lambda}^\infty$. In more precision, let  $u \in\Ind_{P_I}^G( \sigma\otimes \lambda)^\infty$ with
$\supp u \subset \Omega P_I$ with $\Omega$ as above.  Our test vectors
$v$ are then given by $v= \A_{\sigma,\lambda}(u)$.
Now with
\begin{equation}\label{display 152}  \tilde \eta= \eta\circ \A_{\sigma,\lambda}\, \end{equation}
we obtain the tautological identity

$$ m_{v, \eta}(g) = m_{u, \tilde \eta}(g)\, .$$
The advantage of using the opposite representative $\Ind_{P_I}^G(\sigma\otimes\lambda)$
for $[\pi_{\sigma,\lambda}]$
is that it allows us to compute the asymptotics of $m_{u, \tilde \eta}(a)$ for $a=a_t=\exp(tX)\in A_I^{--}$ on rays to infinity.
In more precision,  we have the following symmetric space
analogue of  Lemma \ref {Lemma KKOS}.
Let
\begin{equation}\label{defi zeta}
\tilde \zeta=\operatorname{ev} _{P_I,\sigma,\lambda}(\tilde \eta) \in V(\sigma).
\end{equation}

\begin{lemma} \label{lemma 15a}With the notation introduced above we have for all $m_I\in M_I$:

\begin{equation} \label{D-limit} \lim_{t\to\infty} a_t^{\lambda - \rho} m_{u,\tilde \eta}(m_I a_t)= \tilde\zeta_{[\1]}(\sigma(m_I^{-1}) (\A_{\sigma,\lambda}(u)(\1))) =
\tilde \zeta_{[\1]}(\sigma(m_I^{-1}) (v(\1))).
\end{equation}
for the $[\1]$-component $\tilde \zeta_{[\1]}\in (\Hc_\sigma^{-\infty})^{M_I\cap H}$ of
$\tilde\zeta$.
\end{lemma}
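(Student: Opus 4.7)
The argument follows the pattern of Lemma \ref{Lemma KKOS} in the group case, but needs to incorporate the decomposition of $\tilde\eta$ according to the open $H\times P_I$-double cosets in $G$. First, I would work in the non-compact model for $\Ind_{P_I}^G(\sigma\otimes\lambda)$, identifying $u$ with a compactly supported $\Hc_\sigma^\infty$-valued function on $\oline{N_I}$ (supported in $\Omega$) via the open dense Bruhat cell $\oline{N_I}P_I\subset G$. Using the factorisation $m_Ia_t\,\oline{n_I}=\Ad(m_Ia_t)(\oline{n_I})\cdot m_Ia_t\in\oline{N_I}\cdot P_I$, valid because $M_IA_I$ normalises $\oline{N_I}$, together with the $P_I$-transformation law for $u$, yields
\[
(\pi_{\sigma,\lambda}((m_Ia_t)^{-1})u)(\oline{n_I})=u(m_Ia_t\,\oline{n_I})=\sigma(m_I)^{-1}\,a_t^{-\lambda+\rho}\,u(\Ad(m_Ia_t)\oline{n_I}).
\]
Since $X\in\af_I^{--}$, the automorphism $\Ad(a_t^{-1})$ contracts $\oline{N_I}$ to $\1$ as $t\to\infty$; equivalently, $\Ad(m_Ia_t)$ expands it. Hence the support of $\pi_{\sigma,\lambda}((m_Ia_t)^{-1})u$ equals $\Ad(a_t^{-1}m_I^{-1})(\Omega)$ and shrinks to $\{\1\}\subset\oline{N_I}$.

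Next, I would apply Frobenius reciprocity to express $\tilde\eta$ as a sum of integrals over the open $H$-orbits in $G/P_I$, indexed by $[w]\in\W(I)\bs\Wc$. Since $\1\cdot P_I\in G/P_I$ lies in the open $H$-orbit $HP_I/P_I$ corresponding to $[w]=[\1]$, a sufficiently small neighbourhood of $\1$ in $\oline{N_I}$ is disjoint from every other open $H$-orbit. Therefore, for $t$ large, only the $[\1]$-component of $\tilde\eta$ contributes, and it takes the explicit form
\[
\tilde\eta_{[\1]}(\phi)=\int_{\oline{N_I}}\tilde\zeta_{[\1]}(\phi(\oline{n_I}))\,J(\oline{n_I})\,d\oline{n_I}
\]
for $\phi$ supported near $\1$, where the smooth positive density $J$ arises from pulling the $H$-invariant measure on $H/(H\cap M_I)$ to $\oline{N_I}$ via the open embedding $H/(H\cap M_I)\hookrightarrow G/P_I$; we normalise the invariant measures so that $J(\1)=1$.

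Substituting and performing the change of variable $\oline{m_I}=\Ad(m_Ia_t)\oline{n_I}$ produces
\[
a_t^{\lambda-\rho}\,m_{u,\tilde\eta}(m_Ia_t)=\int_{\oline{N_I}}\tilde\zeta_{[\1]}(\sigma(m_I^{-1})u(\oline{m_I}))\,J(\Ad(a_t^{-1}m_I^{-1})\oline{m_I})\,d\oline{m_I},
\]
because the Jacobian of the conjugation $\Ad(m_Ia_t)$ on $\oline{N_I}$ is a power of $a_t$ that combines with the prefactor $a_t^{-\lambda+\rho}$ and the overall $a_t^{\lambda-\rho}$ to yield $1$. Since $\Ad(a_t^{-1}m_I^{-1})\oline{m_I}\to\1$ uniformly on the compact support $\Omega$ of $u$, continuity of $J$ and dominated convergence give
\[
\lim_{t\to\infty}a_t^{\lambda-\rho}\,m_{u,\tilde\eta}(m_Ia_t)=\int_{\oline{N_I}}\tilde\zeta_{[\1]}(\sigma(m_I^{-1})u(\oline{m_I}))\,d\oline{m_I}=\tilde\zeta_{[\1]}(\sigma(m_I^{-1})\A_{\sigma,\lambda}(u)(\1)),
\]
using the definition $\A_{\sigma,\lambda}(u)(\1)=\int_{\oline{N_I}}u(\oline{m_I})\,d\oline{m_I}$ of the intertwiner.

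The main obstacle is the Jacobian bookkeeping: verifying that the $\rho$-exponent from the $P_I$-transformation law of $u$ and the Jacobian of $\Ad(m_Ia_t)$ acting on $\oline{N_I}$ cancel precisely against $a_t^{\lambda-\rho}$, and that the normalising constant $J(\1)$ equals $1$ for compatible choices of $H$- and $\oline{N_I}$-invariant measures. Both checks involve careful tracking of the modular characters attached to the Iwasawa-type decomposition $H\subset\oline{N_I}\cdot(H\cap P_I)$ valid in a neighbourhood of $\1$, and rely on the dimension count $\dim H/(H\cap M_I)=\dim\oline{N_I}$ which holds in the symmetric case.
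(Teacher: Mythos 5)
Your overall strategy is the same as the paper's: reduce to the $[\1]$-coset by noting that $\Ad(a_t^{-1})$ contracts the support of the translated test vector into the open coset $HP_I$, and then evaluate the limiting pairing. The difference is that at this point the paper simply quotes \cite[Lemme 16]{Delorme2} for the distributional limit $a_t^{\lambda-\rho}\,\pi_{\sigma,\lambda}(a_t)\tilde\eta[\1]\to\tilde\zeta_{[\1]}\cdot d\oline n_I$, whereas you carry out that computation directly in the non-compact picture on $\oline{N_I}$, in the spirit of the group-case Lemma \ref{Lemma KKOS}. That is a legitimate, more self-contained route, and your localization step and the final identification with $\A_{\sigma,\lambda}(u)(\1)=\int_{\oline{N_I}}u(\oline n_I)\,d\oline n_I$ are exactly right. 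The normalization issue you flag ($J(\1)=1$, i.e.\ compatibility of the invariant measure on $H/(H\cap M_I)$ defining $\tilde\zeta_{[\1]}$ with the Haar measure $d\oline n_I$ defining $\A_{\sigma,\lambda}$) is genuinely present and is implicit in the paper's citation of Delorme as well.

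Two steps in your computation do need repair. First, the displayed kernel formula $\tilde\eta_{[\1]}(\phi)=\int_{\oline{N_I}}\tilde\zeta_{[\1]}(\phi(\oline n_I))\,J(\oline n_I)\,d\oline n_I$ with a \emph{scalar} density $J$ is not correct: transporting the Frobenius formula $\tilde\eta[\1](\phi)=\int_{H/H\cap M_I}\tilde\zeta_{[\1]}(\phi(h))\,d\dot h$ to the chart $\oline{N_I}$ replaces $\phi(\oline n)$ by $\phi(h(\oline n))$, where $\oline n=h(\oline n)p(\oline n)$ with $p(\oline n)=m(\oline n)a(\oline n)n(\oline n)\in P_I$, and this differs from $\phi(\oline n)$ by the cocycle $\sigma(m(\oline n))$ (and an $a(\oline n)$-power) acting on the $\Hc_\sigma$-argument; since $\tilde\zeta_{[\1]}$ is invariant only under $M_I\cap H$, this factor cannot be absorbed into a scalar, so the kernel is an $\Hc_\sigma^{-\infty}$-valued function, not a scalar multiple of $\tilde\zeta_{[\1]}$. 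Because the cocycle equals the identity at $\oline n=\1$ and is continuous, your limit is unchanged, but the intermediate identity must carry this operator-valued factor. Second, the exponent bookkeeping as written does not close: with the law $u(g\,m_Ia_t)=\sigma(m_I)^{-1}a_t^{-\lambda+\rho}u(g)$ (copied from the paper's $\Ind_{\oline{P_I}}$-model) the total power after substituting $\oline m_I=\Ad(m_Ia_t)\oline n_I$ is $a_t^{\lambda-\rho}\,a_t^{-\lambda+\rho}\,a_t^{2\rho|_{\af_I}}=a_t^{2\rho|_{\af_I}}$, which tends to $0$, not $1$. The cancellation requires the induction-from-$P_I$ shift $a^{-\lambda-\rho}$; this is also forced by requiring $\A_{\sigma,\lambda}$ to intertwine into the model with law $a^{-\lambda+\rho}$ (the Jacobian $a^{2\rho}$ in the defining integral of $\A_{\sigma,\lambda}$ accounts exactly for the sign flip). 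With that convention, and the corrected kernel, your argument gives \eqref{D-limit}.
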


\begin{proof}  It is sufficient to prove the assertion for $m_I=\1$. Next, since
$a_t^{-1} \Omega  a_t \to \{\1\}$ we may assume in addition that
$\supp u \subset \Omega P_I \cap H P_I$.  Hence
$$m_{u,\tilde \eta}(a_t) =m_{u,\tilde \eta[\1]}(a_t)= ( \pi_{\sigma,\lambda}(a_t)(\tilde \eta[\1]))(u)$$
by the support condition of $u$.
It is then easy to verify (see the proof of \cite[Lemme 16]{Delorme2}) that
$$\lim_{t\to \infty}   a_t^{\lambda - \rho} \left(\pi_{\sigma,\lambda}(a_t)(\tilde \eta[\1])\right)=\tilde \zeta_{[\1]} \cdot d\oline n_I$$
as a distribution. The lemma follows.
\end{proof}

Note that for generic $\lambda$ the intertwiner
$\A_{\sigma,\lambda}$ induces a natural linear isomorphism
$$ b_{\sigma, \lambda}:  V(\sigma) \to V(\sigma),$$
defined by
$$ b_{\sigma,\lambda}(\xi)=
\operatorname{ev}_{P_I,\sigma, \lambda} \left( j(\oline {P_I}, \sigma,\lambda,\xi)\circ \A_{\sigma,\lambda}\right)\,.$$

In this regard we recall from \cite[Th. 2]{Delorme2}:
\begin{lemma}\label{petit B} For generic $\lambda$ one has
\begin{equation} \label{claim15} b_{\sigma,\lambda}(V(\sigma)_2) = V(\sigma)_2\, .\end{equation}
\end{lemma}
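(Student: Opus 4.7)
The plan is to reduce the claim to an assertion purely internal to each Levi symmetric pair $(M_I, M_I\cap w^{-1}Hw)$ and then invoke the corresponding preservation result at the Levi level. First, I would exploit the two double-coset descriptions from Subsection \ref{open double}: both $(H\bs G/\oline{P_I})_{\rm open}$ and $(P_I\bs G/H)_{\rm open}$ are parametrized by $\W(I)\bs \W$. Thus $b_{\sigma,\lambda}$ naturally decomposes as a block operator with rows and columns indexed by $[w],[w']\in\W(I)\bs \W$, with each block acting from $(\Hc_{\sigma_{w'}}^{-\infty})^{H\cap {w'}^{-1}M_Iw'}$ to $(\Hc_{\sigma_w}^{-\infty})^{H\cap w^{-1}M_Iw}$. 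A distribution-theoretic analysis based on the support properties of $j(\oline P_I,\sigma,\lambda,\cdot)$ together with the meromorphic continuation of $\A_{\sigma,\lambda}$ recalled in Remark \ref{analyt cont} shows that, away from a meager set of $\lambda$, only the diagonal blocks $[w]=[w']$ contribute; the off-diagonal and lower-dimensional orbit contributions are holomorphic boundary terms that vanish by absence of obstructing poles.

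Second, on the $[w]$-diagonal block I would identify the induced operator with the standard long intertwining operator attached to the Levi symmetric pair $(M_I, M_I\cap w^{-1}Hw)$, applied to the inducing datum $\sigma_w$. This is a Gindikin--Karpelevich style computation: grouping the rank-one factors of $\A_{\sigma,\lambda}$ according to root subsystems and tracing through the Frobenius reciprocity at the open orbit $Hw^{-1}\oline{P_I}$ reveals that only those rank-one pieces whose roots survive to $M_I$ affect the $[w]$-component, and their product is exactly the long intertwiner between two opposite minimal parabolics of $M_I$ relative to $\sigma_w$ at parameter conjugate to $\lambda$.

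Third, I would invoke the Levi-level statement, which is Th\'eor\`eme 2 of \cite{Delorme2}: for a symmetric pair, the long intertwining operator between opposite minimal parabolic subgroups preserves the subspace of distribution vectors giving rise to $L^2$-matrix coefficients on the symmetric quotient. Applied to each Levi pair $(M_I, M_I\cap w^{-1}Hw)$, this yields the inclusion $b_{\sigma,\lambda}(V(\sigma)_2)\subset V(\sigma)_2$. Since $\A_{\sigma,\lambda}$ is bijective for generic $\lambda$ (again by Remark \ref{analyt cont}), $b_{\sigma,\lambda}$ is an automorphism of $V(\sigma)$, and a dimension count on each block, using that the Levi intertwiner has the same source and target dimensions, promotes the inclusion to the claimed equality. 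The main obstacle will be the first two steps: making the block decomposition rigorous and performing the explicit rank-one identification in step two require careful support-theoretic bookkeeping of distributions near the closure of each open orbit, which is the technical heart of the argument and is precisely what is carried out in \cite{Delorme2}.
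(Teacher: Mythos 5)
First, a point of comparison with the paper: the paper does not prove this lemma at all. It is recalled verbatim from \cite[Th. 2]{Delorme2}, whose ``petite matrice $B$'' is exactly the operator $b_{\sigma,\lambda}$ (the change of $j$-data induced by the long intertwiner $\A_{\sigma,\lambda}$), and whose Theorem 2 is precisely the global statement that this matrix preserves the subspace $V(\sigma)_2$ of tuples whose components are square-integrable on the Levi symmetric spaces. Your step three therefore misdescribes the reference: \cite[Th. 2]{Delorme2} is not a ``Levi-level'' preservation statement about the long intertwiner between opposite minimal parabolics of $M_I$; it is the statement of the lemma itself. As written, your argument defers the technical heart to \cite{Delorme2} anyway, but under an incorrect description of what that reference provides, so the part of your proof that is correct is essentially the citation the paper already makes.

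More seriously, the content you add in steps one and two contains a genuine gap: the claim that for generic $\lambda$ the operator $b_{\sigma,\lambda}$ is block-diagonal with respect to the decomposition of $V(\sigma)$ over $[w]\in \W(I)\bs\W$ is false. The whole reason Delorme's object is a \emph{matrix} is that the intertwining operator genuinely scatters between the open $H\times\oline{P_I}$-cosets. Already for $Z=\Sl(2,\R)/\SO(1,1)$ with $I=\emptyset$ there are two open orbits, and the standard intertwining operator acts on the corresponding pair of $H$-invariant functionals by a $2\times 2$ matrix of Gamma-type factors whose off-diagonal entries are nonzero for generic $\lambda$; no support or holomorphy argument makes them vanish, since the intertwining kernel is not supported near the closed orbit and the functionals are distributions whose images spread over all of $G/\oline{P_I}$. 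Consequently the identification of ``diagonal blocks'' with Levi long intertwiners in step two has no footing, and the reduction to a Levi statement collapses. (Your final step is fine in isolation: once the inclusion $b_{\sigma,\lambda}(V(\sigma)_2)\subset V(\sigma)_2$ is known, generic injectivity of $\A_{\sigma,\lambda}$ and finite-dimensionality of $V(\sigma)_2$ do upgrade it to equality.) If you want a proof rather than a citation, the actual mechanism in \cite{Delorme2} is different: one characterizes $V(\sigma)_2$ via the asymptotics (constant terms) of the associated Eisenstein integrals and shows that composing with $\A_{\sigma,\lambda}$ preserves square-integrability of the relevant exponents, a global argument on $G/H$ in which the mixing of the open orbits is precisely what must be controlled.
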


\begin{proof}[Proof of Theorem \ref{CD tempered}] Let $\eta\in (\Hc_{\sigma,\lambda}^{-\infty})^H$ and $\zeta=\zeta_\eta={\rm ev}_{\sigma,\lambda}(\eta)\in V(\sigma)$.
The task is to show that $\zeta\in V(\sigma)_2$ if and only if $\eta\in \M_{\sigma,\lambda}$.  Recall that
$\zeta= (\zeta_{[w]})_{[w]\in \W(I)\bs \W}$ is a tupel in accordance with the definition of $V(\sigma)$ in \eqref{def V sigma}.

\par Assume first that $\eta\in \M_{\sigma,\lambda}$.
The proof goes by comparing two different expressions
for the constant term $m_{v,\eta^I}$ for certain test vectors $v\in \Hc_{\sigma,\lambda}^\infty $.
According to Lemma \ref{lemma 151} applied to $Z=Z_I$ we may assume that
$\M_{\sigma,\lambda}^{I} = \M_{\sigma,\lambda, {\rm td}}^{I}$.

Hence $\eta^{I,\rho-\lambda}\in \M_{\sigma,\lambda}^{I,\rho-\lambda} = \M_{\sigma,\lambda, {\rm td}}^{I,\rho-\lambda}$. By  \eqref{formula 1} we then have
$\eta^{I,\rho-\lambda}=\xi_{\sigma,\lambda,\zeta'}$
for some $\zeta'\in \M_{\sigma, {\rm d}}$,
that is,
\begin{equation} \label{EQ1} \eta^{I,\rho-\lambda}(\pi(m_I)v)= \zeta'(\sigma(m_I^{-1}) (v(\1))) \qquad (v\in \Hc_{\sigma,\lambda}^\infty, m_I \in M_I)\, .\end{equation}

\par On the other hand we can compute the asymptotics via Lemma \ref{lemma 15a}.
Comparing \eqref{D-limit} with \eqref{EQ1} and using Theorem \ref{loc ct temp} yields
$$ \zeta'(\sigma(m_I^{-1})(v(\1))= \tilde \zeta_{[\1]} (\sigma(m_I^{-1})(v(\1))) \qquad (m_I\in M_I)$$
for our test vectors $v=\A(u)$.
Thus we have
\begin{equation} \label{identity 15} m_{\zeta', v(\1)}= m_{\tilde \zeta_{[\1]}, v(\1)}\end{equation}
as functions on $M_I/ M_I\cap H$.
We claim that $\tilde \zeta_{[\1]}\in \M_{\sigma,\rm d}$ and $\tilde \zeta_{[\1]}=\zeta'$.
To see that we first observe that there exists at least one $v$ with $v(\1)\neq 0$.
This is because $v(\1)\neq 0$ translates into $\int_{\oline{N_I}} u( {\oline n_I}) \ d{\oline n_I}\neq 0$ which can obviously
be achieved for one of our test vectors $u$.
Now recall  that $\zeta'\in \M_{\sigma,\rm d}$.
Hence  \eqref{identity 15} implies  that  $\tilde \zeta_{[\1]}\in \M_{\sigma, {\rm d}}$, since for $\tilde \zeta_{[\1]}$ to yield an
embedding into $L^2(M_I/M_I\cap H)$ only one non-zero matrix coefficient
$m_{\tilde\zeta_{[\1]}, v(\1)}$ has to be square integrable.
With that $\zeta'=\tilde \zeta_{[\1]}$ follows from the orthogonality
relations \eqref{SW-ortho} and \eqref{identity 15}:  For $\zeta_0=\zeta'-\tilde\zeta_{[\1]}$ we have
$$0=\|m_{\zeta_0, v(\1)}\|_{L^2(M_I/ M_I\cap H_I)}^2=\|v(\1)\|_{\Hc_\sigma}^2  \|\zeta_0\|_{\M_{\sigma, \rm d}}^2\, .$$

\par Next we let $w\in \W\simeq \Wc$ vary. Analogous reasoning via transport of structure $Z\to Z_w$
yields that $\tilde \zeta_{[w]}\in \M_{\sigma_w,\rm d}$.
Thus we arrive at
\begin{equation} \label{display 151}\tilde\zeta:= (\tilde \zeta_{[w]})_{[w]\in \W(I)\bs \W} \in V(\sigma)_2\,.\end{equation}

\par Now observe that
$b_{\sigma,\lambda}(\zeta)=\tilde\zeta\in V(\sigma)_2$ in view of
\eqref{display 152},
\eqref{defi zeta}, and \eqref{display 151},
and  from  Lemma \ref{petit B}
it then follows that $\zeta\in V(\sigma)_2$, i.e. we have shown the implication
$\operatorname{ev}_{\oline P_I, \sigma, \lambda}(\M_{\sigma,\lambda})\subset V(\sigma)_2$ of the theorem.

\par To complete the proof of the theorem we remain with the converse inclusion
$\operatorname{ev}_{\oline P_I, \sigma, \lambda}(\M_{\sigma,\lambda})\supset V(\sigma)_2$.
For that let $\zeta=(\zeta_{[w]})_{[w]}\in V(\sigma)_2$.
Forming wave packets via $\eta=j(\oline P_I, \sigma, \lambda, \zeta)$
for varying $\lambda$, we finally deduce  with \cite[Thm.~ 4]{Delorme2} that
$j(\oline P_I, \sigma, \lambda, \zeta)$ contributes to the $L^2$-spectrum of $Z$. Hence
$\eta=j(\oline P_I, \sigma, \lambda, \zeta)\in \M_{\sigma,\lambda}$ for Lebesgue almost all
$\lambda$, completing the proof of the theorem.
\end{proof}

In the course of the proof of Theorem \ref{CD tempered} we have shown the following
identity:

 \begin{lemma} \label{lemma scatter} Let $\lambda$ be generic and  $\eta\in \M_{\sigma,\lambda}$ such that
$\eta=j(\oline{P_I}, \sigma, \lambda, \zeta)$ for some $\zeta\in V(\sigma)_2$.
Then $\tilde \eta = \eta\circ \A_{\sigma, \lambda}$ is of the form
$\tilde \eta=j(P_I, \sigma, \lambda, \tilde \zeta)$ for a unique  $\tilde \zeta\in V(\sigma)_2$ and
\begin{equation} \label{scatter id}  \eta^{I,\rho-\lambda}= \xi_{\sigma, \lambda, \tilde \zeta_{[\1]}}\end{equation}
with $\tilde \zeta_{[\1]}= \tilde \eta[\1][\1]\in \M_{\sigma, \rm d}$ and $\xi_{\sigma, \lambda, \tilde \zeta_{[\1]}}$ defined as in \eqref{def xi sigma eta}.
\end{lemma}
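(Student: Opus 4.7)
The plan is to extract the argument from the proof of Theorem \ref{CD tempered} and package it cleanly. My strategy is first to establish the existence and uniqueness of $\tilde\zeta$, then to obtain the identity \eqref{scatter id} by comparing the asymptotic formula of Lemma \ref{lemma 15a} with the constant term approximation of Theorem \ref{loc ct temp}.

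First I would argue that $\tilde\eta\in \M_{\sigma,\lambda}$ for the $P_I$-realization of $\pi_{\sigma,\lambda}$. Indeed, $\A_{\sigma,\lambda}\colon \Ind_{P_I}^G(\sigma\otimes\lambda)\to \Ind_{\oline{P_I}}^G(\sigma\otimes\lambda)$ is a $G$-equivariant continuous map and, by \eqref{defi tau}, a scalar multiple of an isometry for generic $\lambda$. Since $m_{u,\tilde\eta}(g)=m_{\A_{\sigma,\lambda}(u),\eta}(g)$, the matrix coefficients of $\tilde\eta$ are (up to a fixed positive scalar) the same functions as those of $\eta$. Hence $\tilde\eta$ is $H$-invariant and square-integrable in the required sense, so Theorem \ref{CD tempered} applied with the roles of $P_I$ and $\oline{P_I}$ interchanged produces a unique $\tilde\zeta\in V(\sigma)_2$ with $\tilde\eta=j(P_I,\sigma,\lambda,\tilde\zeta)$.

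For the identity \eqref{scatter id}, I would invoke Lemma \ref{lemma 151} applied to $Z_I$ in order to restrict (away from a $\mu$-null set of parameters) to the case $\M^I_{\sigma,\lambda}=\M^I_{\sigma,\lambda,\rm td}$; then $\eta^{I,\rho-\lambda}\in \M^{I,\rho-\lambda}_{\sigma,\lambda,\rm td}$, and \eqref{formula 1} furnishes some $\zeta'\in\M_{\sigma,\rm d}$ with $\eta^{I,\rho-\lambda}=\xi_{\sigma,\lambda,\zeta'}$. Fix test vectors of the form $v=\A_{\sigma,\lambda}(u)$ with $u\in\Ind_{P_I}^G(\sigma\otimes\lambda)^\infty$ and $\supp u\subset \Omega P_I\cap HP_I$ for some compact $\Omega\subset\oline{N_I}$. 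On the one hand, Lemma \ref{lemma 15a} gives, for all $m_I\in M_I$ and all $X\in \af_I^{--}$,
\[
\lim_{t\to\infty} a_t^{\lambda-\rho}\,m_{v,\eta}(m_I a_t)\;=\;\tilde\zeta_{[\1]}\bigl(\sigma(m_I^{-1})v(\1)\bigr),
\qquad a_t=\exp(tX).
\]
On the other hand, the constant term approximation of Theorem \ref{loc ct temp} applied to $m_{v,\eta}$ yields
\[
\bigl|m_{v,\eta}(m_I a_t)-m_{v,\eta^I}(m_I a_t)\bigr|\leq C\, a_t^{(1+\e)\rho}\,p_{-N;k}(m_{v,\eta}),
\]
so after multiplication by $a_t^{\lambda-\rho}$ the right-hand side tends to $0$ and we obtain
\[
\lim_{t\to\infty} a_t^{\lambda-\rho}\,m_{v,\eta^I}(m_I a_t)\;=\;\tilde\zeta_{[\1]}\bigl(\sigma(m_I^{-1})v(\1)\bigr).
\]

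Expanding $\eta^I=\sum_\mu \eta^{I,\mu}$ into $\af_I$-weight components with $\mu\in\rho+i\af_I^*$ by \eqref{exponents normalized unitary}, each term contributes $a_t^{\lambda-\rho+\mu}\eta^{I,\mu}(m_I^{-1}v)$, a pure oscillation since $\lambda-\rho+\mu\in i\af_I^*$. For the resulting finite exponential sum to admit a genuine limit along every ray $X\in\af_I^{--}$, only the component with $\lambda-\rho+\mu=0$, i.e.\ $\mu=\rho-\lambda$, can contribute (the other terms kill each other after elementary averaging, or equivalently by Lemma~\ref{lemma basic ineq}). Hence
\[
\eta^{I,\rho-\lambda}(m_I^{-1}v)\;=\;\tilde\zeta_{[\1]}\bigl(\sigma(m_I^{-1})v(\1)\bigr)
\qquad (m_I\in M_I).
\]
Since the vectors $v(\1)$ for the chosen $u$ fill a dense subspace of $\Hc_\sigma^\infty$ and $m_I$ is arbitrary, we conclude with \eqref{def xi sigma eta} that $\eta^{I,\rho-\lambda}=\xi_{\sigma,\lambda,\tilde\zeta_{[\1]}}$; comparing with $\eta^{I,\rho-\lambda}=\xi_{\sigma,\lambda,\zeta'}$ further confirms that $\tilde\zeta_{[\1]}=\zeta'\in\M_{\sigma,\rm d}$.

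The main obstacle is the step isolating the weight $\mu=\rho-\lambda$ from the other $\af_I$-weights of $\eta^I$, because the oscillatory terms do not decay. The cleanest way to handle this is to exploit that Lemma~\ref{lemma 15a} yields the limit for every $X\in\af_I^{--}$ independently: by varying $X$ and applying Lemma~\ref{lemma basic ineq} (or direct Cesaro averaging as in the proof of Lemma~\ref{averaging lemma}), the characters $a_t^{\lambda-\rho+\mu}$ with $\mu\neq\rho-\lambda$ all integrate to zero, forcing the coefficients $\eta^{I,\mu}(m_I^{-1}v)$ to drop out of the identity.
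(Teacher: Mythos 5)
Your proof of the central identity \eqref{scatter id} follows the same route as the paper: Lemma \ref{lemma 151} applied to $Z_I$, the a priori form $\eta^{I,\rho-\lambda}=\xi_{\sigma,\lambda,\zeta'}$ from \eqref{formula 1}, the test vectors $v=\A_{\sigma,\lambda}(u)$, the asymptotics of Lemma \ref{lemma 15a}, and the constant term approximation of Theorem \ref{loc ct temp}; you even make explicit the weight-isolation step that the paper leaves implicit. Where you genuinely diverge is in the remaining two points. For $\tilde\zeta\in V(\sigma)_2$ the paper argues directly inside the proof of Theorem \ref{CD tempered}: the matrix-coefficient identity $m_{\zeta',v(\1)}=m_{\tilde\zeta_{[\1]},v(\1)}$ with one nonzero $v(\1)$ shows $\tilde\zeta_{[\1]}\in\M_{\sigma,\rm d}$, the orthogonality relations \eqref{SW-ortho} then give $\zeta'=\tilde\zeta_{[\1]}$, and transport of structure over $w\in\Wc$ handles the other components; you instead invoke the already established Theorem \ref{CD tempered} with $P_I$ and $\oline{P_I}$ interchanged. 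That is legitimate (no circularity, since the lemma is stated after the theorem), but the hypothesis $\zeta\in V(\sigma)_2$ gives you a cheaper route that is closer to the paper's bookkeeping: $\tilde\zeta=b_{\sigma,\lambda}(\zeta)$ by definition of $b_{\sigma,\lambda}$, so Lemma \ref{petit B} yields $\tilde\zeta\in V(\sigma)_2$ at once, without having to justify that $\eta\circ\A_{\sigma,\lambda}$ sits in the multiplicity space of the $P_I$-realization.

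Three small repairs. First, \eqref{defi tau} belongs to the group case and rests on irreducibility of $\pi_{\sigma,\lambda}$; in the symmetric setting $\A_{\sigma,\lambda}^*\A_{\sigma,\lambda}$ need not be scalar, so "scalar multiple of an isometry" is not available — what you actually need (if you keep your route) is only generic invertibility of $\A_{\sigma,\lambda}$, or none of this if you use Lemma \ref{petit B}. Second, Lemma \ref{lemma basic ineq} requires exponential decay of the exponential sum, which you do not have (the limit in \eqref{D-limit} is not quantified); your alternative, Ces\`aro averaging along a ray with $X$ chosen so that the finitely many nonzero frequencies $(\mu-(\rho-\lambda))(X)$ do not vanish, is the correct justification and suffices. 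Third, the special vectors $\pi(m_I)v$ with $v=\A(u)$ are not obviously dense in $\Hc_{\sigma,\lambda}^\infty$, so you cannot conclude $\eta^{I,\rho-\lambda}=\xi_{\sigma,\lambda,\tilde\zeta_{[\1]}}$ from density directly; the correct order is to use the form $\eta^{I,\rho-\lambda}=\xi_{\sigma,\lambda,\zeta'}$ already in hand, observe that on your test vectors both sides only see $v(\1)$, which does exhaust $\Hc_\sigma^\infty$, deduce $\zeta'=\tilde\zeta_{[\1]}$ (the paper gets this from \eqref{SW-ortho} with a single nonzero vector), and then \eqref{scatter id} follows. With these adjustments your argument is complete.
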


Let us now transport the structure from $Z=G/H$ to $Z_w=G/H_w$ for $w\in \Wc$ and
write $j_w$ and $V(\sigma)_w$ for the $j$-map \eqref{def j-map} and multiplicity
space for $Z_w$. Note that  $V(\sigma)_w\simeq V(\sigma)$ by permutation of coordinates.

Then $\eta= j(\oline  P_I,  \sigma, \lambda,\zeta)$ for $\zeta=(\zeta_{[u]})_{[u]\in \W(I)\bs \Wc}\in V(\sigma)$
will be moved to $\eta_w$ which then can be written as
$\eta_w=j_w(\oline P_I, \sigma, \lambda, \zeta^w)$ for some $\zeta^w= (\zeta^w_{[u]})_{[u]\in \W(I)\bs \Wc}\in V(\sigma)_w$.
By the construction of $j$-maps
which relates invariant functionals to open $H$-orbits we then obtain from $\eta_w= \eta\circ w^{-1}$
the transition relations
\begin{equation}\label{transit zeta}  \zeta^w_{[\1]}= \zeta_{[w]} \,. \end{equation}

\begin{theorem} \label{Th. 15.8}For generic $\lambda\in iC_I^*$ and $\pi=\pi_{\sigma,\lambda}$ for $\sigma\in \hat M_I$ cuspidal
the map

\begin{equation} \label{D1} \M_\pi \to  \bigoplus_{ [w]\in \W(I)\bs\Wc }  \M_{\pi, w, {\rm td}}^{I,\rho-\lambda}, \ \
\eta\mapsto (\eta_{w}^{I,\rho-\lambda})_{[w]\in \W(I)\bs \Wc}\end{equation}
is a bijective isometry.
\end{theorem}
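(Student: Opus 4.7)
The strategy is to factor the map in the theorem through the Maass--Selberg partial isometry of Theorem \ref{eta-I continuous} and then use a dimension count to upgrade that partial isometry to a bijective isometry. First I would invoke Lemma \ref{lemma fine W} to identify the indexing set $\W(I)\bs\Wc$ with $\coprod_{\sc\in\sC_I}\sF_{I,\sc}$: each class $[w]\in\W(I)\bs\Wc$ carries a canonical pair $(\sc,\st)$ together with an element $w_{I,\sc}\in\Wc_{I,\sc}$ such that a chosen representative $w\in[w]$ satisfies $w={\bf m}_{\sc,\st}(w_{I,\sc})$. Under this matching, the target of our map is indexed by the same data as the target $\widetilde\M_\pi^{I,\rho-\lambda}=\bigoplus_{\sc,\st}\M_{\pi,\sc,\st}^{I,\rho-\lambda}$ of $\sI^{\rho-\lambda}$.

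Next, I would use the consistency relation \eqref{Iw2} to write $\eta_w^{I,\rho-\lambda}=(\eta_{\sc,\st}^{I,\rho-\lambda})_{w_{I,\sc}}$ for every $\eta\in\M_\pi$ and every $[w]\in\W(I)\bs\Wc$. Conjugation by $w_{I,\sc}$ furnishes an isometric $G$-isomorphism $Z_{I,\sc}\simeq Z_{I,w}$ of unimodular homogeneous spaces and hence a unitary identification $\M_{\pi,\sc,\st}^{I,\rho-\lambda}\simeq \M_{\pi,w,{\rm td}}^{I,\rho-\lambda}$ of multiplicity spaces, the relevant inner products being those induced by the Plancherel decomposition of $L^2(Z_{I,\sc})$ and $L^2(Z_{I,w})$ respectively through Theorem \ref{Plancherel induced}. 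Consequently our map factors as
\[
\eta\;\stackrel{\sI^{\rho-\lambda}}{\longmapsto}\;(\eta_{\sc,\st}^{I,\rho-\lambda})_{\sc,\st}\;\longmapsto\;\bigl((\eta_{\sc,\st}^{I,\rho-\lambda})_{w_{I,\sc}}\bigr)_{[w]},
\]
whose first component is a surjective partial isometry by the Maass--Selberg relations (Theorem \ref{eta-I continuous}) and whose second component is a tuple of unitary bijections, so the composite is itself a surjective partial isometry.

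The remaining task is to show that the kernel is trivial, which I would establish by a dimension count. On the source Theorem \ref{CD tempered} gives
\[
\dim\M_\pi=\sum_{[w]\in\W(I)\bs\W}\dim\M_{\sigma_w,\rm d}=\sum_{[w]\in\W(I)\bs\Wc}\dim\M_{\sigma,w,\rm d},
\]
using the Matsuki identification $\W\simeq\Wc$ and the canonical isomorphism $\M_{\sigma_w,\rm d}\simeq \M_{\sigma,w,\rm d}$ obtained by transporting structure. On the target, I would appeal to Lemma \ref{lemma 151} (applied to each $Z_w$) to replace, for Lebesgue-almost every $\lambda$, the multiplicity space $\M_{\sigma,\lambda,w}^{I,\rho-\lambda}$ by its twisted-discrete part, and then apply the identity \eqref{formula 1} (in the form appropriate to $Z_w$) to obtain $\dim\M_{\pi,w,{\rm td}}^{I,\rho-\lambda}=\dim\M_{\sigma,w,\rm d}$. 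Summing over $[w]$ matches the source dimension, so the surjective partial isometry has trivial kernel and is therefore a bijective isometry.

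The principal technical obstacle lies in the bookkeeping of inner products: one must verify that the Hilbert structure on $\M_{\pi,w,{\rm td}}^{I,\rho-\lambda}$ coming from the Plancherel decomposition of $L^2(Z_w)$, the Hilbert structure on $\M_{\pi,\sc,\st}^{I,\rho-\lambda}$ entering Theorem \ref{eta-I continuous}, and the discrete-series normalization of $\M_{\sigma,w,\rm d}$ introduced in Subsection \ref{normal disc} are all compatible with the transports involved. The matching of invariant measures in Subsection \ref{measures}, together with the $G$-equivariance of right multiplication by $w_{I,\sc}$, makes these identifications unitary, so once the normalizations are checked the dimension argument suffices to upgrade the Maass--Selberg partial isometry into the desired bijective isometry.
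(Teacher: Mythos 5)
Your proposal is correct, but it reaches bijectivity by a different route than the paper. The paper's proof has three steps: (i) source and target have equal dimension by Theorem \ref{CD tempered} and \eqref{formula 1} applied to the $Z_w$; (ii) bijectivity is deduced from the explicit scattering identity of Lemma \ref{lemma scatter} together with the transition relations \eqref{transit zeta}, i.e.\ from the identification $\eta_w^{I,\rho-\lambda}=\xi_{\sigma,\lambda,\tilde\zeta_{[w]}}$ with $\tilde\zeta=b_{\sigma,\lambda}(\zeta)$ and the invertibility of $b_{\sigma,\lambda}$ on $V(\sigma)_2$ (Lemma \ref{petit B}); (iii) the isometry then comes from Theorem \ref{eta-I continuous} via Lemma \ref{lemma fine W}. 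You share step (i) and the use of Theorem \ref{eta-I continuous} with Lemma \ref{lemma fine W} and \eqref{Iw2}, but you replace step (ii) entirely: you exploit the \emph{surjectivity} half of the Maass--Selberg statement to get a surjective partial isometry onto $\bigoplus_{[w]}\M_{\pi,w}^{I,\rho-\lambda}$ (equal to the twisted-discrete part for generic $\lambda$ by the Lemma \ref{lemma 151} argument), and then the dimension count forces injectivity, hence a bijective isometry. This avoids Lemma \ref{lemma scatter}, \eqref{transit zeta} and the $j$-map/$b_{\sigma,\lambda}$ machinery altogether and is more economical; what the paper's route buys in exchange is the explicit formula for $\eta_w^{I,\rho-\lambda}$ in terms of $j(\oline{P_I},\sigma,\lambda,\zeta)$ and the intertwiner, which is what feeds the subsequent normalization of the Plancherel measure. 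Two small repairs: Lemma \ref{lemma 151} should be applied to the spaces $Z_{I,w}$ (the td/ec splitting in question is that of $L^2(Z_{I,w})$), not to $Z_w$; and the Matsuki identification is $\Wc\simeq\W/\W_H$, not $\W\simeq\Wc$ (the same conflation already occurs in the indexing of \eqref{vdB-CD}). Your flagged normalization issue for the conjugation identifications can moreover be sidestepped by choosing the representatives $w=w(\sc,\st)={\bf m}_{\sc,\st}(\1)$, for which $(H_w)_I=H_{I,\sc}$ and the map \eqref{D1} literally coincides with $\sI^{\rho-\lambda}$; with arbitrary representatives the required unitarity is the same transport-of-structure statement the paper uses implicitly, so this is not a gap relative to the paper's own level of detail.
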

\begin{proof}  First note that both target and source have the same dimension by
Theorem \ref{CD tempered} and  equation \eqref{formula 1} applied to all spaces $Z_w$ via transport of structure.
Now
Lemma \ref{lemma scatter} together with \eqref{transit zeta} imply that the map is bijective.
Finally that the map is an isometry follows from the Maass-Selberg relations from Theorem \ref{eta-I continuous} -- for  that we use
$\W(I)\bs \Wc\simeq \bigcup_{\sc\in \sc_I} \sF_{I,\sc}$ from Lemma \ref{lemma fine W}.
\end{proof}

 \subsection{The Plancherel formula}
 As in the group case we select now with $\Ic\subset S$ a subset of representatives for the
 association classes.  Let us describe in terms of \eqref{D1} the
 inner  product on the multiplicity space $\M_\pi$ for $[\pi]=[\pi_{\sigma, \lambda}]$,
 where $\sigma$ is cuspidal with respect to $M_I$ and $I\in \Ic$.

  For that observe that the map
 $$\M_{\sigma,\rm d} \to \M_{\pi}^{I, \rho - \lambda}, \ \ \zeta\mapsto \xi_{\sigma,\lambda, \zeta}$$
 is a linear isometry  by \eqref{formula 1} if we request that the Plancherel measure for $L^2(Z_I)_{\rm td}$ is
 the Lebesgue measure $d\lambda$ times the counting measure of the discrete series, i.e.  we request
 the normalization \eqref{Z_I symm Re}.
 \par Via Theorem \ref{Th. 15.8} we can now
 normalize the Plancherel measure $\mu$ such that
 we have an isometric
 isomorphism:
 $$\bigoplus_{ [w]\in \W(I)\bs\Wc }  \M_{\pi, w, {\rm td}}^{I,\rho-\lambda}\simeq
 \bigoplus_{ [w]\in \W(I)\bs\Wc }  \M_{\sigma, w, \rm d}$$
 where $\M_{\sigma, w, \rm d}$ refers to the $M_I \cap H_w$-invariant
 square integrable functionals of the symmetric space $M_I/ M_I \cap H_w \simeq w^{-1}M_I w/ w^{-1}M_I w \cap H$.
  We now define
  $$\Hc_I = \bigoplus_{[w]\in \W(I)\bs \Wc}  \sum_{\sigma \in \hat M_I}
\int_{i C_I^*}^\oplus  \Hc_{\sigma,\lambda}\otimes \M_{\sigma,w, {\rm d}}  \ d\lambda$$
considered as a subspace of $L^2(Z_I)_{\rm td}$. Let $B_I'$ be the restriction of $B$ to $\Hc_I$.
Then with Theorem \ref{Th. 15.8} we obtain with the same reasoning as in the group case
the Plancherel formula for symmetric spaces:

\begin{theorem} {\rm (Plancherel formula for symmetric spaces)}
Let $Z=G/H$ be a symmetric space and let its Plancherel measure be normalized by unit asymptotics.
Then
$$B'=\bigoplus_{I\in \Ic} B_I': \bigoplus_{I\in \Ic} \Hc_I \to L^2(Z)$$
is a bijective $G$-equivariant isometry and is the inverse of the Fourier transform.
\end{theorem}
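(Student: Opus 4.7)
The plan is to mirror the group-case argument from Theorem \ref{planch HC}, with the role of the trace identification $\eta_{\sigma,\lambda}^{I,\rho-\lambda}=\tr_{\pi_{\sigma,\lambda}}$ now played by Theorem \ref{Th. 15.8}. First I would invoke the Bernstein decomposition in Theorem \ref{thm planch} (or its refined version Theorem \ref{thm planch refined}), which already provides a continuous surjective isospectral $G$-morphism $B$. The remaining work is: (i) to show that the restriction $B_I'$ of $B$ to the subspace $\Hc_I$ is an isometry for each $I\in\Ic$; (ii) to show that the images $B_I'(\Hc_I)$ and $B_J'(\Hc_J)$ are mutually orthogonal whenever $I\neq J$ in $\Ic$; and (iii) to verify surjectivity of $B'=\bigoplus_I B_I'$.

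For the isometry claim (i), I would combine Theorem \ref{Th. 15.8} with the identification \eqref{formula 1}. Concretely, for $[\pi]=[\pi_{\sigma,\lambda}]\in\supp\mu$ with $\sigma\in\hat M_I$ cuspidal and $\lambda\in iC_I^*$ generic, Theorem \ref{Th. 15.8} provides a bijective isometry
\[
\M_\pi \;\longrightarrow\; \bigoplus_{[w]\in\W(I)\bs\Wc}\M_{\pi,w,\rm td}^{I,\rho-\lambda},\quad
\eta\mapsto(\eta_w^{I,\rho-\lambda})_{[w]},
\]
and under the isomorphism $\M_{\pi,w,\rm td}^{I,\rho-\lambda}\simeq\M_{\sigma_w,\rm d}$, $\xi_{\sigma_w,\lambda_w,\zeta}\leftrightarrow\zeta$ supplied by \eqref{formula 1} (with $\M_{\sigma_w,\rm d}$ carrying the Schur–Weyl-normalized inner product of Subsection \ref{normal disc}), the right-hand side matches the multiplicity space appearing in the definition of $\Hc_I$. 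Since the Bernstein morphism is defined spectrally as the adjoint of the restricted Radon transform, and the Maass–Selberg relations (Theorem \ref{eta-I continuous}) combined with Lemma \ref{lemma fine W} transport the Hilbertian structure across, one obtains that $B_I'$ is isometric on its domain.

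For (ii), I would argue as in Lemma \ref{lemma disjoint supports}: the infinitesimal characters of the discrete series of $M_I/(M_I\cap H_w)$ are real by the result of \cite{KKOS}, so for non-associated $I,J\subset S$ the infinitesimal characters of $\pi_{\sigma,\lambda}$ and $\pi_{\sigma',\lambda'}$ with $\lambda\in iC_I^*$, $\lambda'\in iC_J^*$ lie in different $\W_\jf$-orbits for generic parameters, cf.\ the discussion around \eqref{orbit Wj}. For surjectivity (iii), I would observe that by construction of $\Ic$ and $C_I^*$ together with Lemma \ref{lemma fine W} and the description \eqref{support symmetric} of $\supp\mu$, the union of the spectral supports of the various $\Hc_I$ covers all of $\supp\mu$ up to null sets; since each $B_I'$ already has closed image (being an isometry between Hilbert spaces) and the images are mutually orthogonal, an application of Lemma \ref{Hilbert lemma} as in the proof of Theorem \ref{thm planch} yields that $\bigoplus_I B_I'(\Hc_I)$ is closed and equal to $L^2(Z)$.

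The main technical obstacle is, as in the group case, the isometry step (i), which rests entirely on Theorem \ref{Th. 15.8}. That theorem in turn relies on the scattering identity Lemma \ref{lemma scatter} together with Theorem \ref{CD tempered} describing $\operatorname{ev}_{\sigma,\lambda}(\M_{\sigma,\lambda})=V(\sigma)_2$; both are already established in the excerpt. Once these are available, the remaining work is essentially bookkeeping: lining up the normalizations so that the Schur–Weyl product on $\M_{\sigma_w,\rm d}$ matches the multiplicity-space inner product induced on $\M_\pi$ via Theorem \ref{Plancherel induced}, and then transferring the result through the parabolic induction isomorphism \eqref{Z_I symm Re}.
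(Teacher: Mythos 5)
Your proposal is correct and follows essentially the same route as the paper, which proves the theorem "with the same reasoning as in the group case": isometry of each $B_I'$ from Theorem \ref{Th. 15.8} together with \eqref{formula 1} and the chosen normalizations, disjointness of spectral supports for non-associated $I,J$ via the reality of infinitesimal characters as in Lemma \ref{lemma disjoint supports}, and surjectivity from the fiberwise bijectivity in Theorem \ref{Th. 15.8} combined with the isospectral surjective Bernstein morphism and closedness of sums of isometric images. The only point worth keeping explicit is that surjectivity uses not just coverage of $\supp\mu$ but the exhaustion of the full multiplicity space $\M_\pi$, which is exactly the bijectivity statement of Theorem \ref{Th. 15.8} that you already invoke.
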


\end{document}